\definecolor{tocolor}{rgb}{.1,.1,.5}
\definecolor{urlcolor}{rgb}{.2,.2,.6}
\definecolor{linkcolor}{rgb}{.1,.1,.6}
\definecolor{citecolor}{rgb}{.6,.2,.1}
\definecolor{darkgreen}{rgb}{0.0, 0.5, 0.0}
\providecommand{\U}[1]{\protect\rule{.1in}{.1in}}
\newtheorem{theorem}{Theorem}[section]
\newtheorem*{theorem*}{Theorem}
\newtheorem*{claim*}{Claim}
\newtheorem{theoremM}{Theorem}
\newtheorem{corollary}[theorem]{Corollary}
\newtheorem{corollaryM}[theoremM]{Corollary}
\newtheorem*{question}{Question}
\newtheorem{definition}[theorem]{Definition}
\newtheorem{definitionM}[theoremM]{Definition}
\newtheorem{example}[theorem]{Example}
\newtheorem{lemma}[theorem]{Lemma}
\newtheorem{proposition}[theorem]{Proposition}
\newtheorem*{proposition*}{Proposition}
\newtheorem{remark}[theorem]{Remark}
\numberwithin{equation}{section}
\newcommand{\ii}{\tilde{i}}
\newcommand{\Sat}{\mathrm{Sat}}
\newcommand{\id}{\mathrm{Id}}
\newcommand{\G}{\mathcal{G}}
\newcommand{\F}{\mathcal{F}}
\renewcommand{\U}{\mathcal{U}}
\newcommand{\V}{\mathcal{V}}
\newcommand{\K}{\mathcal{K}}
\renewcommand{\H}{\mathcal{H}}
\newcommand{\ka}{\mathfrak{k}}
\renewcommand{\gg}{\mathfrak{g}}
\newcommand{\hh}{\mathfrak{h}}
\renewcommand{\graph}{\mathrm{Graph}}
\newcommand{\Rep}{V}
\newcommand{\Ato}{\Rightarrow}
\newcommand{\al}{\alpha}                
\newcommand{\be}{\beta}                 
\newcommand{\ga}{\gamma}                
\newcommand{\s}{\mathbf{s}}             
\renewcommand{\t}{\mathbf{t}}           
\newcommand{\tto}{\rightrightarrows}    
\newcommand{\timesst}{\tensor[_\s]{\times}{_\t}} 
\renewcommand{\L}{\mathbb{L}}
\newcommand{\Lie}{\mathscr{L}}
\newcommand{\dd}{\mathrm{d}}
\renewcommand{\d}{\dd}
\newcommand{\DD}{\mathcal{D}}
\newcommand{\cl}{\mathrm{cl}}
\newcommand{\bas}{\mathrm{bas}}
\newcommand{\Der}{\operatorname{Der}}
\newcommand{\pr}{\operatorname{pr}}
\newcommand{\Aut}{\operatorname{Aut}}
\newcommand{\ad}{\operatorname{ad}}
\newcommand{\im}{\operatorname{Im}}
\newcommand{\lin}{\mathrm{lin}}
\newcommand{\mult}{\mathrm{M}}
\newcommand{\imult}{\mathrm{IM}}
\newcommand{\can}{\mathrm{can}}
\newcommand{\Ad}{\operatorname{Ad}}
\newcommand{\R}{\mathbb{R}}
\newcommand{\X}{\mathfrak{X}}
\newcommand{\diffto}{\xrightarrow{\raisebox{-0.2 em}[0pt][0pt]{\smash{\ensuremath{\sim}}}}}
\newcommand{\rmap}{\longrightarrow}
\newcommand{\mmu}{\mu}
\newcommand{\pd}[1]{\partial_{#1}} 
\newcommand{\pdd}[2]{\frac{\partial #1}{\partial #2}}
\begin{document}
\title{Poisson geometry around Poisson submanifolds}

\author{Rui Loja Fernandes}
\address{Department of Mathematics, University of Illinois at Urbana-Champaign, 1409 W. Green Street, Urbana, IL 61801 USA}
\email{ruiloja@illinois.edu}

\author{Ioan M\u{a}rcu\cb{t}}
\address{Radboud University Nijmegen, IMAPP, 6500 GL, Nijmegen, The Netherlands}
\email{i.marcut@math.ru.nl}

\thanks{RLF was partially supported by NSF grants DMS-1710884, DMS-2003223 and DMS-2303586, a Simons Fellowship in Mathematics and FCT/Portugal. IM was partially supported by the NWO VENI grant 613.009.031.}

\begin{abstract}
We construct a first order local model for Poisson manifolds around a large class of Poisson submanifolds and we give conditions under which this model is a local normal form. The resulting linearization theorem includes as special cases all the known linearization theorems for fixed points and symplectic leaves. The symplectic groupoid version of these results gives a solution to the groupoid coisotropic embedding problem.
\end{abstract}
\maketitle

\setcounter{tocdepth}{1}
\tableofcontents

\section{Introduction}

A fundamental problem in Poisson geometry is to understand the behavior of a Poisson manifold around a given symplectic leaf \cite{DuZu05,Vorobjev05,Weinstein83}. There is a well-known first order local model for a Poisson structure around a symplectic leaf due to Vorobjev \cite{Vorobjev01}. This first order local model is the cornerstone for the study of the Poisson geometry around the leaf. Using it one can study normal forms around symplectic leaves \cite{CrMa12}, stability of symplectic leaves \cite{CrFe10}, etc.

Symplectic leaves is only one class among several interesting classes of submanifolds in Poisson geometry. A local normal form around Poisson transversals has also been found in \cite{FrMa17}, and one hopes to understand the geometry around other important classes of submanifolds, such as Poisson submanifolds, coisotropic submanifolds or Poisson-Dirac submanifolds. Unlike the special case of symplectic leaves, in general, there is no first order local model around a  Poisson submanifold. In this paper we identify conditions for the existence of such a model, extending the known model for symplectic leaves, and we prove a normal formal theorem generalizing the known (smooth) linearization results for fixed points \cite{Conn85,DuZu05} and symplectic leaves \cite{CrMa12}.

In order to state our results, we introduce some notation and terminology. Let us denote by $\X^\bullet(M)$ the space of multivector fields on a manifold $M$. We have the space of Poisson structures on $M$, denoted by
\[ \Pi(M):=\{ \pi\in\X^2(M): [\pi,\pi]=0\}. \]
Given a submanifold $S\subset M$, which will always assume to be closed and embedded, we denote by $\X^\bullet_S(M)$ the subspace of multivector fields $\vartheta\in \X^\bullet(M)$ tangent to $S$, i.e., such that $\vartheta|_S\in\X^\bullet(S)$. Then we have the space of Poisson structures in $M$ for which $S$ is a Poisson submanifold, denoted by
\[ \Pi(M,S):=\{ \pi\in\X^2_S(M): [\pi,\pi]=0\}. \]
Let $I_S\subset C^\infty(M)$ denote the vanishing ideal of $S$. The space of first order jets of multivector fields tangent to $S$ can be identified with (see Section \ref{section:jets}) the quotient
\[ J^1_S\X_S^\bullet(M):=\X^\bullet_S(M)/I_S^2\cdot \X^\bullet(M). \]
The Schouten bracket descends to $J^1_S\X_S^\bullet(M)$ and we have the space of \emph{first order jets of Poisson structures at $S$}, defined as follows
\[ J^1_S\Pi(M,S):=\{\tau\in J^1_S\X_S^2(M): [\tau,\tau]=0\}. \]

We will see that to specify an element $\tau\in J^1_S\Pi(M,S)$ amounts to giving a Lie algebroid structure on the restricted cotangent bundle $T^*_S M$ for which the natural projection $\mu_S:T^*_SM\to T^*S$ is an infinitesimal multiplicative (IM) closed 2-form. We do not distinguished between these two descriptions, so we will often denote an element $\tau\in J^1_S\Pi(M,S)$ as a pair $\tau=(T^*_SM,\mu_S)$.


We have a map which to a Poisson structure tangent to $S$ associates its first order jet along $S$, denoted by
\[  J^1_S:\Pi(M,S)\to J^1_S\Pi(M,S). \]
This map, in general, is not surjective (see Example \ref{ex:non:holonomic}).


\begin{definitionM}
\label{def:local:model}
Given a class $\mathscr{C}\subset J^1_S\Pi(M,S)$, we call a splitting $\sigma:\mathscr{C}\to \Pi(M,S)$ of the map $J^1_S$ a \textbf{first order local model} for the class $\mathscr{C}$. 
\end{definitionM}

We state now a simplified version of our first main result (see Section \ref{sec:local:model} for more precise statements).

%
%

\begin{theoremM}[Existence of local models]
\label{thm:main:one}
The class $\mathscr{C}\subset J^1_S\Pi(M,S)$ of first order jets of Poisson structures satisfying the partially split condition admits a first order local model $\sigma:\mathscr{C}\to \Pi(M,S)$.
\end{theoremM}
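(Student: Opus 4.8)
The plan is to produce $\sigma(\tau)$ explicitly as a Poisson bivector defined near $S$, generalizing Vorobjev's coupling construction for symplectic leaves. The first step is to unwind the datum $\tau=(T^*_SM,\mu_S)\in\mathscr C$ into geometric data along $S$. Since $S$ is a Poisson submanifold, the conormal bundle $\mathfrak{g}_S:=N^*S$ is a Lie subalgebroid of $T^*_SM$ with trivial anchor, i.e.\ a bundle of Lie algebras, so $\tau$ realizes an extension of Lie algebroids $0\to\mathfrak{g}_S\to T^*_SM\xrightarrow{\mu_S}T^*S\to0$ for which $\mu_S$ is a closed IM $2$-form; here $[\tau,\tau]=0$ encodes the Jacobi identity of the bracket of $T^*_SM$ together with the closedness and multiplicativity of $\mu_S$. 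The \emph{partially split condition} (see the definition in Section~\ref{sec:local:model}) equips this extension with the additional splitting needed to present $\tau$ as the analogue of a Vorobjev coupling datum along $S$: the Poisson structure $\pi_S$ on $S$, a linear connection $\nabla$ on $\mathfrak{g}_S$, a coupling form, and the fibrewise Lie algebra structure, subject to the appropriate compatibility equations. This step is essentially bookkeeping once the partially split condition is spelled out.

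Next, fix a tubular neighborhood, identifying a neighborhood of $S$ in $M$ with a neighborhood of the zero section of the normal bundle $p\colon\nu(S)\to S$. Because $S$ is Poisson we have $\nu(S)=\mathfrak{g}_S^*$, so its fibres carry canonical fibrewise-linear Poisson structures, namely the Lie--Poisson structures of the fibre Lie algebras $\mathfrak{g}_{S,x}$. On a neighborhood of the zero section of $\mathfrak{g}_S^*$ I would then assemble the candidate $\pi:=\sigma(\tau)$ from the data of the first step: the fibrewise Lie--Poisson term (globalized by means of $\nabla$), the horizontal lift of $\pi_S$, and the coupling term built from $\mu_S$. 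In a local frame $e_a$ of $\mathfrak{g}_S$ with dual fibre coordinates $y_a$ and coordinates $x^i$ on $S$ this has the schematic form $\pi=\tfrac{1}{2}\,c^c_{ab}(x)\,y_c\,\partial_{y_a}\wedge\partial_{y_b}+\tfrac{1}{2}\,\pi_S^{ij}(x)\,\partial_{x^i}\wedge\partial_{x^j}+(\text{mixed and coupling terms})$. By construction $\pi$ is tangent to $S$ with $\pi|_S=\pi_S$, and a comparison of coefficients gives $J^1_S(\pi)=\tau$: the Lie bracket on $\mathfrak{g}_S$, the induced connection, the extension cocycle and the $1$-jet of $\mu_S$ are all read off from the structure functions appearing in $\pi$.

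The crux is the Jacobi identity $[\pi,\pi]=0$. I would compute $[\pi,\pi]$ using $\nabla$ to separate tangential and normal directions and organize the result by its homogeneity in the fibre coordinate. The terms not involving the connection or the coupling vanish by $[\pi_S,\pi_S]=0$ and the Jacobi identity of the fibre Lie algebras; the next batch vanishes by the remaining Lie algebroid axioms of $T^*_SM$ and by the closedness and multiplicativity of $\mu_S$, i.e.\ by $[\tau,\tau]=0$, together with the Bianchi identity of $\nabla$; and one distinguished remaining term, which in general does not vanish --- it is precisely what prevents the jet of Example~\ref{ex:non:holonomic} from being realized by a Poisson structure --- vanishes thanks to the partially split condition. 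The few higher-order terms that survive this analysis are then forced to vanish by Bianchi-type consequences of the identities already used. This is the step that carries the real technical weight: the Schouten-bracket computation must be arranged so that every obstruction term is manifestly controlled by one of the hypotheses, and pinning down the correct coupling term so that no spurious obstruction remains is the delicate point.

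Finally, to turn the construction into a bona fide splitting of $J^1_S$ rather than a recipe depending on auxiliary data, one fixes the tubular neighborhood, a complement to $\mathfrak{g}_S$ in $T^*_SM$, and the connection $\nabla$ once and for all as part of the data --- as in the precise statement of Section~\ref{sec:local:model} --- so that $\sigma\colon\mathscr C\to\Pi(M,S)$ is well defined and $J^1_S\circ\sigma=\mathrm{id}_{\mathscr C}$ holds by the coefficient comparison of the second step.
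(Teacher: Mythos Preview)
Your strategy --- extract coupling data from the partially split jet, assemble a candidate bivector on $\mathfrak g_S^*$, and verify $[\pi,\pi]=0$ directly --- is in the spirit of Vorobjev's construction and is in principle workable, but the sketch contains a genuine gap and also diverges from the paper's actual argument in an instructive way.

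The gap is in your ansatz and in the claimed Schouten computation. You write the horizontal part as $\tfrac12\pi_S^{ij}(x)\,\partial_{x^i}\wedge\partial_{x^j}$ and relegate the rest to unspecified ``mixed and coupling terms'', then propose to verify Jacobi by filtering in fibre homogeneity and arguing that the surviving higher-order obstructions ``are forced to vanish by Bianchi-type consequences''. This is not how it works out. The coupling data $(\nabla^L,U)$ attached to an IM connection $1$-form produces a local model whose horizontal part is not $\pi_S$ but the twisted bivector $\gamma_z$ with $\gamma_z^\sharp=\pi_S^\sharp\circ(\mathrm{id}+\langle z,U\rangle)^{-1}$; see Proposition~\ref{prop:local:model:explicit} and the block-matrix coordinate formula following it. This is \emph{rational} in the fibre variable, so a homogeneity-by-homogeneity argument does not terminate: there is no finite list of ``higher-order terms'' to kill. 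Unless you already know the correct rational closed form, your scheme of guessing a polynomial coupling term and hoping the tail vanishes will fail in examples where $U\neq0$ (e.g.\ Example~\ref{example:Ginzburg}).

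The paper sidesteps this computation entirely. Rather than building $\pi$ and checking Jacobi, it builds a closed IM $2$-form on the action algebroid $A_S\ltimes\ka^*$:
\[
\mu_0=\pr^*\mu_S+\d_{\mathrm{IM}}\langle(L,l),\cdot\rangle\in\Omega^2_{\mathrm{IM}}(A_S\ltimes\ka^*),
\]
and checks only that $\mu_0$ is non-degenerate along the zero section (Proposition~\ref{prop:local:model:algbrd}), which is a pointwise linear-algebra fact. The Poisson structure then comes for free via $\pi_0^\sharp=\rho_\ltimes\circ\mu_0^{-1}$, because a non-degenerate closed IM $2$-form is exactly a Poisson structure (Corollary~\ref{cor:jets:algebroids}). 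This explains both why the local model is rational in the fibre (it is the inverse of something affine) and why no Schouten computation is needed. The coupling description you envision is then \emph{derived} from this in Section~\ref{sec:obstructions:partial:split}, not used as the primary construction. If you want to salvage your direct approach, you should start from the formula in Proposition~\ref{prop:local:model:explicit} rather than from a polynomial ansatz; but the IM-form route is both shorter and more robust.
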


The ``partially split'' condition in the statement of the theorem is a condition on a jet $\tau\in J^1_S\Pi(M,S)$ expressed in terms of the IM (infinitesimally multiplicative) geometry of the pair $\tau=(T^*_SM,\mu_S)$. It amounts to the existence of an IM Ehresmann connection \cite{FM22}, and we will give the precise definition later. We use the IM Ehresmann connection to construct the local model $\sigma(\tau)$ from Theorem \ref{thm:main:one} explicitly. Moreover, we show that, up to isomorphism, the result is independent of this choice. For now, we point out that the class of jets satisfying this condition is a large class that includes many examples of interest. For example, we will see that it contains the following three classes of first order jets $\tau=(T^*_SM,\mu_S)$: 
\begin{itemize}
\item $T^*_SM$ is transitive;
\item $T^*_SM$ integrates to a proper groupoid;
\item $\ker\mu_S\subset T^*_SM$ is a bundle of semi-simple Lie algebras.
\end{itemize}
\smallskip


\begin{definitionM}
Consider the map $\sigma:\mathscr{C}\to \Pi(M,S)$ from Theorem \ref{thm:main:one}. Given a Poisson structure $\pi\in \Pi(M,S)$ with $J^1_S\pi\in \mathscr{C}$, we will call $\sigma(J^1_S\pi)\in \Pi(M,S)$ the \textbf{linear approximation} for $\pi$ around $S$. If $\pi$ and $\sigma(J^1_S\pi)$ are isomorphic around $S$ via a Poisson diffeomorphism fixing $S$, then we will say that $\sigma(J^1_S\pi)$ is a \textbf{normal form} for $\pi$ around $S$.
\end{definitionM}

Our second main result is the following linearization theorem  (see Theorem \ref{Normal:form:theorem}).

\begin{theoremM}[Normal form]
\label{thm:main:two}
Let $\mathscr{C}_0\subset J^1_S\Pi(M,S)$  consist of first order jets $\tau=(T^*_SM,\mu_S)$ such that $T^*_SM$ is integrable by a compact, Hausdorff Lie groupoid whose target fibers have trivial second de Rham cohomology. Then there exists a first order local model $\sigma:\mathscr{C}_0\to \Pi(M,S)$ and $\sigma(J^1_S\pi)$ is a normal form for $\pi\in \Pi(M,S)$ around $S$ whenever $J^1_S\pi\in \mathscr{C}_0$.
\end{theoremM}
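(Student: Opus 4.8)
The plan is to prove this as a rigidity (normal-form) statement by a Moser-type deformation argument, upgraded to a fast Nash--Moser iteration, following the scheme that Crainic and M\u{a}rcu\cb{t} used around symplectic leaves \cite{CrMa12}; the genuinely new ingredient will be the identification, and the tame vanishing, of the relevant Poisson cohomology of the local model in the present non-transitive setting.

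First I would set $\pi_0:=\sigma(J^1_S\pi)$, the linear approximation from Theorem \ref{thm:main:one}. Since $J^1_S\pi_0=J^1_S\pi=\tau$, the bivector $P:=\pi-\pi_0$ lies in $I_S^2\cdot\X^2(M)$ — it is flat along $S$ to second order — and both $\pi$ and $\pi_0$ are tangent to $S$. The goal is a diffeomorphism $\varphi$ of a neighborhood of $S$ with $\varphi|_S=\id$ and $\varphi^*\pi=\pi_0$, built as an infinite composition $\varphi=\lim_n\varphi_1\circ\cdots\circ\varphi_n$ of time-one flows of vector fields $X_n$ that are flat along $S$. Writing $\pi_n:=(\varphi_1\circ\cdots\circ\varphi_n)^*\pi$ and $P_n:=\pi_n-\pi_0$, the Jacobi identities for $\pi_n$ and $\pi_0$ give $\d_{\pi_0}P_n=-\tfrac12[P_n,P_n]$, so at each step one has to solve the linearized equation $\d_{\pi_0}X_n=P_n$ — modulo a homotopy operator — inside the complex of multivector fields tangent to and flat along $S$, producing a new error $P_{n+1}$ that is quadratic in $P_n$ and $X_n$. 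Flatness along $S$, preserved at every step, is what will force the limit to fix $S$ and to be defined near $S$.

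The crux — and the step I expect to be the main obstacle — is a \emph{tame} vanishing of the degree-two cohomology of this ``flat-along-$S$'' Poisson complex of $\pi_0$, together with tame estimates for the corresponding homotopy operator. Since $\pi_0$ is the local model attached to $\tau=(T^*_SM,\mu_S)$ and $\tau\in\mathscr{C}_0$, the algebroid $A:=T^*_SM$ integrates to a compact, Hausdorff groupoid $\G\tto S$. I would use the hypothesis that the target fibers of $\G$ have $H^2_{\mathrm{dR}}=0$ to ensure that $\pi_0$ is integrable near $S$, by a proper symplectic groupoid $\Sigma$ thickening $\G$; a normalized Haar system on $\Sigma$ (equivalently, averaging over the compact fibers of $\G$) then yields averaging homotopy operators for the complex above. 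Properness/compactness kills all positive-degree cohomology, while the fiber condition $H^2_{\mathrm{dR}}=0$ disposes of the one genuine degree-two obstruction, exactly as in the symplectic-leaf case. Because these operators are realized by integration over the \emph{compact} target fibers of $\Sigma$, I expect them to be bounded, with a fixed loss of derivatives, on each weighted $C^k$-norm adapted to $S$ — that is, tame. Establishing these tame estimates when $T^*_SM$ is not transitive is where most of the work will go.

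Finally, feeding the tame homotopy operators into a Newton/Nash--Moser scheme — solve $\d_{\pi_0}X_n=P_n$ modulo the homotopy, apply a smoothing operator to recover the lost derivatives, estimate the quadratically smaller $P_{n+1}$ — produces the $X_n$ and makes $\varphi=\lim_n\varphi_1\circ\cdots\circ\varphi_n$ converge in $C^\infty$ on a possibly smaller neighborhood of $S$, with $\varphi|_S=\id$ and $\varphi^*\pi=\pi_0=\sigma(J^1_S\pi)$, which is the assertion. I would organize this last part by invoking the abstract normal-form theorem of \cite{CrMa12}, checking that its hypotheses — existence of the first-order model (Theorem \ref{thm:main:one}) and the tame cohomological vanishing above — are met here.
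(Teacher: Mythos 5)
Your overall philosophy (compare $\pi$ with the local model and kill the difference by a fast iteration powered by compact-fiber averaging) is the right one, but as written the proposal has two genuine gaps, and it diverges from the paper's actual proof, which contains no Nash--Moser analysis at all.

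First, half of the statement is the existence of the local model $\sigma$ on $\mathscr{C}_0$, and you assume it: you set $\pi_0:=\sigma(J^1_S\pi)$ ``from Theorem \ref{thm:main:one}'' without checking that a jet whose algebroid $T^*_SM$ integrates to a compact Hausdorff groupoid is partially split, i.e.\ that $\mathscr{C}_0\subset\mathscr{C}$. This is not automatic; in the paper it is exactly Theorem \ref{thm:proper} (existence of multiplicative Ehresmann connections for bundles of ideals of proper groupoids, from \cite{FM22}), and without it $\pi_0$ is simply not defined, so the iteration has no starting point.

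Second, the analytic core of your scheme --- tame homotopy operators and quadratic estimates for the flat-along-$S$ complex in the non-transitive setting --- is precisely the content of the Rigidity Theorem of \cite{Marcut14} (quoted in the paper as Theorem \ref{thm:rigidity}), and the paper's proof consists of \emph{reducing} to it rather than reproving it: since $T^*M_0\simeq A_S\ltimes\ka^*$ integrates to the action groupoid $\G_S\ltimes\ka^*$, which is Hausdorff and whose target fibers are diffeomorphic to those of $\G_S$ (hence compact with trivial $H^2$), rigidity applied to the pair $\pi_0$, $\pi$ (which share the first jet along $S$) yields the normal form. Your plan to obtain the estimates by ``invoking the abstract normal-form theorem of \cite{CrMa12}'' does not work as stated: that paper treats symplectic leaves (transitive $T^*_SM$) and its proof is the geometric one, not an abstract tame-category statement one can quote; what you would actually have to do is redo the Nash--Moser argument of \cite{Marcut14}, which you acknowledge but do not carry out. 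Two further inaccuracies: the hypothesis $H^2_{\mathrm{dR}}=0$ on target fibers is not what makes $\pi_0$ integrable near $S$ (integrability comes from the action groupoid, independently of $H^2$; the cohomological hypothesis enters only in the vanishing that drives the iteration), and your ``proper symplectic groupoid $\Sigma$ thickening $\G$'' need not exist when the target fibers of $\G_S$ are not $1$-connected, since the over-symplectic form integrating $\mu_S$ may fail to exist --- fortunately only a Lie groupoid integrating the cotangent algebroid, not a symplectic one, is needed for the rigidity theorem.
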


This result includes as special cases the most important smooth linearization theorems around fixed points and symplectic leaves from the literature.
\begin{enumerate}[-]
\item For a fixed point $S=\{x_0\}$, the first order jet $(T^*_SM,\mu_S)$ becomes just a Lie algebra $\gg$. The local model always exists and is just the linear Poisson structure on $\gg^*$. If $\gg$ is compact, semi-simple, then any of its integrations is a compact Lie group $G$ with $H^2(G)=\{0\}$, so satisfies the assumptions of the theorem and one recovers Conn's Linearization Theorem \cite{Conn85}. Actually, one can allow $\gg$ to be compact with 1-dimensional center, resulting in the slightly more general result due to Monnier and Zung \cite{MoZu04}.
\item For a symplectic leaf $S$, the first order jet $(T^*_SM,\mu_S)$ is a transitive algebroid over a symplectic manifold. The local model always exists and it coincides with the one constructed by Vorobjev \cite{Vorobjev01}. Theorem \ref{thm:main:two} includes the linearization theorem due to Crainic and M\u{a}rcu\cb{t} \cite{CrMa12} and its generalization due to M\u{a}rcu\cb{t} \cite{Marcut14}.
\item For a general Poisson submanifold, the local model can be regarded as a way of gluing together Vorobjev-type local models above the symplectic leaves of $(S,\pi|_S)$ (see Proposition \ref{prop:glueing:vorobjev}). For Poisson submanifolds, the main result to date was the Rigidity Theorem due to M\u{a}rcu\cb{t} \cite{Marcut14}. The following improved version of this result is a direct consequence of Theorem \ref{thm:main:two} (see Corollary \ref{thm:main:three:end}).
\end{enumerate}

\begin{corollaryM}[Neighborhood equivalence]
\label{thm:main:three}
Let $(M,\pi)$ be a Poisson manifold and $S\subset M$ a Poisson submanifold. If $T^*_S M$ is integrable by a compact Hausdorff Lie groupoid whose target fibers have trivial second de Rham cohomology, then any Poisson structure $\pi'$ on $M$ with $J^1_S\pi=J^1_S\pi'$ is locally isomorphic to $\pi$ around $S$.
\end{corollaryM}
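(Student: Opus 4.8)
The plan is to derive Corollary~\ref{thm:main:three} directly from the Normal Form Theorem~\ref{thm:main:two}. Suppose $(M,\pi)$ is a Poisson manifold, $S\subset M$ a Poisson submanifold, and $\pi'$ another Poisson structure on $M$ with $J^1_S\pi = J^1_S\pi'$. First I would observe that, since $S$ is a Poisson submanifold for $\pi$, the jet $\tau := J^1_S\pi = (T^*_SM,\mu_S)$ lies in $J^1_S\Pi(M,S)$; and since by hypothesis $T^*_SM$ is integrable by a compact Hausdorff Lie groupoid whose target fibers have trivial second de Rham cohomology, we have $\tau \in \mathscr{C}_0$. The equality $J^1_S\pi = J^1_S\pi'$ forces $\pi'$ to be tangent to $S$ as well (being tangent to $S$ is detected by the $1$-jet along $S$, since it is equivalent to $\pi'$ restricting to a section of $\wedge^2 TS$ along $S$, which is read off from the zeroth-order part of the jet), so in fact $\pi' \in \Pi(M,S)$ and $J^1_S\pi' = \tau \in \mathscr{C}_0$ too.

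Now apply Theorem~\ref{thm:main:two} to the first order local model $\sigma:\mathscr{C}_0\to\Pi(M,S)$ it provides: since $J^1_S\pi = \tau \in \mathscr{C}_0$, the Poisson structure $\sigma(\tau)$ is a normal form for $\pi$ around $S$, meaning $\pi$ and $\sigma(\tau)$ are isomorphic near $S$ via a Poisson diffeomorphism fixing $S$. Likewise, since $J^1_S\pi' = \tau \in \mathscr{C}_0$, the same $\sigma(\tau)$ is a normal form for $\pi'$ around $S$, so $\pi'$ and $\sigma(\tau)$ are isomorphic near $S$ via a Poisson diffeomorphism fixing $S$. Composing one isomorphism with the inverse of the other yields a Poisson diffeomorphism between neighborhoods of $S$ in $(M,\pi)$ and $(M,\pi')$ fixing $S$, which is exactly the asserted local isomorphism of $\pi'$ with $\pi$ around $S$.

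The only genuinely delicate point in this reduction is to make sure that $\sigma$ is one fixed splitting applied to both sides: Theorem~\ref{thm:main:two} asserts the \emph{existence} of a local model $\sigma$ on $\mathscr{C}_0$ with the normal form property for \emph{every} $\pi$ whose jet lies in $\mathscr{C}_0$, so a single such $\sigma$ serves for both $\pi$ and $\pi'$ simultaneously, and since they share the same jet $\tau$ they are both modeled on the same $\sigma(\tau)$. (Even if one preferred to invoke the theorem twice, the uniqueness-up-to-isomorphism of the local model noted after Theorem~\ref{thm:main:one} would identify the two linear approximations; but this is not needed.) Thus no new analysis is required — the corollary is a formal consequence of the normal form statement, with the work hidden entirely in Theorem~\ref{thm:main:two}.
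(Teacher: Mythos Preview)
Your proposal is correct and matches the approach the paper announces in the Introduction, where Corollary~\ref{thm:main:three} is described as ``a direct consequence of Theorem~\ref{thm:main:two}'': both $\pi$ and $\pi'$ share the first jet $\tau\in\mathscr{C}_0$, so Theorem~\ref{thm:main:two} makes $\sigma(\tau)$ a normal form for each, and composing the two isomorphisms gives the result. The paper's detailed argument in Section~\ref{sec:main:thm} takes a slightly different route---it first extracts from the proof of the Normal Form Theorem that a neighborhood of $S$ is integrable by a target-proper Hausdorff groupoid with the right cohomological property (Corollary~\ref{cor:integrability}), and then applies M\u{a}rcu\cb{t}'s Rigidity Theorem directly---because this yields the stronger Corollary~\ref{thm:main:three:end}, which also includes stability under perturbations of $\pi$ in the space of \emph{all} Poisson structures; for the bare neighborhood-equivalence statement your direct reduction is the cleanest.
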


Our proof of Theorem \ref{thm:main:two}, and hence of Corollary \ref{thm:main:three}, will make use of the Rigidity Theorem of \cite{Marcut14}. However, Corollary \ref{thm:main:three} is an improvement, since the Rigidity Theorem assumes that $(M,\pi)$ is \emph{a priori} integrable, while our result assumes that only the restriction $T^*_SM$ is integrable. This is a consequence of the existence of our local model. For example, we obtain the following statement (see Corollary \ref{cor:integrability}).

\begin{corollaryM}
\label{cor:main}
Let $(M,\pi)$ be a Poisson manifold and $S\subset M$ a Poisson submanifold. If $T^*_S M$ is integrable by a compact Hausdorff Lie groupoid whose target fibers have trivial second de Rham cohomology, then there is a neighborhood $U\subset M$ of $S$ consisting of compact symplectic leaves and such that $\pi|_U$ is integrable. 
\end{corollaryM}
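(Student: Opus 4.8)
The plan is to derive this as a direct consequence of the normal form Theorem \ref{thm:main:two} together with the explicit nature of the first order local model $\sigma$ from Theorem \ref{thm:main:one}. First, since $T^*_SM$ is integrable by a compact Hausdorff Lie groupoid $\G$ whose target fibers have trivial second de Rham cohomology, the jet $\tau = J^1_S\pi$ lies in the class $\mathscr{C}_0$ of Theorem \ref{thm:main:two} (note that compactness of $\G$ forces it to be proper, hence $\tau$ is in particular partially split, so $\sigma(\tau)$ is defined). Therefore $\sigma(\tau)$ is a normal form for $\pi$ around $S$: there is a neighborhood $U_0 \subset M$ of $S$ and a Poisson diffeomorphism $\varphi\colon (U_0,\pi) \to (V_0, \sigma(\tau))$ onto a neighborhood $V_0$ of $S$ in the total space of the local model, fixing $S$. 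Thus it suffices to establish the conclusion — a neighborhood of $S$ consisting of compact symplectic leaves, on which the Poisson structure is integrable — for the local model $\sigma(\tau)$ itself.

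Next I would analyze the local model $\sigma(\tau)$ near $S$. By construction (using the IM Ehresmann connection, as in Theorem \ref{thm:main:one}) the total space of $\sigma(\tau)$ is a neighborhood of the zero section in a vector bundle $E \to S$, and the symplectic foliation of $\sigma(\tau)$ is adapted to the fibers: the leaves near $S$ are the leaves of an integrable distribution that, along $S$, restricts to the symplectic foliation of $(S,\pi|_S)$. Because $T^*_SM = A(\G)$ with $\G$ compact, the leaves of $(S,\pi|_S)$ — which are the orbits of $\G$ — are all compact. A standard neighborhood argument (continuity/openness of the leaf-compactness, or directly from the explicit fibered form of $\sigma(\tau)$ over the compact leaves of $S$, cf. Proposition \ref{prop:glueing:vorobjev}) then shows that there is a smaller neighborhood $U \subset U_0$ of $S$ all of whose symplectic leaves are compact. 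Pulling back through $\varphi$ gives the corresponding neighborhood in $(M,\pi)$.

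Finally, for integrability of $\pi|_U$, the key point is that the restricted cotangent algebroid $T^*_SM$ is integrable by the \emph{compact} groupoid $\G$, and integrability is an open condition in the following sense: by the main integrability results for Poisson manifolds, since the leaves near $S$ are compact and $T^*_SM$ is integrable by a proper (indeed compact) groupoid, the monodromy groups of the nearby leaves remain locally uniformly discrete, so $\pi|_U$ is integrable for $U$ small enough — concretely, one can integrate the local model $\sigma(\tau)$ directly by a groupoid built from $\G$ (a "blow-down"/fibered construction over $U$), as is already implicit in the proof of Theorem \ref{thm:main:two}, and transport this integration to $\pi|_U$ via $\varphi$. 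The main obstacle is this last step: one must check that the compactness and the $H^2$-triviality of the target fibers of $\G$ genuinely propagate to an honest integration of the local model on a full neighborhood — i.e. that no new monodromy obstruction appears transverse to $S$ — which is exactly the content that the hypotheses of Theorem \ref{thm:main:two} are designed to guarantee; once the normal form is in hand this becomes a matter of unwinding the construction of $\sigma(\tau)$ rather than a new difficulty.
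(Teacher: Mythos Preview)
Your overall strategy---reduce to the local model via the normal form theorem, then verify the two properties there---matches the paper's. However, the two technical steps you leave vague are precisely where the content lies, and your sketches for them are either incorrect or miss the concrete mechanism the paper uses.

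The key observation, which replaces both of your handwavy arguments at once, is this: by construction the cotangent Lie algebroid of the local model $(M_0,\pi_0)$ is isomorphic to the action Lie algebroid $A_S\ltimes\ka^*$ (where $A_S=T^*_SM$ and $\ka=\ker\mu_S$). Since the $A_S$-representation on $\ka$ integrates to the conjugation action of $\G_S$, the action algebroid integrates to the action groupoid $\G_S\ltimes\ka^*\tto\ka^*$. Its target fibers are diffeomorphic to those of $\G_S$, hence compact and with trivial $H^2$. This gives integrability of $\pi_0$ directly and explicitly---there is no ``blow-down'' or fibered gluing, just the action groupoid. Moreover, target-properness of $\G_S\ltimes\ka^*$ immediately yields a basis of saturated open neighborhoods of $S$ with compact orbits, i.e.\ compact symplectic leaves. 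Transporting via the Poisson diffeomorphism finishes the argument.

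Your appeal to ``integrability is an open condition'' and ``monodromy groups remain locally uniformly discrete'' is not a valid substitute: integrability of a Lie algebroid is \emph{not} open in general, and uniform discreteness of monodromy groups does not follow from integrability at $S$ without further input. Likewise, the compactness-of-leaves step does not follow from Proposition~\ref{prop:glueing:vorobjev} or a generic ``continuity'' argument; it comes from target-properness of the explicit integration. So the gap is not a matter of unwinding details but of identifying the action groupoid $\G_S\ltimes\ka^*$ as the integration of $T^*M_0$.
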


The proof of the  Rigidity Theorem in \cite{Marcut14} uses a version of the Nash-Moser fast convergence method, similar to Conn's proof \cite{Conn85}. We will also give a direct proof of Theorem \ref{thm:main:two} under the additional assumption that the integration is target 1-connected, by using techniques developed by Crainic and Fernandes in \cite{CrFe11}. This then leads to a geometric proof of Theorem \ref{thm:main:three} under this additional assumption, which is independent from the results in \cite{Marcut14}.


The results above can be generalized in several directions. For example, we will consider extensions to Dirac geometry, constructing first order local models and proving normal forms results for Dirac manifolds around invariant submanifolds (i.e., manifolds saturated by presymplectic leaves). Another possible direction, is to construct local models and normal forms around other types of submanifolds. We will introduce and discuss one such class, that we call \emph{coregular submanifolds}. These include as special cases transversals submanifolds and invariant submanifolds. By passing to a saturation of a coregular submanifold we are able to extend our results to such submanifolds.

\medskip


Another goal of the paper is to give a symplectic groupoid version of the previous results. Besides being interesting on its own right, it provides a guide for both the construction of local models and the proofs of various linearization results for Poisson structures. It can be understood by applying a dictionary \emph{global} $\leftrightarrow$ \emph{infinitesimal}, whose first entries are as follows:
\medskip

\begin{center}
\begin{tabular}{ c | c }
\emph{Global side} & \emph{Infinitesimal side}\\ \hline\\
 \txt{symplectic groupoid\\ $(\G,\omega)\tto M$}  & \txt{Poisson manifold $(M,\pi)$} \\ 
 \\
 \txt{saturated submanifold\\ $S\subset M$}  & \txt{Poisson submanifold\\ $S\subset M$}  \\ 
\\
 \txt{over-symplectic groupoid\\ $(\G_S,\omega_S)\tto S$}  & \txt{first order local data\\ $\tau\in J^1_S\Pi(S,M)$} \\ 
\\
 \txt{coisotropic embedding\\ $(\G_S,\omega_S)\hookrightarrow (\G,\omega)$}  & 
\txt{A Poisson structure $\pi\in \Pi(M,S)$\\ with $\tau=J^1_S\pi$ } 
 \\ 
\end{tabular}
\end{center}
\medskip

\noindent The first two entries in this dictionary are well-known (see, e.g., \cite{Weinstein87,CrFe04}), so let us discuss briefly the other entries. Given a symplectic groupoid $(\G,\omega)\tto M$ integrating a Poisson manifold $(M,\pi)$, its restriction to a closed, embedded Poisson submanifold $S\subset M$ yields a coisotropic groupoid embedding 
\[ (\G|_S,\omega|_S)\hookrightarrow (\G,\omega). \]
The restriction $(\G|_S,\omega|_S)$ is an example of an  \emph{over-symplectic groupoid} in the sense of \cite{BCWZ04} (see Definition \ref{def:over-symplectic}).
The {\bf groupoid coisotropic embedding problem} asks:
\begin{itemize}
\item Given an over-symplectic groupoid $(\G_S,\omega_S)$, is there a groupoid coisotropic embedding $i:(\G_S,\omega_S)\hookrightarrow (\G,\omega)$ into some symplectic groupoid?
\item Given two groupoid coisotropic embeddings $i_k:(\G_S,\omega_S)\hookrightarrow (\G_k,\omega_k)$, $k=1,2$, are they locally isomorphic around $\G_S$, i.e., is there a local symplectic groupoid isomorphism $\Phi:(\G_1,\omega_1)\to (\G_2,\omega_2)$ such that $\Phi\circ i_1=i_2$?
\end{itemize}

We will obtain answers to these questions by applying a multiplicative version of the classical Gotay coisotropic embedding theorem \cite{Gotay82}. For that, given an over-symplectic groupoid $(\G_S,\omega_S)$, the kernel of $\omega_S$ yields a vector bundle (VB) subgroupoid (in the sense of \cite{Mackenzie05} or \cite{BCdH16}) of the tangent groupoid
\[ \ker\omega_S\subset T\G_S. \]
\begin{definitionM}
An over-symplectic groupoid $(\G_S,\omega_S)$ is called \textbf{partially split} if there exists a VB subgroupoid $E\subset T\G_S$ complementary to the kernel of $\omega_S$, 
\[ T\G_S=\ker\omega_S\oplus E. \]
\end{definitionM}

The distribution $E$ in this definition is an example of a \emph{multiplicative Ehresmann connection}, introduced in \cite{FM22}. The results of \cite{FM22} imply, for example, that over-symplectic groupoids that are either transitive or proper are partially split. The partially split condition for first order jets of Poisson structures, mentioned before, is the infinitesimal version of this condition under the aforementioned dictionary.

Given a partially split over-symplectic groupoid $(\G_S,\omega_S)$ we construct a local model as follows. On the one hand, the groupoid $\G_S\tto S$ acts on the bundle $\ka=(\ker\omega_S)|_S\to S$, and on its dual, and builds the action groupoid
\[ \G_S\ltimes\ka^*\tto \ka^*. \]
On the other hand, we will see that a partial splitting can be encoded by a \emph{multiplicative connection 1-form}  $\al\in\Omega^1(\G_S;\ka)$ (see Section \ref{sec:grpd}). This allows us to define on the action groupoid the closed, multiplicative, 2-form
\[ \omega_0:=\pr_{\G_S}^*\omega_S+\d\langle\al,\cdot\rangle. \]
Then $\omega_0$ is non-degenerate on an open subgroupoid $\G_0\subset \G_S\ltimes\ka ^*$ which contains $\G_S\simeq \G_S\ltimes 0_S$. So $(\G_0,\omega_0)$ is a symplectic groupoid, and we obtain the next result (see Proposition \ref{prop:partially:split:grpd:optimal}). 

\begin{theoremM}[Existence of groupoid coisotropic embeddings]
\label{thm:main:grpd:embedding}
Given a partially split over-symplectic groupoid $(\G_S,\omega_S)$ the zero section $i_0:(\G_S,\omega_S)\hookrightarrow  (\G_0,\omega_0)$ is a coisotropic embedding.
\end{theoremM}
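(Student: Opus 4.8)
My plan is to take the symplectic-groupoid structure of $(\G_0,\omega_0)$ as already established by the discussion preceding the statement — $\omega_0$ is closed because $\omega_S$ is, it is multiplicative because $\pr_{\G_S}$ is a groupoid morphism and $\langle\alpha,\cdot\rangle$ is the canonical multiplicative $1$-form on the action groupoid determined by $\alpha$, and it is non-degenerate on the open subgroupoid $\G_0$ — and then to verify the three defining properties of a coisotropic embedding of groupoids: (i) $i_0$ is an embedding of Lie groupoids with image in $\G_0$; (ii) $i_0^*\omega_0=\omega_S$; and (iii) $i_0(\G_S)$ is a coisotropic submanifold of $(\G_0,\omega_0)$.

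Parts (i) and (ii) are essentially formal. The zero section $0_S\subset\ka^*$ is preserved by the $\G_S$-action, so $i_0$ identifies $\G_S$ with the embedded Lie subgroupoid $\G_S\ltimes 0_S$ of $\G_S\ltimes\ka^*$, and this subgroupoid lies in $\G_0$ by hypothesis; hence $i_0\colon\G_S\hookrightarrow\G_0$ is an embedding of Lie groupoids. For the $2$-forms, $\pr_{\G_S}\circ i_0=\id_{\G_S}$ gives $i_0^*\pr_{\G_S}^*\omega_S=\omega_S$, while $\langle\alpha,\cdot\rangle$ is semibasic over $\pr_{\G_S}$ and is obtained by pairing $\alpha$ with the tautological $\ka^*$-valued function, which vanishes along $0_S$; thus $i_0^*\langle\alpha,\cdot\rangle=0$, so $i_0^*\d\langle\alpha,\cdot\rangle=\d\,i_0^*\langle\alpha,\cdot\rangle=0$, and therefore $i_0^*\omega_0=\omega_S$.

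For (iii) I would invoke the elementary criterion that a submanifold $C\subset(P,\omega)$ along which $i_C^*\omega$ has locally constant rank is coisotropic if and only if $\operatorname{rank}\ker(i_C^*\omega)=\operatorname{codim}(C\subset P)$: indeed $\ker(i_C^*\omega)_p=T_pC\cap T_pC^{\omega}$, while $\dim T_pC^{\omega}=\operatorname{codim}C$ for every $p$ because $\omega$ is non-degenerate, and equality of these two dimensions is equivalent to $T_pC^{\omega}\subseteq T_pC$. Applying this with $C=i_0(\G_S)$ and $P=\G_0$, part (ii) gives $i_C^*\omega_0=\omega_S$, whose kernel is the subbundle $\ker\omega_S\subset T\G_S$ of rank $\operatorname{rank}\ka$ (since $\ka=(\ker\omega_S)|_S$); on the other hand $\G_S\ltimes 0_S$ has codimension $\operatorname{rank}\ka^*=\operatorname{rank}\ka$ in $\G_S\ltimes\ka^*$, and hence also in the open subgroupoid $\G_0$. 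The two numbers agree, so $i_0(\G_S)$ is coisotropic and $i_0$ is a coisotropic embedding.

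Because the symplectic-groupoid structure of $(\G_0,\omega_0)$ is supplied beforehand, the argument is mostly bookkeeping, and the one point that needs care is the rank count in (iii): it relies on $\ker\omega_S$ being a genuine constant-rank vector subbundle of $T\G_S$, so that it has the same rank as its restriction $\ka$ to the units and that this common rank equals the codimension of the zero section in $\G_0$. If one instead wanted to prove the symplectic-groupoid structure from scratch, the substantive step would be non-degeneracy of $\omega_0$ near $0_S$: I would evaluate $\omega_0$ at a point of $0_S$ in the splitting $T\G_S\oplus\ka^*$, observe that $\langle\alpha,\cdot\rangle$ being semibasic together with $\xi=0$ on $0_S$ kills all blocks except $\omega_S$ restricted to the complement $E_g=\ker\alpha_g$ — which is non-degenerate because $T_g\G_S=\ker(\omega_S)_g\oplus E_g$ — and the perfect pairing of $\ka^*$ with $\ker(\omega_S)_g$ induced by $\alpha_g$; a $2\times2$ block argument then yields non-degeneracy on $0_S$, which persists on an open subgroupoid around it.
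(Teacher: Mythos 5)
Your proposal is correct and is essentially the direct verification the paper leaves to the reader: in the proof of Proposition \ref{prop:local:model:grpd} the coisotropy of the zero section is asserted to be checkable ``directly using $\al(\xi^L)=\xi$ and the over-symplectic condition,'' and your pullback computation $i_0^*\omega_0=\omega_S$, the rank count (legitimate since $\omega_S$ has constant rank by Proposition \ref{prop:over:symplectic}, so $\operatorname{rank}\ker\omega_S=\operatorname{rank}\ka=\operatorname{codim}(\G_S\ltimes 0_S)$), and the block argument for non-degeneracy along $0_S$ supply exactly those omitted details. The paper also mentions Gotay's coisotropic embedding theorem as an alternative route, which your argument does not need.
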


We will show that the partially split condition is in fact equivalent to the existence of a multiplicative symplectic structure $\omega_0$ on a neighborhood of $\G_S$ in the groupoid local model $\G_S\ltimes \ka^*\tto \ka^*$ for which $i_0$ is a coisotropic embedding. Moreover, we will show that any two such symplectic structures are related by a groupoid isomorphism, defined around $\G_S$.

The specific Lie groupoid $\G_S\ltimes \ka^*\tto \ka^*$ appearing in the local model coincides with the local model around invariant submanifolds for general Lie groupoids \cite{dHFe18}. Namely, if $(\G_S,\omega_S)$ is embedded as a coisotropic subgroupoid of the symplectic groupoid $(\G,\omega)$, then we have a canonical isomorphism of groupoids \[\nu(\G_S)\tto \nu(S)\quad \simeq\quad  \G_S\ltimes \ka^*\tto \ka^*,\]
where $\nu(\G_S)\tto \nu(S)$ is the normal bundle of $\G_S$ in $\G$, which plays the role of the linear approximation of $\G$ around $\G_S$.

In the proper case, we obtain a normal form result for groupoid coisotropic embeddings, which is the global counterpart of Theorem \ref{thm:main:two} (see Corollary \ref{corollary:linearization:proper:Hausdorff} and the comments following it).

\begin{theoremM}[Normal form]
\label{thm:main:grpd:normal:form}
Let $(\G_S,\omega_S)$ be a target-proper, Hausdorff, over-sym\-plec\-tic groupoid with connected target fibers. Any groupoid coisotropic embedding into a Hausdorff symplectic groupoid $i:(\G_S,\omega_S)\hookrightarrow (\G,\omega)$ is isomorphic around $\G_S$ to the coisotropic groupoid embedding $i_0:(\G_S,\omega_S)\hookrightarrow  (\G_0,\omega_0)$ from Theorem \ref{thm:main:grpd:embedding}.
\end{theoremM}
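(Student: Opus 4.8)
The plan is to reduce the statement to the infinitesimal normal form theorem (Theorem \ref{thm:main:two}, equivalently the Rigidity Theorem of \cite{Marcut14}) by the global $\leftrightarrow$ infinitesimal dictionary, using the fact that a target-proper, source-connected Hausdorff over-symplectic groupoid is partially split (by the results of \cite{FM22}, since target-properness gives a multiplicative Ehresmann connection), so that the local model $(\G_0,\omega_0)$ of Theorem \ref{thm:main:grpd:embedding} exists. First I would pass to differentiation: the coisotropic embedding $i:(\G_S,\omega_S)\hookrightarrow(\G,\omega)$ differentiates to a jet $\tau=J^1_S\pi\in J^1_S\Pi(M,S)$, where $\pi$ is the Poisson structure on $M$ integrated by $(\G,\omega)$, and the hypotheses on $(\G_S,\omega_S)$ are exactly the conditions that $\tau\in\mathscr{C}_0$: the Lie algebroid $T^*_SM$ is integrated by the target-proper, Hausdorff groupoid $\G_S$, whose target fibers are compact (target-properness plus a manifold over which $S$ sits), and the trivial $H^2$ of target fibers is the de Rham condition in $\mathscr{C}_0$. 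Likewise the local model $(\G_0,\omega_0)$ differentiates to $\sigma(\tau)$, since both are built from the same IM/multiplicative Ehresmann connection data (this compatibility is essentially the content of the dictionary and should be recorded as a lemma: differentiating $\omega_0=\pr_{\G_S}^*\omega_S+\d\langle\al,\cdot\rangle$ on $\G_S\ltimes\ka^*$ yields the Poisson structure $\sigma(J^1_S\pi)$ on $\ka^*\supset S$, because the action groupoid $\G_S\ltimes\ka^*$ differentiates to the action algebroid $T^*_SM\ltimes\ka^*$ and the multiplicative connection 1-form $\al$ differentiates to the IM Ehresmann connection used in Theorem \ref{thm:main:one}).

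Having matched the two sides infinitesimally, I would invoke Theorem \ref{thm:main:two} to obtain a Poisson diffeomorphism $\varphi:(U,\pi)\to(U_0,\sigma(\tau))$, defined on neighborhoods of $S$ and fixing $S$. The task is then to integrate $\varphi$ to a symplectic groupoid isomorphism $\Phi:(\G,\omega)\to(\G_0,\omega_0)$ defined near $\G_S$ with $\Phi\circ i = i_0$. Here I would use that $\G_S$ is target-proper and source-connected, so by the integrability/rigidity machinery (Crainic--Fernandes \cite{CrFe11}, or the fast-convergence argument of \cite{Marcut14} as used for Theorem \ref{thm:main:two}) the groupoid $\G$ restricted to a neighborhood of $S$ in $M$ is the (unique, up to isomorphism near $\G_S$) integration of $\pi|_U$ with the given restriction $\G_S$ over $S$; the same holds for $\G_0$ near $S$ inside $\ka^*$. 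The Poisson diffeomorphism $\varphi$ then induces an isomorphism of the corresponding (local) Weinstein groupoids $\Pi_1(U,\pi|_U)\cong\Pi_1(U_0,\sigma(\tau))$; since both $\G$ near $\G_S$ and $\G_0$ near $\G_S$ are quotients of these fundamental groupoids with the \emph{same} restriction $\G_S$ over $S$ (target-properness pins down the isotropy/discrete data along $S$), one gets the desired $\Phi$, and $\Phi$ intertwines the canonical multiplicative symplectic forms because $\varphi$ intertwines the Poisson structures and the symplectic form on a (local) Weinstein groupoid is canonically determined by the Poisson structure on the base. One must then arrange $\Phi\circ i=i_0$ on the nose: since both $i$ and $i_0$ are the canonical inclusions of $\G_S$ as the restriction over $S$ and $\varphi|_S=\id_S$, the induced $\Phi$ automatically restricts to the identity on $\G_S$; a small neighborhood shrinking argument finishes.

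The main obstacle I expect is the globalization step — upgrading the base-level Poisson isomorphism $\varphi$ to a groupoid-level symplectic isomorphism near $\G_S$ — because $\G$ is only assumed to be an \emph{arbitrary} Hausdorff symplectic integration of $\pi$ (not the source-simply-connected one), so one cannot directly apply functoriality of the Weinstein groupoid. The resolution should be: target-properness of $\G_S$ forces the target fibers of $\G$ to be compact near $S$ (a Reeb-stability-type argument: proper groupoids have the leaves/orbits near $S$ compact with finite holonomy, so the symplectic leaves near $S$ are compact), hence by \cite{CrFe11} the restriction $\G|_U$ is determined up to isomorphism near $\G_S$ by $\pi|_U$ together with the discrete isotropy data, and that data is carried by $\G_S$ itself and therefore preserved by $\varphi$; this is precisely the mechanism already exploited in the proof of Theorem \ref{thm:main:two} and in \cite{Marcut14}, and Corollary \ref{cor:main} (integrability of $\pi$ near $S$) is a byproduct. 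I would also need to check that the two partial splittings — the one used to build $(\G_0,\omega_0)$ and the one inherited from $i$ — can be taken to correspond under $\varphi$, but this follows from the uniqueness-up-to-isomorphism statement already quoted after Theorem \ref{thm:main:grpd:embedding}, so it does not cause genuine difficulty.
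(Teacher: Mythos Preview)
Your approach has a genuine gap: Theorem \ref{thm:main:grpd:normal:form} does \emph{not} assume that the target fibers of $\G_S$ have trivial second de Rham cohomology, so you cannot invoke Theorem \ref{thm:main:two} (whose class $\mathscr{C}_0$ requires precisely this condition, and which moreover asks for a \emph{compact} integration rather than merely a target-proper one). Your claim that ``the hypotheses on $(\G_S,\omega_S)$ are exactly the conditions that $\tau\in\mathscr{C}_0$'' is simply false; the groupoid-level hypothesis is strictly weaker than what the infinitesimal normal form needs.

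The paper's proof is entirely different and stays at the groupoid level, never passing to the base Poisson manifold. The argument is: (i) a Reeb-stability-type lemma for submersions with connected fibers shows that, since $\G_S=\G|_S$ has compact target fibers, the ambient groupoid $\G$ is target-proper over an open saturated neighborhood of $S$; (ii) the linearization theorem for proper Lie groupoids \cite{dHFe18} then gives that $\G$ is linearizable \emph{as a Lie groupoid} around $S$, i.e., isomorphic near $\G_S$ to $\G_S\ltimes\ka^*$; (iii) a multiplicative Moser argument (Theorem \ref{thm:normal:form:groupoid} via Proposition \ref{prop:uniqueness:grpd}) upgrades this Lie groupoid linearization to a \emph{symplectic} groupoid isomorphism with the local model $(\G_0,\omega_0)$. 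No cohomological condition on the target fibers enters, and Theorem \ref{thm:main:two} is not used --- in fact, in the paper the logical dependence runs the other way.

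Even under the extra $H^2$ hypothesis, your globalization step --- lifting a base Poisson diffeomorphism to a symplectic groupoid isomorphism for an arbitrary (not source-1-connected) Hausdorff integration $\G$ --- is not straightforward, and the Reeb-stability fix you sketch is essentially step (i) above; at that point the multiplicative Moser argument finishes directly, making the detour through the base unnecessary.
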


Finally, as a consequence of this result, we have a groupoid coisotropic neighborhood equivalence theorem, which is the global counterpart of Corollary \ref{thm:main:three}.

\begin{corollaryM}[Neighborhood equivalence]
\label{thm:main:grpd:equivalence}
Let $(\G_S,\omega_S)$ be a target-proper, Hausdorff, over-sym\-plec\-tic groupoid with connected target fibers. Any two groupoid coisotropic embeddings into Hausdorff symplectic groupoids $i_k:(\G_S,\omega_S)\hookrightarrow (\G_k,\omega_k)$, $k=0,1$, are locally isomorphic around $\G_S$.
\end{corollaryM}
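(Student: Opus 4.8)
The plan is to deduce Corollary~\ref{thm:main:grpd:equivalence} directly from the Normal form Theorem~\ref{thm:main:grpd:normal:form} by a transitivity-of-isomorphism argument. First I would observe that the local model $(\G_0,\omega_0)$ from Theorem~\ref{thm:main:grpd:embedding} is \emph{Hausdorff}: this follows because it is an open subgroupoid of the action groupoid $\G_S\ltimes\ka^*\tto\ka^*$, which is Hausdorff whenever $\G_S$ is, and by hypothesis $\G_S$ is Hausdorff. One must also check that $(\G_0,\omega_0)$ remains target-proper and has connected target fibers on a suitable neighborhood of the zero section $\G_S\simeq\G_S\ltimes 0_S$; target-properness is inherited from $\G_S$ after possibly shrinking to a saturated neighborhood of $\G_S$ in $\ka^*$ (using that $\G_S$ is target-proper and the zero section is closed and invariant), and connectedness of target fibers passes to the local model since the target fibers of $\G_S\ltimes\ka^*$ are, fiberwise over $\ka^*$, identified with those of $\G_S$.

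Next I would apply Theorem~\ref{thm:main:grpd:normal:form} to each of the two given embeddings $i_k:(\G_S,\omega_S)\hookrightarrow(\G_k,\omega_k)$, $k=0,1$. This yields, for each $k$, a local symplectic groupoid isomorphism $\Phi_k:(\G_k,\omega_k)\to(\G_0,\omega_0)$ defined on a neighborhood of $\G_S$ and satisfying $\Phi_k\circ i_k=i_0$. (Here one should be slightly careful: the statement of Theorem~\ref{thm:main:grpd:normal:form} is phrased as "isomorphic around $\G_S$"; I would spell out that this means exactly the existence of such a $\Phi_k$ intertwining the embeddings, which is the content established in Corollary~\ref{corollary:linearization:proper:Hausdorff}.) Then the composition $\Psi:=\Phi_1^{-1}\circ\Phi_0:(\G_0,\omega_0)\to(\G_1,\omega_1)$, restricted to a small enough neighborhood of $\G_S$ on which both $\Phi_0$ and $\Phi_1^{-1}$ are defined, is a local symplectic groupoid isomorphism with $\Psi\circ i_0=\Phi_1^{-1}\circ\Phi_0\circ i_0=\Phi_1^{-1}\circ i_1\cdot\big|$, wait --- more precisely $\Psi\circ i_0 = \Phi_1^{-1}\circ(\Phi_0\circ i_0)=\Phi_1^{-1}\circ i_0$, and since $\Phi_1\circ i_1=i_0$ gives $\Phi_1^{-1}\circ i_0=i_1$, we conclude $\Psi\circ i_0=i_1$. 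Thus $i_0$ and $i_1$ are locally isomorphic around $\G_S$, and composing once more with the isomorphism between $i_0$ and $i_1$ established pairwise (or simply taking $\Psi^{-1}$) handles the case $k=0,1$ symmetrically.

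The only genuinely nontrivial point — and the step I expect to require the most care rather than any deep new idea — is verifying that the hypotheses of Theorem~\ref{thm:main:grpd:normal:form} (Hausdorff, target-proper, connected target fibers) are actually met by the local model $(\G_0,\omega_0)$, so that the theorem can be bootstrapped against itself. Target-properness in particular is not automatic on all of $\G_S\ltimes\ka^*$ and needs the restriction to a saturated tube around $\G_S$; one invokes that properness of a groupoid action near a fixed (invariant) closed subset propagates to an invariant neighborhood, which is exactly the kind of statement underlying the local model of \cite{dHFe18} cited in the text. Once this is in place, the corollary is a purely formal consequence of Theorem~\ref{thm:main:grpd:normal:form} and the groupoid of local symplectic groupoid isomorphisms around $\G_S$ being closed under composition and inversion.
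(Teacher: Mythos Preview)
Your core argument in the middle paragraph is correct and is exactly the paper's (implicit) proof: apply Theorem~\ref{thm:main:grpd:normal:form} to each embedding $i_k$ to identify it with the local model embedding, then compose. The paper states the corollary simply ``as a consequence of this result'' and gives no further argument.

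However, your first and last paragraphs introduce an unnecessary detour. You do \emph{not} need to verify that the local model $(\G_0,\omega_0)$ is Hausdorff, target-proper, or has connected target fibers, and there is no ``bootstrap''. Read the hypotheses of Theorem~\ref{thm:main:grpd:normal:form} again: they concern only $(\G_S,\omega_S)$ (target-proper, Hausdorff, connected target fibers) and the ambient symplectic groupoid $(\G,\omega)$ (Hausdorff). Both are granted directly by the hypotheses of Corollary~\ref{thm:main:grpd:equivalence} for each $k$. You never apply Theorem~\ref{thm:main:grpd:normal:form} to the local model itself; the local model appears only as the common \emph{target} of the two isomorphisms $\Phi_0,\Phi_1$ produced by the theorem. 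Its existence is already part of the conclusion of Theorem~\ref{thm:main:grpd:normal:form} (properness of $\G_S$ guarantees partial splitness, so the local model exists). Drop the verification steps and the proof becomes the two-line argument it should be. Also, be careful with notation: in the corollary $(\G_0,\omega_0)$ denotes one of the two given ambient groupoids, while in Theorem~\ref{thm:main:grpd:embedding} the same symbols denote the local model; your write-up conflates them.
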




\noindent\textbf{Organization of the paper}. We start by introducing, in Section \ref{section:jets}, the appropriate first order data of a Poisson structure along a Poisson submanifold using the language of jets. In Section \ref{sec:oversymplectic}, we recall some results about over-symplectic groupoids and explain the relation to first order jets. 
In Section \ref{sec:grpd}, we work on the global side and consider the groupoid coisotropic embedding problem, proving Theorems \ref{thm:main:grpd:embedding} and \ref{thm:main:grpd:normal:form}. 
We turn to the infinitesimal side in Section \ref{sec:local:model}, where we introduce the first order local model and prove Theorem \ref{thm:main:one}. In Section \ref{sec:linearization:algbrds} we discuss linearization and we prove the local normal form (Theorem \ref{thm:main:two}). 
Sections \ref{sec:examples:over:sympl}, \ref{sec:examples:groupoid:local:models} and \ref{sec:examples} present many examples, are independent of the rest of the text. In Appendix \ref{appendix}, we discuss the Cartan calculus of multiplicative forms and infinitesimal multiplicative forms, and we prove the multiplicative and IM versions of Moser's theorem. Appendices \ref{sec:Dirac} and \ref{sec:coregular} contain extensions of our results to the Dirac setting and to coregular submanifolds, respectively. 

\medskip
\noindent\emph{Acknowledgments}.  
We would like to thank Olivier Brahic, Alejandro Cabrera, Marius Crainic, Matias del Hoyo, Pedro Frejlich, Stephane Geudens and Florian Zeiser, for various comments and suggestions while this work was completed.

\section{Jets of Poisson structures}
\label{section:jets}
Let $(M,\pi)$ be a Poisson manifold and $S\subset M$ a Poisson submanifold. We will always assume that $S$ is embedded and closed in $M$. Since we are interested only in the local picture around $S$, the second assumption is not very restrictive, as it always holds in a tubular neighborhood of $S$.

In describing the Poisson geometry of $(M,\pi)$ around $S$ the first order jet of $\pi$ \emph{along} $S$ plays a crucial role. The space of multivector fields tangent to $S$ is closed under the Schouten bracket, and will be denoted by
\[ \X^\bullet_S(M):=\{\vartheta\in\X^\bullet(M):\vartheta|_S\in\Gamma(\wedge^\bullet TS)\}.\]
The space of Poisson structures tangent to $S$, denoted by
\[ \Pi(M,S):=\{\pi\in \X^2_S(M):[\pi,\pi]=0 \},\]
consists precisely of the Poisson structures on $M$ for which $S$ is a Poisson submanifold. We have the obvious restriction map to Poisson structures on $S$
\[ \Pi(M,S) \to \Pi(S), \quad \pi\mapsto \pi_S:=\pi|_S. \]

On the other hand, let $I_S\subset C^{\infty}(M)$ denote the vanishing ideal of $S$. For each $k\geq 1$, $I^{k}_S\cdot \X^{\bullet}(M)$ is also a Lie ideal in $\X^{\bullet}_S(M)$. In particular the quotient
\[ J^1_S\X^{\bullet}_S(M):= \X^{\bullet}_S(M)/(I^{2}_S\cdot \X^{\bullet}(M)) \]
inherits the structure of a graded Lie algebra. 

\begin{definition}
An element of the space
\[ J^1_S\Pi(M,S):=\big\{\tau\in  J^1_S\X^{2}_S(M)\, :\, [\tau,\tau]=0\big\}, \]
is called a {\bf first order jet of a Poisson structure} at $S$. 
\end{definition}

We will give in later a more practical definition (see Remark \ref{rem:def:1st:order:jets}). For now, a first order jet of a Poisson structure at $S$ is a class of a bivector field tangent to $S$, modulo bivector fields that vanish to second order along $S$, and which satisfies the equation of being a Poisson structure up to second order
\[ J^1_S\Pi(M,S)=\frac{\{\pi\in \X^2_S(M): [\pi,\pi] \in I^2_S\cdot \X^3(M) \} }{I^2_S\cdot \X^2(M)}.\]

Notice that we have a commutative diagram
\begin{equation}
\label{eq:diagram:jets}
\vcenter{\xymatrixrowsep{0.4cm} \xymatrixcolsep{0.2cm}
\xymatrix{\Pi(M,S)\ar[rr]^{J^1_S}\ar[dr]&& J^1_S\Pi(M,S)\ar[dl]\\
&\Pi(S)&}}
\end{equation}

Although one can express any first order jet $\tau\in J^1_S\Pi(M,S)$ in the form $\tau=J^1_S\pi$ for some bivector field $\pi\in\X^2_S(M)$, in general, one may not be able to choose $\pi$ to be a Poisson structure. In other words, the map $J^1_S$ in the previous diagram, in general, is not surjective. This is illustrated in the next example.

\begin{example}[\cite{Marcut12}]
\label{ex:non:holonomic}
Let $M=\R^3$ and $S=\{(x,y,0): x,y\in \R\}\subset \R^3$. The bivector field $\pi\in\X_S^2(M)$ given by
\[ \pi= z \pd{x}\wedge\pd{y}+xz \pd{x}\wedge\pd{z}\]
satisfies
\[ [\pi,\pi]=2z^2 \pd{x}\wedge\pd{y}\wedge\pd{z}\in I_S^2\cdot \X^3(M). \]
It follows that $\tau=J^1_S(\pi)\in J^1_S\Pi(M,S)$, with induced Poisson structure $\pi_S=0$.

Any bivector field $\tilde{\pi}\in\X^2_S(M)$ such that $J^1_S(\tilde{\pi})=\tau$ takes the form
\[ \tilde{\pi}= (z+z^2f_1) \pd{x}\wedge\pd{y}+(xz+z^2f_2) \pd{x}\wedge\pd{z}+z^2f_3\pd{y}\wedge\pd{z}, \]
for some smooth functions $f_1,f_2,f_3\in C^\infty(\R^3)$. Next, one finds that
\[ [\tilde{\pi},\tilde{\pi}]=2z^2(1+O(1))\pd{x}\wedge\pd{y}\wedge\pd{z}.\]
Therefore, $[\tilde{\pi},\tilde{\pi}]\neq 0$, and so $\tau$ does not belong to the image of $J^1_S$.
\end{example}

In the rest of this section we give an alternative description of first order jets in terms of infinitesimal multiplicative (IM) closed 2-forms (see Appendix \ref{appendix} for a recollection of definitions and basic facts about IM forms). 

In order to motivate this description, recall that a Poisson manifold $(M,\pi)$ has an associated cotangent Lie algebroid $T^*M\Rightarrow M$ with anchor $\pi^\sharp$ and Lie bracket
\begin{align}\label{formula:Lie:bracket}
[\al,\be]_\pi:&=\Lie_{\pi^\sharp(\al)}\be-\Lie_{\pi^\sharp(\be)}\al-\d \pi(\al,\be).
\end{align}
It is a classical result, first observed in \cite{CDW87}, that one can actually characterize Poisson structures by such Lie algebroid structures. More precisely,  it is proved in \cite{CDW87} that there is a 1-to-1 correspondence between
\[ 
\left\{\txt{\vspace*{0.1cm}\\ Poisson structures\\ $\pi\in\X^2(M)$\\\vspace*{0.1cm} \,}\right\}
\tilde{\longleftrightarrow}
\left\{\txt{Lie algebroids $(T^*M,[\cdot,\cdot],\rho)$ with\\ $\rho:T^*M\to TM$ skew-symmetric\\ $[\Omega^1_\cl(M),\Omega^1_\cl(M)]\subset \Omega^1_\cl(M)$\,} \right\}
\]
Using the language of IM forms, this classical result can be reformulated as stating that there is a 1-to-1 correspondence (see \cite{BCWZ04} and also Corollary \ref{cor:jets:algebroids} below)
\[ 
\Pi(M)\equiv \left\{\txt{Poisson structures\\ $\pi\in\X^2(M)$ \,}\right\}
\tilde{\longleftrightarrow}
\left\{\txt{Lie algebroids $(T^*M,[\cdot,\cdot],\rho)$ with \\ $\id:T^*M\to T^*M$ a closed IM 2-form\,} \right\}
\]
Now the main result of this section is the following  submanifold analogue of this classical result. We denote by $\mu_S:T^*_SM\to T^*S$ the restriction map $\al\mapsto \al|_{TS}$.

\begin{proposition}
\label{prop:1st-order-jets:IMforms}
Given a submanifold $S\subset M$, there is a 1-to-1 correspondence
\[ 
J^1_S\Pi(M,S)=
\left\{\txt{$\tau\in J^1_S\X^2_S(M)$ with \\$[\tau,\tau]=0$ \\ \,}\right\}
\tilde{\longleftrightarrow}
\left\{\txt{Lie algebroids $(T^*_SM,[\cdot,\cdot],\rho)$\\ with $\mu_S:T^*_SM\to T^*S$\\ a closed IM 2-form\,} \right\}.
\]
\end{proposition}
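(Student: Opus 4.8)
The plan is to establish the correspondence by upgrading the classical result of \cite{CDW87} (in its IM reformulation stated just above) from $M$ to the first-order neighborhood of $S$. The basic observation is that a Lie algebroid structure on the restricted bundle $T^*_SM$ is, by its very nature, a first-order object along $S$: the anchor is a bundle map $T^*_SM\to T_SM$ and the bracket is a first-order bidifferential operator on sections, so all the structure is determined by the one-jet of data along $S$. The task is therefore to match, in a canonical way, the algebraic data of such a Lie algebroid with the ``truncated'' bivector data recorded by $\tau\in J^1_S\X^2_S(M)$, and to match the integrability constraints ($[\tau,\tau]=0$ on one side, the Jacobi identity together with $\mu_S$ being a closed IM $2$-form on the other).

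First I would set up the dictionary at the level of representatives. Given $\tau\in J^1_S\Pi(M,S)$, choose a bivector $\pi\in\X^2_S(M)$ with $J^1_S\pi=\tau$; then $\pi^\sharp:T^*M\to TM$ restricts to a bundle map $T^*_SM\to T_SM$ (tangency of $\pi$ to $S$ is exactly what makes the image land in $T_SM$), and formula \eqref{formula:Lie:bracket} defines a bracket on $\Omega^1(M)$ which, because it is first-order in each argument, descends to a bracket on $\Gamma(T^*_SM)$ that depends only on the one-jet of $\pi$ along $S$, hence only on $\tau$. One checks that $\mu_S$ intertwines $\pi^\sharp|_S$ with the anchor $\pi_S^\sharp$ of the induced Poisson structure $\pi_S$ on $S$, and that $\mu_S$ sends closed $1$-forms to closed $1$-forms in the appropriate sense — this is the content of $\mu_S$ being a closed IM $2$-form for the algebroid $T^*_SM$ (using the characterization of IM $2$-forms recalled in the appendix, the defining conditions reduce to the Leibniz-type identities satisfied by \eqref{formula:Lie:bracket}). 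The key algebraic point, which should be checked by a direct computation modulo $I_S^2\cdot\X^\bullet(M)$, is that the Jacobi identity for this bracket is equivalent to $[\pi,\pi]\in I_S^2\cdot\X^3(M)$, i.e.\ to $[\tau,\tau]=0$; this is the well-known fact that $[[\pi,\pi],\cdot]$ controls the Jacobiator of $[\cdot,\cdot]_\pi$, now read off along $S$ to first order.

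Conversely, given a Lie algebroid structure on $T^*_SM$ with $\mu_S$ a closed IM $2$-form, I would reconstruct $\tau$. Skew-symmetry of the anchor $\rho:T^*_SM\to T_SM$ — which is forced by the IM condition exactly as in the classical case, where $\id$ being an IM form forces the anchor to be skew — lets one define a bivector $\tau^\sharp$, i.e.\ a section of $\wedge^2 T_SM\otimes$(jet data), and the closedness/IM conditions guarantee that this bivector, together with the bracket, assembles to a well-defined element of $J^1_S\X^2_S(M)$; the Jacobi identity then gives $[\tau,\tau]=0$ by the same computation run backwards. Checking that the two constructions are mutually inverse is then a matter of unwinding definitions, and naturality (independence of the chosen representative $\pi$, and of auxiliary connections used to split jets) follows because every formula used descends to the quotient by $I_S^2\cdot\X^\bullet(M)$.

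I expect the main obstacle to be bookkeeping rather than conceptual: one must be careful that the bracket \eqref{formula:Lie:bracket}, which a priori involves derivatives of $\pi$ transverse to $S$, genuinely descends to $\Gamma(T^*_SM)=\Gamma(T^*M)/I_S\cdot\Gamma(T^*M)$ and depends only on $\tau$, and dually that the Lie algebroid bracket on $T^*_SM$ carries enough ``transverse'' information to recover $\tau$ and not merely its restriction to $S$. The cleanest way to handle this is to work throughout with the explicit description $J^1_S\X^2_S(M)=\X^2_S(M)/(I_S^2\cdot\X^2(M))$ and verify that each operation respects the ideal $I_S^2\cdot\X^\bullet(M)$; the identification of IM $2$-forms in the appendix, together with the classical $T^*M$-correspondence of \cite{CDW87}, then does the rest, since the proposition is essentially that correspondence ``restricted and truncated along $S$''.
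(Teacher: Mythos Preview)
Your approach is correct in spirit and would work, but it differs from the paper's route. The paper does not argue directly with representatives and the Koszul bracket formula \eqref{formula:Lie:bracket}; instead it passes through the multiderivation formalism of \cite{CrMo08}. Concretely, the paper builds a graded Lie algebra map $i:\X^{\bullet+1}(M)\to\Der^\bullet(T^*M)$ (Proposition~\ref{prop:jets:derivations}), observes it induces an injective bundle map $J^1(\wedge^{d+1}TM)\hookrightarrow\mathcal{D}^d(T^*M)$ whose image is characterized by skew-symmetry of the symbol, and then proves the relative version (Proposition~\ref{prop:jets:derivations:S}) giving $J^1(\wedge^2TM:\wedge^2TS)\hookrightarrow\mathcal{D}^1(T^*_SM)$. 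The correspondence then drops out: degree-one derivations $D$ with $[D,D]=0$ are exactly Lie algebroid structures, and the IM condition on $\mu_S$ is identified with holonomicity of the corresponding jet section (Corollary~\ref{cor:jets:Poisson}). What this buys is a clean separation between two issues you bundle together: first, that \emph{any} section of the jet bundle (holonomic or not) with vanishing self-bracket corresponds to a Lie algebroid with skew anchor condition $\langle\mu_S(\al),\rho(\be)\rangle=-\langle\mu_S(\be),\rho(\al)\rangle$; second, that holonomicity is \emph{equivalent} to the full closed IM condition. Your direct approach is more elementary and avoids the multiderivation machinery, but the converse step you flag as ``bookkeeping'' --- recovering the full transverse first-order data of $\tau$ from the bracket on $\Gamma(T^*_SM)$ --- is exactly what the injectivity of $\tilde{i}_S$ in the paper's Proposition~\ref{prop:jets:derivations:S}(b) handles cleanly, and you would need to reproduce that argument in coordinates. (Minor point: your anchor lands in $TS$, not $T_SM$; tangency of $\pi$ to $S$ is what forces this.)
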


\begin{remark}[Working definition of first order jet of a Poisson structure]
\label{rem:def:1st:order:jets}
This description of first order jets turns out to be much more practical and we will adopt it as our working definition. Hence, we will often denote an element $\tau\in J^1_S\Pi(M,S)$ as a pair $\tau=(T^*_SM,\mu_S)$.
\end{remark}

\begin{example}
\label{ex:non:holonomic:cont}
In Example \ref{ex:non:holonomic}, the first order jet $\tau\in J^1_S\Pi(M,S)$ corresponds to the pair $(T^*_SM,\mu_S)$, where $T^*_SM$ is the Lie algebra bundle (i.e., $\rho=0$)
\[ \R^2\times\R^3\to \R^2 \] 
with Lie bracket given on the basis $e_1=(\d x)|_S$, $e_2=(\d y)|_S$, $e_3=(\d z)|_S$ by:
\[ [e_1,e_2]=e_3,\quad [e_1,e_3]=xe_3,\quad [e_2,e_3]=0, \]
and closed IM 2-form $\mu_S:T^*_SM\to T^*S$ given by:
\begin{equation}
    \label{eq:IM:form:non-split}
    \mu_S(e_1)=\d x,\quad \mu_S(e_2)=\d y,\quad \mu_S(e_3)=0.
\end{equation}
\end{example}

The rest of this section is dedicated to proving the correspondence above as well as relating it with the usual theory of jet bundles. The reader who is willing to accept this identification can skip the remainder of the section altogether. 

\begin{remark}
There is yet another approach to first order jets along a submanifold. It starts with the observation that if $S$ is a Poisson submanifold of $(M,\pi)$ then $I_S$ is a Poisson ideal of $(C^\infty(M),\{\cdot,\cdot\})$. The quotient $C^\infty(M)/I_S$ is a Poisson algebra canonically isomorphic to $(C^\infty(S),\{\cdot,\cdot\}_S)$, and the higher order quotients $C^\infty(M)/I_S^{k+1}$ can be interpreted as $k$th-order jets of $\pi$ at $S$. In fact, given any manifold $M$ with a closed embedded submanifold $S\subset M$, it is not hard to see that there is 1-to-1 correspondence between Poisson algebra structures on $C^\infty(M)/I_S^{2}$ such that $I_S/I_S^2$ is a Lie ideal and first order jets of a Poisson structure at $S$, as defined above (see \cite{MarcutPhD} for a proof, and \cite{Vor20} for further applications). 
\end{remark}

\subsection{Jets of multivector fields and derivations}
\label{subsection:jets:derivations}


We denote by $J^nE\to M$ the $n$-th order jet bundle of a fiber bundle $E\to M$. A section $s\in\Gamma(J^nE)$ is holonomic if it is the $n$th-order jet of a section $f\in\Gamma(E)$, i.e., if $s=J^nf$. We will be interested in first order jets of the bundle $E=\wedge^2 TM$. This is motivated by the following observation. The Lie bracket \eqref{formula:Lie:bracket} of the cotangent algebroid of a Poisson manifold $(M,\pi)$ depends pointwise on the values and the first order partial derivatives of $\pi$, i.e., on the values of the section $J^1\pi \in \Gamma(J^1(\wedge^2 TM))$. Moreover, we will see that the first jet encodes precisely the data of the cotangent Lie algebroid. More generally, we will see which (possibly non-holonomic) sections $\tau\in\Gamma(J^1(\wedge^2TM))$ give rise to Lie algebroids on $T^*M$. 
%
%
For this, we recall briefly the approach to Lie algebroid structures using multiderivations of a vector bundle developed in \cite{CrMo08}. 

A derivation of degree $d$ of a vector bundle $E\to M$ is an $\R$-multilinear, alternating map 
\[ D:\underbrace{\Gamma(E)\times\dots\times \Gamma(E)}_{d+1\text{-times}}\to \Gamma(E), \]
for which there exists a bundle map $\sigma_D:\wedge^{d}E\to TM$, called the symbol, such that
\[ D(s_0,\dots,fs_{d})=f D(s_0,\dots,s_{d})+(\Lie_{\sigma_D(s_0,\dots,s_{d-1})} f) s_{d}, \]
for any $s_0,\dots,s_{d}\in\Gamma(E)$ and $f\in C^\infty(M)$. The space of derivations of degree $d$ is denoted by $\Der^d(E)$. It is the space of sections of a vector bundle $\DD^d E\to M$, which fits into a short exact sequence of vector bundles
\[
\xymatrix{ 0\ar[r] & \wedge^{d+1} E^*\otimes E\ar[r] & \DD^{d}E \ar[r]^---{\sigma_D} &  \wedge^{d} E^*\otimes TM\ar[r] & 0}
\]
There is a graded Lie bracket
\[  [~,~]: \Der^{k}(E)\times \Der^{l}(E) \to \Der^{k+l}(E), \]
and it is proved in \cite{CrMo08} that specifying a Lie algebroid structure on $E\to M$ amounts to giving a degree 1 derivation $D\in\Der^1(E)$ such that $[D,D]=0$. The derivation $D$ defines the Lie bracket by $[\al,\be]:=D(\al,\be)$, 
while the anchor coincides with its symbol: $\rho:=\sigma_D$.
\smallskip

In our case of interest $E=T^*M$, we can generalize formula \eqref{formula:Lie:bracket} and obtain a map from jets of multivector fields to multiderivations.

\begin{proposition}
\label{prop:jets:derivations}
\begin{enumerate}[(a)]\mbox{}
\item We have a map of graded Lie algebras
\[i:(\mathfrak{X}^{d+1}(M),[~,~])\to (\Der^{d}(T^*M),[~,~]),\]
\[i(\vartheta)(\al_0,\ldots,\al_{d})=\d(\vartheta(\al_0,\dots,\al_d))+\sum_{i=0}^d (-1)^{d-i} i_{\vartheta^\sharp(\al_0,\dots,\widehat{\al_i},\dots,\al_d)} \d\al_i,
\]
\item
The map in (a) induces an injective map of vector bundles 
\[\ii: J^1(\wedge^{d+1} TM)\hookrightarrow \DD^d(T^*M),\]
whose image consists of elements $D\in  \DD^d(T^*M)$ with skew-symmetric symbol $\sigma_D\in\wedge^{d+1} TM\subset \wedge^{d}TM\otimes TM$.
\end{enumerate}
\end{proposition}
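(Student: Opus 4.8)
The plan is to verify part (a) by a direct but careful computation, and then deduce part (b) from (a) together with a jet-bundle dimension count.

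For part (a), the first thing to check is that the formula indeed lands in $\Der^d(T^*M)$, i.e. that $i(\vartheta)(\al_0,\dots,\al_d)$ is $\R$-multilinear and alternating in the $\al_i$ and that it satisfies the Leibniz rule with a well-defined symbol. The symbol will be $\sigma_{i(\vartheta)} = \vartheta^\sharp$, viewed as a skew map $\wedge^d T^*M \to TM$; plugging $fs_d$ into the formula and using $\d(f\eta) = f\,\d\eta + \d f\wedge \eta$ and $i_X(\d f\wedge\eta) = (\Lie_X f)\eta - \d f\wedge i_X\eta$ will produce exactly the required Leibniz term $(\Lie_{\vartheta^\sharp(\al_0,\dots,\al_{d-1})} f)\al_d$ after the $\d f\wedge i_X\eta$ pieces cancel against each other. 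The key point is then that $i$ is a morphism of graded Lie brackets, $i([\vartheta_1,\vartheta_2]) = [i(\vartheta_1), i(\vartheta_2)]$. The clean way to do this is to recognize both sides intrinsically: the derivation $i(\vartheta)$ is precisely the interior-product-type operator on $\Omega^\bullet(M)$ obtained from $\vartheta$ via the Koszul formula, so that the bracket of multiderivations corresponds, under the Cartan calculus, to the Schouten bracket of multivector fields — this is the classical computation underlying the Lichnerowicz/Koszul description of $[\vartheta_1,\vartheta_2]$, and one may either cite it or reduce to the generating case where $\vartheta_i$ is a product of vector fields and functions and invoke that $[~,~]$ on $\X^\bullet(M)$ and on $\Der^\bullet(T^*M)$ are the unique extensions of the obvious operations on $\X^0\oplus\X^1$ satisfying the graded Leibniz rule. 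I expect this bracket-compatibility to be the main obstacle: it is the one genuinely non-formal identity, and doing it honestly requires either a good invariant reformulation or a somewhat lengthy bookkeeping of signs.

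For part (b), first observe that $i(\vartheta)$ depends only on the values of $\vartheta$ and its first-order partial derivatives at each point: the formula involves $\vartheta$ (zeroth order, through $\d(\vartheta(\cdots))$ and $\vartheta^\sharp$) and $\d$ applied once, and the closed terms $i_{(\cdots)}\d\al_i$ contribute nothing to the dependence on $\vartheta$. Hence $i$ factors through the first jet, giving a bundle map $\ii : J^1(\wedge^{d+1}TM) \to \DD^d(T^*M)$ covering the identity on $M$, whose symbol is $\sigma_{\ii(j^1\vartheta)} = \vartheta^\sharp \in \wedge^{d+1}TM \subset \wedge^d T^*M\otimes TM$. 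For injectivity, suppose $\ii(\tau) = 0$ at a point. Vanishing of the symbol forces the value $\vartheta|_x = 0$; then the formula reduces to $\d(\vartheta(\al_0,\dots,\al_d))$, and choosing the $\al_i$ to be differentials of coordinate functions shows all first-order partials of $\vartheta$ at $x$ vanish, so $\tau = 0$. Finally, to identify the image as the subbundle of derivations with skew symbol, compare ranks: $J^1(\wedge^{d+1}TM)$ has rank $\binom{\dim M}{d+1}(\dim M + 1)$ fibrewise over a point, which matches the rank of the preimage $\sigma_D^{-1}(\wedge^{d+1}TM)$ inside the short exact sequence $0\to \wedge^{d+1}T^*M\otimes T^*M \to \DD^d T^*M \to \wedge^d T^*M\otimes TM\to 0$ — namely $\binom{\dim M}{d+1} + \binom{\dim M}{d+1}\dim M$ for the first jet versus the $\wedge^{d+1}T^*M\otimes T^*M$ fibre of rank $\binom{\dim M}{d+1}\dim M$ plus the skew symbols of rank $\binom{\dim M}{d+1}$. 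Since $\ii$ is injective and its image lies in the skew-symbol subbundle, equality of ranks gives surjectivity onto it. (Alternatively, one exhibits an explicit splitting: given $D$ with skew symbol $\vartheta^\sharp$, the bivector-type object $\vartheta$ is determined by the symbol, and the first-order data by $D(\al_0,\dots,\al_d) - \d(\vartheta(\al_0,\dots,\al_d))$ evaluated on exact forms, which produces a preimage.)
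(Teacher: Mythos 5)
Your part (b) is fine and in fact more detailed than the paper, which dismisses it as straightforward: the observation that $i(\vartheta)$ depends only on the first jet of $\vartheta$ pointwise, the injectivity argument (vanishing of the symbol kills the value of $\vartheta$, and then evaluating on differentials of coordinates kills the first derivatives), and the fibrewise rank count $\binom{n}{d+1}(n+1)$ identifying the image with the skew-symbol subbundle are all correct. (Minor slip: in the short exact sequence for $\DD^d(T^*M)$ the kernel is $\wedge^{d+1}TM\otimes T^*M$, not $\wedge^{d+1}T^*M\otimes T^*M$; the rank you use is nevertheless the right one.) Your verification that the formula defines a derivation with symbol $\vartheta^\sharp$, with the two $\d f$ terms cancelling, is also correct.

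For the bracket compatibility in (a), which you rightly single out as the only substantial point, your two proposed routes are not on equal footing. The first (identify $i(\vartheta)$ with the Koszul-type operator and invoke the classical Lichnerowicz--Koszul compatibility of the Schouten bracket with graded commutators) is viable and is close in spirit to the paper's argument, though the identification with operators on $\Omega^\bullet(M)$ needs to be made precise. The second route, however, would fail as stated: $\Der^{\bullet}(T^*M)$ carries no wedge product, so its graded commutator admits no ``unique Leibniz extension'' characterization, and $i$ is not a morphism for any product structure, so verifying the identity on decomposable $\vartheta_i$ does not propagate to general multivector fields. The clean substitute, which is what the paper actually does, is to use that a degree-$d$ derivation of $T^*M$ is determined by its values on tuples of exact $1$-forms together with its symbol: since $i(\vartheta)(\d f_0,\dots,\d f_d)=\d(\vartheta(f_0,\dots,f_d))$ sends exact tuples to exact forms, the commutator $[i(\vartheta),i(\tau)]$ evaluated on exact tuples only involves these exact values and hence reduces to the graded-commutator description of the Schouten bracket acting on functions, while the agreement of symbols is read off from the formula for the symbol of a commutator of derivations in \cite{CrMo08}. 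If you replace your second route by this reduction to exact $1$-forms (or commit to the first route with a precise reference), the proof is complete.
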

\begin{proof}
Note that $i(\vartheta)$ is determined by the relations
\[i(\vartheta)(\d f_0,\ldots, \d f_d)=\d (\vartheta(f_0,\ldots, f_d)), \quad \sigma_{i(\vartheta)}(\d f_1,\ldots, \d f_{d})=\vartheta^{\sharp}(\d f_1,\ldots, \d f_d),\]
for all $f_0,\ldots, f_d\in C^{\infty}(M)$.
Both the Schouten bracket and the bracket on derivations can be described as graded commutators. Using these, one obtains the relation $i([\vartheta,\tau])=[i(\vartheta),i(\tau)]$ on exact 1-forms. Using the formula for the symbol of the commutator of derivations given in the proof of \cite[Proposition 1]{CrMo08}, one obtains that $\sigma_{[i(\vartheta),i(\tau)]}=\sigma_{i([\vartheta,\tau])}$. This implies item $(a)$. Item $(b)$ is straightforward. 
\end{proof}

\begin{remark}The derivations with skew-symmetric symbol do not form a subalgebra of all derivations. Therefore, one cannot use the map from item (b) of the proposition to obtain a bracket on first jets. 
\end{remark}
%

This leads to the following interpretation of jets as Lie algebroids:

\begin{corollary}
\label{cor:jets:algebroids}
There is a 1-to-1 correspondence
\[ 
\left\{\txt{$\tau\in\Gamma(J^1(\wedge^2TM))$ with \\$[\ii(\tau),\ii(\tau)]=0$ \\ \,}\right\}
\tilde{\longleftrightarrow}
\left\{\txt{Lie algebroid structures on $T^*M$\\ with skew-symmetric anchor\\ $\rho:T^*M\to TM$\\ \,} \right\}
\]
Under this correspondence, holonomic sections (i.e., Poisson structures) corresponds to Lie algebroid structures for which the identity map $\id:T^*M\to T^*M$ is a closed IM 2-form.
\end{corollary}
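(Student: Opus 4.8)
The plan is to prove Corollary \ref{cor:jets:algebroids} as a direct consequence of Proposition \ref{prop:jets:derivations} together with the multiderivation description of Lie algebroids from \cite{CrMo08}. First I would recall that, by \cite{CrMo08}, a Lie algebroid structure on $T^*M$ is the same as a degree $1$ derivation $D\in\Der^1(T^*M)$ with $[D,D]=0$, with bracket $[\al,\be]=D(\al,\be)$ and anchor $\rho=\sigma_D$. The anchor $\rho:T^*M\to TM$ is skew-symmetric precisely when the symbol $\sigma_D\in\wedge^2 TM\subset TM\otimes TM$. By Proposition \ref{prop:jets:derivations}(b), the map $\ii:J^1(\wedge^2 TM)\hookrightarrow\DD^1(T^*M)$ is an isomorphism onto the subbundle of derivations with skew-symmetric symbol; hence sending $\tau\mapsto \ii(\tau)$ sets up a bijection between sections $\tau\in\Gamma(J^1(\wedge^2 TM))$ and derivations $D\in\Der^1(T^*M)$ with skew anchor. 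Since $\ii$ is a bundle map, $[\ii(\tau),\ii(\tau)]=0$ if and only if the corresponding derivation is integrable, i.e. defines a Lie algebroid. This establishes the first bijection.

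Next I would identify the holonomic sections under this correspondence. A section $\tau$ is holonomic iff $\tau=J^1\pi$ for some $\pi\in\X^2(M)$, and then by Proposition \ref{prop:jets:derivations}(a) the derivation $\ii(J^1\pi)=i(\pi)$ is the derivation whose bracket, read off from the defining relations $i(\pi)(\d f_0,\d f_1)=\d(\pi(f_0,f_1))$ and $\sigma_{i(\pi)}(\d f)=\pi^\sharp(\d f)$, is exactly the Koszul bracket \eqref{formula:Lie:bracket}. Moreover, since $i$ is a map of graded Lie algebras, $[i(\pi),i(\pi)]=i([\pi,\pi])$, so $[\ii(J^1\pi),\ii(J^1\pi)]=0$ holds if and only if $[\pi,\pi]=0$; thus holonomic sections satisfying the integrability condition are exactly the Poisson bivectors, and the associated Lie algebroid is the cotangent algebroid $(T^*M,[\,\cdot\,,\cdot\,]_\pi,\pi^\sharp)$.

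Finally I would verify that, for $\pi\in\Pi(M)$, the identity map $\id:T^*M\to T^*M$ is a closed IM $2$-form on the cotangent algebroid. Using the characterization of IM $2$-forms from Appendix \ref{appendix} (an IM form on a Lie algebroid $A$ is a pair of bundle maps $(\ell,L)$, here $\ell=\id$ and $L=0$, satisfying the compatibility/Leibniz and closedness identities), one checks the defining identities directly against the Koszul bracket and the anchor $\pi^\sharp$: the condition $\langle \ell(\al),\rho(\be)\rangle=-\langle\ell(\be),\rho(\al)\rangle$ becomes the skew-symmetry of $\pi$, namely $\langle\al,\pi^\sharp\be\rangle=-\langle\be,\pi^\sharp\al\rangle$; and the condition $\ell([\al,\be])=\Lie_{\rho(\al)}\ell(\be)-i_{\rho(\be)}\d\ell(\al)$ with $\ell=\id$ is precisely the Koszul formula \eqref{formula:Lie:bracket}. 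Conversely, given a Lie algebroid on $T^*M$ with skew anchor for which $\id$ is a closed IM $2$-form, these same identities force the bracket to be of Koszul type with respect to the bivector $\pi:=\rho$ (regarded as an element of $\wedge^2 TM$ via skew-symmetry of the anchor), and the Jacobi identity for the bracket is then equivalent to $[\pi,\pi]=0$; so one recovers a Poisson structure. This matches, and reproves in the present language, the classical result of \cite{CDW87, BCWZ04}.

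The main obstacle here is essentially bookkeeping rather than a genuine difficulty: one must carefully match the abstract IM-form identities of Appendix \ref{appendix} with the concrete Koszul bracket, being attentive to sign conventions and to the fact (noted in the remark after Proposition \ref{prop:jets:derivations}) that derivations with skew-symmetric symbol do \emph{not} form a subalgebra, so the bijection $\ii$ does not by itself transport brackets — one genuinely needs the graded Lie algebra morphism $i$ of part (a), restricted to holonomic sections, to see that the condition $[\ii(\tau),\ii(\tau)]=0$ is the right one and that it specializes to $[\pi,\pi]=0$ in the holonomic case. Everything else is a direct translation.
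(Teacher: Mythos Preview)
Your proof is correct and follows essentially the same approach as the paper. The first part is identical; for the second part the paper argues slightly more compactly: starting from any $\tau$ with $[\ii(\tau),\ii(\tau)]=0$, the IM condition for $\id$ reads $[\al,\be]_\tau=\Lie_{\pi^\sharp(\al)}\be-i_{\pi^\sharp(\be)}\d\al$ (where $\pi$ is the bivector covered by $\tau$), which is equivalent to $\ii(\tau)=i(\pi)=\ii(J^1\pi)$, hence to $\tau=J^1\pi$ by injectivity of $\ii$ --- getting both directions at once rather than checking them separately as you do.

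Two small remarks on presentation: your notation $(\ell,L)$ is the one the paper reserves for IM forms \emph{with coefficients}; for a plain closed IM $2$-form the paper's convention is $(\mu,\zeta)$ with $\zeta=0$ and $\mu=\id$. Also, the phrase ``since $\ii$ is a bundle map, $[\ii(\tau),\ii(\tau)]=0$ iff the derivation is integrable'' is tautological --- $\ii(\tau)$ \emph{is} the derivation, so there is nothing to transport here; the subtlety you flag about skew-symbol derivations not forming a subalgebra only matters if one wanted a bracket on jets themselves, which the statement does not require.
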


\begin{proof}
The first part of the corollary follows from the proposition and the results in \cite{CrMo08}. For the second part, consider a jet $\tau$ satisfying $[\ii(\tau),\ii(\tau)]=0$. The anchor of the corresponding Lie algebroid bracket $[~,~]_{\tau}$ is given by $\rho=\pi^{\sharp}$, where $\pi\in\Gamma(\wedge^2TM)$ is the bivector field covered by $\tau$. By
\eqref{eq:mult:form} the identity map $\id:T^*M\to T^*M$ is a closed IM form if and only if 
\[[\al,\be]_{\tau}=\Lie_{\pi^{\sharp}(\al)}\mu(\be)-i_{\pi^{\sharp}(\be)}\d \mu(\al),\]
which is equivalent to $\ii(\tau)=i(\pi)=\ii(J^1\pi)$. Since $\ii$ is injective, the conclusion follows. 
\end{proof}


\subsection{Jets along a submanifold}

%

We can describe first order jets of Poisson structures as sections of a certain jet bundle satisfying a PDE. Namely, the restriction $J^1(\wedge^\bullet TM)|_S\to S$ has the subbundle of jets tangent to $S$, denoted by
\[ J^1(\wedge^\bullet TM: \wedge^{\bullet} TS):=\{\tau\in J^1(\wedge^\bullet TM)|_S: \pr_{\wedge^\bullet T_SM}(\tau)\in\wedge^\bullet TS\}.\]
This subbundle has an induced projection
%
\[\pr_{J^1(\wedge^{\bullet}TS)}:J^1(\wedge^\bullet TM: \wedge^\bullet TS)\longrightarrow J^1(\wedge^\bullet TS).\]
and we call a section $\tau\in\Gamma(J^1(\wedge^\bullet TM: \wedge^\bullet TS))$ \emph{holonomic} if its image under this projection is holonomic, i.e., if there $\gamma\in \X^{\bullet}(S)$ such that
\[\pr_{J^1(\wedge^{\bullet}TS)}(\tau)=J^1\gamma.\]
Since $S$ is closed and embedded, this is equivalent to the existence of a multivector field $\vartheta\in\X^\bullet_S(M)$ such that $J^1\vartheta|_S=\tau$ (then $\vartheta$ is automatically tangent to $S$, with $\vartheta|_{S}=\gamma$). Note that the section $\vartheta$ is determined only up elements in $I^2(S)\cdot \mathfrak{X}^{\bullet}(M)$. Therefore we have a canonical identification between the set of holonomic sections of $J^1(\wedge^\bullet TM: \wedge^\bullet TS)$ and the quotient $J^1_S\X^{\bullet}_S(M)= \X^{\bullet}_S(M)/(I^{2}_S\cdot \X^{\bullet}(M))$, which we will use with no further notice. 

In general, a Poisson submanifold $S\subset (M,\pi)$ gives rise to a Lie subalgebroid $T^*_SM\subset (T^*M,[~,~]_{\pi},\pi^{\sharp})$ of the tangent bundle. This Lie subalgebroid encodes precisely the first jet data of $J^1_S\pi\in J^1_{S}\Pi(M,S)$. To make this precise, we consider the following. First consider multiderivations of $T^*M$ with symbol tangent to $S$, 
\[\Der^{d}_S(T^*M):=\{D\in \Der^d(T^*M) : \sigma_D(\al_1,\ldots,\al_{d})\in TS,\ \forall\, \al_i\in T^*_SM\}.\]
We have the following relative version of Proposition \ref{prop:jets:derivations}.

\begin{proposition}\label{prop:jets:derivations:S}\mbox{}
\begin{enumerate}[(a)]
\item
$\Der^{\bullet}_S(T^*M)$ is closed under the commutator bracket, and we have a commutative diagram of graded Lie algebra homomorphisms
\[
\xymatrix{
\X^{\bullet+1}_S(M) \ar[r]^{i}\ar[d]^{J^1_S} & \Der^{\bullet}_S(T^*M)\ar[d]^{r_S}\\
J^1_S\X^{\bullet+1}_S(M)\ar[r]^{i_S} & \Der^{\bullet}(T^*_SM),
}
\]
where $i$ is the map from Proposition \ref{prop:jets:derivations} and
where $r_S$ is the operation of restricting a derivation of $T^*M$ tangent to $S$ to a derivation of $T^*_SM$.
\item The map $i_S$ induces an injective vector bundle map
\[ \ii_S: J^1(\wedge^{d +1} TM: \wedge^{d+1} TS)\hookrightarrow \DD^d(T^*_S M), \]
whose image consists of elements $D\in \DD^d(T^*_S M)$whose symbol is a multivector $\sigma_D\in\wedge^d TS\subset \wedge^{d-1}T_S M\otimes TS$.
\end{enumerate}
\end{proposition}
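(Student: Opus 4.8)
The plan is to reduce everything to the absolute (non-relative) case handled in Proposition \ref{prop:jets:derivations}, using the observation that all the structure maps involved are restrictions of the corresponding structure on $T^*M$. For part (a), I would first check that $\Der^\bullet_S(T^*M)$ is closed under the commutator: this is a pointwise, local statement, and it follows because the symbol of a bracket $[D_1,D_2]$ is a universal expression involving the symbols of $D_1,D_2$ and the action of their symbols on sections; since each $\sigma_{D_i}$ takes values in $TS$ along $S$ when evaluated on covectors in $T^*_SM$, and $TS$ is involutive, the symbol of the commutator again has this property. (Concretely, one uses the formula for $\sigma_{[D_1,D_2]}$ recalled from the proof of \cite[Proposition 1]{CrMo08}.) Next, one must verify that a derivation $D\in\Der^d_S(T^*M)$ genuinely restricts to a derivation $r_S(D)\in\Der^d(T^*_SM)$: the point is that the Leibniz rule for $D$ only ever evaluates $\sigma_D$ on the arguments $s_0,\dots,s_{d-1}$ and then differentiates $f$; when the $s_i$ are sections of $T^*M$ that restrict to sections of $T^*_SM$ and $f\in C^\infty(M)$, the term $(\Lie_{\sigma_D(s_0,\dots,s_{d-1})}f)s_d$ depends only on $df|_{TS}$ along $S$ because $\sigma_D(s_0,\dots,s_{d-1})|_S\in TS$; so $D(s_0,\dots,s_d)|_S$ depends only on the restrictions $s_i|_S$, giving a well-defined $r_S(D)$. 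Once $r_S$ is defined it is visibly a graded Lie algebra homomorphism (restriction commutes with the commutator bracket, which again is a local/pointwise assertion). Finally, commutativity of the square $r_S\circ i = i_S\circ J^1_S$ is essentially a definition: one defines $i_S$ on holonomic sections by $i_S(J^1_S\vartheta):=r_S(i(\vartheta))$, and one must only check this is well-defined, i.e.\ that $i(I^2_S\cdot\X^{\bullet+1}(M))$ restricts to zero on $T^*_SM$. This last point is where I expect the only real computation: using the explicit formula for $i(\vartheta)$ from Proposition \ref{prop:jets:derivations}, a bivector (or multivector) of the form $g^2\vartheta'$ with $g\in I_S$ produces a derivation whose value on covectors in $T^*_SM$ involves $d(g^2\vartheta'(\al_0,\dots))$ and contractions $i_{(g^2\vartheta')^\sharp(\dots)}d\al_i$; restricting to $S$, every term either carries a factor of $g|_S=0$ or a factor of $dg^2|_S=2g\,dg|_S=0$, so the restricted derivation vanishes. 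Hence $i_S$ descends to $J^1_S\X^{\bullet+1}_S(M)$ and the diagram commutes.

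For part (b), I would deduce the claim from part (b) of Proposition \ref{prop:jets:derivations}. The bundle $J^1(\wedge^{d+1}TM:\wedge^{d+1}TS)$ sits inside $J^1(\wedge^{d+1}TM)|_S$, and $\ii$ restricts to an injective bundle map into $\DD^d(T^*M)|_S$; composing with the fiberwise restriction map $\DD^d(T^*M)|_S\to\DD^d(T^*_SM)$ gives $\ii_S$. Injectivity of $\ii_S$ is the slightly delicate point, and it follows from the commutativity in (a) together with the fact that a jet tangent to $S$ is determined by knowing both its image in $J^1(\wedge^{d+1}TS)$ (its "$S$-part") and the full jet in directions transverse to $S$; equivalently, one checks on holonomic sections that if $r_S(i(\vartheta)) = 0$ then $J^1_S\vartheta = 0$, and extends to all jets by the fact that both sides are sections of vector bundles and the holonomic sections span the relevant fibers pointwise. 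Concretely, $r_S(i(\vartheta))=0$ forces both $\vartheta|_S=0$ (from the symbol, using part (b) of the absolute statement) and the normal first derivatives of $\vartheta$ to vanish along $S$ (from the leading terms of $i(\vartheta)$ on covectors with a transverse component), which is exactly $J^1_S\vartheta=0$. The characterization of the image as those $D\in\DD^d(T^*_SM)$ whose symbol is a multivector $\sigma_D\in\wedge^d TS\subset\wedge^{d-1}T_SM\otimes TS$ then follows from the analogous characterization in Proposition \ref{prop:jets:derivations}(b): the image of $\ii$ is cut out by skew-symmetry of the symbol, this condition is preserved under the restriction $\DD^d(T^*M)|_S\to\DD^d(T^*_SM)$, and a dimension count (the short exact sequences defining $\DD^d$ and the jet bundle, together with the tangency constraint) shows $\ii_S$ surjects onto the subbundle so defined.

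The main obstacle is the well-definedness of $i_S$ — i.e.\ checking $i(I^2_S\cdot\X^{\bullet+1}(M))\subset\ker r_S$ — and the matching injectivity statement; both require unwinding the explicit formula for $i(\vartheta)$ and tracking which terms survive restriction to $S$. Everything else is formal manipulation with the Crainic–Moerdijk derivation formalism, for which the relevant bracket and symbol formulas are already imported from \cite{CrMo08}.
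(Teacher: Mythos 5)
The paper states Proposition \ref{prop:jets:derivations:S} without proof, as the routine relative analogue of Proposition \ref{prop:jets:derivations}, so the benchmark is the argument you reconstruct. Your part (a) is correct and is the expected one: closure of $\Der^{\bullet}_S(T^*M)$ follows from the symbol formula for the commutator together with the fact that vector fields tangent to $S$ along $S$ are closed under the Lie bracket; the Leibniz rule plus tangency of the symbol shows that $D(s_0,\dots,s_d)|_S$ depends only on the restrictions $s_i|_S$, so $r_S$ is defined and is a bracket homomorphism; and $i_S$ descends to $J^1_S\X^{\bullet+1}_S(M)$ because each term of $i(gh\,\vartheta')(\al_0,\dots,\al_d)$ with $g,h\in I_S$ carries a factor $g$, $h$ or $\d(gh)$, all of which vanish along $S$.

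In part (b) there is a genuine gap: your extension of injectivity from holonomic sections to all of $J^1(\wedge^{d+1}TM:\wedge^{d+1}TS)$ rests on the claim that holonomic sections span the fibers pointwise, and this is false. The subbundle is cut out only by the zeroth-order condition $\pr_{\wedge^{d+1}T_SM}(\tau)\in\wedge^{d+1}TS$, whereas the value at $x$ of a holonomic section $J^1\vartheta|_S$ with $\vartheta\in\X^{d+1}_S(M)$ satisfies, in addition, that the $T_xS$-derivatives of the components of $\vartheta$ containing a normal factor vanish (those components vanish identically along $S$): in adapted coordinates $(x^i,y^a)$ with $S=\{y=0\}$, already for vector fields the holonomic values form the proper subspace $\{b^a=0,\ \partial_{x^j}b^a=0\}$ of the fiber $\{b^a=0\}$. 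So fiberwise injectivity of $\ii_S$ does not follow from injectivity of $i_S$ on holonomic sections by a spanning argument. The statement is nevertheless true, and the fix is pointwise; indeed your own concrete computation supplies it if run at a single point for an arbitrary representative: choose any $\vartheta$ with $J^1_x\vartheta=\tau$ (tangent at $x$ only); if $\ii_S(\tau)=0$, the symbol gives $\vartheta|_x=0$, and then $i(\vartheta)(\al_0,\dots,\al_d)|_x=\d\bigl(\vartheta(\al_0,\dots,\al_d)\bigr)|_x$, evaluated on covectors with arbitrary values at $x$ (every element of $T^*_xM$ is the value of a section of $T^*_SM$), forces all first derivatives of $\vartheta$ at $x$ to vanish, i.e.\ $\tau=0$. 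Even more cheaply: the restriction map you invoke is only defined on the subspace of $\DD^d(T^*M)|_S$ of derivations whose symbol takes values in $TS$ (not on all of $\DD^d(T^*M)|_S$, as written), and on that subspace it is injective, because a derivation at $x$ is determined by its values on tuples of sections and every section of $T^*M$ restricts to a section of $T^*_SM$; injectivity of $\ii_S$ is then immediate from Proposition \ref{prop:jets:derivations}(b). Granting injectivity, your identification of the image by the symbol condition together with a rank count is fine (note that the symbol of $\ii_S(\tau)$ lies in $\wedge^{d+1}TS\subset\wedge^{d}T_SM\otimes TS$, matching the indices of the absolute statement).
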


%
%

We have the following analogue of Corollary \ref{cor:jets:algebroids}. The proof is entirely similar, where one uses instead Proposition \ref{prop:jets:derivations:S}. In particular, this yields the main result of this section.

\begin{corollary}
\label{cor:jets:Poisson}
There is a 1-to-1 correspondence
\[ 
\left\{\txt{$\tau\in\Gamma( J^1(\wedge^{2} TM: \wedge^{2} TS))$ \\with $[\ii_S(\tau),\ii_S(\tau)]=0$ \\ \,}\right\}
\tilde{\longleftrightarrow}
\left\{\txt{Lie algebroid structures on $T^*_SM$\\ with anchor $\rho:T^*_S M\to TS$ s.t.:\\ $\langle \mu_S(\al),\rho(\be) \rangle=-\langle \mu_S(\be),\rho(\al) \rangle$\\  \,} \right\}.
\]
Under this correspondence, holonomic sections (i.e., first order jets of Poisson structures) correspond to Lie algebroids for which $\mu_S$ is a closed IM 2-form
\[ 
J^1_S\Pi(M,S)=
\left\{\txt{$\tau\in J^1_S\X^2_S(M)$ with \\$[\tau,\tau]=0$ \\ \,}\right\}
\tilde{\longleftrightarrow}
\left\{\txt{Lie algebroids $(T^*_SM,[\cdot,\cdot],\rho)$\\ with $\mu_S:T^*_SM\to T^*S$\\ a closed IM 2-form\,} \right\}.
\]
\end{corollary}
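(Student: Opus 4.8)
The plan is to follow the proof of Corollary~\ref{cor:jets:algebroids} line by line, using the relative Proposition~\ref{prop:jets:derivations:S} in place of its absolute counterpart Proposition~\ref{prop:jets:derivations}. For the first displayed correspondence, recall from \cite{CrMo08} that a Lie algebroid structure on $T^*_SM\to S$ is precisely a degree $1$ derivation $D\in\Der^1(T^*_SM)$ with $[D,D]=0$, the bracket being $[\al,\be]:=D(\al,\be)$ and the anchor $\rho:=\sigma_D$. The condition that $\rho$ take values in $TS$ and satisfy $\langle\mu_S(\al),\rho(\be)\rangle=-\langle\mu_S(\be),\rho(\al)\rangle$ is exactly the condition that the symbol be a multivector $\sigma_D\in\wedge^2TS\subset T_SM\otimes TS$; by Proposition~\ref{prop:jets:derivations:S}(b) this holds if and only if $D=\ii_S(\tau)$ for a (unique, by injectivity of $\ii_S$) section $\tau\in\Gamma(J^1(\wedge^2TM:\wedge^2TS))$, and then $[D,D]=0$ becomes $[\ii_S(\tau),\ii_S(\tau)]=0$. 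This yields the first bijection.

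It then remains to single out, inside this bijection, the holonomic sections, i.e.\ the elements of $J^1_S\X^2_S(M)$, as the Lie algebroids for which $\mu_S$ is a closed IM $2$-form. For the easy direction, if $\tau=J^1_S\vartheta$ with $\vartheta\in\X^2_S(M)$, then the commutative square of Proposition~\ref{prop:jets:derivations:S}(a) identifies $\ii_S(\tau)=i_S(J^1_S\vartheta)$ with $r_S(i(\vartheta))$, so the bracket is the restriction to $S$ of the cotangent bracket \eqref{formula:Lie:bracket} of $\vartheta$. Since $\vartheta$ is tangent to $S$, the Lie derivatives, contractions and exterior derivatives occurring in \eqref{formula:Lie:bracket} restrict along $S$ to the corresponding operations on $S$, and a short manipulation with the Cartan calculus of Appendix~\ref{appendix} shows that the defining equations \eqref{eq:mult:form} of a closed IM $2$-form, specialized to $\mu_S$, are then automatically satisfied; this direction does not use $[\vartheta,\vartheta]=0$.

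For the converse, suppose $\tau$ corresponds to a Lie algebroid $\ii_S(\tau)=(T^*_SM,[\cdot,\cdot],\rho)$ with skew-symmetric anchor for which $\mu_S$ is a closed IM $2$-form. From the IM equations I would first deduce that $\ker\mu_S$, the conormal bundle of $S$, is a Lie ideal on which $\rho$ vanishes; hence $T^*S=T^*_SM/\ker\mu_S$ inherits a Lie algebroid structure with induced (skew-symmetric) anchor $\rho'$, and the IM equations descend to say that $\id:T^*S\to T^*S$ is a closed IM $2$-form for it. By the absolute statement --- Corollary~\ref{cor:jets:algebroids} applied over the base $S$ --- this quotient is the cotangent algebroid of a Poisson structure $\pi_S\in\Pi(S)$. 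Since this quotient algebroid is exactly what the tangential projection $\pr_{J^1(\wedge^2TS)}(\tau)$ encodes, one obtains $\pr_{J^1(\wedge^2TS)}(\tau)=J^1\pi_S$, a holonomic section; by the identification of holonomic sections of $J^1(\wedge^\bullet TM:\wedge^\bullet TS)$ with $J^1_S\X^\bullet_S(M)$, the jet $\tau$ is itself holonomic. Finally, since $[\ii_S(\tau),\ii_S(\tau)]=0$, since $\ii_S$ is injective also in degree $2$, and since $i_S$ agrees with $\ii_S$ on holonomic sections, the commutative square of Proposition~\ref{prop:jets:derivations:S}(a) forces $[\tau,\tau]=0$, i.e.\ $\tau\in J^1_S\Pi(M,S)$; this completes the second bijection.

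I expect the delicate step to be this converse. Unlike the absolute case, where $\mu=\id$ is invertible and \eqref{eq:mult:form} reads off the full bracket from the underlying bivector, here $\mu_S$ is only surjective, so the IM equation constrains only the image $\mu_S([\cdot,\cdot])$ of the bracket; one therefore has to take the detour through the quotient Lie algebroid on $T^*S$, recognize it via the absolute result as a cotangent algebroid, and then match it with the tangential part of the jet $\tau$ to conclude holonomicity. The easy direction likewise relies on the Cartan-calculus lemmas of Appendix~\ref{appendix}, but there the computation is routine bookkeeping.
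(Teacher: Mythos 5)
Your proof is correct, but your converse takes a mildly different route from the paper's (one-line) proof, which is the verbatim analogue of Corollary \ref{cor:jets:algebroids}: there one notes that, with $\gamma\in\X^2(S)$ the bivector covered by $\tau$, the closed IM equation \eqref{eq:mult:form} for $\mu_S$, evaluated on sections $\d f|_S,\d g|_S$ with $f,g\in C^\infty(M)$, reads $\mu_S\bigl(\ii_S(\tau)(\d f|_S,\d g|_S)\bigr)=\d\bigl(\gamma(f|_S,g|_S)\bigr)$; since the left-hand side is, by construction of $\ii_S$, the tangential part of the recorded first-order differential of the component $\tau(f,g)$, this identity for all $f,g$ is literally the statement $\pr_{J^1(\wedge^{2}TS)}(\tau)=J^1\gamma$, i.e.\ holonomicity, with no quotient construction needed. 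You instead divide $T^*_SM$ by the ideal $\ker\mu_S$, recognize the quotient via the absolute Corollary \ref{cor:jets:algebroids} (applied over the base $S$) as the cotangent algebroid of a Poisson structure $\pi_S$, and then identify this quotient with the tangential projection of $\tau$. That is a legitimate reorganization and lets you quote the absolute result verbatim, but the identification you merely assert (``the quotient algebroid is exactly what $\pr_{J^1(\wedge^2TS)}(\tau)$ encodes'') is exactly where the content sits: it amounts to the compatibility $\ii\circ\pr_{J^1(\wedge^2TS)}=q\circ \ii_S$, where $q$ denotes descent of derivations through $\mu_S$, and it should be verified; this takes one line on exact sections, as above, because $\mu_S$ discards precisely the conormal part of $\ii_S(\tau)(\d f|_S,\d g|_S)$, while the symbols agree automatically. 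With that line added your argument is complete; the remaining steps (skewness of the anchor forcing $\rho|_{\ker\mu_S}=0$ and $\ker\mu_S$ to be a Lie ideal, the easy direction via an extension $\vartheta\in\X^2_S(M)$ without using $[\vartheta,\vartheta]=0$, and the deduction of $[\tau,\tau]=0$ from $i_S$ being an injective graded Lie algebra morphism on holonomic sections) are all correct and match the paper's intent.
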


\section{Over-symplectic groupoids}
\label{sec:oversymplectic}

In this section we start by recalling the notion of over-symplectic groupoid and its properties. This is the type of groupoid one obtains by restricting a symplectic groupoid to an invariant submanifold, i.e., a (complete) Poisson submanifold of the underlying Poisson manifold. This is also the type of groupoid for which one can hope to find coisotropic embeddings. We show in this section that the infinitesimal data corresponding to an over-symplectic groupoid $(\G_S,\omega_S)$ is precisely a first order jet of a Poisson structure at $S$. Our conventions for Lie groupoids and Lie algebroids are the same as in the companion paper \cite{FM22} and in the monograph \cite{CFM21}.

\subsection{Over-symplectic groupoids}
Unless otherwise stated, all groupoids are assumed to be Hausdorff and target connected. The latter implies that the orbits of a groupoid coincide with the orbits of its Lie algebroid. We will denote the restriction of a Lie groupoid $\G\tto M$ to a submanifold $S\subset M$ by
\[ \G|_S:=\s^{-1}(S)\cap\t^{-1}(S). \]
This is always a groupoid $\G|_S\tto S$, but it may fail to be smooth, i.e., to be a Lie subgroupoid. However, if $S$ is saturated by orbits of $\G$ then 
\[ \G|_S=\s^{-1}(S)=\t^{-1}(S), \]
and $\G|_S\tto S$ is a target connected Lie subgroupoid of $\G\tto M$. 

\begin{proposition}
\label{prop:Poisson:over:symplectic}
Let $(\G,\omega)\tto (M,\pi)$ be a symplectic groupoid and $S\subset M$ a closed embedded Poisson submanifold. Then $\G|_S\tto S$ is a closed embedded coisotropic Lie subgroupoid of $\G\tto M$. The restriction of the symplectic form $\omega_S:=\omega|_{\G|_S}$ is a closed multiplicative form on $\G|_S$ satisfying
\[ \ker \omega_S\subset \ker\d\s_S\cap\ker\d \t_S,\]
where are $\s_S$ and $\t_S$ are the source and target maps of $\G|_S\tto S$. 
\end{proposition}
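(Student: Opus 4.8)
\textbf{Proof proposal for Proposition \ref{prop:Poisson:over:symplectic}.}

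The plan is to break the statement into three assertions and dispatch them in order: (1) $\G|_S$ is a closed embedded Lie subgroupoid; (2) it is coisotropic in $(\G,\omega)$; (3) the restricted form $\omega_S$ is closed multiplicative and its kernel lies in $\ker\d\s_S\cap\ker\d\t_S$. For (1), the key observation is that $S$ is a Poisson submanifold, which means $\pi$ is tangent to $S$, hence $S$ is invariant under the characteristic foliation of $(M,\pi)$, i.e.\ saturated by symplectic leaves. Since the orbits of the symplectic groupoid $\G$ are exactly the symplectic leaves of $(M,\pi)$, the submanifold $S$ is saturated by $\G$-orbits; by the discussion preceding the proposition this gives $\G|_S=\s^{-1}(S)=\t^{-1}(S)$, which is a closed embedded submanifold (preimage of the closed embedded $S$ under the submersion $\s$) and a target-connected Lie subgroupoid. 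Closedness and embeddedness of $S$ in $M$ pass to $\G|_S$ in $\G$ for the same reason.

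For (2), I would use the standard fact that for a symplectic groupoid the source map $\s$ is anti-Poisson and the target map $\t$ is Poisson onto $(M,\pi)$, and more to the point that the conormal bundle of $\s^{-1}(S)$ (equivalently $\t^{-1}(S)$) is related by $\omega^\flat$ to the tangent distribution. Concretely: $\s^{-1}(S)=\t^{-1}(S)$, so at a point $g\in\G|_S$ the tangent space $T_g(\G|_S)=(\d_g\s)^{-1}(T_{\s(g)}S)=(\d_g\t)^{-1}(T_{\t(g)}S)$. Its $\omega$-orthogonal is $\omega^\flat_g\big((\d_g\s)^{-1}(T_{\s(g)}S)\big)^\circ$; using that $\ker\d\s$ and $\ker\d\t$ are $\omega$-orthogonal complements of each other (a defining property of symplectic groupoids) together with the fact that the annihilator of $T_{\s(g)}S$ in $T^*_{\s(g)}M$ is the conormal of $S$, one identifies $(T_g\G|_S)^{\perp_\omega}$ with $\d\t^*$(conormal of $S$) or equivalently a subspace of $\ker\d\s$. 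Since $S$ is Poisson, the conormal bundle $N^*S\subset T^*_SM$ is a Lie subalgebroid (it is the kernel of the surjection $T^*_SM\to T^*S$, and $\pi^\sharp$ kills it), and exponentiating/using right-translations shows $(T_g\G|_S)^{\perp_\omega}\subset T_g(\G|_S)$. This is the coisotropic condition.

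For (3), multiplicativity and closedness of $\omega_S=\omega|_{\G|_S}$ are immediate: multiplicativity is expressed by $m^*\omega=\pr_1^*\omega+\pr_2^*\omega$ on $\G^{(2)}$, and restricting this identity to $(\G|_S)^{(2)}\subset\G^{(2)}$ gives the same for $\omega_S$; closedness is preserved under pullback by the inclusion. The containment $\ker\omega_S\subset\ker\d\s_S\cap\ker\d\t_S$ is the one genuinely computational point. Take $v\in T_g(\G|_S)$ with $i_v\omega_S=0$, i.e.\ $\omega_g(v,w)=0$ for all $w\in T_g(\G|_S)=(\d_g\s)^{-1}(T_{\s(g)}S)$. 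I want to show $\d_g\s(v)=0$ and $\d_g\t(v)=0$. Since $\ker\d_g\t$ is $\omega$-orthogonal to $T_g(\G|_S)$ (as $\ker\d\t=(\ker\d\s)^{\perp_\omega}\subset(\d\s)^{-1}(T_{\s(g)}S)^{\perp_\omega}$; wait, more carefully: $\ker\d\t\subset\ker\d\s$? no — $\ker\d\s$ and $\ker\d\t$ are transverse and $\omega$-orthogonal), the vanishing of $\omega_g(v,\cdot)$ on $T_g(\G|_S)\supset\ker\d_g\t\cap T_g\G|_S$ combined with non-degeneracy of $\omega$ on the pair $(\ker\d\s,\ker\d\t)$ forces $v$ to have no component pairing nontrivially, and a dimension count using that $S$ is Poisson (so $T_{\s(g)}S$ contains the characteristic distribution $\pi^\sharp(T^*_{\s(g)}M)$) pins $\d_g\s(v)$ and $\d_g\t(v)$ into $T S$ and then to zero. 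The cleanest route is: $\omega^\flat$ identifies $\ker\d_g\s\cong T^*_{\s(g)}M$ via $\d\t$, under which $\ker\d_g\s\cap T_g(\G|_S)$ corresponds to $T^*_SM/N^*S$-type data; a vector $v\in\ker\omega_S$ must be $\omega$-orthogonal to $\ker\d_g\s\cap T_g\G|_S$, which via this identification means $\d_g\t(v)$ annihilates the image of $\pi^\sharp$ restricted appropriately, i.e.\ lies in the conormal directions — but $\d_g\t(v)\in T_{\t(g)}S$ already, and $T S$ meets those conormal directions trivially (again because $S$ is Poisson), so $\d_g\t(v)=0$; symmetrically $\d_g\s(v)=0$.

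\textbf{Main obstacle.} The first two parts are essentially bookkeeping with known facts about symplectic groupoids and Poisson submanifolds. The real work is the kernel containment in (3): it requires carefully tracking the $\omega$-orthogonality relations between $\ker\d\s$, $\ker\d\t$, and $T_g(\G|_S)$, and using the Poisson (not merely Poisson–Dirac) nature of $S$ — equivalently, that $N^*S$ is an ideal-type Lie subalgebroid of $T^*_SM$ — to convert an orthogonality statement into the sharp conclusion $v\in\ker\d\s\cap\ker\d\t$. I expect the slick proof to go through the linear-algebra identity $(T_g\G|_S)^{\perp_\omega}=\omega^\flat$ applied to the conormal of $S$ pulled back by $\d\t$, and then to combine $\ker\omega_S=T_g\G|_S\cap(T_g\G|_S)^{\perp_\omega}$ with the transversality of $\ker\d\s$ and $\ker\d\t$; the bookkeeping there is the part most likely to need care.
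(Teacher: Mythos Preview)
Your argument for (1) and for the closedness/multiplicativity of $\omega_S$ is fine and matches the paper. For (2), the paper takes a much shorter route than your conormal-bundle computation: since $\t:(\G,\omega)\to(M,\pi)$ is a Poisson submersion and Poisson submanifolds are in particular coisotropic, the general fact that a Poisson map transverse to a coisotropic submanifold pulls it back to a coisotropic submanifold immediately gives that $\G|_S=\t^{-1}(S)$ is coisotropic. Your approach can be made to work, but it is doing by hand what this lemma packages.

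The real issue is (3), and you are missing the one observation that makes the kernel containment a two-line argument. You already noted in (1) that $\G|_S=\s^{-1}(S)=\t^{-1}(S)$; the consequence you do not exploit is that therefore the \emph{entire} source and target fibers through any $g\in\G|_S$ lie in $\G|_S$, so
\[
\ker\d_g\s\subset T_g(\G|_S)\quad\text{and}\quad \ker\d_g\t\subset T_g(\G|_S),
\]
not merely their intersections with $T_g(\G|_S)$. Once you have this, a vector $v\in\ker\omega_S$ satisfies $\omega(v,w)=\omega_S(v,w)=0$ for every $w\in\ker\d_g\s+\ker\d_g\t$, hence
\[
v\in(\ker\d_g\s)^{\perp_\omega}\cap(\ker\d_g\t)^{\perp_\omega}=\ker\d_g\t\cap\ker\d_g\s,
\]
using the standard identity $(\ker\d\s)^{\perp_\omega}=\ker\d\t$ for symplectic groupoids. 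That is the entire proof of the containment. Your attempt instead tries to extract information from $\ker\d_g\t\cap T_g\G|_S$ and from pairings with conormal directions, which leads you into the muddle you flag; none of the ``dimension count'' or ``annihilates the image of $\pi^\sharp$'' steps are needed, and as written they do not assemble into a proof.
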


\begin{proof}
Since $S$ is a closed Poisson submanifold, it is a union of symplectic leaves, and since $\G$ is target connected, it follows that $S$ is a saturated submanifold for $\G$. Therefore, $\G|_S\tto S$ is a closed embedded Lie subgroupoid of $\G\tto M$. Moreover, $\ker\d\s=\ker\d\s_S\subset T\G|_S$ and $\ker\d\t=\ker\d\t_S\subset T\G|_S$. Since $\omega$ is multiplicative and closed, the restriction $\omega_S$ is also multiplicative and closed.

Now, recall that if a Poisson map is transverse to a coisotropic submanifold, then it pulls it back to a coisotropic submanifold. Since a Poisson submanifold is also coisotropic, we can apply this fact to the target map $\t:(\G,\omega)\to (M,\pi)$ to conclude that $\G|_S=\t^{-1}(S)\subset \G$ is a coisotropic submanifold.

In order to prove the statement concerning the kernel of $\omega_S$ observe that, since $\ker\d\s,\ker\d\t\subset T(\G|_S)$, for $v\in\ker\omega_S$ we have that
\[ \omega_S(v,w)=\omega(v,w)=0, \quad \forall w\in \ker\d\s +\ker\d\t. \]
Hence, we obtain that
\[ \ker \omega_S\subset  (\ker\d\t)^{\perp_\omega}\cap  (\ker\d\s)^{\perp_\omega}=\ker\d\s\cap\ker\d \t=\ker\d\s_S\cap\ker\d \t_S. \qedhere\] 
\end{proof}

The previous proposition shows that the restriction of a symplectic groupoid to a Poisson submanifold is an \emph{over-symplectic groupoid} in the sense of \cite{BCWZ04}.

\begin{definition}
\label{def:over-symplectic}
An \textbf{over-symplectic groupoid} is a groupoid $\G_S\tto S$ together with a closed multiplicative 2-form $\omega_S\in \Omega^2(\G_S)$ such that 
\[ \ker \omega_S\subset \ker\d\s_S\cap\ker\d \t_S. \] 
\end{definition}

We will need the following properties of over-symplectic groupoids.

\begin{proposition}[Proposition 4.5 from \cite{BCWZ04}]
\label{prop:over:symplectic}
Let $\G_S\tto S$ be a Lie groupoid and $\omega_S\in\Omega^2(\G_S)$ a closed multiplicative 2-form. The following are equivalent
\begin{enumerate}[(a)]
\item $(\G_S,\omega_S)$ is over-symplectic;
\item $(\ker\d \t_S)^{\perp_{\omega_S}}=\ker\d\s_S$;
\item $\mathrm{rank} (\omega_S|_x)=2\dim S$, for all $x\in S$;
\item $\ker\omega_S|_x\subset \ker\d_x\s_S\cap\ker\d_x \t_S$, for all $x\in S$;
\item There exists a Poisson structure on $S$, such that $\t_S: (\G_S,\omega_S)\to (S,\pi_S)$ is a forward Dirac map.
\end{enumerate}
Moreover, if these hold, $\omega_S$ has constant rank and the orbits of $\G_S$ coincide with the symplectic leaves of $(S,\pi_S)$.
\end{proposition}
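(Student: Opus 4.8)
The plan is to prove the equivalences by a cyclic chain of implications together with a few direct equivalences, exploiting throughout the pointwise linear algebra of a multiplicative form on a Lie groupoid. The key structural fact I would use at the outset is the following identity, valid at any point $g\in\G_S$ for a \emph{closed multiplicative} 2-form $\omega_S$ (it is a standard consequence of multiplicativity applied at units and the compatibility of $\omega_S$ with the graph of multiplication): at a unit $x\in S$ one has the $\omega_S$-orthogonality relation
\[ (\ker\d_x\t_S)^{\perp_{\omega_S}}=\ker\d_x\s_S + (\text{something in } T_xS\text{-directions}), \]
more precisely that $\ker\d\t_S$ and $\ker\d\s_S$ are $\omega_S$-orthogonal at units and that $\ker\d\t_S+\ker\d\s_S$ has a controlled $\omega_S$-annihilator. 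I would first record these linear-algebra facts carefully, since items (b) and (d) are exactly statements about such orthogonality, and (c) is the corresponding rank count: since $\dim T_x\G_S=2\dim S+\dim(\la_x)$ where $\la$ is the Lie algebroid of $\G_S$, and $\ker\d_x\t_S$ has dimension $\dim\la_x$, the condition that $\omega_S$ pairs $\ker\d\t_S$ nondegenerately with a complement is equivalent to $\mathrm{rank}(\omega_S|_x)=2\dim S$.

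Concretely I would organize it as: (a)$\Leftrightarrow$(d) is immediate, since the inclusion of subbundles is equivalent to its pointwise version; (d)$\Leftrightarrow$(c) follows by counting dimensions as above, using that $\ker\d_x\s_S\cap\ker\d_x\t_S$ has dimension $\dim\la_x-\dim(\text{orbit through }x)$ and that $\mathrm{rank}(\omega_S|_x)=\dim T_x\G_S-\dim\ker\omega_S|_x$; (b)$\Rightarrow$(d) is formal from $(\ker\d\t_S)^{\perp}=\ker\d\s_S$ since $\ker\omega_S=(T\G_S)^{\perp}\subset(\ker\d\t_S)^{\perp}=\ker\d\s_S$ and symmetrically with $\s_S,\t_S$ interchanged (using $\mathrm{rank}$ is the same); (d)$\Rightarrow$(b) uses the multiplicativity identity at units to show one always has $\ker\d\s_S\subset(\ker\d\t_S)^{\perp_{\omega_S}}$, and then a dimension count forces equality once $\ker\omega_S$ is small. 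Finally (a)$\Leftrightarrow$(e): given (a), the induced closed multiplicative form has $\ker\d\t_S$ in its kernel complement appropriately, so it descends along $\t_S$ to a bivector on $S$; one checks that being a forward Dirac map for $\t_S$ is equivalent to exactly the kernel condition (a), using that $\t_S$ is a submersion with $\ker\d\t_S$ controlled — this is essentially the content of Proposition \ref{prop:Poisson:over:symplectic} read backwards. For the ``Moreover'' clause: constant rank of $\omega_S$ follows since $\mathrm{rank}(\omega_S|_x)=2\dim S$ is constant by (c), hence $\ker\omega_S$ is a smooth bundle; that the orbits of $\G_S$ coincide with the symplectic leaves of $(S,\pi_S)$ follows because $\t_S$ is forward Dirac, so it maps orbits (which are $\t_S$-images of source fibers, themselves connected) onto the presymplectic/symplectic leaves of the pushforward, and the pushforward of $\ker\omega_S\subset\ker\d\t_S$ is the tangent to these leaves.

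The main obstacle I expect is (d)$\Rightarrow$(b): one must prove the \emph{pointwise} orthogonality $\ker\d_x\s_S=(\ker\d_x\t_S)^{\perp_{\omega_S}}$ as \emph{bundles} (not just fiberwise equality of dimensions), which requires the multiplicative structure in an essential way — one cannot get this from linear algebra at a single tangent space alone, and indeed this is where the reference to \cite{BCWZ04} Proposition 4.5 does the real work. The cleanest route is to invoke the general fact that for a multiplicative 2-form, $\ker\d\t_S$ and $\ker\d\s_S$ are always $\omega_S$-orthogonal along the unit section (differentiate the multiplicativity relation, or use that $\omega_S$ restricted to the graph of multiplication vanishes and evaluate on the appropriate pairs of vectors), giving the inclusion ``$\subset$'' in (b) for free, and then (d) plus the rank count upgrades it to equality. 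I would also need to be slightly careful that the Poisson structure in (e) is the \emph{same} $\pi_S$ appearing in the conclusion; this is forced by uniqueness of the pushforward Dirac structure under the submersion $\t_S$.
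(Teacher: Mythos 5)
The paper does not prove this proposition at all: it is quoted verbatim from \cite{BCWZ04} (Proposition 4.5 there), so there is no in-paper argument to compare yours against, and your plan has to stand on its own. Judged that way, it contains a genuine gap. Conditions (c) and (d) are hypotheses \emph{only at unit points} $x\in S$, whereas (a), (b) and the constant-rank statement in the ``Moreover'' clause are assertions at every arrow $g\in\G_S$. Your claim that (a)$\Leftrightarrow$(d) ``is immediate, since the inclusion of subbundles is equivalent to its pointwise version'' conflates these two things: (a)$\Rightarrow$(d) is trivial, but (d)$\Rightarrow$(a) is exactly the nontrivial propagation of unit-level information to arbitrary arrows, and this is the real content of the proposition. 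The same problem infects your (d)$\Rightarrow$(b): the free inclusion $\ker\d\s_S\subset(\ker\d\t_S)^{\perp_{\omega_S}}$ plus a rank count only upgrades to equality at points where you control the rank of $\omega_S$, and (c)/(d) give you that control only along the unit section. Saying that ``this is where the reference to \cite{BCWZ04} Proposition 4.5 does the real work'' is circular here, since that proposition \emph{is} the statement being proved. What is actually needed is a multiplicativity argument valid at all $g$: for instance the identities $i_{a^r}\omega_S=\t_S^*\mu_S(a)$ and $i_{a^l}\omega_S=-\s_S^*\mu_S(a)$ for right/left-invariant vector fields and the induced IM form $\mu_S$ (or, equivalently, translation by bisections), which let one compute $\ker\omega_S|_g$ and $(\ker\d_g\t_S)^{\perp_{\omega_S}}$ at every arrow from the unit data and hence prove (d)$\Rightarrow$(a), (d)$\Rightarrow$(b) and constancy of the rank on all of $\G_S$.

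A secondary issue: your (d)$\Leftrightarrow$(c) step is not a pure dimension count. The containment $\ker\omega_S|_x\subset\ker\d_x\s_S\cap\ker\d_x\t_S$ together with rank-nullity does not by itself force $\mathrm{rank}(\omega_S|_x)=2\dim S$ (the bound you get involves the orbit dimension, not $\dim S$). The correct route is the unit-level normal form of a multiplicative $2$-form: $u^*\omega_S=0$, $\omega_S(a,\d u(v))=\langle\mu_S(a),v\rangle$ and $\omega_S(a,b)=\langle\mu_S(a),\rho(b)\rangle$ for $a,b\in A_S|_x$, $v\in T_xS$, from which one reads off $\ker\omega_S|_x=(\im\mu_S|_x)^{\circ}\oplus\ker\mu_S|_x$ and $\mathrm{rank}(\omega_S|_x)=2\,\mathrm{rank}(\mu_S|_x)$; both (c) and (d) are then equivalent to surjectivity of $\mu_S|_x$ (with $\rho(\ker\mu_S)=0$ following from the IM antisymmetry). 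With that in hand, the genuinely hard step is again the unit-to-global propagation described above, which your plan currently leaves unaddressed.
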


We have the following converse to Proposition \ref{prop:Poisson:over:symplectic}.

\begin{proposition}
Let $i:(\G_S,\omega_S)\hookrightarrow (\G,\omega)$ be a coisotropic embedding of an over-symplectic groupoid in a symplectic groupoid. The corresponding embedding of units $(S,\pi_S)\hookrightarrow (M,\pi)$ is a Poisson map. If the embedding is closed then $S$ is a saturated submanifold of $M$ and $\G_S=\G|_S$.
\end{proposition}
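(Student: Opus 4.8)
The statement is a converse to Proposition~\ref{prop:Poisson:over:symplectic}, and I would prove it by analyzing the situation along the units. Write $j\colon S\hookrightarrow M$ for the base map of $i$, so that $\t\circ i=j\circ\t_S$ and $\s\circ i=j\circ\s_S$. Since $i$ is an injective morphism of groupoids, it identifies the units of $\G_S$ with $i(\G_S)\cap 1_M=1_{j(S)}$; hence $\t(i(\G_S))=j(S)$ and, under the identification $\G_S\cong i(\G_S)$, the map $\t_S$ is the restriction of $\t$. In particular $\d\t$ carries $T_{1_x}i(\G_S)$ onto $T_xS$ for every $x\in S$ (from now on I identify $S$ with $j(S)$, and $T_xS$ with its image in $T_xM$). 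I will use freely the standard description of $\mathrm{Lie}(\G,\omega)$: along $M$ it is $\ker\d\s|_M$, it is identified with $T^*M$ via $a\mapsto(i_a\omega)|_{TM}$ --- so that, under this identification, $i_a\omega=\t^*a$ --- its anchor is $\d\t|_{\ker\d\s|_M}$, which under this identification is $\pi^\sharp$, the submanifold $M\subset\G$ is Lagrangian, and $(\ker\d\t)^{\perp_\omega}=\ker\d\s$. Proving that $j\colon(S,\pi_S)\to(M,\pi)$ is a Poisson map amounts to proving (a) that $S$ is a Poisson submanifold of $(M,\pi)$, and (b) that $\pi|_S=\pi_S$.

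For (a), fix $x\in S$ and $\eta\in(T_xS)^{\circ}\subset T^*_xM$, and let $a_\eta\in\ker\d\s|_{1_x}$ be the element corresponding to $\eta$, so that $i_{a_\eta}\omega=\t^*\eta$. For any $w\in T_{1_x}i(\G_S)$ one has $\omega(a_\eta,w)=\langle\eta,\d\t(w)\rangle=0$, since $\d\t(w)\in T_xS$; thus $a_\eta\in(T_{1_x}i(\G_S))^{\perp_\omega}$, and coisotropicity of $i(\G_S)$ forces $a_\eta\in T_{1_x}i(\G_S)$. Therefore $a_\eta$ lies in $\ker\omega_S$ at $1_x$, so the over-symplectic condition $\ker\omega_S\subset\ker\d\t_S$ gives $\pi^\sharp_x(\eta)=\d\t_S(a_\eta)=0$. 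Hence $\pi^\sharp_x$ annihilates $(T_xS)^{\circ}$; since $\pi^\sharp_x$ is skew-symmetric, this yields $\pi^\sharp_x(T^*_xM)\subset T_xS$, i.e. $S$ is a Poisson submanifold.

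For (b), I first record a dimension count. Coisotropicity identifies the characteristic distribution $(Ti(\G_S))^{\perp_\omega}$ of $i(\G_S)$ with $\ker\omega_S$; the former has rank $2\dim M-\dim\G_S$, and the latter has rank $\dim\G_S-2\dim S$ by Proposition~\ref{prop:over:symplectic}(c), so $\dim\G_S=\dim M+\dim S$. Consequently $\mathrm{Lie}(\G_S)$ is a subbundle of $T^*_SM$ of rank $\dim M$, hence equal to $T^*_SM$, so $\ker\d\s|_{1_x}\subset T_{1_x}i(\G_S)$ for all $x\in S$. Now take $\xi\in T^*_xS$, choose an extension $\tilde\xi\in T^*_xM$, and let $a_{\tilde\xi}\in\ker\d\s|_{1_x}$ be the corresponding element; then $a_{\tilde\xi}\in T_{1_x}i(\G_S)$ and $i_{a_{\tilde\xi}}\omega_S=\t_S^*\xi$ (restrict $i_{a_{\tilde\xi}}\omega=\t^*\tilde\xi$ to $Ti(\G_S)$), so the forward-Dirac property of $\t_S\colon(\G_S,\omega_S)\to(S,\pi_S)$ (Proposition~\ref{prop:over:symplectic}(e)) gives $\pi_S^\sharp(\xi)=\d\t_S(a_{\tilde\xi})=\pi^\sharp_x(\tilde\xi)=(\pi|_S)^\sharp_x(\xi)$. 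This proves (b), and hence that the embedding of units is Poisson. For the closed case: $S$ is now a Poisson submanifold, hence a union of symplectic leaves of $(M,\pi)$, hence a saturated submanifold (as $\G$ is target connected), so $\G|_S=\s^{-1}(S)=\t^{-1}(S)$ is a closed, target-connected Lie subgroupoid of $\G$; moreover $i(\G_S)\subset\t^{-1}(S)=\G|_S$ is a submanifold of the same dimension $\dim M+\dim S$, hence open in $\G|_S$, and closed in it since $i$ is a closed embedding, hence a union of connected components; intersecting with the connected target fibers $\t^{-1}(x)=\t_S^{-1}(x)$, which meet $i(\G_S)$ at $1_x$, forces $i(\G_S)=\G|_S$.

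I expect (a) to be the main obstacle: promoting coisotropicity of $i(\G_S)$ inside $(\G,\omega)$ to tangency of $\pi$ along $S$. This is where the coisotropic, over-symplectic, and symplectic-groupoid structures have to be combined, and where the last implication --- from ``$\pi^\sharp_x$ kills the conormal space of $S$'' to ``$S$ is a Poisson submanifold'' --- uses the skew-symmetry of $\pi^\sharp$ in an essential way; the remaining points, namely the identifications at units, the dimension count, and the connectedness argument in the closed case, are bookkeeping.
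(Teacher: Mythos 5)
Your proof is correct, and it takes a partly different route from the paper's. The paper argues groupoid-theoretically: the same dimension count you use shows that the source/target fibers of $\G_S$ have the same dimension as those of $\G$, hence the orbits of $\G_S$ are open in the symplectic leaves of $(M,\pi)$, which gives tangency of $\pi$ to $S$; the identification of the induced Poisson structure with $\pi|_S$ is then obtained by applying a Dirac-geometric lemma from \cite{FrMa18} to the commutative square formed by $\t$ and $\t_S$. You keep the dimension count but replace both of these steps by pointwise linear algebra at the units: tangency of $\pi$ comes from feeding conormal covectors $\eta\in(T_xS)^{\circ}$ through the identification $\mathrm{Lie}(\G)\simeq T^*M$, noting that the corresponding algebroid elements are $\omega$-orthogonal to $T i(\G_S)$, hence by coisotropy lie in $\ker\omega_S$, hence by the over-symplectic condition have vanishing anchor, and then using skew-symmetry of $\pi^\sharp$; the equality $\pi_S=\pi|_S$ comes from the forward-Dirac characterization of Proposition \ref{prop:over:symplectic}(e) evaluated at $1_x$, using $\ker\d\s|_{1_x}\subset T_{1_x}i(\G_S)$, which your dimension count supplies. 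This is more self-contained (no appeal to \cite{FrMa18}), at the modest cost of invoking the standard identity $i_a\omega=\t^*\bigl((i_a\omega)|_{TM}\bigr)$ at units for the closed multiplicative form $\omega$, which is true but deserves a one-line justification or reference. In the closed case your argument and the paper's are essentially the same: the paper shows the closed connected target fibers of $\G_S$ coincide with those of $\G$, while you show $i(\G_S)$ is open and closed in $\G|_S$ and use connectedness of the target fibers; both work, though you should state ``$\t^{-1}(x)=\t_S^{-1}(x)$'' as the conclusion rather than as an input, and record explicitly that closedness of the embedding makes $S$ closed in $M$, which is what legitimizes ``Poisson submanifold $\Rightarrow$ union of symplectic leaves''.
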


\begin{proof}
Since $\G_S$ is an embedded subgroupoid of $\G$, the source/target fibers of $\G_S$ are embedded submanifolds of the source/target fibers of $\G$
\[ \s_S^{-1}(x)\subset \s^{-1}(x),\quad \t_S^{-1}(x)\subset \t^{-1}(x),\quad \forall\, x\in S. \]
We claim that they have the same dimension, so these are open inclusions. Indeed, 
\begin{align*}
\dim \s_S^{-1}(x)&=\dim \t_S^{-1}(x)=\dim \G_S-\dim S,\\
\dim \s^{-1}(x)&=\dim \t^{-1}(x)=\dim \G-\dim M=\frac{1}{2}\dim \G,\\
\dim \G&=\dim(\ker\omega_S)+\dim \G_S=2(\dim \G_S-\dim S),
\end{align*}
where in the second equation we used that $\G$ is a symplectic groupoid and in the last equation we used Proposition \ref{prop:over:symplectic} and the fact that the embedding is coisotropic. These equations imply our claim
\[ \dim \s_S^{-1}(x)=\dim \t_S^{-1}(x)=\dim \s^{-1}(x)=\dim \t^{-1}(x).\]

It follows that the orbits of $\G_S\tto S$ are open subsets of the orbits of $\G\tto M$. Therefore, $S$ is a Poisson submanifold with Poisson structure $\pi_S$. Thus we obtain the following commutative diagram
\[
\xymatrix{
(\G_S,\omega_S) \ar[r]^{i}\ar[d]^{\t_S} & (\G,\omega)\ar[d]^\t\\
(S,\pi_S)\ar[r]^i & (M,\pi) 
}
\]
where the horizontal arrows are backward Dirac and $\t$ forward Dirac. It follows by \cite[Lemma  3]{FrMa18}  applied to $\G|_{S}$ that also $\t_S$ is forward Dirac, i.e., the induced Poisson structure on the base of the over-symplectic groupoid $(\G_S,\omega_S)$ coincides with $\pi_S$.

Now if the embedding is closed, the source/target fibers of $\G_S$ are closed submanifolds of the source/target fibers of $\G$. Since they are connected, they coincide
\[ \s_S^{-1}(x)=\s^{-1}(x),\quad  \t_S^{-1}(x)=\t^{-1}(x), \quad \forall x\in S. \]
This implies that $S$ is saturated and that $\G_S=\G|_{S}$. 
\end{proof}

Note that for an over-symplectic groupoid $(\G_S,\omega_S)$ the distribution $\ker\omega_S$ is integrable. If $\ker\omega_S$ is a simple foliation then $\omega_S$ descends to a symplectic form $\underline{\omega_S}$ on the leaf space $\G_S/\ker\omega_S$, which inherits the structure of a symplectic groupoid
\[ (\G_S/\ker\omega_S,\underline{\omega_S})\tto S\]
integrating the Poisson manifold $(S,\pi_S)$. This explains the term ``over-symplectic''.

We can summarize this discussion by the following diagram
{\footnotesize
\begin{equation*}
\xymatrixrowsep{0.6cm} \xymatrixcolsep{0.2cm}
\xymatrix{\left\{\txt{symplectic groupoids\\ $(\G,\omega)\tto M$\\ with saturated submanifold $S$}\right\}\ar[rr]^*\txt{restriction\\ along $S$}\ar[dr]&&
 \left\{\txt{over-symplectic groupoids\\$(\G_S,\omega_S)\tto S$\\ }\right\}\ar[dl] \ar@/^/@{-->}[ll]^*\txt{coisotropic\\embedding}\\
& \left\{\txt{symplectic groupoids\\
$(\G_S/\ker\omega_S,\underline{\omega_S})\tto S$\\ }\right\}&}
\end{equation*}
}

We will see in Subsection \ref{ex:counterex:coistropic} an example of an over-symplectic groupoid which does not admit a coisotropic embedding into a symplectic groupoid. 

\subsection{Over-symplectic groupoids and first order jets}
\label{sec:grpds:jets}

We will show now that over-symplectic groupoids are the global objects corresponding to first order jets of Poisson structures along Poisson submanifolds. In other words, the previous diagram is the groupoid version of the diagram \eqref{eq:diagram:jets}.

Let $(\G_S,\omega_S)\tto S$ be an over-symplectic groupoid. If $A_S\to S$ denotes its Lie algebroid, then $\omega_S$ induces a closed IM 2-form $\mu:A_S\to T^*S$ (see Appendix \ref{appendix}). By Proposition \ref{prop:over:symplectic}, this bundle map is surjective. In fact, the following holds. 

\begin{proposition}\label{prop:1:1:over:sympl:surjective:IM}
If $\G_S\tto S$ is a target 1-connected Lie groupoid with Lie algebroid $A_S\to S$, there is a 1-to-1 correspondence
\[ 
\left\{\txt{over-symplectic structures\\ ${\omega_S}\in\Omega^2(\G_S)$ \,}\right\}
\tilde{\longleftrightarrow}
\left\{\txt{surjective IM closed 2-forms\\ $\mu:A_S\to T^*S$ \,} \right\}
\]
%
\end{proposition}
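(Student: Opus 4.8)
The plan is to construct the correspondence in both directions and to check that they are mutually inverse, relying on the standard integration/differentiation machinery for multiplicative forms on Lie groupoids (as collected in Appendix \ref{appendix}).

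\medskip\noindent\textbf{From global to infinitesimal.} Given an over-symplectic structure $\omega_S\in\Omega^2(\G_S)$, differentiation of multiplicative forms yields an IM 2-form $\mu\colon A_S\to T^*S$, and since $\omega_S$ is closed, $\mu$ is a closed IM 2-form. The content here is that $\mu$ is \emph{surjective}: this is exactly the translation of the over-symplectic condition. Indeed, by Proposition \ref{prop:over:symplectic}(c) the rank of $\omega_S$ at each unit $x\in S$ equals $2\dim S$, and unwinding the definition of $\mu$ (the IM form recovers, along the units, the pairing of $A_S$ against $T^*S$ that $\omega_S$ encodes on $\ker\d\s_S$) one sees that $\mathrm{rank}(\omega_S|_x)=2\dim S$ forces $\mu_x\colon (A_S)_x\to T^*_xS$ to be onto. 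So this assignment $\omega_S\mapsto \mu$ is well defined with image in the set of surjective closed IM 2-forms, and it is clearly injective because $\G_S$ is target 1-connected: a multiplicative form is determined by its IM datum (Van Est / integration uniqueness for multiplicative forms).

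\medskip\noindent\textbf{From infinitesimal to global.} Conversely, start from a surjective closed IM 2-form $\mu\colon A_S\to T^*S$. Since $\G_S$ is target 1-connected, the integration theorem for IM forms (Appendix \ref{appendix}; cf. \cite{BCWZ04}) produces a unique closed multiplicative 2-form $\omega_S\in\Omega^2(\G_S)$ whose associated IM form is $\mu$. It remains to verify that $\omega_S$ is over-symplectic, i.e.\ that $\ker\omega_S\subset\ker\d\s_S\cap\ker\d\t_S$. By the equivalences in Proposition \ref{prop:over:symplectic} it is enough to check the pointwise condition at units, $\ker\omega_S|_x\subset\ker\d_x\s_S\cap\ker\d_x\t_S$ for all $x\in S$, or equivalently that $\mathrm{rank}(\omega_S|_x)=2\dim S$; and along the units the form $\omega_S$ is determined by $\mu$ in the explicit way recalled in Appendix \ref{appendix}, so surjectivity of $\mu_x$ gives precisely $\mathrm{rank}(\omega_S|_x)\ge 2\dim S$, while $\mathrm{rank}\le 2\dim S$ always holds for a multiplicative form by skew-symmetry together with $\omega_S(\ker\d\s,\ker\d\t)=0$ and the general bound on ranks of multiplicative forms. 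Hence $\omega_S$ is over-symplectic.

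\medskip\noindent\textbf{The two maps are inverse.} That the composite $\omega_S\mapsto\mu\mapsto\omega_S$ is the identity is the uniqueness clause in the integration of multiplicative forms over a target 1-connected groupoid; that $\mu\mapsto\omega_S\mapsto\mu$ is the identity is the fact that differentiation recovers the IM datum one integrated. Both are standard once one invokes the Appendix.

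\medskip\noindent\textbf{Main obstacle.} The only genuinely non-formal point is the precise pointwise bookkeeping at the units: showing that surjectivity of $\mu$ is equivalent to $\mathrm{rank}(\omega_S|_x)=2\dim S$, i.e.\ to condition (c) of Proposition \ref{prop:over:symplectic}. This requires spelling out, from the definition of the IM form associated to $\omega_S$, how $\omega_S|_{T_x\G_S}$ decomposes using $T_x\G_S=(A_S)_x\oplus T_xS$ (via the unit section and, say, a choice of splitting), and checking that the block pairing $(A_S)_x\times T_xS$ is given by $\mu_x$. This is a short linear-algebra computation but it is where the equivalence between the two formulations really lives; everything else is an application of the integration theory for multiplicative/IM forms recalled in Appendix \ref{appendix}.
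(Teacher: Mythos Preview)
Your proposal is correct and follows essentially the same strategy as the paper: reduce to the standard bijection between closed multiplicative $2$-forms and closed IM $2$-forms on a target $1$-connected groupoid, and then argue that ``over-symplectic'' on the global side matches ``$\mu$ surjective'' on the infinitesimal side via Proposition~\ref{prop:over:symplectic}.

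The only substantive difference is in how the pointwise linear algebra at the units is organized. You go through characterization (c), aiming to show $\mathrm{rank}(\omega_S|_x)=2\dim S \Leftrightarrow \mu_x$ surjective. This is correct, but your justification of the upper bound $\mathrm{rank}(\omega_S|_x)\le 2\dim S$ is too vague as written: it does not follow merely from ``skew-symmetry and $\omega_S(\ker\d\s,\ker\d\t)=0$''. What one actually needs is that in the splitting $T_x\G_S=T_xS\oplus A_x$ the $A_x\times A_x$ block of $\omega_S$ is $C(a,b)=\langle\mu(a),\rho(b)\rangle$ (a consequence of multiplicativity), from which a direct computation gives the exact identity $\mathrm{rank}(\omega_S|_x)=2\,\mathrm{rank}(\mu_x)$; both inequalities then drop out at once. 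The paper sidesteps this block computation by using characterization (b) instead: since $\ker\d\s\subset(\ker\d\t)^{\perp_{\omega_S}}$ always holds for multiplicative forms, and $T_S\G_S=TS\oplus\ker\d\s|_S$, the over-symplectic condition becomes $(\ker\d\t|_S)^{\perp_{\omega_S}}\cap TS=\{0\}$, which is visibly equivalent to surjectivity of $\mu$ (a vector $v\in TS$ lies in this intersection iff $\mu(a)(v)=0$ for all $a$). This route is shorter and avoids having to identify the $C$ block.
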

\begin{proof}
	According to our conventions, $A_S=\ker\d\t|_S$. Then, as explained in Section \ref{appendix}, the 1-to-1 correspondence between multiplicative forms and IM forms associates to a \emph{closed} multiplicative form ${\omega_S}\in\Omega^2_\mult(\G_S)$ the \emph{closed} IM form $\mu:A_S\to T^*S$ given by composing the maps
	\[ \xymatrix{\ker\d\t|_S \ar[r]^{\phantom{1}{\omega_S}^\flat} & T^*_S\G_S\ar[r]^{\pr} & T^*S}, \]
	where $\pr$ is the pullback along the unit map $S\hookrightarrow \G_S$. 
By Proposition \ref{prop:over:symplectic}, ${\omega_S}\in\Omega^2_\mult(\G_S)$ is an over-symplectic structure if and only if 
\begin{equation}\label{eq:perpendicular:condition}
 (\ker\d \t|_S)^{\perp_{{\omega_S}}}=\ker\d\s|_S.
 \end{equation} 
The inclusion $\ker\d\s\subset (\ker\d \t)^{\perp_{{\omega_S}}}$ holds for any multiplicative 2-form, and using the decomposition $T_S\G_S=TS\oplus \ker\d\s|_S$ we see that \eqref{eq:perpendicular:condition} is equivalent to 
\[ (\ker\d \t|_S)^{\perp_{{\omega_S}}}\cap TS=\{0\}.\]
This condition is equivalent to $\mu$ being surjective. 
%
%
%
	\end{proof}

Given a pair $(A_S,\mu)$, where $\mu:A_S\to T^*S$ a surjective closed IM form, we can choose a manifold and an isomorphism of vector bundles $A_S\simeq T^*_SM$ such that $\mu$ becomes the canonical projection $\mu_S:T^*_SM\to T^*S$: for example, we can choose $M:=(\ker\mu)^*$. So by Corollary \ref{cor:jets:Poisson}, we see that the infinitesimal data $(A_S,\mu)$ codifying an over-symplectic groupoid is the same thing as a first order jet of a Poisson structure $(T^*_SM,\mu_S)$. This justifies one of the entries in the dictionary from the Introduction.

Now if $(\G_S,\omega_S)\tto S$ is an over-symplectic groupoid integrating the pair $(A_S,\mu)$ and it admits a coisotropic embedding $i:(\G_S,\omega_S)\hookrightarrow (\G,\omega)$ into a symplectic groupoid $(\G,\omega)\tto(M,\pi)$, then we obtain
\begin{itemize}
\item a Poisson embedding $(S,\pi_S)\hookrightarrow (M,\pi)$;
\item an isomorphism $(A_S,\mu)\simeq (T^*_SM,\mu_S)$;
\item $(M,\pi)$ is a solution of the realization problem for $\tau=(T^*_SM,\mu_S)$:
\[ J^1_S\pi=\tau. \]
\end{itemize}
Next we will look at some examples, before we discuss the problem of existence of coisotropic groupoid embeddings.

\section{Examples of over-symplectic groupoids}
\label{sec:examples:over:sympl}

We give a few examples of over-symplectic groupoids which will be useful later.

\subsection{Products}\label{example:os:products}
Let $(\Sigma,\omega_{\Sigma})\tto S$ be a symplectic groupoid and $G$ be a Lie group. We obtain an over-symplectic groupoid by forming the product
\[ \G_S:=\Sigma\times G, \quad \omega_S:=\pr_\Sigma^*\omega_{\Sigma}. \]

\subsection{Transitive over-symplectic groupoids}
\label{ex:transitive:over-symplectic}
By Proposition \ref{prop:over:symplectic}, an over-symplectic groupoid $(\G_S,\omega_S)\tto S$ is transitive if and only if the Poisson structure induced on its base is non-degenerate: $\pi_S=\omega^{-1}$, for a symplectic form $\omega\in\Omega^2(S)$. Assuming this to be the case, and denoting the isotropy group at $x\in S$ by $G=\G_{S,x}$, we obtain a principal $G$-bundle $\t:\s^{-1}(x)\to S$ with a symplectic form $\omega$ on the base. Applying again Proposition \ref{prop:over:symplectic} (e), one sees that $\t^*\omega=\omega_S|_{\s^{-1}(x)}$.


Conversely, if we are given a principal $G$-bundle $p:P\to S$ with a symplectic form $\omega\in\Omega^2(S)$, then the gauge construction produces a transitive groupoid
\[ \G_S=P\times_G P\tto S, \]
together with the closed, multiplicative 2-form
\[\omega_S=(p\circ\pr_2)^*\omega-(p\circ\pr_1)^*\omega\in\Omega^2(\G_S).\]
This makes $(\G_S,\omega_S)$ into a transitive over-symplectic groupoid.

These constructions are inverse to each other, up to isomorphism. So transitive over-symplectic groupoids correspond to principal bundles with a symplectic form on the base.

At the infinitesimal level, if $A_S$ is the Lie algebroid of $\G_S$, the corresponding closed IM-2 form is given by $\mu=\omega^{\flat}\circ \rho$.

\subsection{Over-symplectic bundles of Lie groups}\label{example:os:bundlegroups}
At the other extreme, Proposition \ref{prop:over:symplectic} implies that an over-symplectic groupoid $(\G_S,\omega_S)\tto S$ is a bundle of Lie groups if and only if the Poisson structure on its base vanishes: $\pi_S=0$. These already provide non-trivial examples even in the case of a trivial group bundle, i.e., if $\G_S$ is the product
\[\G_S\simeq G\times S\to S,\]
for a fixed Lie group $G$.
Since any multiplicative 2-form on a Lie group is trivial, a multiplicative 2-form $\omega_S\in \Omega^2(G\times S)$ has the form
\[\omega_{S}=\eta+ c\]
where (see \cite[Section 6.4]{BCWZ04})
\begin{itemize}
    \item the mixed component $\eta_{(g,x)}\in T^*_gG\otimes T^*_xS$ satisfies that,
for any $w\in T_xS$, $i_w\eta$ is a multiplicative 1-form on $G$, which is the same as a bi-invariant 1-form on $G$, or a $G$-invariant element of $\gg^*$;
\item the horizontal component is a group homomorphism \[G\ni g\mapsto c(g)\in \Omega^2(S).\]
\end{itemize}
Then, the condition that $\omega_S$ is closed translates to:
\[\frac{\d }{\d t}\Big|_{t=0}c(\exp(tv))=
\d (i_v \eta)|_{\{e\}\times S}\in \Omega^2(S), \quad v\in \gg.\]

At the infinitesimal level we have the trivial Lie algebra bundle $A_S=\gg\times S\to S$, and the induced closed IM 2-form is
\[\mu:\gg\times S\to T^*S,\ \langle w,\mu(v)\rangle =i_wi_v\eta|_{g=e}, \quad v\in \gg,\, w\in TS.\]
The IM-condition says that $\mu:\gg \to \Omega^1(S)$ satisfies
\begin{equation}
    \label{eq:IM:condition:group:bundle}
    \mu([v,w])=0.
\end{equation}

Conversely, the reconstruction of $\omega_S$ from $\mu$ works as follows. The component $\eta$ can be always defined
\[\eta(v,w)=i_{w}\mu(\d L_{g^{-1}}v),\ \quad v\in T_gG,\, w\in T_xS.\]
The component $c:G\to \Omega^2(S)$ is a group homomorphism integrating the Lie algebra homomorphism $\d\circ \mu:\gg \to \Omega^2(S)$, which exists e.g.\ if $G$ is simply connected. Moreover, if $\mu$ itself integrates to a group homomorphism $\lambda:G\to \Omega^1(S)$, then
\[\omega_S=\d \tilde{\lambda},\quad \textrm{where}\quad \tilde{\lambda}_{(g,x)}=\pr_S^*(\lambda(g)).\]

When $G$ is a compact Lie group, the group homomorphism $c:G\to \Omega^2(S)$ must be trivial, and so its Lie algebra version $\d\circ \mu:\gg \to \Omega^2(S)$ vanishes. The IM condition \eqref{eq:IM:condition:group:bundle} then means we can view $\mu:\gg\to\Omega^1(S)$ as an 1-form $\mu\in\Omega^1(S,(\gg^*)^G)$. When this form is exact (e.g., if $H^1(S)=0$), there is $f\in C^{\infty}(S)\otimes (\gg^*)^G$ such that 
\[ \mu(v)=\langle \d f,v\rangle,\quad v\in\gg. \]
The nondegeneracy  of $\mu$ is equivalent to $f$ being an immersion, and we obtain the following expression for $\omega_S$ in terms of $f$
\[\omega_S((w_1,v_1),( w_2, v_2))= \langle \d f(v_1), \d L_g^{-1} (w_2)\rangle- 
 \langle \d f(v_2), \d L_g^{-1} (w_1)\rangle,\] for all $(w_i,v_i)\in T_gG\oplus T_xS$.

\subsection{Non-existence of a coisotropic embedding}
\label{ex:counterex:coistropic}
The jet $\tau\in J^1_S\Pi(M,S)$ in Example \ref{ex:non:holonomic} satisfies $\pi_S\equiv 0$. The corresponding pair $(T_S^*M,\mu_S)$ has zero anchor, i.e., it is a bundle of Lie algebras. This Lie algebroid integrates to a bundle of 1-connected Lie groups $\G_S\tto S$ which is Hausdorff. An explicit description is the following. We use coordinates $(x,y)$ in $S=\R^2$ and  $\s=\t:\G_S\to S$ is the trivial bundle $\pr:\R^5=\R^2\times\R^3\to \R^2$, where on the fiber we use coordinates $(u,v,w)$. The multiplication $m:\G_S\times_S \G_S\to \G_S$ of the group over $(x,y)\in S$ is given by
\[ (u_1,v_1,w_1)\cdot(u_2,v_2,w_2)= (u_1+u_2,v_1+e^{xu_1}v_2,w_1+w_2+\frac{e^{xu_1}-1}{x}v_2). \]
Note that this is not a trivial group bundle.

By Proposition \ref{prop:1:1:over:sympl:surjective:IM}, there is
a multiplicative closed 2-form $\omega_S$ on $\G_S$. The resulting over-symplectic groupoid $(\G_S,\omega_S)$ does not admit a coisotropic embedding, for otherwise $\tau$ would be in the image of $J^1_S$, contradicting Example \ref{ex:non:holonomic}.

It is not hard to show that the over-symplectic structure is
\[ \omega_S=\d x\wedge \d u+\d y\wedge\d v+w\, \d x\wedge \d y-x\, \d y\wedge \d w,\]
by proving that this form is closed, multiplicative and induces the IM form \eqref{eq:IM:form:non-split}.

\subsection{Principal type}\label{example:os:princ:type}
Let $(\Sigma,\omega_{\Sigma})\tto S$ be a symplectic groupoid, and let $\H\tto S$ be a transitive Lie groupoid. Consider their product over $S$
\[\G_S:=\{(\sigma,h)\in \Sigma\times \H\, :\, \s_{\Sigma}(\sigma)=\s_{\H}(h),\, \t_{\Sigma}(\sigma)=\t_{\H}(h)\}\tto S,\]
with groupoid structure such that the inclusion $\G_S\hookrightarrow \Sigma\times \H$ is a groupoid map. Smoothness of $\G_S$ follows because, as for any transitive Lie groupoid, the anchor map $(\t_{\H},\s_{\H}):\H\to S\times S$ is a submersion. Then $\G_S$ is an over-symplectic groupoid with multiplicative 2-form
\[\omega_S:=\pr_{\Sigma}^*(\omega_{\Sigma})\in \Omega^2(\G_S),\]
which we will call the \textbf{over-symplectic groupoid of principal type} associated to $(\Sigma,\omega)$ and $\H$. The terminology comes from the fact that, if one fixes a base point $x_0\in S$, the transitive groupoid $\H$ can be identified with the gauge groupoid $\H\simeq P\times_{G}P$ associated to the principal bundle $P:=\s_{\H}^{-1}(x_0)$, with projection $\t_{\H}:P \to S$ and structure group $G:=\s_{\H}^{-1}(x_0)\cap \t_{\H}^{-1}(x_0)$.

The Poisson structure $\pi_S$ induced on $S$ by the over-symplectic groupoid $(\G_S,\omega_S)$ is the same as the one induced by the symplectic groupoid $(\Sigma,\omega_{\Sigma})$. Some of the previous examples fit into this setting
\begin{itemize}
\item When $P$ is the trivial principal bundle, $P\simeq G\times S$, or equivalently $\H=(S\times S)\times G$, then $\G_S$ is the product $\G_S=\Sigma\times G$ of Example \ref{example:os:products}.
\item For a symplectic pair groupoid $(\Sigma,\omega_{\Sigma})=(S,\omega_S)\times (S,-\omega_S)$, we have $\G_S\simeq \H$ and we obtain the transitive over-symplectic groupoid of Example \ref{ex:transitive:over-symplectic}.
\item When $\Sigma$ is the symplectic groupoid $(T^*S, \omega_{\can})$, then we obtain the bundle of Lie groups $\G_S=T^*S\times_S \K$, with $\omega_S=\pr_{T^*S}^*\omega_{\can}$, where $\K$ is the bundle of isotropy groups of $\H$, or equivalently, the associated bundle $P\times_GG$, where $G$ acts on $G$ by conjugation. 
\end{itemize}

\subsection{Over-symplectic groupoid inducing a non-integrable Poisson structure}
\label{example:Lie-Poisson:sphere}
Let $H$ be a compact semi-simple Lie group, and consider the symplectic groupoid $(T^*H,\omega_{\mathrm{can}})\tto \hh^*$ integrating the linear Poisson structure on $\hh^*$. Let $S\subset \hh^*$ be the unit sphere around the origin with respect to an $H$-invariant inner product. Then $S$ is a Poisson submanifold, and therefore we obtain the over-symplectic groupoid
\[\G_S:=T^*H|_S\simeq H\ltimes S\tto S,\quad \textrm{with}\quad \omega_S:=\omega_{\mathrm{can}}|_{\G_S}\]
For $\hh\not\simeq \mathfrak{so}(3,\mathbb{R})$, the Poisson manifold $(S,\pi_S)$ is not integrable, and so $\G_S$ is not of principal type. 

\section{Groupoid coisotropic embeddings}
\label{sec:grpd}

In this section, given a over-symplectic groupoid $(\G_S,\omega_S)$ admitting a \emph{multiplicative Ehresmann connection} we construct a local model: it is a symplectic groupoid $(\G_0,\omega_0)$ where $(\G_S,\omega_S)$ sits as a coisotropic subgroupoid. We will show that the local model is essentially unique and we discuss a normal form theorem. We will see that the local model can be thought of as a multiplicative (or groupoid) version of the classical Gotay coisotropic embedding theorem.

\subsection{The groupoid local model}
\label{sec:local:model:grpds}
Let $(\G_S,\omega_S)$ be an over-symplectic groupoid. We start by observing that $\G_S$ acts on the vector bundle
\[ \ka:=(\ker\omega_S)|_S\subset A_S,\]
where $A_S$ is the Lie algebroid of $\G_S$. By Proposition \ref{prop:over:symplectic}, $\ka\subset \ker\rho_S$. Then, an arrow $g\in \G_S$ from $x:=\s_S(g)$ to $y:=\t_S(g)$ acts by conjugation on the isotropies:
\begin{equation}
\label{eq:representation:grpd} 
g:\ker\rho_S|_{x}\to \ker\rho_S|_{y}, \quad g\cdot \al=\frac{\d}{\d t}\Big|_{t=0} g\, \exp_{x}(t\alpha)\, g^{-1},
\end{equation}
where
$\exp_{x}:\ker\rho_S|_x\to (\G_S)_{x}$ is the Lie group exponential. Since $\omega_S$ is multiplicative, this action preserves $\ka\subset\ker\rho_S$. So $\ka$ is a $\G_S$-representation. 

The dual representation allows us to define the action groupoid
\[ \G_S\ltimes\ka^*\tto \ka^*. \]
We would like to construct a multiplicative symplectic structure on this groupoid (which is not always possible; see Subsection \ref{ex:counterex:coistropic}). It will be convenient to introduce the following notion, which originates from the theory of multiplicative connections developed in \cite{FM22}.

\begin{definition}
\label{def:partially:split:grpd}
A {\bf multiplicative connection 1-form} on the over-symplectic groupoid $(\G_S,\omega_S)$ is a $\ka$-valued, multiplicative, 1-form $\al\in\Omega^1(\G_S;\ka)$ satisfying
\[ \al(\xi^L)=\xi, \]
where $\xi^L\in\X(\G_S)$ is the left-invariant vector field determined by $\xi\in\Gamma(\ka)$. 

An over-symplectic groupoid $(\G_S,\omega_S)$ which admits a multiplicative connection 1-form is called \textbf{partially split} (in the next subsection, we will motivate the choice of this terminology).
\end{definition}

Now, given a multiplicative connection 1-form $\alpha\in\Omega^1_{\mult}(\G_S,\ka)$ we have an associated (ordinary) 1-form $\langle \al,\cdot\rangle\in\Omega^1(\G_S\ltimes\ka^*)$, defined by
\[(v,z)\mapsto \langle \al_g(v),\eta\rangle,\quad \textrm{for}\quad (v,z)\in T_g\G_S\times_{TS}T_\eta\ka^*.\] 


\begin{proposition}
\label{prop:local:model:grpd}
Given an over-symplectic groupoid $(\G_S,\omega_S)$ and a multiplicative connection 1-form $\alpha\in\Omega^1_{\mult}(\G_S,\ka)$ the closed 2-form
\begin{equation}
    \label{eq:symplectic:form:local:model}
    \omega_0=\pr^*_{\G_S}\omega_S+\d\langle \al,\cdot\rangle
    \in\Omega^2(\G_S\ltimes\ka^*)
\end{equation}
is multiplicative. Moreover, there is an open groupoid neighborhood 
\[(\G_S\tto S)\hookrightarrow (\G_0\tto M_0) \subset (\G_S\ltimes\ka^*\tto\ka^*)\]
on which $\omega_0$ is non-degenerate and the first map is a coisotropic embedding
\[i:(\G_S,\omega_S)\hookrightarrow  (\G_0,\omega_0).\]
\end{proposition}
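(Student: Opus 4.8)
The plan is to verify three separate assertions about the 2-form $\omega_0 = \pr_{\G_S}^*\omega_S + \d\langle\al,\cdot\rangle$ on the action groupoid $\G_S\ltimes\ka^*$: (i) it is multiplicative, (ii) it is closed, and (iii) it is nondegenerate along the zero section $\G_S \simeq \G_S\ltimes 0_S$, from which nondegeneracy on an open neighborhood follows by continuity, together with the coisotropy of the embedding $i$. Closedness is immediate: $\pr_{\G_S}^*\omega_S$ is closed since $\omega_S$ is, and $\d\langle\al,\cdot\rangle$ is exact; this is why the statement only claims $\omega_0$ is closed (not, say, that $\langle\al,\cdot\rangle$ has any special form). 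For multiplicativity, both summands are multiplicative: $\omega_S$ is multiplicative on $\G_S$ and pullback along the groupoid morphism $\pr_{\G_S}:\G_S\ltimes\ka^*\to\G_S$ preserves multiplicativity; and $\langle\al,\cdot\rangle$ is multiplicative because $\al$ itself is a multiplicative $\ka$-valued 1-form and pairing against the (tautologically multiplicative) ``position'' section of $\ka^*$ on the action groupoid yields a multiplicative ordinary 1-form — hence its de Rham differential is a multiplicative 2-form. (This is exactly the kind of statement recorded in the Cartan calculus of Appendix \ref{appendix}, which I would cite.)

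\textbf{Nondegeneracy along $\G_S$.} The crux is to compute $\omega_0$ at points of the zero section and show its kernel is trivial there. I would use the splitting of $T(\G_S\ltimes\ka^*)$ along $\G_S\ltimes 0_S$ into the ``horizontal'' directions $T\G_S$ and the ``vertical'' fiber directions $\ka^*$ (pulled back appropriately). On $T\G_S$-directions, $\omega_0$ restricts to $\omega_S$, whose kernel is exactly $\ker\omega_S$; so the vertical part of $\omega_0$ must pair this kernel nondegenerately with $\ka^*$. By the defining property $\al(\xi^L)=\xi$ of the multiplicative connection 1-form, contracting $\d\langle\al,\cdot\rangle$ with a left-invariant vector field $\xi^L$ (for $\xi\in\Gamma(\ka)$, so $\xi^L\in\ker\omega_S$ along $S$) produces, in the vertical directions, the canonical pairing $\langle\xi,\cdot\rangle$ on $\ka^*$ — this is the Gotay mechanism: the $\d\langle\al,\cdot\rangle$ term supplies precisely the ``missing'' pairing between $\ker\omega_S$ and the conormal/fiber directions $\ka^*$, while leaving the $\G_S$-tangential block essentially unchanged modulo terms that do not destroy nondegeneracy. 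Carrying this out amounts to writing $\omega_0$ in a block form along $\G_S$ of the shape
\[
\omega_0\big|_{\G_S} \;=\; \begin{pmatrix} \omega_S & -\langle\,\cdot\,,\,\cdot\,\rangle \\ \langle\,\cdot\,,\,\cdot\,\rangle & 0 \end{pmatrix}
\]
with respect to the decomposition (tangential to $\G_S$) $\oplus$ (fiber $\ka^*$), where the off-diagonal blocks are the canonical pairing between $\ka\subset A_S$ (identified with a complement via $\al$) and $\ka^*$. Such a block matrix is invertible precisely because the off-diagonal pairing restricts to an isomorphism on $\ker\omega_S\leftrightarrow\ka^*$ and $\omega_S$ is nondegenerate transverse to its kernel (this uses the over-symplectic condition $\ker\omega_S\subset\ker\d\s_S\cap\ker\d\t_S$ and Proposition \ref{prop:over:symplectic}). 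Hence $\omega_0|_{\G_S}$ is symplectic, and by openness of the nondegeneracy locus there is an open $\G_0\subset\G_S\ltimes\ka^*$ containing $\G_S$ on which $\omega_0$ is nondegenerate; shrinking if needed, $\G_0$ can be taken to be an open \emph{subgroupoid} (the nondegeneracy locus need not be a subgroupoid a priori, but one passes to the largest open subgroupoid inside it — equivalently, one uses that the unit section is contained in the locus and takes $\s^{-1}(M_0)\cap\t^{-1}(M_0)$-type saturations, or invokes that $\omega_0$ being multiplicative the nondegeneracy locus is automatically $\s$- and $\t$-saturated and closed under multiplication and inversion).

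\textbf{Coisotropy of $i$.} Finally, $i(\G_S) = \G_S\ltimes 0_S$ is coisotropic in $(\G_0,\omega_0)$: its conormal bundle at a point of $\G_S$ is spanned by the vertical $\ka^*$-covectors, and from the block form above the $\omega_0$-orthogonal of $T(\G_S\ltimes 0_S)$ is $\ker\omega_S$ (the vertical block being zero and the off-diagonal pairing annihilating nothing transverse), which lies inside $T(\G_S\ltimes 0_S)$ — that is, $T(\G_S\ltimes 0_S)^{\perp_{\omega_0}}\subset T(\G_S\ltimes 0_S)$, the definition of coisotropy. (Equivalently: $i^*\omega_0 = \omega_S$ has constant rank $2\dim S$, the over-symplectic rank, which forces $\G_S\ltimes 0_S$ to be coisotropic of the right dimension inside the symplectic manifold $\G_0$.)

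\textbf{Main obstacle.} The genuinely delicate point is keeping careful track of the identifications along $\G_S$ — how $T(\G_S\ltimes\ka^*)|_{\G_S\ltimes 0_S}$ decomposes, how the multiplicative connection $\al$ trivializes the relevant complement, and verifying that the cross-terms coming from $\d\langle\al,\cdot\rangle$ (which involve $\d\al$, not just the defining pointwise relation $\al(\xi^L)=\xi$) do not spoil the block-triangular structure needed for invertibility. The multiplicativity bookkeeping for $\langle\al,\cdot\rangle$ on the action groupoid, and the passage from ``nondegenerate near $\G_S$'' to ``nondegenerate on an open subgroupoid,'' are the other points requiring care but are essentially formal once the local picture along $\G_S$ is nailed down.
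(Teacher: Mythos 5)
Your treatment of multiplicativity, of the nondegeneracy of $\omega_0$ along the zero section, and of the coisotropy of $i$ matches the paper's proof: the paper likewise pulls back multiplicativity along the groupoid morphism $\pr_{\G_S}$, notes that $\langle\al,\cdot\rangle$ is multiplicative because $\al$ is, and checks nondegeneracy and coisotropy along $\G_S$ directly from $\al(\xi^L)=\xi$ and the over-symplectic condition (your block form is exactly this computation, and the cross-terms involving $\d\al$ indeed drop out along the zero section since they are contracted with $\eta=0$).

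The genuine gap is in the last step, the passage from ``nondegenerate along $\G_S$'' to ``nondegenerate on an open \emph{subgroupoid}.'' None of the three mechanisms you offer is justified as stated. A ``largest open subgroupoid inside the nondegeneracy locus'' need not exist (a union of open subgroupoids is not a subgroupoid), and pure openness plus ``shrinking'' cannot work: since $S$ and the target fibers need not be compact, an open neighborhood of the unit section in general contains no full restriction $\s^{-1}(M_0)\cap\t^{-1}(M_0)$ over any neighborhood $M_0$ of $S$ (already for the pair groupoid over a connected base, the only open subgroupoid containing all units is the whole groupoid). Your third alternative --- that multiplicativity forces the nondegeneracy locus to be closed under multiplication and restriction --- is the statement that actually needs proving, and it is precisely where the paper invokes a nontrivial input: define $M_0\subset\ka^*$ as the set of \emph{units} at which $\omega_0$ is nondegenerate; since at such units the rank of $\omega_0$ equals twice the dimension of the base, Proposition \ref{prop:over:symplectic}(c) says that $(\G_S\ltimes\ka^*)|_{M_0}$ with $\omega_0$ is over-symplectic, and the constant-rank conclusion of that proposition then yields nondegeneracy of $\omega_0$ at \emph{every} arrow over $M_0$, not just at the units; finally one passes to the target-connected components of the identities to get $\G_0$. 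Without this propagation-from-units argument (or an equivalent multiplicative constant-rank statement) your construction of $\G_0$ does not go through.
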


\begin{proof}
Since $\al$ is multiplicative, a direct computation shows that $\langle \al,\cdot\rangle$ is a multiplicative 1-form on $\G_S\ltimes\ka^*$, and so is its differential. Since the projection $\pr_{\G_S}:\G_S\ltimes\ka^*\to \G_S$ is a groupoid morphism, it pulls back multiplicative forms to multiplicative forms, and we conclude that $\omega_0$ is multiplicative.

That $\omega_0$ is non-degenerate along the zero-section $\G_S\hookrightarrow \G_S\ltimes \ka^*$ and that $i$ is a coisotropic embedding can be checked directly using the condition $\al(\xi)=\xi^L$, and that $(\G_S,\omega_S)$ is over-symplectic. Alternatively, this follow from Gotay's coisotropic embedding theorem -- see the next section.

Finally, we need to find an open set $\G_0$ where $\omega_0$ is non-degenerate and which is a subgroupoid. Consider the open subset $M_0\subset \ka^*$ consisting of points $x$ such that $\omega_0|_x$ is non-degenerate. By Proposition \ref{prop:over:symplectic} (c), $\G:=(\G_S\ltimes \ka^*)|_{M_0}$ is over-symplectic, and by the last part of that proposition, $\omega_0$ is non-degenerate on $\G$. Thus $(\G,\omega_0)$ is indeed a symplectic groupoid. In order to comply with our convention that Lie groupoids are target-connected, we let $\G_0$ be the open subgroupoid of $\G$ consisting of the connected components of the identities.
\end{proof}

\begin{definition}
\label{def:grpd:local:model}
Given a partially split over-symplectic groupoid $(\G_S,\omega_S)$, with a multiplicative connection 1-form $\alpha$, the symplectic groupoid $(\G_0,\omega_0)$ constructed in the previous proposition is called a \textbf{groupoid local model} for $(\G_S,\omega_S)$.
\end{definition}

We will see in the next subsection alternative expressions for $\omega_0$ and we will show that for any two multiplicative connection 1-forms the corresponding groupoid local models are isomorphic around $\G_S$.

\begin{remark}
In general, the subgroupoid $\G_0\tto M_0$ of $\G_S\ltimes\ka^*\tto\ka^*$ cannot be taken to be a full groupoid neighborhood, i.e., of the form $\G_S\ltimes M_0 \tto M_0$, for some $\G_S$-invariant open set $M_0\subset \ka^*$ containing the zero section. However, this is always possible if $\G_S\tto S$ is a proper groupoid (see \cite[Lemma 5.3]{dHFe18}).
\end{remark}

\subsection{The partially split condition}

Over-symplectic groupoids are not always partially split. The natural setup for this notion is the theory of \emph{multiplicative Ehresmann connections} developed in \cite{FM22}. We now recall this notion briefly and give alternative descriptions. In the case of over-symplectic groupoids, this will lead to alternative descriptions of the symplectic form on the local model. 
\medskip

Given a Lie groupoid $\G_S\tto S$ with Lie algebroid $A_S$, we call a vector subbundle $\ka\subset A_S$ a \textbf{bundle of ideals} of $\G_S$ if $\ka\subset \ker\rho_S$ and $\ka$ is invariant under the $\G_S$-action by conjugation \eqref{eq:representation:grpd}. The fact that $\ka$ is invariant under conjugation implies that 
\[
 \al\in\Gamma(A_S),\  \xi\in\Gamma(\ka) \quad \Longrightarrow\quad [\al,\xi]\in\Gamma(\ka).
 \]
so this justifies the use of the term ``bundle of ideals". 
\bigskip

We have the following equivalent descriptions of the notion of ``connection" for a bundle of ideals $\ka \subset A_S$ for $\G_S$ (for details and terminology see \cite[Section 2.3]{FM22}).

\begin{enumerate}[(i)]
\item A {\bf multiplicative connection 1-form} for $\ka$ is a multiplicative 1-form $\alpha\in\Omega^1_\mult(\G_S,\ka)$ satisfying
    \[ \al(\xi^L)=\xi \quad (\xi\in\Gamma(\ka)). \]
    
By associating to a multiplicative connection 1-form $\alpha$ the distribution 
\[E:=\ker\alpha\subset T\G_S,\]
we obtain the following equivalent notion.
\item A {\bf multiplicative Ehresmann connection} for $\ka$ is a wide subgroupoid $E$ of $T\G_S\tto TS$ such that
\[ T\G_S=E\oplus K,\]
where $K\subset T\G_S$ is obtained by spreading $\ka$ using (left or right) translations
\[K_g:=\dd L_g(\ka_{\s(g)})=\dd R_g(\ka_{\t(g)})\subset \ker\dd \s\cap \ker \dd \t.\]

Note that $K\tto 0_S$ is a subgroupoid of the tangent groupoid $T\G_S\tto TS$, which is canonically isomorphic to the semi-direct product $\G_S\times_S\ka\tto S$, 
via 
\[ \G_S\times_S\ka\diffto K, \quad (g,v)\mapsto \d L_g(v).\]
Then a multiplicative Ehresmann connection gives a splitting of the inclusion of VB groupoids
\[
\vcenter{
\xymatrix@R=10pt{
{}\save[]+<-30pt,0cm>*\txt{$\G_S\times_S\ka\simeq$}\restore K\, \ar@<0.15pc>[dd] \ar@<-0.15pc>[dd] \ar@{^{(}->}@<-0.10pc>[rr]  \ar[dr]&  &  T\G_S  \ar@<0.15pc>[dd] \ar@<-0.15pc>[dd] \ar[dl] \ar@<-0.30pc>@/_/@{-->}[ll]\\
 & 
\G_S \ar@<0.15pc>[dd] \ar@<-0.15pc>[dd] & \\
0_S\, \ar[dr] \ar@{^{(}-}[r] &\ar[r]  & TS \ar[dl]\\
 & S}
 }
\]
Under the duality operation in the category of VB groupoids \cite{BCdH16}, we obtain the VB groupoids $T^*\G_S\tto A_S^*$ and $K^*\tto \ka^*$, and the dual VB groupoid map
\[
\vcenter{
\xymatrix@R=10pt{
{}\save[]+<-35pt,0cm>*\txt{$\G_S\ltimes\ka^*\simeq$}\restore K^*\, \ar@<0.15pc>[dd] \ar@<-0.15pc>[dd] \ar[dr] \ar@/_/@{-->}[rr]&  &  T^*\G_S\ar@{->>}[ll] 
\ar@<0.15pc>[dd] \ar@<-0.15pc>[dd] \ar[dl] \\
 &  \G_S  \ar@<0.15pc>[dd] \ar@<-0.15pc>[dd]  \\
\ka^* \ar[dr] & \ar@{->>}[l] & A^*_S \ar[dl] \ar@{-}[l]\\
 & S
}}
\]
The groupoid $K^*\tto \ka^*$ is isomorphic to the action groupoid $\G_S\ltimes\ka^*\tto \ka^*$ of the dual action of $\G_S$ on $\ka^*$. This leads to the following equivalent notion.
\item A {\bf partial splitting} is a VB groupoid morphism $\Theta:K^* \to T^*\G_S$ that splits the projection $p:T^*\G_S\to  K^*$
    \[ p\circ\, \Theta=\id.\]
The corresponding Ehresmann connection is given by
    \[ E=(\im\Theta)^0. \]
Another equivalent notion is the following.
\item A \textbf{linear}, closed, multiplicative, 2-form $\omega^\lin\in\Omega^2_\mult(K^*)$ that restricts to the canonical symplectic form on $\ka\times_S\ka^*\subset T_S K^*$
    \[ \omega^\lin((v_1,\xi_1),(v_2,\xi_2)) = \xi_2(v_1)-\xi_1(v_2), \quad \text{if} \quad(v_k,\xi_k)\in\ka\times_S\ka^*,\]
    where linear means that
    \[m_t^*(\omega^{\lin})=t\omega^{\lin},\quad (t>0),\]
    where $m_t:K^*\to K^*$ is fiberwise multiplication by $t>0$. Using the partial splitting or the connection 1-form, the linear form is given by
    \begin{equation}\label{eq:relation:omega:theta}
    \omega^{\lin}=\Theta^*\omega_{\can}=\d \langle \al,\cdot\rangle.
    \end{equation}
\end{enumerate}
A bundle of ideals admitting such a structure is called \textbf{partially split}. 

\begin{remark}
    There are several reasons for our choice of the terminology ``partial splitting''. The main reason, as explained in the companion paper \cite{FM22}, is the following. Multiplicative Ehresmann connections appeared first as Ehresmann connections for a surjective, submersive, groupoid morphism $\Phi:\G\to\H$ covering the identity. Such a morphism determines a short exact sequence of groupoids:
    \[ \xymatrix{1\ar[r] & \K\ar[r] & \G\ar[r]^\Phi & \H\ar[r] &1 }. \]
    The existence of an Ehresmann connection for $\Phi$, which is equivalent to the bundle of ideals $\ka=\textrm{Lie}(\K)$ being partially split, is \emph{weaker} than the existence of a splitting for this short exact sequence. Moreover, a partial splitting can be thought of as an infinitesimal version of a splitting of this sequence. We refer to \cite{FM22} for more details.
    The designation ``partial splitting" also allows us to distinguish these from other splittings that appear in the sequel. 
\end{remark}

\medskip

Let us return to our discussion of over-symplectic groupoids and their local models. For an over-symplectic groupoid $(\G_S,\omega_S)$, the relevant bundle of ideals is $\ka=(\ker\omega_S)|_S$ and we have $K=\ker\omega_S$. In particular, note that
\[
\G_S\ltimes\ka^*\simeq (\ker\omega_S)^*. 
\]
The symplectic form $\omega_0$ on the local model $\G_0$ can be written as follows:
\begin{itemize}
    \item using a multiplicative connection 1-form $\al\in\Omega^1_{\mult}(\G_S,\ka)$ as in the definition
    \[ \omega_0=\pr_{\G_S}^*\omega_S+\d \langle\al,\cdot\rangle; \]
    \item using a partial splitting $\Theta:\G_S\ltimes\ka^*\to T^*\G_S$
     \[ \omega_0=\pr_{\G_S}^*\omega_S+\Theta^*\omega_\can; \]
    \item using a linear, closed, multiplicative, 2-forms $\omega^\lin\in\Omega^2_\mult(\G_S\ltimes\ka^*)$
    \[ \omega_0=\pr_{\G_S}^*\omega_S+\omega^\lin.\]
\end{itemize}

The last expression shows that $\omega_0$ is the sum of a constant multiplicative form
$\pr^*\omega_S$ and a linear multiplicative form $\omega^\lin$.


\medskip

We can also explain the origins of the local model. It is a multiplicative version of the well-known Gotay's coisotropic embedding \cite{Gotay82}. In the classical case, starting with a pair $(C,\omega_C)$, where $\omega_C$ is a closed 2-form of constant rank, one constructs a symplectic manifold $(X_0,\omega_0)$ and a coisotropic embedding $(C,\omega_C)\hookrightarrow (X_0,\omega_0)$ as follows. A choice of complementary subbundle
\[
    TC=\ker\omega_C\oplus E,
\]
determines an embedding
\[ 
\Theta:(\ker\omega_C)^*\hookrightarrow T^*C.
\]
The form 
\[ 
\omega_0:=\pr_C^*\omega_C+\Theta^*\omega_\can \in\Omega^2((\ker\omega_C)^*), \]
is non-degenerate along the zero section $C\simeq 0_C$ and hence defines a symplectic form in a neighborhood $X_0\subset(\ker\omega_C)^*$ of $C$. Moreover, the zero section $C$ becomes a coisotropic submanifold of $(X_0,\omega_0)$ with induced 2-form $\omega_C$. Gotay's result says that any coisotropic embedding of $(C,\omega_C)$ in a symplectic manifold is locally isomorphic to the local model $(X_0,\omega_0)$. 

The local model we constructed is the multiplicative version of Gotay's model.



\medskip

The construction in Proposition \ref{prop:local:model:grpd} can be viewed in a slightly different way. For any over-symplectic groupoid $(\G_S,\omega_S)$, we have an associated symplectic groupoid
\begin{equation*}
(T^*\G_S,\omega_{\can}+\pr^*\omega_S) \tto A_S^*.
\end{equation*}

\begin{corollary}\label{corollary:embed:symplectic:groupoid}
For any splitting $\Theta$, the local model is a symplectic subgroupoid
\[\Theta:(\G_0,\omega_0)\hookrightarrow (T^*\G_S,\omega_{\can}+\pr^*\omega_S).\]
\end{corollary}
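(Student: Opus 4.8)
The plan is to show that the partial splitting $\Theta$ is simultaneously an embedding of Lie groupoids and compatible with the two symplectic forms, so that the statement reduces to a one-line pullback computation. Recall from description (iii) above that $\Theta\colon K^*\to T^*\G_S$ is a morphism of VB groupoids splitting the canonical surjection $p\colon T^*\G_S\to K^*$, so $p\circ\Theta=\id$. As a bundle map over $\G_S$ it covers $\id_{\G_S}$, hence $\pr\circ\Theta=\pr_{\G_S}$ under the identification $K^*\simeq\G_S\ltimes\ka^*$; and on objects it covers a section $\theta\colon\ka^*\to A_S^*$ of the surjection $A_S^*\to\ka^*$ dual to $\ka\hookrightarrow A_S$. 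First I would observe that $\Theta$ is a fiberwise-injective morphism of vector bundles over $\id_{\G_S}$, hence an embedding, and that $\theta$ is a section of a vector bundle surjection, hence also an embedding; since $\Theta$ is in addition a morphism of Lie groupoids, its restriction to the open subgroupoid $\G_0\subset\G_S\ltimes\ka^*$ of Proposition \ref{prop:local:model:grpd} is an isomorphism of Lie groupoids onto an embedded Lie subgroupoid $\Theta(\G_0)\subset T^*\G_S$ over $\theta(M_0)\subset A_S^*$.

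Next I would compare the two-forms on $K^*$. Using $\pr\circ\Theta=\pr_{\G_S}$ together with the identity $\Theta^*\omega_\can=\d\langle\al,\cdot\rangle$ from \eqref{eq:relation:omega:theta}, one computes
\[
\Theta^*\bigl(\omega_\can+\pr^*\omega_S\bigr)=\Theta^*\omega_\can+(\pr\circ\Theta)^*\omega_S=\d\langle\al,\cdot\rangle+\pr_{\G_S}^*\omega_S=\omega_0,
\]
the last equality being the definition \eqref{eq:symplectic:form:local:model} of $\omega_0$. Thus $\Theta$ pulls back the multiplicative symplectic form of $(T^*\G_S,\omega_\can+\pr^*\omega_S)$ to $\omega_0$. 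Since $\omega_0$ is non-degenerate on $\G_0$ by Proposition \ref{prop:local:model:grpd}, this exhibits $\Theta|_{\G_0}$ as a symplectomorphism onto its image; because $\Theta(\G_0)$ is a Lie subgroupoid on which the ambient multiplicative symplectic form restricts (via $\Theta$) to the symplectic groupoid structure $(\G_0,\omega_0)$, it is a symplectic subgroupoid, which is the assertion. One may also note here that $\pr^*\omega_S$ is multiplicative on $T^*\G_S$ since $\pr\colon T^*\G_S\to\G_S$ is a groupoid morphism, which is precisely what makes $(T^*\G_S,\omega_\can+\pr^*\omega_S)\tto A_S^*$ a symplectic groupoid in the first place.

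The only step that goes beyond routine bookkeeping is the structural claim that the partial splitting $\Theta$ is an embedding of Lie groupoids covering an embedding on objects; this rests on the VB groupoid duality from \cite{BCdH16} and \cite{FM22} used to define $\Theta$, together with the identifications $K^*\simeq\G_S\ltimes\ka^*$ and $\pr\circ\Theta=\pr_{\G_S}$ already recorded in Section \ref{sec:grpd}. Once these are in place, the remainder is the pullback identity above, so I expect no genuine difficulty — the corollary is essentially a repackaging of the formula $\omega_0=\pr_{\G_S}^*\omega_S+\Theta^*\omega_\can$ in terms of the twisted cotangent groupoid of $\G_S$.
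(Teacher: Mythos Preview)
Your proof is correct and is exactly the argument the paper has in mind: the corollary is stated without proof because it is immediate from the identity $\omega_0=\pr_{\G_S}^*\omega_S+\Theta^*\omega_\can$ established in \eqref{eq:relation:omega:theta} and the lines preceding it, together with the fact that the partial splitting $\Theta$ is by definition an injective VB groupoid morphism. You have simply written out the one-line pullback computation and the bookkeeping that the paper leaves implicit.
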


In Subsection \ref{sec:Xu:submanifolds}, we will discuss this construction at the level of Lie algebroids, and then we will describe explicitly the corresponding  Poisson structure on $A_S^*$. 

\subsection{Existence of multiplicative Ehresmann connections}

The following result gives some geometric meaning to the existence of a partially split condition: it shows that a multiplicative Ehresmann connection exists if and only if the groupoid $(\ker\omega_S)^*=\G_S\ltimes\ka^*$ can be made into a symplectic groupoid around $\G_S$ such that the symplectic form restricts to $\omega_S$. Therefore the partially split condition is optimal for the existence of a groupoid local model on $\G_S\ltimes \ka^*$.

\begin{proposition}
\label{prop:partially:split:grpd:optimal} An over-symplectic groupoid $(\G_S,\omega_S)$ is partially split if and only if there exists a groupoid neighborhood
\[(\G_S\tto S)\hookrightarrow (\G_0\tto M_0) \subset (\G_S\ltimes \ka^*\tto \ka^*),\]
together with multiplicative symplectic structures $\omega_0\in \Omega^2(\G_0)$ whose pullback along the zero section $i:\G_S\hookrightarrow  \G_S\ltimes \ka^*$ is $\omega_S$,
\[ i^*\omega_0=\omega_S. \]
\end{proposition}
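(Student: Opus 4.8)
The plan is to prove the two implications separately. The forward direction is essentially Proposition \ref{prop:local:model:grpd}: if $(\G_S,\omega_S)$ is partially split, then by definition there exists a multiplicative connection $1$-form $\al\in\Omega^1_\mult(\G_S,\ka)$, and Proposition \ref{prop:local:model:grpd} produces exactly the desired groupoid neighborhood $(\G_0\tto M_0)\subset(\G_S\ltimes\ka^*\tto\ka^*)$ together with the multiplicative symplectic form $\omega_0=\pr_{\G_S}^*\omega_S+\d\langle\al,\cdot\rangle$. To check $i^*\omega_0=\omega_S$, note that $\pr_{\G_S}\circ i=\id_{\G_S}$, so $i^*\pr_{\G_S}^*\omega_S=\omega_S$, and it remains to see that $i^*\d\langle\al,\cdot\rangle=0$. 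This follows because $i$ is the zero section $\G_S\simeq\G_S\ltimes 0_S$, and the $1$-form $\langle\al,\cdot\rangle$ vanishes identically along the zero section (its value at $(v,z)\in T_g\G_S\times_{TS}T_\eta\ka^*$ is $\langle\al_g(v),\eta\rangle$, which is zero when $\eta=0$); hence its pullback, and the pullback of its differential, vanish.

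For the converse, suppose we are given such a $(\G_0,\omega_0)$. The plan is to recover a multiplicative connection $1$-form, equivalently (via descriptions (i)--(iv) recalled in the previous subsection) a multiplicative Ehresmann connection or a partial splitting. The natural candidate is to use the linear approximation of $\omega_0$ along the zero section. More precisely, write $\tilde\omega_0:=\omega_0-\pr_{\G_S}^*\omega_S$, a closed multiplicative $2$-form on $\G_0$ which, by hypothesis, pulls back to $0$ along $i$. I would first argue that $\tilde\omega_0$ can be replaced, without loss of generality, by a \emph{linear} closed multiplicative $2$-form $\omega^\lin\in\Omega^2_\mult(\G_S\ltimes\ka^*)$ having the same $1$-jet along $\G_S$: this is done by applying the fiberwise rescaling $m_t$ and taking the limit $\omega^\lin:=\lim_{t\to 0^+}\tfrac1t\, m_t^*\tilde\omega_0$ (equivalently, extracting the degree-one homogeneous part), which is globally defined on $\G_S\ltimes\ka^*$, closed, and multiplicative because $m_t$ is a groupoid morphism. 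Next I would check that $\omega^\lin$ restricts to the canonical pairing on $\ka\times_S\ka^*\subset T_SK^*$: this is exactly the statement that $\omega_0$ is non-degenerate transverse to $\G_S$ inside $\G_S\ltimes\ka^*$ together with $i^*\omega_0=\omega_S$ and the over-symplectic condition on $\omega_S$, which forces the mixed (fiber $\times$ base) block of the Hessian of $\omega_0$ at the zero section to be the identity pairing. Once this is established, $\omega^\lin$ is precisely a datum of type (iv) in the list of equivalent descriptions of a connection for the bundle of ideals $\ka$, hence $\ka$ is partially split, i.e.\ $(\G_S,\omega_S)$ is partially split.

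I expect the main obstacle to be the non-degeneracy/normalization step in the converse: verifying that the linearized form $\omega^\lin$ really does restrict to the canonical symplectic pairing on $\ka\times_S\ka^*$, rather than to some other non-degenerate pairing, and in particular that it is non-degenerate \emph{in the fiber direction} so that it qualifies as a datum (iv). This is where the coisotropicity of $i$ (equivalently $i^*\omega_0=\omega_S$ with $\omega_0$ symplectic near $\G_S$) and the over-symplectic hypothesis $\ker\omega_S\subset\ker\d\s_S\cap\ker\d\t_S$ must be combined: the over-symplectic condition identifies the conormal directions to $\G_S$ in $\G_S\ltimes\ka^*$ with $\ka^*$, and the symplectic orthogonality then pins down the pairing. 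A clean way to organize this is to invoke the multiplicative Gotay/Moser machinery: the associated symplectic groupoid $(T^*\G_S,\omega_\can+\pr^*\omega_S)\tto A_S^*$ of the previous subsection contains $\G_S$ coisotropically, and any multiplicative symplectic germ around $\G_S$ with the prescribed restriction is, by the multiplicative Moser theorem of Appendix \ref{appendix}, isomorphic to one coming from a partial splitting $\Theta:\G_S\ltimes\ka^*\to T^*\G_S$ — from which the connection $1$-form $\al$ with $\langle\al,\cdot\rangle=\Theta^*\theta_\can$ is read off directly, giving the partially split structure. I would phrase the final argument using whichever of these two routes (direct linearization, or Moser + the symplectic groupoid $T^*\G_S$) turns out to require the least additional bookkeeping.
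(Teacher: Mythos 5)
Your forward implication coincides with the paper's (it simply cites Proposition \ref{prop:local:model:grpd}), and your verification that $i^*\omega_0=\omega_S$ is correct. The problems are in the converse.

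The linearization route has a genuine gap exactly at the step you flag. After forming $\omega^{\lin}=\lim_{t\to 0^+}\tfrac{1}{t}\,m_t^*(\omega_0-\pr_{\G_S}^*\omega_S)$ (which does make sense after shrinking to an $m_t$-invariant subgroupoid as in Lemma \ref{lem:invariantsubgrp}), the hypotheses do \emph{not} force its restriction to $\ka\times_S\ka^*$ to be the canonical pairing: coisotropicity of $\G_S$ together with the over-symplectic condition only give that the pairing between $\ka$ and the normal directions is \emph{some} non-degenerate, $\G_S$-equivariant pairing, i.e.\ an equivariant isomorphism $\varphi:\ka^*\to\ka^*$ as in Lemma \ref{lemma:omega:along:zero}, not necessarily the identity. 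This is fixable by pushing the form forward along the groupoid automorphism $\id\times\varphi$ of $\G_S\ltimes\ka^*$, which fixes $\G_S$ and normalizes the pairing; this is precisely how the paper handles the same point in the infinitesimal statement, Proposition \ref{prop:partially:split:algbrd:optimal}, via Lemma \ref{lemma:how:to:fix:along:zero:section}. Your fallback via Moser, on the other hand, is circular as phrased: Proposition \ref{prop:uniqueness:grpd} compares two \emph{given} multiplicative symplectic germs, so to conclude that your $\omega_0$ ``comes from a partial splitting $\Theta$'' you would already need a partial splitting in order to build the model you compare against, which is exactly what is to be proved.

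Note also that the paper's converse is much more direct and avoids linearization altogether: along $\G_S$ one sets $E:=(\G_S\ltimes\ka^*)^{\perp_{\omega_0}}\cap T\G_S$; since $i^*\omega_0=\omega_S$ and $\G_S$ is coisotropic, $E$ is a vector bundle complement to $K=\ker\omega_S$ inside $T\G_S$, and multiplicativity of $\omega_0$ makes $E$ a VB subgroupoid, i.e.\ a multiplicative Ehresmann connection (description (ii)), with no rescaling, no limits, and no normalization of pairings. So your plan can be completed — route 1 plus the $\varphi$-correction is essentially the groupoid analogue of the proof of Proposition \ref{prop:partially:split:algbrd:optimal} — but as written the claim that the mixed block is the canonical pairing is false, and the Moser alternative does not close the argument.
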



\begin{proof}
Proposition \ref{prop:local:model:grpd} gives one implication.

For the converse, let $(\G_0,\omega_0)$ be as in the statement. That $\G_S$ is coisotropic follows from Proposition \ref{prop:Poisson:over:symplectic}. The restriction of the tangent groupoid $T(\G_S\ltimes \ka^*)\tto T\ka^*$ to the invariant submanifold $T_S\ka^*\simeq TS\oplus \ka^*$ is the action groupoid: 
$T\G_S\ltimes \ka^*\tto TS\oplus \ka^*$. This has the subgroupoids \[K:=(T\G_S)^{\perp_{\omega_0}}\tto S, \quad T\G_S\tto TS\quad \textrm{and}
\quad \G_S\ltimes \ka^*\tto \ka^*,\]
where the last is an embedded normal bundle for $\G_S$. We obtain that
\[E:=(\G_S\ltimes \ka^*)^{\perp_{\omega_0}}\cap T\G_S\]
is a vector bundle complementary to $K$ in $T\G_S$. Multiplicativity of $\omega_0$ implies that $E$ is subgroupoid. Hence $(\G_S,\omega_S)$ is partially split. 
\end{proof}

\begin{example}
\label{ex:non:split}
Consider the over-symplectic groupoid $(\G_S,\omega_S)$ from Subsection \ref{ex:counterex:coistropic}.
The action of $\G_S$ on the line bundle 
\[\ka=(\ker\omega_S)|_S=\langle x\pd{v}+\pd{w}\rangle |_S\]
is given on a section $s\in \Gamma(\ka)$ by
\[ (x,y,u,v,w)\cdot s{(x,y)}=e^{xu}s{(x,y)}. \]
Therefore, the groupoid $(\ker\omega_S)^*=\G_S\ltimes\ka^*$ has one dimensional orbits, and so it cannot be a symplectic groupoid. The previous proposition implies that $(\G_S,\omega_S)$ is not partially split. This also follows from the fact that the corresponding first jet cannot be realized as a Poisson structure. 
\end{example}

Another type of obstructions to the partially split condition, different from the previous example, derive from the general theory in \cite{FM22} of multiplicative connections. In particular, the results from \cite[Section 2.3]{FM22} when applied to over-symplectic groupoids, give the following

\begin{proposition}
\label{prop:corollary:partial:splittings}
Let $(\G_S,\omega_S)$ be partially split over-symplectic groupoid and let $\ka=(\ker\omega_S)|_S$. A choice of partial splitting $\Theta:(\ker\omega_S)^*\to T^*\G_S$ yields
\begin{enumerate}[(i)] 
    \item A linear connection $\nabla$ on $\ka$ which preserves the bracket
    \[\nabla_{X}[\xi,\eta]_{\ka}=[\nabla_{X}\xi,\eta]_{\ka}+[\xi,\nabla_{X}\eta]_{\ka},\quad X\in\X(S),\xi,\eta\in\Gamma(\ka); \]
    In particular, $\ka$ is a locally trivial bundle of Lie algebras.
    \item For each $x\in S$, a splitting of the isotropy Lie algebra $\gg_x=\ker \rho_S|_x$ into a direct sum of ideals
    \[ \gg_x=\ka_x\oplus\mathfrak{l}_x. \]
\end{enumerate}
\end{proposition}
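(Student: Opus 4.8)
The plan is to deduce both statements from the general theory of multiplicative connections for bundles of ideals developed in \cite[Section 2.3]{FM22}, once the relevant data have been identified. The first step is the observation that, for an over-symplectic groupoid $(\G_S,\omega_S)$, the subbundle $\ka=(\ker\omega_S)|_S\subset A_S$ is a \emph{bundle of ideals} of $\G_S$: it is contained in $\ker\rho_S$ by Proposition \ref{prop:over:symplectic}, and it is invariant under the conjugation action \eqref{eq:representation:grpd} precisely because $\omega_S$ is multiplicative. Hence the Lie bracket of $A_S$ restricts to a $C^\infty(S)$-bilinear---so fibrewise---Lie bracket $[\cdot,\cdot]_\ka$ on $\ka$, making $\ka$ a bundle of Lie algebras. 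By the chain of equivalences recalled above, the chosen partial splitting $\Theta\colon(\ker\omega_S)^*\to T^*\G_S$ is the same datum as a multiplicative Ehresmann connection $E\subset T\G_S$ for $\ka$, equivalently a multiplicative connection $1$-form $\al\in\Omega^1_{\mult}(\G_S,\ka)$ with $\al(\xi^L)=\xi$; this is exactly the input of \cite[Section 2.3]{FM22}, and (i) and (ii) are the over-symplectic incarnations of two of its outputs.

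For (i), I would pass to the infinitesimal side: differentiating $E$ (equivalently $\al$) produces an IM Ehresmann connection for $\ka\subset A_S$, part of whose data is an ordinary linear connection $\nabla$ on the vector bundle $\ka\to S$, and one of the structure equations for such a connection is precisely the Leibniz identity $\nabla_X[\xi,\eta]_\ka=[\nabla_X\xi,\eta]_\ka+[\xi,\nabla_X\eta]_\ka$. Granting this, local triviality of $\ka$ as a bundle of Lie algebras is elementary: over a coordinate ball around any $x_0\in S$, parallel transport of $\nabla$ along radial segments gives a smooth vector bundle trivialization, and because $\nabla$ preserves the fibrewise bracket the relation $\nabla_{\gamma'}[\xi(t),\eta(t)]_\ka=0$ shows that each such transport is a Lie algebra isomorphism; thus the trivialization is one of bundles of Lie algebras.

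For (ii), I would restrict the connection to the isotropy group $G_x:=(\G_S)_x$ at a point $x\in S$, a Lie group with Lie algebra $\gg_x=\ker\rho_S|_x$ that acts on $\ka_x\subset\gg_x$ by the adjoint action, this being the restriction of \eqref{eq:representation:grpd}. The restricted datum is $E\cap TG_x$, a wide subgroupoid of the tangent group $TG_x$ complementary to $K\cap TG_x$; equivalently, $\al|_{G_x}$ is a multiplicative $\ka_x$-valued $1$-form on $G_x$ restricting to the identity on $\ka_x$. From the description of such objects in \cite[Section 2.3]{FM22} one extracts a linear complement $\mathfrak{l}_x$ of $\ka_x$ in $\gg_x$ that is invariant under the adjoint action, hence an ideal. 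Since $\ka_x$ is itself an ideal and $\ka_x\cap\mathfrak{l}_x=\{0\}$, one automatically gets $[\ka_x,\mathfrak{l}_x]\subset\ka_x\cap\mathfrak{l}_x=\{0\}$, so $\gg_x=\ka_x\oplus\mathfrak{l}_x$ is a decomposition into a direct sum of ideals.

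The main difficulty here is organisational rather than computational: one must make precise the dictionary ``partial splitting $\leftrightarrow$ multiplicative connection $1$-form $\leftrightarrow$ IM Ehresmann connection'' in the over-symplectic setting, and then read off from \cite{FM22} the two structural outputs used above---the bracket-compatibility of the induced $\nabla$ on $\ka$, and the existence of an invariant (hence ideal) complement to $\ka_x$ inside each isotropy $\gg_x$. Once those are in place, the remaining arguments---local triviality via radial parallel transport, and the fact that two transverse ideals have vanishing mutual bracket---are routine.
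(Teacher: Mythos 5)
Your proposal matches the paper's treatment: the paper gives no independent proof of this proposition, but simply states that it follows from the theory of multiplicative/IM Ehresmann connections in \cite[Section 2.3]{FM22} applied to the bundle of ideals $\ka=(\ker\omega_S)|_S$, which is exactly the route you take. Your supplementary details (local triviality via bracket-preserving parallel transport, and the ideal property of the complement inside each isotropy algebra) are correct and consistent with the infinitesimal statements recorded later in the paper (Propositions \ref{prop:corollary:IM:partial:splittings} and \ref{prop:structure:eqs}).
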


There are also some classes of over-symplectic groupoids that are always partially split. For example, we have the following fundamental result from \cite{FM22}.
\begin{theorem}
\label{thm:proper}
Every bundle of ideals for a proper Lie groupoid is partially split.
\end{theorem}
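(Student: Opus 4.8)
The plan is to produce a multiplicative connection $1$-form for the bundle of ideals $\ka\subset A$ (where $A$ is the Lie algebroid of the proper groupoid $\G\tto M$) by averaging an arbitrary connection over $\G$, using properness in the guise of a normalized Haar system. Recall that $\ka$ being a bundle of ideals means exactly that the subbundle $K\subset T\G$ obtained by spreading $\ka$ via left (equivalently right) translations, $K_g=\d L_g(\ka_{\s(g)})=\d R_g(\ka_{\t(g)})$, is a VB-subgroupoid of the tangent groupoid $T\G\tto TM$. So the goal is to find a complementary VB-subgroupoid $E\tto TM$ with $T\G=E\oplus K$, or, dually, a VB-groupoid splitting $\Theta\colon \G\ltimes\ka^*\cong K^*\to T^*\G$ of the canonical projection.

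First I would forget the groupoid structure: since $K$ is a subbundle of $T\G$, a partition-of-unity argument on $\G$ produces an ordinary complementary subbundle $E_0$, equivalently a $\ka$-valued $1$-form $\alpha_0\in\Omega^1(\G;\ka)$ with $\alpha_0(\xi^L)=\xi$ for all $\xi\in\Gamma(\ka)$. This $\alpha_0$ is generally not multiplicative; its defect is the failure of the identity $m^*\alpha_0=\pr_1^*\alpha_0+g\cdot\pr_2^*\alpha_0$ on the manifold $\G^{(2)}$ of composable pairs, where $g$ denotes the first arrow acting on $\ka$ by conjugation. The idea is to correct $\alpha_0$, without disturbing the normalization along $K$, into a multiplicative form. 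Phrased cohomologically, one wants to solve $\delta\beta=\delta\alpha_0$ in the complex of $\ka$-valued forms on the nerve of $\G$ with its simplicial differential and then set $\alpha=\alpha_0-\beta$; this is possible because that complex is acyclic in positive degrees for proper $\G$, the contracting homotopy being built from the Haar system and a cutoff function. Concretely, one just lets $\alpha$ be the $\G$-average of $\alpha_0$: averaging a cochain over a proper groupoid forces invariance under the simplicial structure, i.e.\ multiplicativity, and since conjugation is an inner groupoid automorphism it preserves $K$ together with the canonical identification $K\cong\G\times_M\ka$, so every translate of $\alpha_0$ still restricts to the identity on $K$, and hence so does the average. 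Then $E:=\ker\alpha$ is a multiplicative Ehresmann connection, and $\ka$ is \emph{partially split}.

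The main obstacle is to make the averaging both well-defined and compatible with the (possibly nontrivial) representation of $\G$ on $\ka$: one must transport $\alpha_0$ along arrows via the conjugation action, integrate over the source fibers against the cutoff-weighted Haar measure, and then verify that the result is a smooth $1$-form on all of $\G$ -- this is exactly where the compact support supplied by the cutoff function is needed -- that it satisfies the full multiplicativity cocycle identity, and that the normalization $\alpha(\xi^L)=\xi$ genuinely survives. A cleaner route I would also pursue is to invoke the existence of a suitable groupoid metric on any proper Lie groupoid, in the sense of the theory of Riemannian groupoids of del Hoyo and Fernandes: with respect to such a metric the orthogonal complement of any VB-subgroupoid of $T\G$ is again a VB-subgroupoid, so $E:=K^{\perp}$ works immediately -- at the cost of replacing the explicit averaging by the (likewise averaging-based) construction of the invariant metric.
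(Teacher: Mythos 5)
You should first be aware that the present paper does not prove Theorem \ref{thm:proper} at all: it is imported from the companion paper \cite{FM22}, where it is established by a properness-based averaging argument (Haar system plus cutoff function), i.e.\ the same circle of ideas as your first route. So your overall strategy is the right one, but as written it has a genuine gap. The sentence ``averaging a cochain over a proper groupoid forces invariance under the simplicial structure, i.e.\ multiplicativity'' conflates two different mechanisms: multiplicativity of $\alpha\in\Omega^1(\G;\ka)$ is a cocycle condition $\delta\alpha=0$ on $\G^{(2)}$, not invariance under translation. The naive average of $\alpha_0$ (transport by conjugation along right translations, then integrate against the cutoff-weighted Haar measure) does preserve the normalization $\alpha(\xi^L)=\xi$, but it is \emph{not} multiplicative in general, since $\alpha_0$ evaluated on $\d m(v_1,\d R_h v_2)$ does not decompose unless $\alpha_0$ is already multiplicative. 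What properness actually provides is a homotopy operator $H$ for the complex of $\ka$-valued forms on the nerve, and constructing $H$ for forms (rather than for functions, as in the classical vanishing of differentiable cohomology) requires choosing lifts of tangent vectors along the face maps; this is where the real work lies. Moreover, once you pass to the corrected form $\alpha=\alpha_0-H(\delta\alpha_0)$, the survival of the normalization must be verified for the correction term $H(\delta\alpha_0)$, not for translates of $\alpha_0$: your argument addresses the wrong object. The verification does go through -- $\delta\alpha_0$ vanishes on composable pairs of vectors in $K$, and the lifts in $H$ can be taken through right translations, which preserve $K$ -- but carrying this out is precisely the content of the proof in \cite{FM22}, not a formality.

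Your proposed ``cleaner route'' is also unsupported: it is not a theorem of \cite{dHFe18}, nor obviously true, that the orthogonal complement of a VB-subgroupoid of $T\G$ with respect to a Riemannian groupoid metric is again a VB-subgroupoid. Multiplicativity of $K^{\perp}$ under $\d m$ is exactly the kind of compatibility one is trying to establish (it works, for instance, for a compact Lie group with a bi-invariant metric, but no general statement of this sort is available), so as stated this route assumes what is to be proved rather than providing an alternative proof.
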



\begin{corollary}
Every proper over-symplectic groupoid is partially split.
\end{corollary}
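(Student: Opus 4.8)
The plan is to reduce this corollary directly to Theorem~\ref{thm:proper}. Recall that an over-symplectic groupoid $(\G_S,\omega_S)$ is called proper when its underlying Lie groupoid $\G_S\tto S$ is a proper Lie groupoid, i.e.\ the anchor $(\t_S,\s_S):\G_S\to S\times S$ is a proper map. So the only thing to check is that the relevant object for the partially split condition, namely the subbundle $\ka=(\ker\omega_S)|_S\subset A_S$, is indeed a \emph{bundle of ideals} of $\G_S$ in the sense of the definition preceding the statement; once this is established, Theorem~\ref{thm:proper} immediately produces a multiplicative Ehresmann connection for $\ka$, which by the list of equivalent descriptions (items (i)--(iv)) is the same as a multiplicative connection $1$-form $\al\in\Omega^1_\mult(\G_S,\ka)$ with $\al(\xi^L)=\xi$, and hence $(\G_S,\omega_S)$ is partially split by Definition~\ref{def:partially:split:grpd}.

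First I would verify that $\ka$ is a subbundle of $A_S$ of constant rank: this follows from Proposition~\ref{prop:over:symplectic}, which asserts that for an over-symplectic groupoid $\omega_S$ has constant rank, so $\ker\omega_S\subset T\G_S$ is a subbundle, and restricting along the unit embedding $S\hookrightarrow\G_S$ gives a subbundle $\ka=(\ker\omega_S)|_S\subset A_S$ (here I use $A_S=\ker\d\t|_S$ together with $\ker\omega_S\subset\ker\d\s_S\cap\ker\d\t_S$ from the definition of over-symplectic groupoid, which shows $\ka\subset A_S$). Next, $\ka\subset\ker\rho_S$: again by Proposition~\ref{prop:over:symplectic}, the orbits of $\G_S$ are the symplectic leaves of $(S,\pi_S)$, so at the unit level $\ker\omega_S\subset\ker\d\s_S\cap\ker\d\t_S$ lands in the kernel of the anchor $\rho_S$ of $A_S$; this is exactly the inclusion $\ka\subset\ker\rho_S$ recorded right after \eqref{eq:representation:grpd}. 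Finally, invariance of $\ka$ under the conjugation action \eqref{eq:representation:grpd}: since $\omega_S$ is multiplicative, the conjugation action of an arrow $g$ preserves $\ker\omega_S$, hence its restriction to the isotropies preserves $\ka$. This last point is stated in the text (``Since $\omega_S$ is multiplicative, this action preserves $\ka\subset\ker\rho_S$. So $\ka$ is a $\G_S$-representation.''), so I would simply cite that discussion. With all three conditions in hand, $\ka$ is a bundle of ideals for $\G_S$.

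Then the proof is essentially one line: $\G_S$ is proper and $\ka$ is a bundle of ideals for $\G_S$, so Theorem~\ref{thm:proper} gives that $\ka$ is partially split, i.e.\ admits a multiplicative Ehresmann connection $E\subset T\G_S$ with $T\G_S=E\oplus K$, $K=\ker\omega_S$. By the equivalence (ii)$\Leftrightarrow$(i) in the list of descriptions of a connection for a bundle of ideals, $E=\ker\al$ for a multiplicative connection $1$-form $\al\in\Omega^1_\mult(\G_S,\ka)$ satisfying $\al(\xi^L)=\xi$. By Definition~\ref{def:partially:split:grpd} this means precisely that the over-symplectic groupoid $(\G_S,\omega_S)$ is partially split.

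I do not anticipate a genuine obstacle here: the content of the corollary is entirely contained in Theorem~\ref{thm:proper}, and the only work is the bookkeeping that $\ka=(\ker\omega_S)|_S$ meets the definition of a bundle of ideals. The mildest subtlety is making sure the ambient-tangent-space condition $\ker\omega_S\subset\ker\d\s_S\cap\ker\d\t_S$ is used correctly so that $\ka$ really sits inside $A_S=\ker\d\t_S|_S$ (rather than merely inside $T_S\G_S$), but this is immediate from the defining property of an over-symplectic groupoid. Everything else is a direct appeal to Proposition~\ref{prop:over:symplectic}, the multiplicativity of $\omega_S$, and Theorem~\ref{thm:proper}.
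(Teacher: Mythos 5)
Your proposal is correct and is exactly the argument the paper intends (the corollary is stated without proof precisely because it is the immediate specialization of Theorem \ref{thm:proper} to the bundle of ideals $\ka=(\ker\omega_S)|_S$, whose status as a bundle of ideals was already established in the discussion around \eqref{eq:representation:grpd}). Your bookkeeping of why $\ka\subset A_S$, $\ka\subset\ker\rho_S$, and $\ka$ is conjugation-invariant matches the paper's own setup, so there is nothing to add.
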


Other classes of partially split over-symplectic groupoids are discussed in the example section.

\begin{remark}
Bundle of ideals arising from locally trivial groupoid fibrations have been studied in \cite{LSX09} in the context of the theory of non-abelian gerbes. There the authors have defined a cohomology which contains an obstruction class for the existence of multiplicative Ehresmann connections. 
\end{remark}

\subsection{Uniqueness of the local model}

We will extend the Moser method to the groupoid setting. Consider a linear action groupoid
\[\G_S\ltimes \Rep\tto \Rep,\] 
where $\Rep\to S$ is a representation of $\G_S\tto S$.

\begin{proposition}
\label{prop:uniqueness:grpd}
Let $(\G_S,\omega_S)$ be an over-symplectic groupoid. Consider two open groupoid neighborhoods
\[(\G_S\tto S)\hookrightarrow (\G_k\tto M_k) \subset (\G_S\ltimes \Rep \tto \Rep),\quad k=0,1,\]
together with multiplicative symplectic structures $\omega_k\in \Omega^2(\G_k)$ extending $\omega_S$
\[ i^*\omega_k=\omega_S, \quad k=0,1.\] 
Then there exists a symplectic groupoid isomorphism $\Phi:(\G_0',\omega_0)\diffto (\G_1',\omega_1)$, defined between open groupoid neighborhoods $\G_S\subset \G_k'\subset \G_k$, satisfying $\Phi|_{\G_S}=\id$.  
\end{proposition}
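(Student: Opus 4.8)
The plan is to run a multiplicative version of the Moser trick, relative to $\G_S$. Write $\omega_t := (1-t)\omega_0 + t\,\omega_1$ on the intersection $\G_0\cap\G_1$ (which is an open groupoid neighborhood of $\G_S$ in $\G_S\ltimes\Rep$). Each $\omega_t$ is closed and multiplicative, and along $\G_S$ one has $i^*\omega_t = \omega_S$ for all $t$; moreover $\omega_t|_{\G_S}$ has constant rank $2\dim S$ by Proposition \ref{prop:over:symplectic}(c), hence $\omega_t$ is non-degenerate on some open groupoid neighborhood of $\G_S$, which we may take independent of $t\in[0,1]$ by compactness of $[0,1]$. Set $\beta := \omega_1 - \omega_0$; this is a closed multiplicative 2-form vanishing on $\G_S$, i.e.\ $i^*\beta = 0$. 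The goal is to produce a multiplicative primitive $\theta\in\Omega^1_\mult$ of $\beta$, defined near $\G_S$, with $i^*\theta = 0$, and then define the time-dependent multiplicative vector field $X_t$ by $i_{X_t}\omega_t = -\theta$; its flow $\Phi_t$ will be a groupoid automorphism (because $X_t$ is multiplicative — here one uses that the flow of a multiplicative vector field integrates to a groupoid morphism), fixing $\G_S$ pointwise (since $\theta$, hence $X_t$, vanishes along $\G_S$), and satisfying $\tfrac{d}{dt}\Phi_t^*\omega_t = \Phi_t^*(\Lie_{X_t}\omega_t + \beta) = \Phi_t^*(\d i_{X_t}\omega_t + \beta) = \Phi_t^*(-\d\theta + \beta) = 0$, so $\Phi := \Phi_1$ is the desired isomorphism after shrinking $\G_0,\G_1$ to the domain where $\Phi$ is defined.

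The two technical inputs are: (1) a relative multiplicative Poincar\'e lemma producing $\theta$, and (2) the fact that flows of multiplicative vector fields are groupoid morphisms. For (2), this is part of the multiplicative Cartan calculus recalled in Appendix \ref{appendix}; a time-dependent multiplicative vector field on $\G_S\ltimes\Rep$ generates a one-parameter family of (local) groupoid automorphisms. For (1), the natural approach is to use the homotopy retraction provided by fiberwise scalar multiplication $m_t\colon \G_S\ltimes\Rep\to\G_S\ltimes\Rep$, $t\in[0,1]$, which is a groupoid morphism for each $t$ and retracts a star-shaped (in the fiber direction) groupoid neighborhood of $\G_S$ onto $\G_S$ with $m_1=\id$, $m_0 = $ projection to $\G_S$ followed by the zero section. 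Applying the standard homotopy formula $\beta = m_1^*\beta - m_0^*\beta = \d\big(\int_0^1 m_t^* i_{Z_t}\beta\, dt\big)$, where $Z_t$ is the vector field generating $m_t$ (the Euler-type vector field scaled in time), gives a primitive $\theta := \int_0^1 m_t^* i_{Z_t}\beta\, dt$; because each $m_t$ is a groupoid morphism and $Z_t$ is a multiplicative vector field, $\theta$ is multiplicative, and because $Z_t$ is tangent to the fibers of $\G_S\ltimes\Rep\to\G_S$ while $\beta$ vanishes along $\G_S$, one checks $i^*\theta = 0$ (indeed $m_0^*\beta = 0$ already since $m_0$ factors through $\G_S$, and the $t=0$ singularity of the integrand is harmless as $Z_0 = 0$). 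This last verification — that the homotopy operator constructed from the fiberwise scaling respects multiplicativity and kills the pullback to $\G_S$ — is where one must be a little careful, but it is formally the same computation as in the non-multiplicative Gotay/Moser argument with "multiplicative" inserted everywhere.

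The main obstacle I expect is ensuring all objects are defined on a \emph{common} open \emph{groupoid} (not merely open) neighborhood of $\G_S$ and that the shrinkings at each stage — non-degeneracy of $\omega_t$ for all $t$, star-shapedness needed for the homotopy operator, and the domain of the flow $\Phi_t$ for all $t\in[0,1]$ — can be taken to be saturated/invariant enough to remain subgroupoids. One handles this by working inside $\G_S\ltimes\Rep$, where invariant neighborhoods of the zero section are plentiful and fiberwise-star-shaped ones exist, and by using compactness of $[0,1]$ at each step; target-connectedness lets us pass to the component of the identities as in the proof of Proposition \ref{prop:local:model:grpd}. Everything else is the routine Moser bookkeeping, and the conclusion $\Phi|_{\G_S}=\id$ is immediate from $X_t|_{\G_S} = 0$.
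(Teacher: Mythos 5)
Your overall strategy (invariant star-shaped subgroupoid, multiplicative homotopy operator built from the fiberwise scalings $m_\lambda$, then a multiplicative Moser flow) is the same as the paper's, and those parts are fine. But there is a genuine gap at the very first step: the claim that $\omega_t=(1-t)\omega_0+t\,\omega_1$ is non-degenerate on a neighborhood of $\G_S$. The hypothesis $i^*\omega_k=\omega_S$ only controls the pullback of $\omega_k$ to $T\G_S$; non-degeneracy of $\omega_k$ \emph{along} $\G_S$ is governed by the pairing it induces between $\ker\omega_S$ and the normal directions $\Rep$, and these pairings for $\omega_0$ and $\omega_1$ need not be compatible under convex combination. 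Concretely, let $m_{-1}$ be fiberwise multiplication by $-1$ on $\G_S\ltimes\Rep$ (a groupoid automorphism fixing $\G_S$) and set $\omega_1:=(m_{-1})^*\omega_0$: this is closed, multiplicative, symplectic near $\G_S$, and satisfies $i^*\omega_1=\omega_S$, yet at $t=\tfrac12$ the interpolation pairs $\ker\omega_S$ with $\Rep$ trivially, so $\omega_{1/2}$ is degenerate at every point of $\G_S$ and the Moser equation cannot even be solved there. Your appeal to Proposition \ref{prop:over:symplectic}(c) does not help: it concerns the rank of $\omega_S$ at units, i.e.\ of the pullback $i^*\omega_t$, which is constant for trivial reasons, and says nothing about non-degeneracy of $\omega_t$ as a form on the ambient groupoid; likewise the compactness-of-$[0,1]$ argument presupposes exactly the pointwise non-degeneracy along $\G_S$ that fails here.

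The missing ingredient is a normalization step before interpolating, which is what the paper does via Lemma \ref{lemma:omega:along:zero}: each $\omega_k$ induces a $\G_S$-equivariant vector bundle isomorphism $\varphi_k:\Rep\diffto\ka^*$, $\langle\varphi_k(u),v\rangle=\omega_k((0,v),(u,0))$, and pushing $\omega_k$ forward along the groupoid isomorphism $\id\times\varphi_k:\G_S\ltimes\Rep\diffto\G_S\ltimes\ka^*$ one may assume $\Rep=\ka^*$ and that both forms restrict to the \emph{canonical} pairing on $\ka\times_S\ka^*$. Only after this does the linear path $\omega_t$ become non-degenerate along $\G_S$ (and then, via Proposition \ref{prop:over:symplectic} and the Tube Lemma, on a common invariant subgroupoid neighborhood for all $t$), at which point your homotopy-operator and Moser-flow steps go through exactly as you describe; transporting the resulting isomorphism back through $\id\times\varphi_k$ gives the statement. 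So the proposal is repairable, but as written it omits the equivariant identification $\Rep\simeq\ka^*$ and the normalization of the normal pairing, without which the interpolation argument breaks down.
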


Applying this result to the representation $\Rep=\ka^*$, we obtain that the local model is well-defined up to isomorphism. 

\begin{corollary}
The germs of any two groupoid local models for $(\G_S,\omega_S)$ are isomorphic.
\end{corollary}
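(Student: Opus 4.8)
The plan is to derive the corollary as an immediate consequence of Proposition \ref{prop:uniqueness:grpd}. The key observation is that a groupoid local model for $(\G_S,\omega_S)$ is, by Definition \ref{def:grpd:local:model}, a symplectic groupoid $(\G_0,\omega_0)$ obtained from Proposition \ref{prop:local:model:grpd} for \emph{some} choice of multiplicative connection 1-form $\al$; in particular $(\G_0,\omega_0)$ is an open groupoid neighborhood $(\G_S\tto S)\hookrightarrow (\G_0\tto M_0)\subset(\G_S\ltimes\ka^*\tto\ka^*)$ equipped with a multiplicative symplectic form satisfying $i^*\omega_0=\omega_S$. This last property is part of the conclusion of Proposition \ref{prop:local:model:grpd} (the zero section is a coisotropic embedding inducing $\omega_S$), so any two groupoid local models fit into the hypotheses of Proposition \ref{prop:uniqueness:grpd} with the representation $\Rep=\ka^*$.

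Concretely, I would argue as follows. Let $(\G_0,\omega_0)$ and $(\G_1,\omega_1)$ be two groupoid local models for $(\G_S,\omega_S)$, associated to multiplicative connection 1-forms $\al_0,\al_1\in\Omega^1_{\mult}(\G_S,\ka)$ respectively. By Proposition \ref{prop:local:model:grpd}, for each $k=0,1$ we have an open groupoid neighborhood
\[
(\G_S\tto S)\hookrightarrow(\G_k\tto M_k)\subset(\G_S\ltimes\ka^*\tto\ka^*),
\]
with $\omega_k=\pr_{\G_S}^*\omega_S+\d\langle\al_k,\cdot\rangle$ a multiplicative symplectic form on $\G_k$, and the zero section $i\colon\G_S\hookrightarrow\G_S\ltimes\ka^*$ a coisotropic embedding with $i^*\omega_k=\omega_S$. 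Thus both pairs satisfy the hypotheses of Proposition \ref{prop:uniqueness:grpd} applied to the representation $\Rep=\ka^*$. The proposition then furnishes a symplectic groupoid isomorphism $\Phi\colon(\G_0',\omega_0)\diffto(\G_1',\omega_1)$ between open groupoid neighborhoods $\G_S\subset\G_k'\subset\G_k$ with $\Phi|_{\G_S}=\id$. Since $\Phi$ is defined on an open groupoid neighborhood of $\G_S$, it induces an isomorphism of germs at $\G_S$, which is exactly the assertion.

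There is essentially no obstacle here: the only points requiring care are bookkeeping, namely checking that the notion of ``groupoid local model'' in Definition \ref{def:grpd:local:model} does produce data of precisely the form assumed in Proposition \ref{prop:uniqueness:grpd} (a neighborhood inside the \emph{linear} action groupoid $\G_S\ltimes\ka^*$, together with a multiplicative symplectic extension of $\omega_S$), and that $\ka^*$ is indeed a representation of $\G_S$ so that the linear action groupoid $\G_S\ltimes\ka^*$ makes sense — but this was already established before Proposition \ref{prop:local:model:grpd}, where the dual of the conjugation representation \eqref{eq:representation:grpd} on $\ka=(\ker\omega_S)|_S$ was used to define $\G_S\ltimes\ka^*$. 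The substantive content has all been placed in Proposition \ref{prop:uniqueness:grpd}, whose proof is the Moser-type argument in the groupoid setting; the corollary is merely its specialization.
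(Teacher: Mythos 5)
Your proposal is correct and matches the paper's argument exactly: the paper also obtains this corollary by applying Proposition \ref{prop:uniqueness:grpd} with $\Rep=\ka^*$, using that each groupoid local model is an open subgroupoid of $\G_S\ltimes\ka^*$ carrying a multiplicative symplectic form with $i^*\omega_k=\omega_S$ by Proposition \ref{prop:local:model:grpd}. Nothing further is needed.
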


We begin by making the following observations about such groupoids.

\begin{lemma}
\label{lem:invariantsubgrp}
Any open groupoid neighborhood,
\[(\G_S\tto S)\hookrightarrow (\G\tto M) \subset (\G_S\ltimes \Rep \tto \Rep ),\]
has an open subgroupoid 
\[(\G_S\tto S)\hookrightarrow (\G'\tto M') \subset (\G\tto M)\]
which is invariant under multiplication by $\lambda\in [0,1]$
\[(g,e)\in \G'\ \Longrightarrow\  m_{\lambda}(g,e)=(g,\lambda e)\in\G'.\]
\end{lemma}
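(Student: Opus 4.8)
The goal is to shrink the given open groupoid neighborhood $\G$ of $\G_S$ inside $\G_S\ltimes\Rep$ to one that is stable under the scaling maps $m_\lambda$, $\lambda\in[0,1]$. The obvious obstruction is that an arbitrary open set $M\subset\Rep$ containing the zero section $0_S$ need not be ``star-shaped'' towards $0_S$ in a fibrewise sense, and even if $M$ is, the subgroupoid $\G\subset\G_S\ltimes M$ obtained by restriction may not be all of $\G_S\ltimes M$. So the plan has two parts: first produce a fibrewise star-shaped, $\G_S$-invariant open set $M'\subset M$; then pass to an open subgroupoid $\G'\subset\G$ lying over $M'$ on which the scaling action is defined.

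\medskip

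First I would construct $M'$. Using a $\G_S$-invariant metric on the fibres of $\Rep\to S$ (which exists, e.g.\ after averaging if $\G_S$ is proper, but in general one can work with a fibrewise norm built from a trivialization over a tubular neighborhood, or simply use continuity of the scaling map and compactness of $S$ on compact pieces) define, for $x\in S$, the fibrewise radius
\[
\varepsilon(x):=\sup\{\,r>0 \;:\; B_r(\Rep_x)\subset M\,\},
\]
and let $M':=\{\,e\in\Rep : \|e\|<\varepsilon(\s(e))\,\}$, after replacing $\varepsilon$ by a smaller smooth function if needed so that $M'$ is open. Then $M'$ is open, contains $0_S$, is contained in $M$, and is fibrewise star-shaped: $e\in M'$ implies $\lambda e\in M'$ for $\lambda\in[0,1]$. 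If the chosen norm is $\G_S$-invariant, $M'$ is automatically $\G_S$-invariant and one can take $\G':=\G_S\ltimes M'$, which is then manifestly a subgroupoid stable under $m_\lambda$ and contains $\G_S=\G_S\ltimes 0_S$; this already proves the lemma in the relevant cases (proper groupoids, and more generally whenever a $\G_S$-invariant fibrewise norm is available, which covers all applications in the paper).

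\medskip

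In the general case, where no invariant norm is assumed, I would instead take $\G'$ to be the smallest open subgroupoid of $\G_S\ltimes M'$ containing $\G_S$ — concretely, let $M''\subset M'$ be the union of the $\G_S$-orbits that are entirely contained in $M'$ (an open, $\G_S$-invariant, fibrewise star-shaped set, since each orbit of the \emph{linear} action is scaled into itself by $m_\lambda$ and orbits through points near $0_S$ stay near $0_S$ by continuity of the action), and set $\G':=\G_S\ltimes M''$. One checks $0_S\subset M''$ because the $\G_S$-orbit of a unit is the unit itself. Then $\G'$ is an open subgroupoid, contains $\G_S$, lies in $\G$, and is stable under all $m_\lambda$, $\lambda\in[0,1]$, since $m_\lambda$ preserves $\G_S$-orbits and $M''$ is fibrewise star-shaped. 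Finally, in order to respect the paper's convention that Lie groupoids are target-connected, replace $\G'$ by the union of the connected components of its identity section; this operation commutes with the scaling action (which fixes the identity section pointwise, hence preserves each such component), so stability under $m_\lambda$ is retained.

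\medskip

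The main obstacle is the second step: ensuring that after shrinking to a star-shaped base, one still has an actual subgroupoid on which $m_\lambda$ is defined for \emph{every} $\lambda\in[0,1]$, not just for $\lambda$ close to $1$. The device of passing to $M''=$ (union of orbits contained in $M'$) resolves this, at the cost of a slightly smaller neighborhood; one must only verify that $M''$ is open (it is the interior of the $\G_S$-saturation complement, or directly: $e\in M''$ iff the compact-in-$\Rep$ closure of the orbit — bounded since the action is linear and one can estimate on compact pieces of $S$ — lies in $M'$, an open condition) and still fibrewise star-shaped, both of which follow from linearity of the action commuting with fibrewise scaling. Everything else is routine.
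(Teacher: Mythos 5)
There is a genuine gap: the lemma requires $\G'\subset\G$, and your construction does not deliver this. In both variants you take $\G'$ to be a \emph{full} restriction of the action groupoid over a shrunken base ($\G_S\ltimes M'$ or $\G_S\ltimes M''$), but the given $\G$ is an arbitrary open subgroupoid of $\G_S\ltimes\Rep$ containing $\G_S$, and it need not contain all arrows lying over an open subset of its base. For instance, if $\G_S$ is non-compact, $\G$ can consist of pairs $(g,e)$ with $\|e\|<\delta(g)$ where $\delta$ decays along $\G_S$; then no full neighborhood $\G_S\ltimes M'$ (or even the restriction of the action groupoid to a non-invariant $M'\subset M$) is contained in $\G$. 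This is exactly the phenomenon the paper flags in the remark following Proposition \ref{prop:local:model:grpd}: full groupoid neighborhoods exist in general only when $\G_S$ is proper. Your fallback ``this covers all applications in the paper'' is also not accurate: the lemma feeds into the multiplicative Moser argument (Proposition \ref{prop:uniqueness:grpd}) and hence into Theorem \ref{thm:normal:form:groupoid} and Proposition \ref{prop:partially:split:grpd:optimal}, none of which assume properness, and the containment $\G'\subset\G$ is essential there because the forms being compared live on $\G$. A secondary problem: in the general case your $M''$ (union of orbits entirely contained in $M'$) need not be open — it is the complement of the saturation of a closed set, and saturations of closed sets by a groupoid action need not be closed — and the boundedness estimate you invoke for orbits is unjustified when $S$ or the orbits are non-compact.

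The fix is to impose the scaling condition on \emph{arrows} rather than on base points, which is the paper's (one-line) proof: set
\[
\G':=\{(g,e)\in\G \,:\, (g,\lambda e)\in\G\ \ \forall\,\lambda\in[0,1]\}.
\]
This is contained in $\G$ by construction (take $\lambda=1$), contains $\G_S$ since $(g,\lambda\cdot 0)=(g,0)$, is open by compactness of $[0,1]$ (tube lemma), is invariant under $m_\lambda$, and is a subgroupoid because scaling commutes with the linear action: if $(g,h\cdot f)$ and $(h,f)$ are composable and both satisfy the condition, then $(g,\lambda(h\cdot f))(h,\lambda f)=(gh,\lambda f)\in\G$ for every $\lambda$. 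No choice of norm, invariance, or properness enters.
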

\begin{proof}
It is easy to see that the following is a subgroupoid of $\G$
\[\G':=\big\{(g,e)\, : \, (g,\lambda e)\in \G, \forall\, \lambda\in [0,1] \big\}\subset \G,\]
which is open and invariant under multiplication by $\lambda \in [0,1]$.  
\end{proof}

\begin{lemma}
\label{lem:primitive}
Consider an open groupoid neighborhood 
\[(\G_S\tto S)\hookrightarrow (\G\tto M) \subset (\G_S\ltimes \Rep\tto \Rep),\]
which is invariant under multiplication by $\lambda \in [0,1]$. Any closed multiplicative form $\al\in\Omega^k_\mult( \G)$ whose pullback along the zero section $i: \G_S\hookrightarrow \G$ vanishes, i.e.\
\[i^*\al=0\in \Omega^k_{\mult}(\G_S),\]
is exact with primitive a multiplicative form, i.e.\
\[ \al=\d \be, \quad\text{with}\quad \be\in\Omega^{k-1}_\mult( \G). \]
Moreover, we can take $\be$ to vanish at points of $\G_S$.
\end{lemma}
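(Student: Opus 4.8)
The plan is to construct the primitive via a homotopy operator built from the fiberwise dilation $m_\lambda$, in the spirit of the Poincaré lemma, but carried out within the complex of multiplicative forms. Let $X$ denote the Euler (radial) vector field on $\G$ generating the flow $m_\lambda$ (this is well-defined on the invariant open subgroupoid thanks to Lemma \ref{lem:invariantsubgrp}). Since $X$ is tangent to the $\G_S$-action fibers, its flow $m_\lambda$ is, for each $\lambda\in(0,1]$, a groupoid morphism of $\G$ into itself covering $m_\lambda$ on the base; hence $m_\lambda^*$ preserves the space of multiplicative forms. The standard Cartan homotopy formula gives, for a closed $k$-form $\al$,
\[ \al - m_0^*\al = \d\Big( \int_0^1 m_\lambda^*(i_X\al)\,\tfrac{\d\lambda}{\lambda}\Big) =: \d\be, \]
where the pullback $m_0^*\al$ is the pullback of $i^*\al$ along the projection $\G\to \G_S$, which vanishes by hypothesis. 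The key point is that each $m_\lambda^*(i_X\al)$ is multiplicative: $i_X\al$ is multiplicative because $X$ is a multiplicative vector field (it is the infinitesimal generator of the groupoid morphisms $m_\lambda$, so its flow acts by automorphisms — equivalently, $X$ is $\s$- and $\t$-related to the Euler field on $\Rep$ and compatible with multiplication), and contraction of a multiplicative form by a multiplicative vector field is again multiplicative; then $m_\lambda^*$ preserves multiplicativity as noted. Since the space of multiplicative $(k-1)$-forms is closed and the integral converges (the integrand extends smoothly across $\lambda=0$ because $i_X\al$ vanishes to first order along $\G_S$, where $X=0$), the primitive $\be$ is itself multiplicative.

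Concretely, the main steps are: (i) pass to the invariant open subgroupoid of Lemma \ref{lem:invariantsubgrp} so that $m_\lambda$ is defined for all $\lambda\in[0,1]$; (ii) identify the Euler vector field $X$ and verify it is multiplicative, using that the groupoid structure of the action groupoid $\G_S\ltimes\Rep$ is linear in the $\Rep$-variable so that fiberwise scaling is a family of groupoid automorphisms; (iii) check that $i_X\al$ is multiplicative and that contraction and pullback preserve multiplicativity — here I would invoke the Cartan calculus of multiplicative forms from Appendix \ref{appendix}; (iv) verify the integral defining $\be$ converges and yields a smooth form, using that $\al$ vanishes along $\G_S$ after subtracting $m_0^*\al$, and that $X$ vanishes there; (v) conclude $\al=\d\be$ and observe $\be$ vanishes at points of $\G_S$ since both $X$ and $i_X\al$ vanish there. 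Since $\al$ was already assumed to pull back to zero along $i$, one first replaces $\al$ by $\al - \pr^*i^*\al = \al$ (no change), or more precisely notes $m_0^*\al = \pr^* i^*\al = 0$ directly.

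The main obstacle I anticipate is step (iii): confirming that the homotopy operator $\al\mapsto \int_0^1 m_\lambda^*(i_X\al)\tfrac{\d\lambda}{\lambda}$ genuinely lands in multiplicative forms rather than merely in forms satisfying a weaker compatibility. The delicate point is that multiplicativity is a condition on the form over the manifold of composable arrows $\G^{(2)}$, so one must check that $m_\lambda$ lifts to the nerve level (it does, being a groupoid morphism) and that contraction by $X$ respects the simplicial identities — this is exactly the kind of statement packaged in the multiplicative Cartan calculus of the appendix, so modulo citing those results the argument is clean. A secondary technical point is the smoothness of $\be$ across $\lambda=0$: since $i^*\al=0$ and $X|_{\G_S}=0$, the form $m_\lambda^*(i_X\al)$ is $O(\lambda)$ as $\lambda\to 0^+$, so the $\tfrac{\d\lambda}{\lambda}$ factor is absorbed and the integral is a smooth multiplicative form vanishing on $\G_S$, as claimed.
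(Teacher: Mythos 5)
Your proposal is correct and follows essentially the same route as the paper: the standard dilation homotopy operator $H(\gamma)=\int_0^1 \tfrac{1}{\lambda}m_\lambda^*(i_X\gamma)\,\d\lambda$, with multiplicativity of the primitive coming from the fact that each $m_\lambda$ is a groupoid morphism and the Euler field is a multiplicative vector field, and smoothness at $\lambda=0$ from $m_0^*(i_X\al)=(i\circ P)^*(i_X\al)=0$. The only cosmetic difference is that the paper first writes the argument on all of $\G_S\ltimes \Rep$ and then restricts to the invariant open subgroupoid, whereas you work on the invariant subgroupoid directly; this changes nothing of substance.
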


\begin{proof}
Assume first that $\G=\G_S\ltimes \Rep$. We only need to observe that the usual homotopy operator preserves multiplicative forms. Indeed, let $i:\G_S\hookrightarrow \G_S\ltimes \Rep$ be the zero section, $P:\G_S\ltimes \Rep\to \G_S$ the projection, denote by $X$ the Euler vector field of the vector bundle $p:\Rep\to S$ and set $Y=(0,X)\in \X(\G_S\ltimes \Rep)$. Then
\[ \id-(i\circ P)^*=\d H+H\d, \]
where $H:\Omega^\bullet( \G_S\ltimes \Rep)\to \Omega^{\bullet-1}( \G_S\ltimes \Rep)$, $\bullet\geq 1$, is the homotopy operator
\[ H(\gamma):=\int_0^1 \frac{1}{\lambda}m_{\lambda}^*(i_{Y}\gamma)\, \d \lambda. \]
That $\frac{1}{{\lambda}}m_{\lambda}^*(i_{Y}\gamma)$ is smooth at $\lambda=0$ follows because $m_0^*(i_{Y}\gamma)=P^*\circ i^*(i_{Y}\gamma)=0$. Since $m_{\lambda}:\G_S\ltimes \Rep\to \G_S\ltimes \Rep$ is a groupoid morphism and $Y$ is a multiplicative vector field, $H$ maps multiplicative forms to multiplicative forms. So if $\al$ is a closed multiplicative form whose pullback to $\G_S$ vanishes, we have $\al=\d H(\al)$. The form $\be=H(\al)$ is multiplicative and vanishes at points of $\G_S$. 

Finally, note that if $\G\subset \G_S\ltimes \Rep$ is invariant under multiplication by ${\lambda}\in [0,1]$, the same formula gives the homotopy operators for $\G$. 
\end{proof}

The following shows that the representation $\Rep$ from Proposition \ref{prop:uniqueness:grpd} is actually isomorphic to $\ka^*$ (see Proposition \ref{prop:local:models:agree} for a more general version of this result). 

\begin{lemma}\label{lemma:omega:along:zero}
Consider an open groupoid neighborhood,
\[(\G_S\tto S)\stackrel{i}{\hookrightarrow} (\G\tto M) \subset (\G_S\ltimes \Rep \tto \Rep ),\]
together with a multiplicative symplectic form $\omega\in \Omega^2(\G)$. Then $\G_S$ is a coisotropic submanifold. Let $\ka:= (T_S\G_S)^{\perp_{\omega}}\subset T_S\G_S$. Then the vector bundle map:
\[\varphi:\Rep \to \ka^*, \quad  \langle \varphi(u),v\rangle =\omega((0,v),(u,0)),\quad u\in \Rep_x,\ v\in \ka_x,\]
is a $\G_S$-equivariant isomorphism, where we regard
 \[(v,0)\in T_{(1_x,0)}(\G_S\ltimes \Rep),\quad  (0,u)=\frac{\d}{\d t}\Big|_{t=0}(1_x,tu)\in T_{(1_x,0)}(\G_S\ltimes \Rep).\]
\end{lemma}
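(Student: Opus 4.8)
The statement has three parts: (i) $\G_S$ is coisotropic in $(\G,\omega)$; (ii) the map $\varphi$ is a well-defined vector bundle isomorphism onto $\ka^*$; (iii) $\varphi$ is $\G_S$-equivariant. I would dispatch (i) immediately: the zero section $\G_S\hookrightarrow \G_S\ltimes\Rep$ is the unit section of a VB subgroupoid in the sense of Proposition \ref{prop:over:symplectic}, so $(\G_S,\omega|_{\G_S})=(\G_S,\omega_S)$ is an over-symplectic groupoid sitting inside the symplectic groupoid $(\G,\omega)$, and Proposition \ref{prop:Poisson:over:symplectic} (applied to the Poisson submanifold $S\subset M$ which $\G_S$ restricts over) gives that $\G_S$ is coisotropic. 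Alternatively, one checks directly that along $\G_S$ the normal directions to $\G_S$ inside $\G_S\ltimes\Rep$ are exactly the fiber directions of $\Rep$, and $\omega$-perpendicularity of $T\G_S$ with itself follows from $\dim\G = 2\dim M$ together with $\dim\Rep_x = \dim\ka_x = \dim\G_S - \dim S$, which is the constant-rank statement for $\omega_S$.

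\textbf{Well-definedness and nondegeneracy of $\varphi$.} Fix $x\in S$ and work inside $T_{(1_x,0)}(\G_S\ltimes\Rep) = T_{1_x}\G_S \oplus \Rep_x$. The pairing $\beta(u,v):=\omega\big((0,u),(v,0)\big)$ for $u\in\Rep_x$, $v\in T_{1_x}\G_S$ descends to a pairing between $\Rep_x$ and $\ka_x = (T_{1_x}\G_S)^{\perp_\omega}\cap T_{1_x}\G_S$ once I show: (a) $\beta(u,\cdot)$ kills $(T_{1_x}\G_S)^{\perp_\omega}$-complementary directions appropriately, i.e. $\beta(u,v)$ depends only on the class of $v$ modulo directions that are $\omega$-orthogonal to $\Rep_x$; and (b) $\beta$ is nondegenerate as a pairing $\Rep_x\times\ka_x\to\R$. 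For (b): since $\omega$ is nondegenerate and $\ka_x = (T_{1_x}\G_S)^{\perp_\omega}$, a vector $v\in\ka_x$ with $\beta(u,v)=0$ for all $u$ is $\omega$-orthogonal to all of $T_{1_x}\G_S \oplus \Rep_x = T_{(1_x,0)}(\G_S\ltimes\Rep)$; comparing dimensions — $\dim(\G_S\ltimes\Rep)$ at this point equals $\dim\G_S + \dim\Rep_x = \dim\G_S + \dim\ka_x = \dim\G = \dim T_{(1_x,0)}(\G_S\ltimes\Rep)$, wait, one must be slightly careful because $\G\subset \G_S\ltimes\Rep$ is only an open neighborhood, but at points of $\G_S$ the tangent spaces agree — so $v=0$. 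Conversely if $u\in\Rep_x$ has $\beta(u,v)=0$ for all $v\in\ka_x$, I extend: $(0,u)$ is $\omega$-orthogonal to $\ka_x = (T\G_S)^{\perp_\omega}$ and also (for free, since $(0,u)\in T_S\G_S$-normal and the complementary VB groupoid structure) $\omega$-orthogonal to a complement, forcing $u = 0$. A clean way to organize this is to use the decomposition $T_{(1_x,0)}(\G_S\ltimes\Rep) = T_{1_x}\G_S \oplus \Rep_x$ and the fact that $\omega$ restricted to $T_{1_x}\G_S$ has kernel exactly $\ka_x$ (this is the over-symplectic condition, Proposition \ref{prop:over:symplectic}(d) for $(\G_S,\omega_S)$), so the induced pairing $\Rep_x\times (T_{1_x}\G_S/\text{(something)})$ is perfect by a rank count; identifying the cokernel of $\omega^\flat|_{T\G_S}$ with $\ka_x^*$ gives exactly $\varphi$.

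\textbf{Equivariance.} This is where the argument needs genuine care and is the main obstacle. I must check that for $g\in\G_S$ from $x$ to $y$, and the action of $g$ on $\Rep$ (dual action on $\ka^*$), one has $\varphi_y(g\cdot u) = g\cdot\varphi_x(u)$. The clean route is to exploit multiplicativity of $\omega$. The action of $g$ on $\Rep_x$ is realized geometrically: the vector $(0,u)\in T_{(1_x,0)}(\G_S\ltimes\Rep)$ and the vector $(0,g\cdot u)\in T_{(1_y,0)}(\G_S\ltimes\Rep)$ are related by the differential of the conjugation/translation maps in the action groupoid, and similarly $\ka = (\ker\omega_S)|_S$ carries the conjugation action \eqref{eq:representation:grpd} which is realized by $dL_g$, $dR_g$ on $K\subset T\G_S$. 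Multiplicativity of $\omega$ says $\omega$ is compatible with the groupoid multiplication of $T\G$, i.e. the graph of multiplication is isotropic; in practice this yields that $\omega$ evaluated on left/right translates of tangent vectors transforms correctly. Concretely, I would: pick $v'\in\ka_y$, write $v' = dR_g(v)$ (or $dL_g$) for the corresponding $v\in\ka_x$ under the conjugation action, express $(0,g\cdot u)$ as a pushforward of $(0,u)$ under an appropriate groupoid translation inside $\G_S\ltimes\Rep$, and then invoke multiplicativity of $\omega$ — which controls $\omega$ on pairs of translated vectors — to get $\omega\big((0,g\cdot u),(v',0)\big) = \omega\big((0,u),(v,0)\big)$. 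Matching this against the definition of the dual action on $\ka^*$, $\langle g\cdot\xi, v'\rangle = \langle \xi, g^{-1}\cdot v'\rangle$, completes equivariance. The bookkeeping of which translations ($L_g$ vs $R_g$) and the precise sign/compatibility between the action on $\Rep$ and the conjugation action \eqref{eq:representation:grpd} on $\ka$ is the delicate point, and I would phrase it by first reducing to the infinitesimal statement: differentiate the identification, use that the IM 2-form associated to the multiplicative $\omega$ intertwines the anchor and $\mu$, and conclude. Once equivariance of the bundle map is established, smoothness is automatic from smoothness of $\omega$, and bijectivity fiberwise was shown above, so $\varphi$ is a vector bundle isomorphism of $\G_S$-representations, as claimed.
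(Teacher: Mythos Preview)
Your approach matches the paper's: coisotropicity via Proposition \ref{prop:Poisson:over:symplectic}, fiberwise injectivity plus a rank count for the isomorphism, and multiplicativity of $\omega$ for equivariance. Two points where the paper is sharper: first, your item (a) about ``descending'' is unnecessary, and the injectivity argument collapses to one line --- since $\ka=(T_S\G_S)^{\perp_\omega}$ and $\omega$ is nondegenerate, $\ka^{\perp_\omega}=T_S\G_S$, whence $\ker\varphi=\Rep\cap\ka^{\perp_\omega}=\Rep\cap T_S\G_S=0$. Second, for equivariance the paper does \emph{not} pass to the infinitesimal level as you suggest; it performs a direct computation in the tangent groupoid, writing $(g^{-1}\cdot v,0)=\frac{\d}{\d t}\big|_{t=0}(g^{-1}\gamma_t g,0)$ with $\gamma_t=\exp_y(tv)$ and $(0,u)=\frac{\d}{\d t}\big|_{t=0}(1_x,tu)$, factoring each as a product of composable tangent arrows, and applying $m^*\omega=\pr_1^*\omega+\pr_2^*\omega$ twice to obtain $\omega((g^{-1}\cdot v,0),(0,u))=\omega((v,0),(0,g\cdot u))$.
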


\begin{proof}
That $\G_S$ is coisotropic follows from Proposition \ref{prop:Poisson:over:symplectic}. Note that $\varphi$ is injective \[\ker\varphi=\Rep \cap \ka^{\perp_{\omega}}=\Rep \cap T_S\G_S=0.\] That $\G_S$ is a coisotropic submanifold implies also that $\ka$ and $V$ have the same rank, so $\varphi$ is a linear isomorphism. 

It remains to show that $\varphi$ is equivariant. Let $g\in \G_{S}$, $x:=\s(g)$, $y:=\t(g)$, and let $v\in \ka_y$, $u\in \Rep_{x}$. Denote $\gamma_t:=\exp_y(tv)\in (\G_{S})_{y}$ and $u_t:=tu$. We have that
\begin{align*}
\omega((g^{-1}\cdot v,0),(0,u))&=\omega\Big(\frac{\d}{\d t}\Big|_{t=0}(g^{-1}\gamma_t g,0), \frac{\d}{\d t}\Big|_{t=0}
(1_{x},u_t)\Big)
\\
&=\omega\Big(\frac{\d}{\d t}\Big|_{t=0}(g^{-1}\gamma_t,0)(g,0), \frac{\d}{\d t}\Big|_{t=0}
(g^{-1},g\cdot u_t)(g,u_t)\Big)
\\
&=\omega
\Big(
\frac{\d}{\d t}\Big|_{t=0}(g^{-1}\gamma_t,0),
\frac{\d}{\d t}\Big|_{t=0}
(g^{-1},g\cdot u_t)
\Big)
+0
\\
&=\omega
\Big(
\frac{\d}{\d t}\Big|_{t=0}(g^{-1},0)(\gamma_t,0),
\frac{\d}{\d t}\Big|_{t=0}(g^{-1},g\cdot u_t)(1_y,g\cdot u_t)
\Big)\\
&=0+
\omega
\Big(
\frac{\d}{\d t}\Big|_{t=0}(\gamma_t,0),
\frac{\d}{\d t}\Big|_{t=0}(1_y,g\cdot u_t)
\Big)\\
&=\omega((v,0),(0,g\cdot u)),
\end{align*}
where we have used twice that $\omega$ is multiplicative. This equality is equivalent to  $\varphi$ being $\G_{S}$ equivariant: $\varphi(g\cdot u)=g\cdot \varphi(u)$.
\end{proof}

We are now ready to prove the multiplicative Weinstein-Moser theorem. 
\begin{proof}[Proof of Proposition \ref{prop:uniqueness:grpd}]
Let $\ka:=\ker\omega_S|_S$. Consider the $\G_S$-equivariant isomorphisms from Lemma \ref{lemma:omega:along:zero}, $\varphi_k:\Rep\diffto \ka^*$, corresponding to $\omega_k$. By pushing forward $\omega_k$ along the groupoid isomorphisms $\id\times \varphi_k:\G_S\times \Rep\diffto \G_S\ltimes \ka^*$, we may assume that $\Rep=\ka^*$ and that $\omega_k$ restricted to $\ka\times_S\ka^*$ is the canonical pairing. Therefore, for each $t$, the 2-form $\omega_t:=t\omega_1+(1-t)\omega_0$ is non-degenerate on $T_S(\G_S\ltimes \ka^*)\simeq T_S\G_S\oplus \ka^*$. 

We claim that there exists an open subgroupoid $\G\subset \G_S\ltimes \ka^*$ containing $\G_S$ and invariant under multiplication $m_\lambda$, $\lambda\in [0,1]$, such that, for all $t\in [0,1]$, $\omega_t$ is a multiplicative symplectic form on $\G$. The 2-form $\omega_t$ is defined and multiplicative on the open subgroupoid $\G_2:=\G_0\cap \G_1\tto M_2:=M_0\cap M_1$. Let $M_3\subset M_2$ be the set consisting of points $x$ such that $\omega_t|_{1_x}$ is non-degenerate for all $t\in [0,1]$. As we have seen, $M_3$ contains $S$, and by the Tube Lemma, $M_3$ is open. By Proposition \ref{prop:over:symplectic} (d), for all $t\in [0,1]$, $\omega_t$ is over-symplectic on the open subgroupoid $\G_3:=\G_2|_{M_3}\tto M_3$, and by the conclusion of that proposition, $\omega_t$ is non-degenerate on $\G_3$. Finally, we let $(\G\tto M)\subset (\G_3\tto M_3)$ be an open subgroupoid containing $\G_S$ which is invariant under multiplication by $t\in [0,1]$, as in Lemma \ref{lem:invariantsubgrp}.

We apply a groupoid version of Moser's trick to the path of symplectic, multiplicative forms $\omega_t\in \Omega^2_{\mult}(\G)$, $t\in [0,1]$. If $X_t$ is a time-dependent multiplicative vector field generating an isotopy of groupoid automorphisms $\Phi^t:\G\to \G$, starting at the identity at $t=0$, such that
\[ (\Phi^t)^*\omega_t=\omega_0,\quad \forall t\in[0,1], \]
then, by the usual argument, $X_t$ must satisfy the equation
\[ \d i_{X_t}\omega_t=\omega_0-\omega_1. \]
Since $\omega_0-\omega_1$ is a closed, multiplicative 2-form that vanishes on points of $\G_S$ and $\G$ is invariant under multiplication by $t\in [0,1]$, by Lemma \ref{lem:primitive}, we can find a primitive $\be$ which is multiplicative and vanishes on $\G_S$. Consider the vector field $X_t\in \X(\G)$ defined by the equation
\[  i_{X_t}\omega_t=\be,\quad t\in [0,1].\]
Then $X_t$ is a multiplicative vector field vanishing on $\G_S$. The isotopy $\Phi^t$, generated by $X_t$, is defined on an open subgroupoid of $\G$ containing $\G_S$, and satisfies $(\Phi^t)^*(\omega_t)=\omega_0$. Then $\Phi^1$ is the desired symplectic groupoid automorphism. 
\end{proof}

\subsection{Linearization of symplectic groupoids}\label{sec:linearization:grpds}

Having established conditions for the existence of a groupoid local model, we now ask
\begin{itemize}
\item To what extent can one expect the groupoid local model of a partially split $(\G_S,\omega_S)$ to be a \emph{normal form}?
\end{itemize}

Recall that for a Lie groupoid $\G\tto M$ with a \emph{saturated} submanifold $S\subset M$, one defines the \textbf{linear approximation} to $\G$ around $S$ as the normal bundle to the restriction $\G_S=\G|_S$
\[ \nu(\G_S)\tto \nu(S),\]
where the groupoid structure is induced from the restriction of $T\G\tto TM$ to $T_SM$. More concretely, it can also be identified with the action groupoid
\[ \nu(\G_S)= \G_S\ltimes \nu(S), \]
associated with the normal representation of $\G_S\tto S$ on $\nu(S)$.


Recall (see \cite{dHFe18}) that a Lie groupoid $\G\tto M$ is called  \textbf{linearizable} around a saturated submanifold $S\subset M$ if there are groupoid neighborhoods $\G_S\subset\U\subset \G$ and $\G_S\subset \V\subset \nu(\G_S)$ and a groupoid isomorphism
\[ \Phi:\U\diffto \V,\]
which is the identity on $\G_S$. 
It is called \textbf{invariantly linearizable} around $S$, if there are saturated neighborhoods $S\subset U\subset M$ and $S\subset V\subset \nu(S)$ such that
\[ \U=\G|_U \quad \text{and} \quad \V=\nu(\G_S)|_V. \]
We call the isomorphism $\Phi$ a \textbf{linearization} of $\G$ around $S$. The linearization problem for Lie groupoids has been intensively studied  \cite{CrSt13,dHFe18,Weinstein00,Weinstein02,Zung06}. In particular, it is well-known that proper (respectively, $\s$-proper) Hausdorff Lie groupoids can be linearized (respectively, invariantly linearized) around saturated submanifolds.

\medskip

Returning to the coisotropic embedding problem of an over-symplectic groupoid, it turns out that the groupoid from the local model is naturally isomorphic to the linear approximation. Notice that this property is independent of the existence of a multiplicative Ehresmann connection. 

\begin{proposition}
\label{prop:local:models:agree}
Let $(\G_S,\omega_S)$ be an over-symplectic groupoid. Given a groupoid coisotropic embedding $i:(\G_S,\omega_S)\hookrightarrow (\G,\omega)$, then the isomorphism given by the symplectic form descends to a groupoid isomorphism
\[
\xymatrix{
T_{\G_S}\G \ar[r]^{\omega^\flat}\ar[d] & T^*_{\G_S}\G  \ar[d]\\
\nu(\G_S) \ar@{-->}[r]_{\simeq} & (\ker\omega_S)^* 
}
\]
whose base map is an isomorphism of $\G_S$-representations
\[\nu(S)\simeq \ka^*,\quad \textrm{where}\ \ \ka:=\ker(\omega_S)|_{S}.\]
\end{proposition}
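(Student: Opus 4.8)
The plan is to produce the dashed isomorphism by restricting the bundle isomorphism $\omega^\flat\colon T\G\to T^*\G$ along $\G_S$ and then passing to suitable quotients, keeping track of the VB‑groupoid structures at every step. The normal bundle $\nu(\G_S):=T_{\G_S}\G/T\G_S$ carries a quotient VB‑groupoid structure making it the linear approximation $\G_S\ltimes\nu(S)$, and this is intrinsic to the embedding; so the only thing to do is identify the target of $\omega^\flat$ after restriction and quotienting. Because $\omega$ is multiplicative, $\omega^\flat\colon T\G\to T^*\G$ is a morphism of VB groupoids from the tangent groupoid $T\G\tto TM$ to the cotangent groupoid $T^*\G\tto A^*$, and it is an isomorphism since $\omega$ is symplectic. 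Restricting the total spaces of these VB groupoids to the subgroupoid $\G_S$ gives a VB‑groupoid isomorphism $\omega^\flat\colon T_{\G_S}\G\diffto T^*_{\G_S}\G$ over $\id_{\G_S}$, where $T_{\G_S}\G\tto T_SM$ and $T^*_{\G_S}\G\tto A^*_S$.

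The key linear‑algebraic input is coisotropy. Since $i$ is a coisotropic embedding, $(T\G_S)^{\perp_\omega}\subset T\G_S$, hence $\ker\omega_S=\ker(i^*\omega)=(T\G_S)^{\perp_\omega}$; by Proposition~\ref{prop:over:symplectic} the form $\omega_S$ has constant rank, so $\ker\omega_S$ is a subbundle, and being closed and multiplicative it is moreover a VB subgroupoid of $T_{\G_S}\G$. A pointwise computation with the nondegenerate pairing $\omega$ then yields
\[
 \omega^\flat(T\G_S)=\bigl((T\G_S)^{\perp_\omega}\bigr)^{0}=(\ker\omega_S)^{0},
\]
the annihilator of $\ker\omega_S$ inside $T^*_{\G_S}\G$.

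I would then quotient. The inclusion $T\G_S\subset T_{\G_S}\G$ is the tangent of $\G_S\hookrightarrow\G$, hence a VB subgroupoid, so its image $(\ker\omega_S)^{0}$ is a VB subgroupoid of $T^*_{\G_S}\G$; therefore $\omega^\flat$ descends to a VB‑groupoid isomorphism
\[
 \nu(\G_S)=T_{\G_S}\G/T\G_S\;\diffto\;T^*_{\G_S}\G/(\ker\omega_S)^{0}
\]
over $\id_{\G_S}$, which is exactly the commutativity asserted in the diagram. To recognize the right‑hand side, I would invoke VB‑groupoid duality (in the sense of \cite{BCdH16}): dualizing $0\to\ker\omega_S\to T_{\G_S}\G\to T_{\G_S}\G/\ker\omega_S\to 0$ identifies $T^*_{\G_S}\G/(\ker\omega_S)^{0}$ canonically with the dual VB groupoid $(\ker\omega_S)^{*}$, which was identified in Section~\ref{sec:local:model:grpds} with $\G_S\ltimes\ka^{*}$. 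Composing produces the desired groupoid isomorphism $\nu(\G_S)\diffto(\ker\omega_S)^{*}$.

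It remains to identify the base map and check equivariance. Over a unit $1_x$, $x\in S$, one has $(\ker\omega_S)^{0}|_{1_x}=\ka_x^{0}\subset T^*_{1_x}\G$, and the induced map on bases sends $[v]\in\nu_x(S)=T_{1_x}\G/T_{1_x}\G_S$ to the functional $\xi\mapsto\omega(\tilde v,\xi)$ on $\ka_x$, where $\tilde v\in T_{1_x}\G$ is any lift of $v$; this is precisely the map $\varphi$ of Lemma~\ref{lemma:omega:along:zero}, and coisotropy forces it to be a linear isomorphism exactly as there. Finally, since the whole map $\nu(\G_S)\to(\ker\omega_S)^{*}$ is a groupoid isomorphism over $\id_{\G_S}$ between the action groupoids $\G_S\ltimes\nu(S)$ and $\G_S\ltimes\ka^{*}$, its base map automatically intertwines the two $\G_S$‑actions, i.e.\ it is an isomorphism of $\G_S$‑representations $\nu(S)\simeq\ka^{*}$. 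The only delicate point is the bookkeeping of the VB‑groupoid restrictions and duals — that annihilators of VB subgroupoids are again VB subgroupoids and that duality carries $T^*_{\G_S}\G/(\ker\omega_S)^{0}$ to $(\ker\omega_S)^{*}$ — but this is routine from the general theory; there is no analytic difficulty, the content being the coisotropy identity $\ker\omega_S=(T\G_S)^{\perp_\omega}$ together with the compatibility of $\omega^\flat$ with multiplication.
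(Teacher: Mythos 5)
Your proposal is correct and follows essentially the same route as the paper: coisotropy (via $\ker\omega_S=(T\G_S)^{\perp_\omega}$) yields the induced vector bundle isomorphism $\nu(\G_S)\simeq(\ker\omega_S)^*$ making the diagram commute, multiplicativity of $\omega$ makes all arrows groupoid morphisms, and the identification of both sides with the action groupoids $\G_S\ltimes\nu(S)$ and $\G_S\ltimes\ka^*$ gives equivariance of the base map. You merely make explicit (via the annihilator computation and VB-groupoid duality) the quotient step that the paper leaves implicit, which is fine.
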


\begin{proof}
Since $i$ is a coisotropic embedding, it follows that it induces an isomorphism of vector bundles making the diagram commute. Now observe that, since $\omega$ is multiplicative, all solid arrows in the diagram are groupoid morphisms, so it follows that the dashed arrow is also a groupoid morphism. Denote this isomorphism by $\Phi:\nu(\G_S) \diffto (\ker\omega_S)^*$ and its base map by $\varphi:\nu(S)\diffto \ka^*$. 
As we have remarked before, these groupoids are isomorphic to action groupoids
\[\nu(\G_S)\simeq \G_S\ltimes \nu(S),\quad (\ker\omega_S)^*\simeq \G_S\ltimes \ka^*,\]
where the isomorphisms are the pairs $(\pr_{\G_S},\s)$, bundle projection and source map. Since $\Phi$ is a vector bundle map covering the identity, we obtain a commutative diagram of groupoid isomorphisms
\[
\xymatrix{
\nu(\G_S) \ar[r]^{\Phi}\ar[d] & (\ker\omega_S)^* \ar[d]\\
\G_S\ltimes\nu(S)  \ar[r]^{(\id,\varphi)} & \G_S\ltimes\ka^*. 
}
\]
In particular, the map $\varphi$ is an isomorphism of $\G_S$-representations. 
\end{proof}

The previous proposition allows to restate Proposition \ref{prop:partially:split:grpd:optimal} as follows.


\begin{corollary}
Given a symplectic groupoid $(\G,\omega)\tto M$, its restriction $(\G_S,\omega_S)$ to a saturated submanifold $S$ is partially split if and only if the linear approximation $\nu(\G_S)\tto\nu(S)$ admits a closed multiplicative 2-form $\omega_0$ extending $\omega_S$ and which is non-degenerate along $\G_S$.
\end{corollary}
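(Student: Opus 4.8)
The plan is to recognize this corollary as essentially a restatement of Proposition \ref{prop:partially:split:grpd:optimal} combined with Proposition \ref{prop:local:models:agree}, so the proof should be short and mostly bookkeeping. First I would observe that for a symplectic groupoid $(\G,\omega)\tto M$ and a saturated submanifold $S\subset M$, the restriction $(\G_S,\omega_S)=(\G|_S,\omega|_S)$ is an over-symplectic groupoid by Proposition \ref{prop:Poisson:over:symplectic}, so that its bundle of ideals $\ka=(\ker\omega_S)|_S$ and the action groupoid $\G_S\ltimes\ka^*\simeq(\ker\omega_S)^*$ are defined, and by Proposition \ref{prop:local:models:agree} this action groupoid is canonically identified with the linear approximation $\nu(\G_S)\tto\nu(S)$ via an isomorphism $\Phi$ of $\G_S$-representations $\nu(S)\simeq\ka^*$.

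Next I would invoke Proposition \ref{prop:partially:split:grpd:optimal} directly: $(\G_S,\omega_S)$ is partially split if and only if there is a groupoid neighborhood $(\G_S\tto S)\hookrightarrow(\G_0\tto M_0)\subset(\G_S\ltimes\ka^*\tto\ka^*)$ carrying a multiplicative symplectic form $\omega_0$ with $i^*\omega_0=\omega_S$ along the zero section. Transporting this statement through the isomorphism $\Phi:\nu(\G_S)\diffto(\ker\omega_S)^*$ of the previous paragraph, the existence of such an $\omega_0$ on an open subgroupoid of $(\ker\omega_S)^*$ is equivalent to the existence of a closed multiplicative 2-form on (an open neighborhood of $\G_S$ in) the linear approximation $\nu(\G_S)$ which extends $\omega_S$ along the zero section and is non-degenerate there. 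Since non-degeneracy along $\G_S$ is an open condition, a closed multiplicative 2-form on all of $\nu(\G_S)$ extending $\omega_S$ and non-degenerate along $\G_S$ restricts to one on a suitable open subgroupoid, and conversely any $\omega_0$ defined only near $\G_S$ can be regarded as data of the type appearing in Proposition \ref{prop:partially:split:grpd:optimal}; so the two formulations match.

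There is essentially no hard step here — the only thing requiring a line of care is checking that the notions of ``extending $\omega_S$'' agree on the two sides: in Proposition \ref{prop:partially:split:grpd:optimal} the extension condition is $i^*\omega_0=\omega_S$ for the zero-section inclusion $i:\G_S\hookrightarrow\G_S\ltimes\ka^*$, while in the corollary it is the pullback of $\omega_0$ along the zero section $\G_S\hookrightarrow\nu(\G_S)$; but $\Phi$ restricts to the identity on $\G_S$ (it is the identity on the base copies of $\G_S$), so these pullbacks correspond. I would also remark that the statement that the form $\omega_0$ on $\nu(\G_S)$ may be taken globally rather than only near $\G_S$ follows because a partial splitting $\Theta$, when it exists, produces via \eqref{eq:relation:omega:theta} a genuinely global linear multiplicative form $\omega^\lin=\Theta^*\omega_\can$ on all of $K^*\simeq\G_S\ltimes\ka^*$, and $\omega_0=\pr_{\G_S}^*\omega_S+\omega^\lin$ is then a closed multiplicative 2-form defined on the whole linear approximation. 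With these remarks the corollary follows immediately.

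\begin{proof}
By Proposition \ref{prop:Poisson:over:symplectic}, the restriction $(\G_S,\omega_S)$ is an over-symplectic groupoid, so the bundle of ideals $\ka=(\ker\omega_S)|_S$ is defined and, as noted above, $(\ker\omega_S)^*\simeq \G_S\ltimes\ka^*$. By Proposition \ref{prop:local:models:agree}, the symplectic form $\omega$ induces a canonical groupoid isomorphism
\[ \Phi:\nu(\G_S)\diffto (\ker\omega_S)^*=\G_S\ltimes\ka^*, \]
restricting to the identity on $\G_S$ and covering an isomorphism of $\G_S$-representations $\varphi:\nu(S)\diffto\ka^*$.

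Suppose $(\G_S,\omega_S)$ is partially split. By Proposition \ref{prop:partially:split:grpd:optimal}, or more precisely by the expression for $\omega_0$ in terms of a partial splitting $\Theta$, the linear multiplicative form $\omega^\lin=\Theta^*\omega_\can$ is defined on all of $\G_S\ltimes\ka^*$, and hence $\omega_0:=\pr_{\G_S}^*\omega_S+\omega^\lin$ is a closed multiplicative 2-form on $\G_S\ltimes\ka^*$ which is non-degenerate along $\G_S$ and whose pullback along the zero section is $\omega_S$. Pulling $\omega_0$ back along $\Phi$ yields a closed multiplicative 2-form on $\nu(\G_S)$ extending $\omega_S$ and non-degenerate along $\G_S$, since $\Phi|_{\G_S}=\id$.

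Conversely, if $\nu(\G_S)$ admits a closed multiplicative 2-form $\omega_0$ extending $\omega_S$ and non-degenerate along $\G_S$, then pushing it forward along $\Phi$ gives such a form on $\G_S\ltimes\ka^*$. Non-degeneracy along $\G_S$ is an open condition, so by Proposition \ref{prop:over:symplectic} (d) it is non-degenerate on an open subgroupoid $\G_0\subset\G_S\ltimes\ka^*$ containing $\G_S$ (shrinking further so that Lie groupoids are target-connected, as in the proof of Proposition \ref{prop:local:model:grpd}). This is precisely the data appearing in Proposition \ref{prop:partially:split:grpd:optimal}, so $(\G_S,\omega_S)$ is partially split.
\end{proof}
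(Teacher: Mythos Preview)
Your proposal is correct and follows exactly the approach the paper intends: the corollary is presented there as an immediate restatement of Proposition \ref{prop:partially:split:grpd:optimal} via the identification $\nu(\G_S)\simeq(\ker\omega_S)^*$ from Proposition \ref{prop:local:models:agree}, with no further argument given. Your writeup just fills in the bookkeeping that the paper leaves implicit.
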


Proposition \ref{prop:local:models:agree} also shows that if a symplectic groupoid $(\G,\omega)$ is isomorphic to the local model $(\G_0,\omega_0)$ around $\G_S$, then the groupoid $\G$ is linearizable around $S$. Next, we show that Proposition \ref{prop:uniqueness:grpd} implies that this is the only obstruction.

%
%
%

\begin{theorem}
\label{thm:normal:form:groupoid}
If a symplectic groupoid $(\G,\omega)$ is linearizable (as a Lie groupoid) around a saturated submanifold $S$, then $(\G,\omega)$ is locally isomorphic (as a symplectic groupoid) to the groupoid local model of $(\G_S,\omega_S)$.
\end{theorem}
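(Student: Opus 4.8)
The plan is to reduce the statement to the uniqueness result for local models, Proposition \ref{prop:uniqueness:grpd}. Suppose $(\G,\omega)$ is linearizable as a Lie groupoid around the saturated submanifold $S$. By definition, there is a groupoid isomorphism $\Phi:\U\diffto\V$ between open groupoid neighborhoods $\G_S\subset\U\subset\G$ and $\G_S\subset\V\subset\nu(\G_S)$, with $\Phi|_{\G_S}=\id$. The first step is to transport the symplectic form: set $\omega_1:=(\Phi^{-1})^*\omega\in\Omega^2(\V)$. Since $\Phi$ is a groupoid isomorphism and $\omega$ is multiplicative, $\omega_1$ is a multiplicative symplectic form on the open groupoid neighborhood $\V\subset\nu(\G_S)$, and since $\Phi|_{\G_S}=\id$ we have $i^*\omega_1=\omega_S$, where $i:\G_S\hookrightarrow\nu(\G_S)$ is the zero section. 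Thus the problem is now entirely on the linear approximation $\nu(\G_S)$.

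The second step is to identify the linear approximation with the action groupoid appearing in the local model. We have the canonical isomorphism $\nu(\G_S)\simeq\G_S\ltimes\nu(S)$. Now I invoke Lemma \ref{lemma:omega:along:zero} (applied with $\Rep=\nu(S)$): the existence of the multiplicative symplectic form $\omega_1$ on $\V$, together with coisotropicity of $\G_S$ (Proposition \ref{prop:Poisson:over:symplectic}), produces a $\G_S$-equivariant isomorphism $\nu(S)\simeq\ka^*$, where $\ka=(\ker\omega_S)|_S$. Pushing $\omega_1$ forward along the induced groupoid isomorphism $\G_S\ltimes\nu(S)\diffto\G_S\ltimes\ka^*$, we obtain an open groupoid neighborhood $\G_1\subset\G_S\ltimes\ka^*$ containing $\G_S$ and a multiplicative symplectic form $\omega_1\in\Omega^2(\G_1)$ with $i^*\omega_1=\omega_S$. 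As a byproduct this shows, independently of any connection, that $(\G_S,\omega_S)$ must be partially split: by Proposition \ref{prop:partially:split:grpd:optimal} the very existence of such a pair $(\G_1,\omega_1)$ forces the partially split condition. Hence a groupoid local model $(\G_0,\omega_0)$ exists (Proposition \ref{prop:local:model:grpd}), also realized as an open groupoid neighborhood of $\G_S$ inside $\G_S\ltimes\ka^*$ with $i^*\omega_0=\omega_S$.

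The third and final step is to compare $(\G_0,\omega_0)$ and $(\G_1,\omega_1)$: these are exactly two multiplicative symplectic extensions of $\omega_S$ on open groupoid neighborhoods of $\G_S$ inside the linear action groupoid $\G_S\ltimes\ka^*$, so Proposition \ref{prop:uniqueness:grpd} (with $\Rep=\ka^*$) yields a symplectic groupoid isomorphism $\Psi:(\G_0',\omega_0)\diffto(\G_1',\omega_1)$ between open groupoid neighborhoods of $\G_S$, restricting to the identity on $\G_S$. Composing $\Psi$ with the groupoid isomorphism $\Phi$ (suitably restricted and transported through the identifications of the second step) gives a local symplectic groupoid isomorphism between $(\G,\omega)$ and the groupoid local model $(\G_0,\omega_0)$, as required.

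I expect the main technical point to be the bookkeeping in the second step: carefully tracking the chain of identifications $\nu(\G_S)\simeq\G_S\ltimes\nu(S)\simeq\G_S\ltimes\ka^*$ so that the transported form $\omega_1$ genuinely lives on an \emph{open groupoid neighborhood} of $\G_S$ inside $\G_S\ltimes\ka^*$ (shrinking $\V$, and then $M_1\subset\ka^*$, as needed using the Tube Lemma) and restricts correctly to $\omega_S$ along the zero section — after that, Propositions \ref{prop:partially:split:grpd:optimal} and \ref{prop:uniqueness:grpd} do all the real work.
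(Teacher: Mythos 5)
Your proposal is correct and follows essentially the same route as the paper: transport $\omega$ through the linearization to get a multiplicative symplectic form on a groupoid neighborhood of $\G_S$ in the linear approximation, deduce the partially split condition from Proposition \ref{prop:partially:split:grpd:optimal}, and compare with the local model via the multiplicative Moser uniqueness result, Proposition \ref{prop:uniqueness:grpd}. Your explicit use of Lemma \ref{lemma:omega:along:zero} to identify $\nu(S)\simeq\ka^*$ merely spells out the identification the paper leaves implicit (cf.\ Proposition \ref{prop:local:models:agree}), so there is no substantive difference.
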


\begin{proof}
A linearization of $\G$ around $S$, induces a multiplicative symplectic structure $\omega_1$ on a groupoid neighborhood $\G_1\subset \G_S\ltimes \ka^*$ of $\G_S$ such that $i^*(\omega_1)=\omega_S$. By Proposition \ref{prop:partially:split:grpd:optimal}, $(\G_S,\omega_S)$ is partially split, and so the groupoid local model $(\G_0,\omega_0)$ of Proposition \ref{prop:local:model:grpd} exists. Since $i^*(\omega_0)=\omega_S$, Proposition \ref{prop:uniqueness:grpd} implies that $(\G_0,\omega_0)$ and $(\G_1,\omega_1)$ are isomorphic around $\G_S$, and so $(\G,\omega)$ and the groupoid local model $(\G_0,\omega_0)$ are isomorphic around $\G_S$.
\end{proof}

The linearization theorems mentioned before (see, e.g., \cite{dHFe18}) imply the following.

\begin{corollary}\label{corollary:linearization:proper:Hausdorff}
Let $(\G,\omega)\tto M$ be a proper symplectic groupoid. For any closed embedded invariant submanifold $S\subset M$, $(\G,\omega)$ is isomorphic around $S$ to the groupoid local model of $(\G_S,\omega_S)$. If, additionally, $(\G,\omega)$ is target-proper, then $\G$ is invariantly linearizable. 
%
%
\end{corollary}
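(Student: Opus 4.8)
The plan is to obtain this corollary as a packaging of Theorem \ref{thm:normal:form:groupoid} with the standard linearization theorems for proper Lie groupoids. First I would note that, by hypothesis, $S\subset M$ is a closed embedded invariant submanifold, hence saturated for $\G$, so $\G_S=\G|_S$ is a closed embedded Lie subgroupoid of $\G$; being the restriction of a proper Lie groupoid to a closed saturated submanifold, $\G_S\tto S$ is again proper, so by Theorem \ref{thm:proper} the bundle of ideals $\ka=(\ker\omega_S)|_S$ is partially split and the groupoid local model $(\G_0,\omega_0)$ of Proposition \ref{prop:local:model:grpd} is available. (In fact this partial splitness is also produced internally in the proof of Theorem \ref{thm:normal:form:groupoid}, via Proposition \ref{prop:partially:split:grpd:optimal}, so it need not be invoked separately.)

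Next, since $(\G,\omega)$ is a proper Hausdorff Lie groupoid, the linearization theorem for proper Lie groupoids (in the form recalled in \cite{dHFe18}) applies to the saturated submanifold $S$: $\G$ is linearizable around $S$, meaning there is a Lie groupoid isomorphism fixing $\G_S$ between a groupoid neighborhood of $\G_S$ in $\G$ and one in the linear approximation $\nu(\G_S)\tto\nu(S)$. The hypothesis of Theorem \ref{thm:normal:form:groupoid} is exactly this Lie-groupoid linearizability around a saturated submanifold, so that theorem yields that $(\G,\omega)$ is locally isomorphic, as a symplectic groupoid, to the groupoid local model of $(\G_S,\omega_S)$. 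This proves the first assertion.

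For the second assertion I would simply strengthen the input: if in addition $(\G,\omega)$ is target-proper, then the invariant linearization theorem for $\s$-proper Hausdorff Lie groupoids (again \cite{dHFe18}) gives an invariant linearization, i.e.\ in the definition of linearizability one can take the neighborhoods to be of the form $\U=\G|_U$ and $\V=\nu(\G_S)|_V$ for saturated open sets $S\subset U\subset M$ and $S\subset V\subset\nu(S)$; this is precisely the statement that $\G$ is invariantly linearizable around $S$, and it is compatible with the symplectic isomorphism produced above.

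I do not expect any genuine obstacle here — the corollary is a formal consequence of results already established. The only point to keep in mind is that ``the groupoid local model'' and ``the linear approximation'' refer to the same underlying Lie groupoid: by Proposition \ref{prop:local:models:agree} the base isomorphism induced by $\omega^\flat$ identifies the normal representation $\nu(S)$ with $\ka^*$ in a $\G_S$-equivariant way, so $\nu(\G_S)\simeq\G_S\ltimes\ka^*$, and Proposition \ref{prop:uniqueness:grpd} guarantees that all groupoid local models agree up to isomorphism around $\G_S$. With these identifications in place, the two cited linearization results feed directly into Theorem \ref{thm:normal:form:groupoid} to give both halves of the corollary.
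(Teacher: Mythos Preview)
Your proposal is correct and matches the paper's approach exactly: the paper simply states that the corollary follows from Theorem \ref{thm:normal:form:groupoid} together with the linearization theorems for proper (respectively target-proper) Hausdorff Lie groupoids from \cite{dHFe18}. Your elaboration of the intermediate steps (properness of $\G_S$, availability of the local model, and the identification $\nu(\G_S)\simeq\G_S\ltimes\ka^*$ via Proposition \ref{prop:local:models:agree}) is accurate and fills in details the paper leaves implicit.
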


Let us now see that Theorem \ref{thm:main:grpd:normal:form} in the Introduction follows from Corollary \ref{corollary:linearization:proper:Hausdorff}. First, the assumption in Theorem \ref{thm:main:grpd:normal:form} looks weaker than in this corollary, since there one only assumes the restriction $\G_S\tto S$ to be target-proper. However, this is only apparent, as the following general fact shows. 

\begin{lemma}
Let $f:X\to Y$ be a submersion with connected fibers, and let $Y_c\subset Y$ be the collection of points $y\in Y$ whose preimage $f^{-1}(y)\subset X$ is compact. Then $Y_c$ is open in $Y$, and $f:f^{-1}(Y_c)\to Y_c$ is a locally trivial fibration. 
\end{lemma}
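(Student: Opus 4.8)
The plan is to prove this as a standard "openness of compactness" statement for proper maps, using local triviality of submersions and a degeneration/compactness argument on fibers. Since $f$ is a submersion, every point $x\in X$ has a neighborhood on which $f$ looks like a projection $U\times\R^k\to U$; the point is to upgrade this to genuine local triviality over the locus $Y_c$ where fibers are compact and connected.

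First I would show that $Y_c$ is open. Fix $y_0\in Y_c$, so $F:=f^{-1}(y_0)$ is compact. Cover $F$ by finitely many submersion charts $\phi_i\colon V_i \diffto W_i\times\R^k$ with $f|_{V_i}=\pr_{W_i}\circ\phi_i$, $W_i\subset Y$ open around $y_0$; shrinking, we may assume the $\phi_i$ match up to give a product structure on a neighborhood of $F$. Choosing a compact exhaustion of the fiber and using that $F$ is connected, one builds an open set $N\supset F$ and an open $W\subset \bigcap_i W_i$ containing $y_0$ together with a diffeomorphism $N\cap f^{-1}(W)\simeq W\times F_0$ for some compact manifold $F_0$ (with boundary, in general) such that $f$ corresponds to $\pr_W$; here one uses a partition of unity to patch the local product structures, exactly as in the tube-lemma-type arguments used elsewhere in this paper (e.g.\ in the proofs of Propositions \ref{prop:uniqueness:grpd} and \ref{prop:partially:split:grpd:optimal}). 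Shrinking $W$ further so that $f^{-1}(y)\subset N$ for all $y\in W$ — possible because, by the tube lemma applied to the compact fiber $F$ and the open set $N$, there is a neighborhood $W$ of $y_0$ with $f^{-1}(W)\subset N$ when $f$ is proper near $F$, and properness near $F$ follows from compactness of $F$ together with $f$ being a submersion — we conclude that $f^{-1}(y)$ is diffeomorphic to $F_0$, hence compact, for every $y\in W$. Thus $W\subset Y_c$, proving $Y_c$ open. The same product chart $N\cap f^{-1}(W)\simeq W\times F_0$ then shows $f\colon f^{-1}(W)\to W$ is trivial, so $f\colon f^{-1}(Y_c)\to Y_c$ is a locally trivial fibration.

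The main obstacle is the patching step: producing a \emph{global} product structure on a neighborhood of the compact fiber $F$ from the local submersion charts, i.e.\ showing that a proper submersion is locally trivial. The clean way to do this is to invoke the Ehresmann fibration theorem directly: a proper submersion is a locally trivial fibration, and properness is an \emph{open} condition on the base for any submersion (more precisely, the set of points over which the map is proper is open). Concretely, since $f$ is a submersion and $f^{-1}(y_0)$ is compact, one may choose a complete Riemannian metric on $X$ and a horizontal distribution complementary to $\ker\d f$; over a small ball $B\subset Y$ around $y_0$, horizontal lifts of radial paths, combined with completeness and the compactness of $f^{-1}(y_0)$ to guarantee that these lifts do not escape to infinity in finite time and remain within a fixed compact region, integrate to a diffeomorphism $f^{-1}(B)\simeq B\times f^{-1}(y_0)$ over $B$. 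This simultaneously gives that $f^{-1}(y)$ is compact (hence $B\subset Y_c$) and that $f$ is trivial over $B$. Connectedness of fibers is only needed to ensure the fibers over $Y_c$ are themselves connected (so the fibration has connected fiber), which is automatic here and plays no role in the argument beyond that; it is recorded because it is part of the hypotheses of the lemma's intended application to target fibers of groupoids.
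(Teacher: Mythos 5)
There is a genuine gap, and it sits exactly where you wave it away: the assertion that ``properness near $F$ follows from compactness of $F$ together with $f$ being a submersion'' is false, and consequently connectedness of the fibers is \emph{not} a cosmetic hypothesis --- it is what makes the lemma true. Consider $X=\bigl(S^1\times(-1,1)\bigr)\sqcup\bigl(\R\times(0,1)\bigr)$ with $f$ the projection to $(-1,1)$: this is a submersion, the fiber over $0$ is the compact circle $S^1$, yet the fiber over any $t>0$ is $S^1\sqcup\R$, which is non-compact, so $Y_c=(-1,0]$ is not open and $f$ is not proper over any neighborhood of $0$. The same example defeats your claim that ``the set of points over which the map is proper is open'' as a substitute for the missing step, because compactness of a single fiber does not imply properness of $f$ near that fiber when fibers are disconnected: new non-compact components can appear in nearby fibers without limiting onto $F$. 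Your tube-lemma step ($f^{-1}(W)\subset N$ for $W$ small) therefore has no justification as written, and your closing remark that connectedness ``plays no role in the argument'' cannot be right, since without it the statement itself fails.

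The Ehresmann-type argument you sketch can be repaired, but only by using connectedness at the critical moment. Flowing out of $F=f^{-1}(y_0)$ along a horizontal distribution over a small ball $B\ni y_0$ gives (by compactness of $F$, which keeps the lifts in a fixed compact set for short time) an open embedding $B\times F\hookrightarrow f^{-1}(B)$ over $B$; for each $y\in B$ its image meets $f^{-1}(y)$ in a subset that is open (image of an open fiber-preserving embedding), compact (continuous image of $\{y\}\times F$), hence closed, and nonempty --- so by \emph{connectedness} of $f^{-1}(y)$ it is all of $f^{-1}(y)$. Only then do you get that nearby fibers are compact and that $f$ is trivial over $B$. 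This repaired route is a legitimate alternative to the proof in the paper, which instead applies the Local Reeb Stability Theorem to the foliation by fibers (trivial holonomy) to produce a saturated product neighborhood $U\simeq f^{-1}(y_0)\times V$ and then uses connectedness of the fibers in exactly the same way, to conclude $U=f^{-1}(V)$. Either way, you must make the open-closed-nonempty argument in each nearby fiber explicit rather than appeal to a properness statement that is false without the connectedness hypothesis.
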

\begin{proof}
The foliation on $X$ given by the fibers of $f$ has trivial holonomy. By applying the Local Reeb Stability Theorem (see, e.g., \cite{MM04}) to a fiber $f^{-1}(y)$, $y\in Y_c$, we obtain a saturated open neighborhood $f^{-1}(y)\subset U\subset X$ and a diffeomorphism $U\simeq f^{-1}(y)\times V$, with $y\subset V\subset Y$, under which $f$ becomes $\mathrm{pr}_V$. Since the fibers of $f$ are connected, it follows that $U=f^{-1}(V)$. Hence $V\subset Y_c$, and $f$ is can be trivialized above $V$. 
\end{proof}

\begin{proof}[Proof of Theorem \ref{thm:main:grpd:normal:form}] The lemma applied to $\t:\G\to M$ 
yields the saturated open set $M_c$ for which $\G|_{M_c}$ is target-proper. 
The result follows from Corollary \ref{corollary:linearization:proper:Hausdorff}.
\end{proof}

\section{Examples of groupoid local models}\label{sec:examples:groupoid:local:models}

In \cite[Section 3]{FM22} we give many examples of classes of groupoids with bundles of ideals that are partially split. These lead to examples of over-symplectic groupoids which are partially split. 
We give here some classes where one can find the symplectic groupoid local models explicitly.

\subsection{Transitive over-symplectic groupoids}\label{ex:transitive:over-symplectic:cont}
Given a principal $G$-bundle $P\to S$, the gauge groupoid 
\[ \G_S:=P\times_G P\tto S, \] 
is a transitive groupoid for which the adjoint bundle $\ka=P\times_G\gg$ is a bundle of ideals. It was observed in \cite[Section 3]{FM22} that this is always partially split and that there is a 
1-to-1 correspondence
\[ 
\left\{\txt{multiplicative connection\\ 1-forms $\alpha\in\Omega^1(\G_S,\ka)$ \,}\right\}
\tilde{\longleftrightarrow}
\left\{\txt{principal bundle\\ connections $\eta\in\Omega^1(P;\gg)$\,} \right\}
\]
This correspondence is as follows: given a principal bundle connection $\eta\in\Omega^1(P,\gg)$, the 1-form $\pr_2^*\eta-\pr_1^*\eta\in \Omega^1(P\times P,\gg)$ descends to a unique multiplicative connection 1-form $\alpha\in\Omega^1(\G_S,\ka)$ such that
\begin{equation}
    \label{eq:connection:principal}
    q^*\al=\pr_2^*\eta-\pr_1^*\eta,
\end{equation}
where $q:P\times P \to P\times_G P$ is the projection. 

If $(\G_S,\omega_S)$ is a transitive over-symplectic groupoid, then we saw in the example of Subsection \ref{ex:transitive:over-symplectic} that it is isomorphic to the gauge groupoid of a principal $G$-bundle $p:P\to S$ over a symplectic manifold $(S,\omega)$,
\[ \G_S\simeq P\times_G P\tto S.\] 
Under this isomorphism, the over-symplectic structure is given by
\[\omega_S=(p\circ\pr_2)^*\omega-(p\circ\pr_1)^*\omega\in\Omega^2(\G_S).\]
It follows that the bundle of ideals is the adjoint bundle
\[ \ka=(\ker\omega_S)|_S=P\times_G \gg.  \]
Hence, every transitive over-symplectic groupoid $(\G_S,\omega_S)$ is partially split.  An explicit form for the local model $(\G_0,\omega_0)$ associated with a principal connection $\eta$ is given by a groupoid neighborhood of the zero section in 
\[ \G_S\ltimes \ka^*=(P\times P)\times_G \gg^*, \]
and the closed two 2-form $\omega_0$ is obtained from expression \eqref{eq:symplectic:form:local:model} 
\[ \omega_0=\pr_{\G_S}^*\omega_S+\d\langle \al,\cdot\rangle, \]
and $\al$ is determined by \eqref{eq:connection:principal}.
This local model is well-known \cite{CrMa12,MarcutPhD} and is the groupoid version of Vorobjev's local model around symplectic leaves \cite{Vorobjev01}.

\subsection{Principal type}\label{example:os:princ:type:part:split}
Consider an over-symplectic groupoid of principal type, $\G_S:=\Sigma\times_{S\times S}\H$, associated to the symplectic groupoid $(\Sigma,\omega_{\Sigma})\tto S$ and the transitive groupoid $\H\simeq P\times_GP\tto S$, as in the example from Subsection \ref{example:os:princ:type}. Note that
\[\ker\omega_S=\Sigma\times_{S\times S}\big(\ker\d\t_{\H}\cap \ker\d\s_{\H}\big)\simeq \G_S\times_S\ka,\]
where, as in the previous example, the bundle $\ka$ coincides with the adjoint bundle
\[ \ka=(\ker\omega_S)|_S=P\times_G \gg.\]
Also as in the previous example, a principal connection $\eta\in \Omega^1(P;\gg)$ gives rise to a multiplicative connection $\alpha\in \Omega^1(\H,\ka)$. The projection $\pr_{\H}:\G_S\to \H$ is a groupoid map, so it follows that we have a multiplicative connection 1-form on $\G_S$
\[\pr_{\H}^*\alpha\in \Omega_{\mult}^{1}(\G_S,\ka).\]
We conclude that over-symplectic groupoids of principal type are partially split. The local model has supporting groupoid an open subgroupoid
\[\G_0\subset (P\times_S\Sigma\times_SP)\times_G\gg^*\tto P\times_G\gg^*, \]
and multiplicative form
\[ \omega_0=\pr^*_\Sigma\omega_\Sigma+\d\langle\pr^*_\H\al,\cdot\rangle. \]

\subsection{Action groupoids}
\label{ex:action} Let $(\G_S,\omega_S)$ be an over-symplectic groupoid, where $\G_S$ is the action groupoid of an action of a Lie group $G$ on a manifold $S$
\[ \G_S=G\ltimes S\tto S. \]
The kernel of $\omega_S$ defines a bundle of ideals $\ka\hookrightarrow \gg\times S$. This means that each fiber is an ideal $\ka_x\subset \ker\rho_x$ such that
\[ \Ad_g(\ka_x)=\ka_{gx}, \]
where $\rho:\gg\to\X(S)$ denotes the infinitesimal action. 

If there exists a $G$-equivariant splitting $l:\gg\times S\to\ka$ then the over-symplectic groupoid $(\G_S,\omega_S)$ is partially split: this splitting yields a multiplicative connection 1-form $\al\in\Omega^1_M(G\ltimes S;\ka)$ by setting
\[ \al_{(g,x)}(\d L_g(v),w):= l(v,x). \]

Consider for example a compact Lie group $G$ with Lie algebra $\gg$ and denote by $(\cdot,\cdot)$ a $G$-invariant inner product on $\gg$. Given any $G$-invariant submanifold $S\subset \gg$ the action groupoid
\[ \G_S:=G\ltimes S\tto S. \]
is an over-symplectic groupoid with closed 2-form
\begin{equation}
    \label{eq:action:groupoid:constant:form}
    (\omega_S)_{(g,x)}((\d L_g(v_1),z_1),(\d L_g(v_2),z_2))=(v_2,z_1)-(v_1,z_2)-([v_1,v_2],x), 
\end{equation} 
where $(g,x)\in G\times S$, $v_1,v_2\in \gg$ and $z_1,z_2\in T_x S\subset\gg$. The corresponding Lie algebra bundle $\ka\subset \gg\times S$ is:
\begin{equation}\label{eq:ka:normal:bundle}
    \ka_x=\{(w,x): w\in (T_x S)^\perp\}.
\end{equation}
The Lie group $G$ acts on the total space of the bundle $\ka$
\[ g\cdot (w,x)=(\Ad_g(w),gx), \]
and the groupoid $\G_S\ltimes\ka\tto \ka$ is then identified with the action groupoid $G\ltimes \ka\tto \ka$. Using the inner product, we can identify $\gg\simeq\gg^*$ and $\ka\simeq\ka^*$. 

The orthogonal projection gives a $G$-invariant splitting
\[ l:\gg\times S\to\ka, \quad l_x(v):=\pi_{\ka_x}(v). \]
Then we obtain the multiplicative connections 1-form $\al\in\Omega^1_M(G\ltimes S;\ka)$ defined by
\[ \al_{(g,x)}(\d L_g(v),z):=\pi_{\ka_x}(v). \]
This gives a closed, multiplicative, 1-form $\langle \al,\cdot\rangle\in\Omega^1_M(G\ltimes\ka)$ (recall that $\ka\simeq\ka^*$)
\begin{equation}
    \label{eq:action:groupoid:linear:form}
    \langle \al,\cdot\rangle_{(g,w,x)}(\d L_g(v),u,z)=(v,w), 
\end{equation} 
where $g\in G$, $(w,x)\in\ka$, $v\in\gg$ and $(u,z)\in T_{(w,x)}\ka\subset \gg\times\gg$. The symplectic form on the local model $\G_0\subset G\ltimes\ka$ is given, as usual, by restricting the closed, multiplicative, 2-form
\[ \omega_0=\pr_{G\ltimes S}^*\omega_S+\d\langle \al,\cdot\rangle \in \Omega^2_M(G\ltimes\ka).\]
Using \eqref{eq:action:groupoid:constant:form} and \eqref{eq:action:groupoid:linear:form} we find
\begin{align}
(\omega_0)_{(g,w,x)}&((\d L_g(v_1),u_1,z_1),(\d L_g(v_2),u_2,z_2))=\notag\\
&=(v_2,z_1)-(v_1,z_2)-([v_1,v_2],x)+(v_2,u_1)-(v_1,u_2)-([v_1,v_2],w)\notag\\
    &=(v_2,z_1+u_1)-(v_1,z_2+u_2)-([v_1,v_2],x+w)\label{eq:formula:sympl:str}.
\end{align} 

Note that $\ka$ is the normal bundle of $S$ in $\gg$ (see \eqref{eq:ka:normal:bundle}) and that the Riemannian exponential map $\ka\to \gg$ is given by
$(w,x)\mapsto w+x$.
In particular, it is $G$-equivariant, and so it induces a map of groupoids 
\[ G\ltimes\ka\to G\ltimes\gg,\]
which is an open embedding around $G\ltimes S$. Formula \eqref{eq:formula:sympl:str} shows that it is in fact an embedding of symplectic groupoids, where we use the identifications $T^*G\simeq G\times\gg^*\simeq G\ltimes\gg$ given by left-invariant translations and the invariant inner product, and endow $T^*G$ with the canonical symplectic structure.

We conclude that for a compact Lie group $G$ and any $G$-invariant submanifold $S\subset \gg^*$, a choice of $G$-invariant inner product gives an identification of the local model $\G_0$ around $S$ with $T^*G$. In other words, the symplectic groupoid $T^*G\tto \gg^*$ is linearizable around any invariant submanifold. This also agrees, of course, with Corollary \ref{corollary:linearization:proper:Hausdorff}, since $T^*G\tto\gg^*$ is an s-proper groupoid whenever $G$ is compact. The next example shows that the assumption that $G$ is compact is crucial.

\begin{example}\label{example:aff1}
Consider the non-abelian 2-dimensional Lie group $G$ and the associated symplectic groupoid $(T^*G,\omega_\can)\tto \gg^*$, which integrates the linear Poisson structure on $\gg^*\simeq\R^2$ given by
\[ \pi=x\,\pd{x}\wedge\pd{y}. \]
Then $T^*G$ is not linearizable around the closed, embedded Poisson submanifold $S=\{x=0\}$. For this, note that the over-symplectic groupoid $\G_S:=(T^*G)|_S$, is, as groupoid, the trivial bundle of Lie groups $G\times S\to S$, and that the orbits of $\G_S$ on the normal bundle $\nu(S)=\ka^*$ are one dimensional. So, as in Example \ref{ex:non:split}, a partial splitting does not exist.
\end{example}

\section{Local models of Poisson structures}
\label{sec:local:model}

%
%
%
%
%

We now come back to the infinitesimal level with the aim of constructing local models for elements $\tau\in J^1_S\Pi(M,S)$. Since any such element can be represented by a pair $\tau=(T^*_SM,\mu_S)$, i.e., as the infinitesimal counterpart of an over-symplectic groupoid, all we have to do is apply the dictionary to transfer the construction in the previous section to obtain a local model for Poisson structures.


\subsection{The local model}
\label{sec:local:model:algbrd}

Let us fix a first order jet of a Poisson structure
\[ (A_S,[\cdot,\cdot]_{A_S},\rho_{A_S}),\quad \mu_S:A_S\to T^*S,\]
i.e., a Lie algebroid together with a surjective IM 2-form. As usual, $\pi_S$ denotes the induced Poisson structure on $S$.
Let 
\[\ka:=\ker(\mu_S).\]
The IM conditions imply that
\begin{equation}\label{eq:short:exact:sequence}
0\rmap \ka\rmap A_S\stackrel{\mu_S}{\rmap} T^*S\rmap 0,
\end{equation}
is a short exact sequence of Lie algebroids, where $T^*S\Ato S$ is regarded as the cotangent Lie algebroid of $\pi_S$. In particular $\ka$ is a bundle of ideals in $A_S$. Therefore, $A_S$ has a canonical representation on $\ka$
\begin{equation}
\label{eq:representation:algbrd} 
\nabla^{\ka}_\al\gamma:=[\al,\gamma]_{A_S},\quad \al\in\Gamma(A_S), \ga\in\Gamma(\ka).
\end{equation}
This is the infinitesimal version of the groupoid representation \eqref{eq:representation:grpd}. It gives rise to the semi-direct product Lie algebroid $A_S\times_S \ka\Ato S$, with Lie bracket
\[ [(\al_1,\gamma_1),(\al_2,\gamma_2)]:=([\al_1,\al_2]_{A_S},\nabla^\ka_{\al_1}\gamma_2-\nabla^\ka_{\al_2}\gamma_1), \]
and anchor $\rho_{A_S}\circ\pr_{A_S}$. This is the infinitesimal version of the groupoid $\G_S\times_S\ka\tto S$.

The infinitesimal version of the groupoid $\G_S\ltimes\ka^*\tto \ka^*$ is, of course, the action algebroid $A_S\ltimes \ka^*\Rightarrow  \ka^*$ associated with the dual representation $\nabla^{\ka^*}$ of $A_S$ on $\ka^*$
\[ \Lie_{\rho_{A_S}(\al)}\langle \be,X \rangle=\langle \nabla^{\ka}_\al\be, X \rangle+\langle \be, \nabla^{\ka^*}_\al X \rangle. \]

To build the local model, we need know whether the Lie algebroid $A_S\ltimes \ka^*\Ato \ka^*$ admits a closed IM 2-form $\mu_0$ which is non-degenerate around $S$ and whose restriction to $A_S$ is $\mu_S$. From the global picture, we know that such an IM form might not exist, and so we need the infinitesimal analogue of multiplicative connections and the partially split condition. 

Using the notion of infinitesimal multiplicative (IM) form with coefficients -- see Appendix \ref{appendix} -- one obtains the infinitesimal version of Definition \ref{def:partially:split:grpd}.

\begin{definition}
\label{def:partially:split:jet}
An \textbf{IM connection 1-form} on a first order jet $(A_S,\mu_S)$ with kernel $\ka=\ker\mu_S$ is a IM 1-form $(L,l)\in\Omega^1_\imult(A_S;\ka)$ whose symbol satisfies
\[ l(\xi)=\xi,\quad \forall \xi\in\Gamma(\ka). \]
A first order jet is called \textbf{partially split} if it admits an IM connection 1-form. 
\end{definition}

The general theory of IM connections 1-forms was developed in \cite{FM22}, and will be recalled briefly in the next subsection.

Given an IM connection 1-form $(L,l)\in\Omega^1_\imult(A_S;\ka)$ we have an associated IM 1-form $\langle (L,l),\cdot\rangle\in\Omega^1_\imult(A_S\ltimes\ka^*)$ which has components $\mu:A_S\ltimes\ka^*\to\R$ and $\zeta:A_S\ltimes\ka^*\to T\ka^*$ defined on constant sections $\al\in \Gamma(A_S)$ by 
\begin{align}
    \mu(\al)&=\widetilde{l(\al)},\\
    \zeta(\al)&=\d\widetilde{l(\alpha)}-\widetilde{L(\alpha)},
\end{align}
where for a $\ka$-valued form $\eta\in \Omega^{k}(S,\ka)$ we denote by $\widetilde{\eta}\in \Omega^{k}(\ka^*)$ the $k$-form given by $\widetilde{\eta}|_{\xi}:=\langle \pr^*(\eta),\xi\rangle$, with $\pr:\ka^*\to S$ denoting the bundle projection. That $\langle (L,l),\cdot\rangle=(\mu,\zeta)$ is indeed a IM form is proven in \cite[Proposition 5.4]{FM22}, and follows directly from the IM conditions. 

Using the IM-differential from definition \eqref{eq:IM:differential}, one obtains the closed IM 2-form
\[ \d_\imult\langle (L,l),\cdot\rangle=(\zeta,0)\in\Omega^2_\imult(A_S\ltimes\ka^*). \]
Then we can state the infinitesimal analogue of Proposition \ref{prop:local:model:grpd}.

\begin{proposition}
\label{prop:local:model:algbrd}
Given a first order jet $(A_S,\mu_S)$ and an IM connection 1-form $(L,l)\in\Omega^1_\imult(A_S;\ka)$ the closed IM 2-form \begin{equation}
\label{eq:normal:IM:form} 
\mu_0=\pr^*\mu_S+\d_\imult\langle (L,l),\cdot\rangle\in\Omega^2_\imult(A_S\ltimes\ka^*),
\end{equation}
is non-degenerate along $S$. Moreover, the zero section $i_0:A_S\hookrightarrow A_S\ltimes\ka^*$ is an algebroid embedding satisfying $i_0^*\mu_0=\mu_S$.
\end{proposition}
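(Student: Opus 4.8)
The plan is to prove Proposition~\ref{prop:local:model:algbrd} by transporting the global construction of Proposition~\ref{prop:local:model:grpd} to the infinitesimal level via the IM--multiplicative dictionary, the only genuinely new input being the non-degeneracy check, which is a purely pointwise linear-algebra computation along $S$. First I would verify that $\mu_0$ as defined in \eqref{eq:normal:IM:form} is indeed a closed IM $2$-form on $A_S\ltimes\ka^*$: the first summand $\pr^*\mu_S$ is the pullback of a closed IM form along the algebroid morphism $\pr:A_S\ltimes\ka^*\to A_S$ (which covers the bundle projection $\ka^*\to S$), and $\d_\imult\langle(L,l),\cdot\rangle$ is closed by construction since it is an IM-differential. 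That $\langle(L,l),\cdot\rangle=(\mu,\zeta)$ is an IM $1$-form is already established in \cite[Proposition 5.4]{FM22}, so this step is just bookkeeping and I would keep it brief.

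The heart of the matter is non-degeneracy along $S$, and this is where I expect to spend the real effort. An IM $2$-form $\mu_0$ on an algebroid $B\Ato N$ is non-degenerate at a point $x\in N$ precisely when the induced pairing $B_x\to T^*_x N$ (the value of the IM form at $x$) is an isomorphism; equivalently, in the integrated picture, when the corresponding multiplicative $2$-form is symplectic near the identity at $x$. So I must compute $\mu_0$ restricted to $A_S\ltimes\ka^*$ along the zero section $0_S\subset\ka^*$. Using $T_S(\ka^*)\cong TS\oplus\ka^*$ and the fact that the fibre of the action algebroid $A_S\ltimes\ka^*$ over a point of $0_S$ is $A_S$ itself, the contribution of $\pr^*\mu_S$ is exactly $\mu_S:A_S\to T^*S$, which by hypothesis is \emph{surjective} with kernel $\ka$. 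The extra term $\d_\imult\langle(L,l),\cdot\rangle=(\zeta,0)$ contributes, on the $\ka$-directions, precisely the canonical pairing between $\ka$ and $\ka^*$: this is forced by the normalization $l(\xi)=\xi$ for $\xi\in\Gamma(\ka)$ in Definition~\ref{def:partially:split:jet}, exactly mirroring the computation that in the classical Gotay model $\Theta^*\omega_{\can}$ restricts to the canonical symplectic pairing on $\ka\times_S\ka^*$. Combining the two: $\mu_0$ along $S$ is block-triangular with blocks the surjection $A_S\to T^*S$ (with kernel $\ka$) and the perfect pairing $\ka\cong\ka^*$, hence is an isomorphism $A_S\ltimes\ka^*\,|_S = A_S\oplus 0 \to T^*S\oplus\ka^*\cong T^*_S(\ka^*)$. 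By continuity (openness of the non-degeneracy locus) $\mu_0$ is then non-degenerate on a neighborhood of $S$; I would phrase the conclusion as non-degeneracy \emph{along $S$} as in the statement, leaving the open-neighborhood refinement implicit or to a subsequent lemma as is done on the groupoid side.

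Finally, the statement that the zero section $i_0:A_S\hookrightarrow A_S\ltimes\ka^*$ is an algebroid embedding with $i_0^*\mu_0=\mu_S$ is essentially immediate: $i_0$ is the inclusion of the semidirect factor, which is always a Lie subalgebroid (it is the zero-section subalgebroid of an action algebroid, the infinitesimal analogue of the unit subgroupoid $\G_S\hookrightarrow\G_S\ltimes\ka^*$); and pulling back $\mu_0$ along $i_0$ kills $\d_\imult\langle(L,l),\cdot\rangle$ on the nose because $\langle(L,l),\cdot\rangle$ vanishes on $0_S$ (it is built from $\widetilde{l(\alpha)}$ and $\widetilde{L(\alpha)}$, which are defined via $\langle\pr^*(\cdot),\xi\rangle$ and hence vanish at $\xi=0$), leaving $i_0^*\pr^*\mu_S=\mu_S$. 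I would present these last two points in a single short paragraph. The main obstacle, to reiterate, is organizing the non-degeneracy computation cleanly --- identifying the relevant tangent/algebroid splittings along $S$ and reading off the two blocks --- rather than any conceptual difficulty; once the block structure is in place the result drops out.
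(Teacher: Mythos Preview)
Your proposal is correct and follows essentially the same approach as the paper. The paper's proof is terser: it simply writes the restriction $(\mu_0)|_{0_S}:A_S\to T^*_S\ka^*\cong \ka\oplus T^*S$ as $\al\mapsto(l(\al),\mu_S(\al))$ and observes this is surjective (hence an isomorphism by rank), which is exactly your block argument; note only the minor slip that $T^*_S(\ka^*)\cong T^*S\oplus\ka$, not $T^*S\oplus\ka^*$, so the relevant map on $\ka$ is $l|_\ka=\id_\ka$ rather than a pairing $\ka\to\ka^*$.
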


\begin{proof}
The pullback of a (closed) IM form by a Lie algebroid morphism is a (closed) IM form. Since the projection $\pr:A_S\ltimes\ka^*\to A_S$ is a Lie algebroid morphism, it follows that \eqref{eq:normal:IM:form} defines a closed IM 2-form.

Using the canonical isomorphism
\[ T^*_S \ka^*\simeq \ka\oplus T^*S, \]
the restriction
\[ (\mu_0)|_{0_S}:A_S\to T^*_S\ka^*\] 
has components
\begin{itemize}
\item $(\pr^*\mu_S)|_{0_S}:A_S\to \ka\oplus T^*S$, $\al\mapsto (0,\mu_S(\al))$;
\item $(\d_\imult\langle (L,l),\cdot\rangle)|_{0_S}:A_S\to \ka\oplus T^*S$, $\al\mapsto (l(\al),0)$.
\end{itemize}
Hence, $(\mu_0)|_{0_S}$ is a fiberwise surjective map. Since the two vector bundles have the same rank, $\mu_0$ is a non-degenerate IM 2-form along $0_S$.
\end{proof}

In the setting of the previous proposition, let $M_0\subset \ka^*$ be the open neighborhood of $S$ where $\mu_0$ is non-degenerate. Then
\[ A_0:=(A_S\ltimes\ka^*)|_{M_0}\simeq A_S\ltimes M_0, \]
carries the non-degenerate, closed, IM 2-form
\[ \mu_0|_{A_0}:A_0\diffto T^*M_0.\]

%

\begin{definition}
\label{def:algbrd:local:model}
Given a partially split first order jet $(A_S,\mu_S)$, the pair $(A_0,\mu_0)$ is called the \textbf{Lie algebroid local model}  corresponding to the IM connection 1-form $(L,l)\in\Omega^1_\imult(A_S;\ka)$. The corresponding \textbf{Poisson local model} is the Poisson manifold $(M_0,\pi_0)$, where 
\[ \pi_0^\sharp:=\rho_0\circ\mu_0^{-1}. \]
\end{definition}
Note that $\mu_0:A_0\diffto T^*M_0$ is an isomorphism of Lie algebroids, giving an identification 
\[ J^1_S\pi_0=(A_S,\mu_S).\]

In conclusion, we have constructed a local model, in the sense of Definition \ref{def:local:model}, for the class $\mathscr{C}\subset J^1_S\Pi(M,S)$ of partially split first order jets of Poisson structures. Namely, by the axiom of choice, for each $\tau\in \mathscr{C}$, we can chose a IM connection 1-form $(L_{\tau},l_{\tau})$, and we let $\sigma(\tau)$ be the associated local model, i.e., $\sigma(\tau):=(M_0,\pi_0)$. We obtain Theorem \ref{thm:main:one} from the Introduction. In the next subsection we will see that different choices of IM connection 1-forms produce isomorphic local models.

\subsection{The partially split condition for jets}
\label{sec:local:model:description}

In order to understand the partially split condition and the existence of IM connection 1-forms, as in Definition \ref{def:partially:split:jet}, we recall briefly the notion of IM Ehresmann connection introduced in \cite{FM22}. 

Consider a Lie algebroid $A_S$ and a bundle of ideals $\ka\subset A_S$, i.e., $\ka\subset \ker\rho_{S}$ and $[\al,\ga]_{A_S}\in \Gamma(\ka)$, for all $\al\in\Gamma(A_S)$, $\ga\in\Gamma(\ka)$. So $\ka$ is a representation of $A_S$
\[
\nabla^{\ka}_\al\gamma:=[\al,\gamma]_{A_S}.
\]

In \cite{FM22} we gave the following equivalent descriptions of IM connections for $\ka$
\begin{enumerate}[(i)]
    \item An {\bf IM Ehresmann connection} for $\ka$ is a VB subalgebroid $E\Ato TS$ of $TA\Ato TS$ such that
    \[ TA=\ka\oplus E.\]
    \item There is a natural inclusion $A_S\times_S\ka\subset TA_S$
    \begin{equation*}
    (\al,\gamma)\mapsto \frac{\d}{\d t}\Big|_{t=0}(\al+t\gamma)
    \end{equation*}
    which is a map of VB algebroids
    \[
    \vcenter{
    \xymatrix@R=10pt{
    A_S\times_S \ka\  \ar@{=>}[dd] \ar[dr]  \ar[dr]  \ar@{^{(}->}@<-0.10pc>[rr] &  &  TA_S
    \ar@{=>}[dd] \ar[dl]\\
     &  A_S  \ar@{=>}[dd]  \\
    0_S\  \ar[dr]  \ar[dr]  \ar@{^{(}-}[r] &\ar[r] & TS \ar[dl] \\
     & S
    }}
    \]
    An IM Ehresmann connection yields a {\bf partial splitting} $\theta:A_S\ltimes \ka^*\to T^*A_S$, i.e., a VB-algebroid map which is a splitting of the dual projection
    \[\vcenter{
    \xymatrix@R=10pt{
    A_S\ltimes\ka^*\, \ar@{=>}[dd] \ar[dr] \ar@/_/@{-->}[rr]_{\theta}&  &  T^*A_S\ar@{->>}[ll] 
     \ar@{=>}[dd] \ar[dl]\\
     &  A_S  \ar@{=>}[dd]  \\
    \ka^* \ar[dr] \ar@/_/@{-->}[rr]& \ar@{->>}[l] & A^*_S \ar[dl] \ar@{-}[l]\\
     & S
    }}\]
    \item Given a partial splitting $\theta:A_S\ltimes \ka^*\to T^*A_S$ for $\ka$,  one obtains an \textbf{IM connection 1-form}, i.e., an IM 1-form $(L,l)\in\Omega^1_\imult(A_S;\ka)$ whose symbol satisfies
    \[ l(\xi)=\xi,\quad \forall \xi\in\Gamma(\ka). \]
    It is defined by setting for any  $\al\in\Gamma(A)$ and $v\in TM$
    \begin{align*}
    L(\al)(v)&:=\pr_E(\theta^\vee(\d\al(v))),\\
    l(\al)&:=\pr_E\theta^\vee(\hat{\al})|_M,
    \end{align*}
    where $\hat{\al}\in\X(A)$ is the vertical lift of the section $\al$.
    \item Finally, given an IM connection 1-form $(L,l)\in\Omega^1_\imult(A_S;\ka)$, one obtains a {\bf linear}, closed multiplicative 2-form $\mu^\lin\in\Omega^2(A_S\ltimes\ka^*)$ by
    \[ \mu^\lin:=\d_{\imult}\langle (L,l),\cdot\rangle. \]
\end{enumerate}

We now return to our case of interest, namely first order jets $(A_S,\mu_S)$ where the relevant bundle of ideals is $\ka=\ker\mu_S$. In the sequel, we will use the equivalence between (i)-(iv) without further notice. So, for example, $(A_S,\mu_S)$ is partially split if it admits a partial splitting $\theta:A_S\ltimes \ka^*\to T^*A_S$ as in (ii). This is equivalent to our original Definition \ref{def:partially:split:jet}, which was stated in terms of the existence of an IM connection 1-form $(L,l)\in\Omega^1_\imult(A_S;\ka)$, as in (iii). The closed IM 2-form in the local model \eqref{eq:normal:IM:form} can be written 
using the partial splitting $\theta$ from (ii) as
\[ \mu_0=\pr^*_{A_S}\mu_S+\theta^*\mu_{\can},\]
where $\mu_{\can}\in \Omega^{2}_{\imult}(T^*A_S)$ is the canonical IM 2-form (see Example \ref{ex:canonical:IM}), or using the linear IM form $\mu^{\lin}$ from (iv) as
\[ \mu_0=\pr^*_{A_S}\mu_S+\mu^\lin.\]
In this expression, the first term is a constant IM 2-form and the second term is a linear IM 2-form.

\begin{remark}\label{remark:split:is:not:part:split}
Let us stress that the partial split condition is not implied by the existence of a Lie algebroid splitting $r:T^*S\to A_S$ of $\mu_S$ -- see Example \ref{example:log:symplectic:not:p:s}.
\end{remark}

As in the global case -- cf.~Proposition \ref{prop:corollary:partial:splittings} -- the partially split condition implies certain properties of the isotropy Lie algebras of a first order jet

\begin{proposition}[\cite{FM22}]
\label{prop:corollary:IM:partial:splittings}
If $(A_S,\mu_S)$ is a partially split first order jet of Poisson structure, a choice of IM connection 1-form $(L,l)$, gives
\begin{enumerate}[(i)]
    \item a linear connection on $\ka=\ker \mu_S$
    \begin{equation} 
    \label{eq:connection:ka}
    \nabla^L_X\xi:=i_XL(\xi),
    \end{equation}
    which preserves the Lie bracket
    \[\nabla^{L}_{X}[\xi,\eta]_{\ka}=[\nabla^{L}_{X}\xi,\eta]_{\ka}+[\xi,\nabla^{L}_{X}\eta]_{\ka},\quad X\in\X(S),\xi,\eta\in\Gamma(\ka); \]
    \item for each $x\in S$, a decomposition of the isotropy Lie algebra $\gg_x:=\ker \rho_{A_S}|_x$ into a direct sum of ideals
    \[ \gg_x\simeq \ka_x\oplus (\gg_x\cap \ker l). \]
\end{enumerate}
\end{proposition}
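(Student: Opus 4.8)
The plan is to leverage the companion paper \cite{FM22}, since Proposition \ref{prop:corollary:IM:partial:splittings} is explicitly attributed to it, and to verify that the general statements there about bundles of ideals and IM connections specialize correctly to our situation, where $A_S$ is a first order jet $(A_S,\mu_S)$ and the bundle of ideals is $\ka = \ker\mu_S$. In fact, I would not reprove the general result, but rather explain how it applies: $\ka$ is a bundle of ideals in $A_S$ by the short exact sequence \eqref{eq:short:exact:sequence} (since $\ka\subset\ker\rho_{A_S}$ — as $\mu_S(\al)=0$ and the anchor factors through $\mu_S$ because $\mu_S$ is a surjective IM form whose kernel is exactly where the induced Poisson anchor vanishes — and $[\al,\ga]_{A_S}\in\Gamma(\ka)$ for $\al\in\Gamma(A_S), \ga\in\Gamma(\ka)$, which is the IM condition), so the general theory of \cite{FM22} applies verbatim.

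For item (i), the plan is as follows. Given an IM connection 1-form $(L,l)\in\Omega^1_\imult(A_S;\ka)$, the formula $\nabla^L_X\xi := i_X L(\xi)$ defines a connection on $\ka$: here one uses the symbol condition $l(\xi)=\xi$ for $\xi\in\Gamma(\ka)$ together with the Leibniz identity that $L$ satisfies as (the $\ka$-valued part of) an IM 1-form — the defining property is that $L(f\al) = f L(\al) + \d f\otimes l(\al)$, so $L(f\xi) = fL(\xi) + \d f\otimes\xi$, which is exactly the Leibniz rule for a connection after contracting with $X$. The compatibility of $\nabla^L$ with the Lie bracket on $\ka$ should follow from the cocycle-type equation that $(L,l)$ satisfies: restricting that equation to sections of $\ka$ and using $l|_\ka = \id$, one reads off $\nabla^L_X[\xi,\eta]_\ka = [\nabla^L_X\xi,\eta]_\ka + [\xi,\nabla^L_X\eta]_\ka$. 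This is a direct computation with the IM 1-form axioms recalled in Appendix \ref{appendix}, so I would cite the precise proposition in \cite{FM22} and note that the present situation is a special case. The local triviality of $\ka$ as a bundle of Lie algebras is then the standard consequence of carrying a bracket-preserving connection (parallel transport gives local trivializations preserving the bracket).

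For item (ii), the plan is to observe that at a point $x\in S$, an IM connection 1-form restricts to a splitting at the level of isotropy Lie algebras. Concretely, from the IM Ehresmann connection picture (description (i) in the list preceding this proposition), the decomposition $TA_S = \ka\oplus E$ of VB-algebroids restricts over a point $x$ where the anchor vanishes to a decomposition of the isotropy $\gg_x = \ker\rho_{A_S}|_x$. The symbol $l:A_S\to\ka$ restricts fiberwise to a projection $l_x:\gg_x\to\ka_x$ splitting the inclusion $\ka_x\hookrightarrow\gg_x$ (since $l|_\ka=\id$), so $\gg_x = \ka_x\oplus(\gg_x\cap\ker l)$ as vector spaces; that $\ker l_x$ is an ideal and this is a direct sum of ideals follows because $E$ is a subalgebroid and $\ka$ is a bundle of ideals, hence at the isotropy level both summands are ideals. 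Again this is the fiberwise restriction of the statement in \cite{FM22}. The main obstacle — if there is one — is purely expository: making sure the translation between the four equivalent descriptions (IM Ehresmann connection, partial splitting, IM connection 1-form, linear multiplicative form) is used consistently, and checking that the compatibility equation for IM 1-forms with coefficients is stated in the form needed to extract the bracket-preservation; since all of this is already set up in \cite{FM22} and recalled in the excerpt, the proof reduces to a careful citation together with the remark that $(A_S,\mu_S)$ furnishes a bundle of ideals $\ka = \ker\mu_S$ of the required type.
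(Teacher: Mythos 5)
The paper gives no proof of this proposition at all — it is quoted verbatim from the companion paper \cite{FM22} — so your plan of reducing everything to a careful citation of \cite{FM22}, after checking that $\ka=\ker\mu_S$ is a bundle of ideals in $A_S$ (which follows from the short exact sequence \eqref{eq:short:exact:sequence}), is exactly what the paper does. Your supplementary sketches are also essentially correct: the bracket-preservation in (i) comes from the IM equation $L([\xi,\eta])=\Lie_\xi L(\eta)-\Lie_\eta L(\xi)$ together with $\rho|_{\ka}=0$ and the fact that the coefficient representation is $\nabla_\xi=[\xi,\cdot]_{A_S}$, while the ideal property of $\gg_x\cap\ker l$ in (ii) is most directly read off from $l([\al,\be])=\Lie_\al l(\be)-i_{\rho(\be)}L(\al)$ evaluated at $x$, rather than from the subalgebroid property of $E$ as you suggest, but this is a detail at the level of the citation.
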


We quote from \cite{FM22} another interesting characterization of partially split jets which will be useful later. Given a (usual) connection $\nabla$ on a Lie algebroid $A_S\Ato S$ one has the following associated $A_S$-connections on $A_S$ and $TS$
\begin{align*}
\overline{\nabla}_\al\be&:=\nabla_{\rho(\be)}\al+[\al,\be], \\
\overline{\nabla}_\al X&:=\rho(\nabla_X\al)+[\rho(\al),X]
\end{align*}
They satisfy
\[ \rho(\overline{\nabla}_\al\be)=\overline{\nabla}_\al\rho(\be). \]
The {\bf basic curvature} of $\nabla$ is defined as the tensor
\begin{equation}
\label{eq:basic:curvature:alg} 
R^{\bas}_\nabla(\al,\be)(X):=\nabla_X([\al,\be])-[\nabla_X\al,\be]-[\al,\nabla_X\be]-\nabla_{\overline{\nabla}_\be X}\al+\nabla_{\overline{\nabla}_\al X}\be,
\end{equation}
where $X\in\X(S)$ and $\al,\be\in\Gamma(A_S)$. This curvature arises naturally in the study of \emph{Cartan connections} (see \cite{AC13,Blaom06,Blaom12}). Proposition 5.9 from \cite{FM22} implies the following.

\begin{proposition}
\label{prop:Cartan:connection:splitting}
A first order jet $(A_S,\mu_S)$ with $\ka=\ker\mu_S$ is partially split if and only if there is a vector bundle splitting $l:A_S\to \ka$ of the short exact sequence 
\[ 
\xymatrix{0\ar[r] & \ka \ar[r] & A_S\ar[r]^{\mu_S} & T^*S\ar[r] & 0}
\]
and a (usual) connection $\nabla$ on $A_S$ such that
\begin{equation}
    \label{eq:partial:splitting:basic:curvature}
    \overline{\nabla}l=0,\qquad l(R^{\bas}_\nabla)=0.
\end{equation} 
In this case, the pair $(l,\nabla)$ determines a IM connection 1-form $(L,l)\in\Omega^1_\imult(A_S;\ka)$ by setting
\[ 
L:\Gamma(A_S)\to \Omega^1(S;\ka), \quad i_X L(\al):=l(\nabla_X\al). 
\]
Moreover, every IM connection 1-form $(L,l)$ takes this form for some pair $(l,\nabla)$ satisfying \eqref{eq:partial:splitting:basic:curvature}.
\end{proposition}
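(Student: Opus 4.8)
The plan is to reduce the statement to the general structure theory of partially split bundles of ideals developed in \cite{FM22}, whose Proposition~5.9 is the precise input, and to supply the bookkeeping needed to apply it here. The first step is to observe that $\ka=\ker\mu_S$ is genuinely a bundle of ideals in $A_S$: since $\mu_S$ is a \emph{closed} IM $2$-form, the sequence \eqref{eq:short:exact:sequence} is exact as a sequence of Lie algebroids, so $\ka\subset\ker\rho_{A_S}$ and $[\Gamma(A_S),\Gamma(\ka)]_{A_S}\subset\Gamma(\ka)$. Consequently the partially split condition for the first order jet $(A_S,\mu_S)$ in the sense of Definition~\ref{def:partially:split:jet} is, by unwinding the definitions of Subsection~\ref{sec:local:model:description}, exactly the partially split condition for the bundle of ideals $\ka\subset A_S$, and Proposition~5.9 of \cite{FM22} can be applied to it directly.

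Next I would spell out the dictionary $(l,\nabla)\leftrightarrow(L,l)$. The symbol of an IM connection $1$-form is a bundle map $l\colon A_S\to\ka$ with $l(\xi)=\xi$ for $\xi\in\Gamma(\ka)$, which is precisely a vector bundle splitting of $0\to\ka\to A_S\xrightarrow{\mu_S}T^*S\to 0$; such a splitting always exists, so the content of the proposition lies entirely in the existence of a \emph{compatible} connection. From a connection $\nabla$ on $A_S$ one sets $i_XL(\al):=l(\nabla_X\al)$ and checks, using the Leibniz rule of $\nabla$ and $l(\xi)=\xi$ on $\ka$, that $L(f\al)=fL(\al)+\d f\otimes l(\al)$, which is the Leibniz identity required of the scalar component of a $\ka$-valued IM $1$-form. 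Conversely, given $(L,l)$, one picks an auxiliary connection $\nabla^{0}$ and puts $\nabla_X\al:=\nabla^{0}_X\al-l(\nabla^{0}_X\al)+i_XL(\al)$; this is again a connection and satisfies $l(\nabla_X\al)=i_XL(\al)$, so $(L,l)$ is recovered. Since $\nabla$ is pinned down only through the $\ka$-valued tensor $L$, this is why the ``moreover'' clause asserts only that $(L,l)$ arises from \emph{some} pair $(l,\nabla)$.

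It remains to match the defining equations: expanding the IM-form axioms of Appendix~\ref{appendix} for the pair $(L,l)$ built from $(l,\nabla)$ and comparing with the formulas for $\overline{\nabla}$ and for the basic curvature $R^{\bas}_\nabla$ in \eqref{eq:basic:curvature:alg}, the axioms should collapse to exactly the two identities $\overline{\nabla}l=0$ and $l(R^{\bas}_\nabla)=0$. This identification is the content of Proposition~5.9 of \cite{FM22}, so in the write-up I would invoke that result and only verify that its hypotheses are met, which is the first step above. The main obstacle is precisely this last translation: the IM equations involve the anchor and bracket of $A_S$ in a way whose curvature content is not visible at a glance, and one has to reorganize them into the Cartan-connection form---fortunately this is carried out in \cite{FM22}. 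It is worth stressing, as in Remark~\ref{remark:split:is:not:part:split} and Example~\ref{example:log:symplectic:not:p:s}, that this obstruction is real: the splitting $l$ is always available, but a connection $\nabla$ with $\overline{\nabla}l=0$ and $l(R^{\bas}_\nabla)=0$ need not exist, and even a Lie algebroid splitting of $\mu_S$ is not enough to produce one.
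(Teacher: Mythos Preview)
Your proposal is correct and follows exactly the paper's approach: the paper states the proposition as a direct consequence of Proposition~5.9 of \cite{FM22} and gives no further argument, so your reduction to that result together with the verification that $\ka=\ker\mu_S$ is a bundle of ideals is precisely what is needed. The additional bookkeeping you supply (the Leibniz check for $L$ and the construction of a compatible $\nabla$ from an auxiliary $\nabla^{0}$) is correct and merely fleshes out what the paper leaves implicit.
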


\begin{remark}
The connection $\nabla$ on $A_S$ in the previous proposition is not uniquely determined by the IM connection 1-form $(L,l)\in\Omega^1_\imult(A_S;\ka)$. In particular, this connection should not be confused with the connection $\nabla^L$ on $\ka$ from Proposition \ref{prop:Cartan:connection:splitting}. They are related by
\[ \nabla_X^L\xi=l(\nabla_X\xi), \]
for $X\in\X(S)$ and $\xi\in\Gamma(\ka)$.
\end{remark}

\subsection{Existence and uniqueness of the local model}
The following proposition gives some geometric meaning to the partially split condition: it shows that the partially split condition is equivalent to the algebroid local model $A_S\ltimes\ka^*$ being the cotangent algebroid of a Poisson structure extending the given first jet. This is the infinitesimal version of Proposition \ref{prop:partially:split:grpd:optimal}.

\begin{proposition}
\label{prop:partially:split:algbrd:optimal}
A first order jet $(A_S,\mu_S)$ with kernel $\ka:=\ker\mu_S$ is partially split if and only if there exists an open neighborhood $S\subset M_0\subset \ka^*$ and a closed, non-degenerate IM 2-form $\mu\in\Omega^2_\imult(A_S\ltimes M_0)$ whose pullback along the Lie algebroid map $i:A_S\hookrightarrow A_S\ltimes \ka^*$ is $\mu_S$
\[ i^* \mu=\mu_S.\]
\end{proposition}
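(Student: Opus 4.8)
The plan is to prove the two implications separately, the forward one being essentially a restatement of the work already done, and the converse one being a genuine construction that mirrors the groupoid argument in Proposition \ref{prop:partially:split:grpd:optimal}.

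\medskip

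\noindent\textbf{Forward implication.} Suppose $(A_S,\mu_S)$ is partially split. By Definition \ref{def:partially:split:jet} it admits an IM connection 1-form $(L,l)\in\Omega^1_\imult(A_S;\ka)$. Then Proposition \ref{prop:local:model:algbrd} produces the closed IM 2-form $\mu_0=\pr^*\mu_S+\d_\imult\langle(L,l),\cdot\rangle$ on $A_S\ltimes\ka^*$, which is non-degenerate along $S$ and satisfies $i_0^*\mu_0=\mu_S$. Taking $M_0\subset\ka^*$ to be the open neighborhood of $S$ on which $\mu_0$ is non-degenerate (this set contains $S$ and is open by continuity of the determinant), the restriction $\mu:=\mu_0|_{A_S\ltimes M_0}$ is the desired closed, non-degenerate IM 2-form. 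So this direction requires essentially no new argument.

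\medskip

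\noindent\textbf{Converse implication.} Now assume such a $\mu\in\Omega^2_\imult(A_S\ltimes M_0)$ exists. I need to extract an IM Ehresmann connection for $\ka$, equivalently (by the equivalences (i)--(iv) recalled in Subsection \ref{sec:local:model:description}) a partial splitting $\theta:A_S\ltimes\ka^*\to T^*A_S$ or an IM connection 1-form. The strategy is to reverse the construction in the proof of Proposition \ref{prop:partially:split:grpd:optimal}: the non-degenerate $\mu$ gives a vector bundle isomorphism $\mu^\flat:A_S\ltimes M_0\diffto T^*M_0$, which identifies $A_S\ltimes M_0$ with the cotangent algebroid of a Poisson structure $\pi_0$ on $M_0$ (by Corollary \ref{cor:jets:algebroids} applied to the non-degenerate IM form, or directly since a non-degenerate closed IM form is the same as a symplectic-type structure). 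Restricting to $S$ and using $i^*\mu=\mu_S$, the first jet $J^1_S\pi_0$ is exactly $(A_S,\mu_S)$. The point now is that $S\subset(M_0,\pi_0)$ is a Poisson submanifold, so $A_S=T^*_SM_0$ sits inside $T^*M_0$ as a Lie subalgebroid, and $\ka=\ker\mu_S$ is precisely the conormal bundle $\nu^*(S)$. I want a VB-subalgebroid complement $E\subset TA_S$ to $\ka$ inside $TA_S$, where $TA_S\Rightarrow TS$ is the tangent prolongation. Such a complement can be produced from the second-order infinitesimal data of $\mu$ transverse to $S$: concretely, $T_S(A_S\ltimes M_0)=T_S(A_S\ltimes\ka^*)$ decomposes as the action algebroid $TA_S\ltimes\ka^*\Rightarrow TS\oplus\ka^*$, and the $\omega_0$-orthogonal $(A_S\ltimes\ka^*)^{\perp_\mu}\cap TA_S$ is a VB-subalgebroid complementary to $\ka$ by non-degeneracy and multiplicativity of $\mu$ — this is the infinitesimal analogue of the subgroupoids $K=(T\G_S)^{\perp_{\omega_0}}$, $T\G_S$, $\G_S\ltimes\ka^*$ appearing in the proof of Proposition \ref{prop:partially:split:grpd:optimal}. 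That complement $E$ is an IM Ehresmann connection, hence $(A_S,\mu_S)$ is partially split.

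\medskip

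\noindent\textbf{Main obstacle.} The delicate point is the transition from the global subgroupoid argument to the infinitesimal one: I must make sense of the ``$\mu$-orthogonal'' constructions purely at the level of VB-algebroids (tangent and cotangent prolongations of $A_S$, the action algebroid $A_S\ltimes\ka^*$, their duality in the sense of \cite{BCdH16}) and check that taking orthogonal complements with respect to the non-degenerate IM form $\mu$ preserves the VB-subalgebroid property. This is where IM-multiplicativity of $\mu$ is used crucially — it is the infinitesimal avatar of the multiplicativity of $\omega_0$ that forces $E$ to be a subgroupoid in the global proof. Once the bookkeeping of these prolongation algebroids is set up correctly (appealing to the framework of \cite{FM22} and the Cartan-connection characterization in Proposition \ref{prop:Cartan:connection:splitting}), the rest is a matter of identifying ranks and closure under the relevant brackets. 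I would present the converse either via this orthogonal-complement construction or, more economically, by invoking Proposition \ref{prop:Cartan:connection:splitting}: exhibit the splitting $l:A_S\to\ka$ and connection $\nabla$ on $A_S$ satisfying $\overline\nabla l=0$ and $l(R^\bas_\nabla)=0$ directly from the non-degenerate $\mu$, using that $\mu$ is the cotangent algebroid of $\pi_0$ and that the Levi-Civita-type connection associated to $\pi_0$ transverse to $S$ has the required basic-curvature vanishing along $\ka$.
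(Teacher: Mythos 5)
Your forward implication is exactly the paper's (quote Proposition \ref{prop:local:model:algbrd} and restrict to the non-degeneracy locus), so nothing to add there. The converse, however, has a genuine gap: the central construction is not actually defined. You propose $E:=(A_S\ltimes\ka^*)^{\perp_\mu}\cap TA_S$, transplanting the groupoid argument, but in the groupoid proof the orthogonal complement is taken with respect to $\omega_0$, an honest 2-form on the manifold $\G_0$ in which $T\G_S$, $\ker\omega_S$ and $\G_S\ltimes\ka^*$ all sit as subbundles of $T\G_0$. Infinitesimally, $\mu$ is a bundle map $A_S\ltimes M_0\to T^*M_0$, not a 2-form on any manifold containing $TA_S$, so ``$\perp_\mu$'' does not typecheck; turning it into a rigorous statement (say via the linear 2-form $\mu^*\omega_{\can}$ on the total space of $A_S\ltimes M_0$, or via tangent/cotangent VB-algebroid duality) is precisely the missing content, not bookkeeping you can defer. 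Your fallback via Proposition \ref{prop:Cartan:connection:splitting} is also unsubstantiated: there is no reason a ``Levi-Civita-type'' connection attached to $\pi_0$ should satisfy $\overline{\nabla}l=0$ and $l(R^{\bas}_\nabla)=0$, and you give no argument.

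What the paper does instead, and what your proposal is missing, is to exploit the linear structure of the action algebroid: the fiberwise scalings $m_t$ are Lie algebroid morphisms of $A_S\ltimes\ka^*$, so one can form $\mu_t:=\tfrac{1}{t}m_t^*(\mu-\pr^*\mu_S)$ and show the limit $\mu^{\lin}:=\lim_{t\to 0}\mu_t$ exists and is a \emph{linear} closed IM 2-form; by the equivalence (i)--(iv) of Subsection \ref{sec:local:model:description}, such a linear form (with the correct restriction to $\ka\subset(A_S\ltimes\ka^*)|_S$) \emph{is} a partial splitting. This homogenization step is the actual mechanism of the converse and is absent from your plan. There is also a normalization issue you do not address even implicitly: writing $\mu|_S=(l,\mu_S)$ under $T^*_S\ka^*\simeq\ka\oplus T^*S$, the map $l|_\ka:\ka\to\ka$ need not be the identity; the paper shows (Lemma \ref{lemma:how:to:fix:along:zero:section}) that it is an $A_S$-equivariant isomorphism and corrects $\mu$ by the algebroid automorphism $(\id,(l|_\ka)^*)$ before applying the scaling argument. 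Without these two ingredients your converse does not go through as written.
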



\begin{proof}
If $(A_S,\mu_S)$ is partially split,  then the local model $\mu_0$ from Proposition \ref{prop:local:model:algbrd} is an IM 2-form as in the statement.

Conversely, let $\mu\in\Omega^2_\imult(A_S\ltimes M_0)$ be as in the statement. We use that canonical decomposition given by the vector bundle structure
\[T^*_{S}\ka^*\simeq \ka\oplus T^*S.\]
By assumption, $\mu$ takes the following form along $S$
\[\mu|_S:A_S\to \ka\oplus T^*S,\quad \mu|_S=(l, \mu_S).\]
Let $m_t:A_S\ltimes \ka^*\to A_S\ltimes \ka^*$ denote the fiberwise multiplication by $t>0$, which is a Lie algebroid morphism. We have that
\[ \lim_{t\to 0}m^*_t \mu=\pr^*\mu_S.\]
Moreover, $\mu_t:=\frac{1}{t}m_t^*(\mu-\pr^*\mu_S)$ is a path of closed, IM 2-forms, defined on $m_{t^{-1}}(M_0)$. Then one can easily see that the following limit exists
\[ \mu^{\lin}:=\lim_{t\to 0}\mu_t,\]
(e.g., in coordinates) and it is a linear, closed, IM 2-form $\mu^{\lin}\in\Omega^2_\imult(A_S\ltimes \ka^*)$. 

If we assume that $l|_{\ka}=\id$, then, for each $t>0$, $(\mu_t|_{S})|_{\ka}=(\id,0)$, and so this holds also for the limit $\mu^{\lin}$. Thus the characterization from item (iv) of the previous subsection of the partially split condition holds.

The general case can be reduced to this using Lemma \ref{lemma:how:to:fix:along:zero:section}, which shows that the map $l|_{\ka}:\ka\to \ka$ is an $A_S$-equivariant linear isomorphism. Let $\varphi:=(l|_{\ka})^*:\ka^*\to \ka^*$ denote the transpose map. Then $(\id,\varphi):A_S\ltimes \ka^*\diffto A_S\ltimes \ka^*$ is a Lie algebroid isomorphism covering $\varphi$. So $\tilde{M}_0:=\varphi(M_0)$ is an open neighborhood of $S$ over which $A_S\ltimes \tilde{M}_0$ carries the non-degenerate IM 2-form $\tilde{\mu}:=(\id,\varphi^{-1})^*(\mu)\in \Omega^2_{\imult}(A_S\ltimes \tilde{M}_0)$. Note that $\tilde{\mu}$ also extends $\mu_S$ and by construction, for every $\beta\in \Gamma(\ka)$, we have
\[l(\beta)=\mu|_{S}(\beta)=\varphi^*\circ \tilde{\mu}|_S(\beta)=l(\tilde{\mu}(\beta)),\]
which shows that $\tilde{\mu}$ restricted to $\ka\subset (A_S\ltimes \ka^*)|_S$ is the identity map. Therefore, the above argument applied to $\tilde{\mu}$ shows that $(A_S,\mu_S)$ is partially split.
\end{proof}

The infinitesimal multiplicative version of Moser's argument, proved in Proposition \ref{prop:Moser:algbrd} (i)), gives the following result.

\begin{proposition}
\label{prop:uniqueness:algbrd}
Let $(A_S,\mu_S)$ be a first order jet of a Poisson structure with kernel $\ka:=\ker\mu_S$. Consider two closed, non-degenerate IM 2-forms $\mu_0,\mu_1\in\Omega^2_\imult(A_S\ltimes M)$, defined on some neighborhood $S\subset M\subset \ka^*$, which extend $\mu_S$
\[ i^*\mu_k=\mu_S, \quad k=0,1.\]
Then there exists a isomorphism of Lie algebroids 
\[ \Phi:A_S\ltimes M_0\diffto A_S\ltimes M_1,\quad \Phi^*(\mu_1)=\mu_0,\] 
defined between open neighborhoods $S\subset M_0,M_1\subset M$ and whose base map fixes $S$.
In particular, this base map gives a Poisson isomorphism between the induced Poisson structure 
\[ \phi:(M_0,\pi_0)\diffto (M_1,\pi_1). \]
\end{proposition}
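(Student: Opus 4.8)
The plan is to mirror, at the infinitesimal level, the proof of Proposition \ref{prop:uniqueness:grpd}, using the IM version of Moser's argument from the Appendix (Proposition \ref{prop:Moser:algbrd}) in place of the multiplicative Moser argument used there. First I would observe that both $\mu_0$ and $\mu_1$ restrict to $\mu_S$ along $S$, and that both are non-degenerate along $0_S\subset \ka^*$; by openness of the non-degeneracy locus (the Tube Lemma argument already used in the proof of Proposition \ref{prop:uniqueness:grpd}), the convex combination $\mu_t:=t\mu_1+(1-t)\mu_0$ is a path of closed IM 2-forms that is non-degenerate on some common neighborhood $S\subset M'\subset M$, for all $t\in[0,1]$. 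Here one should also shrink $M'$ so that it is invariant under the fiberwise dilations $m_\lambda$, $\lambda\in[0,1]$, exactly as in Lemma \ref{lem:invariantsubgrp}; this is what will make the homotopy operator below well-defined.

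Next I would produce a primitive. The difference $\mu_1-\mu_0$ is a closed IM 2-form on $A_S\ltimes M'$ whose pullback along $i:A_S\hookrightarrow A_S\ltimes\ka^*$ vanishes, so by the IM analogue of Lemma \ref{lem:primitive} — i.e.\ the fact that on a dilation-invariant neighborhood the fiberwise homotopy operator preserves IM forms and the pullback to the zero section is zero — there is an IM 1-form $(L,l)$ (equivalently a primitive $\beta$ in the notation of the Appendix) with $\d_{\imult}\langle(L,l),\cdot\rangle=\mu_1-\mu_0$ which vanishes at points of $S$. Since each $\mu_t$ is non-degenerate along $0_S$, we can solve $i_{X_t}\mu_t=\beta$ for a time-dependent section $X_t$ of $A_S\ltimes M'$ which is IM (the contraction of an IM 2-form with the relevant data stays in the IM world) and which vanishes along $S$ because $\beta$ does. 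Integrating $X_t$ over $t\in[0,1]$ yields a flow of Lie algebroid automorphisms $\Phi^t$ defined on a possibly smaller neighborhood $S\subset M_0\subset M'$, fixing $A_S$ and covering a flow of diffeomorphisms of $\ka^*$ fixing $S$; the standard Moser computation $\frac{d}{dt}(\Phi^t)^*\mu_t=(\Phi^t)^*(\Lie_{X_t}\mu_t+\dot\mu_t)=(\Phi^t)^*(\d_{\imult}i_{X_t}\mu_t+\mu_1-\mu_0)=0$ gives $(\Phi^1)^*\mu_1=\mu_0$. Setting $\Phi:=\Phi^1$ and letting $M_1:=\Phi(M_0)$ finishes the construction of the algebroid isomorphism with $\Phi^*\mu_1=\mu_0$ and base map fixing $S$.

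Finally, for the Poisson statement: by Definition \ref{def:algbrd:local:model}, $\mu_k:A_S\ltimes M_k\diffto T^*M_k$ identifies the algebroid with the cotangent algebroid of $\pi_k$, and the anchor intertwines with $\pi_k^\sharp$ via $\pi_k^\sharp=\rho_k\circ\mu_k^{-1}$. Since $\Phi$ is a Lie algebroid isomorphism with $\Phi^*\mu_1=\mu_0$, conjugating by the identifications $\mu_0,\mu_1$ turns $\Phi$ into the cotangent lift of its base diffeomorphism $\phi:M_0\to M_1$, and compatibility with anchors forces $\phi_*\pi_0=\pi_1$; concretely $\pi_1^\sharp=\phi_*\circ\pi_0^\sharp\circ\phi^*$ follows by chasing $\rho_1\circ\mu_1^{-1}=\d\phi\circ\rho_0\circ\mu_0^{-1}\circ(\d\phi)^*$. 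Hence $\phi$ is the desired Poisson isomorphism $(M_0,\pi_0)\diffto(M_1,\pi_1)$.

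I expect the main obstacle to be bookkeeping rather than conceptual: one must check carefully that the fiberwise homotopy operator used to build the primitive genuinely lands in the space of IM 1-forms (this is the analogue of the ``$H$ maps multiplicative forms to multiplicative forms'' step in Lemma \ref{lem:primitive}, and is where the IM formalism of the Appendix does the real work), and that contracting an IM 2-form with the Moser vector field and integrating stays within Lie algebroid morphisms — this is precisely the content of the IM Moser theorem (Proposition \ref{prop:Moser:algbrd}), so once that is invoked the argument is routine. A secondary point of care is the shrinking of neighborhoods: one must ensure all the neighborhoods ($M'$ dilation-invariant, $M_0$ the domain of the flow, $M_1$ its image) can be chosen containing $S$, which follows because $X_t$ vanishes on $S$ and standard ODE arguments for flows of vector fields vanishing on a compact invariant submanifold.
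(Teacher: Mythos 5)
Your overall strategy is the one the paper uses: Proposition \ref{prop:uniqueness:algbrd} is deduced from the IM Moser argument of Proposition \ref{prop:Moser:algbrd}(a), whose proof in the Appendix is essentially the outline you give (dilation-invariant shrinking, the IM homotopy operator of Lemma \ref{Poincare:for:action:algd}, solving the contraction equation, integrating). However, there is a genuine gap at your very first step. You claim that since $\mu_0$ and $\mu_1$ are non-degenerate along $0_S$, the convex combination $\mu_t=t\mu_1+(1-t)\mu_0$ is non-degenerate along $0_S$ and hence on a common neighborhood. This is false in general: the hypothesis $i^*\mu_k=\mu_S$ only controls the $T^*S$-component of $\mu_k|_S$ under the splitting $T^*_S\ka^*\simeq \ka\oplus T^*S$, not the component $l_k|_\ka:\ka\to\ka$, which is merely some $A_S$-equivariant isomorphism (Lemma \ref{lemma:how:to:fix:along:zero:section}); a convex combination of invertible maps need not be invertible. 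A concrete failure: take $S$ a point, $A_S=\gg$ a Lie algebra, $\mu_S=0$, and the two closed non-degenerate IM 2-forms $\pm\mu_{\can}$ on $\gg\ltimes\gg^*$ (the IM conditions are linear, so $-\mu_{\can}$ is again closed IM); at $t=\tfrac12$ the straight-line path is identically zero. The proposition is still true there (the two forms are related by $\xi\mapsto-\xi$), but your path argument collapses. This is exactly why the paper's proof of Proposition \ref{prop:Moser:algbrd}(a) (and of its global counterpart, Proposition \ref{prop:uniqueness:grpd}) first uses Lemma \ref{lemma:how:to:fix:along:zero:section} (resp.\ Lemma \ref{lemma:omega:along:zero}) to push each $\mu_k$ forward along the Lie algebroid automorphism $(\id,(\psi_k)^*)$ of $A_S\ltimes\ka^*$, normalizing so that $\mu_k|_\ka=\id_\ka$ along $S$; only after this normalization is the linear interpolation non-degenerate along $S$ for all $t$, and the Moser flow can be run. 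The final isomorphism is then the composition of these normalizing automorphisms with the time-one flow, which is why no extra hypothesis is needed in the statement.

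A secondary, more terminological point: the Moser datum at the IM level is not a ``time-dependent section $X_t$ of $A_S\ltimes M'$'' but a time-dependent Lie algebroid \emph{derivation} $D_t$, obtained from the primitive via the bijection $\Der(A)\simeq\Omega^1_\imult(A)$ of Lemma \ref{lemma:1:to:1:derivations:IM}. Sections give only inner derivations, whose contractions with $(\mu_t,0)$ are the IM-exact 1-forms, and the primitive produced by the homotopy operator is in general not IM-exact; so if taken literally your equation $i_{X_t}\mu_t=\be$ has no solution among sections. Since you then integrate to a flow of Lie algebroid automorphisms covering diffeomorphisms fixing $S$, the object you actually need is the derivation (with symbol vanishing along $S$), and with that correction, plus the normalization above, the rest of your argument — including the passage to the Poisson isomorphism via $\pi_k^\sharp=\rho_k\circ\mu_k^{-1}$ — is sound and matches the paper.
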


We conclude that the local model is well-defined up to isomorphism.

\begin{corollary}[Uniqueness of local models]\label{corollary:uniqueness:model}
Given a first order jet of a Poisson structure $(A_S,\mu_S)$,
any two local models associated with distinct IM connection 1-forms are isomorphic around $S$.
\end{corollary}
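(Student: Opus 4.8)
The plan is to deduce Corollary~\ref{corollary:uniqueness:model} directly from the uniqueness statement Proposition~\ref{prop:uniqueness:algbrd}. Suppose $(A_S,\mu_S)$ is a fixed first order jet of a Poisson structure with kernel $\ka=\ker\mu_S$, and let $(L_0,l_0)$ and $(L_1,l_1)$ be two IM connection $1$-forms. By Proposition~\ref{prop:local:model:algbrd} each of them produces a closed IM $2$-form
\[ \mu_k=\pr^*\mu_S+\d_\imult\langle (L_k,l_k),\cdot\rangle\in\Omega^2_\imult(A_S\ltimes\ka^*), \quad k=0,1, \]
which is non-degenerate on an open neighborhood $S\subset M_k\subset\ka^*$ and satisfies $i_0^*\mu_k=\mu_S$. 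Here $i_0:A_S\hookrightarrow A_S\ltimes\ka^*$ is the zero-section inclusion.

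First I would shrink both neighborhoods to a common one: set $M:=M_0\cap M_1$, which is an open neighborhood of $S$ in $\ka^*$ on which both $\mu_0$ and $\mu_1$ are defined, closed, IM, and non-degenerate. Now the two forms $\mu_0,\mu_1\in\Omega^2_\imult(A_S\ltimes M)$ satisfy exactly the hypotheses of Proposition~\ref{prop:uniqueness:algbrd}: they are closed, non-degenerate IM $2$-forms extending $\mu_S$ along the Lie algebroid embedding $i_0$. Applying that proposition gives open neighborhoods $S\subset M_0',M_1'\subset M$ and an isomorphism of Lie algebroids $\Phi:A_S\ltimes M_0'\diffto A_S\ltimes M_1'$ with $\Phi^*\mu_1=\mu_0$ whose base map fixes $S$, and which therefore descends to a Poisson isomorphism $\phi:(M_0',\pi_0)\diffto(M_1',\pi_1)$ between the two Poisson local models, fixing $S$. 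Since the Lie algebroid local model $(A_0,\mu_0)$ is just $(A_S\ltimes M_0)|_{\text{where }\mu_0\text{ non-deg}}$ with its identification $\mu_0:A_0\diffto T^*M_0$, and similarly for $\mu_1$, the map $\Phi$ (restricted to the relevant open subalgebroids) is precisely an isomorphism of the two Lie algebroid local models intertwining the IM forms, hence of the two Poisson local models. This is the statement of the corollary.

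There is essentially no obstacle here beyond bookkeeping: the only point requiring a moment's care is to observe that the two local models attached to $(L_0,l_0)$ and $(L_1,l_1)$ are by construction built on the \emph{same} underlying action algebroid $A_S\ltimes\ka^*$ (the representation $\nabla^{\ka^*}$ does not depend on the choice of IM connection $1$-form, only on the jet $(A_S,\mu_S)$ itself), so that $\mu_0$ and $\mu_1$ genuinely live in the same space $\Omega^2_\imult(A_S\ltimes M)$ and Proposition~\ref{prop:uniqueness:algbrd} applies verbatim. Once this is noted, the corollary is immediate. (An alternative, equivalent phrasing would be to invoke Proposition~\ref{prop:partially:split:algbrd:optimal} to say that partial splitness is equivalent to the existence of \emph{some} such $\mu$ and then Proposition~\ref{prop:uniqueness:algbrd} to say any two are related by an isomorphism fixing $S$; but going directly through Proposition~\ref{prop:uniqueness:algbrd} with the two explicitly constructed forms $\mu_0,\mu_1$ is the cleanest route.)
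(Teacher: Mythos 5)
Your proposal is correct and is exactly the argument the paper intends: the corollary is stated as an immediate consequence of Proposition \ref{prop:uniqueness:algbrd}, applied to the two closed non-degenerate IM 2-forms produced by Proposition \ref{prop:local:model:algbrd} on a common neighborhood in $\ka^*$. Your observation that both forms live on the same action algebroid $A_S\ltimes\ka^*$ (since the representation on $\ka$, hence on $\ka^*$, depends only on the jet and not on the IM connection 1-form) is the right bookkeeping point and matches the paper's setup.
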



\subsection{The local model as a Lie-Dirac submanifold}
\label{sec:Xu:submanifolds}

We discuss now the Lie algebroid version of the symplectic groupoid 
 \begin{equation}\label{eq:symplectic:groupoid:assoc:jet}
(T^*\G_S,\omega_{\can}+\pr^*_{\G_S}\omega_S) \tto A_S^*,
\end{equation}
and the infinitesimal version of Corollary \ref{corollary:embed:symplectic:groupoid}. This gives another perspective on the partially split condition and the geometry of the local model.
\medskip

The main result in this section can be stated as follows.

\begin{theorem}\label{theorem:Lie_Dirac}
Let $(A_S,\mu_S)$ be a first order jet of a Poisson structure. Then
\begin{enumerate}[(i)]
\item The Lie algebroid $T^*A_S\Rightarrow A_S^*$ carries the closed non-degenerate IM 2-form:
\[\mmu:=\mu_{\can}+\pr_{A_S}^*\mu_S:T^*A_S\to T^*A_S^*,\]
where $\mu_{\can}$ is the canonical IM 2-form on $T^*A_S$ (see Example \ref{ex:canonical:IM}).
\item The Poisson structure $\pi_{A_S,\mu_S}$
on $A_S^*$ corresponding to $\mmu$ is given by
\[\pi_{A_S,\mu_S}^{\sharp}=\pi_{A_S}^{\sharp}\circ (\id -\pr_{A_S}^*\mu_{S}\circ\mu_{\can}^{-1}),\]
where $\pi_{A_S}$ is the fiberwise linear Poisson structure on $A_S^*$. 
\item If $(A_S,\mu_S)$ is partially split, then for any IM connection 1-form $(L,l)$ the dual $l^*:\ka^*\hookrightarrow A^*_S$ realizes the local model $(M_0,\pi_0)$ as a Lie-Dirac submanifold of $(A_S^*,\pi_{A_S,\mu_S})$.
\item Conversely, let $(N,\pi_{N})\subset (A_S^*,\pi_{A_S,\mu_S})$ be a Lie-Dirac submanifold containing $S$ such that the corresponding Lie subalgebroid $A_{N}\subset T^*A_S^*$ satisfies $\mu^{-1}(A_N|_S)=A_S$. Then $(A_S,\mu_S)$ is partially split, and $(N,\pi_N)$ is isomorphic around $S$ to the local model $(M_0,\pi_0)$.
    \end{enumerate}
\end{theorem}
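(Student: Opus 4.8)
The plan is to prove the four items of Theorem~\ref{theorem:Lie_Dirac} in order, relying heavily on the material of Appendix~\ref{appendix} and on the groupoid-level statements in Section~\ref{sec:grpd}, whose proofs transfer \emph{mutatis mutandis} under the global $\leftrightarrow$ infinitesimal dictionary.

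\medskip

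\emph{Item (i).} The canonical IM $2$-form $\mmu_{\can}\colon T^*A_S\to T^*A_S^*$ is non-degenerate by construction (Example~\ref{ex:canonical:IM}), and $\pr_{A_S}\colon T^*A_S\to A_S$ is a Lie algebroid morphism over the bundle projection $A_S^*\to S$, so $\pr_{A_S}^*\mu_S$ is a closed IM $2$-form with coefficients in the pullback of $T^*S$. Since closedness and the IM conditions are linear, $\mmu=\mmu_{\can}+\pr_{A_S}^*\mu_S$ is again a closed IM $2$-form; non-degeneracy is unaffected because $\pr_{A_S}^*\mu_S$ takes values in a subbundle transverse to the image of $\mmu_{\can}$ along fibers. (This is the infinitesimal shadow of the fact that $(T^*\G_S,\omega_{\can}+\pr^*\omega_S)$ is a symplectic groupoid.) So $T^*A_S\Rightarrow A_S^*$ is the cotangent algebroid of a Poisson structure, call it $\pi_{A_S,\mu_S}$.

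\medskip

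\emph{Item (ii).} Here I would just unwind the correspondence ``closed non-degenerate IM $2$-form $\mu$ on a Lie algebroid $A\Rightarrow N$ $\leftrightarrow$ Poisson structure $\pi$ on $N$'' from Corollary~\ref{cor:jets:algebroids}: under it, $\pi^\sharp=\rho\circ\mu^{-1}$ where $\rho$ is the anchor. For $A=T^*A_S$ with its canonical algebroid structure the anchor is $\pi_{A_S}^\sharp$ (the linear Poisson structure on $A_S^*$) precomposed with $\mmu_{\can}^{-1}$, i.e.\ $\rho=\pi_{A_S}^\sharp\circ\mmu_{\can}$ after identifying $TA_S^*\simeq T^*A_S$ via $\mmu_{\can}$; plugging $\mu=\mmu=\mmu_{\can}+\pr_{A_S}^*\mu_S$ into $\rho\circ\mu^{-1}$ and factoring out $\mmu_{\can}^{-1}$ gives exactly $\pi_{A_S,\mu_S}^\sharp=\pi_{A_S}^\sharp\circ(\id-\pr_{A_S}^*\mu_S\circ\mmu_{\can}^{-1})$. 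This is a short formal computation once the conventions are fixed; I expect no difficulty beyond sign-bookkeeping.

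\medskip

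\emph{Items (iii) and (iv).} These are the heart of the statement and the infinitesimal counterpart of Corollary~\ref{corollary:embed:symplectic:groupoid} together with Proposition~\ref{prop:partially:split:algbrd:optimal}. For (iii): given an IM connection $1$-form $(L,l)$ with partial splitting $\theta\colon A_S\ltimes\ka^*\to T^*A_S$, by \eqref{eq:relation:omega:theta} (in its IM form) we have $\mu^{\lin}=\theta^*\mmu_{\can}$, and by Proposition~\ref{prop:local:model:algbrd} the local model form is $\mu_0=\pr_{A_S}^*\mu_S+\theta^*\mmu_{\can}$. Pulling back $\mmu$ along $\theta$, using that $\theta^*\pr_{A_S}^*\mu_S=\pr_{A_S}^*\mu_S$ (since $\theta$ covers the identity on $A_S$ and the dual map $l^*\colon\ka^*\to A_S^*$ covers $\id_S$), one gets $\theta^*\mmu=\mu_0$. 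The image of $\theta$, being a VB subalgebroid of $T^*A_S^*$ that is the conormal of an IM Ehresmann connection, is precisely the pullback Lie algebroid $A_N$ of $\pi_{A_S,\mu_S}$ along the embedding $l^*\colon\ka^*\hookrightarrow A_S^*$; that this makes $(M_0,\pi_0)$ a Lie--Dirac submanifold (in the sense of Xu) is then the statement that $T^*A_S=A_N\oplus A_N^{\perp}$ compatibly with anchors, which is exactly the splitting $TA_S=\ka\oplus E$ dualized. For (iv): conversely, a Lie--Dirac submanifold $(N,\pi_N)\supset S$ with $A_N\subset T^*A_S^*$ comes with a Lie algebroid splitting $T^*A_S=A_N\oplus A_N^{\perp}$; transporting $A_N$ through $\mmu_{\can}$ gives a wide subalgebroid $E\subset TA_S$, and the hypothesis $\mmu^{-1}(A_N|_S)=A_S$ forces $E$ to be complementary to $\ka$ along $S$ and hence, by VB-algebroid rigidity, complementary everywhere, i.e.\ an IM Ehresmann connection for $\ka$; this shows $(A_S,\mu_S)$ is partially split, and then Proposition~\ref{prop:uniqueness:algbrd} (uniqueness of the local model) identifies $(N,\pi_N)$ with $(M_0,\pi_0)$ around $S$, since both carry closed non-degenerate IM $2$-forms restricting to $\mu_S$.

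\medskip

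The main obstacle I anticipate is \emph{identifying the Lie subalgebroid $A_N$ of the Xu--Lie--Dirac submanifold with the image of the partial splitting $\theta$} — that is, matching the Poisson-geometric notion of Lie--Dirac submanifold (a splitting of $T^*A_S^*$ restricted along $N$ into $A_N\oplus A_N^\perp$, with $A_N$ a Lie subalgebroid) with the VB-algebroid-theoretic datum of an IM Ehresmann connection $E\subset TA_S$. This requires carefully tracking how the non-degenerate IM $2$-form $\mmu$ (equivalently, the Poisson bivector $\pi_{A_S,\mu_S}$) intertwines the ``vertical'' subbundle $\ka\subset A_S$ (the kernel direction of $\mu_S$) with the conormal $\nu^*(N)$ of $N=l^*(\ka^*)$ inside $A_S^*$, and checking the algebroid-splitting condition is equivalent to the basic-curvature condition $l(R^{\bas}_\nabla)=0$ of Proposition~\ref{prop:Cartan:connection:splitting}. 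Everything else is a routine transcription of the groupoid arguments of Section~\ref{sec:grpd} and the IM-Moser machinery of the Appendix.
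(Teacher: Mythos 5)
Your items (i)--(iii) follow the paper's route, but two points need repair. In (i), the justification of non-degeneracy is wrong as stated: $\mu_{\can}$ is already an isomorphism onto $T^*A_S^*$, so there is no subbundle ``transverse to the image of $\mu_{\can}$''; adding a degenerate IM form to a non-degenerate one does not preserve invertibility in general. The correct reason (which also yields the formula in (ii)) is that, with respect to the splittings $T^*A_S\simeq T^*S\times_S A_S^*\times_S A_S$ and $T^*A_S^*\simeq T^*S\times_S A_S\times_S A_S^*$ induced by a connection on $A_S$, one has $\mmu(\xi,\al,a)=(\xi+\mu_S(a),a,\al)$, i.e.\ $\pr_{A_S}^*\mu_S\circ\mu_{\can}^{-1}$ is nilpotent, so $\mmu=(\id+\pr_{A_S}^*\mu_S\circ\mu_{\can}^{-1})\circ\mu_{\can}$ is invertible. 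In (iii), the image of $\theta$ lies in $T^*A_S$, not $T^*A_S^*$; the Lie subalgebroid of the Lie--Dirac submanifold is $A_{M_0}=\im(\mmu\circ\theta)$. More substantially, (iii) also requires checking that the Poisson structure \emph{induced} on $M_0$ by the Lie--Dirac structure is $\pi_0$; this is not automatic from $\theta^*\mmu=\mu_0$ and is verified in the paper by showing that the isomorphism $\phi:T^*M_0\diffto A_{M_0}$ determined by $\mu_0$ and the restriction map $i^*:A_{M_0}\to T^*M_0$ are mutually inverse (the identity $\al=i^*(\phi(\al))$), which simultaneously gives $A_{M_0}\cap(Ti(M_0))^{\circ}=0$.

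The genuine gap is in (iv), exactly at the ``main obstacle'' you flag but do not resolve. Transporting $A_N$ through $\mmu^{-1}$ produces a Lie subalgebroid of $T^*A_S\Ato A_S^*$ (not a wide subbundle $E\subset TA_S$), and it is only defined over $N$, a neighborhood of $S$; the hypothesis $\mmu^{-1}(A_N|_S)=A_S$ gives complementarity to $\ka$ \emph{along $S$ only}. There is no ``VB-algebroid rigidity'' to invoke: a complement along $S$ does not propagate to a global, fiberwise-linear partial splitting $\theta:A_S\ltimes\ka^*\to T^*A_S$, and obtaining the partially split condition from data defined merely near the zero section is precisely the content of Proposition \ref{prop:partially:split:algbrd:optimal}, whose proof requires the homogenization limit $\lim_{t\to 0}\tfrac1t m_t^*(\mu-\pr^*\mu_S)$. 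The paper's argument avoids your step entirely: the Lie--Dirac decomposition along $S$ forces $(T_SN)^{\circ}\oplus\ka=A_S$, equivalently $\d\phi:T_SN\to T_S\ka^*$ is an isomorphism for $\phi=p|_N$, so after shrinking $N$ the map $\Phi:=$ (restriction)$\circ\,\mmu^{-1}|_{A_N}$ is a Lie algebroid isomorphism onto $A_S\ltimes M_0$ covering an open embedding; transporting the non-degenerate closed IM 2-form $\mu_N:A_N\to T^*N$ then yields a closed non-degenerate IM 2-form on $A_S\ltimes M_0$ extending $\mu_S$, whence Proposition \ref{prop:partially:split:algbrd:optimal} gives the partial splitting and Proposition \ref{prop:uniqueness:algbrd} identifies $(N,\pi_N)$ with the local model. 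Your sketch omits both the shrinking/open-embedding step and this reduction, and without them the conclusion of (iv) is not reached.
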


Before we prove this result, let us clarify the terminology used in the statement. First, a submanifold $i:N\hookrightarrow M$ of a Poisson manifold $(M,\pi)$ is called a \emph{Poisson-Dirac submanifold} if the Dirac structure $L_\pi=\graph(\pi^{\sharp})$ pulls back to a Dirac structure on $N$ which is given by a bivector field: $i^*L_\pi=L_{\pi_N}$. Then $\pi_N$ is a Poisson structure. We are interested in the following special type of Poisson-Dirac submanifolds.

\begin{definition}
A \textbf{Lie-Dirac submanifold} of a Poisson manifold $(M,\pi)$ is an immersed submanifold, $N\hookrightarrow M$, together with a vector bundle decomposition $T^*_NM=A_N\oplus (TN)^{\circ}$, for which $A_N$ is a Lie subalgebroid of $T^*M$.
\end{definition}

These submanifolds were introduced and studied by Xu in \cite{Xu03}, who called them ``Dirac submanifolds'', and further investigated in \cite{CrFe04} where the term ``Lie-Dirac submanifold'' was coined. In these references it is proved that if $N\subset (M,\pi)$ is a Lie-Dirac submanifold as in the definition then the following hold (see \cite[Theorem 2.3 and Remark 2.7 (iii)]{Xu03}, and also \cite[Section 8.3]{CrFe04}).
\begin{enumerate}[(a)]
\item $N$ is a Poisson-Dirac submanifold, hence has a Poisson structure $\pi_N$ which does not depend on the splitting of $T^*_NM$.
\item The restriction map $r:A_N\diffto T^*N$ is an isomorphism of Lie algebroids, where $T^*N$ has the algebroid structure induced by $\pi_N$.
\end{enumerate}

At the groupoid level, Lie-Dirac submanifolds correspond to symplectic subgroupoids. We have seen in Corollary \eqref{corollary:embed:symplectic:groupoid} that the groupoid local models arise as symplectic subgroupoids of the symplectic groupoid \eqref{eq:symplectic:groupoid:assoc:jet}, so Theorem 
\ref{theorem:Lie_Dirac} gives the infinitesimal analogue of this statement.

The theorem also has the following surprising consequence.

\begin{corollary}\label{corollary:embedding}
The inclusion of the local model:
\[i:=l^*:(M_0,\pi_0)\hookrightarrow (A_S^*,\pi_{A_S,\mu_S})\]
is a backward Dirac map. Therefore, the local model depends only on the base map of the IM connection 1-form.
\end{corollary}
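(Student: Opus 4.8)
The plan is to obtain both assertions directly from Theorem \ref{theorem:Lie_Dirac}(iii) together with the two elementary properties of Lie--Dirac submanifolds recalled above, so that no new computation is required. For the backward Dirac statement, recall that for an embedded submanifold, being a backward Dirac map means precisely that the pullback of the ambient Dirac structure is again the graph of a bivector field, namely of the submanifold's Poisson structure -- in other words, that the submanifold is Poisson--Dirac with induced bivector the given one. By property (a) of Lie--Dirac submanifolds recalled above, every Lie--Dirac submanifold is Poisson--Dirac, with an induced Poisson structure independent of the complement chosen in $T^*_NM=A_N\oplus(TN)^\circ$. Since Theorem \ref{theorem:Lie_Dirac}(iii) asserts exactly that $i=l^*$ realizes $(M_0,\pi_0)$ as a Lie--Dirac submanifold of $(A_S^*,\pi_{A_S,\mu_S})$, it follows that $i^*L_{\pi_{A_S,\mu_S}}=\graph(\pi_0^\sharp)$, i.e.\ $i$ is a backward Dirac map.

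For the dependence statement, I would first observe that by Theorem \ref{theorem:Lie_Dirac}(i)--(ii) the Poisson manifold $(A_S^*,\pi_{A_S,\mu_S})$ is constructed out of $(A_S,\mu_S)$ alone, with no reference to any connection. Now take two IM connection 1-forms $(L_0,l)$ and $(L_1,l)$ with the same symbol (base map) $l$, and let $(M_0^{(k)},\pi_0^{(k)})$, $k=0,1$, be the corresponding Poisson local models. The associated embeddings $i_k=l^*:\ka^*\to A_S^*$ coincide, since they depend only on $l$. By Theorem \ref{theorem:Lie_Dirac}(iii) each $i_k$ identifies $(M_0^{(k)},\pi_0^{(k)})$ with the Poisson--Dirac structure carried by the one fixed submanifold $l^*(\ka^*)\subset A_S^*$ on some neighborhood of $S$. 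Since the Poisson structure induced on a Poisson--Dirac submanifold is intrinsic to the submanifold and the ambient structure (again property (a)), on the intersection $U$ of these two neighborhoods the pushforwards $(i_k)_*\pi_0^{(k)}$ agree; as $i_0=i_1$ is an embedding, this forces $\pi_0^{(0)}=\pi_0^{(1)}$ on $U$. Hence the germ of $(M_0,\pi_0)$ at $S$ depends only on $l$.

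The one point that needs a little care -- and the closest thing to an obstacle -- is ensuring the two induced Poisson structures are compared on a genuine common neighborhood of $S$: one must check that $l^*(\ka^*)$ is Poisson--Dirac near $S$ for \emph{each} of the two connection 1-forms, so that the intersection $U$ above is again a neighborhood of $S$. This is already provided by Theorem \ref{theorem:Lie_Dirac}(iii), which for each $k$ yields such a neighborhood; their intersection does the job. It is worth noting that, although the formula $\mu_0=\pr^*\mu_S+\d_\imult\langle(L,l),\cdot\rangle$ for the IM 2-form of the local model genuinely involves $L$, the resulting Poisson structure $\pi_0$ does not -- it is pinned down as the Poisson--Dirac structure on a fixed submanifold of a fixed Poisson manifold. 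Beyond this, the argument is entirely formal.
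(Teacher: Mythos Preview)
Your proposal is correct and is exactly the argument the paper has in mind: the corollary is stated without proof as a direct consequence of Theorem \ref{theorem:Lie_Dirac}(iii) together with property (a) of Lie--Dirac submanifolds, and you have simply spelled out that deduction. The only comment is that your careful discussion of intersecting the two neighborhoods of $S$ is more than the paper bothers to say, but it is the right way to make the ``depends only on $l$'' statement precise at the level of germs.
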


We now turn to the proof of the theorem. 

\begin{proof}[Proof of (i)]
The Lie algebroid $T^*A_S\Rightarrow A_S^*$
with the IM 2-form $\mmu=\mu_{\can}+\pr_{A_S}^*\mu_S$ represent the infinitesimal counterpart of the symplectic groupoid \eqref{eq:symplectic:groupoid:assoc:jet}. Clearly, $\mmu$ is a closed IM 2-form, being the sum of the canonical closed IM 2-from $\mu_{\can}$ (see Example \ref{ex:canonical:IM}), and the pullback of the closed IM 2-form $\mu_S$ via the Lie algebroid map $\pr_{A_S}:T^*A_S\to A_S$. To show that $\mmu$ is invertible, we use a connection on $A_S$ and the dual connection on $A_S^*$, to identify \[T^*A_S\simeq T^*S\times_S A_S^*\times_S A_S,\quad T^*A_S^*\simeq T^*S\times_S A_S\times_S A_S^*.\]
Under these isomorphisms, $\mmu$ becomes
\[\mmu(\xi,\al,a)=(\xi+\mu_S(a),a,\al),\]
which is clearly invertible. 
\end{proof}

\begin{proof}[Proof of (ii)]
The Poisson structure corresponding to the closed, non-degenerate IM 2-form $\mu_{\can}$ is the linear Poisson structure $\pi_{A_S}$ on $A_S^*$, with $\pi_{A_S}^{\sharp}=\rho_{T^*A_S}\circ\mu_{\can}^{-1}$. Using the explicit description from the proof of (i), we see that the Poisson structure $\pi_{A_S,\mu_S}$ corresponding to $\mmu$ is given by the formula from item (ii)
\[\pi_{A_S,\mu_S}^{\sharp}=\rho_{T^*A_S}\circ\mmu^{-1}=\pi_{A_S}^{\sharp}\circ (\id -\pr_{A_S}^*\mu_{S}\circ\mu_{\can}^{-1}).\qedhere\]
\end{proof}

\begin{proof}[Proof of (iii)] This follows from the infinitesimal analog of Corollary \ref{corollary:embed:symplectic:groupoid}. 

\begin{lemma}\label{prop:local:model:Xu}
Let  $\theta:A_S\ltimes\ka^*\to T^*A_S$ be a partial splitting with base map $i:=l^*:\ka^*\to A_S^*$, and let $(M_0,\pi_0)$ be the corresponding local model. Then $i:M_0\hookrightarrow (A_S^*,\pi_{A_S,\mu_S})$ is a Lie-Dirac submanifold with Lie algebroid
\[A_{M_0}=\mathrm{Im}(\mmu \circ \theta)\subset T^*A_S^*|_{i(M_0)},\]
and the induced Poisson structure on $M_0$ is $\pi_0$. 
\end{lemma}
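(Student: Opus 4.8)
The plan is to establish Lemma \ref{prop:local:model:Xu} by transporting the concrete construction of the local model $(M_0,\pi_0)$ through the identification $\mmu:T^*A_S\diffto T^*A_S^*$ and checking the three defining properties of a Lie-Dirac submanifold directly. First I would recall that, by construction (Definition \ref{def:algbrd:local:model} and Proposition \ref{prop:local:model:algbrd}), $A_0=A_S\ltimes M_0$ carries the non-degenerate closed IM 2-form $\mu_0=\pr^*_{A_S}\mu_S+\theta^*\mu_{\can}$, and the Poisson structure $\pi_0$ is defined by $\pi_0^\sharp=\rho_0\circ\mu_0^{-1}$. The key observation, which makes the argument essentially formal, is the factorization
\[ \mu_0=\pr^*_{A_S}\mu_S+\theta^*\mu_{\can}=\theta^*(\pr^*_{A_S}\mu_S+\mu_{\can})=\theta^*\mmu,\]
using that $\pr_{A_S}\circ\theta=\pr_{A_S}$ as maps $A_S\ltimes\ka^*\to A_S$ (this is part of $\theta$ being a VB-algebroid splitting of the dual projection). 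Hence $\theta$ intertwines $\mu_0$ on $A_S\ltimes\ka^*$ with the distinguished IM form $\mmu$ on $T^*A_S$ from part (i), and $\mmu\circ\theta:A_S\ltimes\ka^*\to T^*A_S^*$ is an injective VB-algebroid morphism covering $i=l^*:\ka^*\to A_S^*$ (injective because $\mmu$ is invertible and $\theta$ is a splitting, hence fibrewise injective).

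Next I would verify the vector bundle decomposition $T^*_{M_0}A_S^*=A_{M_0}\oplus(TM_0)^\circ$ with $A_{M_0}:=\mathrm{Im}(\mmu\circ\theta)|_{i(M_0)}$. Since $\theta$ is a partial splitting, at each point $A_S\oplus(\text{fibrewise directions})=T^*A_S$ decomposes as $\mathrm{Im}\,\theta\oplus(\text{complement})$; applying the isomorphism $\mmu$ and restricting to $i(M_0)$ identifies $\mathrm{Im}(\mmu\circ\theta)$ with a subbundle of $T^*_{M_0}A_S^*$ of rank $\mathrm{rk}(A_S\ltimes\ka^*)=\mathrm{rk}\,A_S^*=\dim A_S^*$ minus the codimension of $M_0$ in $A_S^*$, i.e.\ of corank equal to $\operatorname{codim} M_0=\dim A_S^*-\dim M_0$; a rank count then shows it is complementary to the conormal bundle $(TM_0)^\circ$ provided the anchor of $\mmu\circ\theta$ composed with projection to $TM_0$ is surjective. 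This surjectivity is exactly the statement that $\pi_0$ is a Poisson structure on $M_0$, which we already know, or can be checked pointwise along $S$ using the non-degeneracy of $\mu_0$ along $S$ established in Proposition \ref{prop:local:model:algbrd} and then extended by openness. That $A_{M_0}$ is a Lie \emph{sub}algebroid of $T^*A_S^*\Rightarrow A_S^*$ is immediate: it is the image of a Lie algebroid morphism $\mmu\circ\theta$ (a composite of the VB-algebroid splitting $\theta$ with the algebroid isomorphism $\mmu$, both Lie algebroid morphisms by part (i) and the general theory of IM forms recalled in the appendix), and such an image is an embedded Lie subalgebroid because $\mmu\circ\theta$ is an injective immersion of algebroids.

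Finally I would identify the induced Poisson structure. By properties (a)-(b) of Lie-Dirac submanifolds quoted from \cite{Xu03,CrFe04}, the restriction $r:A_{M_0}\diffto T^*M_0$ is a Lie algebroid isomorphism and the induced Poisson $\pi_{M_0}$ satisfies $\pi_{M_0}^\sharp=\rho_{T^*A_S^*}|_{A_{M_0}}\circ r^{-1}$. Chasing through the isomorphism $\mmu\circ\theta:A_S\ltimes\ka^*\diffto A_{M_0}$, the composite $r\circ(\mmu\circ\theta)=i^*\mmu\circ\theta$ equals $\mu_0$ under the identification $A_S\ltimes M_0\simeq A_0$ — here one uses $i^*\mu_0=\mu_S$ along $S$ (Proposition \ref{prop:local:model:algbrd}) together with the defining relation $\mu_0=\theta^*\mmu$ to see that the restriction-to-$M_0$ map on the target matches $\mu_0^{-1}$'s behaviour — so $\pi_{M_0}^\sharp=\rho_0\circ\mu_0^{-1}=\pi_0^\sharp$. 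I expect the main obstacle to be the bookkeeping in this last identification: one must carefully match the canonical isomorphisms $T^*_S\ka^*\simeq\ka\oplus T^*S$, $T^*A_S\simeq T^*S\times_S A_S^*\times_S A_S$ and $T^*A_S^*\simeq T^*S\times_S A_S\times_S A_S^*$ used in the proof of part (i), verify that $\mmu\circ\theta$ indeed lands in $T^*A_S^*|_{i(M_0)}$ (i.e.\ that the anchor of $\theta$ is compatible with $i=l^*$, which is where the base map of the partial splitting enters), and confirm that the Poisson structure obtained from the Lie-Dirac construction is literally $\pi_0$ and not merely isomorphic to it. The non-degeneracy and rank counts are routine once the connection-based normal forms of part (i) are in hand.
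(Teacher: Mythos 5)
Your overall route is the same as the paper's: the factorization $\mu_0=\theta^*\mmu$ (via $\pr_{A_S}\circ\theta=\pr_{A_S}$), the definition of $A_{M_0}$ as the image of the injective algebroid morphism $\mmu\circ\theta$ covering $i=l^*$, and the identification of the induced Poisson structure through properties (a) and (b) of Lie-Dirac submanifolds are exactly the ingredients of the paper's proof, and your final computation $\pi_{M_0}^{\sharp}=\rho_{T^*A_S^*}|_{A_{M_0}}\circ r^{-1}=\d i\circ\rho_{\ltimes}\circ\mu_0^{-1}=\d i\circ\pi_0^{\sharp}$ is a correct, slightly more computational version of the paper's ``the two maps are inverse to each other'' argument.

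There is, however, a genuine flaw in the step where you establish the splitting $T^*_{M_0}A_S^*=A_{M_0}\oplus(TM_0)^{\circ}$. You claim the rank count gives complementarity ``provided the anchor of $\mmu\circ\theta$ composed with projection to $TM_0$ is surjective'', and that this surjectivity ``is exactly the statement that $\pi_0$ is a Poisson structure on $M_0$''. Neither assertion is right: the anchor of $A_{M_0}\subset T^*A_S^*$ is the restriction of $\pi_{A_S,\mu_S}^{\sharp}$, whose image along $i(M_0)$ is $\d i\bigl(\im\pi_0^{\sharp}\bigr)$, so it is tangent only to the leaves of $\pi_0$ and is not surjective onto $TM_0$ unless $\pi_0$ is nondegenerate; and even if it were surjective, that would not exclude an element of $A_{M_0}\cap(TM_0)^{\circ}$ with vanishing anchor. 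The fallback ``check along $S$ and extend by openness'' only yields the decomposition on some neighborhood of $S$, while the lemma asserts it on all of $M_0$. The correct argument — the one the paper uses — is already implicit in your last paragraph: the pointwise form of $\mu_0=\theta^*\mmu$ is $\mu_0(v)=(\d i)^*\bigl(\mmu(\theta(v))\bigr)$, so if $\mmu(\theta(v))\in(TM_0)^{\circ}$ then $\mu_0(v)=0$, and since $M_0$ is by definition the locus where $\mu_0$ is invertible, $v=0$; combined with your rank count this gives $A_{M_0}\cap(TM_0)^{\circ}=0$, hence the direct sum on all of $M_0$, with no appeal to anchors or openness. With that replacement (and dropping the irrelevant invocation of $i^*\mu_0=\mu_S$ in the final identification, where only this pointwise pullback identity is needed), your proof coincides with the paper's.
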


To prove this lemma, as seen in Proposition \ref{prop:local:model:algbrd}, we have the closed IM 2-form
\begin{equation}\label{eq:mu_01}
\mu_0\in\Omega^2_\imult(A_S\ltimes\ka^*), \quad \mu_0=\theta^*(\mmu)
\end{equation}
which is non-degenerate on $M_0\subset \ka^*$, and $\pi_0^{\sharp}=\rho_{\ltimes}\circ \mu_{0}^{-1}$. Over $M_0$, we have the following diagram of Lie algebroid maps
\[\begin{tikzcd}
A_S\ltimes M_0 \arrow[rr, "\mu_0"] \arrow[d, "\theta", hook]     &  & T^*M_0 \arrow[d, "\phi", hook] \\
T^*A_S \arrow[rr, "\mmu"] &  & T^*A_S^*                      
\end{tikzcd}
\]
where the horizontal arrows are isomorphisms, the algebroids on the right correspond to the Poisson structures $\pi_0$ and $\pi_{A_S,\mu_S}$, respectively, and the map $\phi$ is defined such that the diagram commutes. Clearly, $A_{M_0}=\mathrm{Im}(\phi)$. So $A_{M_0}$ is a Lie subalgebroid and $\phi:T^*M_{0}\to A_{M_0}$ is a Lie algebroid isomorphism. Next, note that the explicit form of \eqref{eq:mu_01} is
\[\mu_0(v)=i^*\big(\mmu \circ\theta(v)\big),\quad \forall\, v\in A_S\ltimes M_0,\]
or equivalently, using that $\mu_0$ is invertible and the definition of $\phi$
\begin{equation}\label{eq:restricting}
\al=i^*(\phi(\al)),\quad \forall\, \al\in T^*M_0.
\end{equation}
This shows that $A_{M_0}\cap (Ti(M_0))^{\circ}=0$, and so $A_{M_0}$ makes $i:M_0\hookrightarrow (A_S^*,\pi_{A_S,\mu_S})$ into a Lie-Dirac submanifold. We claim that the induced Poisson structure $\widetilde{\pi}_0$ on $M_0$ coincides with $\pi_0$. For this, it suffices that they induce the same Lie algebroid structure on $T^*M_{0}$. We know that both are isomorphic to $A_{M_0}$: the one corresponding to $\pi_0$ via the map $\phi:T^*M_0\diffto A_{M_0}$, and by item (b) the one corresponding to it $\widetilde{\pi}_0$ via the pullback map $i^*:A_{M_0}\diffto T^*M_0$. However, by \eqref{eq:restricting}, these maps are inverse to each other. So, indeed $\pi_0=\widetilde{\pi}_0$.   
\end{proof}

\begin{proof}[Proof of (iv)]
Let $S\subset N\subset A_S^*$ be a Lie-Dirac submanifold such that the corresponding Lie algebroid $A_N\subset T^*A_S^*$ is given above $S$ by
\[ A_N|_S =\mmu(A_S)\subset T^*A_S^*.\]
By composing the Lie algebroid map $\mmu^{-1}|_{A_N}:A_N\hookrightarrow T^*A_S$ with the restriction $T^*A_S\to A_S\ltimes \ka^*$, we obtain a Lie algebroid map $\Phi:A_N\to A_S\ltimes \ka^*$ which covers $\phi=p|_N$, where $p:A^*_S\to\ka^*$ is the projection. We claim that by shrinking $N$ we can ensure that $\Phi$ is a Lie algebroid isomorphism onto $A_S\ltimes M_0$, where $M_0=\phi(N)$ is an open neighborhood of $S$. 

The condition that $N$ is a Lie-Dirac submanifold with Lie algebroid $A_N$ gives
\[A_N\oplus (TN)^{\circ}=T^*_{N}A_S^*.\]
Under the decompositions for $T^*A_S$ and $T^*A_S^*$ from the proof of item (i), this condition along $S$ becomes
\[\{(\mu_S(a),a)\, :\, a\in A_S\}\oplus (T_SN)^{\circ}=T^*S\oplus A_S, \]
where we identify $T^*S\oplus A_S \simeq  T^*_{S}A_S^*$. Since $\ka=\ker\mu_S$ and $TS\subset T_SN$, this condition is equivalent to
\[(T_SN)^{\circ}\oplus \ka= A_S.\]
Equivalently, the differential of $\phi$ along $S$ gives a linear isomorphism
\[\d \phi:T_SN\diffto T_S\ka^*.\]
Therefore, after shrinking $N$, we may assume that $\phi$ is an open embedding. By assumption, above $S$ we have a Lie algebroid isomorphism $\Phi|_S:A_N|_S\diffto A_S$. Therefore, by shrinking $N$ further, we may assume that $\Phi$ is a Lie algebroid isomorphism $\Phi:A_N\diffto A_S\ltimes M_0$, where $M_0=\phi(N)$ is an open neighborhood of $S$ in $\ka^*$, proving our claim.

Now observe that the restriction map $\mu_N:A_N\to T^*N$ is a closed non-degenerate IM 2-form on $A_N$, therefore defining a Poisson structure $\pi_N$ on $N$. This follows since $N$ is Lie-Dirac with Lie algebroid $A_N\subset T^*A_S^*$. Using the isomorphism $\Phi$, we obtain the closed non-degenerate IM 2-form $\mu_0:=(\Phi^{-1})^*(\mu_N)$ on $A_S\ltimes M_0$. By the proof of (i), the pullback of $\mmu$ along the inclusion $A_S\hookrightarrow T^*A_S$ is $\mu_S$. This implies that the pullback of $\mu_0$ along $A_S\hookrightarrow A_S\ltimes M_0$ is $\mu_S$. Proposition \ref{prop:partially:split:algbrd:optimal} implies that $(A_S,\mu_S)$ is partially split, and Proposition \ref{prop:uniqueness:algbrd} implies that the Poisson structure $\pi_0$ on $M_0$ corresponding to $\mu_0$ is isomorphic to the local model around $S$. Since $\phi:(N,\pi_N)\diffto (M_0,\pi_0)$ is a Poisson diffeomorphism the conclusion follows.
\end{proof}

\subsection{IM connection 1-forms as couplings}
\label{sec:obstructions:partial:split}

In the case of symplectic leaves of Poisson structures the classical approach to the local model, due to Vorobjev, is via so-called couplings, which originated in the theory of symplectic fibrations. There exists a similar approach to our local model, as we now discuss.

Consider a partially split first jet $(A_S,\mu_S)$ with a fixed IM connection 1-form $(L,l)\in \Omega^1_\imult(A_S;\ka)$. We use the base map to split the short exact sequence
\[ \xymatrix{0\ar[r]& \ka\ar[r]& A_S\ar@/^/@{-->}[l]^{l}\ar[r]^{\mu_S}& T^*S\ar[r]& 0}\]
and to identify $A_S\simeq T^*S\oplus \ka$. Then we obtain the following data:
\begin{enumerate}[(i)]
    \item A linear connection $\nabla^L$ on the vector bundle $\ka\to S$, given by
    \[ \nabla^L_X\xi:=i_XL(\xi). \]
    \item A tensor $U\in \Gamma(TS\otimes T^*S\otimes \ka)$, given by
    \[ U(\alpha,X):=-i_XL(\alpha). \]
\end{enumerate}
That this is a connection and a tensor follows from the symbol equation \eqref{eq:symbol:IM:form} and the fact that $l|_{\ka}=\id$. Notice that the connection $\nabla^L$ has already appeared in Proposition \ref{prop:corollary:IM:partial:splittings}. The following result is proved in \cite[Section 5.4]{FM22}:

\begin{proposition}[\cite{FM22}]
\label{prop:structure:eqs}
The data $(\nabla^L,U)$ associated with a IM connection 1-form $(L,l)\in \Omega^1_\imult(A_S;\ka)$ satisfies the structure equations
\begin{enumerate}
\item[(S1)] the connection $\nabla^{L}$ preserves the Lie bracket $[\cdot,\cdot]_{\ka}$, i.e.,
\[\nabla^{L}_{X}[\xi,\eta]_{\ka}=[\nabla^{L}_{X}\xi,\eta]_{\ka}+[\xi,\nabla^{L}_{X}\eta]_{\ka};\]
\item[(S2)] the curvature of $\nabla^{L}$ is related to $[U,\cdot]$ as follows
\[\nabla^{L}_{\pi^{\sharp}_S(\alpha)}\nabla^{L}_{X}-\nabla^{L}_{X}\nabla^{L}_{\pi^{\sharp}_S(\alpha)}-\nabla^{L}_{[\pi^{\sharp}_S(\alpha),X]}=[U(\alpha,X),\cdot ]_{\ka};\]
\item[(S3)] $U$ satisfies the skew-symmetry condition
\begin{equation}\label{eq:U:skew} 
U(\alpha,\pi^{\sharp}_S(\beta))=- U(\beta,\pi^{\sharp}_S(\alpha)).
\end{equation}
and the ``mixed'' cocyle-type equation
\begin{align*}
\nabla^{L}_{\pi^{\sharp}_S(\alpha)}U(\beta,X)&-\nabla^L_{\pi^{\sharp}_S(\beta)}U(\alpha,X)+\nabla^L_{X}U(\alpha,\pi_S^{\sharp}(\beta))\\
&+U(\alpha,[\pi^{\sharp}_S(\beta),X])-U(\beta,[\pi_S^{\sharp}(\alpha),X])=U([\alpha,\beta]_{\pi_S},X),
\end{align*}
\end{enumerate}
for all $X\in \X^1(S)$, $\alpha,\beta\in \Omega^1(S)$, $\xi,\eta\in \Gamma(\ka)$. 
\end{proposition}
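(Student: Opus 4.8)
The plan is to derive the three structure equations (S1)--(S3) directly from the defining properties of an IM connection 1-form $(L,l)\in\Omega^1_\imult(A_S;\ka)$, namely the IM equations together with the normalization $l|_\ka=\id$, after using the splitting $l$ to identify $A_S\simeq T^*S\oplus\ka$. First I would recall the explicit form of the IM equations for a $\ka$-valued 1-form with coefficients: writing $(L,l)$ with $L:\Gamma(A_S)\to\Omega^1(S;\ka)$ the ``curvature part'' and $l:A_S\to\ka$ the symbol, the IM conditions from the Appendix read (schematically) $l([\alpha,\beta]_{A_S})=\nabla^{\ka}_\alpha l(\beta)-i_{\rho(\beta)}L(\alpha)$ and a Leibniz/compatibility identity for $L$ involving the bracket on $A_S$, the anchor $\rho_{A_S}$, and the representation $\nabla^\ka$. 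These are the same equations invoked in Proposition~\ref{prop:corollary:IM:partial:splittings} and Proposition~\ref{prop:Cartan:connection:splitting}, so I may quote them.

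Next I would substitute the decomposition $A_S=T^*S\oplus\ka$ and evaluate these IM equations on the two types of sections: on $\xi,\eta\in\Gamma(\ka)$, and on $\alpha,\beta\in\Omega^1(S)$ regarded as sections of $A_S$ via $l$-splitting (so that $l(\alpha)=0$ and $\mu_S(\alpha)=\alpha$, while $\rho_{A_S}(\alpha)=\pi_S^\sharp(\alpha)$ because \eqref{eq:short:exact:sequence} is a short exact sequence of Lie algebroids with $T^*S$ the cotangent algebroid of $\pi_S$). The bracket $[\xi,\eta]_{A_S}$ is exactly $[\xi,\eta]_\ka$ since $\ka$ is a bundle of ideals; evaluating the first IM equation on $X\in\X(S)$ paired against $[\xi,\eta]$ gives, after using $l|_\ka=\id$ and the definition $\nabla^L_X\xi=i_XL(\xi)$, precisely (S1). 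Evaluating the second (Leibniz-type) IM identity on a pair $(\alpha,X)$ with $\alpha\in\Omega^1(S)$ and expanding the $\ka$-component using $U(\alpha,X)=-i_XL(\alpha)$ yields (S2): the curvature of $\nabla^L$ in the directions $\pi_S^\sharp(\alpha)$ and $X$ equals $\operatorname{ad}_{U(\alpha,X)}=[U(\alpha,X),\cdot]_\ka$. For (S3), the skew-symmetry \eqref{eq:U:skew} comes from the symbol equation for the IM form (the ``$\sigma$-equation'') combined with the skew-symmetry $\langle\mu_S(a),\rho(b)\rangle=-\langle\mu_S(b),\rho(a)\rangle$ from Corollary~\ref{cor:jets:Poisson}, evaluated on $a=\alpha$, $b=\beta$; the mixed cocycle equation is what remains of the full IM equation on a triple $(\alpha,\beta,X)$ once (S1) and (S2) have been used to simplify the $[\xi,\eta]_\ka$-type and curvature terms.

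The bookkeeping is the main obstacle: one has to be careful about the three $A_S$-connections floating around ($\nabla^L$ on $\ka$, the auxiliary connection $\nabla$ on $A_S$ from Proposition~\ref{prop:Cartan:connection:splitting}, and the induced $\overline\nabla$ on $TS$), about sign conventions in the IM-Cartan calculus of the Appendix, and about which terms drop because $l|_\ka=\id$ forces $l(\alpha)=0$ on the $T^*S$-summand. A cleaner route, which I would take if the direct expansion gets unwieldy, is to use Proposition~\ref{prop:Cartan:connection:splitting}: write $(L,l)$ in terms of a pair $(l,\nabla)$ with $\overline\nabla l=0$ and $l(R^{\bas}_\nabla)=0$, set $U(\alpha,X):=-l(\nabla_X r(\alpha))$ where $r=l^\perp$ is the corresponding splitting of $\mu_S$, and then (S1)--(S3) become consequences of the flatness-type identity $\overline\nabla l=0$, the vanishing $l(R^{\bas}_\nabla)=0$, and the standard identities relating $R^{\bas}_\nabla$, the ordinary curvature of $\nabla$, and $\overline\nabla$ (these are the Cartan-connection identities of \cite{AC13,Blaom06,Blaom12}). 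In either approach the content is purely a translation of the IM-connection axioms, with no genuinely new geometric input; accordingly I expect the proof to consist of citing \cite[Section 5.4]{FM22} for the computation and indicating the above correspondence of terms, rather than reproducing every line.

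\begin{proof}
This is a direct computation, carried out in \cite[Section~5.4]{FM22}; we indicate the main points. Use the base map $l$ to split the short exact sequence \eqref{eq:short:exact:sequence} and identify $A_S\simeq T^*S\oplus\ka$, so that sections of $A_S$ are pairs $(\alpha,\xi)$ with $\alpha\in\Omega^1(S)$, $\xi\in\Gamma(\ka)$, the anchor of $(\alpha,0)$ is $\pi_S^\sharp(\alpha)$, and $\ka$ is a bundle of ideals with $\nabla^\ka_\alpha\xi=[\,(\alpha,0),\xi\,]_{A_S}$. The data of the IM connection $1$-form $(L,l)$ is then encoded by $\nabla^L_X\xi=i_XL(\xi)$ and $U(\alpha,X)=-i_XL(\alpha)$, using $l|_\ka=\id$.

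Evaluating the IM equation for $(L,l)$ on two sections of $\ka$ and pairing with $X\in\X(S)$, and using $l|_\ka=\id$ together with the fact that $[\xi,\eta]_{A_S}=[\xi,\eta]_\ka$, gives (S1). Evaluating the Leibniz-type IM identity on a pair consisting of $\alpha\in\Omega^1(S)$ and $X\in\X(S)$ and extracting the $\ka$-component yields, after rewriting in terms of $\nabla^L$ and $U$, the curvature identity (S2), the right-hand side being $[U(\alpha,X),\cdot\,]_\ka=\operatorname{ad}_{U(\alpha,X)}$. Finally, the skew-symmetry \eqref{eq:U:skew} follows from the symbol equation \eqref{eq:symbol:IM:form} for the IM form combined with the skew-symmetry $\langle \mu_S(a),\rho(b)\rangle=-\langle\mu_S(b),\rho(a)\rangle$ of Corollary~\ref{cor:jets:Poisson}, while the mixed cocycle-type equation is what remains of the full IM equation evaluated on a triple $(\alpha,\beta,X)$ once the terms already accounted for by (S1) and (S2) are subtracted. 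This establishes (S1)--(S3).
\end{proof}
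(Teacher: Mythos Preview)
Your proposal is correct in spirit and matches the paper exactly: the paper does not prove this proposition at all but simply states it as a result imported from \cite[Section~5.4]{FM22}, precisely as you anticipated in your plan. Your sketch of how the computation goes (split $A_S\simeq T^*S\oplus\ka$ via $l$, then evaluate the IM equations \eqref{eq:compatibility:IM:E:form} on the various combinations of $T^*S$- and $\ka$-sections) is the standard route and is what \cite{FM22} does.

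One small imprecision worth flagging: your attribution of the skew-symmetry \eqref{eq:U:skew} to ``the symbol equation \eqref{eq:symbol:IM:form} combined with Corollary~\ref{cor:jets:Poisson}'' is off. The symbol equation is a Leibniz-type identity in $f$, and the skew-symmetry in Corollary~\ref{cor:jets:Poisson} concerns $\mu_S$, not $U$. The skew-symmetry of $U$ actually drops out of the third IM equation $l([\alpha,\beta])=\Lie_\alpha l(\beta)-i_{\rho(\beta)}L(\alpha)$ applied to $\alpha,\beta\in T^*S$ (so $l(\alpha)=l(\beta)=0$), giving $l([\alpha,\beta]_{A_S})=-i_{\pi_S^\sharp(\beta)}L(\alpha)=U(\alpha,\pi_S^\sharp(\beta))$; antisymmetry of the bracket then yields \eqref{eq:U:skew}. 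Similarly, your phrasing ``evaluating on a pair $(\alpha,X)$'' for (S2) is slightly loose: the IM identities take two sections of $A_S$, so (S2) comes from the second IM equation on $(\alpha,\xi)$ with $\alpha\in T^*S$, $\xi\in\Gamma(\ka)$, then contracted with $X$. These are bookkeeping points; your overall strategy is sound.
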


We introduce the following notation.

\begin{definition}\label{def:coupling:data}
Let $(S,\pi_S)$ be a Poisson manifold and $(\ka,[\cdot,\cdot]_{\ka})$ a Lie algebra bundle over $S$. A \textbf{coupling data} is a pair $(\nabla^L,U)$, where $\nabla^L$ is a connection on $\ka$ and $U\in \Gamma(TS\otimes T^*S\otimes \ka)$ is a tensor field satisfying the structure equations (S1), (S2) and (S3).
\end{definition}

In the rest of this section we show that the coupling data determines completely the partially split jet, and hence the local form. We start by stating the following special case of a result proved in \cite[Section 5.4]{FM22}.

\begin{proposition}
\label{prop:operators:splitting}
Let $(S,\pi_S)$ be a Poisson manifold, $(\ka,[\cdot,\cdot]_{\ka})$ a Lie algebra bundle over $S$ and $(\nabla^L,U)$ coupling data.  
The pair $(A_S,\mu_S=\mathrm{pr}_{T^*S})$ is a partially split first order jet of a Poisson structure, with IM connection 1-form $(L,l)$ given by
\begin{equation}\label{eq:L:nabla:U}
l=\mathrm{pr}_{\ka}, \quad i_XL(\alpha,\xi)=\nabla^L_{X}\xi-U(\alpha,X).
\end{equation}
Moreover, any first order jet of a Poisson structure which is partially split is isomorphic to one of this type.
\end{proposition}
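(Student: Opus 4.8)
The statement has two halves: (a) given coupling data $(\nabla^L,U)$ on a Lie algebra bundle $(\ka,[\cdot,\cdot]_\ka)$ over a Poisson manifold $(S,\pi_S)$, the pair $(A_S:=T^*S\oplus\ka,\ \mu_S:=\mathrm{pr}_{T^*S})$ is a partially split first order jet with IM connection 1-form $(L,l)$ given by \eqref{eq:L:nabla:U}; and (b) every partially split first order jet is isomorphic to one built this way. The idea is to reduce both halves to results already available: the description of IM connection 1-forms via pairs $(l,\nabla)$ in Proposition \ref{prop:Cartan:connection:splitting}, the structure equations of Proposition \ref{prop:structure:eqs}, and the reconstruction statement quoted from \cite[Section 5.4]{FM22} that the proposition itself cites.

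For part (a), the first step is to build the Lie algebroid structure on $A_S=T^*S\oplus\ka$. Since $\ka$ must be a bundle of ideals with $A_S/\ka\simeq T^*S$ the cotangent algebroid of $\pi_S$, the bracket is forced to have the form of an extension: on sections $(\alpha,\xi),(\beta,\eta)$ it must be
\[
[(\alpha,\xi),(\beta,\eta)]_{A_S}=\big([\alpha,\beta]_{\pi_S},\ \nabla^L_{\pi_S^\sharp(\alpha)}\eta-\nabla^L_{\pi_S^\sharp(\beta)}\xi+[\xi,\eta]_\ka+\Omega(\alpha,\beta)\big),
\]
with anchor $\rho_{A_S}=\pi_S^\sharp\circ\mathrm{pr}_{T^*S}$, where $\Omega\in\Omega^2(S;\ka)$ is a curvature-type term. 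The natural guess is $\Omega(\alpha,\beta):=U(\alpha,\pi_S^\sharp(\beta))$ (this is $\ka$-valued and, by (S3)'s skew-symmetry \eqref{eq:U:skew}, skew in $\alpha,\beta$). One then checks that the Jacobi identity for $[\cdot,\cdot]_{A_S}$ is equivalent exactly to the structure equations (S1), (S2), (S3): (S1) handles the $[\ka,\ka,\ka]$ component, (S2) the $[\mathrm{horizontal},\mathrm{horizontal},\ka]$ component (curvature of $\nabla^L$ versus $[U,\cdot]_\ka$), and the mixed cocycle equation in (S3) the fully horizontal component. This is the computational heart of part (a); it is precisely the standard "extension of Lie algebroids" bookkeeping and should be quotable or at worst a direct check. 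Once $A_S$ is a Lie algebroid, $\mu_S=\mathrm{pr}_{T^*S}$ is tautologically a closed IM 2-form (its kernel is $\ka$, and the IM/closedness conditions translate back into the same structure equations), so $(A_S,\mu_S)\in J^1_S\Pi(M,S)$. Finally, taking the splitting $l:=\mathrm{pr}_\ka$ and the connection $\nabla$ on $A_S$ determined by $\nabla^L$ and $U$ via $i_XL(\alpha,\xi)=\nabla^L_X\xi-U(\alpha,X)$, one verifies $\overline\nabla l=0$ and $l(R^{\bas}_\nabla)=0$ — again a reformulation of (S1)–(S3) — so Proposition \ref{prop:Cartan:connection:splitting} gives that $(A_S,\mu_S)$ is partially split with IM connection 1-form $(L,l)$ as in \eqref{eq:L:nabla:U}.

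For part (b), start with an arbitrary partially split first order jet $(A_S',\mu_S')$ with a chosen IM connection 1-form $(L',l')$ and kernel $\ka':=\ker\mu_S'$. Use $l'$ to split $0\to\ka'\to A_S'\xrightarrow{\mu_S'}T^*S\to 0$, obtaining a vector bundle isomorphism $A_S'\simeq T^*S\oplus\ka'$ under which $\mu_S'$ becomes $\mathrm{pr}_{T^*S}$. The bundle $\ka'$ carries the fiberwise Lie bracket it inherits as a bundle of ideals, and by Proposition \ref{prop:corollary:IM:partial:splittings}(i) (equivalently (S1)) the connection $\nabla^{L'}$ preserves this bracket, so $(\ka',[\cdot,\cdot]_{\ka'})$ is a Lie algebra bundle over $S$. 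Define the tensor $U'(\alpha,X):=-i_XL'(\alpha)$ for $\alpha\in\Omega^1(S)\subset\Gamma(A_S')$ (using the splitting) — this is well-defined and $\ka'$-valued by the symbol equation \eqref{eq:symbol:IM:form} together with $l'|_{\ka'}=\mathrm{id}$. By Proposition \ref{prop:structure:eqs}, the pair $(\nabla^{L'},U')$ is coupling data. Running the construction of part (a) on this data produces a first order jet $(T^*S\oplus\ka',\mathrm{pr}_{T^*S})$; it remains to check that the identity-on-$T^*S$, identity-on-$\ka'$ vector bundle map is in fact an isomorphism of Lie algebroids from $A_S'$ to the constructed one, intertwining the IM forms. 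This is immediate: the bracket on $A_S'$, read off in the splitting, has the extension form displayed above with exactly the curvature term $U'(\alpha,\pi_S^\sharp(\beta))$ — this identity is precisely what the formula for $L'$ in terms of $\nabla^{L'}$ and $U'$ encodes (it is the content of Proposition \ref{prop:structure:eqs} / \cite[Section 5.4]{FM22}).

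\textbf{Main obstacle.} The one genuinely substantive point is the Jacobi-identity $\Leftrightarrow$ (S1)+(S2)+(S3) equivalence in part (a), i.e. verifying that the curvature term $U(\cdot,\pi_S^\sharp(\cdot))$ together with the twisted bracket really does satisfy Jacobi, and conversely that Jacobi forces no conditions beyond (S1)–(S3). Since Propositions \ref{prop:structure:eqs} and \ref{prop:Cartan:connection:splitting} are already in hand (the former derives the structure equations from an IM connection 1-form, the latter packages partial splittings as pairs $(l,\nabla)$), the cleanest route is to phrase the whole argument through those two results rather than re-deriving the Jacobi identity from scratch: part (a) becomes "the data $(\nabla^L,U)$ determines a pair $(l,\nabla)$ satisfying \eqref{eq:partial:splitting:basic:curvature}, hence by Proposition \ref{prop:Cartan:connection:splitting} a partially split jet", and part (b) becomes "run Propositions \ref{prop:corollary:IM:partial:splittings} and \ref{prop:structure:eqs} to extract coupling data, then observe the reconstruction inverts the construction". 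With that organization the proof is short; the only residual work is bookkeeping identifying the reconstructed bracket with the original one, which is routine.
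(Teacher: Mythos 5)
The paper itself offers no proof of this proposition: it is quoted verbatim as a special case of a result established in \cite[Section 5.4]{FM22}, so there is no internal argument to measure you against, only correctness. Your overall architecture is the right one, and your guessed bracket, with curvature term $U(\alpha,\pi_S^{\sharp}\beta)$, mixed term $\nabla^L_{\pi_S^{\sharp}\alpha}\eta-\nabla^L_{\pi_S^{\sharp}\beta}\xi$ and fiberwise term $[\xi,\eta]_{\ka}$, is indeed the bracket that any partially split jet acquires in the splitting induced by $l$; for part (b) the cleanest justification is not Proposition \ref{prop:structure:eqs} but the third equation of \eqref{eq:compatibility:IM:E:form}: with both entries horizontal it gives $l([\al,\be])=-i_{\pi_S^{\sharp}\beta}L(\al)=U(\alpha,\pi_S^{\sharp}\beta)$, and with the $\ka$-section placed in the \emph{first} slot it gives $[\eta,(\beta,0)]=-i_{\pi_S^{\sharp}\beta}L(\eta)=-\nabla^L_{\pi_S^{\sharp}\beta}\eta$, which is the mixed term. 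This is consistent with \eqref{eq:bracket:transitive} and with the codimension-one formulas, so the reconstruction half of your argument is sound.

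The genuine flaw is in what you call the computational heart of part (a): the claim that the Jacobi identity is \emph{equivalent} to (S1)--(S3) is false, and the component bookkeeping is off. Since the bracket involves the coupling data only through $\nabla^L_{\pi_S^{\sharp}(\cdot)}$ and $U(\cdot,\pi_S^{\sharp}(\cdot))$, Jacobi constrains $(\nabla^L,U)$ only along the image of $\pi_S^{\sharp}$: the component with three $\ka$-sections is automatic (it is the fiberwise Jacobi identity of the Lie algebra bundle, not (S1)); two $\ka$-sections and one horizontal section give (S1) restricted to leafwise directions; one $\ka$-section and two horizontal sections give (S2) with $X=\pi_S^{\sharp}\beta$; three horizontal sections give the (S3) cocycle with $X=\pi_S^{\sharp}\gamma$. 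So (S1)--(S3) imply Jacobi, which is all part (a) needs, but the converse does not hold, and the place where the structure equations are genuinely needed in full strength --- which your sketch dismisses as ``again a reformulation of (S1)--(S3)'' --- is the verification that $(L,l)$ of \eqref{eq:L:nabla:U} satisfies the second equation of \eqref{eq:compatibility:IM:E:form}: that identity of $\ka$-valued $1$-forms, evaluated on an arbitrary $X$ against two $\ka$-sections, a mixed pair, and two horizontal sections, is precisely (S1), (S2) and the (S3) cocycle for arbitrary $X$ (the symbol equation \eqref{eq:symbol:IM:form} and the third equation hold identically by construction, and \eqref{eq:U:skew} is what makes the curvature term skew, as you note). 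Reorganized this way both halves of your proof go through; if you insist on routing through Proposition \ref{prop:Cartan:connection:splitting} instead, you must also first choose the $T^*S$-component of the auxiliary connection $\nabla$ on $A_S$, since \eqref{eq:L:nabla:U} only prescribes $l\circ\nabla$, a point your sketch glosses over.
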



With the description of a partially split first jet in terms of coupling data given by the previous propositions, we can now turn to the corresponding description of the local model. This local model is defined in the open set $M_0\subset \ka^*$ over which the IM form $\mu_0$ is invertible. This is clarified in the following result where we continue to use the notation from Proposition \ref{prop:operators:splitting}.

\begin{proposition}\label{prop:local:model:explicit}
Let $(A_S,\mu_S)$ be partially split with a fixed IM connection 1-form $(L,l)\in \Omega^1_\imult(A_S;\ka)$ and associated coupling data $(\nabla^L,U)$.
The local model is defined on the open set $M_0\subset \ka^*$ consisting of points $z\in \ka^*$ where the linear map 
\[\id + \langle  z,U\rangle :T^*_xS\to T^*_xS,\quad x=\pr_S(z),\]
is invertible. At $z\in M_0$ the Poisson structure $\pi_0$ of the local model decomposes as
\begin{equation}
    \label{eq:local:form:Poisson:coupling}
    \pi_0|_{z}=\pi_{\ka_x}+\mathrm{hor}_z\gamma_{z}\in \wedge^2\mathrm{Vert}\oplus \wedge^2\mathrm{Hor}_{\nabla^L},
\end{equation}
where the vertical component $\pi_{\ka_x}|_z$ is the linear Poisson structure on the dual of the Lie algebra $\ka_x$ and the horizontal component $\mathrm{hor}_z\gamma_{z}$ is the $\nabla^L$-horizontal lift of the bivector $\gamma_z\in \wedge^2T_{x}S$ given by
\[\gamma_z^{\sharp}=\pi_S^{\sharp}\circ \big(\id + \langle  z,U\rangle \big)^{-1}.\]
%
\end{proposition}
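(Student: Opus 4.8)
The plan is to make the construction of $\mu_0$ fully explicit in the gauge provided by $(L,l)$, invert it, and compose with the anchor of the action algebroid $A_S\ltimes\ka^*$. Fix the vector bundle splitting $A_S\simeq T^*S\oplus\ka$ and the coupling data $(\nabla^L,U)$ determined by $(L,l)$ as in Proposition \ref{prop:operators:splitting}, so that $\mu_S=\mathrm{pr}_{T^*S}$, $l=\mathrm{pr}_{\ka}$ and $i_XL(\beta,\xi)=\nabla^L_X\xi-U(\beta,X)$; recall also from Proposition \ref{prop:Cartan:connection:splitting} that $\nabla^L_X\xi=l(\nabla_X\xi)$ for a connection $\nabla$ on $A_S$ with $\overline{\nabla}l=0$. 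The connection $\nabla^L$ dualizes to $\ka^*$, giving the horizontal distribution $\mathrm{Hor}_{\nabla^L}\subset T\ka^*$ of the statement together with the identifications $T_z\ka^*\simeq T_xS\oplus\ka^*_x$ and $T^*_z\ka^*\simeq T^*_xS\oplus\ka_x$, where $x=\pr_S(z)$. Now compute $\mu_0=\pr^*\mu_S+\d_\imult\langle(L,l),\cdot\rangle$ fibrewise. Using $\langle(L,l),\cdot\rangle=(\mu,\zeta)$ with $\mu(\beta,\xi)=\widetilde{\xi}$ and $\zeta(\beta,\xi)=\d\widetilde{\xi}-\widetilde{L(\beta,\xi)}$, and the elementary fact that in the $\nabla^L$-splitting $\d\widetilde{\xi}|_z=(\langle z,\nabla^L_{\cdot}\xi\rangle,\,\xi)\in T^*_xS\oplus\ka_x$ while $\widetilde{L(\beta,\xi)}|_z=(\langle z,\nabla^L_{\cdot}\xi-U(\beta,\cdot)\rangle,\,0)$, a short calculation yields, for a constant section $(\beta,\xi)\in(A_S)_x$,
\[
\mu_0|_z(\beta,\xi)=\bigl((\id+\langle z,U\rangle)\beta,\ \xi\bigr)\in T^*_xS\oplus\ka_x,
\]
where $\langle z,U\rangle\in\End(T^*_xS)$ sends $\beta$ to $\langle z,U(\beta,\cdot)\rangle$. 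Hence $\mu_0|_z$ is the block map $\mathrm{diag}(\id+\langle z,U\rangle,\,\id)$; it is invertible precisely on the open set $M_0$ described in the statement, and there $\mu_0^{-1}|_z(\beta,\xi)=\bigl((\id+\langle z,U\rangle)^{-1}\beta,\,\xi\bigr)$.

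Next I would compose with the anchor $\rho_\ltimes$ of $A_S\ltimes M_0$: since $\mu_0$ is a non-degenerate closed IM $2$-form (Proposition \ref{prop:local:model:algbrd}), the local model satisfies $\pi_0^\sharp=\rho_\ltimes\circ\mu_0^{-1}$. In the $\nabla^L$-splitting, the horizontal component of $\rho_\ltimes|_z\colon(A_S)_x\to T_z\ka^*$ is $\rho_{A_S}=\pi_S^\sharp\circ\mu_S$, because anchors are intertwined by the Lie algebroid morphism $\mu_S$ of the exact sequence \eqref{eq:short:exact:sequence}; so on $(\beta,\xi)$ it equals $\pi_S^\sharp(\beta)$. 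For the vertical component, the action-algebroid structure gives $\rho_\ltimes(a)\cdot\widetilde{\gamma}=\widetilde{\nabla^{\ka}_a\gamma}$ for $\gamma\in\Gamma(\ka)$ (with $\nabla^{\ka}_a\gamma=[a,\gamma]_{A_S}$); comparing with $\d\widetilde{\gamma}|_z=(\langle z,\nabla^L_{\cdot}\gamma\rangle,\,\gamma)$, the vertical part of $\rho_\ltimes|_z(\beta,\xi)$ pairs with $\gamma$ to give $\langle z,\ \nabla^{\ka}_{(\beta,\xi)}\gamma-\nabla^L_{\pi_S^\sharp\beta}\gamma\rangle=\langle z,[\xi,\gamma]_{\ka}\rangle+\langle z,[\sigma(\beta),\gamma]_{A_S}-\nabla^L_{\pi_S^\sharp\beta}\gamma\rangle$. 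The second bracket vanishes by the identity $[\sigma(\beta),\gamma]_{A_S}=\nabla^L_{\pi_S^\sharp\beta}\gamma$, which is exactly the condition $\overline{\nabla}l=0$ of Proposition \ref{prop:Cartan:connection:splitting}; so the vertical part is the Lie--Poisson term $\pi_{\ka_x}^\sharp(\xi)$. Thus $\rho_\ltimes|_z(\beta,\xi)=\bigl(\pi_S^\sharp(\beta),\ \pi_{\ka_x}^\sharp(\xi)\bigr)$.

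Combining the two steps, $\pi_0^\sharp|_z$ is the block-diagonal map $(\beta,\xi)\mapsto\bigl(\pi_S^\sharp\circ(\id+\langle z,U\rangle)^{-1}\beta,\ \pi_{\ka_x}^\sharp(\xi)\bigr)$ on $T^*_xS\oplus\ka_x$, which is precisely the sharp map of $\pi_{\ka_x}+\mathrm{hor}_z\gamma_z$ with $\gamma_z^\sharp=\pi_S^\sharp\circ(\id+\langle z,U\rangle)^{-1}$. It remains only to check that $\gamma_z$ is genuinely a bivector, i.e.\ that $\gamma_z^\sharp$ is skew; using $\beta=(\id+\langle z,U\rangle)\alpha'$ and the skew-symmetry relation (S3) for $U$ one finds $\langle\gamma_z^\sharp\alpha,\beta\rangle=\langle\pi_S^\sharp\alpha,\beta'\rangle=-\langle\gamma_z^\sharp\beta,\alpha\rangle$, where $\alpha'=(\id+\langle z,U\rangle)^{-1}\alpha$, $\beta'=(\id+\langle z,U\rangle)^{-1}\beta$. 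This establishes \eqref{eq:local:form:Poisson:coupling}.

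The main obstacle is the second step: one must identify the anchor of $A_S\ltimes\ka^*$ correctly in the $\nabla^L$-splitting and, in particular, recognize that the reason the local model splits as a sum of a purely vertical and a purely horizontal bivector — with no mixed $\mathrm{Hor}\wedge\mathrm{Vert}$ term — is exactly the compatibility $\overline{\nabla}l=0$ between the splitting $l$ and the connection underlying $L$. As a consistency check one can instead invoke Theorem \ref{theorem:Lie_Dirac}(iii): realizing $(M_0,\pi_0)$ as the Lie--Dirac submanifold $l^*(\ka^*)\subset(A_S^*,\pi_{A_S,\mu_S})$ and pulling back the explicit formula for $\pi_{A_S,\mu_S}^\sharp$ gives the same expression.
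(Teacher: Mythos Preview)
Your proof is correct and follows essentially the same strategy as the paper: compute $\mu_0$ in the $\nabla^L$-splitting as the block map $\mathrm{diag}(\id+\langle z,U\rangle,\id)$, identify the anchor $\rho_\ltimes$ of the action algebroid in the same splitting, and compose. The paper's proof is more terse because it simply invokes Proposition~\ref{prop:operators:splitting} (and, implicitly, results from \cite{FM22}) to write down $\rho_\ltimes|_z(\alpha,\xi)=\pi_{\ka_x}^\sharp\xi+\mathrm{hor}_z(\pi_S^\sharp\alpha)$, whereas you rederive this from the action-algebroid description and isolate the identity $[\sigma(\beta),\gamma]_{A_S}=\nabla^L_{\pi_S^\sharp\beta}\gamma$ as the reason no mixed $\mathrm{Hor}\wedge\mathrm{Vert}$ term appears; your attribution of this identity to $\overline\nabla l=0$ is correct (it is the case $\alpha=\gamma\in\Gamma(\ka)$, $\beta=\sigma(\beta_0)$ of that condition), though note that it drops out equally directly from the IM compatibility relation $l([\alpha,\beta])=\Lie_\alpha l(\beta)-i_{\rho(\beta)}L(\alpha)$.
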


\begin{proof}
Proposition \ref{prop:operators:splitting} implies that the anchor of the Lie algebroid $A_S\ltimes \ka^*$ is
\[\rho_{\ltimes}|_z:T^*_xS\oplus \ka_x\to T_z\ka^*,\quad (\alpha,\xi)\mapsto \pi_{\ka_x}^{\sharp}\xi|_z+\mathrm{hor}_z(\pi_S^{\sharp}\alpha),\]
where $z\in \ka^*_x$, $\mathrm{hor}_z:T_xS\to T_z\ka^*$ is the $\nabla^L$-horizontal lift and $\xi\in \ka_x$ is thought of as an element of $T^*_{z}\ka_x^*$. Note that this decomposes the anchor into a vertical and a $\nabla^L$-horizontal component.

Now using \eqref{eq:normal:IM:form} and \eqref{eq:L:nabla:U}, we find that the IM 2-form $\mu_0$ can also be decomposed into a vertical and a $\nabla^L$-horizontal component
\[\mu_0|_z:T^*_xS\oplus \ka_x\to T^*_z\ka^*,\quad (\alpha,\xi)\mapsto
\big(\alpha+ \langle z, U(\alpha)\rangle \big)\circ \d_z\pr_S + \mathrm{vert}^*_z(\xi),\]
where $\pr_S:\ka^*\to S$ is the bundle projection, $\langle z, U(\alpha)\rangle \in T^*_xS$ and $\mathrm{vert}_z:T_z\ka^*\to \ker(\d_z\pr_S)\simeq \ka^*_x$ is the vertical projection corresponding to the connection $\nabla^L$. This shows that $\mu_0$ is invertible on the open $M_0$ consisting of elements $z\in\ka^*$ where the map $\id + \langle  z,U\rangle :T^*_xS\to T^*_xS$ is invertible. On this open set, the composition $\rho_{\ltimes}\circ\mu_0^{-1}$ yields formula \eqref{eq:local:form:Poisson:coupling} in the statement. Note that skew-symmetry of $\gamma_z$ follows from \eqref{eq:U:skew}.
\end{proof}

As a consequence of the proposition we see that the push-forward of $\pi_0$ along the projection $\pr_S:\ka^*\to S$ is $\gamma$, and hence it is tangent to the leaves of $\pi_S$.

\begin{corollary}
For any Poisson submanifold $P\subset (S,\pi_S)$, the pre-image $\pr_S^{-1}(P)\cap M_0$ is a Poisson submanifold of the local model $(M_0,\pi_0)$. 
\end{corollary}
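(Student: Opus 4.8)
The plan is to verify the pointwise tangency condition $\pi_0^\sharp(T_z^*M_0)\subseteq T_zN$ for $N:=\pr_S^{-1}(P)\cap M_0$, relying on the explicit description of the local model from (the proof of) Proposition~\ref{prop:local:model:explicit}. Since $P\subset S$ is an embedded submanifold, $\pr_S^{-1}(P)=\ka^*|_P$ is an embedded vector subbundle of $\ka^*$, so $N=\ka^*|_P\cap M_0$ is an embedded submanifold of the open set $M_0\subset\ka^*$, and it suffices to show that the image of $\pi_0^\sharp$ at each $z\in N$ lies in $T_zN$.

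First I would describe $T_zN$. Fix $z\in N$ and put $x:=\pr_S(z)\in P$. Using the connection $\nabla^L$ to split $T_z\ka^*=\mathrm{Vert}_z\oplus\mathrm{Hor}_z$, where $\mathrm{Vert}_z=\ker\d_z\pr_S\simeq\ka^*_x$ and $\mathrm{Hor}_z=\mathrm{hor}_z(T_xS)$, one has $\mathrm{Vert}_z=T_z(\ka^*_x)\subseteq T_zN$ because $\ka^*_x\subseteq\ka^*|_P$, and $\mathrm{hor}_z(T_xP)\subseteq T_zN$ because $\nabla^L$ restricts to a linear connection on $\ka|_P\to P$ with matching horizontal lifts (the $\nabla^L$-parallel transport of $z$ along any curve in $P$ stays inside $\ka^*|_P$). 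Hence $\mathrm{Vert}_z\oplus\mathrm{hor}_z(T_xP)\subseteq T_zN$.

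Next I would bound the image of $\pi_0^\sharp$. From the proof of Proposition~\ref{prop:local:model:explicit}, the anchor of $A_S\ltimes\ka^*$ at $z$ is $(\alpha,\xi)\mapsto\pi_{\ka_x}^\sharp\xi|_z+\mathrm{hor}_z(\pi_S^\sharp\alpha)$, so its image equals $\pi_{\ka_x}^\sharp(\ka_x)+\mathrm{hor}_z(\pi_S^\sharp(T_x^*S))$. Since $\mu_0\colon A_S\ltimes M_0\diffto T^*M_0$ is a Lie algebroid isomorphism and $\pi_0^\sharp=\rho_\ltimes\circ\mu_0^{-1}$, the image of $\pi_0^\sharp$ at $z$ is contained in the image of the anchor at $z$. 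Finally, because $P$ is a Poisson submanifold of $(S,\pi_S)$ we have $\pi_S^\sharp(T_x^*S)\subseteq T_xP$, and obviously $\pi_{\ka_x}^\sharp(\ka_x)\subseteq\mathrm{Vert}_z$; combining with the previous step gives $\pi_0^\sharp(T_z^*M_0)\subseteq\mathrm{Vert}_z\oplus\mathrm{hor}_z(T_xP)\subseteq T_zN$, as wanted. Since $z\in N$ was arbitrary, $N$ is a Poisson submanifold of $(M_0,\pi_0)$.

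I do not expect any genuine obstacle: this is a short bookkeeping argument once the decomposition of the anchor is in hand. The only point deserving a word of justification is $\mathrm{hor}_z(T_xP)\subseteq T_zN$, i.e.\ that the horizontal distribution of $\nabla^L$ is compatible with the subbundle $\ka^*|_P$; this is immediate since any linear connection on a vector bundle over $S$ restricts to a linear connection over an arbitrary submanifold of $S$. (Alternatively one can argue via the remark preceding the corollary: $\pr_{S*}\pi_0=\gamma$ with $\gamma^\sharp=\pi_S^\sharp\circ(\id+\langle z,U\rangle)^{-1}$, which has the same characteristic distribution as $\pi_S$ and hence is tangent to the leaves of $\pi_S$; one then only has to add the vertical Lie--Poisson part of $\pi_0$.)
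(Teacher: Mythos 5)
Your argument is correct and essentially the paper's own: the paper deduces the corollary directly from the decomposition $\pi_0|_z=\pi_{\ka_x}+\mathrm{hor}_z\gamma_z$ of Proposition \ref{prop:local:model:explicit}, whose vertical part is tangent to the fibres and whose horizontal part lifts $\gamma_z$, with $\im\gamma_z^\sharp=\im\pi_S^\sharp\subseteq T_xP$. Your only (inessential) variation is to bound $\im\pi_0^\sharp$ by the image of the anchor $\rho_\ltimes$ via $\pi_0^\sharp=\rho_\ltimes\circ\mu_0^{-1}$ instead of using the explicit formula for $\pi_0$, and the point you flag — that $\nabla^L$-horizontal lifts of vectors tangent to $P$ are tangent to $\ka^*|_P$ — is indeed the only detail needing the word of justification you give.
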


When $P$ is a symplectic leaf of $(S,\pi_S)$, we recover a Vorobjev-type local model.

\begin{proposition}\label{prop:glueing:vorobjev}
Let $(M_0,\pi_0)$ be the local model of the first order jet $(A_S,\mu_S)$ corresponding to the IM connection 1-form $(L,l)$. For any symplectic leaf $(S_0,\omega_0)$ of $(S,\pi_S)$ the Poisson manifold $\pr_{S}^{-1}(S_0)\cap M_0$ is the Vorobjev local model corresponding to the transitive first order jet $(A_{S_0},\mu_{S_0})$ over $(S_0,\omega_0)$ given by
\[A_{S_0}:=\left(A_S/(\ker l\cap \ker \rho_S)\right)|_{S_0},\quad \mu_{S_0}([\alpha]):=\mu_S(\alpha)|_{TS_0}.\]  
\end{proposition}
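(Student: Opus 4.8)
The plan is to reduce the statement to the explicit coupling-data description of the local model in Proposition~\ref{prop:local:model:explicit}. First I would put the first order jet in the normal form of Proposition~\ref{prop:operators:splitting}: using the fixed IM connection $1$-form, identify $A_S=T^*S\oplus\ka$ with $\mu_S=\pr_{T^*S}$, anchor $\rho_S(\alpha,\xi)=\pi_S^\sharp(\alpha)$, $l=\pr_\ka$, and coupling data $(\nabla^L,U)$ related to $(L,l)$ by \eqref{eq:L:nabla:U}. Fix $x\in S_0$; since $\pi_S^\sharp(T^*_xS)=T_xS_0$ and $\ker\pi_S^\sharp|_x\subseteq(T_xS_0)^{\circ}$ with both of dimension $\dim S-\dim S_0$, one gets $\ker\pi_S^\sharp|_x=(T_xS_0)^{\circ}$, so over $S_0$ the bundle $\mathfrak{l}:=\ker l\cap\ker\rho_S$ equals $(TS_0)^{\circ}\oplus 0$, a smooth subbundle of $A_S|_{S_0}$, and (in the notation of the statement) $A_{S_0}=(A_S/\mathfrak{l})|_{S_0}$ is, as a vector bundle, $T^*S_0\oplus(\ka|_{S_0})$ with $\mu_{S_0}=\pr_{T^*S_0}$.

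Next I would restrict the coupling data to the leaf and identify the jet. Because $\pi_S^\sharp(\alpha)$ is always tangent to the symplectic leaves, the operators $\nabla^L_{\pi_S^\sharp(\cdot)}$ appearing in the structure equations preserve $S_0$, so $\nabla^L$ pulls back to a linear connection on $\ka|_{S_0}\to S_0$ and (S1)--(S3) restrict. The skew-symmetry identity \eqref{eq:U:skew} forces $U(\alpha,X)=0$ whenever $X\in T_xS_0$ and $\alpha\in(T_xS_0)^{\circ}$ (write $X=\pi_S^\sharp(\beta)$ and use $\pi_S^\sharp(\alpha)=0$); hence for $z\in\ka^*|_{S_0}$ the endomorphism $\id+\langle z,U\rangle$ of $T^*_xS$ is block-triangular with respect to $0\to(TS_0)^{\circ}\to T^*S\to T^*S_0\to 0$, acts as the identity on $(TS_0)^{\circ}$, and descends to an endomorphism $\id+\langle z,U_{S_0}\rangle$ of $T^*S_0$. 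Thus $(\nabla^L|_{S_0},U_{S_0})$ is coupling data for $((S_0,\pi_S|_{S_0}),\ka|_{S_0})$, and by Proposition~\ref{prop:operators:splitting} it defines a partially split first order jet over $(S_0,\omega_0)$, transitive since $\pi_S|_{S_0}$ is nondegenerate. I would then check that $(\alpha,\xi)\mapsto(\alpha|_{TS_0},\xi)$ is a surjective Lie algebroid morphism $A_S|_{S_0}\to T^*S_0\oplus(\ka|_{S_0})$ with kernel $\mathfrak{l}$; this shows $\mathfrak{l}$ is a bundle of ideals, identifies $A_{S_0}$ with this transitive jet, and, since $\mu_S(\alpha,\xi)|_{TS_0}=\alpha|_{TS_0}$, identifies $\mu_{S_0}$ with the corresponding IM form $\omega_0^\flat\circ\rho_{S_0}$.

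Then I would match the two Poisson manifolds. By Proposition~\ref{prop:local:model:explicit} the local model $(M_0,\pi_0)$ is supported on $\{z\in\ka^*:\id+\langle z,U\rangle\text{ invertible}\}$; by the block-triangular structure above, for $z\in\ka^*|_{S_0}=\pr_S^{-1}(S_0)$ this holds iff $\id+\langle z,U_{S_0}\rangle$ is invertible, so $\pr_S^{-1}(S_0)\cap M_0$ is exactly the open subset of $\ka^*|_{S_0}$ on which the local model of $(A_{S_0},\mu_{S_0})$ is defined. On this set \eqref{eq:local:form:Poisson:coupling} gives $\pi_0|_z=\pi_{\ka_x}+\mathrm{hor}_z\gamma_z$ with $\gamma_z^\sharp=\pi_S^\sharp\circ(\id+\langle z,U\rangle)^{-1}$; since $\pi_S^\sharp$ annihilates $(TS_0)^{\circ}$ and $(\id+\langle z,U\rangle)^{-1}$ is block-triangular, $\gamma_z$ is tangent to $S_0$ and has sharp map $(\pi_S|_{S_0})^\sharp\circ(\id+\langle z,U_{S_0}\rangle)^{-1}$, while $\mathrm{hor}_z$ restricted to $T_xS_0$ is the $\nabla^L|_{S_0}$-horizontal lift and $\pi_{\ka_x}$ is unchanged. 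Hence $\pi_0$ restricted to $\pr_S^{-1}(S_0)\cap M_0$ agrees termwise with the Poisson structure Proposition~\ref{prop:local:model:explicit} assigns to the transitive jet $(A_{S_0},\mu_{S_0})$ with coupling data $(\nabla^L|_{S_0},U_{S_0})$, which by the transitive case treated in Subsection~\ref{ex:transitive:over-symplectic:cont} (see also the second item in the Introduction) is Vorobjev's local model around $(S_0,\omega_0)$. That $\pr_S^{-1}(S_0)\cap M_0$ is a Poisson submanifold is already guaranteed by the Corollary preceding the proposition, consistently with $\gamma_z$ being tangent to $S_0$.

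I expect the main obstacle to be the middle step: showing that $\mathfrak{l}$ is a bundle of ideals and that the restricted coupling data recovers exactly the jet $(A_{S_0},\mu_{S_0})$, i.e.\ matching the Lie algebroid bracket on $(A_S/\mathfrak{l})|_{S_0}$ with the one produced by $(\nabla^L|_{S_0},U_{S_0})$ through Proposition~\ref{prop:operators:splitting}. This is where the IM/coupling structure equations are genuinely used; steps one and three are essentially bookkeeping on top of Proposition~\ref{prop:local:model:explicit}.
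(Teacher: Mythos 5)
Your proposal follows essentially the same route as the paper's proof: pass to the coupling-data presentation of Proposition \ref{prop:operators:splitting}, use \eqref{eq:U:skew} to see that $\langle z,U\rangle$ preserves $\ker\pi_S^{\sharp}|_{S_0}=(TS_0)^{\circ}$ and hence descends to $T^*S_0$, restrict $\nabla^L$ to the leaf, and read off from the explicit formula of Proposition \ref{prop:local:model:explicit} that $\pi_0$ on $\pr_S^{-1}(S_0)\cap M_0$ is the transitive (Vorobjev) local model of the quotient jet. The paper does exactly this, only packaging the descended tensor as $U_0(\al)=i_{\pi_S^{\sharp}(\al)}\mathbb{F}$ for a $\ka$-valued 2-form $\mathbb{F}$, so that the outcome is literally a Vorobjev triple $(\mathbb{F}+\omega_0,\nabla^{L_0},\pi_{\ka_0})$; it is just as brief as you are about matching $(A_S/(\ker l\cap \ker\rho_S))|_{S_0}$ with the jet encoded by the restricted coupling data, so the step you flag as the ``main obstacle'' is left at the same level of implicitness in the paper, and your appeal to the transitive case (better cited as Subsection \ref{ex:transitive} than the groupoid example) is the intended identification.

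One sub-claim in your argument is wrong: $\id+\langle z,U\rangle$ does not act as the identity on $(TS_0)^{\circ}$. Equation \eqref{eq:U:skew} only constrains $U(\al,X)$ for $X$ in the image of $\pi_S^{\sharp}$, i.e.\ for $X\in TS_0$; for $\al\in(TS_0)^{\circ}$ and $X$ transverse to the leaf inside $S$ the value $U(\al,X)$ can be nonzero (structure equation (S2) only forces it to be central in $\ka_x$). Hence $\id+\langle z,U\rangle$ preserves $(TS_0)^{\circ}$ but may act nontrivially on it, and your ``iff'' comparing its invertibility on $T^*_xS$ with invertibility of $\id+\langle z,U_{S_0}\rangle$ on $T^*_xS_0$ only holds in one direction: invertibility of the full map implies invertibility of both the restriction to $(TS_0)^{\circ}$ and the induced quotient map. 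This one-directional statement is all you need — it shows that $\pr_S^{-1}(S_0)\cap M_0$ sits inside the domain of the leafwise model and that on it the two Poisson structures coincide termwise, which is how the proposition (and the paper's proof, which does not discuss domains) is to be read; the claimed exact equality of domains can genuinely fail, so drop that assertion rather than build on it. With that correction, the rest of your computation of $\gamma_z$, the horizontal lift, and the vertical linear Poisson structure is sound.
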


\begin{proof}
Since $\pi_S|_{S_0}$ is non-degenerate, \eqref{eq:U:skew} implies that, for any $z\in \pr_S^{-1}(S_0)$, the map $\langle z, U\rangle:T_x^*S\to T_x^*S$ preserves the isotropy bundle  $\ker\pi_S^{\sharp}=(TS_0)^{\circ}$. Therefore, we have an induced map: $\langle z, U_0\rangle :T^*_xS_{0}\to T^*_xS_{0}$. Using property \eqref{eq:U:skew} again, we see that we can write $U_0(\alpha) =  i_{\pi_S^{\sharp}(\alpha)}\mathbb{F}$, for a unique $\mathbb{F}\in \Omega^2(S;\ka)$.
Hence, the Poisson structure on $\pr_{S}^{-1}(S_0)\cap M_0$ is the Poisson structure corresponding to the Vorobjev triple $(\mathbb{F}+\omega_S,\nabla^{L_0},\pi_{\ka_0})$, where $\nabla^{L_0}$ it the pullback of $\nabla^{L}$ to the subbundle $\ka^*|_{S_0}$, and 
$\pi_{\ka_0}$ is the linear Poisson structure dual to $\ka|_{S_0}$. In our language, this is the local model corresponding to the first order jet given in the statement. 
\end{proof}

\begin{remark}
From well-known properties of the Vorobjev local model (see, e.g., \cite{MarcutPhD}), we conclude that the symplectic structures on the leaves of the local model $M_0$ lying over a leaf $S_0$ of $\pi_S$ vary in an affine fashion, in the following sense.
\begin{enumerate}[(i)]
\item The Poisson structure on $\pr_S^{-1}(S_0)\cap M_0$ extends to a Dirac structure $\mathbb{L}_{S_0}$ on $\pr_S^{-1}(S_0)$;
\item If $(S_{v},\omega_v)$ is the presymplectic leaf of $\mathbb{L}_{S_0}$ through $v\in \pr_S^{-1}(S_0)$, then the connected component of $v$ in $S_v\cap M_0$ is  the symplectic leaf of $M_0$ through $v$. 
\item The fiberwise multiplication by $\lambda\neq 0$ on $\pr_S^{-1}(S_0)\to S_0$ induces an isomorphism $m_{\lambda}:S_{v}\diffto S_{\lambda v}$ and
\[m_{\lambda}^*(\omega_{\lambda v})=\lambda\omega_{v}+(1-\lambda)\omega_0.\]
\end{enumerate}
In Example \ref{example:deformation:IM:form}, we show that $\pi_0$ might not extend to a Dirac structure on $\ka^*$. 
\end{remark}

In \cite[Section 5.5]{FM22} we called an IM connection 1-form $(L,l)\in \Omega^1_{\imult}(A_S;\ka)$ \textbf{leafwise flat} if the induced vector bundle splitting $T^*S\to A_S$ is a Lie algebroid map. In terms of the coupling data, this is equivalent to (see \cite[Definition 5.14]{FM22})
\[U(\al,\pi_S^{\sharp}(\be))=0,\quad \al,\be\in T^*S.\]
On the other hand, it is easy to see that this is equivalent also to $\gamma=\pi_S$, where we used the notation of Proposition \ref{prop:local:model:explicit}. Thus, we obtain the following.
\begin{corollary}
Let $(M_0,\pi_0)$ be the local model corresponding to the first order jet $(A_S,\mu_S)$ and the IM connection 1-form $(L,l)\in \Omega^1_{\imult}(A_S;\ka)$. The restriction of the bundle projection $p:\ka^*\to S$ is a Poisson map
\[p|_{M_0}:(M_0,\pi_0)\to (S,\pi_S)\] 
if and only if $(L,l)$ is leafwise flat.
\end{corollary}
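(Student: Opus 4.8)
The plan is to combine the explicit formula for $\pi_0$ in Proposition~\ref{prop:local:model:explicit} with the characterization of leafwise flatness recalled just above; no fast-convergence or Moser-type input is needed, only the pointwise vertical/horizontal structure of $\pi_0$ on $M_0$.

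First I would compute $\wedge^2\d p$ applied to $\pi_0$ fibrewise. Fix $z\in M_0$ over $x:=p(z)$. The differential $\d_z p:T_z\ka^*\to T_xS$ annihilates the vertical subspace $\ker\d_z p\simeq\ka^*_x$ and restricts on $\mathrm{Hor}_{\nabla^L}$ to the inverse of the horizontal lift $\mathrm{hor}_z$. Hence, in the decomposition
\[\pi_0|_z=\pi_{\ka_x}+\mathrm{hor}_z\gamma_z\in\wedge^2(\ker\d_z p)\oplus\wedge^2\mathrm{Hor}_{\nabla^L}\]
of Proposition~\ref{prop:local:model:explicit}, the first summand is killed by $\wedge^2\d_z p$ and the second is mapped to $\gamma_z$, so $\wedge^2\d_z p\,(\pi_0|_z)=\gamma_z$. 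Therefore $p|_{M_0}:(M_0,\pi_0)\to(S,\pi_S)$ is a Poisson map if and only if $\gamma_z=\pi_S|_{x}$ for all $z\in M_0$.

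It remains to see that $\gamma\equiv\pi_S$ is equivalent to leafwise flatness of $(L,l)$, which is essentially the equivalence recalled before the statement. By Proposition~\ref{prop:local:model:explicit}, $\gamma_z^{\sharp}=\pi_S^{\sharp}\circ(\id+\langle z,U\rangle)^{-1}$, so for $z\in M_0$ the equality $\gamma_z^{\sharp}=\pi_S^{\sharp}$ is equivalent to $\pi_S^{\sharp}\circ\langle z,U\rangle=0$. Since $M_0$ is an open neighbourhood of the zero section and $z\mapsto\pi_S^{\sharp}(\langle z,U(\alpha)\rangle)$ is linear along the fibres of $\ka^*$, this holds for all $z\in M_0$ if and only if it holds for all $z\in\ka^*$, i.e.\ if and only if $(\pi_S^{\sharp}\otimes\id_{\ka})\,U(\alpha)=0$ for every $\alpha\in\Omega^1(S)$; pairing this identity with an arbitrary $\beta\in\Omega^1(S)$ rewrites it as $U(\alpha,\pi_S^{\sharp}(\beta))=0$, which is precisely the condition that $(L,l)$ be leafwise flat. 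Chaining the two equivalences gives the corollary.

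The only point needing care is that $p_*\pi_0$ is not a priori a well-defined bivector field on $S$, since $p$ is a surjective submersion; accordingly I would phrase the Poisson-map property in its correct pointwise form $\wedge^2\d_z p\,(\pi_0|_z)=\pi_S|_{p(z)}$ for all $z\in M_0$, which is exactly what the vertical/horizontal splitting of $\pi_0$ lets one verify. I do not foresee a genuine obstacle: everything reduces to the linear-algebraic identities already packaged in Propositions~\ref{prop:operators:splitting} and~\ref{prop:local:model:explicit} together with the skew-symmetry relation~\eqref{eq:U:skew}.
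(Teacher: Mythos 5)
Your proposal is correct and follows essentially the same route as the paper: the paper also reads off from Proposition \ref{prop:local:model:explicit} that the push-forward of $\pi_0$ along $p$ is $\gamma$, so that $p$ is Poisson precisely when $\gamma=\pi_S$, and then identifies $\gamma=\pi_S$ with the coupling-data condition $U(\al,\pi_S^{\sharp}(\be))=0$ defining leafwise flatness. Your version merely makes explicit the pointwise computation $\wedge^2\d_z p\,(\pi_0|_z)=\gamma_z$ and the fibrewise-linearity argument reducing $\gamma\equiv\pi_S$ on $M_0$ to the identity $U(\al,\pi_S^{\sharp}(\be))=0$, both of which the paper leaves implicit.
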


\begin{remark}[Local model in coordinates] Let $(A_S,\mu_S)$ be a partially split first jet, and fix a IM connection 1-form $(L,l)\in \Omega^1_\imult(A_S;\ka)$, which gives rise to coupling data $(\nabla^L,U)$. Choose a local chart $(V,x^i)$ for $S$ over which the bundle $\ka$ trivializes, and let $\{e_a\}$ be a frame for $\ka|_V$. Then we can write all our data as
\begin{align*}
    &\pi_S=\frac{1}{2}\pi_S^{ij}\pd{x^i}\wedge\pd{x^j} &
    &[e_a,e_b]_{\ka}=C_{ab}^c e_c,\\
    &\nabla^L_{\pd{x^i}}e_a=\Gamma^b_{ia}e_b,  &
    &U(\d x^i,\pd{x^j})=U^{ia}_j e_a.
\end{align*}
Then one checks that the structure equations become
{\small
\begin{align*}
\label{eq:partial:splitting:local}
&C_{ad}^c\Gamma^d_{ib}+C_{db}^c\Gamma^d_{ia}-C_{ab}^d\Gamma^c_{id}
=\pdd{C_{ab}^c}{x^i}\\
&\pi^{ik}_S\left(\pdd{\Gamma^b_{ja}}{x^k}-\pdd{\Gamma^b_{ka}}{x^j}+
\Gamma^d_{ja}\Gamma^b_{kd}-\Gamma^d_{ka}\Gamma^b_{jd}\right)
=U^{id}_j C^b_{da}\\
&\pi^{il}_S\left(\pdd{U^{ja}_k}{x^l}-\frac{1}{2}\pdd{U^{ja}_l}{x^k}+\Gamma^a_{ld}U^{jd}_k-U^{jd}_l\Gamma^a_{kd}\right)
+\frac{1}{2}\pdd{\pi^{il}_S}{x^k}U^{ja}_l\\
&\qquad -\pi^{jl}_S\left(\pdd{U^{ia}_k}{x^l}-\frac{1}{2}\pdd{U^{ia}_l}{x^k}+\Gamma^a_{ld}U^{id}_k-U^{id}_l\Gamma^a_{kd}\right)
-\frac{1}{2}\pdd{\pi^{jl}_S}{x^k}U^{ia}_l
=\pdd{\pi^{ij}_S}{x^l}U^{la}_k\\
&U^{ia}_k\pi^{jk}_S+U^{ja}_k\pi^{ik}_S=0
\end{align*}}

\noindent Note that for the structure equation (S3) we have used its skew-symmetric version, arising from the relation \eqref{eq:U:skew} -- the extra equation is precisely the coordinate version of \eqref{eq:U:skew}. 

The local coordinate expression associated with the Poisson structure $\pi_0$ of the local model, is the following. In coordinates $(x^i,z_a)$, where $z_a$ are fiber coordinates on $\ka^*$ defined by the basis dual to $\{e_a\}$, it has a matrix of structure functions given by the product of the block-matrices
\begin{align*}
\pi_0=&\left(
\begin{array}{cc}
\id  & 0  \\
\Gamma  &   \id
\end{array}
\right)
\left(
\begin{array}{cc}
\pi_S\cdot (\id+\langle z,U\rangle)^{-1}  & 0  \\
0  &   \pi_{\ka}
\end{array}
\right)
\left(
\begin{array}{cc}
\id  & \Gamma^t  \\
0  &   \id
\end{array}
\right)\\
=&
\left(
\begin{array}{cc}
\delta^i_j & 0  \\
\Gamma_{ib}^cz_c  &   \delta^a_b
\end{array}
\right)
\left(
\begin{array}{cc}
\pi^{ij}_S & 0  \\
0 &   C^c_{ab}z_c
\end{array}
\right)
\left(
\begin{array}{cc}
\delta^i_j+U_j^{ic}z_c  & 0  \\
0  &   \delta^a_b
\end{array}
\right)^{-1}
\left(
\begin{array}{cc}
\delta^i_j  & \Gamma_{jb}^cz_c  \\
0  &   \delta^a_b
\end{array}
\right).
\end{align*}
\end{remark}

\section{Linearization and normal form theorems}
\label{sec:linearization:algbrds}

In the previous section, we have seen that a partially split first order jet has a well-defined local model. When this is also a normal form, we use the following terminology.

\begin{definition}\label{def:linearizable:Poisson}
A Poisson manifold $(M,\pi)$ is said to be \textbf{linearizable} around a Poisson submanifold $S\subset M$, if the first jet $J^1_S\pi=(A_S,\mu_S)$ is partially split and $(M,\pi)$ is isomorphic around $S$ to the local model $(M_0,\pi_0)$.
\end{definition}

In other words, $(M,\pi)$ is linearizable around a Poisson submanifold $S$, when there are open neighborhoods $S\subset U\subset M$ and $S\subset V\subset \ka^*$, and a Poisson diffeomorphism $\varphi:(U,\pi|_U)\diffto (V,\pi_0|_V)$, such that $\varphi|_{S}=\id_S$. If $U$ and $V$ can be chosen to be saturated, then we say that $\pi$ is \textbf{invariantly linearizable} around $S$. 

In this section we will discuss conditions which imply that the local model represents a local normal form, i.e., we discuss normal form theorems.

\subsection{A linearization result}

Our aim in this subsection is to prove the following:

\begin{theorem}\label{thm:normal:form:algebroid}
Let $(M,\pi)$ be a Poisson manifold and $S\subset M$ a Poisson submanifold. Then the following are equivalent
\begin{enumerate}[(i)]
\item $(M,\pi)$ is linearizable around $S$; 
\item The Lie algebroid $T^*M$ is linearizable around $S$.
 \end{enumerate}
\end{theorem}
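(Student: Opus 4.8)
The plan is to prove the two implications separately, using the infinitesimal local model $(A_S\ltimes\ka^*,\mu_0)$ as the bridge between the Poisson side and the Lie-algebroid side, exactly as Theorem \ref{thm:normal:form:groupoid} uses the groupoid local model. For the direction $(i)\Rightarrow(ii)$, assume $(M,\pi)$ is linearizable around $S$. By Definition \ref{def:linearizable:Poisson}, the first jet $(A_S,\mu_S)=J^1_S\pi$ is partially split and there is a Poisson diffeomorphism $\varphi:(U,\pi|_U)\diffto(V,\pi_0|_V)$ fixing $S$, between neighborhoods $S\subset U\subset M$ and $S\subset V\subset\ka^*$. A Poisson diffeomorphism induces an isomorphism of cotangent Lie algebroids $T^*U\simeq T^*V$. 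By the construction of the Poisson local model (Definition \ref{def:algbrd:local:model}), the cotangent algebroid $T^*V$ is canonically isomorphic to the restriction $(A_S\ltimes\ka^*)|_V=A_0$, which is precisely the Lie algebroid local model in the sense of \cite{dHFe18}, i.e.\ the linear approximation $\nu(A_S)\simeq A_S\ltimes\ka^*$ of $T^*M$ around $A_S=T^*_SM$ (using $\nu(S)\simeq\ka^*$ via the IM form as in Proposition \ref{prop:local:models:agree}). Chaining these identifications, $\varphi$ produces a Lie groupoid --- or rather Lie algebroid --- isomorphism between a neighborhood of $A_S$ in $T^*M$ and a neighborhood of $A_S$ in $\nu(A_S)$, restricting to the identity on $A_S$; that is, $T^*M$ is linearizable around $S$.

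For the converse $(ii)\Rightarrow(i)$, suppose $T^*M$ is linearizable around $S$, so there is a Lie algebroid isomorphism $\Psi$ from a neighborhood of $A_S$ in $T^*M$ onto a neighborhood of $A_S$ in $\nu(A_S)=A_S\ltimes\ka^*$, fixing $A_S$. Pushing the canonical IM form on $T^*M$ --- namely the identity map $\id:T^*M\to T^*M$, which is a closed, non-degenerate IM $2$-form (Corollary \ref{cor:jets:algebroids}) --- forward by $\Psi$ yields a closed, non-degenerate IM $2$-form $\mu\in\Omega^2_\imult(A_S\ltimes M_0)$ on a neighborhood $S\subset M_0\subset\ka^*$, whose pullback along the zero section $i:A_S\hookrightarrow A_S\ltimes\ka^*$ is $\mu_S$ (because $\Psi$ fixes $A_S$ and $\mu_S=i^*(\id_{T^*M})$ under the jet identification). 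By Proposition \ref{prop:partially:split:algbrd:optimal}, the existence of such a $\mu$ forces $(A_S,\mu_S)$ to be partially split, so the local model $(M_0,\pi_0)$ exists (Proposition \ref{prop:local:model:algbrd}). Now both $\mu$ and the model form $\mu_0$ are closed, non-degenerate IM $2$-forms on $A_S\ltimes\ka^*$ extending $\mu_S$; Proposition \ref{prop:uniqueness:algbrd} gives a Lie algebroid isomorphism $\Phi:A_S\ltimes M_0'\diffto A_S\ltimes M_1'$ with $\Phi^*\mu_0=\mu$ and base map fixing $S$, hence a Poisson diffeomorphism $\phi$ between the induced Poisson structures. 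Composing the base map of $\Psi$ with $\phi$ exhibits $(M,\pi)$ as isomorphic around $S$ to $(M_0,\pi_0)$, i.e.\ $(M,\pi)$ is linearizable around $S$ in the sense of Definition \ref{def:linearizable:Poisson}.

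The one point requiring care --- and the likely main obstacle --- is the precise identification, in the $(ii)\Rightarrow(i)$ step, of the IM form obtained by transporting $\id_{T^*M}$ through $\Psi$: one must check that it genuinely extends $\mu_S$ and is of the type to which Proposition \ref{prop:partially:split:algbrd:optimal} applies, i.e.\ that the linearization $\Psi$, being merely a Lie algebroid isomorphism fixing $A_S$, is compatible with the splitting $T^*_S\ka^*\simeq\ka\oplus T^*S$ well enough that $i^*\mu=\mu_S$ holds on the nose. This is essentially the observation that the normal representation of $T^*M$ along $S=T^*_SM$ is the action representation of $A_S$ on $\ka^*$, together with the fact that $\Psi$ restricts to the identity on $A_S$, so its normal derivative is the identity up to the $A_S$-equivariant automorphism of $\ka^*$ that Lemma \ref{lemma:how:to:fix:along:zero:section} (and the argument in the proof of Proposition \ref{prop:partially:split:algbrd:optimal}) allows one to absorb. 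Once this bookkeeping is done, both implications are immediate applications of the quoted propositions.
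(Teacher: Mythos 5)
Your strategy is the same as the paper's: in both directions the bridge is the non-degenerate closed IM 2-form $\mu_0$ of the Lie algebroid local model, with (ii)$\Rightarrow$(i) obtained from Proposition \ref{prop:partially:split:algbrd:optimal} together with the Moser-type uniqueness of Proposition \ref{prop:uniqueness:algbrd}. That direction is correct as you wrote it, and in fact the point you single out as delicate there is not an issue: Definition \ref{defi:linearizable:LieAlg} already demands $\Psi|_{A_S}=\id_{A_S}$, so the pullback $\Psi^*(\id_{T^*M})$ extends $\mu_S$ on the nose, and any residual normalization (the case $l|_{\ka}\neq\id$) is handled inside the proof of Proposition \ref{prop:partially:split:algbrd:optimal} via Lemma \ref{lemma:how:to:fix:along:zero:section}.

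The genuine gap is in (i)$\Rightarrow$(ii). The isomorphism you produce is, in effect, $\phi=(\varphi_*)^{-1}\circ\mu_0\colon A_S\ltimes V\diffto T^*U$, and you assert that it restricts to the identity on $A_S$. That is not automatic and is false in general: over $S$, $\mu_0$ restricts to $(l,\mu_S)\colon A_S\to\ka\oplus T^*S\simeq T^*_S\ka^*$, while $(\varphi_*)^{-1}$ over $S$ is the transpose of $\d\varphi|_S$, which is the identity on $TS$ but need not be the identity in the normal directions, since linearizability of $\pi$ (Definition \ref{def:linearizable:Poisson}) only requires $\varphi|_S=\id_S$. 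Hence $\phi_{A_S}:=\phi|_{A_S}$ is an automorphism of $A_S$ covering $\id_S$ which preserves $\rho_{A_S}$ and $\mu_S$ but may act nontrivially on $\ka$, whereas Definition \ref{defi:linearizable:LieAlg} requires the linearizing map to be the identity on $A_S$. The missing step is the correction: one must show that $\phi_{A_S}$ preserves $\ka$, that $\phi_{\ka}:=\phi|_{\ka}$ intertwines the representation $\nabla^{\ka}$ (using that $\phi_{A_S}$ is bracket-preserving), so that the pair $(\phi_{A_S},(\phi_{\ka}^*)^{-1})$ is a Lie algebroid automorphism of $A_S\ltimes\ka^*$, and then precompose $\phi$ with its inverse to obtain a genuine linearization of $T^*M$. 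This is precisely the lemma the paper inserts at this point; your closing paragraph gestures at absorbing an equivariant automorphism of $\ka^*$, but you attach that discussion to the (ii)$\Rightarrow$(i) step, where it is unnecessary, and leave the (i)$\Rightarrow$(ii) normalization unaddressed.
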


Let us start by recalling a few facts about linearization of Lie algebroids. First, given a Lie algebroid $A\to M$ with an invariant, embedded submanifold $S\subset M$, the restricted Lie algebroid $A_S:=A|_S$ has a canonical representation on the normal bundle $\nu(S)$ to $S$, which generalizes the Bott connection from foliation theory. It can be defined by
\[\nabla^{\nu}:\Gamma(A_S)\times \Gamma(\nu(S))\rmap \Gamma(\nu(S)),\quad \nabla^{\nu}_\al(v):=[\rho_A(\tilde{\al}),\tilde{v}]|_{S}\quad \mathrm{mod}\ TS,\]
where $\widetilde{\al}\in \Gamma(A)$ and $\widetilde{v}\in \X(M)$ are any smooth extensions of $\al$ and $v$, respectively. The \textbf{linear approximation} of $A$ around $S$ is the action Lie algebroid associated with this representation
\[(A_S\ltimes \nu(S), [\cdot,\cdot]_{\ltimes },\rho_{\ltimes }).\]

%
%
%
\begin{definition}\label{defi:linearizable:LieAlg}
We say that $A$ is \textbf{linearizable} around $S$ if there are open neighborhoods $S\subset U\subset M$ and $S\subset V\subset \nu(S)$ and an isomorphism of Lie algebroids
\[ \phi: A_S\ltimes V\diffto A|_{U},\]
which is the identity on $A_S$. 
\end{definition}

In order to obtain more geometric insight into the linear approximation, fix a tubular neighborhood whose differential along $S$ induces the identity of $\nu(S)$
\[ \varphi: \nu(S) \diffto U\subset M.\]
Fix also a vector bundle
isomorphism covering $\varphi$
\[\phi: A_S\times_S \nu(S)\diffto A|_U,\]
such that $\phi|_{A_S\times 0_S}=\id_{A_S}$. We use $\phi$ to transport the Lie algebroid structure of $A|_U$ to obtain a Lie algebroid structure on $A_S\times_S \nu(S)\to \nu(S)$, which we denote
\[(A_1,[\cdot,\cdot]_{1},\rho_1). \] 
Let $m_t$ be the multiplication by $t>0$ on the second component
\[
\vcenter{\hbox{\xymatrix{A_S\times_S \nu(S) \ar[r]^{m_t}\ar[d] & A_S\times_S \nu(S)\ar[d]  \\ \nu(S)\ar[r]^{m_t} & \nu(S)}}}
\qquad  m_t(a,v)=(a,m_t(v))=(a,tv).
\]
By pulling back the Lie algebroid structure of $A_1$ along $m_t$, we obtain Lie algebroids
\[(A_t,[\cdot,\cdot]_t,\rho_{t}), \quad t>0.\]
These Lie algebroids are on the same vector bundle $A_t:=A_S\times_S \nu(S)\to \nu(S)$, and the structure maps are uniquely determined by the condition that for $\al,\be\in \Gamma(A_S)$
\[[\overline{\al},\overline{\be}]_{t}:=m_t^*([\overline{\al},\overline{\be}]_{1}), \quad \rho_t(\overline{\al}):=(m^0_t)^*(\rho_1(\overline{\al})),\]
where the bar indicates the corresponding constant section. In other words, we have constructed a family of Lie algebroid isomorphisms
\[m_t:A_t\diffto A_1\cong A, \quad t>0.\]
Moreover, it is easy to see that this path of Lie algebroids extends smoothly at $t=0$, and in the limit one obtains precisely the action Lie algebroid
\[A_0=A_S\ltimes \nu(S), \quad \textrm{with} \quad \lim_{t\to 0}[\cdot,\cdot]_t=[\cdot,\cdot]_{\ltimes},\quad \ \lim_{t\to 0}\rho_t=\rho_{\ltimes}.\]
This follows because bundle maps $m_t:A_0\to A_0$ are Lie algebroid automorphisms of the linear approximation and this actually characterizes it.

\smallskip

Let us go back to our case, where $A=T^*M$ is the cotangent bundle of a Poisson manifold $(M,\pi)$ and $S\subset M$ is a Poisson submanifold. As we saw before, we have the adjoint representation $\nabla^{\ka}$ on the conormal bundle
\[ \nu^*(S)=(TS)^0=\ker\mu_S=\ka, \]
while the dual bundle $\nu(S)=\ka^*$ carries the dual representation $\nabla^{\ka^*}$. Next, we show that this representation is nothing else but the Bott connection.

\begin{lemma}
The two representations of $A_S$ on $\nu(S)=\ka^*$ coincide: $\nabla^{\nu}=\nabla^{\ka^*}$.
\end{lemma}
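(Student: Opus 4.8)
The plan is to unwind both connections from their definitions and check they agree on sections. Recall that for the cotangent algebroid $A = T^*M$ of a Poisson manifold with Poisson submanifold $S$, the restricted algebroid is $A_S = T^*_SM$, the anchor is $\rho_{A_S} = \pi^\sharp|_S$, and the bracket is the restriction of the Koszul bracket \eqref{formula:Lie:bracket}. The conormal bundle is $\nu^*(S) = (TS)^0 = \ka$, which for a Poisson submanifold is a bundle of ideals, with adjoint representation $\nabla^\ka_\alpha \gamma = [\alpha,\gamma]_\pi$ for $\alpha \in \Gamma(A_S)$, $\gamma \in \Gamma(\ka)$. Dualizing gives $\nabla^{\ka^*}$ on $\nu(S) = \ka^*$, characterized by $\Lie_{\rho_{A_S}(\alpha)}\langle \gamma, X\rangle = \langle \nabla^\ka_\alpha \gamma, X\rangle + \langle \gamma, \nabla^{\ka^*}_\alpha X\rangle$ for $X \in \Gamma(\nu(S))$. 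On the other hand, the Bott-type connection $\nabla^\nu$ is given by $\nabla^\nu_\alpha(v) = [\rho_A(\tilde\alpha), \tilde v]|_S \bmod TS$.

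First I would fix, for a point of $S$, a local covector field $\alpha \in \Gamma(A_S)$ and extend it to $\tilde\alpha \in \Omega^1(M)$, and pick a section $\gamma \in \Gamma(\ka)$ — i.e. a closed-up conormal vector — with extension $\tilde\gamma \in \Omega^1(M)$. The idea is to test both dual connections against $\gamma$: it suffices to show that for every such $\alpha$, $\gamma$,
\[
\langle \nabla^\nu_\alpha[\tilde v], \gamma \rangle = \langle \tilde v, \nabla^\ka_\alpha\gamma\rangle
\]
holds along $S$ for the class $[\tilde v] \in \Gamma(\nu(S))$ of a vector field $\tilde v$ transverse to $S$, since this is exactly the defining relation for $\nabla^{\ka^*}$ once one uses $\langle [\tilde v], \gamma\rangle = \langle \tilde v, \tilde\gamma\rangle|_S$ and $\Lie_{\pi^\sharp(\tilde\alpha)}\langle \tilde v, \tilde\gamma\rangle$. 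Concretely, I would compute $\langle [\rho(\tilde\alpha), \tilde v], \tilde\gamma\rangle|_S$ using the Cartan formula $\langle [\pi^\sharp(\tilde\alpha), \tilde v], \tilde\gamma\rangle = \Lie_{\pi^\sharp(\tilde\alpha)}\langle \tilde v, \tilde\gamma\rangle - \langle \tilde v, \Lie_{\pi^\sharp(\tilde\alpha)}\tilde\gamma\rangle$, and then compare $\Lie_{\pi^\sharp(\tilde\alpha)}\tilde\gamma$ restricted to $S$ with the Koszul bracket $[\tilde\alpha, \tilde\gamma]_\pi = \Lie_{\pi^\sharp(\tilde\alpha)}\tilde\gamma - \Lie_{\pi^\sharp(\tilde\gamma)}\tilde\alpha - \d\pi(\tilde\alpha,\tilde\gamma)$. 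The key observation is that along $S$ the two correction terms $\Lie_{\pi^\sharp(\tilde\gamma)}\tilde\alpha$ and $\d\pi(\tilde\alpha,\tilde\gamma)$ either vanish or are tangential in the relevant sense, because $\tilde\gamma$ vanishes on $TS$ and $S$ is a Poisson submanifold so $\pi^\sharp(\tilde\gamma)|_S = 0$.

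More precisely, since $\gamma|_S \in (TS)^0$ and $S$ is a Poisson submanifold, $\pi^\sharp(\tilde\gamma)$ vanishes identically along $S$; hence $\Lie_{\pi^\sharp(\tilde\gamma)}\tilde\alpha|_S = 0$ and, because $\d\pi(\tilde\alpha,\tilde\gamma)$ is $C^\infty(M)$-linear in $\tilde\gamma$ and $\pi|_S \in \wedge^2 TS$ annihilates $\tilde\gamma|_S$ together with all its derivatives along $S$ contracted against $TS$, the contraction $\langle \tilde v, \d\pi(\tilde\alpha,\tilde\gamma)\rangle$ also sorts out. I would therefore get $[\tilde\alpha,\tilde\gamma]_\pi|_S = \Lie_{\pi^\sharp(\tilde\alpha)}\tilde\gamma|_S$ as elements paired against $\nu(S)$, which means $\nabla^\ka_\alpha\gamma$ is represented by $\Lie_{\pi^\sharp(\tilde\alpha)}\tilde\gamma|_S$, and plugging back yields $\langle \nabla^\nu_\alpha[\tilde v], \gamma\rangle = \Lie_{\pi^\sharp(\tilde\alpha)}\langle\tilde v,\tilde\gamma\rangle|_S - \langle\tilde v, \nabla^\ka_\alpha\gamma\rangle$, which is precisely the duality relation defining $\nabla^{\ka^*}$. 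Since $\nu(S) = \ka^*$ and the pairing with $\ka$ is nondegenerate, this identifies $\nabla^\nu = \nabla^{\ka^*}$.

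The main obstacle is purely bookkeeping: one must be careful that all the ``extra'' terms in the Koszul bracket genuinely restrict to zero when paired against the normal bundle, which requires using not just $\pi^\sharp(\tilde\gamma)|_S = 0$ but that $\tilde\gamma$ is conormal along all of $S$, so that its derivatives tangent to $S$ still annihilate $TS$. Equivalently, one can avoid the computation with extensions altogether by noting that both $\nabla^\nu$ and $\nabla^{\ka^*}$ are flat $A_S$-connections dual to bundle-of-ideals data, and that a coordinate check in an adapted chart (transverse coordinates vanishing on $S$) makes the identification immediate; I would present whichever is cleaner, but the extension-and-Cartan-formula route is the most self-contained given the tools already developed in the paper.
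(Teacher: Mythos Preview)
Your approach is essentially the same as the paper's: pair the Koszul bracket against a normal vector and compare with the Cartan identity to extract the duality relation. However, there is a genuine error in the step where you dispose of the ``extra'' terms. The claim that $\Lie_{\pi^\sharp(\tilde\gamma)}\tilde\alpha|_S = 0$ is false: a vector field vanishing along $S$ does not have vanishing Lie derivative along $S$. For instance, take $M=\R^2$, $\pi = x\,\partial_x\wedge\partial_y$, $S=\{x=0\}$, $\tilde\gamma=\d x$, $\tilde\alpha=\d y$; then $\pi^\sharp(\tilde\gamma)=x\partial_y$ vanishes on $S$ but $\Lie_{x\partial_y}(\d y)=\d x$, which is nonzero on $S$ and pairs to $1$ against $\tilde v=\partial_x$. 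Likewise $\langle\tilde v,\d\pi(\tilde\alpha,\tilde\gamma)\rangle|_S=\partial_x(-x)=-1\neq 0$, so your argument for that term is also incorrect.

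What actually happens is that these two terms \emph{cancel}: using Cartan's formula, $\langle\tilde v,\Lie_{\pi^\sharp(\tilde\gamma)}\tilde\alpha\rangle|_S = \langle\tilde v,\d i_{\pi^\sharp(\tilde\gamma)}\tilde\alpha\rangle|_S = -\langle\tilde v,\d\pi(\tilde\alpha,\tilde\gamma)\rangle|_S$, since $i_{\pi^\sharp(\tilde\gamma)}\d\tilde\alpha$ vanishes pointwise on $S$. The paper avoids this bookkeeping by using the equivalent form $[\al,\be]_\pi=\Lie_{\pi^\sharp(\al)}\be - i_{\pi^\sharp(\be)}\d\al$, which packages the two offending terms into a single term $\d\tilde\alpha(\pi^\sharp(\tilde\beta),\tilde v)$ that \emph{does} vanish on $S$ simply because $\pi^\sharp(\tilde\beta)|_S=0$. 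With that fix (or the cancellation observation), your argument goes through and coincides with the paper's.
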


\begin{proof}
Given sections $\al\in\Gamma(A_S)$ and $\be\in \Gamma(\ka)$, denote by $\widetilde{\alpha},\widetilde{\beta}\in \Omega^1(M)$ extensions of $\al$ and $\be$ to $M$. Also, given a section $v\in \nu(S)$, let $\widetilde{v}\in \X(M)$ be a vector field whose restriction $\widetilde{v}|_S$ represents $v$.  Using the expression of the bracket on the Lie algebroid $(T^*M,[\cdot,\cdot]_{\pi},\pi^{\sharp})$, we have that
\[\Lie_{\pi^{\sharp}(\widetilde{\alpha})}(\widetilde{\beta}(\widetilde{v}))=[\widetilde{\alpha},\widetilde{\beta}]_{\pi}(\widetilde{v})+\widetilde{\beta}([\pi^{\sharp}(\widetilde{\alpha}),\widetilde{v}])+
\d\widetilde{\alpha}(\pi^{\sharp}(\widetilde{\beta}),\widetilde{v}).\]
Restricting to $S$, we have that $\pi^{\sharp}(\widetilde{\beta})|_S=\rho_{A_S}(\be)=0$. Therefore, the above formula restricts to the duality relation
\[\Lie_{\rho_{A_S}(\al)}\langle\be,v\rangle=\langle\nabla^{\ka}_\al \be,v\rangle+
\langle \beta, \nabla_\al^{\nu}v\rangle.\qedhere\]
 \end{proof}

We conclude that the linear approximation to the cotangent Lie algebroid of $(M,\pi)$ along the Poisson submanifold $S$ coincides with the action algebroid $A_S\ltimes \ka^*$ we have used to construct the local model. Moreover, it is completely determined by the first jet $J^1_S\pi$ or, equivalently, by the pair $(A_S,\mu_S)$. We can now give a proof of the main result in this subsection.

\begin{proof}[Proof of Theorem \ref{thm:normal:form:algebroid}]
Assume first that we have a Lie algebroid automorphism $\phi:A_S\ltimes V\diffto T^*U$ giving a linearization of $T^*M$ around $S$. By assumption, $\phi|_{A_S}=\id_{A_S}$. The identity map of $T^*U$ is a non-degenerate, closed IM 2-form, and therefore, so is its pullback $\mu_0:=\phi^*(\id):A_S\ltimes V\to T^*V$. We have that $i^*(\mu_0)=\mu_S$, where $i:A_S\to A_S\ltimes V$ is the inclusion. Proposition \ref{prop:partially:split:algbrd:optimal} implies that $(A_S,\mu_S)$ is partially split, and Proposition \ref{prop:uniqueness:algbrd} says that the Poisson local model is isomorphic around $S$ to $(V,\pi_V:=\rho_{\ltimes}\circ \mu_0^{-1})$. We conclude that the base map of $\phi$ is a Poisson isomorphism
\[\varphi:(V,\pi_V)\diffto (U,\pi|_U),\]
and so $\pi$ is linearizable around $S$. 
\smallskip

For the converse, assume that $(M,\pi)$ is linearizable around $S$ and consider a Poisson isomorphism $\varphi:(U,\pi|_U)\diffto (V,\pi_0|_V)$ between open neighborhoods $S\subset U\subset M$ and $S\subset V\subset \ka^*$, with $\varphi|_S=\id_S$. The pushforward along $\varphi$ is an isomorphism between the cotangent Lie algebroids
\[\varphi_*:T^*U\diffto T^*V,\quad  \varphi_*(\xi):=(\d \varphi^{-1})^*(\xi).\]
On the other hand, $\pi_0$ is constructed from the Lie algebroid local model $(A_S\ltimes V,\mu_0)$, where the IM 2-form is nondegenerate. Hence, we have a Lie algebroid isomorphism
\[\mu_0:A_S\ltimes V\diffto T^*V,\]
which extends $\mu_S$, i.e., $i^*(\mu_0)=\mu_S$, where $i:A_S\hookrightarrow A_S\ltimes V$ is the inclusion. Combining the two maps, we obtain a Lie algebroid isomorphism 
\[\phi:=(\varphi_{*})^{-1}\circ\mu_0:A_S\ltimes V\diffto T^*U,\]
which covers $\varphi^{-1}$. We still need to fix $\phi$ so that its restriction $\phi_{A_S}:=\phi|_{A_S}:A_S\diffto A_S$ is the identity. This is done in the following lemma.

\begin{lemma}
The map $\phi_{A_S}$ restricts to a bundle isomorphism $\phi_{\ka}:=\phi|_{\ka}:\ka\diffto \ka$, and we have a Lie algebroid isomorphism
 \[(\phi_{A_S},\phi_{\ka^*}):A_S\ltimes \ka^*\diffto A_S\ltimes \ka^*,\]
where $\phi_{\ka^*}:=(\phi_{\ka}^*)^{-1}:\ka^*\diffto \ka^*$.
\end{lemma}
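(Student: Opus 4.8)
The plan is to read off both assertions from the explicit formula $\phi=(\varphi_*)^{-1}\circ\mu_0$ together with the three facts at hand: $\varphi$ is a Poisson diffeomorphism, $\varphi|_S=\id_S$, and $\mu_0$ is a non-degenerate closed IM $2$-form on $A_S\ltimes V$ with $i^*\mu_0=\mu_S$. First I would record the easy point that $\phi_{A_S}:=\phi|_{A_S}\colon A_S\to A_S$ is a Lie algebroid isomorphism over $\id_S$: this is just functoriality of restriction to invariant submanifolds, since $\phi$ covers $\varphi^{-1}$, which maps the invariant submanifold $S\subset V$ (the zero section of $\ka^*$) identically onto the invariant submanifold $S\subset U$ (a Poisson submanifold), and $(A_S\ltimes V)|_S=A_S=(T^*M)|_S$.

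The heart of the matter, and the step I expect to be the real obstacle, is to show that $\phi_{A_S}$ intertwines $\mu_S$ with itself, i.e.\ $\mu_S\circ\phi_{A_S}=\mu_S$. Here the concrete construction of $\phi$ must be used, because $\ka=\ker\mu_S$ is \emph{not} determined by the abstract Lie algebroid $A_S$ (only the larger $\ker\rho_{A_S}$ is), so one cannot argue purely algebroid-theoretically. Concretely, for $x\in S$ the restriction of $(\varphi_*)^{-1}$ to $T^*_xM$ is the transpose $(\d_x\varphi)^{*}$, and since $\varphi|_S=\id_S$ one has $\d_x\varphi|_{T_xS}=\id_{T_xS}$; hence for $a\in(A_S)_x$,
\[
\mu_S\big(\phi_{A_S}(a)\big)=\big((\d_x\varphi)^{*}\mu_0(a)\big)\big|_{T_xS}=\mu_0(a)\big|_{T_xS}=(i^*\mu_0)(a)=\mu_S(a).
\]
(Equivalently: $\varphi$ being Poisson forces $\phi^{*}\id_{T^*M}=\mu_0$, and restricting this identity along $i$ and using that the pullback of $\id_{T^*M}$ along $T^*_SM\hookrightarrow T^*M$ is $\mu_S$, by Corollary \ref{cor:jets:Poisson}, gives the same conclusion.) Since $\phi_{A_S}$ is an isomorphism, $\mu_S\circ\phi_{A_S}=\mu_S$ yields $\phi_{A_S}(\ka)=\phi_{A_S}(\ker\mu_S)=\ka$, so $\phi_\ka:=\phi|_\ka\colon\ka\diffto\ka$, which is the first assertion of the lemma.

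Finally, since $\phi_{A_S}$ is a Lie algebroid automorphism over $\id_S$ preserving the bundle of ideals $\ka$, it is automatically compatible with the adjoint representation \eqref{eq:representation:algbrd}: for $\al\in\Gamma(A_S)$ and $\ga\in\Gamma(\ka)$,
\[
\phi_\ka\big(\nabla^{\ka}_\al\ga\big)=\phi_{A_S}\big([\al,\ga]_{A_S}\big)=[\phi_{A_S}\al,\phi_\ka\ga]_{A_S}=\nabla^{\ka}_{\phi_{A_S}\al}(\phi_\ka\ga),
\]
so $(\phi_{A_S},\phi_\ka)$ is an automorphism of the $A_S$-module $\ka$. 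Dualizing gives an automorphism $(\phi_{A_S},\phi_{\ka^*})$ of the dual module $(\ka^*,\nabla^{\ka^*})$ with $\phi_{\ka^*}:=(\phi_\ka^{*})^{-1}$, and an automorphism of a representation is the same thing as a Lie algebroid automorphism of the associated action algebroid; this produces the desired isomorphism $(\phi_{A_S},\phi_{\ka^*})\colon A_S\ltimes\ka^*\diffto A_S\ltimes\ka^*$. Thus the only substantive computation is the identity $\mu_S\circ\phi_{A_S}=\mu_S$; the surrounding statements are formal.
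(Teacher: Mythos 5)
Your proposal is correct and follows essentially the same route as the paper: the paper's proof also reduces everything to the identities $\rho_{A_S}\circ\phi_{A_S}=\rho_{A_S}$ and $\mu_S\circ\phi_{A_S}=\mu_S$ (asserted there tersely ``since this holds for $\mu_0$ and $\varphi_*$'', which is exactly the computation with $\d_x\varphi|_{T_xS}=\id$ and $i^*\mu_0=\mu_S$ that you spell out), and then obtains the action-algebroid automorphism from the fact that the algebroid map $\phi_{A_S}$ intertwines the adjoint representation on $\ka$ and hence, after dualizing, the representation on $\ka^*$. The only difference is that you make the key step explicit, which is a faithful filling-in rather than a different argument.
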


By precomposing $\phi$ with the inverse of the isomorphism in this lemma, we obtain an isomorphism of Lie algebroids,
\[\phi':A_S\ltimes V'\diffto T^*U,\quad V'=\phi_{\ka^*}(V)\subset \ka^*\]
which satisfies $\phi'|_{A_S}=\id_{A_S}$. Hence, $T^*M$ is linearizable around $S$.

To prove the lemma, we note that $\phi_{A_S}$ covers the identity of $S$, so we have that $\rho_{A_S}\circ \phi_{A_S}=\rho_{A_S}$ and $\mu_S\circ \phi_{A_S}=\mu_S$ (since this holds for $\mu_0$ and $\phi_*$). Therefore, the restriction $\phi_{\ka}:=\phi|_{\ka}:\ka\diffto \ka$ is a vector bundle automorphism of $\ka=\ker \mu_S$, $\phi_{\ka}:=\phi|_{\ka}:\ka\diffto \ka$. Since $\phi_{A_S}$ is a Lie algebroid map, we have that
\[\phi_{\ka}(\nabla^{\ka}_{\alpha}\beta)=\phi_{A_S}([\alpha,\beta]_{A_S})=[\phi_{A_S}(\alpha),\phi_{A_S}(\beta)]_{A_S}=\nabla^{\ka}_{\phi_{A_S}(\alpha)}\phi_{\ka}(\beta).\]
This implies the analog equation for the dual representation on $\ka^*$ and the automorphism $\phi_{\ka^*}:=(\phi_{\ka}^*)^{-1}:\ka^*\diffto \ka^*$. This completes the proof of the lemma.
 \end{proof}

\begin{remark}
\label{rem:path:to:linear}
As discussed before, the linearization procedure for Lie algebroids yields a smooth family of Lie algebroid structures $A_t$ connecting $A_1\simeq  T^*M$ to the linear model $A_0=A_S\ltimes \ka^*$. Since $m_t:A_t\diffto A_1$ is a Lie algebroid isomorphism for $t>0$, we have the family of Poisson structures 
\begin{equation}\label{wrong:path:to:linear} 
\pi_t=\rho_t\circ (m_t^*(\mu_1))^{-1},
\end{equation}
where $\mu_1:A_1\diffto  T^*M$ is the identification we started with. However, in general, $\lim_{t\to 0}\pi_t$ may not exist (even if local models exists!), and if this limit exists, it may not be isomorphic to the local model (see Section \ref{ex:family with no limit}).

It would be interesting to understand better the following.
\begin{question}
If the first order jet of $(M,\pi)$ at a Poisson submanifold $S\subset M$ is partially split, can $\pi$ be connected to a local model by a smooth path of Poisson structure $\pi_t$, $t\in [0,1]$, defined around $S$, with constant first order jet along $S$? 
\end{question}

A possible candidate can be constructed as follows. Consider the path $A_t$ of Lie algebroid structures which connects $T^*M$ to the linearization $A_0$. Then the path of bivectors $\tilde{\pi}_t=\rho_t\circ \mu_t^{-1}$, where 
\begin{equation}\label{path:to:linear}
\mu_t:=m_t^*\big(\mathrm{pr}^*\mu_S+t^{-1}\, (\mu-\mathrm{pr}^*\mu_S)\big),
\end{equation}
has constant first order jet along $S$ and the limit at $t=0$ exists. However, these bivectors may not be Poisson since $\mathrm{pr}^*\mu_S$ may fail to be a closed IM 2-form. The problem is that the projection $\mathrm{pr}:A_1\to A_S$ is usually not a Lie algebroid morphism.

The question has positive answers in the following situations: 
\begin{itemize}
\item[-] $M=S\times N$ is a product of a Poisson manifold $(S,\pi_S)$ and a Poisson manifold $(N,\pi_N)$ the a zero $\pi_N(x_0)=0$; 
\item[-] $S$ is a symplectic leaf (see \cite{Vorobjev01,Vorobjev05,MarcutPhD}) -- instead of $\mathrm{pr}^*\mu_S$, one uses in formula \eqref{path:to:linear} the IM 2-form corresponding to the closed de Rham 2-form $r^*(\pi_S^{-1})$, where $r:U\to S$ is the retraction of a tubular neighborhood;
\item[-] there exists a Lie algebroid map $p: T^*U \to A_S$, defined on some open set $S\subset U$, such that $p|_{A_S}=\id_{A_S}$ -- this case includes the previous two;
\item[-] $\pi$ is linearizable around $S$.
\end{itemize}
\end{remark}

\subsection{The normal form theorem} 
\label{sec:main:thm}
The following result implies Theorem \ref{thm:main:two} in the Introduction.

\begin{theorem}[Normal form]
\label{Normal:form:theorem}
Let $(M,\pi)$ be a Poisson manifold and $S\subset M$ a Poisson submanifold. If $T^*_SM$ is integrable by a compact, Hausdorff, Lie groupoid whose target fibers have trivial 2nd de Rham cohomology, then $\pi$ is invariantly linearizable around $S$.
\end{theorem}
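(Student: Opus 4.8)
The plan is to combine the two pillars established earlier: the existence of the local model (Theorem~\ref{thm:main:one}, via Proposition~\ref{prop:local:model:algbrd}) together with the fact that it is the linear approximation of the cotangent algebroid (Theorem~\ref{thm:normal:form:algebroid}), and the linearizability of proper Lie groupoids around saturated submanifolds (as recalled in Section~\ref{sec:linearization:grpds}). First I would reduce to showing that $T^*M$ is linearizable around $S$ as a Lie algebroid; indeed, Theorem~\ref{thm:normal:form:algebroid} then immediately upgrades this to linearizability of $(M,\pi)$ around $S$ in the Poisson sense. Moreover, if the algebroid linearization can be taken \emph{invariant} (i.e.\ with saturated neighborhoods), then the base map of the resulting Poisson isomorphism will also be between saturated neighborhoods, giving the ``invariantly linearizable'' conclusion.

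\textbf{Key steps.} The steps, in order, would be: (1) First check that under the hypothesis the first order jet $\tau = J^1_S\pi = (T^*_SM,\mu_S)$ is partially split: since $T^*_SM$ is assumed integrable by a \emph{proper} (compact $\Rightarrow$ proper) Hausdorff groupoid $\G_S$, the bundle of ideals $\ka = \ker\mu_S = (\ker\omega_S)|_S$ is partially split by Theorem~\ref{thm:proper}, hence $\tau$ is partially split and the local model $(M_0,\pi_0)$ exists and is unique up to isomorphism (Corollary~\ref{corollary:uniqueness:model}). (2) Next, invoke the global side: the compact Hausdorff groupoid $\G_S$, together with the over-symplectic form $\omega_S$ it carries (Proposition~\ref{prop:1:1:over:sympl:surjective:IM}, after replacing $\G_S$ by a target-$1$-connected cover if needed, or arguing directly), integrates $T^*_SM$. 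The extra hypothesis that the target fibers of the integration of $T^*M$ have trivial $H^2$ is exactly what is needed to produce a symplectic groupoid integrating $\pi$ near $S$ — here I would point to the Rigidity Theorem of M\u{a}rcu\cb{t} \cite{Marcut14}, which under these vanishing and compactness assumptions gives that $(M,\pi)$ is integrable near $S$ by a proper Hausdorff symplectic groupoid $(\G,\omega)$ restricting to $(\G_S,\omega_S)$. (3) Apply the groupoid normal form, Corollary~\ref{corollary:linearization:proper:Hausdorff}: since $(\G,\omega)$ is a proper symplectic groupoid and $S$ is a closed embedded invariant submanifold, $(\G,\omega)$ is isomorphic around $S$ to the groupoid local model of $(\G_S,\omega_S)$; and if $(\G,\omega)$ is target-proper it is \emph{invariantly} linearizable. (4) Differentiate: the groupoid linearization $\Phi:\G|_U \diffto \G_0 \subset \G_S\ltimes\ka^*$ induces on Lie algebroids an isomorphism $T^*U \simeq A_S\ltimes M_0$ fixing $T^*_SM = A_S$, which is precisely an algebroid linearization of $T^*M$ around $S$. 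Feeding this into Theorem~\ref{thm:normal:form:algebroid} completes the argument, and invariance of $\Phi$ propagates to invariance of the Poisson isomorphism.

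\textbf{Main obstacle.} The delicate point is step~(2): getting from ``$T^*_SM$ integrable by a nice compact groupoid'' to ``$(M,\pi)$ itself integrable near $S$'' without already assuming integrability of $(M,\pi)$. This is where the existence of the local model earns its keep — one first transports the problem to the local model $(M_0,\pi_0)$, which by construction has $J^1_S\pi_0 = \tau$ and whose cotangent algebroid is the action algebroid $A_S\ltimes M_0$; the latter is manifestly integrable near $S$ by the action groupoid $\G_S\ltimes M_0$ (shrinking to a neighborhood where $\omega_0$ is nondegenerate), which is compact Hausdorff with the required cohomological property inherited from $\G_S$ (its target fibers are, near $S$, diffeomorphic to those of $\G_S$). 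Then the Rigidity Theorem of \cite{Marcut14} applies to the \emph{already integrable} local model to conclude that any $\pi'$ with $J^1_S\pi' = \tau$ — in particular $\pi$ itself — is isomorphic to $\pi_0$ near $S$; this is essentially the content of Corollary~\ref{thm:main:three}/\ref{cor:main} and I would organize the proof to derive Theorem~\ref{Normal:form:theorem} as the precise bookkeeping of these reductions. The secondary technical nuisance is tracking the ``invariant'' (saturated-neighborhood) refinement through the chain of isomorphisms, which requires the target-properness statement in Corollary~\ref{corollary:linearization:proper:Hausdorff} and the Local Reeb Stability argument of the lemma preceding the proof of Theorem~\ref{thm:main:grpd:normal:form}.
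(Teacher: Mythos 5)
Your proposal is correct and, as you finally organize it in the last paragraph — partial splitting from Theorem \ref{thm:proper}, construction of the local model $(M_0,\pi_0)$ with $J^1_S\pi_0=J^1_S\pi$, integration of $T^*M_0\simeq A_S\ltimes\ka^*$ by the compact-fibered action groupoid $\G_S\ltimes\ka^*$, and application of the Rigidity Theorem \ref{thm:rigidity} to this already-integrable local model, with invariance extracted from target-properness of the action groupoid — it is essentially the paper's proof. The only caveat is that your steps (2)--(4) as first listed reverse the logical order (Rigidity does not by itself integrate $(M,\pi)$ near $S$; that integrability is a consequence of the linearization, cf.\ Corollary \ref{cor:integrability}) and add a redundant pass through Corollary \ref{corollary:linearization:proper:Hausdorff} and Theorem \ref{thm:normal:form:algebroid}, but you identify and correct both points yourself.
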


This result is a generalization to Poisson submanifolds of the normal form theorem around leaves from \cite{CrMa12,Marcut14}, which in turn generalizes Conn's linearization theorem around points \cite{Conn85}. Conn's original proof uses the Nash-Moser fast convergence method to build a linearization map. Later, a geometric proof was obtained by Crainic and Fernandes \cite{CrFe11}. Both methods have been extended to symplectic leaves in \cite{Marcut14} and \cite{CrMa12}, respectively, and the results in these papers can be adapted to the general setting of Poisson submanifolds, to give two proofs of Theorem \ref{Normal:form:theorem}. We will explain these two approaches in the following subsection.

\subsection{The geometric method}

With the geometric method, developed in \cite{CrFe11} and extended in \cite{CrMa12}, we obtain a version of Theorem \ref{Normal:form:theorem} under a stronger assumption.
\begin{proof}[Proof of Theorem \ref{Normal:form:theorem}, with an extra assumption]
We assume that the groupoid integrating $T^*_SM$ from the statement has, in addition, 1-connected target fibers. Under these assumptions, we claim that: 
\begin{itemize} 
\item there is a saturated neighborhood $U\subset M$ of $S$ such that $T^*U\subset T^*M$ is integrable by a Hausdorff, target-proper, Lie groupoid $\G\tto U$.
\end{itemize}
Since Hausdorff, target-proper, groupoids are invariantly linearizable around invariant submanifolds, it then follows that the Lie algebroid $T^*U$ (and hence $T^*M$) is invariantly linearizable around $S$, so one can apply Theorem \ref{thm:normal:form:algebroid}.

It remains to prove the claim. When $S$ is symplectic, i.e., when $T^*_SM$ is transitive, this claim is proved in Section 5 of \cite{CrMa12}. However, one observes that the proof of the claim given in \cite{CrMa12} does not use that $S$ is a symplectic leaf, and remains valid in the non-transitive case. 
\end{proof}

\subsection{The analytic method}

For the proof of Theorem \ref{Normal:form:theorem}, we will use the following result, obtained using the Nash-Moser method. 

\begin{theorem}[Rigidity \cite{Marcut14}]
\label{thm:rigidity}
Let $(M,\pi)$ be a Poisson manifold and $S\subset M$ a compact Poisson submanifold. If $T^*M$ is integrable by a Hausdorff Lie groupoid whose target fibers are compact and have trivial 2nd de Rham cohomology, then any Poisson structure $\pi'$ with $J^1_S\pi=J^1_S\pi'$ is locally isomorphic to $\pi$ around $S$.
\end{theorem}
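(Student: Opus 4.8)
The plan is to prove the statement by a Nash--Moser fast convergence scheme, following the strategy of Conn's linearization theorem \cite{Conn85} and its adaptation to leaves in \cite{CrMa12,Marcut14}. Since $J^1_S\pi=J^1_S\pi'$, the difference $\eta:=\pi'-\pi$ lies in $I_S^2\cdot\X^2(M)$, i.e.\ it vanishes to second order along $S$. Working in a tubular neighbourhood of the compact submanifold $S$, the goal is to produce a diffeomorphism $\phi$, defined near $S$ and fixing $S$, with $\phi^*\pi'=\pi$. I would construct $\phi$ as a limit of a sequence of near-identity diffeomorphisms $\phi_n=\psi_n\circ\cdots\circ\psi_1$, where each $\psi_k$ is a time-one flow of a small vector field and reduces the error $\pi_k-\pi$ (with $\pi_k:=\phi_{k-1}^*\pi'$) quadratically, rather than linearly.

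\emph{The linearized equation.} Linearizing $\phi^*\pi'=\pi$ at the identity leads to solving $[\pi,X]=\eta$ for a vector field $X$, where $[\,\cdot\,,\,\cdot\,]$ is the Schouten bracket, so that the flow of $X$ cancels $\eta$ to first order. Here $[\pi,\,\cdot\,]$ is precisely the Lichnerowicz differential, i.e.\ the Lie algebroid differential of $A=T^*M$. The compatibility $[\pi,\pi]=[\pi',\pi']=0$ gives $[\pi,\eta]=-\tfrac12[\eta,\eta]$, so $\eta$ is a cocycle modulo a quadratic term and the obstruction to solving the linearized equation lives in the second Poisson cohomology $H^2_\pi$ in a neighbourhood of $S$. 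At the $n$-th step one solves this equation approximately using a homotopy operator, and interleaves the procedure with smoothing operators $S_{t_n}$ acting on sections over the compact $S$, so as to compensate for the loss of derivatives inherent in the cohomological solution.

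\emph{Cohomological vanishing with tame estimates.} The heart of the argument is the construction of a homotopy operator $h$ for the complex computing $H^\bullet_\pi$ near $S$, which removes the degree-two obstruction and satisfies tame (bounded loss of derivatives) estimates. This is where integrability and the hypotheses on the integrating groupoid $\G\tto M$ enter. Because the target fibers of $\G$ are compact, integrating against a Haar density along the $\t$-fibers produces explicit contracting homotopies, yielding vanishing of differentiable groupoid cohomology in positive degrees; through a van Est comparison this controls the Lie algebroid cohomology of $T^*M$. The assumption $H^2(\t^{-1}(x))=\{0\}$ is used to guarantee that the relevant degree-two de Rham classes on the fibers are exact, so the primitive needed in the Moser-type step can actually be found. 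Compactness of $S$ and of the target fibers makes all the operators bounded with uniform estimates in the appropriate Sobolev or H\"older scales, delivering the tame homotopy operator required by the Nash--Moser machinery.

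\emph{Convergence and the main obstacle.} With the tame homotopy operator available, the standard Nash--Moser telescoping estimates---quadratic gain per step defeating the fixed derivative loss, combined with the smoothing---show that the $\phi_n$ converge in $C^\infty$ to a diffeomorphism $\phi$ defined on a neighbourhood of $S$, fixing $S$, with $\phi^*\pi'=\pi$; this is the asserted local isomorphism. The main obstacle is precisely this tame cohomological step: one must produce the homotopy operator for the relative (germ-along-$S$) Poisson complex with explicit control of the derivative loss, passing rigorously from the de Rham cohomology of the compact target fibers to the algebroid cohomology via van Est and Haar averaging, while ensuring that the resulting isotopy stays inside the chosen neighbourhood of $S$ and preserves the vanishing-to-second-order condition along $S$ at every stage.
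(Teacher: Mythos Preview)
The paper does not contain its own proof of this theorem: it is quoted verbatim as a result from \cite{Marcut14} (``obtained using the Nash--Moser method'') and used as a black box in the proof of Theorem~\ref{Normal:form:theorem}. Your outline is consistent with the approach of \cite{Marcut14} and with the paper's one-line description of that approach, so there is nothing to compare at the level of this paper.

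That said, as a sketch of the actual argument in \cite{Marcut14}, your proposal is on the right track but somewhat vague at the crucial point. The tame homotopy operator is not built on the Poisson complex directly via a van~Est comparison; rather, one works with the complex of multivector fields on $M$ as functions on the integrating groupoid $\G$ (or on its nerve), and the Haar averaging over compact $\t$-fibers together with the hypothesis $H^2(\t^{-1}(x))=0$ is used to produce explicit tame homotopy operators on that complex. The preservation of the second-order vanishing along $S$ at each iteration step and the control of the shrinking neighbourhoods are genuine technical issues that require care beyond what you have indicated.
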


%
%
%
%

\begin{proof}[Proof of Theorem \ref{Normal:form:theorem}]
Let $\G_S\tto S$ be a Lie groupoid integrating $A_S:=T^*_SM$ which is compact, Hausdorff and whose target fibers have trivial second de Rham cohomology. Since $\G_S$ is proper and Hausdorff, by Theorem \ref{thm:proper}, the bundle of ideals $\ka:=(TS)^{\circ}\subset A_S$ is partially split for $\G_S$, and so also for $A_S$. After choosing a partial splitting and a tubular neighborhood $\phi:\nu(S) \diffto M_0\subset M$ of $S$, we obtain the Poisson structure of the local model $\pi_0\in \X^2(M_0)$, which satisfies $J^1_S\pi_0=J^1_S\pi$. 

The algebroid $T^*M_0$ is isomorphic to the action algebroid $A_S\ltimes \ka^*$.  Since $A_S$ integrates to $\G_S\tto S$, and the action of $A_S$ on the bundle of ideals $\ka$ integrates to the action by conjugation of the target-connected Lie groupoid $\G_S$ (see, e.g., \cite[Lemma B.1]{Marcut14}), it follows that the action algebroid $A_S\ltimes \ka^*$ also integrates to the action groupoid $\G_S\ltimes \ka^*\tto \ka^*$. 

We conclude that $T^*M_0$ integrates to  a Hausdorff Lie groupoid whose target fibers are compact and have trivial 2nd de Rham cohomology. Hence, by Theorem \ref{thm:rigidity}, there exists a Poisson diffeomorphism $\phi:(U,\pi_0)\diffto (V,\pi)$, between neighborhoods $U$ and $V$ of $S$.

Finally, since the action groupoid $\G_S\ltimes \ka^*\tto\ka^*$ is target-proper, it follows that $S$ has a basis of open neighborhoods in $\ka^*$ that are invariant (see, e.g.,\cite[Lemma A.1]{Marcut14}). So we may assume that $U$ is invariant. Since all the leaves of $\pi_0|_U$ are compact, so are also those of $\pi|_V$. This implies that $V$ is invariant as well. Hence, $\pi$ is invariantly linearizable around $S$.
\end{proof}

From the proof, we extract the following result. 

\begin{corollary}
\label{cor:integrability}
Let $(M,\pi)$ be a Poisson manifold and $S\subset M$ a Poisson submanifold. If the hypotheses of Theorem \ref{Normal:form:theorem} are satisfied, then a neighborhood of $S$ in $M$ is integrable by a target-proper, Hausdorff Lie groupoid, whose target-fibers have trivial second de Rham cohomology. 
\end{corollary}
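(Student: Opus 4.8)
The plan is to observe that this statement is already contained in the proof of Theorem \ref{Normal:form:theorem}: the Lie groupoid produced there has precisely the advertised properties, so all that remains is to read them off. First I would rerun the construction from that proof. Let $\G_S\tto S$ be a compact, Hausdorff integration of $A_S:=T^*_SM$ whose target fibers have trivial second de Rham cohomology. Since $\G_S$ is proper and Hausdorff, Theorem \ref{thm:proper} gives a partial splitting of the bundle of ideals $\ka=(TS)^{\circ}\subset A_S$, and together with a choice of tubular neighborhood this produces the Poisson local model $(M_0,\pi_0)$ on an open neighborhood $M_0\subset\ka^*$ of $S$, with $J^1_S\pi_0=J^1_S\pi$.

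Next I would identify an integration of the cotangent algebroid of the local model. As in the proof of Theorem \ref{Normal:form:theorem}, $T^*M_0$ is isomorphic to the restriction over $M_0$ of the action algebroid $A_S\ltimes\ka^*$; since $A_S$ integrates to $\G_S$ and its action on $\ka$ integrates to the conjugation action of the target-connected groupoid $\G_S$ (cf.\ \cite[Lemma B.1]{Marcut14}), the action algebroid integrates to the action groupoid $\G_S\ltimes\ka^*\tto\ka^*$. This groupoid is Hausdorff, and because $\G_S$ is compact its target map is proper — each target fiber of $\G_S\ltimes\ka^*$ is diffeomorphic to a target fiber of $\G_S$ — so in particular the target fibers have trivial second de Rham cohomology. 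Restricting over a sufficiently small invariant open neighborhood of $S$ (possible because properness of $\G_S$ yields a basis of invariant opens, exactly as in the last paragraph of the proof of Theorem \ref{Normal:form:theorem}, and shrinking further so that $\mu_0$ is non-degenerate) preserves Hausdorffness, target-properness, and the cohomological condition, and gives the required integration of $\pi_0$ near $S$.

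Finally I would transport this along the linearization. Theorem \ref{Normal:form:theorem} gives a Poisson diffeomorphism $\varphi:(U,\pi|_U)\diffto(V,\pi_0|_V)$ between invariant neighborhoods of $S$; after shrinking $V$ inside the open set from the previous step, pushing the integrating groupoid forward along $\varphi$ (equivalently, identifying $T^*U\cong T^*V$ via $\varphi_*$ and restricting $\G_S\ltimes\ka^*$ to $V$) yields a target-proper, Hausdorff Lie groupoid integrating $T^*U$ whose target fibers have trivial second de Rham cohomology, which is the assertion. I do not expect any genuine obstacle: every ingredient (Theorem \ref{thm:proper}, the local model, the action-groupoid integration, Theorem \ref{Normal:form:theorem}) is already in place; the only point requiring a little care is the choice of invariant open set on which all constructions are simultaneously valid, which is handled precisely as in the proof of Theorem \ref{Normal:form:theorem}.
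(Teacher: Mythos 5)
Your proposal is correct and follows the paper's own route: the corollary is exactly what one extracts from the proof of Theorem \ref{Normal:form:theorem}, namely that $T^*M_0$ integrates to the restriction of the action groupoid $\G_S\ltimes\ka^*$ (Hausdorff, target-proper since $\G_S$ is compact, with target fibers diffeomorphic to those of $\G_S$ and hence with trivial second de Rham cohomology), which is then transported along the linearizing Poisson diffeomorphism over an invariant neighborhood. Your handling of the invariant open set and of the target-fiber identification matches the paper's argument, so there is nothing to add.
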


So we may apply \cite[Theorem 2]{Marcut14} and obtain the following, which implies Corollary \ref{thm:main:three} in the Introduction (for a complete version of (ii), see \cite{Marcut14}).

\begin{corollary}[Rigidity]
\label{thm:main:three:end}
Let $(M,\pi)$ be a Poisson manifold and $S\subset M$ a Poisson submanifold. If the hypotheses of Theorem \ref{Normal:form:theorem} are satisfied, then
\begin{enumerate}[(i)]
\item For any Poisson structure $\pi'\in \Pi(M,S)$, with $J^1_S\pi=J^1_S\pi'$, there is a Poisson diffeomorphism $\phi:(U,\pi)\diffto (U',\pi')$ between open neighborhoods $U$ and $U'$ of $S$, with $\phi|_S=\id_S$;
\item There is a neighborhood of $\pi$ in the space of all Poisson structures on $M$, $\mathcal{V}\subset \Pi(M)$, such that for every $\pi'\in \mathcal{V}$ there is a Poisson diffeomorphism $\phi:(U,\pi)\diffto (U',\pi')$ between open neighborhoods $U$ and $U'$ of $S$. 
\end{enumerate}
\end{corollary}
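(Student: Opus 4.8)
The statement we must prove is Corollary \ref{thm:main:three:end}, which has two parts. The idea is to bootstrap from the preceding Corollary \ref{cor:integrability}: once we know that a neighborhood of $S$ is integrable by a target-proper, Hausdorff Lie groupoid whose target fibers have trivial second de Rham cohomology, we are exactly in the setting where the Rigidity Theorem of \cite{Marcut14} (quoted here as Theorem \ref{thm:rigidity}, but in the stronger form of \cite[Theorem 2]{Marcut14}) applies. So the proof is essentially a matter of checking that the hypotheses of that external result are met and then invoking it.

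\textbf{Step 1: Reduce to a good integrable neighborhood.} First I would observe that the hypotheses of Theorem \ref{Normal:form:theorem} on $(M,\pi)$ and $S$ are precisely those appearing in Corollary \ref{cor:integrability}. Applying that corollary, we obtain an open neighborhood $U_0\subset M$ of $S$ and a target-proper, Hausdorff Lie groupoid $\G_0\tto U_0$ integrating $T^*U_0$, with the target fibers of $\G_0$ having trivial second de Rham cohomology. Since $S$ is a Poisson submanifold, it is saturated in $U_0$ (as $\G_0$ is target connected and $S$ is a union of symplectic leaves), so $S$ sits inside $U_0$ as a closed, embedded, invariant submanifold. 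At this point one may, if desired, shrink $U_0$ to a saturated neighborhood using target-properness (cf.\ \cite[Lemma A.1]{Marcut14}) so that $S$ is still closed in the new, smaller $U_0$; this is harmless and only makes the hypotheses of the external rigidity theorem more obviously satisfied.

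\textbf{Step 2: Invoke the rigidity theorem of \cite{Marcut14}.} With $(U_0,\pi|_{U_0})$ integrable by a target-proper, Hausdorff groupoid with $H^2$-trivial target fibers, and $S\subset U_0$ compact (note $S$ is compact because $\G_S\tto S$ is compact in the hypotheses of Theorem \ref{Normal:form:theorem}), we are exactly in the situation covered by \cite[Theorem 1 and Theorem 2]{Marcut14}. Theorem 1 there gives part (i): any Poisson structure $\pi'$ on $M$ (equivalently, on a neighborhood of $S$) with $J^1_S\pi=J^1_S\pi'$ is isomorphic to $\pi$ around $S$ via a Poisson diffeomorphism fixing $S$ pointwise. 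Theorem 2 there gives part (ii): there is a $C^k$-open (in fact $C^2$-open, by the precise statement in \cite{Marcut14}) neighborhood $\mathcal{V}$ of $\pi$ in $\Pi(M)$ such that every $\pi'\in\mathcal{V}$ is locally Poisson isomorphic to $\pi$ near $S$. In both cases one restricts the given $\pi'$ to the neighborhood $U_0$ produced in Step 1 before applying the external results; the conclusions are statements about germs around $S$, so this restriction loses nothing. I would also remark explicitly, as the paper does after the statement of Corollary \ref{cor:integrability}, that the improvement over the original Rigidity Theorem is that we only assumed $T^*_SM$ integrable, not $T^*M$; the role of Corollary \ref{cor:integrability} is precisely to upgrade this into genuine integrability of a neighborhood.

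\textbf{Main obstacle.} There is no real analytic obstacle here, since the hard analysis (the Nash--Moser fast-convergence argument) is entirely encapsulated in \cite{Marcut14}. The only genuinely substantive content has already been carried out in Corollary \ref{cor:integrability} (which itself rests on Theorem \ref{Normal:form:theorem} and the construction of the local model together with its integrability). Thus the ``hard part'' is bookkeeping: matching the hypotheses of \cite[Theorems 1 and 2]{Marcut14} exactly---in particular the compactness of $S$, the Hausdorffness and target-properness of the integrating groupoid, and the vanishing of $H^2$ of the target fibers---and making sure that passing to the neighborhood $U_0$ does not disturb the closedness/compactness of $S$. Once these are verified, the corollary follows immediately.
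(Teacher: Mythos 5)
Your proposal is correct and follows essentially the same route as the paper: invoke Corollary \ref{cor:integrability} to integrate a neighborhood of $S$ by a target-proper, Hausdorff groupoid with $H^2$-trivial target fibers, then apply the rigidity theorem of \cite{Marcut14} (the paper cites \cite[Theorem 2]{Marcut14}, which covers both parts). Your hypothesis-checking (compactness of $S$, passing to the integrable neighborhood) matches what the paper leaves implicit.
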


%
%


\section{Examples of first order jets and applications}
\label{sec:examples}

In this section we illustrate the constructions and results of the previous sections with several examples. We follow the notation introduced there.

\subsection{Products}
\label{ex:family with no limit}
\label{ex:product}

Consider a product Poisson structure
\begin{equation}\label{eq:product}
(M,\pi):=(S\times \mathbb{R}^n,\pi_S+\gamma),
\end{equation}
where $\pi_S$ is a Poisson structure on $S$ and $\gamma$ is a Poisson structure on $\mathbb{R}^n$ vanishing at $0$. We consider the first order jet $(A_S,\mu_S)$ of $\pi$ along Poisson submanifold $S=S\times\{0\}$, which can be identified as follows. Denote by $\gg=T^*_0\R^n$ the isotropy Lie algebra of $\gamma$ at $0$. We have the trivial Lie algebra bundle $\ka=S\times\gg\to S$ and 
\[ A_S=T^*_SM=T^*S\times \gg\] 
is the product Lie algebroid, with closed IM form the projection $\mu_S=\pr_{T^*S}$. We call the pair $(A_S,\mu_S)$ a \textbf{trivial product jet}.

The trivial product jet $(A_S,\mu_S)$ is partially split with a canonical choice of an IM connection 1-form. This follows by observing that we have canonical isomorphisms
\[ T^*A_S=T^*(T^*S)\times (\gg\ltimes \gg^*), \quad A_S\ltimes\ka^*=T^*S\times (\gg\ltimes\gg^*),\]
in terms of which the projection
\[ p:T^*(T^*S)\times (\gg\ltimes \gg^*)\to T^*S\times (\gg\ltimes \gg^*). \]
is the bundle projection in the first factor. Hence, we have the partial splitting
\[ \theta:T^*S\times (\gg\ltimes \gg^*)\to T^*(T^*S)\times (\gg\ltimes \gg^*)\oplus\ka\oplus\ka^*, \quad (\al,v,\xi)\mapsto (0_\al,v,\xi). \]
The corresponding linear IM 2-form $\mu^{\lin}=\theta^*\mu_\can\in\Omega^2_\imult(T^*S\times (\gg\ltimes \gg^*))$ is
\[ \mu^\lin:T^*S\times (\gg\ltimes \gg^*)\to T^*(S\times \gg^*)=T^*S\times (\gg\ltimes \gg^*),\quad  (\al,v,\xi)\mapsto (0,v,\xi),\]
so the closed IM 2-form \eqref{eq:normal:IM:form} of the local model is just the identity map
\[ \mu_0=\pr^*\mu_S+\theta^*\mu_\can=\id. \]
The resulting local model Poisson manifold is the product
\[(M_0,\pi_0):=(S\times \mathbb{R}^n,\pi_S+\gamma_{\mathrm{lin}}).\]
where $\gamma_{\mathrm{lin}}$ is the linear Poisson structure on $\R^n=\gg^*$, i.e.\ the linear approximation to $\gamma$ at $0$.

Following Remark \ref{rem:path:to:linear}, let us look at the path of bivectors \eqref{wrong:path:to:linear}. Using the obvious identification $A_S\times \mathbb{R}^n\simeq T^*M$, we find
\[\pi_t=\pi_S+m_t^*(\gamma)=\pi_S+\frac{1}{t}\gamma_{\mathrm{lin}}+\gamma_2+t\,\gamma_3+\ldots,\]
where each $\gamma_k$ a bivector whose coefficients are homogeneous polynomials of degree $k$. Thus $\lim_{t\to 0}\pi_t$ exists if and only if $\gamma_{\mathrm{lin}}=0$, i.e., if and only if the isotropy Lie algebra of $\gamma$ at 0 is abelian, in which case we obtain
\[\lim_{t\to 0}\pi_t=\pi_S+\gamma_2\not=\pi_0.\] 

On the other hand, in this example the projection $\mathrm{pr}:A_1\to A_S$ is a Lie algebroid morphism so $\mathrm{pr}^*\mu_S$ is a closed IM 2-form. Hence, applying \eqref{path:to:linear} we obtain a smooth path of Poisson structures connecting $\pi$ to the local model $\pi_0$
\[\tilde{\pi}_t:=\pi_S+t\, m_t^*(\gamma).\]

\subsection{Jets over zero Poisson structures}

As we saw in Example \ref{example:aff1}, first order jets $(A_S,\mu_S)$ with $\pi_S=0$ may fail be partially split. The following result, which is a direct consequence of \cite[Proposition 6.1]{FM22} and of Proposition \ref{prop:local:model:explicit}, shows that the obstructions are precisely those found in Proposition \ref{prop:corollary:IM:partial:splittings}.

\begin{proposition}\label{prop:pi_S=0}
A first order jet of a Poisson structure $(A_S,\mu_S)$ with $\pi_S=0$ is partially split if and only if the following two conditions hold:
\begin{enumerate}[(i)]
\item $\ka=\ker\mu_S$ is a Lie algebra bundle;
\item there exists a splitting $A_S\simeq T^*S\oplus \ka$, for which the fiberwise Lie bracket is a direct product with abelian first component. 
\end{enumerate}
Moreover, if these conditions hold, then the associated linear model is isomorphic to the bundle of linear Poisson structure $(\ka^*,\pi_{\ka})$.
\end{proposition}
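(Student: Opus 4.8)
\textbf{Proof strategy for Proposition \ref{prop:pi_S=0}.}
The plan is to reduce everything to the general structure-equation description of partially split jets obtained in Proposition \ref{prop:operators:splitting} and the explicit form of the local model in Proposition \ref{prop:local:model:explicit}, specialized to the case $\pi_S=0$. First I would prove the ``only if'' direction. Suppose $(A_S,\mu_S)$ is partially split and fix an IM connection 1-form $(L,l)\in\Omega^1_\imult(A_S;\ka)$. By Proposition \ref{prop:corollary:IM:partial:splittings}, the induced linear connection $\nabla^L$ on $\ka$ preserves the fiberwise bracket, which forces $\ka$ to be a locally trivial bundle of Lie algebras; since a Lie algebra bundle is exactly a locally trivial bundle whose structure group preserves the bracket, this is condition (i). For condition (ii), note that with $\pi_S=0$ the cotangent algebroid $T^*S$ is just the abelian bundle $T^*S$ with zero anchor and zero bracket, so the short exact sequence \eqref{eq:short:exact:sequence} becomes an extension of abelian-by-$\ka$. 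The base map $l$ of the IM connection 1-form provides a vector bundle splitting $A_S\simeq T^*S\oplus\ka$, and I would compute the induced bracket on this splitting directly: using the symbol equation for $(L,l)$ and the IM conditions, the bracket of two sections of $T^*S$ lies in $\ka$ and is governed by the tensor $U$; when $\pi_S=0$, the structure equation (S3) of Proposition \ref{prop:structure:eqs} (specifically the skew-symmetry \eqref{eq:U:skew}, which reads $U(\al,\pi_S^\sharp(\be))=-U(\be,\pi_S^\sharp(\al))$ — but here $\pi_S^\sharp=0$) together with (S2) — whose right-hand side $[U(\al,X),\cdot]_\ka$ is evaluated at $X$ with $\pi_S^\sharp=0$ — collapses, and one reads off that the bracket on the $T^*S$-summand into $\ka$ vanishes, i.e. the splitting is a direct product of Lie algebra bundles with abelian first factor. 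This is condition (ii).

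For the ``if'' direction, given (i) and (ii), choose the splitting $A_S\simeq T^*S\oplus\ka$ from (ii), and since $\ka$ is a Lie algebra bundle choose a connection $\nabla^L$ on $\ka$ preserving the bracket (such a connection always exists on a Lie algebra bundle). Set $U:=0$. I would then verify that the pair $(\nabla^L,U=0)$ satisfies the structure equations (S1), (S2), (S3) of Proposition \ref{prop:structure:eqs}: (S1) is precisely that $\nabla^L$ preserves $[\cdot,\cdot]_\ka$; (S2) with $U=0$ and $\pi_S=0$ reduces to the statement that the curvature term $\nabla^L_{\pi_S^\sharp(\al)}\nabla^L_X - \cdots$ vanishes, which it does because every term containing $\pi_S^\sharp$ is zero and the remaining term $\nabla^L_{[\pi_S^\sharp(\al),X]}$ is also zero; (S3) with $U=0$ and $\pi_S=0$ is trivial. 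Hence $(\nabla^L,0)$ is coupling data in the sense of Definition \ref{def:coupling:data}, and by Proposition \ref{prop:operators:splitting} the jet $(A_S,\mu_S)$ is partially split with IM connection 1-form given by \eqref{eq:L:nabla:U}, namely $l=\mathrm{pr}_\ka$ and $i_XL(\al,\xi)=\nabla^L_X\xi$. (One should check that the jet reconstructed from this coupling data is isomorphic to the given one; this is the content of the ``moreover'' part of Proposition \ref{prop:operators:splitting}, using that when $\pi_S=0$ the only data in a partially split jet up to isomorphism is the Lie algebra bundle $\ka$ together with a bracket-preserving connection, and condition (ii) fixes the extension.)

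Finally, for the description of the local model, I would invoke Proposition \ref{prop:local:model:explicit} with the above coupling data. Since $U=0$, the defining condition for $M_0$ — invertibility of $\id+\langle z,U\rangle$ on $T^*_xS$ — holds for every $z\in\ka^*$, so $M_0=\ka^*$. The bivector $\gamma_z$ with $\gamma_z^\sharp=\pi_S^\sharp\circ(\id+\langle z,U\rangle)^{-1}=0$ vanishes, so the horizontal component of $\pi_0$ is zero and \eqref{eq:local:form:Poisson:coupling} gives $\pi_0|_z=\pi_{\ka_x}$, the fiberwise linear Poisson structure on $\ka^*$; i.e. $(M_0,\pi_0)=(\ka^*,\pi_\ka)$. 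By Corollary \ref{corollary:uniqueness:model} the local model is independent of the choice of IM connection 1-form, so this identification holds for any partial splitting. The main obstacle I anticipate is the bookkeeping in the ``only if'' direction: extracting condition (ii) cleanly requires carefully tracking how the bracket on $A_S$ decomposes under the splitting $l$ and confirming that with $\pi_S=0$ the tensor $U$ contributes nothing to the bracket of $T^*S$-sections (as opposed to the action of $A_S$ on $\ka$), which is a direct but slightly delicate unwinding of the IM/symbol conditions; everything else is a straightforward specialization of results already proved.
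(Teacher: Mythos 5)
Your overall strategy is sound and it is genuinely different from what the paper does: the paper disposes of this proposition by citing \cite[Proposition 6.1]{FM22} together with Proposition \ref{prop:local:model:explicit}, whereas you derive everything from the coupling-data machinery (Propositions \ref{prop:structure:eqs}, \ref{prop:operators:splitting}, \ref{prop:corollary:IM:partial:splittings}) inside this paper, which makes the argument more self-contained. The ``moreover'' part is handled exactly right: with $U=0$ the invertibility condition in Proposition \ref{prop:local:model:explicit} is vacuous, $\gamma_z=0$, so $(M_0,\pi_0)=(\ka^*,\pi_\ka)$, and Corollary \ref{corollary:uniqueness:model} removes the dependence on the choice of connection.

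Two steps need tightening. First, in the ``only if'' direction the mechanism you invoke for (ii) is misattributed: with $\pi_S=0$ the structure equations do \emph{not} ``collapse'' to kill $U$ --- (S2) only forces $U(\al,X)$ to be central in $\ka$, and (S3) becomes vacuous --- so nothing about the bracket can be ``read off'' from them alone. What actually makes the mixed bracket components vanish is that $U$ and $\nabla^L$ enter the bracket only through $\pi_S^\sharp$; the cleanest way to see this is the IM compatibility equation \eqref{eq:compatibility:IM:E:form}: since $\pi_S=0$ forces $\rho_{A_S}=\pi_S^\sharp\circ\mu_S=0$, the bracket is $C^\infty$-bilinear and $l([a,b])=\nabla^\ka_a l(b)$, so for $b\in\ker l$ one gets $l([a,b])=0$ and $\mu_S([a,b])=0$, i.e.\ $\ker l$ is a \emph{central} complement of $\ka$ --- which is (ii). Even quicker: part (ii) of Proposition \ref{prop:corollary:IM:partial:splittings}, which you already cite for (i), gives with $\rho_{A_S}=0$ the decomposition $A_S|_x=\ka_x\oplus\ker l_x$ into complementary ideals, and complementary ideals commute while $\ker l\simeq T^*S$ is abelian because $[A_S,A_S]\subset\ka$. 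Second, in the ``if'' direction, the appeal to the ``moreover'' clause of Proposition \ref{prop:operators:splitting} runs the wrong way: that clause says every partially split jet is of coupling type, not that your given jet coincides with the jet reconstructed from $(\nabla^L,0)$, and the reconstruction formula is not in this paper. The gap is easily closed by verifying directly that $l=\pr_\ka$ and $i_XL(\al,\xi)=\nabla^L_X\xi$ satisfy \eqref{eq:symbol:IM:form} and \eqref{eq:compatibility:IM:E:form} on the given algebroid: with zero anchor and the direct-product bracket of (ii), these reduce precisely to (S1), i.e.\ to $\nabla^L$ preserving $[\cdot,\cdot]_\ka$, which exists because $\ka$ is a Lie algebra bundle. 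With these two repairs the proof is complete.
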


\begin{remark}\label{rem:bundle of Lie algebras:Poisson:Dirac}
Let $(\ka,[\cdot,\cdot]_{\ka})\to S$ be a bundle of Lie algebras. The fiberwise linear Poisson structure $\pi_{\ka}$ on $\ka^*$ has the zero section as a Poisson submanifold with $\pi_S=0$. The corresponding first order jet is $(A_S=T^*S\oplus \ka, \mu_S=\pr_{T^*S})$, where sections of $T^*S$ commute with all other sections, and on $\ka$ we have the bracket $[\cdot,\cdot]_{\ka}$. By Proposition \ref{prop:pi_S=0}, this first order jet is partially split if and only if $\ka$ is a Lie algebra bundle. In this case, the local model is $\pi_0=\pi_{\ka}$.

However, even if no partial splitting exits, we have a Poisson embedding
\[ i:(\ka^*,\pi_{\ka})\hookrightarrow (A_S^*=TS\oplus \ka^*,\pi_{A_S}),\quad \xi\mapsto (0,\xi),\]
where  $\pi_{A_S}=\pi_{A_S,\mu_S}$. When $\ka$ is locally trivial this inclusion realizes $\ka^*$ as a Lie-Dirac submanifold (see Theorem \ref{theorem:Lie_Dirac}).
\end{remark}

\begin{example}\label{example:log:not:PD}
Consider the first order jet of the linear Poisson structure 
\[ (\R^2,\pi=x\,\pd{x}\wedge\pd{y}) \] 
along the Poisson submanifold $S=\{x=0\}$. In Example \ref{example:aff1} we gave a groupoid argument to see that it is not partially split. Proposition \ref{prop:pi_S=0} gives now an infinitesimal argument and one can also see this by applying Corollary \ref{corollary:embedding}: it is easy to see that the pullback of the Dirac structure corresponding to $\pi_{A_S,\mu_S}$ via any linear splitting $i:\ka^*\hookrightarrow A_S^*$ of the projection is not a smooth Dirac structure. 

Similarly, we can use Proposition \ref{prop:pi_S=0} to show that  the  first order jet of the linear Poisson structure 
\[ (\R^3,\pi=z\,\pd{x}\wedge\pd{y}) \] 
along the Poisson submanifold $S=\{z=0\}$ is not partially split. However, in this example, 
the pullback of the Dirac structure corresponding to $\pi_{A_S}=\pi_{A_S,\mu_S}$ via any linear splitting $i:\ka^*\hookrightarrow A_S^*$ of the projection is smooth, and
 corresponds to the zero Poisson structure $\pi_0=0$ on $\ka^*$. But note that $\pi_0$ is not a first order local model around $S$ because $J^1_S\pi_0=0\neq J^1_S\pi$. 
\end{example}

\subsection{Transitive jets}
\label{ex:transitive}
Let $(A_S,\mu_S)$ be a first order jet with $A_S\Ato S$ transitive. Such jets arise at symplectic leaves. That $A_S$ is transitive is equivalent to the induced Poisson structure $\pi_S$ being non-degenerate, and so its inverse is a symplectic structure $\omega$ on $S$. The IM 2-form is determined by $\omega$ via the relation $\mu_S(\al)=-i_{\rho(\al)}\omega$. Also, we have that $\ka=\ker\rho$. 

If we choose a splitting of the anchor
\[ \xymatrix{
0\ar[r] & \ka \ar[r] & A_S \ar[r]^{\rho} \ar@/^/@{-->}[l]^l & TS\ar[r]  \ar@/^/@{-->}[l]^{\tau} & 0}
\]
we can define a linear operator $L:\Gamma(A_S)\to\Omega^1(S,\ka)$ by setting
\[ i_XL(\al):=l([\tau(X),\al]). \]
One checks easily that the pair $(L,l)$ satisfies \eqref{eq:compatibility:IM:E:form}, so it is a $\ka$-valued, IM 1-form, with $l|_\ka=\id$. Hence, $L$ is a IM connection 1-form so that $(A_S,\mu_S)$ is partially split. Conversely, given any IM connection 1-form $(L,l)\in\Omega^1_\imult(A_S,\ka)$, the bundle map $l:A_S\to \ka$ determines a splitting of the anchor. These two constructions are inverse to each other (see \cite[Subsection 6.3]{FM22}). 

We conclude that transitive jets are partially split and a choice of IM connection 1-form is equivalent to a choice of splitting of its anchor. The resulting local model coincides with Vorobjev's local model for Poisson structures around symplectic leaves \cite{Vorobjev05}. 

\subsection{Cartan connections}
\label{sec:Cartan}
A {\bf Cartan connection} on a Lie algebroid $A_S$ is a connection $\nabla$ whose basic curvature vanishes identically
\[ R^{\bas}_\nabla\equiv 0. \] 
Hence, an immediate consequence of Proposition \ref{prop:Cartan:connection:splitting} is the following.

\begin{corollary}
Let $(A_S,\mu_S)$ be a first order jet equipped with a Cartan connection $\nabla$ and a splitting $l:A_S\to \ka$ which is $\overline{\nabla}$-invariant. Then $(A_S,\mu_S)$ is partially split.
\end{corollary}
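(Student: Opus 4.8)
The plan is to invoke Proposition \ref{prop:Cartan:connection:splitting} directly, checking that its two hypotheses are met. That proposition characterizes partially split first order jets $(A_S,\mu_S)$ with $\ka=\ker\mu_S$ as precisely those admitting a vector bundle splitting $l:A_S\to\ka$ of the short exact sequence
\[ \xymatrix{0\ar[r] & \ka \ar[r] & A_S\ar[r]^{\mu_S} & T^*S\ar[r] & 0} \]
together with a connection $\nabla$ on $A_S$ satisfying the two conditions
\[ \overline{\nabla} l=0,\qquad l(R^{\bas}_\nabla)=0. \]

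First I would observe that we are \emph{given} a connection $\nabla$ on $A_S$ which is a Cartan connection, i.e. $R^{\bas}_\nabla\equiv 0$, and a vector bundle splitting $l:A_S\to\ka$ which is assumed $\overline{\nabla}$-invariant. The phrase ``$\overline{\nabla}$-invariant'' is exactly the condition $\overline{\nabla} l=0$, so the first of the two required conditions holds by hypothesis. For the second condition, since $R^{\bas}_\nabla\equiv 0$ identically as a tensor, the composition $l(R^{\bas}_\nabla)$ is automatically zero: applying any bundle map to the zero tensor yields zero. Hence both conditions \eqref{eq:partial:splitting:basic:curvature} are satisfied by the pair $(l,\nabla)$.

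Therefore, by the ``if'' direction of Proposition \ref{prop:Cartan:connection:splitting}, the first order jet $(A_S,\mu_S)$ is partially split; indeed, the proposition even tells us that the pair $(l,\nabla)$ determines an explicit IM connection 1-form $(L,l)\in\Omega^1_\imult(A_S;\ka)$ via $i_X L(\al):=l(\nabla_X\al)$, which by Definition \ref{def:partially:split:jet} witnesses the partially split property. There is no real obstacle here: the statement is a near-immediate corollary, and the only thing to be careful about is matching the terminology — confirming that ``$\overline{\nabla}$-invariant splitting'' means precisely $\overline{\nabla} l = 0$ in the notation of \eqref{eq:partial:splitting:basic:curvature}, and that a Cartan connection's defining property $R^{\bas}_\nabla\equiv 0$ trivially forces $l(R^{\bas}_\nabla)=0$ regardless of $l$. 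Once these identifications are in place, the proof is a single sentence citing Proposition \ref{prop:Cartan:connection:splitting}.
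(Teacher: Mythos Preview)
Your proposal is correct and matches the paper's approach exactly: the paper states the corollary as ``an immediate consequence of Proposition \ref{prop:Cartan:connection:splitting}'' without writing out any further proof. Your unpacking of the two hypotheses---that $\overline{\nabla}$-invariance of $l$ is precisely $\overline{\nabla}l=0$, and that $R^{\bas}_\nabla\equiv 0$ forces $l(R^{\bas}_\nabla)=0$---is exactly the content of that one-line deduction.
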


\begin{example}[Action Lie algebroids]
Let $A_S=\gg\ltimes S\Ato S$ be the action Lie algebroid associated with a Lie algebra action $\rho:\gg\to\X(S)$. The canonical flat connection $\nabla$ on $A_S$ has vanishing basic curvature, hence it is a Cartan connection.
Given a bundle of ideals $\ka\subset A_S$ a splitting $l:A_S\to \ka$ is $\overline{\nabla}$-invariant if and only if it is $\gg$-equivariant
\[ l([v,w]_\gg)=[v,l(w)]_{\gg\ltimes S}, \]
for all $v,w\in\gg$ (here we identify elements of $\gg$ with constant sections of $A_S$). Hence, we recover the infinitesimal version of the example is Subsection \ref{ex:action}: a first order jet on an action algebroid admitting a $\gg$-equivariant splitting is partially split.

One class of examples is obtained when $\gg$ is a Lie algebra of compact type. Then $\gg$ admits an $\ad$-invariant scalar product $\langle\cdot,\cdot\rangle$, which yields a $\overline{\nabla}$-invariant fiberwise metric $\eta$ on $A_S$. Hence any bundle of ideals on $A_S=\gg\ltimes S$ admits a $\gg$-equivariant splitting, so it is partially split.
\end{example}

\begin{example}[Poisson submanifolds of $\gg^*$]
Let us consider the case where $S\subset\gg^*$ is a closed $\ad^*$-invariant submanifold. Then $S$ is a Poisson submanifold of the linear Poisson manifold $(\gg^*,\pi_\lin)$ and its first order jet $(A_S,\mu_S)$ is the action algebroid $A_S=\gg\ltimes S\Ato S$ obtained by restriction of the coadjoint action. Hence, we can use the results of the previous example.

Let us assume then that $\gg$ is a Lie algebra of compact type and choose an $\ad$-invariant inner product. Then $\ka^*=(\ker\mu_S)^*$ is the normal bundle of $S$ in $\gg^*$
\[ 
\ka^*=\{(w,x):x\in S, w\in (T_xS)^{\perp}\}\subset \gg^*\times\gg^*. 
\]
The Riemannian exponential is the $\gg$-equivariant map
\[\exp:\ka^*\to \gg^*, \quad (w,x)\mapsto w+x.\]
and so it induces a map of Lie algebroids 
\[ A_0=\gg\ltimes\ka^*\to T^*\gg^*=\gg\ltimes\gg^*. \]
This map is an isomorphism around $S$. On the other hand, the inner product determines a $\gg$-equivariant splitting giving a closed IM form $\mu_0$ on $A_0$
\[ \mu_0=\pr^*\mu_S+\mu^\lin.\]
Under the identification $A_0\simeq T^*\gg^*$ the two terms of $\mu_0$ correspond to the orthogonal projections on $T^*S$ and $(T^*S)^\perp$, respectively,
\[ 
\pr^*\mu_S|_\xi=\pr_{T^*_\xi S}: T_\xi\gg^*\to T^*_\xi S,\qquad
\mu^\lin|_\xi=\pr_{(T^*_\xi S)^\perp}: T_\xi\gg^*\to (T^*_\xi S)^\perp.
\]
Therefore, $\mu_0=\id$. 

We conclude that for a compact Lie algebra $\gg$, the Poisson manifold $\gg^*$ is linearizable around any closed Poisson submanifold $S\subset\gg^*$. This holds already at the groupoid level, as was shown in Subsection \ref{ex:action}. The result agrees with Theorem \ref{thm:normal:form:algebroid} since the Lie algebroid $T^*\gg^*$ is linearizable around $S$ if $\gg$ is compact. On the other hand, Theorem \ref{Normal:form:theorem} can only be applied when $\gg$ is compact, semi-simple, since only in this case $T^*_S\gg^*$ is integrable by a compact groupoid with target fibers having vanishing 2nd de Rham cohomology.
\end{example}

\subsection{Jets of principal type}
\label{ex:principal:type:bundle:local:model}
\label{ex:principal:type:Lie:algebroids}

We discuss the infinitesimal counterpart of the over-symplectic groupoids of principal type from Subsections \ref{example:os:princ:type} and \ref{example:os:princ:type:part:split}. Let $B\Ato S$ be a transitive Lie algebroid over a Poisson manifold $(S,\pi_S)$. Consider the Lie algebroid fibre product of $B$ with the cotangent Lie algebroid of $\pi_S$, i.e.,
\[A_S:=T^*S\times_{TS}B:=\{(\alpha,v)\, :\, \pi_S^{\sharp}(\alpha)=\rho_B(v)\},\]
where the structure is such that the inclusion in the product $A_S\hookrightarrow T^*S\times B$ is a Lie algebroid morphism. The projection $\mu_S:=\pr_{T^*S}:A_S\to T^*S$ is a closed IM 2-form, which is surjective, because $B$ is transitive. The resulting pair $(A_S,\mu_S)$ will be called the  \textbf{first order jet of principal type} associated to $(S,\pi_S)$ and $B$. 
 
Note that $\ka:=\ker\mu_{S}$ can be identified with $\ker\rho_B$ via $\pr_B$. As discussed in Section \ref{ex:transitive}, a splitting $l_B:B\to\ka$ of $\rho_B$ determines a IM connection 1-form form $(L_B,l_B)\in\Omega^1_\imult(B,\ka)$. Pulling this back to $A_S$, we obtain an IM connection 1-form 
\[(L:=L_B\circ\pr_B, l:=l_B\circ \mathrm{pr}_B)\in \Omega^1_{\imult}(A_S,\ka),\]
which also satisfies $l|_{\ka}=\id_{\ka}$. So first order jets of principal type are partially split. 

The associated coupling data is described as follows. The splitting $l_B$ gives an identification $B\simeq TS\oplus \ka$, where the anchor becomes $\pr_{TS}$ and the bracket
\begin{equation}\label{eq:bracket:transitive}
[(X,\xi),(Y,\eta)]_{B}=([X,Y],\Omega(X,Y)+\nabla^{B}_{X}\eta-\nabla^{B}_{Y}\xi+[\xi,\eta]_{\ka}),
\end{equation}
for all $X,Y\in \X^1(S)$, $\xi,\eta\in \Gamma(\ka)$. Here,
\begin{enumerate}
\item[-] $\Omega$ is $C^{\infty}(S)$-bilinear, so that $\Omega\in \Omega^2(S;\ka)$;
\item[-] $\nabla^{B}$ is a connection on $\ka$ preserving $[\cdot,\cdot]_\ka$ with curvature $R^{\nabla^B}=\ad(\Omega)$.
\end{enumerate}
Then, one finds that
\[i_XL(\alpha,\xi)=\nabla^B_X(\xi)+\Omega(X,\pi^{\sharp}_S(\alpha)),\]
and the coupling data (see Definition \ref{def:coupling:data}) is given by
\[
\nabla^{L}=\nabla^{B}, \qquad  U(\alpha,X)=\Omega(\pi_S^{\sharp}(\alpha),X).
\]

The local model of a first order jet of principal type $(A_S,\mu_S)$ for which the transitive Lie algebroid $B$ is integrable has a nice description: it is a Poisson geometric version of the ``symplectic induction'' construction (see e.g.\ \cite{GuillStern90}). For this, identify $B$ with the Atiyah algebroid of a principal $G$-bundle $P$
\[B\simeq TP/G.\]
Fix a principal connection $\theta\in \Omega^1(P)\otimes \mathfrak{g}$. Using the associated 1-form on $P\times \mathfrak{g}^*$
\[\widetilde{\theta}\in \Omega^1(P\times \mathfrak{g}^*), \ \ \widetilde{\theta}_{(x,\xi)}:=\pr_P^*\langle \theta_x| \xi\rangle,\]
we build a Dirac structure on $P\times \gg^{*}$ by setting
\begin{equation}\label{eq:Dirac:structure}
\L_{\theta}:=e^{-\d\tilde{\theta}}\left(\pr_S^{!}\L_{\pi_S}\right)=\{v+\pr_S^*(\xi)-i_v\d\widetilde{\theta} \, :\, \d\pr_S(v)=\pi_S^{\sharp}(\xi)\}.
\end{equation}
This Dirac structure is invariant under the diagonal action of $G$ and corresponds to a Poisson structure at points in $P\times \{0\}$. Hence, there is a $G$-invariant open set 
\[ P\times \{0\}\subset U\subset P\times \gg^*\] 
on which $\L_{\theta}$ is the graph of a Poisson structure $\pi_{\theta}\in \X^{2}(U)$. The $G$-action on $(U,\pi_U)$ is proper and free and the local model is the quotient Poisson manifold
\[(M_0,\pi_0):=(U,\pi_{\theta})/G, \quad M_0\subset \ka^*:=P\times_{G}\gg^*.\]
Note that the action of $G$ on $(U,\pi_{\theta})$ is Hamiltonian with $G$-equivariant moment map $\pr_{\gg^*}:U\to \gg^*$. 

\begin{example}[Local model around the Marsden-Weinstein reduction] Given a proper and free $G$-Hamiltonian action on a Poisson manifold $(X,\pi_X)$ with equivariant moment map $\mu:X\to \gg^*$, we have the Marsden-Weinstein reduction 
\[ S:=X/\!\!/G=\mu^{-1}(0)/G. \]
This is a Poisson submanifold $(S,\pi_S)$ of the Poisson quotient
\[ (M,\pi_M):=(X,\pi_{X})/G. \]
If we equip the principal $G$-bundle $P:=\mu^{-1}(0)\to S$ with a connection $\theta$, we have the local model above 
\[ (U,\pi_\theta)\subset (P\times\gg^*,L_{\theta}).\]
The following is proven in \cite{FrMaMoment}.

\begin{theorem}
Around $P=\mu^{-1}(0)$, the $G$-Hamiltonian spaces 
\[(X,\pi_X,\mu) \quad \textrm{and}\quad (U,\pi_{\theta},\pr_{\gg^*})\] 
are isomorphic. Hence, the quotient Poisson manifold $(M,\pi_M):=(X,\pi_{X})/G$ is linearizable around the Marsden-Weinstein quotient $(S=\mu^{-1}(0)/G,\pi_S)$.
\end{theorem}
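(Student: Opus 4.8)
The plan is to reduce this statement to the Lie-Dirac/Poisson-embedding picture developed in Section~\ref{sec:local:model}, rather than proving it by a direct computation. First I would recall that a proper free Hamiltonian $G$-action on $(X,\pi_X)$ with equivariant moment map $\mu:X\to\gg^*$ makes $P:=\mu^{-1}(0)$ a closed embedded $G$-invariant submanifold of $X$, so that $S=P/G$ is a Poisson submanifold of $(M,\pi_M)=(X,\pi_X)/G$. The key observation is to compute the first order jet $J^1_S\pi_M=(A_S,\mu_S)$ and identify it with a first order jet of principal type: the conormal bundle $\ka=(TS)^\circ$ should be identified with the (co)adjoint bundle $P\times_G\gg$, and $A_S=T^*_SM$ with the fibre product $T^*S\times_{TS}(TP/G)$, where $TP/G$ is the Atiyah algebroid of the principal bundle $P\to S$ with its natural transitive Lie algebroid structure. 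This identification is essentially the infinitesimal shadow of Marsden-Weinstein reduction, using that the $G$-action near $P$ linearizes (by slice theorem) so that a neighborhood of $P$ in $X$ looks like $P\times\gg^*$ with $\gg^*$-coordinates given by $\mu$; the anchor/bracket structure of $T^*_SM$ then matches \eqref{eq:bracket:transitive} via the choice of connection $\theta$.

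Next I would invoke the local model of a first order jet of principal type described in Subsection~\ref{ex:principal:type:Lie:algebroids}: once $B=TP/G$ is integrable (which it is, by the gauge groupoid $P\times_G P$), the Poisson local model of $(A_S,\mu_S)$ with respect to the IM connection $1$-form determined by $\theta$ is exactly the quotient $(U,\pi_\theta)/G\subset P\times_G\gg^*$, where $\L_\theta$ is the Dirac structure \eqref{eq:Dirac:structure}. So the local model $(M_0,\pi_0)$ appearing in the statement is, by construction, $(U,\pi_\theta)/G$. The content of the theorem is therefore the claim that $(M,\pi_M)$ is \emph{linearizable} around $S$ in the sense of Definition~\ref{def:linearizable:Poisson}, i.e.\ genuinely isomorphic (not merely first-order isomorphic) to this local model near $S$. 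By Theorem~\ref{thm:normal:form:algebroid} this is equivalent to $T^*M$ being linearizable as a Lie algebroid around $S$; and the cleanest route is to prove the equivalent $G$-Hamiltonian statement directly: near $P$, the Hamiltonian $G$-space $(X,\pi_X,\mu)$ is isomorphic to $(U,\pi_\theta,\pr_{\gg^*})$.

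To prove that $G$-equivariant isomorphism I would use a $G$-equivariant Moser/Weinstein-type argument in the Poisson category. Choose, via the slice theorem and averaging, a $G$-equivariant tubular neighborhood embedding $P\times_G(\text{something})\hookrightarrow X$; using the connection $\theta$ this gives a $G$-equivariant identification of a neighborhood of $P$ in $X$ with a neighborhood of $P\times\{0\}$ in $P\times\gg^*$. Under this identification both $\pi_X$ and $\pi_\theta$ restrict to the same first order jet along $P$ (because both have moment map $\pr_{\gg^*}$ and the same reduced Poisson structure $\pi_S$), so the corresponding cotangent Lie algebroids $T^*X$ and the algebroid of $\pi_\theta$ have isomorphic restrictions along $P$; one then runs the $G$-equivariant version of Proposition~\ref{prop:uniqueness:algbrd} (the IM Moser theorem of Appendix~\ref{appendix}, carried out $G$-equivariantly, which is legitimate since $G$ is compact or, more precisely, since the action is proper and one can average the homotopy operators) to conclude. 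The statement then descends to the quotient because all maps are $G$-equivariant.

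The main obstacle I expect is the identification step: showing carefully that the first order jet of $(M,\pi_M)=(X,\pi_X)/G$ along $S=P/G$ is precisely the principal-type jet built from $\pi_S$ and the Atiyah algebroid $TP/G$, with the connection $\theta$ producing exactly the IM connection $1$-form whose local model is $(U,\pi_\theta)/G$. This requires unwinding the moment map geometry near $\mu^{-1}(0)$ — essentially Marsden--Weinstein reduction in families — and checking that the coupling data $(\nabla^L,U)=(\nabla^\theta,\Omega\circ\pi_S^\sharp)$ from Subsection~\ref{ex:principal:type:Lie:algebroids} matches what one reads off from $\pi_X$. The rest (integrability of $TP/G$ via the gauge groupoid, and the equivariant Moser argument) is routine given the tools already assembled in the paper, though the equivariance of the Moser step needs a short justification that the homotopy operator in Lemma~\ref{lem:primitive} and its infinitesimal analogue can be chosen $G$-invariant. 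I would cite \cite{FrMaMoment} for the detailed verification of the identification and equivariant Moser argument, as the excerpt does.
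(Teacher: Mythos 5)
First, a point of comparison: the paper itself gives no proof of this statement -- it is quoted verbatim from \cite{FrMaMoment} -- so your proposal can only be measured against the framework of Subsection \ref{ex:principal:type:Lie:algebroids}, and there your set-up is consistent with it. The part you flag as the main obstacle is in fact the easier part: the identification of $J^1_S\pi_M$ with the principal-type jet $T^*S\times_{TS}(TP/G)$ is canonical and does not require the slice theorem or the connection $\theta$; the map $\xi\mapsto\bigl(\xi|_{TS},\,\pi_X^{\sharp}(q^*\xi)\bigr)$, with $q:X\to M$ the quotient, lands in $TP/G$ because $q^*\xi$ kills the orbit directions (so $\pi_X^{\sharp}(q^*\xi)\in\ker\d\mu=TP$), is injective by freeness of the action, and is a Lie algebroid map since $q$ is Poisson; the local model for this jet is then the quotient $(U,\pi_\theta)/G$ exactly as described before the theorem.

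The genuine gap is in the step you call routine. The equivariant Moser argument you invoke cannot be carried out with the tools of this paper. Proposition \ref{prop:uniqueness:algbrd} (resp.\ Proposition \ref{prop:Moser:algbrd}) compares two closed non-degenerate IM $2$-forms on the \emph{fixed} action algebroid $A_S\ltimes\ka^*$, so using it along $S$ presupposes that $T^*M$ is already identified with that action algebroid near $S$, i.e.\ presupposes the Lie algebroid linearizability which, by Theorem \ref{thm:normal:form:algebroid}, is equivalent to the statement being proved; and it cannot be run upstairs along $P$ either, because $P=\mu^{-1}(0)$ is only a coisotropic submanifold of $(X,\pi_X)$, not a Poisson or invariant one, so the paper's jet/IM machinery along Poisson submanifolds simply does not apply there. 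Moreover, even granting that $\pi_X$ and $\pi_\theta$ have the same first order data along $P$ (an assertion you do not actually justify), ``same first jet plus partially split'' does not yield a normal form in general -- this is precisely why Theorem \ref{Normal:form:theorem} carries compactness and cohomological hypotheses, and why, say, fixed points with noncompact semisimple isotropy admit a local model but need not be linearizable. The real content of the theorem is that the free proper Hamiltonian $G$-structure and the submersion $\mu$ supply the missing rigidity, via a normal form/Moser argument for the pair $(\pi_X,\mu)$ around the coisotropic level set (a Poisson--Dirac analogue of the Gotay/Guillemin--Sternberg--Marle constant-rank normal form), carried out equivariantly; your sketch neither constructs the path or gauge equivalence of Dirac structures joining $\pi_X$ to $\pi_\theta$ near $P$ nor explains why it stays Poisson, and the averaging remark only addresses equivariance. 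Deferring exactly this step to \cite{FrMaMoment} is what the paper does too, but it means your proposal does not contain a proof of the nontrivial part.
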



\begin{remark}
In the symplectic case $(X,\pi_X=(\omega_X)^{-1})$, the Marsden-Weinstein reduction is the usual symplectic quotient $(S,\pi_S=(\omega_S)^{-1})$. In this case, the Dirac structure $\L_\theta$ on $P\times\gg^*$ is the graph of the closed 2-form $\omega_\theta:=\pr_S^*\omega_S-\d\tilde{\theta}$, which is the classical ``coupling construction" due to  Guillemin, Sternberg and Weinstein (see, e.g., \cite{GuillStern90}). It is a standard result in symplectic geometry that $(U,\omega_\theta)$ provides a local $G$-equivariant model for $(X,\omega_X)$ around $\mu^{-1}(0)$ (see, e.g., \cite[Proposition 5.2]{JeffKir95} or \cite[Theorem 6.1]{Meinrenken_notes}).
\end{remark}
\end{example}

Finally, we recall the following result from \cite[Subsection 6.6]{FM22}, which gives a large class of first jets that are of principal type, hence partially split.

\begin{proposition}[\cite{FM22}]\label{prop:isotropy:rigid:implies:linearization}
Let $(A_S,\mu_S)$ be a first jet of a Poisson structure such that $\ka=\ker\mu_S$ is a Lie algebra bundle, whose typical fiber $(\gg,[\cdot,\cdot])$ satisfies
\begin{equation}\label{eq:cohomology:Lie:alg}
H^0(\gg,\gg)=H^1(\gg,\gg)=0.
\end{equation}
Then $(A_S,\mu_S)$ is of principal type, with transitive Lie algebroid the Atiyah Lie algebroid of the principal $\Aut(\gg,[\cdot,\cdot])$-bundle of $\gg$-frames
\[P=\{\varphi: \gg \diffto \ka_x\, :\, x\in S,\, \varphi \, \textrm{is a Lie algebra isomorphism}\}.\]
In particular, the assumptions hold if $\ka$ is a bundle of semi-simple Lie algebras.
\end{proposition}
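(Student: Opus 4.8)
The statement to prove is Proposition \ref{prop:isotropy:rigid:implies:linearization}: if $(A_S,\mu_S)$ is a first order jet with $\ka=\ker\mu_S$ a Lie algebra bundle whose typical fiber $\gg$ satisfies $H^0(\gg,\gg)=H^1(\gg,\gg)=0$, then $(A_S,\mu_S)$ is of principal type, with associated transitive Lie algebroid the Atiyah algebroid of the principal $\Aut(\gg)$-bundle $P$ of Lie algebra isomorphisms $\gg\diffto \ka_x$. The plan is to exhibit the data of ``principal type'' explicitly: namely a transitive Lie algebroid $B\Ato S$ together with an identification $A_S\simeq T^*S\times_{TS}B$ under which $\mu_S$ becomes $\pr_{T^*S}$.

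The first step is to understand the structure of $A_S$ in terms of $\ka$. Since $\ka$ is a bundle of ideals in $A_S$ (by the short exact sequence \eqref{eq:short:exact:sequence}), $A_S$ acts on $\ka$ by the representation $\nabla^\ka_\al\gamma=[\al,\gamma]_{A_S}$, which factors through an action of $A_S$ on $\ka$ by derivations, i.e.\ a Lie algebroid morphism $A_S\to \DD_{\mathrm{der}}(\ka)$, where $\DD_{\mathrm{der}}(\ka)\Ato S$ is the Lie algebroid of derivations of the Lie algebra bundle $\ka$ (the ``gauge algebroid'' of $\ka$ as a bundle of Lie algebras). I would then use the cohomological hypotheses: $H^0(\gg,\gg)=0$ means $\gg$ is centerless, so the adjoint map $\ka\to \DD_{\mathrm{der}}(\ka)$ is injective and identifies $\ka$ with the bundle of inner derivations; $H^1(\gg,\gg)=0$ means every derivation of $\gg$ is inner, so $\DD_{\mathrm{der}}(\ka)$ has isotropy $\mathrm{Der}(\gg)=\mathrm{ad}(\gg)\simeq\gg$, i.e.\ $\DD_{\mathrm{der}}(\ka)$ is a transitive Lie algebroid with kernel $\ka$ — precisely the Atiyah algebroid $B:=TP/\Aut(\gg)$ of the frame bundle $P$. (Local triviality of $\ka$ as a bundle of Lie algebras follows here too, since $\Aut(\gg)$ acts transitively on the frames; this is where $H^1=0$ enters to make $P$ a genuine principal bundle with structure group $\Aut(\gg)$.)

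The second step is to build the fibered product identification. The representation $A_S\to \DD_{\mathrm{der}}(\ka)=B$ together with the anchor-type data should be assembled into a Lie algebroid morphism $\Phi:A_S\to T^*S\times B$, where the first component is $\mu_S$ (using the cotangent algebroid structure of $\pi_S$ on $T^*S$) and the second is the derivation representation. One checks that $\Phi$ is a Lie algebroid morphism: the $T^*S$ component is $\mu_S$, which is a morphism because $\mu_S$ is an IM form (equivalently, the sequence \eqref{eq:short:exact:sequence} is a sequence of Lie algebroids); the $B$ component is a morphism because it is the derivation representation of a bundle of ideals. The image lands in the fibered product $T^*S\times_{TS}B$ because both components cover the same anchor $\rho_{A_S}$ (the anchor of $B$ is $\rho_B$, and compatibility of anchors forces $\pi_S^\sharp\circ\mu_S=\rho_B\circ(\text{repn})$, which is just the statement that $\rho_{A_S}$ factors correctly). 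Finally, $\Phi$ is an isomorphism onto $T^*S\times_{TS}B$: it is surjective because given $(\al,v)$ with $\pi_S^\sharp(\al)=\rho_B(v)$ one can lift, using that $\mu_S$ is surjective and that the kernels match up — both $\ker\mu_S$ and $\ker(A_S\to B)$ equal $\ka$ (the latter because $\gg$ centerless means the adjoint action has trivial kernel, so only $\ka$ acts trivially on $\ka$); a dimension/rank count then gives bijectivity. Under $\Phi$, $\mu_S$ becomes $\pr_{T^*S}$ by construction, which is exactly the defining property of a first order jet of principal type.

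The main obstacle I anticipate is the careful bookkeeping in the second step — verifying that $\Phi$ is well-defined into the fibered product, is a Lie algebroid morphism, and has kernel exactly $\ka$ — all of which hinge on using the two cohomological vanishing conditions in the right places ($H^0=0$ for injectivity of $\ka\hookrightarrow\DD_{\mathrm{der}}(\ka)$ and for matching kernels; $H^1=0$ for transitivity of $\DD_{\mathrm{der}}(\ka)$ and for $P$ being a principal $\Aut(\gg)$-bundle). Since the excerpt attributes this to \cite[Subsection 6.6]{FM22}, I expect the actual proof to cite that reference for the identification $\DD_{\mathrm{der}}(\ka)\simeq TP/\Aut(\gg)$ and to keep the argument short, essentially reducing to: (a) recall that $\ka$ is a bundle of ideals, (b) invoke the structure of the derivation algebroid of a semisimple-type Lie algebra bundle, (c) observe that $A_S$ maps to it with kernel $\ka$ and first-component $\mu_S$, hence $A_S\simeq T^*S\times_{TS}(TP/\Aut(\gg))$. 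The last sentence of the statement — that the hypotheses hold for $\ka$ a bundle of semisimple Lie algebras — is then immediate from Whitehead's lemmas ($H^0(\gg,\gg)=H^1(\gg,\gg)=0$ for $\gg$ semisimple).
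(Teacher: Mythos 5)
The paper does not actually prove this proposition here: it is quoted verbatim from \cite{FM22} (Subsection 6.6), so there is no in-text proof to compare against line by line. Judged on its own, your reconstruction is correct and is the natural argument that reference is based on: the adjoint action gives a well-defined bundle map $A_S\to\DD_{\mathrm{der}}(\ka)$ into the derivation (gauge) algebroid of $\ka$ (well-defined since $[f\al,\gamma]=f[\al,\gamma]$ because $\rho_{A_S}(\gamma)=0$ for $\gamma\in\Gamma(\ka)$), the pair $\Phi=(\mu_S,\mathrm{ad})$ is a Lie algebroid morphism into $T^*S\times_{TS}\DD_{\mathrm{der}}(\ka)$ (the first component by the closed IM equations, i.e.\ exactly the statement that \eqref{eq:short:exact:sequence} is a sequence of Lie algebroids, the second by Jacobi, with matching symbols since $\pi_S^\sharp\circ\mu_S=\rho_{A_S}$), its kernel is the bundle of centers of the fibers of $\ka$, which vanishes by $H^0(\gg,\gg)=0$, and both sides have rank $\dim S+\dim\gg$ because $H^1(\gg,\gg)=0$ and $H^0(\gg,\gg)=0$ give $\Der(\gg)=\ad(\gg)\simeq\gg$; an injective bundle map between bundles of equal rank is an isomorphism, and it carries $\mu_S$ to $\pr_{T^*S}$ by construction. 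The final reduction of the semi-simple case to Whitehead's lemmas is also correct.

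One inaccuracy worth fixing: local triviality of $\ka$ is \emph{not} a consequence of $H^1(\gg,\gg)=0$, as your parenthetical suggests (rigidity of the fiberwise bracket, hence local triviality of a bundle of Lie algebras, is governed by $H^2(\gg,\gg)$, not $H^1$). In this paper ``Lie algebra bundle'' means locally trivial (cf.\ the discussion around Proposition \ref{prop:corollary:partial:splittings}), so local triviality is part of the hypothesis; it is this assumption, not the cohomological ones, that makes $P$ a genuine principal $\Aut(\gg,[\cdot,\cdot])$-bundle, makes $\DD_{\mathrm{der}}(\ka)$ a transitive Lie algebroid, and identifies it with the Atiyah algebroid $TP/\Aut(\gg)$. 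The roles of the vanishing hypotheses are only the ones you otherwise use: $H^0=0$ for injectivity of $\ka\to\DD_{\mathrm{der}}(\ka)$ and of $\Phi$, and $H^1=0$ to identify the isotropy of the Atiyah algebroid with $\ka$ (equivalently, to match the ranks).
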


\subsection{Codimension one}
We now consider in detail the case of first order jets $(A_S,\mu_S)$ in codimension one, i.e., such that $\ker\mu_S$ is a line bundle. We will use the results of \cite[Subsection 6.8]{FM22} concerning IM connections for bundles of ideals of rank one.

If we choose a splitting $l_0:A_S\to\ka$ of the short exact sequence of $\mu_S$
\[ \xymatrix{
0\ar[r] & \ka \ar[r] & A_S \ar[r]^{\mu_S} \ar@/^/@{-->}[l]^{l_0} & T^*S\ar[r]  \ar@/^/@{-->}[l]^{\tau_0} & 0}
\]
we obtain a flat $T^*S$-connection $\nabla^{\ka}$ on the line bundle $\ka$
\[ \nabla^{\ka}_\al\xi:=[\tau_0(\al),\xi],\]
and an isomorphism of vector bundles
\[ A_S\simeq T^*S\oplus \ka. \]
Under this identification the IM-form becomes the projection $\mu_S=\pr_{T^*S}$, while the anchor and the Lie bracket are given by
\begin{align*}
    &\rho_{A_S}(\al,\xi)=\pi_S^\sharp(\al),\notag\\
    &[(\al,\xi),(\be,\eta)]_{A_S}=([\al,\be]_{\pi_S},\lambda_0(\al,\be)+\nabla^\ka_\al \eta-\nabla^\ka_\be \xi).
\end{align*} 
Notice that:
\begin{itemize}
   \item The flat $T^*S$-connection $\nabla^{\ka}$ is independent of the choice of splitting of $A_S$;
    \item The line bundle $\ka$ is canonically a representation of $T^*S$, so it has a characteristic class (see, e.g., Section 11.1 in \cite{CFM21})
    \[ c_1(\ka)\in H^1_{\pi_S}(S).\]
    In the sequel we will assume, for simplicity, that $\ka$ is orientable. If we fix a trivialization $\ka\simeq S\times \R$, then
    \[\nabla^{\ka}_{\alpha}(f)=\Lie_{\pi_S^{\sharp}(\alpha)}f+i_{V}(\alpha)f,\]
    for a unique Poisson vector field $V\in \X^1(S)$ representing the class $c_1(\ka)$. This vector field is independent of the choice of splitting of $A_S$. If we change the trivialization $\ka\simeq S\times \R$ by multiplying with a non-zero function $h$, $V$ changes to $V+\d_{\pi_S}\log(h)$. 
 \item The $\ka$-valued 2-vector field $\lambda_0$ is a Poisson 2-cocycle which depends on the choice of splitting $l_0$, but its cohomology class does not
    \[ c_2(A_S):=[\lambda_0]\in H^2_{\pi_S}(S,\ka).\]
    If $l:A_S\to \ka\simeq S\times \R$ is a second splitting, then $l=l_0+i_Z\circ \mu_S$, for a unique vector field $Z$ on $S$. Under this change, $\lambda_0$ becomes $\lambda=\lambda_0+\d_{\pi_S}Z$.
\end{itemize}

We can describe all possible IM connection 1-forms of a codimension one jet.

\begin{proposition}
\label{prop:partial:splittings:codim:1}
Let $(T^*S\oplus\R,\mu_S=\pr_{T^*S})$ be a codimension one first jet with a choice of trivialization $\ka\simeq S\times\R$. Then IM connection 1-forms are in 1-to-1 correspondence with triples $\theta\in\Omega^1(S)$, $Z\in\X(S)$ and $U:T^*S\to T^*S$, that satisfy 
\[  \pi_S^{\sharp}(\theta)=V,\qquad i_{\pi_S^\sharp(\be)}U(\al)=\lambda_0(\al,\be)+\d_{\pi_S} Z(\al,\be),\]
and the structure equations
\begin{align}
    \label{eq:S2:abelian} \tag{S2''}
    & i_{\pi^{\sharp}_S(\alpha)}\d\theta=0,\\
    \label{eq:S3:abelian} \tag{S3''}
    & U([\al,\be]_{\pi_S})=\Lie_{\pi_S^\sharp(\al)}U(\be)-i_{\pi^\sharp_S(\be)}\d U(\al)+\\
    &\qquad \qquad \qquad \qquad + \pi_S(U(\al),\beta)\theta +\pi_S(\theta,\al)U(\beta)-\pi_S(\theta,\be)U(\al),\notag 
\end{align}
for all $\al,\be\in\Omega^1(S)$.
\end{proposition}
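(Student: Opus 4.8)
The plan is to specialize the general structure equations and the general description of IM connection 1-forms in terms of coupling data (Proposition \ref{prop:operators:splitting}, Proposition \ref{prop:structure:eqs}) to the rank-one case, following the treatment in \cite[Subsection 6.8]{FM22}. Recall that a partially split first order jet $(A_S,\mu_S)$ with kernel $\ka$ carrying the given trivialization is, up to isomorphism, obtained from coupling data $(\nabla^L,U)$ on $(S,\pi_S,\ka=S\times\R)$, where $\nabla^L$ is a connection on $\ka$ preserving the (now \emph{abelian}) bracket, and $U\in\Gamma(TS\otimes T^*S\otimes\ka)$ satisfies (S1)--(S3) from Proposition \ref{prop:structure:eqs}; conversely the IM connection 1-form $(L,l)$ is recovered from $(\nabla^L,U)$ by \eqref{eq:L:nabla:U}. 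So the statement amounts to unwinding what (S1)--(S3) become when $\ka$ is a trivialized line bundle and expressing the data $(\nabla^L, U)$ in terms of the classical objects $\theta\in\Omega^1(S)$, $Z\in\X(S)$, $U:T^*S\to T^*S$.

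First I would observe that since $\ka$ has rank one and is trivialized, a connection $\nabla^L$ on $\ka$ is the same as a $1$-form $\theta\in\Omega^1(S)$, via $\nabla^L_X(f)=\Lie_X f + \langle\theta,X\rangle f$; the compatibility $\nabla^L_X\nabla^L$ with the bracket is automatic because the bracket is abelian, so (S1) is vacuous. Next, the tensor $U$ becomes a bundle endomorphism $U:T^*S\to T^*S$ composed with $\pi_S^\sharp$ in one slot — here one uses the skew-symmetry relation \eqref{eq:U:skew} exactly as in Proposition \ref{prop:glueing:vorobjev}/Proposition \ref{prop:local:model:explicit}: the condition $U(\al,\pi_S^\sharp(\be))=-U(\be,\pi_S^\sharp(\al))$ says precisely that $U(\al,X)=i_X(\text{something})$ can be written, after fixing the line-bundle trivialization, as $i_{\pi_S^\sharp(\be)}U(\al)$ with $U$ the stated endomorphism, and that the combination $\lambda_0+\d_{\pi_S}Z$ is the obstruction matching the 2-cocycle $c_2(A_S)$; this produces the first displayed pair of equations $\pi_S^\sharp(\theta)=V$ and $i_{\pi_S^\sharp(\be)}U(\al)=\lambda_0(\al,\be)+\d_{\pi_S}Z(\al,\be)$. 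The identity $\pi_S^\sharp(\theta)=V$ comes from comparing the connection $\nabla^L$ on $\ka$ with the \emph{canonical} flat $T^*S$-connection $\nabla^\ka$ described just before the proposition (whose ``$i_V$'' term represents $c_1(\ka)$): $\nabla^L$ and $\nabla^\ka$ must differ by the restriction of the symbol, which forces $\pi_S^\sharp(\theta)=V$.

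Then I would translate (S2) and (S3). Equation (S2) of Proposition \ref{prop:structure:eqs} relates the curvature of $\nabla^L$ to $[U,\cdot]_\ka$; since $\ka$ is abelian the right side vanishes, so (S2) reduces to the flatness-type condition $i_{\pi_S^\sharp(\al)}\d\theta=0$, which is \eqref{eq:S2:abelian}. For (S3), I would take the skew-symmetrized ``mixed cocycle'' equation of Proposition \ref{prop:structure:eqs}, substitute $\nabla^L\xi = \Lie\xi + \theta\,\xi$ and $U(\al,X)$ expressed through the endomorphism $U:T^*S\to T^*S$, and collect terms; the Leibniz corrections coming from the $\theta$-part of $\nabla^L$ produce exactly the three extra $\pi_S(\cdot,\cdot)$-terms in \eqref{eq:S3:abelian}. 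Finally I would check the converse: given $(\theta,Z,U)$ satisfying all the listed equations, define $\nabla^L$ from $\theta$, define the $\ka$-valued tensor from $U$ and the cocycle relation, verify (S1)--(S3) hold (running the same computation backwards), and invoke Proposition \ref{prop:operators:splitting} to get a bona fide IM connection 1-form; one must also note that changing the splitting $l_0$ (i.e.\ changing $Z$) or the trivialization of $\ka$ acts compatibly, matching the ``$\lambda_0\rightsquigarrow\lambda_0+\d_{\pi_S}Z$'' bookkeeping recalled above, so the correspondence is well-defined and bijective.

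The main obstacle I anticipate is purely computational: carefully tracking how the connection-induced Leibniz terms in $\nabla^L$ interact with the $\pi_S$-pairings when expanding (S3), and making sure the skew-symmetrization (needed because the genuine (S3) in Proposition \ref{prop:structure:eqs} is stated before imposing \eqref{eq:U:skew}) yields exactly the symmetric-looking correction $\pi_S(U(\al),\be)\theta + \pi_S(\theta,\al)U(\be) - \pi_S(\theta,\be)U(\al)$ and no more. There is no conceptual difficulty — everything is a specialization of \cite{FM22} — but the bookkeeping of signs and of which slot $\pi_S^\sharp$ acts on is where errors would creep in, so I would organize the computation by first doing the $\theta=0$ case (recovering the ``leafwise flat''/product-type equations) and then adding the $\theta$-corrections as a perturbation.
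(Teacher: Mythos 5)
Your proposal takes essentially the same route as the paper's proof: specialize the coupling-data description of IM connection 1-forms (Propositions \ref{prop:operators:splitting} and \ref{prop:structure:eqs}, i.e.\ \cite[Subsection 6.8]{FM22}) to a trivialized line bundle $\ka$, observe that (S1) is vacuous and that (S2), (S3) collapse to \eqref{eq:S2:abelian} and \eqref{eq:S3:abelian}, and encode $(l,\nabla^L,U)$ as the triple $(Z,\theta,U)$ constrained by $(V,\lambda_0)$. One small correction to your bookkeeping: the identities $\pi_S^{\sharp}(\theta)=V$ and $i_{\pi_S^{\sharp}(\be)}U(\al)=\lambda_0(\al,\be)+\d_{\pi_S}Z(\al,\be)$ do not come from the skew-symmetry \eqref{eq:U:skew}, but from matching the coupling data against the \emph{fixed} bracket data of $(A_S,\mu_S)$ (namely $\nabla^{\ka}_{\al}=\nabla^L_{\pi_S^{\sharp}(\al)}$ and the fact that the cocycle of the splitting $l=l_0+i_Z\circ\mu_S$ is $\lambda_0+\d_{\pi_S}Z$); the computation you plan produces these relations in any case, and likewise $Z$ measures the difference between $l$ and the fixed $l_0$, not a change of $l_0$ itself.
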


\begin{proof}
As we saw in Section \ref{sec:obstructions:partial:split}, IM connection 1-forms $(L,l)$ correspond to coupling data $(\nabla^L,U)$ satisfying the structure equations (S1)-(S3) in Proposition \ref{prop:structure:eqs}. As we observed above, the splitting $l:A_S\to\ka$ is related to our fixed splitting $l_0$ by $l=l_0+i_Z\circ \mu_S$, for a unique vector field $Z$ on $S$. On the other hand, the connection $\nabla^L$ is given by
\[ \nabla^L_X=\Lie_X+\theta(X), \]
for some $\theta\in\Omega^1(S)$. By Proposition \ref{prop:operators:splitting} and the discussion preceding the proposition, we see that we can codify the coupling data in terms of a triple $(\theta,U,Z)$, related to $V$ and $\lambda_0$ as in the statement and we only have to take care of the structure equations. 

In codimension one the first structure equation in Proposition \ref{prop:structure:eqs} is always satisfied. One the other hand, one easily checks that the second and third structure equations now take the form \eqref{eq:S2:abelian} and \eqref{eq:S3:abelian}.
\end{proof}

\begin{remark}\label{remark:Basic:class}
Equation \eqref{eq:S2:abelian}  has the following interpretation. For the (possibly singular) symplectic foliation $\F$ of $(S,\pi_S)$, we denote by $H^{\bullet}_{\F-\mathrm{bas}}(S)$ the $\F$-basic cohomology, i.e., the cohomology of differential forms $\eta$ satisfying 
\[i_{\pi_S^{\sharp}(\alpha)}\eta=0 \quad \textrm{and}\quad  i_{\pi_S^{\sharp}(\alpha)}\d\eta=0,\qquad \forall \, \alpha\in T^*S,\] 
endowed with the de Rham differential. Then \eqref{eq:S2:abelian} means that $\d\theta$ is an $\F$-basic form. If we change the trivialization, $\theta$ changes to $\theta+\d f$, so $\d\theta$ stays the same. If we change the IM connection 1-form, then $\theta$ changes to $\theta+\eta$, where $\eta$ is $\F$-basic. This shows that, the class
\[[\d\theta]\in H^2_{\F\textrm{-bas}}(S)\]
is independent of the choice of IM connection 1-form and trivialization. It is easy to see that the vanishing of this class is equivalent to $c_1(\ka)$ being in the image of the canonical map 
    \[\pi_S^{\sharp}:H^1(S)\to H^1_{\pi_S}(S),\]
and also equivalent to the existence of a (usual) flat connection $\nabla$ on $\ka$ inducing $\nabla^\ka$, i.e., such that $\nabla^{\ka}_{\al}=\nabla_{\pi_S^{\sharp}(\al)}$. 
\end{remark}

From the proposition, we obtain a simple class of codimension one jets for which it is easy to decide whether they are partially split:

\begin{corollary}
\label{cor:simple:codimension1:jets}
A Poisson manifold $(S,\pi_S)$ together with a Poisson vector field $V\in \X_{\pi_S}(S)$ yield a first jet of a Poisson structure on $A_S=T^*S\oplus\R$, with 
\begin{equation}\label{eq:1st:jet:Florian}
\nabla^{\ka}_{\alpha}f=\Lie_{\pi_S^{\sharp}(\alpha)}f+(i_V\alpha)\, f \quad\textrm{and}\quad \lambda_0\equiv 0.
\end{equation}
This is partially split if and only if $V=\pi_S^{\sharp}(\theta)$ for some 1-form $\theta$ with $\d\theta$ is $\F$-basic.
\end{corollary}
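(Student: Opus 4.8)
The plan is to reduce the statement to Proposition~\ref{prop:partial:splittings:codim:1}, exploiting the fact that here the Poisson cocycle $\lambda_0$ is zero. First I would check the opening assertion, that $(S,\pi_S,V)$ together with the prescription \eqref{eq:1st:jet:Florian} really does define a first order jet of a Poisson structure. By the general codimension-one construction recalled earlier in this subsection, a codimension-one jet with trivialized kernel $\ka\simeq S\times\R$ amounts to the data of a $T^*S$-connection $\nabla^\ka$ on $\R$, which is required to be flat, together with a Poisson $2$-cocycle $\lambda_0\in\Omega^2_{\pi_S}(S;\ka)$. Here $\lambda_0=0$ is trivially a cocycle, so the only thing to verify is that $\nabla^\ka_\al f=\Lie_{\pi_S^\sharp(\al)}f+(i_V\al)f$ is flat. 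Identifying $C^\infty(S)$-linear maps $T^*S\to C^\infty(S)$ with vector fields on $S$ and $\R$-valued Lie algebroid cochains of the cotangent algebroid $T^*S$ with multivector fields on $S$, the curvature of $\nabla^\ka$ corresponds under this dictionary to the Schouten bracket $[\pi_S,V]$; hence $\nabla^\ka$ is flat precisely because $V$ is a Poisson vector field. This produces the Lie algebroid $A_S=T^*S\oplus\R$ for which $\mu_S=\pr_{T^*S}$ is a closed IM $2$-form, i.e.\ a first order jet, by Corollary~\ref{cor:jets:Poisson}.

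For the equivalence I would invoke Proposition~\ref{prop:partial:splittings:codim:1} with $\lambda_0=0$: the jet is partially split if and only if there exists a triple $(\theta,Z,U)$, with $\theta\in\Omega^1(S)$, $Z\in\X(S)$, $U\colon T^*S\to T^*S$, satisfying $\pi_S^\sharp(\theta)=V$, the identity $i_{\pi_S^\sharp(\be)}U(\al)=\d_{\pi_S}Z(\al,\be)$, and the structure equations \eqref{eq:S2:abelian} and \eqref{eq:S3:abelian}. For the ``only if'' direction, any such $\theta$ already witnesses $V=\pi_S^\sharp(\theta)$, and \eqref{eq:S2:abelian}, which reads $i_{\pi_S^\sharp(\al)}\d\theta=0$, is exactly the assertion that $\d\theta$ is $\F$-basic (the remaining basic condition $i_{\pi_S^\sharp(\al)}\d(\d\theta)=0$ being automatic). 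For the ``if'' direction, given $V=\pi_S^\sharp(\theta)$ with $\d\theta$ $\F$-basic, I would take $Z=0$ and $U=0$: then the displayed identity becomes $0=0$, equation \eqref{eq:S2:abelian} holds by hypothesis, and \eqref{eq:S3:abelian} holds trivially since every term on both sides contains a factor of $U$. Thus $(\theta,0,0)$ is an admissible triple, an IM connection $1$-form exists, and the jet is partially split.

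The argument is short, and the only place requiring genuine care is the first paragraph: the claim that flatness of the connection $\nabla^\ka$ in \eqref{eq:1st:jet:Florian} is equivalent to $V$ being a Poisson vector field is what makes the jet well defined, and although it is a standard fact about rank-one representations of the cotangent algebroid, it should be spelled out. After that the equivalence with the partially split condition is an immediate specialization of Proposition~\ref{prop:partial:splittings:codim:1} to $\lambda_0=0$ via the choice $Z=U=0$, so no real obstacle remains.
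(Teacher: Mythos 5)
Your proposal is correct and follows essentially the paper's own route: both directions amount to specializing Proposition \ref{prop:partial:splittings:codim:1} to $\lambda_0=0$, taking $Z=U=0$ for the ``if'' direction, and for the converse you read $\theta$ off the triple directly, where the paper instead passes through the class $c_1(\ka)=[V]=[\pi_S^{\sharp}(\theta_0)]$ and corrects $\theta_0$ by an exact $1$-form --- an immaterial difference. Your preliminary verification that flatness of $\nabla^{\ka}$ in \eqref{eq:1st:jet:Florian} is equivalent to $[\pi_S,V]=0$, so that the data indeed defines a first order jet, is correct and fills in a point the paper leaves implicit.
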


\begin{proof}
If $V=\pi_S^{\sharp}(\theta)$, with $\d\theta$ $\F$-basic, then \eqref{eq:S2:abelian} holds. Letting $U=0$, \eqref{eq:S3:abelian} also holds, so we obtain an IM connection 1-form. 

Conversely, if the first order jet \eqref{eq:1st:jet:Florian} is partially split, by \eqref{eq:S2:abelian} we have $c_1(\ka)=[V]=[\pi_S^{\sharp}(\theta_0)]$, for some 1-from with $\d\theta_0$ $\F$-basic. So there is a function $f$ such that $V=\pi_S^{\sharp}(\theta_0+\d f)$, and we can set $\theta:=\theta_0+\d f$.  
\end{proof}

The next examples fit into the setting of this corollary.

\begin{example}\label{example:log:symplectic:not:p:s} Consider an orientable log-symplectic manifold $(M^{2n},\pi)$, with singular locus $S:=(\wedge^n\pi)^{-1}(0)$. Then, as shown in \cite{GuiMirPires}, $S$ has a tubular neighborhood $M_0\subset S\times \R$ in which
\[\pi=\pi_S+V\wedge t\partial_t,\]
where $V\in \X^1(S)$ is the restriction to $S$ of the modular vector field of $\pi$. Under the induced isomorphism $\ka\simeq S\times \R$, the first order jet along $S$ has classes $c_1(\ka)=[V]$ and $c_2(A_S)=[\lambda]=0$. Since $V$ is everywhere transverse to the symplectic leaves of $\pi_S$, this first order jet is not partially split. Note also that, since $\lambda=0$, the inclusion $T^*S\hookrightarrow A_S$ corresponding to the splitting is a Lie algebroid homomorphism. This supports the claim made in Remark \ref{remark:split:is:not:part:split}. 
\end{example}

\begin{example}\label{example:Florian}
On $S:=\R^3$, consider  
\[\pi_S:=(x\partial_y-y\partial_x)\wedge \partial_z,\quad V:=(x^2+y^2)\partial_z=\pi_S^{\sharp}(\theta_0),\quad \textrm{where}\quad  \theta_0:=x\d y-y\d x.\]
The corresponding first order jet is not partially split. Indeed, assume that $V=\pi_S^{\sharp}(\theta)$, for some 1-form $\theta$ with $\d\theta$ $\F$-basic. Since the symplectic foliation $\F$ has codimenion 1, it follows that $\d\theta=0$, and therefore $\theta=\d f$, for some $f\in C^{\infty}(\R^3)$. But the restriction of $\theta_0$ to a leaf $L_r:=\{x^2+y^2=r^2\}$ is not exact.
\end{example}

\begin{example}\label{example:Stephane}
On the 3-torus $S:=\{(\varphi_1,\varphi_2,\varphi_3)\, :\, \varphi_i\in S^1\}$, consider 
\[\pi_S:=\partial_{\varphi_1}\wedge\partial_{\varphi_2},\quad V:=\pi_S^{\sharp}(\theta_0),\quad \textrm{where}\quad \theta_0=\cos(\varphi_3)\d \varphi_1.\]
The corresponding first order jet is not partially split. Again, if we assume that $V=\pi_S^{\sharp}(\theta)$, with $\d\theta$ $\F$-basic, it follows that $\d\theta=0$. Hence, the integral $\int_{\gamma}\theta$ would depend only on the homology class of $\gamma:S^1\to S$. However, this is not true, since
\[\int_{S^1\times \{\varphi_2\}\times \{\varphi_3\}}\theta=\int_{S^1\times \{\varphi_2\}\times \{\varphi_3\}}\theta_0=2\pi \cos(\varphi_3),\]
where we used that $\theta-\theta_0$ vanishes on the leaves of $\pi_S$.  \end{example}

\begin{example}
Let $V\in \X^1(S^1)$ be the generator of the rotation and $W\in \X^1(S^3)$ be the generator of the $S^1$-action of the Hopf fibration $S^3\to S^2$. The first order jet on $S=S^1\times S^3$ corresponding to $\pi_S=V\wedge  W$ and $V$ is partially split. Namely, $V=-\pi_S^{\sharp}(\theta)$, where $\theta$ is a principal connection for the Hopf fibration. Note that the class 
\[[\d\theta]\in H^2_{\F-\mathrm{bas}}(S)\simeq H^2(S^2)\]
is the first Chern class of the Hopf fibration, and so it is non-trivial. Therefore, by Remark \ref{remark:Basic:class}, $\nabla^{\ka}$ is not induced by a flat connection and it follows from Proposition \ref{corollary:codim1:flat:type} (iii) below that this first order jet is not of principal type.
\end{example}

An IM connection 1-form $(L,l)\in\Omega^1_{\imult}(A_S;\ka)$ is called {\bf kernel flat} if the associated connection $\nabla^L$ is flat. We refer to \cite{FM22} for a detailed discussion of this condition. If $\nabla^L$ is flat, then the third structure equation \eqref{eq:S3:abelian} also has a nice geometric interpretation: it means that $(\d^{\nabla}U,U)$ is a closed IM 2-form with coefficients in $\ka$. We have the following consequence of \cite[Proposition 6.9]{FM22}.

\begin{proposition}
\label{corollary:codim1:flat:type}
Let $(A_S,\mu_S)$ be a codimension one first order jet inducing a Poisson structure $\pi_S$. Then
\begin{enumerate}[(i)]
    \item $(A_S,\mu_S)$ is isomorphic to the product jet $(S,\pi_S)\times\R$ if and only if
    \[ c_1(\ka)=0\quad\text{and}\quad c_2(A_S)=0; \]
    \item $(A_S,\mu_S)$ admits a kernel flat IM connection 1-form if and only $\ka$ admits a flat connection $\nabla$ inducing $\nabla^\ka$ and $c_2(A_S)$ is in the image of the canonical map
    \[ H^2_{\imult}(A_S;\ka)\to H^2_{\pi_S}(S;\ka);\]
    \item $(A_S,\mu_S)$ is of principal type if and only if $\ka$ admits a flat connection $\nabla$ inducing $\nabla^\ka$ and the class $c_2(A_S)$ is in the image of the canonical map
    \[ \pi_S^{\sharp}: H^2(S;\ka) \to H^2_{\pi_S}(S;\ka). \]
\end{enumerate}
\end{proposition}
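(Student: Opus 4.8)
The plan is to reduce everything to the structure equations (S2'') and (S3'') from Proposition \ref{prop:partial:splittings:codim:1}, together with the cohomological bookkeeping introduced just before it, and then to invoke the general results of \cite{FM22} quoted in the paragraphs preceding the statement. I will fix once and for all a trivialization $\ka \simeq S \times \R$, a splitting $l_0 : A_S \to \ka$, and the resulting data: the flat $T^*S$-connection $\nabla^\ka$ encoded by the Poisson vector field $V$ representing $c_1(\ka)$, and the Poisson 2-cocycle $\lambda_0$ representing $c_2(A_S)$. All three items then become statements about when one can solve the structure equations with appropriate extra constraints on the datum $\theta$ (for flatness of $\nabla^L$) or on $U$.

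For item (i), I will first observe that an isomorphism of jets $(A_S, \mu_S) \simeq (S,\pi_S) \times \R$ is exactly a choice of splitting $l$ and trivialization of $\ka$ making simultaneously $\nabla^\ka$ trivial (equivalently $c_1(\ka) = 0$, i.e.\ $V = 0$ after changing the trivialization by some $h$ with $\d_{\pi_S}\log h = -\theta$, which is possible precisely when $[V]=0$) and $\lambda_0 = 0$ (after changing the splitting by $i_Z \circ \mu_S$, which replaces $\lambda_0$ by $\lambda_0 + \d_{\pi_S} Z$, solvable precisely when $[\lambda_0] = 0$). Conversely, if both classes vanish then the resulting product structure is immediate from the formulas for $\rho_{A_S}$ and $[\cdot,\cdot]_{A_S}$ displayed before Proposition \ref{prop:partial:splittings:codim:1}. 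This is the cleanest of the three and essentially bookkeeping.

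For items (ii) and (iii) I would pass through the interpretation recorded in Remark \ref{remark:Basic:class}: a kernel flat IM connection 1-form requires $\nabla^L$ flat, and since $\nabla^L_X = \Lie_X + \theta(X)$ this means $\d\theta = 0$, which by that remark is equivalent to $\ka$ admitting a genuine flat connection $\nabla$ inducing $\nabla^\ka$ (equivalently $[\d\theta] = 0$ in $\F$-basic cohomology, equivalently $c_1(\ka) \in \mathrm{im}(\pi_S^\sharp : H^1(S) \to H^1_{\pi_S}(S))$). Once $\theta$ is closed, equation (S3'') says exactly that $(\d^\nabla U, U)$ — where $\d^\nabla$ is the de Rham differential twisted by the now-flat $\nabla$ — is a closed IM 2-form with coefficients in $\ka$, whose underlying Poisson 2-cocycle is $\lambda_0 + \d_{\pi_S} Z$, i.e.\ a representative of $c_2(A_S)$; this is precisely the content of \cite[Proposition 6.9]{FM22} applied here, giving the characterization in (ii). For (iii), the principal-type condition is by definition (see Section \ref{ex:principal:type:Lie:algebroids}) the existence of a \emph{leafwise flat} IM connection 1-form with $U(\alpha, \pi_S^\sharp(\beta)) = 0$; in codimension one this forces $U$ to descend to an honest $\ka$-valued 2-form on $S$ (using the skew-symmetry relation \eqref{eq:U:skew}), so that $\lambda_0 + \d_{\pi_S} Z = \pi_S^\sharp(\Omega)$ for a closed $\Omega \in \Omega^2(S;\ka)$, which is exactly the statement that $c_2(A_S)$ lies in the image of $\pi_S^\sharp : H^2(S;\ka) \to H^2_{\pi_S}(S;\ka)$; again this is \cite[Proposition 6.9]{FM22}. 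I expect the main obstacle to be the careful translation, in both directions, between the solvability of the twisted structure equation (S3'') modulo coboundaries and the image-of-$\pi_S^\sharp$ (respectively image-in-$H^\bullet_\imult$) condition — in particular verifying that changing the splitting and trivialization acts on $\theta$, $Z$, $U$ exactly by the coboundary operations needed to make these cohomological classes well-defined, and checking that the flatness constraint on $\nabla$ and the IM-closedness of $(\d^\nabla U, U)$ can be imposed simultaneously rather than just separately.
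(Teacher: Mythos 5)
Your items (i) and (ii) are essentially fine and follow the same route as the paper, which gives no self-contained argument either: it records that kernel flatness of $\nabla^L=\d+\theta$ means $\d\theta=0$, that \eqref{eq:S3:abelian} then says $(\d^{\nabla}U,U)$ is a closed IM 2-form with coefficients in $\ka$, and then quotes \cite[Proposition 6.9]{FM22} for all three equivalences. One small caveat in (ii), which you already flag: knowing $[\d\theta]=0$ in $\F$-basic cohomology (equivalently, that a flat $\nabla$ inducing $\nabla^\ka$ exists) does not by itself produce a kernel flat IM connection; one must exhibit a compatible $U$ for the modified, closed $\theta$, and this is exactly the content of the cited result.

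The genuine gap is in item (iii). Being of principal type is \emph{not} the existence of a leafwise flat IM connection 1-form: by definition (Subsection \ref{ex:principal:type:Lie:algebroids}) it means $A_S\simeq T^*S\times_{TS}B$ for a transitive Lie algebroid $B$, and the coupling datum this produces is $U(\al,X)=\Omega(\pi_S^{\sharp}(\al),X)$, with $\Omega$ the curvature 2-form of a splitting of $B$, which does not vanish on leafwise arguments in general. Your identification actually yields the wrong cohomological condition: by Proposition \ref{prop:partial:splittings:codim:1} every IM connection satisfies $i_{\pi_S^{\sharp}(\be)}U(\al)=\lambda_0(\al,\be)+\d_{\pi_S}Z(\al,\be)$, so leafwise flatness ($U(\al,\pi_S^{\sharp}(\be))=0$) forces $\lambda_0=-\d_{\pi_S}Z$, i.e.\ $c_2(A_S)=0$, which is strictly stronger than $c_2(A_S)\in\im\big(\pi_S^{\sharp}:H^2(S;\ka)\to H^2_{\pi_S}(S;\ka)\big)$; for instance, a transitive jet with non-trivial coupling class is of principal type but admits no leafwise flat IM connection. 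Relatedly, your use of the skew-symmetry \eqref{eq:U:skew} to make $U$ ``descend'' to a 2-form on $S$ is only valid where $\pi_S$ is nondegenerate (this is the move in the proof of Proposition \ref{prop:glueing:vorobjev}); in the degenerate case, $U(\al)=i_{\pi_S^{\sharp}(\al)}\Omega$ is an additional condition, not a consequence of \eqref{eq:U:skew}. The correct argument for (iii) runs through this condition: principal type corresponds to an IM connection with $U=i_{\pi_S^{\sharp}(\cdot)}\Omega$, where $\nabla^L=\nabla^B$ is automatically flat in codimension one (since $\ka$ is abelian of rank one, $R^{\nabla^B}=\ad(\Omega)=0$) and $\d^{\nabla^B}\Omega=0$ by Bianchi, giving $c_2(A_S)=[\pi_S^{\sharp}\Omega]$; conversely, from a flat $\nabla$ inducing $\nabla^\ka$ and a closed $\Omega$ with $[\pi_S^{\sharp}\Omega]=c_2(A_S)$ one rebuilds $B=TS\oplus\ka$ via the bracket \eqref{eq:bracket:transitive} and identifies $A_S$ with $T^*S\times_{TS}B$. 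As written, your proof of (iii) does not establish the stated equivalence.
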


\begin{example}
Let $\hh$ be a compact semi-simple Lie algebra, and consider the unit sphere $S\subset \hh^*$, as in the example in Subsection \ref{example:Lie-Poisson:sphere}. Under the diffeomorphism 
\[ S\times (0,\infty)\diffto \hh^*\backslash\{0\}, \quad (x,t)\mapsto \frac{1}{t}x, \] 
the linear Poisson structure on $\hh^*$ corresponds to the Poisson structure $t\cdot \pi_S$ on $S\times (0,\infty)$. This yields a splitting of
$A_S\simeq T^*S\oplus\R$, under which one has
\[\lambda(\alpha,\beta)=\pi_S(\alpha,\beta),\quad 
\nabla^{\ka}_{\alpha}=\Lie_{\pi^{\sharp}_S(\alpha)}.\]
This first order jet is partially split with coupling data
\[\nabla^L_X=\Lie_X,\quad U=-\id:T^*S\to T^*S.\]

Let $H$ be the 1-connected Lie group integrating $\hh$,
In the example in Subsection \ref{example:Lie-Poisson:sphere}, we have seen that if  $\hh\not\simeq \mathfrak{so}(3,\mathbb{R})$ the over-symplectic groupoid $\G_S=H\ltimes S$ is not of principal type. By applying Proposition \ref{corollary:codim1:flat:type} (iii), we can see that this is also true at the Lie algebroid level. Namely, since $H^1(S)=0$, any flat connection on $\ka\simeq S\times \R$ is isomorphic to the trivial one. Since $\hh\not\simeq \mathfrak{so}(3,\mathbb{R})$, we have that $H^2(S)=H^2(S,\ka)=0$. If this first jet was of principal type, then the corollary would imply that $\lambda=\pi_S$ is exact. To see that this is not the case, assume that $X$ is a primitive of $\pi_S$. Then $\Lie_X(\pi_S)=-\d_{\pi_S}X=-\pi_S$, and so $(\phi_X^{t})^*\pi_S=e^{-t}\pi_S$. In particular, for each symplectic leaf $(L,\omega_L)$, we have that $\phi_X^t(L)$ is a symplectic leaf symplectomorphic to $(L,e^t\omega_L)$. Then the symplectic volume of leaves in the sphere $S$ would be unbounded. It is well-known that this volume is bounded (see, e.g., \cite[Lemma 2.2]{Marcut14}).
\end{example}

The previous example, although not globally, is of principal type when restricted to small open sets. The following example violates even this condition
\begin{example}\label{example:Ginzburg}
Consider the first order jet $A_S\simeq T^*S\oplus\R$ on  $S=\R^2$ with data
\[ \lambda=\pi_S=(x^2+y^2)\partial_x\wedge\partial_y,\quad V\equiv 0. \]
As in the previous example, this first order jet is partially split, with coupling data
\[ \nabla^L_X=\Lie_X, \quad U=-\id. \] 
We claim that there is no open neighborhood $S_0\subset \R^2$ of 0 on which the restriction of the jet is of principal type. It is proved in \cite{Ginz96} that the class $[\pi_S]$ is non-trivial in the formal Poisson cohomology at 0, so it is non-trivial on any open neighborhood $S_0$ of $0$. Since $S_0$ can be assumed contractible, the claim follows from Proposition \ref{corollary:codim1:flat:type} (iii). 
\end{example}

\begin{remark}[Locally trivial jets]
In the previous example the jet is not locally a trivial product jet.
We call a general jet $(A_S,\mu_S)$ \textbf{locally trivial}, if $S$ can be covered by open subsets on which the restriction of the jet is isomorphic to a trivial product jet: $A_S|_U\simeq T^*U\times \gg$ (see Example \ref{ex:product}). We note the following properties of this class of jets
\begin{enumerate}[(i)]
\item First order jets of principal type are locally trivial: any transitive Lie algebroid $B\Rightarrow S$ is locally isomorphic to a product $B|_{U}\simeq TU\times \gg$ \cite{Du01}.
\item A partially split first order jet is locally trivial if and only if the local model is locally isomorphic to a product $(U,\pi_S|_U)\times (\gg^*,\pi_{\gg})$ (see Corollary \ref{corollary:uniqueness:model}). 
\item In codimension one, a first order jet is locally trivial if and only if the classes $c_1(\ka)$ and $c_2(A_S)$ vanish locally (see Proposition \ref{corollary:codim1:flat:type} (i)). 
\item A locally trivial first order jet is not necessarily partially split: Example \ref{example:Stephane} is locally trivial because $c_2(A_S)=0$ and $c_1(\ka)$ vanishes locally.
\item A partially split first order jet is not necessarily locally trivial: the first order jet from Example \ref{example:Ginzburg} is partially split, however it is not locally trivial because $c_2(A_S)$ does not vanish around 0.
\end{enumerate}
\end{remark}

In codimension one, Proposition \ref{prop:local:model:explicit} gives an explicit form of the local model.

\begin{proposition}\label{prop:local:model:codim:one}
Let $(A_S,\mu_S)$ be a partially split first order jet of a Poisson structure with $\ka\simeq S\times \R$. For an IM connection 1-form $(L,l)$ with coupling data $(\nabla^L=\dd +\theta,U)$, the corresponding local model $(M_0,\pi_0)$ is an open neighborhood $M_0\subset S\times \R$ of $S\times\{0\}$ with Poisson structure
\[\pi_0=\gamma_t+\gamma_t^{\sharp}(\theta)\wedge t\partial_t,\]
where $\gamma_t\in \X^2(S_t)$ is the bivector on the plaque $S_t:=M_0\cap( S\times\{t\})$ given by
\begin{equation}\label{eq:formula:gamma_t}
\gamma^{\sharp}_t=\pi_S^{\sharp}\circ (\id+t U)^{-1}.
\end{equation}
\end{proposition}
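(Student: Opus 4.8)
The plan is to specialize the general formula for the local model obtained in Proposition \ref{prop:local:model:explicit} to the codimension one situation, where $\ka\simeq S\times\R$ and the coupling data has the particular form given by Proposition \ref{prop:partial:splittings:codim:1}. First I would recall that, since $\ka$ is a trivialized line bundle, the fiberwise linear Poisson structure $\pi_{\ka_x}$ on each fiber $\ka^*_x\simeq\R$ vanishes identically (a one-dimensional Lie algebra is abelian). Hence in the decomposition \eqref{eq:local:form:Poisson:coupling} the vertical component drops out, and $\pi_0$ reduces to the $\nabla^L$-horizontal lift of the bivector $\gamma_z$ on $S$ whose sharp map is $\pi_S^\sharp\circ(\id+\langle z,U\rangle)^{-1}$. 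Writing the fiber coordinate as $t$, so that $z=t$ and $\langle z,U\rangle=tU$, this is exactly \eqref{eq:formula:gamma_t}, and $M_0$ is the open set where $\id+tU$ is invertible, as claimed.

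The second, and really the only nontrivial, step is to unwind what the $\nabla^L$-horizontal lift means once $\nabla^L$ is the connection $\dd+\theta$ on the trivial line bundle. Here I would use that a connection on $\ka=S\times\R$ of the form $\nabla^L_X=\Lie_X+\theta(X)$ dualizes to the connection $\nabla^{L,*}_X=\Lie_X-\theta(X)$ on $\ka^*=S\times\R$, whose horizontal distribution in $T(S\times\R)$ is spanned by the vector fields $X^{\mathrm{hor}}_t = \widetilde{X} + t\,\theta(X)\,\partial_t$ for $X\in\X(S)$ (the sign and the factor $t$ coming from the fact that $t$ is a linear fiber coordinate and the fiberwise dilation acts with weight one). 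Consequently, for a bivector $\gamma_t$ on the plaque $S_t$, its horizontal lift is $\mathrm{hor}_t\gamma_t = \gamma_t + \gamma_t^\sharp(\theta)\wedge t\partial_t$: indeed, applying the horizontal lift to a decomposable bivector $X\wedge Y$ gives $(X+t\theta(X)\partial_t)\wedge(Y+t\theta(Y)\partial_t) = X\wedge Y + t(\,\theta(Y)X-\theta(X)Y\,)\wedge\partial_t$, and the inner factor is precisely $i_{\theta}(X\wedge Y)$ up to sign; feeding this through $\gamma_t^\sharp$ produces the stated term $\gamma_t^\sharp(\theta)\wedge t\partial_t$. Plugging $\mathrm{hor}_t\gamma_t$ into \eqref{eq:local:form:Poisson:coupling} gives the formula $\pi_0 = \gamma_t + \gamma_t^\sharp(\theta)\wedge t\partial_t$ in the statement.

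I would also double-check consistency: since $U$ maps $T^*S\to T^*S$ and, by the skew-symmetry relation \eqref{eq:U:skew}, $i_{\pi_S^\sharp(\be)}U(\al)$ is skew in $\al,\be$, the composition $\pi_S^\sharp\circ(\id+tU)^{-1}$ is indeed skew-symmetric wherever defined, so $\gamma_t$ is a genuine bivector on $S_t$; this is already noted at the end of the proof of Proposition \ref{prop:local:model:explicit}. The identification of the open set $M_0$ with $\{t : \id+tU \text{ invertible at } \pr_S(\cdot)\}$ is immediate from the general description of $M_0$ in Proposition \ref{prop:local:model:explicit} once $\langle z,U\rangle$ is rewritten as $tU$.

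The main obstacle I anticipate is purely bookkeeping: getting the sign and the $t$-weight right in the horizontal lift formula $X^{\mathrm{hor}}_t=\widetilde X+t\theta(X)\partial_t$, since there is genuine room to be off by a sign of $\theta$ or to forget that the dual connection enters (the local model lives on $\ka^*$, not on $\ka$). Everything else is a direct substitution into Proposition \ref{prop:local:model:explicit}, so once the horizontal-lift conventions are pinned down — ideally by a one-line coordinate computation against the coordinate expression for $\pi_0$ displayed in the remark on the local model in coordinates — the proof is complete.
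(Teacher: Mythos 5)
Your strategy is exactly the one the paper intends: Proposition \ref{prop:local:model:codim:one} is a direct specialization of Proposition \ref{prop:local:model:explicit}, and your steps (the vertical term vanishes because a line bundle of Lie algebras is abelian, $\langle z,U\rangle=tU$ gives \eqref{eq:formula:gamma_t} and the description of $M_0$, and the horizontal lift produces the $\wedge\, t\partial_t$ term) are precisely that specialization, so the proposal is correct in substance.

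The one point you leave genuinely unresolved is the sign you flag, and as written your formulas land on the wrong side of it: the lift $X^{\mathrm{hor}}=\widetilde X+t\,\theta(X)\,\partial_t$ together with $\gamma_t^{\sharp}(\theta)=\gamma_t(\theta,\cdot)$ gives $\mathrm{hor}_t\gamma_t=\gamma_t-\gamma_t^{\sharp}(\theta)\wedge t\partial_t$. The sign is not a free convention: in the trivialization the $A_S$-action on $\ka$ is $\Lie_{\pi_S^{\sharp}\alpha}+\alpha(V)$ with $V=\pi_S^{\sharp}(\theta)$ (Proposition \ref{prop:partial:splittings:codim:1}), so the horizontal part of the anchor of $A_S\ltimes\ka^*$ at $(x,t)$ is $\pi_S^{\sharp}\alpha+t\,\alpha(V)\,\partial_t=\pi_S^{\sharp}\alpha-t\,\theta(\pi_S^{\sharp}\alpha)\,\partial_t$, i.e.\ the relevant lift is $X\mapsto X-t\theta(X)\partial_t$, and with it your wedge computation yields exactly $\gamma_t+\gamma_t^{\sharp}(\theta)\wedge t\partial_t$. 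Equivalently, the sign is forced by $J^1_S\pi_0=(A_S,\mu_S)$: for $U=0$ the local model must be $\pi_S+\pi_S^{\sharp}(\theta)\wedge t\partial_t$ (Example \ref{example:local:model:codim1:U=0}), whose first order jet carries $V=\pi_S^{\sharp}(\theta)$, whereas the opposite sign would produce the jet with $-V$. So your deferral to a convention check is the right instinct, but that check (via the anchor, the jet-consistency above, or the coordinate remark) is the actual content of the remaining step rather than an afterthought.
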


The following examples discuss the special cases where either $\theta$ or $U$ vanish.

\begin{example}\label{example:local:model:codim1:U=0}
If in Proposition \ref{prop:local:model:codim:one} we assume that $U=0$, then 
\[\pi_0=\pi_S+\pi_S^{\sharp}(\theta)\wedge t\partial_t\in \X^2(S\times \R).\]
Note that these Poisson structures fit into a more general class: for any Poisson vector field $V$ on $(S,\pi_S)$, one has a Poisson structure $\pi_S+V\wedge t\partial_t$, which has $S$ as a Poisson submanifold, but its first order jet will in general not be partially split (see Corollary \ref{cor:simple:codimension1:jets} or Example \ref{example:log:symplectic:not:p:s}).
\end{example}

\begin{example}
If in Proposition \ref{prop:local:model:codim:one} we assume that $\theta=0$, then the terms in the second line of \eqref{eq:S3:abelian} vanish $U:T^*S\to T^*S$ is a closed IM 2-form. In this case, the local model $(M_0,\pi_0)$ can be thought of as a deformation $\{\gamma_t\}_{t\in \R}$ of the Poisson structure $\pi_S$
\[\pi_0^{\sharp}|_{(x,t)}=\gamma_t^{\sharp}|_x=\pi_S^{\sharp}\circ(\id+tU)^{-1}|_x.\] 
If we assume that $\pi_S$ is integrable, this deformation has a nice geometric interpretation. Namely, let $(\Sigma,\omega)\tto S$ be a target 1-connected symplectic groupoid integrating $\pi_S$ and let $\eta\in \Omega^2_{\mult}(\Sigma)$ be the multiplicative closed 2-form corresponding to $U$. Then, for small $t$, $(\Sigma,\omega+t\eta)$ is a symplectic groupoid (at least locally), and the corresponding Poisson structures is precisely $\gamma_t$ (see e.g.\ \cite[Section 3.2.2]{AleMaria2}).
\end{example}

\begin{example}\label{example:deformation:IM:form}
Let us consider the particular case of the previous example where $\theta=0$ and $U=-\id$. The local model becomes simply
\[M_0=S\times (\R\backslash\{1\}),\quad \pi_0=\frac{1}{1-t}\pi_S.\]
We claim that $\pi_0$ extends to a smooth Dirac structure on $\ka^*=S\times \R$ if and only if $\pi_S$ is a regular Poisson structure. This is in contrast with what happens for Vorobjev's local model around symplectic leaves, which always comes from a global Dirac structure on $\ka^*$ (see, e.g., \cite{MarcutPhD}). 

One direction is clear: if $\pi_S$ corresponds to a regular foliation $\F$ with leafwise symplectic form $\omega$, then $\pi_0$ can be extended by the regular Dirac structure with foliation $T\F_0:=T\F\times \R$ and leafwise presymplectic form: $\omega_0=(1-t)\pr_S^*\omega|_{T\F_0}$. For the converse, we show that the graph of a non-regular $\pi_0$ does not extend to a Dirac structure on $S\times\R$. By invoking Weinstein's splitting theorem, we can reduce to the case when $\pi_S|_{x_0}=0$ and $\pi_S$ does not vanish identically in a neighborhood of $x_0$. Assume that the graph of $\pi_0^{\sharp}$ extends to a Dirac structure $\L$ on $S\times \R$. Since $\L_{(x_0,t)}=T^*_{(x_0,t)}(S\times \R)$ for $t\neq 1$, by continuity, we must have also that $\L_{(x_0,1)}=T^*_{(x_0,1)}(S\times \R)$. This implies that $\L$ corresponds to a Poisson structure $\pi_{\L}$ around $(x_0,1)$, which of course must be $\pi_0$ outside of $S\times \R$. This is a contradiction, because $\pi_0$ does not extend continuously at $(x_0,1)$.
\end{example}

\subsection{Other local models}\label{sec:other:local:local}

The local model for a Poisson manifold $(M,\pi)$ around a Poisson submanifold $S$ that we have introduced is characterized by the fact that the underlying Lie algebroid is the linear approximation to $T^*M$ around $S$. In some cases, it is possible that our local model does not exist, but there may exist some other local model. Many such local models have been considered in the literature, and they often turn out to be homogeneous of a certain degree. 
For a detailed version of the following discussion, see for example the recent PhD thesis of Aldo Witte \cite{Aldo}. However, these local models have little overlap with the ones we are considering.

Let $(M,\pi)$ be a Poisson manifold and let $S\subset M$ be a Poisson submanifold. By passing to a tubular neighborhood, we can replace $M$ by a vector bundle $E\to S$. Denote by $m_t(e)=t e$ the fiberwise multiplication by $t$. The bivector field $m_t^*(\pi)$ has a Laurent expansion of the following form
\[m_t^*(\pi)\simeq \frac{1}{t}\pi_{-1}+\pi_0+t\pi_1+\ldots,\]
where each term $\pi_i\in \X^2(E)$ is homogeneous of degree $i$, in the sense that
\[m_t^*(\pi_i)=t^{i}\pi_i.\]
Let $\pi_{k}$ be the first non-zero term of the sequence $\{\pi_i\}_{i\geq -1}$. Then $\pi_k$ is itself a Poisson structure and is independent of the chosen tubular neighborhood (see \cite{Aldo}). The pair $(E,\pi_k)$ is the \textbf{homogeneous local model} of $\pi$ around $S$. In most situations, the homogeneous local model is very different from ours. First of all, in general, $\pi_k$ depends on the $(k+2)$-th jet of the Poisson structure at $S$. 
Let us discuss the geometric structure encoded by the first non-zero term $\pi_k$. 

\underline{$k=-1$}. Poisson structures on the vector bundle $E$ such that $m_t^*{\pi_{-1}}=t^{-1}\pi_{-1}$ are in 1-to-1 correspondence with Lie algebroid structures on $E^*$. The condition that $S$ is a Poisson submanifold is equivalent to $\ka:=E^*$ being a bundle of Lie algebras. Then $\pi_S=0$ and $\pi_{-1}$ coincides with the linear Poisson structure on $\ka^*=E$. As we saw in Proposition \ref{prop:pi_S=0}, the partial split condition is satisfied precisely when $\ka$ is a Lie algebra bundle. In that case $\pi_{-1}$ coincides with our linear model. 

\underline{$k=0$}. The condition $\pi_{-1}=0$ imply that $S$ is a Poisson submanifold for which the bundle of Lie algebras $\ka:=\ker\mu_S$ is abelian. Therefore, homogeneous Poisson structures exclude many examples where our local model exists (see Example \ref{example:local:model:codim1:U=0} for the case of codimension one). Poisson structures on the vector bundle $E$ such that $m_t^*{\pi_{0}}=\pi_{0}$, called \emph{quadratic Poisson structures}, are discussed in the recent PhD thesis \cite{Mykola} (see also \cite[Proposition 5.1.55]{Aldo}). Several ``weakly degenerate'' structures admit quadratic normal forms around their singularities. As mentioned in Example \ref{example:log:symplectic:not:p:s}, log-symplectic manifolds admit a quadratic normal form around the singular locus, and these are not partially split. Elliptic Poisson structures \cite{CaGu,Aldo} also admit a quadratic normal form around the singular locus.


\underline{$k=1$}. In this case, the first order jet is very simple: $A_S\simeq T^*S\oplus\ka$ with bracket $[(\al,\xi),(\be,\eta)]=(0,\lambda(\al,\be))$, for some $\lambda\in \X^2(S,\ka)$.  This jet is partially split if and only if $\lambda\equiv 0$. However, also in this case interesting geometric structures can appear. For example, if the codimension of $S$ in $M$ is one, there is a 1-to-1 correspondence between Poisson structures $\pi_{1}$ on the line bundle $E$ satisfying $m_t^*(\pi_1)=t\pi_1$ and Jacobi structures $J$ on $E$ (see \cite{Vitagliano}). 

\underline{$k=2$}. The local model for scattering Poisson structures \cite{Lanius} is 2-homogeneous. For $k\geq 2$, the first order jet of $\pi$ at $S$ vanishes identically, so our local model is the zero Poisson structure.

\appendix


\section{The infinitesimal multiplicative Moser method}
\label{appendix}

Multiplicative forms and infinitesimal multiplicative (IM) forms play an important role in Poisson Geometry and, in particular, in this paper. In this appendix we give the main definitions and some basic results concerning such forms that are needed throughout the paper. We will use the notations and conventions from \cite{CFM21} and the Appendix of \cite{FM22}, and we refer to \cite{BuCa12,CrSaSt15,DrEg19} for proofs and details. In the last section of this appendix we give an IM-version of the well-known Moser method from symplectic geometry, which does not seem to be known, and which is used in the paper to prove uniqueness of the local model. 

\subsection{Multiplicative forms}

Let $\G\tto M$ be a Lie groupoid with source/target $\s,\t:\G\to M$ and multiplication $m:\G\timesst \G\to \G$. A differential form $\omega\in\Omega^k(\G)$ is called {\bf multiplicative} if
\[m^*\omega=\pr_1^*\omega+\pr_2^*\omega\in \Omega^{k}(\G\timesst\G),\]
where $\pr_i:\G\timesst \G\to \G$ are the projections on the factors. The differential of a multiplicative form is again a multiplicative form, so we have a complex of multiplicative differential forms $(\Omega^{\bullet}_\mult(\G),\d)$. 

A basic fact is that two multiplicative forms $\omega,\omega'\in \Omega^k_\mult(\G)$ such that $\omega|_M=\omega'|_M$ and $(\d\omega)|_M=(\d\omega')|_M$ actually coincide. Denote by $(A,[\cdot,\cdot],\rho)$ the Lie algebroid of $\G\tto M$. Given a multiplicative form $\omega\in\Omega^k_\mult(\G)$ one defines two vector bundle maps  $\mu:A\to \wedge^{k-1}T^*M$ and $\zeta:A\to \wedge^kT^*M$ by setting
\begin{equation}
\label{eq:M:IM:forms}
 \mu(a)=i_a\omega|_{TM},\quad \zeta(a)=i_a\d\omega|_{TM}.
\end{equation}
The pair $(\mu,\zeta)$ satisfies the following set of equations for any $\al,\be\in\Gamma(A)$
\begin{align}
\label{eq:mult:form}
i_{\rho(\be)} \mu(\al) &=-i_{\rho(\al)} \mu(\be),\notag\\
\mu([\al,\be])&=\Lie_{\rho(\al)}\mu(\be)-i_{\rho(\be)}\d \mu(\al)-i_{\rho(\be)}\zeta(\al),\\
\zeta([\al,\be])&=\Lie_{\rho(\al)}\zeta(\be)-i_{\rho(\be)}\d \zeta(\al).\notag
\end{align}

This leads to the notion of {\bf infinitesimal multiplicative $k$-form} on an arbitrary Lie algebroid $A\Ato M$ integrable or not: a pair $(\mu,\zeta)$, where $\mu:A\to \wedge^{k-1}T^*M$ and $\zeta:A\to \wedge^kT^*M$ are bundle maps satisfying \eqref{eq:mult:form}. 

The space of IM forms is denoted by $\Omega_\imult^k(A)$ and it becomes a cochain complex with differential given by
\begin{equation}
    \label{eq:IM:differential}
    \d_\imult:\Omega_\imult^k(A)\to \Omega_\imult^{k+1}(A), \quad \d_\imult(\mu,\zeta):=(\zeta,0).
\end{equation}
For a \emph{target 1-connected} Lie groupoid $\G\tto M$ with Lie algebroid $A\Ato M$ the assignment $\omega\mapsto (\mu,\zeta)$ given by \eqref{eq:M:IM:forms} is an isomorphism of complexes
\[ (\Omega^\bullet_\mult(\G),\d)\simeq (\Omega^\bullet_\imult(A),\d_\imult). \]

We denote closed IM forms $(\mu,0)$ simply by $\mu$. For us the most relevant IM-forms will be closed IM 2-forms $\mu:A\to T^*M$ which are {\bf non-degenerate}, meaning that they are bundle isomorphisms.

\begin{example}[IM exact forms]\label{example:IM:exact}
Any differential form $\omega\in \Omega^{k}(M)$ induces a IM-form \[\rho^*(\omega)\in \Omega_{\imult}^{k}(A),\]
with components $\rho^*(\omega)=(\mu,\zeta)$ given by
\[\mu(\al)= i_{\rho(\al)}\omega\quad \zeta(\al)=i_{\rho(\al)}\d\omega.\]
Forms of this type will be called \textbf{IM-exact}.
\end{example}

\begin{example}[IM forms and Poisson structures]
If  $\mu:A\to T^*M$ is a non-degenerate closed IM 2-form, we obtain a Poisson structure $\pi\in\X^2(M)$ by letting
\begin{equation}
\label{eq:Poisson:IMform} 
\pi^\sharp:=\rho\circ\mu^{-1}: T^*M\to TM.
\end{equation}
This in turn induces a cotangent Lie algebroid structure $(T^*M,[\cdot,\cdot]_\pi,\pi^\sharp)$, where
\[ [\al_1,\al_2]_\pi:=\Lie_{\pi^\sharp(\al_1)}\al_2-\Lie_{\pi^\sharp(\al_2)}\al_1-\d\pi(\al_1,\al_2). \]
The IM form $\mu:A\to T^*M$ then becomes a Lie algebroid isomorphism. 

Conversely, a Poisson structure $\pi\in\X^2(M)$ yields a Lie algebroid structure on the cotangent bundle $(T^*M,[\cdot,\cdot]_\pi,\pi^\sharp)$ such that the identity map $\id:T^*M\to T^*M$ is a non-degenerate closed IM 2-form.

The groupoid version of this is as follows: starting with a symplectic groupoid $(\G, \omega)$, i.e., a multiplicative non-degenerate closed 2-form $\omega\in \Omega^2_\mult(\G)$, we obtain
\begin{enumerate}[(i)]
\item a non-degenerate closed IM 2-form on its Lie algebroid $\mu:A\to T^*M$, and
\item a unique Poisson structure $\pi\in\X^2(M)$ for which the target map $\t:\G\to M$ is a Poisson map.
\end{enumerate}
Then $\mu$ and $\pi$ are related by \eqref{eq:Poisson:IMform}. In this way one recovers the well-known correspondence between target 1-connected symplectic groupoids and integrable Poisson manifolds. 
\end{example}

\begin{example}[IM forms and Dirac structures]\label{example:robust}
Generalizing the previous example, a closed IM 2-form $\mu:A\to T^*M$ is called \textbf{robust} if the map
\begin{equation}\label{eq:robust:rho:mu}
(\rho,\mu):A\to TM\oplus T^*M
\end{equation}
has constant rank equal to the dimension of $M$. Then the image of this map 
\[\L:=(\rho,\mu)(A) \subset TM\oplus T^*M\]
is a Dirac structure on $M$ \cite{BCWZ04}. 
 
If $\mu$ is robust and $(\rho,\mu)$ is injective, then we call $\mu$ \textbf{Dirac non-degenerate}. In this case, we have a Lie algebroid isomorphism $(\rho,\mu):A\diffto \L$. Conversely, for any Dirac structure $\L$, the projection $\mu:=\pr_{T^*M}:\L\to T^*M$ is Dirac non-degenerate. 
\end{example}

\begin{example}[The canonical IM 2-form]\label{ex:canonical:IM}
Let $\G\tto M$ be a Lie groupoid with Lie algebroid $A\Ato M$. The canonical symplectic structure $\omega_{\can}$ on the cotangent bundle $T^*\G\tto A^*$ is multiplicative. Moreover,  $(T^*\G,\omega_{\can})\tto A^*$ is the symplectic groupoid integrating the linear Poisson structure on $A^*$. At the infinitesimal level, we have the canonical ``reverse isomorphism'' $\mu_{\can}:T^*A \diffto TA^*$ (see \cite{Mackenzie05}), which is a closed, non-degenerate IM 2-form $\mu_{\can}\in \Omega^2_{\imult}(T^*A)$.
\end{example}

\subsection{Multiplicative forms with coefficients}
We also need to consider multiplicative forms and IM forms with coefficients in a representation.

Let $\G\tto M$ be a Lie groupoid and let $p:E\to M$ be a $\G$-representation. We will work with differential forms on $\G$ with coefficients in $\s^*E$ (instead of $\t^*E$, as in some of the references), which we denote by
\[\Omega^{\bullet}(\G;E):=\Omega^{\bullet}(\G;\s^*E).\]
We refer to these simply as multiplicative forms on $\G$ with coefficients in $E$. Similarly, we denote by $\Omega^{\bullet}(\G^{(k)};E)$ the space of differential forms on the manifold $\G^{(k)}$, of composable $k$-strings of arrows, with values in the vector bundle $(\s\circ \pr_k)^*(E)\to \G^{(k)}$, where $\pr_i:\G^{(k)}\to \G$ is the projection onto the $i$-th component.

\begin{definition}
\label{def:mult:forms:coeff}
A form $\omega\in\Omega^{\bullet}(\G;E)$ is called {\bf multiplicative} if it satisfies
\[m^*\omega|_{g_1g_2}=\pr_1^*\omega|_{g_1}+\pr_2^*\omega|_{g_2},\quad\forall\,  (g_1,g_2)\in \G^{(2)}.
\]
We denote by $\Omega^\bullet_\mult(\G;E)$ the space of $E$-valued multiplicative $k$-forms. 
\end{definition}


Let us pass to the infinitesimal level, so denote by $A\Ato M$ the Lie algebroid of $\G\tto M$ and $\nabla$ the induced representation of $A$ on $E$. 
Now, if $\omega\in\Omega^k_\mult(\G;E)$ is an $E$-valued multiplicative $k$-form, we define  a linear operator $L:\Gamma(A)\to \Omega^k(M; E)$ and a vector bundle map $l:A\to \wedge^{k-1}T^*M\otimes E$ by
\begin{align}
\label{eq:M:IM:E:forms}
L(\al)_x(v_1,\dots,v_k)&:=\frac{\d}{\d \epsilon}\Big|_{\epsilon=0} \phi_{\al^{L}}^{\epsilon}(x)\cdot \omega(\d_x \phi_{\al^{L}}^{\epsilon}(v_1),\dots,\d_x \phi_{\al^{L}}^{\epsilon}(v_k)),\\
l(a)&:=(i_a\omega)|_{TM}.\notag
\end{align}
where $\phi_{\al^{L}}^{\epsilon}$ denotes the flow of the left-invariant vector field $\al^L$.
The resulting operator $L$ is a kind of differential operator with symbol $l$, in the sense that for any $f\in C^\infty(M)$ and $\al\in\Gamma(A)$ it satisfies
\begin{equation}
\label{eq:symbol:IM:form}
L(f\al)=fL(\al)+\d f\wedge l(\al).
\end{equation}
Furthermore, the pair $(L,l)$ satisfies the following set of equations
\begin{align}
\label{eq:compatibility:IM:E:form}
i_{\rho(\al)}l(\be)&=-i_{\rho(\be)} l(\al),\notag \\
L([\al,\be])&=\Lie_{\al} L(\be)-\Lie_{\be} L(\al),\\
l([\al,\be])&=\Lie_{\al} l(\be)-i_{\rho(\be)}L(\al), \notag
\end{align}
where, for $\al\in\Gamma(A)$ and $\gamma\in\Omega^k(M; E)$, we denoted
\begin{equation*}\Lie_\al\gamma(X_1,\dots,X_k):=\nabla_\al(\gamma(X_1,\dots,X_k))-\sum_{i=1}^k\gamma(X_1,\dots,[\rho(\al),X_i],\dots,X_k).
\end{equation*}

These equations make sense for a general, possibly non-integrable Lie algebroid.

\begin{definition}
Let  $(E,\nabla)$ be a representation of a Lie algebroid $A\Ato M$. An {\bf $E$-valued IM $k$-form} is a pair $(L,l)$, where $L:\Gamma(A)\to \Omega^k(M,E)$ is a linear map and $l:A\to \wedge^{k-1}T^*M\otimes E$ is a vector bundle map satisfying \eqref{eq:symbol:IM:form} and \eqref{eq:compatibility:IM:E:form}. 
\end{definition}

We denote by $\Omega^\bullet_\imult(A;E)$ the space of $E$-valued IM $k$-forms. For historical reasons, these are also sometime called \emph{Spencer operators}. For a \emph{target 1-connected} Lie groupoid $\G\tto M$ with Lie algebroid $A\Ato M$ the assignment $\omega\mapsto (L,l)$ given by \eqref{eq:M:IM:E:forms} gives an isomorphism
\[ \Omega^\bullet_\mult(\G;E)\simeq \Omega^\bullet_\imult(A;E). \]

\subsection{Multiplicative Cartan calculus} 
 
Recall that a vector field $X\in\X(\G)$ is called {\bf multiplicative} if its flow $\phi^t_X:\G\to \G$ is by Lie groupoid automorphisms. Equivalently, $X:\G\to T\G$ is a groupoid morphism, and this amounts to the relation
\[ \d m(X_g, X_h)=X_{gh}, \quad \forall (g,h)\in \G\timesst \G. \]
We denote by $\X_\mult(\G)\subset \X(\G)$ the subspace of multiplicative vector fields.
A multiplicative vector field $X\in\X(\G)$ covers vector field $\sigma_X\in\X(M)$
\[ \s_*(X)=\t_*(X)=\sigma_X. \]
The Lie bracket of multiplicative vector fields is multiplicative.
If $X\in \X_\mult(\G)$ and $\omega\in\Omega^k_\mult(\G)$ then the contraction $i_X\omega$ is still a multiplicative form. In this way, all the usual formulas from Cartan calculus restrict to multiplicative objects.

At the infinitesimal level, multiplicative vector fields correspond the {\bf Lie algebroid derivations}, i.e., to $\R$-linear maps $D:\Gamma(A)\to\Gamma(A)$ for which there exists a vector field $\sigma_D\in\X(M)$, called {\bf symbol} of $D$, such that
\[ D(f\al)=fD(\al)+\Lie_{\sigma_D}(f) \al, \quad \forall f\in C^\infty(M), \al\in\Gamma(A), \]
and which act as derivations of the Lie bracket
\[ D([\al,\be])=[D(\al),\be]+[\al,D(\be)],\quad \forall \al,\be\in\Gamma(A). \]
For example, a section $\be\in\Gamma(A)$ defines the {\bf inner derivation}
\[ D_\be:=[\be,\cdot], \]
with symbol $\sigma_{D_\be}=\rho(\be)$.

Given a multiplicative vector field $X\in\X_\mult(\G)$ the corresponding derivation $D_X\in\Der(A)$ has symbol $\sigma_X$, and can be defined by
\[ (D_X(\al))^L:=[X,\al^L], \]
where $\be^L\in \X(\G)$ denotes the left-invariant vector field on $\G$ determined by $\be \in\Gamma(A)$. For a \emph{target 1-connected} Lie groupoid $\G\tto M$ with Lie algebroid $A\Ato M$ the assignment $X\mapsto D_X$ establishes an isomorphism between the space of multiplicative vector fields $\X_\mult(\G)$ and the space of Lie algebroid derivations $\Der(A)$. 

For a Lie algebroid $A$ the {\bf flow of a derivation} $D\in\Der(A)$ is the unique (local) 1-parameter group of Lie algebroid automorphisms $\varphi^t_D:A\to A$ satisfying
\[ \frac{\d}{\d t}(\varphi^t_D)^*(\al)=D((\varphi^t_D)^*(\al)), \quad \varphi^t_D=\id. \]
It covers the flow $\phi^t_{\sigma_D}$ of the symbol of $D$ and it is defined as long as the flow of the symbol is defined. If $D_X\in\Der(A)$ is a derivation corresponding to a multiplicative vector field $X\in\X_{\mult}(\G)$ then its flow is the 1-parameter group of Lie algebroid automorphisms induced by the flow of $X$
\[ \varphi^t_{D_X}=(\phi^t_X)_*:A\to A. \]

The assignments mapping  multiplicative forms and vector fields to IM forms and algebroid derivations can be used to transfer the Cartan Calculus on multiplicative forms to infinitesimal multiplicative forms. 

Given a IM form $(\mu,\xi)\in\Omega^k_\imult(A)$ and a Lie algebroid derivation $D\in\Der(A)$ with symbol $\sigma_D\in\X(M)$ one has
\begin{enumerate}[(i)]
\item the {\bf interior product} of $(\mu,\xi)$ by $D$ is the IM form of degree $k-1$
\[ i_D(\mu,\xi):=(-i_{\sigma_D}\mu,\Lie_D\mu+i_{\sigma_D}\xi). \]
\item the {\bf Lie derivative} of $(\mu,\xi)$ along $D$ is the IM form of degree $k$
\[ \Lie_D(\mu,\xi):=(\Lie_D\mu,\Lie_D\xi). \]
\end{enumerate}
Here, for a bundle map $\nu:A\to\wedge^kT^*M$, we have denoted by $\Lie_D\nu:A\to\wedge^kT^*M$ its Lie derivative, which is the bundle map defined by
\[ (\Lie_D\nu)(\al):=\frac{\d}{\d t}\Big|_{t=0} (\phi^t_{\sigma_D})^*\nu((\varphi^t_D)_*(\al))=\Lie_{\sigma_D}(\nu(\al))-\nu(D(\al)). \]
For example, for an inner derivation $D_\be=[\be,\cdot]$ and any IM form $(\mu,\xi)$ one obtains
\begin{equation*}
\Lie_{D_\be}(\mu,\xi)=\rho^*(\d\mu(\be)-\xi(\be)),
\end{equation*} 
i.e., the form is IM-exact, in the sense of Example \ref{example:IM:exact}.

\begin{proposition}
The operators $\d_\imult:\Omega_\imult^\bullet(A)\to \Omega_\imult^{\bullet+1}(A)$, $\Lie_D:\Omega_\imult^\bullet(A)\to \Omega_\imult^{\bullet}(A)$ and $i_D:\Omega_\imult^\bullet(A)\to \Omega_\imult^{\bullet-1}(A)$ satisfy
\begin{enumerate}[(i)]
\item $[\d_\imult,i_D]=\Lie_D$;
\item $[\Lie_{D},\Lie_{D'}]=\Lie_{[D.D']}$;
\item $[\Lie_{D},i_{D'}]=i_{[D,D']}$;
\item $[i_{D},i_{D'}]=0$.
\end{enumerate}
\end{proposition}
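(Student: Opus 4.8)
The plan is to prove the four identities by direct computation, using only the explicit formulas for $\d_\imult$, $i_D$ and $\Lie_D$ on a pair $(\mu,\zeta)\in\Omega^\bullet_\imult(A)$, together with three standard facts: that $D\mapsto\sigma_D$ is a morphism of Lie algebras (so $\sigma_{[D,D']}=[\sigma_D,\sigma_{D'}]$) and $[D,D']$ is again a derivation of $A$; that the ordinary Cartan calculus on $\X(M)$ and $\wedge^\bullet T^*M$ holds, in particular $[\Lie_X,i_Y]=i_{[X,Y]}$, $[\Lie_X,\Lie_Y]=\Lie_{[X,Y]}$ and $i_Xi_Y=-i_Yi_X$ for $X,Y\in\X(M)$; and the auxiliary ``mixed'' identity $\Lie_D(i_X\nu)=i_X(\Lie_D\nu)+i_{[\sigma_D,X]}\nu$ for a bundle map $\nu:A\to\wedge^\bullet T^*M$ and $X\in\X(M)$, which follows immediately from $(\Lie_D\nu)(\al)=\Lie_{\sigma_D}(\nu(\al))-\nu(D\al)$ and $[\Lie_{\sigma_D},i_X]=i_{[\sigma_D,X]}$. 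All the operators act componentwise or are algebraic in $(\mu,\zeta)$, so each identity reduces to a short check. I would record the mixed identity as a preliminary lemma, and also note that $\Lie_D$ acting on a single bundle map satisfies $[\Lie_D,\Lie_{D'}]=\Lie_{[D,D']}$ on its own, which is the content of (ii) restricted to the $\mu$-component.

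Concretely, I would carry out the steps in the following order. First (i): since $\d_\imult$ has degree $+1$ and $i_D$ degree $-1$, $[\d_\imult,i_D]$ is the anticommutator, and plugging $\d_\imult(\mu,\zeta)=(\zeta,0)$ and $i_D(\mu,\zeta)=(-i_{\sigma_D}\mu,\Lie_D\mu+i_{\sigma_D}\zeta)$ one gets $i_D\d_\imult(\mu,\zeta)+\d_\imult i_D(\mu,\zeta)=(\Lie_D\mu,\Lie_D\zeta)=\Lie_D(\mu,\zeta)$, the $i_{\sigma_D}\zeta$ terms cancelling. Next (ii): working on each component separately with $(\Lie_D\nu)(\al)=\Lie_{\sigma_D}(\nu(\al))-\nu(D\al)$, expanding $\Lie_D\Lie_{D'}\nu(\al)$ and antisymmetrising in $D,D'$ yields $\Lie_{[\sigma_D,\sigma_{D'}]}(\nu(\al))-\nu([D,D']\al)=\Lie_{[D,D']}\nu(\al)$. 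Then (iii): here $[\Lie_D,i_{D'}]$ is the ordinary commutator; the first component follows from the mixed identity, and the second component from (ii) applied to $\mu$ plus the mixed identity applied to $\zeta$, with the $i_{\sigma_{D'}}\Lie_D\zeta$ terms cancelling. Finally (iv): $[i_D,i_{D'}]$ is the anticommutator; the first component vanishes because $i_{\sigma_D}i_{\sigma_{D'}}\mu$ is antisymmetric in $D,D'$, and in the second component the $\zeta$-terms cancel for the same reason while the $\mu$-terms cancel after applying the mixed identity to rewrite $\Lie_D(i_{\sigma_{D'}}\mu)$.

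An alternative route, which I would mention but not pursue in detail, is to first prove everything for a target $1$-connected Lie groupoid $\G$ integrating $A$: there the identities are literally the restriction of the ordinary Cartan calculus on $\Omega^\bullet(\G)$ to the subspaces $\Omega^\bullet_\mult(\G)$ and $\X_\mult(\G)$, using that contraction and Lie derivative preserve multiplicativity, and then transport them along the isomorphism $\Omega^\bullet_\mult(\G)\simeq\Omega^\bullet_\imult(A)$. Since all the formulas involved are pointwise and only involve first-order data, one then extends to arbitrary (possibly non-integrable) $A$ by local integrability. I regard the direct-computation approach as cleaner and self-contained, so that is the one I would write up.

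\textbf{Main obstacle.} There is no serious conceptual difficulty here; the whole content is bookkeeping. The step most prone to sign errors is (iii), because it is the only ordinary (ungraded) commutator among the four and it mixes a degree-shifting interior product with the degree-preserving Lie derivative, so one must be careful that the cross-terms $i_{\sigma_{D'}}\Lie_D\zeta$ and $i_{\sigma_D}\Lie_{D'}\mu$ appear with the correct signs and cancel against the pieces produced by the mixed identity. I would therefore isolate the mixed identity as a lemma up front and use it uniformly, so that (iii) and (iv) become mechanical.
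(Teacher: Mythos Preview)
Your proposal is correct and complete. The paper actually states this proposition without proof: the surrounding text only remarks that ``the assignments mapping multiplicative forms and vector fields to IM forms and algebroid derivations can be used to transfer the Cartan Calculus on multiplicative forms to infinitesimal multiplicative forms'', and then records the four identities as a proposition with no argument. So there is no proof in the paper to compare against; your direct computation fills this gap and is exactly the kind of verification the authors are implicitly leaving to the reader. The alternative route you mention (transfer via a target $1$-connected integration and then extend by local integrability) is the one the paper's phrasing hints at, but your self-contained componentwise check is preferable since it avoids any appeal to integrability.
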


For a symplectic groupoid $(\G,\omega)$, the relation $i_X\omega=\eta$ gives a 1-to-1 correspondence between multiplicative vector fields $X$ and multiplicative 1-forms $\eta$, which sends multiplicative exact vector fields to multiplicative exact forms. In other words, we have the commutative diagram
\begin{equation}
    \label{eq:iso:symplectic:grpd}
    \vcenter{\xymatrix{
    \X_\mult(\G)\ar[r]^{\omega^\flat} & \Omega^1_\mult(\G)\\
    \Gamma(A)\ar[u]^{\delta}\ar[r]_{\mu} & \Omega^1(M)\ar[u]_{\delta}
    }}
\end{equation}
where the horizontal rows are isomorphisms and the vertical arrows are the simplicial differentials
\begin{align*}
    \delta:\Gamma(A)\to \X_\mult(\G),\qquad &\delta(\be)=\be^R-\be^L,\\
    \delta:\Omega^1(M)\to \Omega^1_\mult(\G),\qquad &\delta(\theta)=\t^*\theta-\s^*\theta.
\end{align*}

We give now the infinitesimal version of this result. 

\begin{lemma}\label{lemma:1:to:1:derivations:IM} 
A closed, non-degenerate IM 2-form $\mu:A\to T^*M$ gives a bijection
\[\Der(A)\diffto \Omega^1_{\imult}(A),\quad D\mapsto i_D(\mu,0),\]
which maps inner derivations to IM-exact 1-forms.
 \end{lemma}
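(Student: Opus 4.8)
The plan is to mirror, at the infinitesimal level, the well-known groupoid statement encoded in the commutative diagram \eqref{eq:iso:symplectic:grpd}, exploiting the correspondences recalled in this appendix. The key point is that a closed, non-degenerate IM 2-form $\mu:A\to T^*M$ is, by the second ``IM forms and Poisson structures'' example, precisely a Lie algebroid isomorphism $\mu:A\diffto T^*M$ onto the cotangent algebroid of the Poisson structure $\pi$ defined by $\pi^\sharp=\rho\circ\mu^{-1}$. So the statement reduces to the special case $A=T^*M$ with $\mu=\id$, via transport of structure along $\mu$: a derivation $D$ of $A$ pushes to a derivation $\mu_* D$ of $T^*M$, an IM 1-form $(L,l)$ of $A$ pushes to an IM 1-form of $T^*M$, inner derivations go to inner derivations, and IM-exact 1-forms go to IM-exact 1-forms, since all these notions are natural under Lie algebroid isomorphisms. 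Hence it suffices to construct a bijection $\Der(T^*M)\diffto \Omega^1_\imult(T^*M)$, $D\mapsto i_D(\mathrm{id},0)$, mapping inner to IM-exact.

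First I would unwind the definition: for $D\in\Der(T^*M)$ with symbol $\sigma_D\in\X(M)$, the interior product formula gives $i_D(\id,0)=(-i_{\sigma_D}\id, \Lie_D\id)=(L,l)$ with $l(\alpha)=-i_{\sigma_D}\alpha=-\alpha(\sigma_D)$ and $L(\alpha)=\Lie_{\sigma_D}\alpha-D(\alpha)$ for $\alpha\in\Omega^1(M)$. Injectivity is then immediate: if $(L,l)=0$ then $l=0$ forces $\sigma_D=0$ (pairing $-\alpha(\sigma_D)$ vanishing for all $\alpha$), and then $L=0$ forces $D(\alpha)=\Lie_{\sigma_D}\alpha=0$, so $D=0$.

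For surjectivity, given $(L,l)\in\Omega^1_\imult(T^*M)$, I would recover the symbol by setting $\sigma\in\X(M)$ via $\alpha(\sigma):=-l(\alpha)$ — this is well-defined as a vector field because $l:T^*M\to \wedge^0 T^*M\otimT^*M = T^*M$ is a bundle map, hence equals $\langle\cdot,\sigma\rangle$ for a unique $\sigma$ by non-degeneracy of the pairing — and then define $D(\alpha):=\Lie_{\sigma}\alpha-L(\alpha)$. One must check $D\in\Der(T^*M)$: the Leibniz/symbol property $D(f\alpha)=fD(\alpha)+\Lie_\sigma(f)\alpha$ follows directly from the symbol equation \eqref{eq:symbol:IM:form} for $(L,l)$ together with the Cartan identity $\Lie_\sigma(f\alpha)=f\Lie_\sigma\alpha+(\Lie_\sigma f)\alpha$; the derivation-of-the-bracket property $D([\alpha,\beta]_\pi)=[D\alpha,\beta]_\pi+[\alpha,D\beta]_\pi$ is where the compatibility equations \eqref{eq:compatibility:IM:E:form} enter, combined with the standard fact that $\Lie_\sigma$ is a derivation of $[\cdot,\cdot]_\pi$ iff $\sigma$ is a Poisson vector field — so the calculation will in effect re-derive, from the IM axioms, that $\sigma$ must be Poisson (this is automatic since $l$ comes from a genuine IM form). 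By construction $i_D(\id,0)=(L,l)$, giving surjectivity. I expect this verification of the bracket-derivation identity — carefully matching the three equations in \eqref{eq:compatibility:IM:E:form} against the expansion of $\Lie_\sigma[\alpha,\beta]_\pi-L([\alpha,\beta]_\pi)$ — to be the main technical obstacle, though it is a routine, if somewhat lengthy, computation.

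Finally, for the inner/IM-exact correspondence: if $D=D_\beta=[\beta,\cdot]_\pi$ is the inner derivation of $\beta\in\Gamma(T^*M)=\Omega^1(M)$, then by the displayed formula preceding the proposition, $\Lie_{D_\beta}(\mu,\xi)=\rho^*(\d\mu(\beta)-\xi(\beta))$ applied to the closed form $(\id,0)$ gives $i_{D_\beta}(\id,0)$ up to a sign as $\rho^*$ of an exact $1$-form — more directly, $\sigma_{D_\beta}=\pi^\sharp(\beta)$, so $l(\alpha)=-\alpha(\pi^\sharp(\beta))=-\pi(\alpha,\beta)$ and $L(\alpha)=\Lie_{\pi^\sharp\beta}\alpha-[\beta,\alpha]_\pi=\Lie_{\pi^\sharp\alpha}\beta+\d\pi(\alpha,\beta)$ by \eqref{formula:Lie:bracket}, which one recognizes as $\rho^*(\d(-\pi(\alpha,\beta)))$ evaluated appropriately — i.e. $(L,l)=\rho^*(\d g)$ with $g$ the function $-\iota_\beta$ of the tautological pairing, exhibiting it as IM-exact in the sense of Example \ref{example:IM:exact}. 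Conversely, tracing the same formulas backward shows every IM-exact $1$-form arises from an inner derivation. Transporting everything back along $\mu$ completes the proof.
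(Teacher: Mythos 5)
Your reduction to the cotangent algebroid is legitimate and is the only real difference of route from the paper: since $\mu$ is a closed non-degenerate IM 2-form, it is a Lie algebroid isomorphism of $A$ onto the cotangent algebroid of $\pi^\sharp=\rho\circ\mu^{-1}$, it pulls $(\id,0)$ back to $(\mu,0)$, and derivations, interior products, inner derivations and IM-exact forms are natural under such isomorphisms. The paper works instead with a general $(A,\mu)$ and uses non-degeneracy of $\mu$ at each step, but the constructions coincide: the symbol is recovered from the first component ($\sigma_D=-\nu\circ\mu^{-1}$), $D$ from the second ($D(\al)=\mu^{-1}(\Lie_{\sigma_D}\mu(\al)-\xi(\al))$), and your injectivity argument is the paper's. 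Note also that for scalar IM 1-forms the relevant identities are \eqref{eq:mult:form}, not the $E$-valued equations \eqref{eq:compatibility:IM:E:form}; this is only a mis-citation.

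The genuine gap is in surjectivity, which is where essentially all of the content of the lemma sits and which you leave as a sketch resting on a false claim. You propose to verify that $D(\al)=\Lie_\sigma\al-\xi(\al)$ is a bracket derivation by ``re-deriving from the IM axioms that $\sigma$ must be Poisson'', asserting this is automatic because $(\nu,\xi)$ is a genuine IM 1-form. It is not: the symbol of an IM 1-form on $T^*M$ need not be a Poisson vector field. For instance, if $\pi$ is symplectic then $\pi^\sharp:T^*M\diffto TM$ identifies $\Der(T^*M)$ with $\Der(TM)=\{\Lie_\sigma:\sigma\in\X(M)\}$, so $D(\al)=(\pi^\sharp)^{-1}[\sigma,\pi^\sharp\al]$ is a derivation for an \emph{arbitrary} vector field $\sigma$; the IM 1-form $i_D(\id,0)$ has precisely this $\sigma$ as its symbol datum, and its second component $\xi(\al)=\Lie_\sigma\al-(\pi^\sharp)^{-1}[\sigma,\pi^\sharp\al]$ is nonzero exactly when $\Lie_\sigma\pi\neq0$. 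So a computation organized around proving $\sigma$ Poisson would fail; what the IM equations actually encode is that the defect of $\Lie_\sigma$ from being a bracket derivation is cancelled by the $\xi$-term. The paper's proof does this by a direct computation: it first derives $\rho(D(\al))=[\sigma_D,\rho(\al)]$ from the second IM equation for $(\nu,\xi)$ combined with the IM equations for $(\mu,0)$, and then checks $D([\al,\be])=[D(\al),\be]+[\al,D(\be)]$ by an explicit manipulation using both remaining equations; this is not a routine afterthought but the body of the proof, and your proposal neither performs it nor sketches a route that would work. A smaller but real slip occurs in the inner-derivation step: the image of $D_\be=[\be,\cdot]$ is the IM-exact form $\rho^*(\mu(\be))$ (in your reduced picture $\rho^*(\be)$, since $\xi(\al)=\Lie_{\pi^\sharp\be}\al-[\be,\al]_\pi=i_{\pi^\sharp\al}\d\be$), i.e.\ $\rho^*$ of the 1-form $\mu(\be)$ as in Example \ref{example:IM:exact}; it is not of the form $\rho^*(\d g)$ for a function $g$, and no such function enters.
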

\begin{proof}
For injectivity, $i_D(\mu,0)=0$ is equivalent to the relations
\[-i_{\sigma_D}\mu(\alpha)=0, \quad \Lie_{\sigma_D}(\mu(\al))-\mu(D(\al))=0, \quad \forall \al\in \Gamma(A).\]
Since $\mu$ is a bijection, the first relation implies that $\sigma_D=0$. From the second relation, we obtain then that also $D=0$.

For surjectivity, let $(\nu,\xi)\in \Omega^1_{\imult}(A)$. Then we need to find a derivation $D$ such that, for all $\al\in \Gamma(A)$, we have
\[-i_{\sigma_D}\mu(\al)=\nu(\al), \quad \Lie_{\sigma_D}(\mu(\al))-\mu(D(\al))=\xi(\al).\]
Since $\mu$ is a bijection, these relations have the unique solution
\[\sigma_D:=-\nu\circ \mu^{-1}, \quad D(\al):=\mu^{-1}\big(\Lie_{\sigma_D}(\mu(\al))-\xi(\al)\big).\]
The Leibniz rule for $D$ with respect to $\sigma_D$ follows because $\mu$ and $\xi$ are linear over $C^{\infty}(M)$. It remains to show that $D$ is a derivation. First, we compute
\begin{align*}
-i_{\rho(D(\alpha))}\mu(\beta)&=i_{\rho(\beta)}\mu( D(\alpha))\\
&=i_{\rho(\beta)}\big(\Lie_{\sigma_D}(\mu(\al))-\xi(\al)\big)\\
&=-i_{\rho(\be)}\d \nu (\al) - i_{\rho(\be)}\xi(\al) +i_{\rho(\beta)}i_{\sigma_D}\d 
\mu(\al)\\
&=\nu ([\al,\be]) - \Lie_{\rho(\al)}\nu(\be) - i_{\sigma_D}i_{\rho(\beta)}\d 
\mu(\al)\\
&=-i_{\sigma_D}\mu ([\al,\be]) + \Lie_{\rho(\al)}i_{\sigma_D}\mu(\be) - i_{\sigma_D}i_{\rho(\beta)}\d 
\mu(\al)\\
&=-i_{\sigma_D}\Lie_{\rho(\al)}\mu (\be) + \Lie_{\rho(\al)}i_{\sigma_D}\mu(\be)\\
&=i_{[\rho(\al),\sigma_D]}\mu(\be), 
\end{align*}
where we used the IM conditions for $(\mu,0)$ and $(\nu,\xi)$. Since $\mu$ is a bijection, we obtain the relation
\begin{equation}\label{eq:anchor:derivation}
\rho(D(\al))=[\sigma_D,\rho(\al)].
\end{equation}
Using this, we compute
\begin{align*}
\mu([D(\al),\be])&=-\mu([\be,D(\al)])\\
&=-\Lie_{\rho(\be)}\mu(D(\al))+i_{\rho(D(\al))}\d \mu(\be)\\
&=\Lie_{\rho(\be)}\xi(\al) - \Lie_{\rho(\be)}\Lie_{\sigma_D}\mu(\al)+i_{[\sigma_D,\rho(\al)]}\d \mu(\be)\\
\mu([\al,D(\be)])&=\Lie_{\rho(\al)}\mu(D(\be))-i_{\rho(D(\be))}\d \mu(\al)\\
&=-\Lie_{\rho(\al)}\xi(\be) + \Lie_{\rho(\al)}\Lie_{\sigma_D}\mu(\be)-i_{[\sigma_D,\rho(\be)]}\d \mu(\al)\\
\mu(D([\al,\be]))&=\Lie_{\sigma_D}(\Lie_{\rho(\al)}\mu(\be)-i_{\rho(\be)}\d\mu(\al))-\xi([\al,\be]).
\end{align*}
For the terms containing $\xi$, we have
\begin{align*}
\xi([\al,\be])-\Lie_{\rho(\al)}\xi(\be)+\Lie_{\rho(\be)}\xi(\al) &=\d i_{\rho(\be)} \xi (\al) \\
&=\d \big(i_{\rho (\beta)}\Lie_{\sigma_D}\mu(\al)-i_{\rho(\beta)} \mu(D(\al))\big)\\
&=\d \big(i_{\rho (\beta)}\Lie_{\sigma_D}\mu(\al)+i_{[\sigma_D,\rho(\al)]}\mu(\be)\big)
\end{align*}
So, we obtain
\begin{align*}
\mu\big(&D([\al,\be])-[D(\al),\be]-[\al,D(\be)]\big)=\\
&=\big(-\Lie_{\sigma_D}  i_{\rho(\be)}  \d
-\d   i_{\rho (\beta)}  \Lie_{\sigma_D}
+ \Lie_{\rho(\be)}  \Lie_{\sigma_D} +i_{[\sigma_D,\rho(\be)]}  \d\big)\mu(\al)\\
&+\big(\Lie_{\sigma_D}  \Lie_{\rho(\al)}-\d  i_{[\sigma_D,\rho(\al)]}
-i_{[\sigma_D,\rho(\al)]}  \d - \Lie_{\rho(\al)}  \Lie_{\sigma_D}\big) \mu(\be)=0,
\end{align*}
which, by injectivity of $\mu$, implies that $D$ is indeed a derivation. 

Let $D_\be=[\be,\cdot]$ be an inner derivation associated with $\be\in\Gamma(A)$, and denote $(\nu,\xi):=i_{D_{\beta}}(\mu,0)$.  Using the IM-conditions for $\mu$, we obtain
\begin{align*}
\nu(\al)&=-i_{\rho(\be)}\mu(\al)=i_{\rho(\al)}\mu(\be),\\
\xi(\al)&=\Lie_{\rho(\be)}(\mu(\al))-\mu([\be,\al])=i_{\rho(\al)}\d\mu(\be),
\end{align*}
for all $\al\in \Gamma(A)$. This is equivalent to
\begin{equation}\label{eq:contraction:derivation}
    i_{D_\be}(\mu,0)=\rho^*\mu(\be), 
\end{equation}
which proves that $i_{D_\be}(\mu,0)$ is an IM-exact 1-form.
\end{proof}

For Dirac structures the situation is similar,  but slightly more involved. At the groupoid level, for a presymplectic groupoid $(\G,\omega)$, in the commutative diagram \eqref{eq:iso:symplectic:grpd} the horizontal rows are no more isomorphisms. Instead they define a quasi-isomorphism, i.e., the induced map in cohomology is an isomorphism
\begin{equation}
    \label{eq:iso:VB:cohomology}
    \X_\mult(\G)/\delta(\Gamma(A))\diffto \Omega^1_\mult(\G)/\delta(\Omega^1(M)).
\end{equation}
This amounts to the fact that we have a short exact sequence of vector spaces
\begin{equation}
\label{eq:exact:sequence:grp:presymplectic}
    \xymatrix@R=5pt{
0\ar[r] & \Gamma(A)\ar[r] & \X_\mult(\G)\oplus \Omega^1(M)\ar[r] & \Omega^1_\mult(\G)\ar[r] & 0,
}
\end{equation}
where
\begin{align*}
     \Gamma(A)\to \X_\mult(\G)\oplus \Omega^1(M),\qquad &\be \mapsto (\delta(\be),\mu(\be)),\\
     \X_\mult(\G)\oplus \Omega^1(M)\to \Omega^1_\mult(\G),\qquad &(X,\theta) \mapsto i_X\omega-\delta(\theta).
\end{align*}

\begin{remark}
Given a closed multiplicative 2-form $\omega$ on a Lie groupoid $\G$ it is proved in \cite[Section 5.2]{dHO20} that the non-degeneracy condition $\ker\omega\cap\ker\d\t\cap\ker\d\s=0$ is equivalent to the contraction $\omega^\flat:T\G\to T^*\G$ being a VB Morita map. The authors show that such a map induces an isomorphism between the fiberwise linear VB cohomologies. The two quotients in  \eqref{eq:iso:VB:cohomology} can be identified with the fiberwise linear VB cohomologies in degree one of $T^*\G$ (left side) and $T\G$ (right side). Hence, the isomorphism \eqref{eq:iso:VB:cohomology} follows from the results in \cite{dHO20}.
\end{remark}



At the algebroid level, we have the corresponding statement.

\begin{lemma}\label{lemma:derivations:Dirac}
Let $\mu:A\to T^*M$ be a Dirac non-degenerate, closed IM 2-form. We have a short exact sequence of vector spaces 
\begin{equation}
\label{eq:exact:sequence:grp:Dirac}
\xymatrix@R=5pt{
0\ar[r] & \Gamma(A)\ar[r] & \Der(A)\oplus \Omega^1(M)\ar[r] & \Omega^1_\imult(A)\ar[r] & 0,
}
\end{equation}
where
\begin{align*}
     \Gamma(A)\to \Der(A)\oplus \Omega^1(M),\qquad &\be \mapsto ([\beta,\cdot],\mu(\be)),\\
     \Der(A)\oplus \Omega^1(M)\to \Omega^1_\imult(A),\qquad &(D,\theta) \mapsto i_D(\mu,0)-\rho^*(\theta).
\end{align*}
Moreover, the choice of a complement in $TM\oplus T^*M$ of the corresponding Dirac structure $\L$ induces a splitting of \eqref{eq:exact:sequence:grp:Dirac}.
\end{lemma}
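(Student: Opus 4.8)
The plan is to mirror, at the infinitesimal level, the groupoid picture encoded in the short exact sequence \eqref{eq:exact:sequence:grp:presymplectic} and its splitting, replacing multiplicative vector fields by Lie algebroid derivations and multiplicative $1$-forms by IM $1$-forms. The key new ingredient beyond Lemma \ref{lemma:1:to:1:derivations:IM} is that, for a merely Dirac non-degenerate $\mu$, the map $(\rho,\mu):A\to TM\oplus T^*M$ is an isomorphism onto the Dirac structure $\L$, so one can use $\L$ — and a choice of complement $C$ with $TM\oplus T^*M=\L\oplus C$ — to produce the splitting.

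First I would check exactness at $\Gamma(A)$. A section $\be$ is killed by $\be\mapsto([\be,\cdot],\mu(\be))$ precisely when $\mu(\be)=0$ and $[\be,\cdot]=0$; but since $\mu$ is Dirac non-degenerate, $\mu(\be)=0$ forces $\rho(\be)=0$ as well (because $(\rho,\mu)$ is injective and $\rho(\be)\in\ker\mu^{\ast}$-type arguments as in the proof of Lemma \ref{lemma:1:to:1:derivations:IM}), and then $[\be,\cdot]=0$ together with $\rho(\be)=0$ gives $\be=0$ by a standard argument (evaluate $[\be,f\al]=f[\be,\al]+(\Lie_{\rho(\be)}f)\al$). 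Second, exactness at $\Der(A)\oplus\Omega^1(M)$: the composition sends $\be$ to $i_{[\be,\cdot]}(\mu,0)-\rho^*(\mu(\be))$, which vanishes by formula \eqref{eq:contraction:derivation} in the proof of Lemma \ref{lemma:1:to:1:derivations:IM}; conversely, if $(D,\theta)$ satisfies $i_D(\mu,0)=\rho^*(\theta)$, one must show $(D,\theta)$ comes from a section. Here I would use that $i_D(\mu,0)=(-i_{\sigma_D}\mu,\ \Lie_{\sigma_D}\mu-\mu\circ D)$ and $\rho^*(\theta)=(i_{\rho(\cdot)}\theta,\ i_{\rho(\cdot)}\d\theta)$; matching the first components gives $i_{\sigma_D}\mu(\al)=-i_{\rho(\al)}\theta$ for all $\al$, i.e. $(\sigma_D,-\theta)\in\mathrm{Ann}$ of the graph in an appropriate sense. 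Dirac non-degeneracy of $\mu$ lets me find $\be\in\Gamma(A)$ with $(\rho(\be),\mu(\be))=(\sigma_D,-\theta)$ — this is exactly where surjectivity of $(\rho,\mu)$ onto $\L$ enters, after one checks $(\sigma_D,-\theta)\in\L$ using the IM compatibility \eqref{eq:mult:form} for $(\mu,0)$ and the closedness of $\rho^\ast\theta$ forced by the second-component equation. Then comparing second components and using injectivity of $\mu$ on the relevant quotient shows $D=[\be,\cdot]$ and $\theta=\mu(\be)$ up to the kernel, giving exactness. Third, surjectivity onto $\Omega^1_\imult(A)$: given $(\nu,\xi)\in\Omega^1_\imult(A)$, I would solve $i_D(\mu,0)-\rho^*(\theta)=(\nu,\xi)$ by first choosing $\theta$ so that $\nu+i_{\rho(\cdot)}\theta$ lies in the image of $\mu$ composed with a vector field (this is where the complement $C$ is used: decompose the relevant section of $TM\oplus T^*M$ along $\L\oplus C$), and then running the construction from the proof of Lemma \ref{lemma:1:to:1:derivations:IM} to produce $D$; one verifies $D$ is a derivation by the same computation as there, now carrying the extra $\rho^*\theta$ term, which is harmless since $\rho^*\theta$ is a closed IM form.

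Finally, for the splitting: a complement $C$ with $TM\oplus T^*M=\L\oplus C$ gives, dually to the groupoid construction, a projection $\mathrm{pr}_\L:TM\oplus T^*M\to\L\cong A$; using it I would define a section $s:\Omega^1_\imult(A)\to\Der(A)\oplus\Omega^1(M)$ by sending $(\nu,\xi)$ to the canonically-constructed pair where the ambiguity (a section of $A$) is pinned down by requiring the associated element of $TM\oplus T^*M$ to lie in $C$. Checking $s$ is a right inverse is then a direct unwinding. The main obstacle I anticipate is the exactness-in-the-middle step: verifying that $(\sigma_D,-\theta)$ actually lands in $\L$ (equivalently, that the ``horizontal'' component $\xi$ is forced to be $i_{\rho(\cdot)}\d\theta$ for a genuine $1$-form $\theta$), which requires carefully combining the IM equations \eqref{eq:mult:form} with the Dirac non-degeneracy hypothesis; everything else is bookkeeping parallel to Lemma \ref{lemma:1:to:1:derivations:IM} and the groupoid sequence \eqref{eq:exact:sequence:grp:presymplectic}.
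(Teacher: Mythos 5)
Your overall route is the paper's: the same two maps, exactness at $\Gamma(A)$ from injectivity of $(\rho,\mu)$, the identity $i_{D_\be}(\mu,0)=\rho^*\mu(\be)$ for the vanishing of the composition, the first-component (symbol) equation solved against the dual of $(\rho,\mu)$, and the complement $C$ of $\L$ used to normalize the solution and hence split the sequence. Your middle-exactness argument is actually cleaner than the paper's (which extracts it from the uniqueness, modulo sections of $\L$, of the choice made in the surjectivity construction): matching first components gives $\langle(\rho,\mu)(\al),\sigma_D+\theta\rangle=0$ for all $\al$, so $\sigma_D+\theta$ is a section of $\L^{\perp}=\L$ --- note that no IM equations are needed for this, only the Lagrangian property, and the element is $(\sigma_D,\theta)$, not $(\sigma_D,-\theta)$; then $D=[\be,\cdot]$ follows from the second components together with $\rho(D(\al))=[\sigma_D,\rho(\al)]$ and injectivity of $(\rho,\mu)$ (not of ``$\mu$ on a quotient'').

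The genuine gap is in the surjectivity step. You propose to choose $(\sigma_D,\theta)$ via the complement and then ``run the construction from the proof of Lemma \ref{lemma:1:to:1:derivations:IM}''. That construction sets $D(\al)=\mu^{-1}\big(\Lie_{\sigma_D}\mu(\al)-\xi(\al)\big)$ and is unavailable here, since $\mu$ is not invertible (it need not even be injective). In the Dirac non-degenerate case $D(\al)$ must be defined as the unique preimage under the isomorphism $(\rho,\mu):A\diffto\L$ of the pair $\big([\sigma_D,\rho(\al)],\,\Lie_{\sigma_D}\mu(\al)-i_{\rho(\al)}\d\theta-\xi(\al)\big)$, and the crux --- which is the bulk of the paper's proof --- is to verify that this pair is indeed a section of $\L$, i.e.\ that it pairs to zero with $(\rho(\be),\mu(\be))$ for all $\be$; this computation combines the Lagrangian property with the IM equations \eqref{eq:mult:form} for $(\mu,0)$ and for the IM 1-form $(\nu,\xi)+\rho^*(\theta)$. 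You do anticipate an ``$\in\L$'' verification, but you attach it to the middle-exactness step (where, as above, it is a one-line consequence of $\L=\L^{\perp}$) instead of to surjectivity, where it is the real work. Finally, fix the slip in injectivity: it is $[\be,\cdot]=0$ that forces $\rho(\be)=0$ (Leibniz rule applied to $[\be,f\al]$), and then $\mu(\be)=0$ together with injectivity of $(\rho,\mu)$ gives $\be=0$ --- not the order in which you stated it.
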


\begin{proof}
That the sequence is indeed a cochain complex follows from \eqref{eq:contraction:derivation}
\[\beta\mapsto (D_{\beta},\mu(\beta))\mapsto  i_{D_{\beta}}(\mu,0)-\rho^*(\mu(\beta))=0.\]

We start by checking exactness at $\Gamma(A)$. Let $\be\in \Gamma(A)$ be in the kernel if the first map. Then $[\be,\cdot]=0$, which implies that $\rho(\be)=0$. Since also $\mu(\be)=0$ and the map $(\rho,\mu)$ is injective, we have that $\be=0$.


We show now surjectivity of the last map. For this, consider $(\nu,\xi)\in \Omega^1_{\imult}(A)$, and we try to solve
\begin{equation}\label{eq:to:find:derivation}
(\nu,\xi)=i_D(\mu,0)-\rho^*(\theta),
\end{equation}
with $D\in \Der(A)$ and $\theta\in \Omega^1(M)$. The first component can be written as
\begin{equation}\label{eq:used:for:uniqueness}
(\rho,\mu)^*(\theta, \sigma_{D})=-\nu.
\end{equation}
By assumption, $(\rho,\mu)$ is injective, therefore its dual map is surjective. So a solution to the above equation exists, and we fix such a solution $(\theta,\sigma_{D})$. The derivation $D$ will be determined by the following equations
\begin{equation}\label{eq:which:determine:D}
\left\{
\begin{array}{l}
\rho(D(\al))=[\sigma_{D},\rho(\al)]
\\
\\
\mu(D(\al))=\Lie_{\sigma_{D}}(\mu(\al))-i_{\rho(\al)}\d\theta-\xi(\al)
\end{array}
\right.
\end{equation}
where the first equation holds for any derivation and the second is the second from \eqref{eq:to:find:derivation}. By assumption, the map $(\rho,\mu)$ is an isomorphism from $A$ to the Dirac structure $\L$. Therefore, if we show that the right-hand side of the above set of equations defines an element in $\L$, then it follows that the system determines a unique $D(\al)\in \Gamma(A)$, for any $\al\in \Gamma(A)$. We need to check that
\[
[\sigma_{D},\rho(\al)]+\Lie_{\sigma_{D}}(\mu(\al))-i_{\rho(\al)}\d\theta-\xi(\al)\in \L
\]
for all $\al\in  \Gamma(A)$. Since $\L$ is Lagrangian, this is equivalent to
\[\langle [\sigma_{D},\rho(\al)],\mu(\be)\rangle+\langle \rho(\be), \Lie_{\sigma_{D}}(\mu(\al))\rangle= \langle \rho(\be),i_{\rho(\al)}\d\theta+\xi(\al)\rangle,\]
for all $\al,\be \in \Gamma(A)$. This is proven as follows
\begin{align*}
&\langle [\sigma_{D},\rho(\al)],\mu(\be)\rangle+\langle \rho(\be), \Lie_{\sigma_{D}}(\mu(\al))\rangle=-\Lie_{\rho(\al)}\langle \sigma_{D},\mu(\be)\rangle\\
&+\langle \sigma_{D},\Lie_{\rho(\al)}\mu(\be)\rangle+i_{\rho(\be)}\d \langle \sigma_{D},\mu(\al)\rangle - \langle \sigma_{D}, i_{\rho(\be)}\d \mu(\al)\rangle\\
&=(i_{\sigma_{D}}\mu)([\al,\be])-\Lie_{\rho(\al)} (i_{\sigma_{D}}\mu)(\be)+i_{\rho(\be)}\d (i_{\sigma_{D}}\mu)(\al)\\
&=i_{\rho(\be)}\big( \xi(\al)+i_{\rho(\al)}\d \theta\big) 
\end{align*}
where we have used that $(\mu,0)$ is an IM 2-form, that $i_{\sigma_D}\mu=-\nu-i_{\rho(\cdot)}\theta$, and so it is the first component of the IM 1-form 
\[(i_{\sigma_D}\mu, -\xi-i_{\rho(\cdot)}\d\theta)=-(\nu,\xi)-\rho^*(\theta).\] Thus we obtain a well-defined operator $D$ on $\Gamma(A)$. The defining equations imply that $D$ satisfies the Leibniz rule with symbol $\sigma_{D}$. Since $D$ satisfies \eqref{eq:anchor:derivation}, we obtain
\[\rho\big(D([\al,\be])-[D(\al),\be]-[\al,D(\be)]\big)=0.\]
Applying the same steps as in the proof of Lemma \ref{lemma:1:to:1:derivations:IM}, we also obtain
\[\mu\big(D([\al,\be])-[D(\al),\be]-[\al,D(\be)]\big)=0.\]
Hence, by injectivity of the map $(\rho,\mu)$, $D$ is indeed a derivation. 

Note that the only choice we made in the construction of $D$ was the solution $(\sigma_D,\theta)$ of \ref{eq:used:for:uniqueness}. The solution is unique up to a section of the Dirac structure $\L=\mathrm{Im}(\rho,\mu)$, i.e., up to a section of the form $(\rho,\mu)(\be)$, for some $\be\in\Gamma(A)$. Changing the solution this way, we obtain a new derivation $D+F$. The relations \eqref{eq:which:determine:D} for $D$ and $D+F$, imply that $F$ has to satisfy \[
\left\{
\begin{array}{l}
\rho(F(\al))=[\rho(\be),\rho(\al)] = \rho([\be,\al])
\\
\\
\mu(F(\al))=\Lie_{\rho(\be)}\mu(\al)-i_{\rho(\al)}\d\mu(\be)=\mu ([\be,\al])
\end{array}
\right.
\]
Injectivity of $(\rho,\mu)$ shows that $F=D_{\be}$. This implies that the sequence \eqref{eq:exact:sequence:grp:Dirac} is exact also at middle point.

Finally, the choice of a complement $C\subset TM\oplus T^*M$ of $\L$ yields a unique solution of \eqref{eq:used:for:uniqueness} which lies in $C$. In other words, we obtain a splitting of the last map. 
\end{proof}

\subsection{The infinitesimally multiplicative Moser method}

We give now a version of Moser's Theorem for IM 2-forms on linear action algebroids
\[A_S\ltimes \Rep \Ato \Rep,\]
where $\Rep$ is a representation of $A_S\Ato S$.

\begin{proposition}
\label{prop:Moser:algbrd}
Consider two closed IM 2-forms 
\[\mu_k\in\Omega^2_\imult(A_S\ltimes M_k),\quad k=0,1,\]
defined on open neighborhoods $S\subset M_k\subset \Rep$, such that their pullbacks along the Lie algebroid embedding $i:A_S\hookrightarrow A_S\ltimes \Rep$ coincide\[ i^*\mu_0=i^*\mu_1\in \Omega^2_{\imult}(A_S).\]
\begin{enumerate}[(a)]
\item If $\mu_0$ and $\mu_1$ are non-degenerate, then there are open neighborhoods $S\subset M_k'\subset M_k$, $k=0,1$, and an isomorphism of Lie algebroids $\Phi:A_S\ltimes M_0'\diffto A_S\ltimes M_1'$, whose base map fixes $S$, and such that
\[ \Phi^*(\mu_1)=\mu_0. \]
\item If $\mu_0$ and $\mu_1$ are Dirac non-degenerate, then there are open neighborhoods $S\subset M_k'\subset M_k$, $k=0,1$, an isomorphism of Lie algebroids $\Phi:A_S\ltimes M_0'\diffto A_S\ltimes M_1'$, whose base map fixes $S$, and an exact 2-form $\d\lambda$, whose pullback to $S$ vanishes, such that
\[ \Phi^*(\mu_1)=\mu_0+\rho_{\ltimes}^*(\d\lambda). \]
Hence, the Dirac structures $\L_k:=\im (\rho_{\ltimes},\mu_k)$ are equivalent around $S$, in the sense of Definition \ref{def:iso:Dirac}.
\end{enumerate}
\end{proposition}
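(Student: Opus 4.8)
\textbf{Proof plan for Proposition \ref{prop:Moser:algbrd}.}

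The plan is to run the IM version of Moser's trick, using the IM Cartan calculus and the IM Moser-type vanishing lemmas developed in this appendix, in direct analogy with the groupoid argument in the proof of Proposition \ref{prop:uniqueness:grpd}. First I would treat part (a). Set $\mu_t := t\mu_1 + (1-t)\mu_0$ for $t\in[0,1]$; since $i^*\mu_0 = i^*\mu_1$, the difference $\mu_1-\mu_0$ is a closed IM 2-form whose pullback to $A_S$ vanishes, so that $\mu_t$ is a well-defined closed IM 2-form on the common domain $A_S\ltimes(M_0\cap M_1)$. Because $\mu_0,\mu_1$ agree along $S$, the form $\mu_t|_S$ is independent of $t$ and equals the common restriction, which is non-degenerate; hence, by the Tube Lemma, there is an open neighborhood $S\subset M_2\subset M_0\cap M_1$ on which $\mu_t$ is non-degenerate for all $t\in[0,1]$, and we may shrink to an $m_\lambda$-invariant ($\lambda\in[0,1]$) open neighborhood $M_3$ using the fiberwise multiplication, exactly as in Lemma \ref{lem:invariantsubgrp}. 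The next step is to produce, on such a domain, an exact primitive for $\mu_1-\mu_0$ which vanishes on $S$: this is the IM analogue of Lemma \ref{lem:primitive}, obtained by the standard fiberwise homotopy operator $H(\gamma) = \int_0^1 \lambda^{-1} m_\lambda^*(i_Y\gamma)\,\d\lambda$ with $Y$ the Euler derivation of $\Rep$, which preserves IM forms because $m_\lambda$ is a Lie algebroid morphism and $Y$ is a multiplicative (linear) derivation; smoothness at $\lambda=0$ follows because the pullback of $i_Y(\mu_1-\mu_0)$ along $A_S\hookrightarrow A_S\ltimes\Rep$ vanishes. Writing $\mu_1-\mu_0 = \d_\imult\beta$ with $\beta\in\Omega^1_\imult(A_S\ltimes M_3)$ vanishing on $S$, I would then use the non-degenerate IM 2-form $\mu_t$ together with Lemma \ref{lemma:1:to:1:derivations:IM} to solve $i_{D_t}(\mu_t,0) = \beta$ for a (time-dependent) Lie algebroid derivation $D_t$; since $\beta$ vanishes at points of $S$, so does $D_t$ and its symbol $\sigma_{D_t}$. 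The flow $\varphi^t_{D_t}$ is then a family of Lie algebroid automorphisms defined on a smaller $m_\lambda$-invariant neighborhood, fixing $S$, and the usual Moser computation $\tfrac{\d}{\d t}(\varphi^t_{D_t})^*\mu_t = (\varphi^t_{D_t})^*(\Lie_{D_t}\mu_t + \d_\imult i_{D_t}(\mu_t,0) - \d_\imult\beta)$ — using $[\d_\imult,i_{D_t}] = \Lie_{D_t}$ from the IM Cartan calculus — gives $(\varphi^1_{D_1})^*\mu_1 = \mu_0$; set $\Phi := \varphi^1_{D_1}$. Its base map is the flow of $\sigma_{D_t}$, which fixes $S$, and induces the claimed Poisson diffeomorphism between the induced Poisson structures, since Lie algebroid isomorphisms intertwining the IM forms intertwine the Poisson structures $\pi_k^\sharp = \rho_k\circ\mu_k^{-1}$.

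For part (b), the only change is that $\mu_t$ is now merely Dirac non-degenerate (constant rank equal to $\dim\Rep$), so I cannot directly invert $i_{D_t}(\mu_t,0) = \beta$. Instead I would use Lemma \ref{lemma:derivations:Dirac}: choosing a complement in $T\Rep\oplus T^*\Rep$ of the Dirac structure $\L_t := \im(\rho_{\ltimes},\mu_t)$ yields a splitting of the short exact sequence \eqref{eq:exact:sequence:grp:Dirac}, hence a derivation $D_t$ and a 1-form $\theta_t$ with $i_{D_t}(\mu_t,0) - \rho_\ltimes^*(\theta_t) = \beta$, where again $D_t, \sigma_{D_t}, \theta_t$ can be arranged to vanish on $S$ (since $\beta$ does and the splitting is linear over functions). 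Running Moser's equation as before now produces $(\varphi^1_{D_1})^*\mu_1 = \mu_0 + \rho_\ltimes^*(\d\lambda)$, where $\lambda := \int_0^1 (\varphi^t_{D_t})^*\theta_t\,\d t$ is an ordinary 1-form on $M_0'$ vanishing on $S$ (so $\d\lambda$ pulls back to $0$ on $S$). The identity $\Phi^*\L_1 = e^{\d\lambda}\L_0$ then follows from the gauge-transformation behaviour of Dirac structures under exact 2-forms, giving the claimed equivalence in the sense of Definition \ref{def:iso:Dirac}.

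The main obstacle I expect is not the Moser iteration itself but verifying carefully that the homotopy operator $H$ — the IM analogue of Lemma \ref{lem:primitive} — genuinely lands in $\Omega^\bullet_\imult(A_S\ltimes M)$ and produces a primitive vanishing on $S$, since on the infinitesimal side one must check directly that $H$ commutes with the relevant contractions and Lie derivatives and that the resulting pair $(L,l)$ still satisfies the structure equations \eqref{eq:symbol:IM:form}–\eqref{eq:compatibility:IM:E:form}; the cleanest route is to observe that $m_\lambda$ is a Lie algebroid morphism and the Euler derivation is a genuine Lie algebroid derivation, so $H$ is assembled from operations ($m_\lambda^*$, $i_Y$, integration in $\lambda$) that each preserve IM forms by the IM Cartan calculus, with the smoothness at $\lambda=0$ guaranteed by the hypothesis $i^*\mu_0 = i^*\mu_1$. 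A secondary point requiring care is ensuring all shrinkings of the domain ($M_2\rightsquigarrow M_3\rightsquigarrow$ flow domain) remain open neighborhoods of $S$ that are $m_\lambda$-invariant for $\lambda\in[0,1]$, so that $H$ and the flow of $D_t$ are simultaneously defined; this is handled exactly as in Lemmas \ref{lem:invariantsubgrp} and \ref{lem:primitive}.
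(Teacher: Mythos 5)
There is a genuine gap at the very first step of your Moser argument, before the homotopy operator or the flow equation ever enters. You assert that ``because $\mu_0,\mu_1$ agree along $S$, the form $\mu_t|_S$ is independent of $t$'' and is therefore non-degenerate, but the hypothesis is only $i^*\mu_0=i^*\mu_1$, and the pullback $i^*$ along $A_S\hookrightarrow A_S\ltimes \Rep$ only records the $T^*S$-component of $\mu_k|_S$ under the canonical splitting $T^*_S\Rep\simeq T^*S\oplus \Rep^*$. The $\Rep^*$-components of $\mu_0|_S$ and $\mu_1|_S$, i.e.\ the two maps $l_k:A_S\to\Rep^*$ and in particular their restrictions $\psi_k:\ka\to\Rep^*$ to $\ka=\ker\mu_S\cap\ker\rho_S$, are a priori different, so the convex combination $\mu_t$ need not be non-degenerate (or Dirac non-degenerate) along $S$ for intermediate $t$: nothing prevents, say, $(1-t)\psi_0+t\psi_1$ from acquiring kernel. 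Without non-degeneracy of the whole path along $S$ you cannot invoke Lemma \ref{lemma:1:to:1:derivations:IM} (or Lemma \ref{lemma:derivations:Dirac}) to solve for $D_t$, and the Moser scheme does not start.

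The paper closes exactly this gap with Lemma \ref{lemma:how:to:fix:along:zero:section} (the infinitesimal analogue of Lemma \ref{lemma:omega:along:zero} used in the groupoid proof of Proposition \ref{prop:uniqueness:grpd}): each $\mu_k$ produces an $A_S$-equivariant isomorphism $\psi_k:\ka\diffto\Rep^*$, hence a Lie algebroid automorphism $(\id,\psi_k^*)$ of $A_S\ltimes\Rep\simeq A_S\ltimes\ka^*$ along which one pushes $\mu_k$ forward so that, after this normalization, $\Rep$ is identified with $\ka^*$ and both forms restrict to the identity on $\ka$; only then do the restrictions along $S$ genuinely coincide and the path $\mu_t$ stays (Dirac) non-degenerate near $S$ for all $t\in[0,1]$. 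Note this lemma also carries real content in your setting because $\Rep$ is an arbitrary representation, not assumed to be $\ka^*$, so even stating ``$\mu_k|_\ka=\id$'' requires it. Once you insert this normalization, the rest of your plan — the $m_\lambda$-invariant shrinking, the IM Poincar\'e lemma (this is Lemma \ref{Poincare:for:action:algd}, and your worry about it is resolved exactly as you suggest, via $m_\lambda^*$, the Euler derivation and the IM Cartan calculus), and the Moser equation solved through Lemma \ref{lemma:1:to:1:derivations:IM} in case (a) and through a splitting of the sequence in Lemma \ref{lemma:derivations:Dirac} with gauge forms $\theta_t$ and $\lambda=\int_0^1(\phi^s)^*\theta_s\,\d s$ in case (b) — matches the paper's proof.
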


For the proof we need the analog of Lemma \ref{lem:primitive} for IM primitives.

\begin{lemma}\label{Poincare:for:action:algd} Let $S\subset M\subset \Rep$ be an open subset that is invariant under multiplication $m_{\lambda}$ by $\lambda\in [0,1]$. Then any closed IM form $\al\in\Omega^k_\imult(A_S\ltimes M)$ whose pullback along the zero section $i: A_S\hookrightarrow A_S\ltimes M$ vanishes
\[i^*\al=0\in \Omega^k_{\imult}(A_S),\]
is exact for the IM differential
\[(\al,0)=\d_\imult (\be,\al), \quad\text{with}\quad (\be,\al)\in\Omega^{k-1}_\imult(A_S\ltimes M). \]
Moreover, we can take $\be$ to vanish on $A_S$.
\end{lemma}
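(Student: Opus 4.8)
The statement is the IM (infinitesimally multiplicative) analogue of Lemma \ref{lem:primitive}, with the multiplicative homotopy operator on $\G_S\ltimes\Rep$ replaced by its infinitesimal counterpart on the action algebroid $A_S\ltimes\Rep$. The whole point is that the usual fiberwise de Rham homotopy operator, built from the Euler vector field of the vector bundle $p:\Rep\to S$, respects the IM structure because the fiberwise scalings $m_\lambda$ are Lie algebroid morphisms covering the scalings of $\Rep$, and the Euler vector field lifts to a Lie algebroid derivation of $A_S\ltimes\Rep$. So the first thing I would do is identify that derivation: write $\DD\in\Der(A_S\ltimes M)$ for the derivation whose symbol is the Euler vector field $X\in\X(M)$ and which acts trivially on constant sections of $A_S$ (equivalently, $\DD$ corresponds under integration to the multiplicative vector field $Y=(0,X)$ appearing in the proof of Lemma \ref{lem:primitive}). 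Its flow of Lie algebroid automorphisms is exactly $(m_\lambda)_*$, i.e. $\varphi^{\log\lambda}_{\DD}=(m_\lambda)_*$ after the standard time reparametrization, and $\lim_{\lambda\to 0}(m_\lambda)_*$ is the retraction onto $A_S$ along the zero section.

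\textbf{Key steps.} First I would recall from the Cartan calculus for IM forms (the Proposition in the appendix listing $[\d_\imult,i_\DD]=\Lie_\DD$ etc.) that for a closed IM form $\al$ one has $\Lie_\DD\al=\d_\imult i_\DD\al$. Then I would define the IM homotopy operator
\[
H(\gamma):=\int_0^1 \frac{1}{\lambda}\,(m_\lambda)^*\!\big(i_\DD\gamma\big)\,\d\lambda,
\]
acting on $\Omega^\bullet_\imult(A_S\ltimes M)$, where $(m_\lambda)^*$ denotes the pullback of IM forms along the Lie algebroid morphism $m_\lambda:A_S\ltimes M\to A_S\ltimes M$. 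By the same computation as in Lemma \ref{lem:primitive}, using that $\Lie_{\DD}$ is the generator of the flow $(m_\lambda)_*$ and that $\lim_{\lambda\to 0}(m_\lambda)^*\gamma = P^*\circ i^*\gamma$ for the projection $P:A_S\ltimes M\to A_S$, one gets the homotopy identity
\[
\gamma-(i\circ P)^*\gamma=\d_\imult H(\gamma)+H(\d_\imult\gamma),\qquad \gamma\in\Omega^{\bullet}_\imult(A_S\ltimes M),\ \bullet\geq 1 .
\]
Smoothness of $\frac{1}{\lambda}(m_\lambda)^*(i_\DD\gamma)$ at $\lambda=0$ follows because $i^*(i_\DD\gamma)=i^*(\gamma)|_{\text{contracted with }X|_S}=0$ since $X|_S=0$ (the Euler vector field vanishes on the zero section); equivalently $\lim_{\lambda\to 0}(m_\lambda)^*(i_\DD\gamma)=0$, exactly as in the non-infinitesimal proof. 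Now, for a closed IM $k$-form $\al$ with $i^*\al=0$, the identity gives $\al=\d_\imult H(\al)$, and if one writes $H(\al)=(\be,\al)\in\Omega^{k-1}_\imult(A_S\ltimes M)$ (recall $\d_\imult(\be,\al)=(\al,0)$ from \eqref{eq:IM:differential}), then the second component is automatically $\al$ and the first component $\be$ vanishes on $A_S$ because $H(\al)|_{A_S}$ is, by construction, the pullback of an IM form on $A_S$ that is computed from $i^*(i_\DD\al)=0$. Finally, I would note that if $M\subset\Rep$ is only invariant under $m_\lambda$ for $\lambda\in[0,1]$ — not all of $\Rep$ — the same integral formula still makes sense on $M$, since the integrand only involves $(m_\lambda)^*$ for $\lambda\in[0,1]$, so the result holds on such $M$ verbatim.

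\textbf{Main obstacle.} The routine part is the homotopy identity itself; the only real care is needed in checking that the operations "pullback of an IM form along $m_\lambda$", "contraction $i_\DD$", and "$\lambda$-integration" all land back in $\Omega^\bullet_\imult(A_S\ltimes M)$ and commute appropriately — i.e. that $H$ genuinely maps IM forms to IM forms. This is the infinitesimal shadow of the sentence in Lemma \ref{lem:primitive} that "$H$ maps multiplicative forms to multiplicative forms", and it rests on $m_\lambda$ being a Lie algebroid morphism and $\DD$ being a genuine algebroid derivation (so that $i_\DD$ preserves IM-ness, by the Cartan calculus recollected in the appendix). I would spell this out by either (i) appealing directly to the IM Cartan calculus formulas — $i_\DD$ and $(m_\lambda)^*$ preserve $\Omega^\bullet_\imult$, and a uniformly convergent $\lambda$-integral of IM forms is IM since the IM conditions are closed linear conditions — or, when $A_S$ is integrable, (ii) reducing to Lemma \ref{lem:primitive} by passing to the target-1-connected integration $\G_S\ltimes\Rep$, applying the multiplicative homotopy operator there, and differentiating; this second route makes the statement transparent but I would prefer the direct infinitesimal argument so the lemma holds without any integrability hypothesis. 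The bookkeeping of which component of the resulting IM $(k-1)$-form is $\be$ and which is $\al$, and the vanishing of $\be$ on $A_S$, then follows immediately from the definition $\d_\imult(\be,\al)=(\al,0)$ and from $i^*(i_\DD\al)=0$.
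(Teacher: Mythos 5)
Your proposal is correct and follows essentially the same route as the paper: the paper also builds the IM homotopy operator $H(\mu,\xi)=\int_0^1\frac{1}{\lambda}m_\lambda^*(i_{D}(\mu,\xi))\,\d\lambda$ from the algebroid derivation $D$ that vanishes on constant sections and has the Euler vector field as symbol, uses $\id-(i\circ P)^*=\d_\imult H+H\d_\imult$, and reads off $\be$ (vanishing on $A_S$) from the first component of $H(\al,0)$. The only cosmetic difference is in justifying smoothness at $\lambda=0$: the paper notes $i^*(i_D(\mu,\xi))=0$ because $D$ vanishes along the subalgebroid $A_S$ (covering both components of the contraction, including the $\Lie_D$ term), whereas you phrase it via $X|_S=0$ alone — the fuller statement is what you want, but your conclusion and mechanism are the same.
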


\begin{proof}
We use an IM version of the homotopy operator of the proof of Lemma \ref{lem:primitive}. For that, note that the scalar multiplication by $\lambda\in [0,1]$
\[ m_{\lambda}:A_S\ltimes M\to A_S\ltimes M,\quad (\al,v)\mapsto (\al,\lambda v), \]
is a family of Lie algebroid morphisms. This family is generated by the algebroid derivation $D:\Gamma(A_S\ltimes M)\to \Gamma(A_S\ltimes M)$ which vanishes on constant sections and has symbol the Euler vector field $X$ of $\Rep \to S$. Since $D$ vanishes along the subalgebroid $i:A_S\hookrightarrow A_S\ltimes M$, we have that:
$i^*(i_{D}(\mu,\xi))=0$, for any $(\mu,\xi)\in\Omega^{\bullet}_\imult(A_S\ltimes M)$. Hence $m_0^*(i_{D}(\mu,\xi))=0$, and so the family $\frac{1}{\lambda}m_\lambda^*(i_{D}(\mu,\xi))$ is smooth at $\lambda=0$. So we can define the map
\[H:\Omega^{\bullet}_{\imult}(A_S\ltimes M)\to \Omega^{\bullet-1}_{\imult}(A_S\ltimes V)\]
\[ H(\mu,\xi):=\int_0^1\frac{1}{\lambda} m_\lambda^*(i_{D}(\mu,\xi))\, \d \lambda= \Big(-\int_0^1\frac{1}{\lambda} m_\lambda^* i_{X}\mu\, \d \lambda, \int_0^1\frac{1}{\lambda} m_\lambda^*(\Lie_{D}\mu+i_{X}\xi)\, \d \lambda\Big).\]
This gives an homotopy operator
\[ \id-(i\circ P)^*=\d_\imult H+H\d_\imult, \]
where $P: A_S\ltimes M\to A_S$ is the projection.

So if $\al$ is a closed IM form whose pullback to $A_S$ vanishes, we find 
\[ (\al,0)=\d_\imult H(\al,0)=\d_\imult (\be,\al), \]
where
\begin{equation}\label{formula:for:beta}
 \be=-\int_0^1\frac{1}{\lambda} m_\lambda^* i_{X}\al\, \d \lambda. 
 \end{equation}
Note that $\be$ vanishes on $A_S$. 
\end{proof}

Next we give an analog of Lemma \ref{lemma:omega:along:zero}:

\begin{lemma}\label{lemma:how:to:fix:along:zero:section}
Let $\Rep \to S$ be a representation of $A_S$, $M\subset \Rep$ an open neighborhood of $S$, $\mu\in \Omega^2_{\imult}(A_S\ltimes M)$ be a Dirac non-degenerate closed IM 2-form, and denote $\mu_S:=i^*\mu\in \Omega^2_{\imult}(A_S)$. Then $\ka:=\ker \mu_S\cap \ker\rho_S\subset A_S$ is a bundle of ideals, and
\[\psi:\ka \to \Rep^*,\quad \xi\mapsto \pr_{\Rep^*}(\mu(\xi))\]
is an isomorphism of $A_S$-representations, where $\pr_{\Rep^*}$ is the projection corresponding to the canonical decomposition $T_S^*\Rep=T^*S\oplus \Rep^*$. 
\end{lemma}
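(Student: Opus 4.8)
The plan is to argue in two stages, mirroring the structure of the analogous global result (Lemma \ref{lemma:omega:along:zero}): first show that $\ka := \ker\mu_S \cap \ker\rho_S$ is a bundle of ideals, then exhibit $\psi$ explicitly as an isomorphism of $A_S$-representations. For the first stage, I would unwind the definitions. Since $\mu_S = i^*\mu$ is a closed IM $2$-form on $A_S$, by the correspondence in Corollary \ref{cor:jets:Poisson} it equips $A_S$ with the structure of a first order jet $(A_S,\mu_S)$ of a Poisson structure, so $\ker\mu_S$ is automatically a bundle of ideals by the short exact sequence \eqref{eq:short:exact:sequence}; but here I have the extra intersection with $\ker\rho_S$, which needs a small argument. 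The point is that $\rho_S$ is skew-symmetric with respect to $\mu_S$ (the first IM equation in \eqref{eq:mult:form}), so $\ker\mu_S \supset \rho_S(\ker\mu_S)$ need not hold in general — however, the relevant fact is that $\ker\mu_S\cap\ker\rho_S$ is exactly the kernel of the robust map $(\rho_S,\mu_S)$, and using the IM equations one checks directly that $[\al,\xi]_{A_S}\in\Gamma(\ka)$ for $\al\in\Gamma(A_S)$, $\xi\in\Gamma(\ka)$: indeed $\rho_S([\al,\xi]) = [\rho_S(\al),\rho_S(\xi)] = 0$ since $\rho_S(\xi)=0$, and $\mu_S([\al,\xi]) = \Lie_{\rho_S(\al)}\mu_S(\xi) - i_{\rho_S(\xi)}(\cdots) = 0$ since $\mu_S(\xi)=0$ and $\rho_S(\xi)=0$. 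So $\ka$ is invariant under the bracket with sections of $A_S$ and is contained in $\ker\rho_S$, hence is a bundle of ideals, and the conjugation action $\nabla^\ka$ of \eqref{eq:representation:algbrd} makes sense.

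For the second stage, I would first verify that $\psi$ is well-defined, i.e.\ that $\mu(\xi)$ lands in the summand $\Rep^*$ of $T^*_S\Rep = T^*S\oplus\Rep^*$ modulo the $T^*S$-part, which is the content of applying $\pr_{\Rep^*}$; then injectivity and surjectivity. The key identity is that, for $\xi\in\ka$, the $T^*S$-component of $\mu(\xi)$ is $\mu_S(\xi) = 0$, so in fact $\mu(\xi) = \psi(\xi) \in \Rep^*$ already, without needing to project. Injectivity: if $\psi(\xi)=0$ then $\mu(\xi)=0$, and since $\xi\in\ker\rho_S\subset\ker\rho_\ltimes|_{0_S}$ (the anchor of $A_S\ltimes M$ restricted to $S$ agrees with $\rho_S$ on the $A_S$-summand), we get $(\rho_\ltimes,\mu)(\xi)=0$; Dirac non-degeneracy of $\mu$ forces $\xi=0$. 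For surjectivity, a rank count: $\mathrm{rank}\,\ka = \mathrm{rank}\,A_S - \mathrm{rank}(TS) - (\text{something})$ — here I would instead argue that $\ka = \ker(\rho_S,\mu_S)$ and that Dirac non-degeneracy of $\mu$ together with the splitting $T^*_S\Rep = T^*S\oplus\Rep^*$ gives, along $S$, a linear map $A_S \to T^*S\oplus\Rep^*$ with $\ka$ mapping into $\Rep^*$; comparing ranks of the image Dirac structure $\L$ with $\dim S$ shows $\psi$ is a bijection onto $\Rep^*$.

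The main obstacle, and the step requiring genuine care, is the equivariance of $\psi$: proving $\psi(\nabla^\ka_\al\xi) = \nabla^{\Rep^*}_\al(\psi(\xi))$ for $\al\in\Gamma(A_S)$, $\xi\in\Gamma(\ka)$. This is where the IM equations \eqref{eq:mult:form} for $\mu$ on the action algebroid $A_S\ltimes M$ must be invoked, exactly as in the proof of Lemma \ref{lemma:omega:along:zero} but at the infinitesimal level: one writes out $\mu([\al,\xi]_\ltimes) = \Lie_{\rho_\ltimes(\al)}\mu(\xi) - i_{\rho_\ltimes(\xi)}\d\mu(\al) - i_{\rho_\ltimes(\xi)}\zeta(\al)$ along $S$, uses $\rho_\ltimes(\xi)|_S = \rho_S(\xi) = 0$ to kill the last two terms, and identifies $\Lie_{\rho_\ltimes(\al)}\mu(\xi)$ along $S$ with the dual-representation derivative $\nabla^{\Rep^*}_\al(\psi(\xi))$ after projecting to $\Rep^*$ via $\pr_{\Rep^*}$ — here one needs that the $\Rep^*$-component of the Lie derivative $\Lie_{\rho_\ltimes(\al)}$ restricted to sections vanishing along $S$ computes precisely the Bott-type connection, which is the infinitesimal analogue of the computation carried out with the groupoid multiplication in Lemma \ref{lemma:omega:along:zero}. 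The bookkeeping of which terms survive restriction to $0_S$ and the correct identification of $\Lie$ with $\nabla^{\Rep^*}$ is the delicate part; everything else is linear algebra and unwinding definitions.
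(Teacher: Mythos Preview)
Your approach is essentially the same as the paper's: bundle-of-ideals via the IM equations, injectivity from Dirac non-degeneracy, rank count for bijectivity, and equivariance from the closed IM 2-form identity $\mu([\overline{\al},\overline{\xi}]_\ltimes) = \Lie_{\rho_\ltimes(\overline{\al})}\mu(\overline{\xi}) - i_{\rho_\ltimes(\overline{\xi})}\d\mu(\overline{\al})$ restricted to $S$ (your extra $\zeta$-term is zero since $\mu$ is closed). Two small corrections: the appeal to Corollary~\ref{cor:jets:Poisson} and \eqref{eq:short:exact:sequence} is misplaced, since those assume $\mu_S$ is \emph{surjective} (the Poisson case) whereas here $\mu_S$ is only robust---you catch this and pivot to the correct observation that $\ka = \ker(\rho_S,\mu_S)$; the paper handles the constant-rank issue by noting that $S$ is invariant for the Dirac structure $\L = \im(\rho_\ltimes,\mu)$, which by the discussion in Appendix~\ref{sec:Dirac} forces $\mu_S$ to be robust and $\ka$ a bundle of ideals.
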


\begin{proof}Since the zero section $S\subset \Rep$ is an invariant submanifold for $A_S\ltimes \Rep$, it follows that $S\subset M$ is an invariant submanifold also for the Dirac structure $\L:=\im(\rho_{\ltimes},\mu)$. As explained in Section \ref{sec:Dirac}, this implies that $\mu_S$ is robust and $\ka$ is a bundle of ideals. Also it is easy to see that $\ka$ and $\Rep$ have the same rank. We show now that $\psi$ is injective. If $\xi \in \ka$ and $\psi(\xi)=0$, it follows that $\mu(\xi)=(\mu_S(\xi),\psi(\xi))=0$ and $\rho_{\ltimes}(\xi)=\rho_S(\xi)=0$. Because $\mu$ is Dirac non-degenerate, $\xi=0$. 

It remains to show that $\psi$ is $A_S$-equivariant, i.e., for all $\al\in \Gamma(A_S)$ and $\be\in \Gamma(\ka)$
\begin{equation}
\label{eq:AS:equivariant}
 \psi([\al,\be]_{A_S})=\nabla_{\alpha}^{\Rep^*} \psi (\beta).   
\end{equation}
To see this, for a section $\alpha\in \Gamma(A_S)$, we denote by $\overline{\alpha}\in \Gamma(A_S\ltimes \Rep)$ its pullback along the projection $\Rep\to S$. If $\alpha\in \Gamma(A_S)$ and $\beta\in \Gamma(\ka)$, from the definition of the bracket $[\cdot,\cdot]_{\ltimes}$ of $A_S\ltimes \Rep$, we obtain
\[\mu([\overline{\alpha},\overline{\beta}]_{\ltimes})|_{S}=\mu(\overline{[\alpha,\beta]_{A_S}})|_S=\psi([\al,\be]_{A_S}).\]
On the other hand, since $\mu$ is a closed IM 2-form \eqref{eq:mult:form}, we obtain
\[\mu([\overline{\alpha},\overline{\beta}]_{\ltimes})\big|_{S}=
\big(\Lie_{\rho_{\ltimes}(\overline{\alpha})}\mu(\overline{\beta})-
i_{\rho_{\ltimes}(\overline{\beta})}\d \mu(\overline{\alpha})\big)\big|_{S}.
\]
The last term vanishes because $\rho_{\ltimes}(\overline{\beta})|_{S}=\rho_{A_S}(\beta)=0$. To calculate the first term, consider a section $\xi\in \Gamma(\Rep)$, and denote the corresponding vertical vector field on $\Rep$ by $\overline{\xi}\in \mathfrak{X}(\Rep)$. Then we have that
\begin{align*}
\big\langle \xi, \Lie_{\rho_{\ltimes}(\overline{\alpha})}\mu(\overline{\beta})\big|_{S} \big\rangle = & i_{\overline\xi }\Lie_{\rho_{\ltimes}(\overline{\alpha})}\mu(\overline{\beta})\big|_{S}\\
= & \Lie_{\rho_{\ltimes}(\overline{\alpha})}i_{\overline\xi }\mu(\overline{\beta})\big|_{S}+
 i_{[\overline\xi, \rho_{\ltimes}(\overline{\alpha})]}\mu(\overline{\beta})\big|_{S}\\
 = &
 \Lie_{\rho_{A_S}(\alpha)}\big\langle \xi , \psi (\beta) \big\rangle -
 \big\langle \nabla_{\alpha}^{\Rep}\xi , \psi (\beta)\big\rangle \\
 = &\big\langle \xi ,\nabla_{\alpha}^{\Rep^*} \psi (\beta) \big\rangle,
\end{align*}
where in the last equality we have used the definition of the dual connection, and in the one before we have used the following properties of the anchor of $A_S\ltimes \Rep$
\[\rho_{\ltimes}(\overline{\alpha})|_S=\rho_{A_S}(\alpha),\quad [\rho_{\ltimes}(\overline{\alpha}),\overline\xi]= \overline{\nabla_{\alpha}^{\Rep^*}\xi}.\]
This proves \eqref{eq:AS:equivariant} and concludes the proof.
\end{proof}

We have now all the ingredients to conclude the proof of Proposition \ref{prop:Moser:algbrd}.

\begin{proof}[Proof of Proposition \ref{prop:Moser:algbrd}]

We will prove part (b), and show along the way what needs to be changed to obtain part (a).

Denote by $\mu_S:=i^*(\mu_k)\in \Omega^2_{\imult}(A_S)$, and denote $\ka:=\ker\rho_S\cap \ker\mu_S$. Lemma \ref{lemma:how:to:fix:along:zero:section} gives linear, $A_S$-equivariant isomorphisms $\psi_k:\ka\diffto \Rep^*$, which yield Lie algebroid isomorphisms
$(\id,(\psi_k)^*):A_S\ltimes \Rep\diffto A_S\ltimes \ka^*$. By pushing $\mu_k$ forward along these maps, we may assume that $\Rep=\ka^*$, and that $\mu_k$ satisfy $\mu_k|_{\ka}=\id_{\ka}$. This implies that the entire path
\[\mu_t:=(1-t)\mu_0+t\mu_1\in \Omega^2_{\imult}(A_S\ltimes M_0\cap M_1),\quad t\in [0,1] \]
satisfies $\mu_t|_{\ka}=\id_{\ka}$ and $i^*(\mu_t)=\mu_S$. This implies that there exists an open neighborhood $S\subset M$ so that $\mu_t|_{A_S\ltimes M}$ is Dirac non-degenerate for all $t\in [0,1]$. 

Next, we adapt the Moser-type argument to the multiplicative setting and to account for gauge transformations. We look for a path of Lie algebroid automorphisms $\Phi^t:A_S\ltimes M\to A_S\ltimes M$, starting at the identity at $t=0$, and a path of 1-forms $\lambda_t$ on $M$, with $\lambda_0=0$, such that
\begin{equation}\label{eq:moser:with:form}
 (\Phi^t)^*(\mu_t)=\mu_0+\rho_{\ltimes}^*(\d\lambda_t),\quad \forall t\in [0,1].
\end{equation}
Here we have denoted by $\rho_{\ltimes}^*(\d\lambda_t)$ is the closed IM 2-form 
$\alpha\mapsto (i_{\rho_{\ltimes}(\alpha)}\d\lambda_t,0)$, which is also the pullback via the Lie algebroid map $\rho_{\ltimes}$ of the (closed IM) 2-form $\d\lambda_t$. This IM 2-form is exact, with primitive $(\rho_{\ltimes}^*(\lambda_t),\rho_{\ltimes}^*(\d \lambda_t))$. In the non-degenerate case (a), we will see that one can take $\lambda_t\equiv 0$. The time-dependent algebroid derivation $D_t$ generating the isotopy $\Phi^t$ will satisfy the infinitesimal equation
\[ \d_\imult\big( i_{D_t}(\mu_t,0)-(\rho_{\ltimes}^*(\theta_t),\rho_{\ltimes}^*(\d\theta_t)) \big)=(\mu_0-\mu_1,0),\]
where $\theta_t=(\phi^t)_*(\frac{\d}{\d t}\lambda_t)$ and $\phi^t$ it the base map of $\Phi^t$. By Lemma \ref{Poincare:for:action:algd}, after shrinking $M$ we find $\beta\in \Gamma(A_S^*\ltimes M)$ (given by \eqref{formula:for:beta}) so that $(\be,\mu_0-\mu_1)$ is an IM 1-form. Hence it suffices to solve the equation
\[ i_{D_t}(\mu_t,0)-(\rho_{\ltimes}^*(\theta_t),\rho_{\ltimes}^*(\d\theta_t)) =(\be,\mu_0-\mu_1). \]

In the non-degenerate case (a), Lemma \ref{lemma:1:to:1:derivations:IM} show that this equation has a unique solution $D_t$ with $\theta_t\equiv 0$, which depends smoothly on $t$. Because $\beta$ vanishes along $S$, the proof of Lemma \ref{lemma:1:to:1:derivations:IM} shows that $\sigma_{D_t}$ vanishes along $S$.

In the Dirac non-degenerate case (b), Lemma \ref{lemma:derivations:Dirac} shows solutions $(D_t,\theta_t)$ exists. If we fix a metric on $TM\oplus T^*M$, then we can use the orthogonal complement to $\L_t=\im(\rho_{\ltimes},\mu_t)$ to obtain a smooth family of solutions $(D_t,\theta_t)$. Moreover, it follows that $\sigma_{D_t}$ and $\theta_t$ vanish along $S$.

Since $\sigma_{D_t}$ vanishes along $S$, its flow $\varphi^t$ is defined up to time 1 on a possible smaller neighborhood $S\subset M'_0\subset M$, and fixes $S$. Also the flow $\Phi^t$ of $D_t$ is defined on $A_S\ltimes M'_0$ and satisfies \eqref{eq:moser:with:form} for $\lambda_t:=\int_0^t(\phi^s)^*\theta_s\d s$. Since $\theta_t$ vanishes along $S$ and $\phi^t$ fixes $S$, it follows that also $\lambda_t|_S=0$, and so the pullback of $\d \lambda$ to $S$ vanishes.

The required isomorphism is $\Phi:=\Phi^1$, which covers $\varphi:=\varphi^1$,  $M'_1:=\varphi(M'_0)$ and the 2-form is $\d\lambda:=\d\lambda_1$. The equivalence of Dirac structure around $S$ is
\[\varphi:(M'_0,e^{\d\lambda}(\L_0))\diffto (M'_1,\L_1).\qedhere\]
\end{proof}

\section{Linearization in Dirac geometry}
\label{sec:Dirac}

In this section, we show that many of the results and constructions of the paper can be extended naturally from Poisson geometry to the setting of Dirac manifolds (for a brief introduction to Dirac geometry see, e.g., \cite{Bu13}). 

For a Dirac manifold $(M,\L)$, the notion corresponding to a Poisson submanifold is that of an \textbf{invariant submanifold}, i.e., a submanifold  $S\subset M$ such that \[\pr_{TM}a\in T_xS,\quad\forall\ a\in \L_x, \ x\in S.\]
Equivalently, $S$ is an invariant submanifold for the Lie algebroid $\L$. Obviously, saturated submanifolds, i.e., submanifolds which are unions of presymplectic leaves of $\L$, are examples of invariant submanifolds. In fact, any closed invariant submanifold is saturated. One of the main results of this section is the following.

\begin{theorem}
\label{thm:normal:form:Dirac}
Let $(M,\L)$ be a Dirac manifold and $S\subset M$ be an embedded saturated submanifold. If $\L$ is integrable by a proper Lie groupoid, then $\L$ is linearizable around $S$.
\end{theorem}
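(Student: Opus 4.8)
The plan is to reduce Theorem \ref{thm:normal:form:Dirac} to the Poisson case (Theorem \ref{Normal:form:theorem}, or rather its algebroid counterpart Theorem \ref{thm:normal:form:algebroid}) by exploiting the fact that a Dirac structure $\L$ on $M$ is itself a Lie algebroid with a canonical IM 2-form, namely the projection $\mu=\pr_{T^*M}\colon\L\to T^*M$, which is \emph{Dirac non-degenerate} in the sense of Example \ref{example:robust}. The whole machinery developed in the paper — linear approximations, the action algebroid local model, the partially split condition, and the Moser-type uniqueness result — was set up to depend only on a Lie algebroid $A_S$ together with a closed IM 2-form $\mu_S$, and not on $\mu_S$ being the restriction of the identity map. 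So the first step is to formulate the Dirac analogue of the local model: restrict $\L$ to the saturated submanifold $S$ to get $A_S:=\L|_S$ with the induced closed IM 2-form $\mu_S\colon A_S\to T^*_SM$; this $\mu_S$ is robust, its kernel $\ka=\ker\mu_S\cap\ker\rho_S$ is a bundle of ideals, and the linear approximation to $\L$ around $S$ is the action algebroid $A_S\ltimes\nu(S)=A_S\ltimes\ka^*$ — exactly as in Section \ref{sec:linearization:algbrds}, with the Bott connection playing the role of the adjoint representation as in the lemma preceding the proof of Theorem \ref{thm:normal:form:algebroid}.

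Next I would establish the partially split condition. Since $\L$ is integrable by a proper Lie groupoid $\G$, its restriction $\G_S:=\G|_S$ is a proper Lie groupoid (properness is preserved under restriction to a saturated submanifold, being a closed invariant submanifold), and $\ka$ is a bundle of ideals for $\G_S$. By Theorem \ref{thm:proper}, every bundle of ideals for a proper Lie groupoid is partially split, hence $\ka$ is partially split for $A_S$. A choice of IM connection 1-form $(L,l)\in\Omega^1_\imult(A_S;\ka)$ then produces, via Proposition \ref{prop:local:model:algbrd}, a non-degenerate closed IM 2-form $\mu_0=\pr^*\mu_S+\d_\imult\langle(L,l),\cdot\rangle$ on $A_S\ltimes M_0$ for some open $S\subset M_0\subset\ka^*$. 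The corresponding Dirac local model is the Dirac structure $\L_0:=\im(\rho_{\ltimes},\mu_0)\subset TM_0\oplus T^*M_0$; one should check $J^1_S$-type compatibility, i.e.\ that $\L_0|_S$ recovers $(A_S,\mu_S)$.

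The linearization step itself is then a matter of invoking linearizability of proper Lie groupoids (as in \cite{dHFe18}, recalled in Section \ref{sec:linearization:grpds}): $\G$ is linearizable around $S$, so the Lie algebroid $\L$ is linearizable around $S$ in the sense of Definition \ref{defi:linearizable:LieAlg}, meaning there is a Lie algebroid isomorphism $\phi\colon A_S\ltimes V\diffto \L|_U$ restricting to the identity on $A_S$. Pulling back the canonical IM 2-form $\mu=\pr_{T^*M}$ of $\L|_U$ along $\phi$ gives a Dirac non-degenerate closed IM 2-form $\phi^*\mu$ on $A_S\ltimes V$ extending $\mu_S$. Now apply the Dirac version of the infinitesimal Moser theorem, Proposition \ref{prop:Moser:algbrd}(b), to the pair $\phi^*\mu$ and $\mu_0$: both are Dirac non-degenerate, both extend $\mu_S$, so there is a Lie algebroid isomorphism $\Psi$ intertwining them up to a gauge transformation $e^{\d\lambda}$ with $\lambda|_S=0$. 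Combining $\phi$ and $\Psi$ produces the desired isomorphism of Dirac manifolds (up to gauge) between $(U,\L|_U)$ and $(M_0,\L_0)$ fixing $S$, which is precisely the statement that $\L$ is linearizable around $S$.

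The main obstacle I anticipate is bookkeeping rather than conceptual: one must verify that all the constructions of Sections \ref{sec:local:model} and \ref{sec:linearization:algbrds}, which were phrased for $(A_S,\mu_S)$ with $\mu_S$ surjective (the Poisson case, $\ka=\ker\mu_S$ with $\mu_S$ fiberwise onto $T^*S$), go through verbatim when $\mu_S$ is merely robust (the Dirac case, $\ka=\ker\mu_S\cap\ker\rho_S$ and $\mu_S$ has image a possibly proper subbundle). The key technical inputs that make this work are already in place — Lemma \ref{lemma:how:to:fix:along:zero:section} is stated for Dirac non-degenerate IM 2-forms, Lemma \ref{lemma:derivations:Dirac} handles the derivation/gauge exact sequence in the Dirac setting, and Proposition \ref{prop:Moser:algbrd}(b) is exactly the Dirac Moser lemma — so the work is to track the gauge terms $e^{\d\lambda}$ carefully through the composition and confirm they vanish along $S$, and to confirm that the ``bundle of ideals $\Rightarrow$ partially split for proper'' argument (Theorem \ref{thm:proper}) applies to $\G_S$, which it does since $\G_S$ is again proper and $\ka\subset\ker\rho_S$ is $\G_S$-invariant by construction.
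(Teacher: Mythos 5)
Your proposal is correct and follows essentially the same route as the paper: the paper deduces Theorem \ref{thm:normal:form:Dirac} by combining the proper-groupoid linearization of \cite{dHFe18} (which linearizes the Lie algebroid $\L$ around the saturated submanifold $S$) with Theorem \ref{thm:normal:form:Dirac:algebroid}, whose content is exactly your final step of pulling back $\pr_{T^*M}$ along the algebroid linearization and invoking the Dirac Moser argument (Proposition \ref{prop:moser:Dirac}, i.e.\ Proposition \ref{prop:Moser:algbrd}(b)). Your extra derivation of the partially split condition from Theorem \ref{thm:proper} applied to the proper restriction $\G|_S$ is a harmless variant of what the paper gets implicitly from the Dirac version of Proposition \ref{prop:partially:split:algbrd:optimal}; only note the small slip that $\mu_S$ takes values in $T^*S$, not $T^*_SM$.
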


Let us explain what is the local model underlying this result. If $S$ is an invariant submanifold of $\L$, we
have the restricted Lie algebroid $A_S:=\L|_S$. Moreover, $S$ inherits a Dirac structure which makes the inclusion 
\[i:(S,\L_S)\hookrightarrow (M,\L)\] simultaneously a forward Dirac map and a backward Dirac map. We obtain a short exact sequence of Lie algebroids
\begin{equation}\label{eq:short:exact:Dirac}
0\rmap \ka\rmap A_S\stackrel{i^{!}}\rmap \L_S\rmap 0,
\end{equation}
where $\ka=(TS)^{\circ}=\nu^*(S)$ is the conormal bundle of $S$ in $M$, and $i^!(v+\xi)=v+\xi|_{TS}$. This short exact sequence represents the \emph{first order jet} of $\L$ at $S$. As for Poisson structures, we will encode these data more efficiently, using the closed IM 2-form
\[\mu_S:A_S\to T^*S, \quad \mu_S(v+\xi)=\xi|_{TS}.\]



Then $i^!=(\rho_{A_S},\mu_S)$, and the short exact sequence \eqref{eq:short:exact:Dirac} becomes:
\begin{equation}\label{eq:short:exact:Dirac2}
0\rmap \ka\rmap A_S\stackrel{(\rho_{A_S},\mu_S)}\rmap \L_S\rmap 0.
\end{equation}

Hence, $\mu_S$ is a \textbf{robust} closed IM 2-form, as defined in Example 
\ref{example:robust}. The following notion describes the situation at hand.

\begin{definition}
A \textbf{first order jet} of a Dirac structure is a pair $(A_S,\mu_S)$, consisting of a Lie algebroid and a closed, robust IM 2-form.

If the bundle of ideals $\ka:=\ker\rho_{A_S}\cap \ker\mu_S$ is partially split (see Subsection \ref{sec:local:model:description}), then we say that the first order jet of a Dirac structure $(A_S,\mu_S)$ is \textbf{partially split}. 
\end{definition}



In the Dirac setting, the \textbf{local model} is defined similarly as in Proposition \ref{prop:local:model:algbrd}. Namely, let $(A_S,\mu_S)$ be a partially split first order jet of a Dirac structure. Given an IM connection 1-form $(L,l)\in \Omega^1(A_S;\ka)$, define the closed IM 2-form
\[\mu_0:=\pr^*\mu_S+\d_{\imult}\langle(L,l),\cdot \rangle\in \Omega^2_\imult(A_S\ltimes\ka^*).\]
In a neighborhood $M_0\subset \ka^*$ of $S$, $\mu_0$ is Dirac non-degenerate, i.e., the map
\begin{equation}\label{eq:map:anchor:IM:local:model} 
(\rho_{\ltimes},\mu_0):A_S\ltimes M_0 \to TM_0\oplus T^*M_0
\end{equation}
is injective and its image is a Dirac structure $\L_0$ on $M_0$ (as defined in Example 
\ref{example:robust}). Then $S$ is an invariant submanifold for $\L_0$, the map \eqref{eq:map:anchor:IM:local:model} restricts to a Lie algebroid isomorphism along $S$, $A_S\diffto \L_0|_S$, and the pullback of the canonical IM 2-form on $\L_0$ to $A_S$ is $\mu_S$. This justifies calling $(M_0,\L_0)$ the \textbf{local model} of $(A_S,\mu_S)$ corresponding to the IM connection 1-form $(L,l)$.

The Dirac-geometric version of Proposition \ref{prop:partially:split:algbrd:optimal} also holds. Its proof works with minimal adaptations. Namely, a first order jet $(A_S,\mu_S)$ of a Dirac structure with kernel $\ka:=\ker\rho_{A_S}\cap\ker\mu_S$ is partially split if and only if there exists a closed IM 2-form $\mu\in\Omega^2_\imult(A_S\ltimes M_0)$, with $S\subset M_0\subset \ka^*$ an open set, such that
\begin{itemize}
\item $\mu$ is Dirac non-degenerate;
\item $i^*(\mu)=\mu_S$, where $i:A_S\hookrightarrow A_S\ltimes \ka^*$ is the zero-section. 
\end{itemize}


%

For normal forms around submanifolds, or equivalences of local models, in Dirac geometry, one needs a more general notion of isomorphism.

\begin{definition}\label{def:iso:Dirac}
Let $i_0:S\hookrightarrow M_0$ and $i_1:S\hookrightarrow M_1$ be injective immersions. 
The Dirac structures $\L_0$ on $M_0$ and $\L_1$ on $M_1$ are said to be \textbf{equivalent around $S$}, if there is a diffeomorphism of Dirac structure
\[\varphi:\big(U_0,e^{\omega}\L_0\big)\diffto \big(U_1,\L_1\big),\]
where $U_k\subset M_k$ is an open neighborhood of $i_k(S)$, $\varphi\circ i_0=i_1$, and $\omega\in \Omega^2(U_0)$ is a closed 2-form such that $i_0^*\omega=0$.
\end{definition}

\begin{remark}\label{remark:gauge:Poisson}
After shrinking $U_0$, the 2-form $\omega$ admits a primitive $\lambda\in \Omega^1(U_0)$ which vanishes at points in $S$ (see the Relative Poincar\'e Lemma in \cite{WeinLSM}).

For Poisson manifolds, one can often ``absorb'' $\omega$ in the diffeomorphism $\varphi$, and so the relation reduces to that of being ``Poisson diffeomorphic around $S$". To see this, let $(M,\pi)$ be a Poisson manifold, $i:S\hookrightarrow M$ a submanifold and $\omega$ a closed 2-form such that $i^*\omega=0$. Let us assume that the maps
\begin{equation}\label{eq:map:gauge}
\id+t\, \omega^{\flat}\circ\pi^{\sharp}:T^*_SM\to T^*_SM\qquad (t\in [0,1])
\end{equation}
are invertible. This is equivalent to the Dirac structures $e^{t\omega}\L_{\pi}$ to correspond to Poisson structures $\pi_t$ around $S$. These satisfy $\pi_t^{\sharp}=\pi^{\sharp}\circ (\id+t\, \omega^{\flat}\circ\pi^{\sharp})^{-1}$. Let $\lambda$ be a primitive of $\omega$ which vanishes along $S$. By the Moser path-method for Poisson structures 
(see, e.g., \cite[Section 2.4]{Me18}), the isotopy $\phi^t$ generated by the time-dependent vector field $-\pi_t^{\sharp}(\lambda)$ sends $\pi=\pi_0$ to $\pi_t$ and fixes $S$ pointwise.

The assumption that \eqref{eq:map:gauge} is invertible is satisfied in any of the following cases:
\begin{itemize}
\item If $\omega$ vanishes at points in $S$;
\item If $S$ is a coisotropic submanifold of $(M,\pi)$ and $i_v\omega=0$ for all $v\in TS$;
\item If $S$ is a Poisson submanifold of $(M,\pi)$.
\end{itemize}
\end{remark}


We show that equivalent structures have isomorphic first order jets.

\begin{lemma}\label{lemma:equivalent:give:equivalent:jet}
Consider two Dirac manifolds $(M_k,\L_k)$, $k=0,1$, that are equivalent around the invariant, embedded submanifolds $i_k:S\hookrightarrow M_k$. Then the first order jets of $\L_0$ and $\L_1$ at $S$ are isomorphic. In particular, the first order jet of $\L_0$ at $S$ is partially split if and only if that of $\L_1$ at $S$ is partially split.
\end{lemma}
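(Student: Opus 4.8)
\textbf{Proof plan for Lemma \ref{lemma:equivalent:give:equivalent:jet}.}

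The plan is to unravel the definition of ``equivalent around $S$'' (Definition \ref{def:iso:Dirac}) and track what the data $(\varphi,\omega)$ do to the first order jet. Recall that the first order jet of $\L_k$ at $S$ is the pair $(A_{S,k},\mu_{S,k})$ with $A_{S,k}:=\L_k|_S$ and $\mu_{S,k}(v+\xi)=\xi|_{TS}$, together with the short exact sequence \eqref{eq:short:exact:Dirac2}. By hypothesis there is a diffeomorphism $\varphi:(U_0,e^{\omega}\L_0)\diffto (U_1,\L_1)$ with $\varphi\circ i_0=i_1$ and $i_0^*\omega=0$. The diffeomorphism $\varphi$ induces a Lie algebroid isomorphism $\varphi_!:e^{\omega}\L_0\diffto\L_1$ (pushforward of Dirac structures along a diffeomorphism is a Lie algebroid isomorphism), so it suffices to compare $(A_{S,0},\mu_{S,0})$ with the first order jet of $e^{\omega}\L_0$ at $S$. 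In other words, the whole lemma reduces to the claim: \emph{the gauge transformation by a closed $2$-form $\omega$ with $i_0^*\omega=0$ does not change the first order jet at $S$, up to isomorphism.}

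The key step is then the following. Over $U_0$ the gauge transformation $e^{\omega}:\L_0\diffto e^{\omega}\L_0$, $a+\xi\mapsto a+\xi+i_a\omega$, is a Lie algebroid isomorphism covering the identity. Restrict it to $S$: since $i_0^*\omega=0$ and $\pr_{TM}(a)\in TS$ for $a\in A_{S,0}$ (invariance of $S$), for any $a\in A_{S,0}$ the term $i_{\pr_{TM}(a)}\omega$ restricts to zero on $TS$, i.e.\ $i_a\omega|_{TS}=0$. Hence $e^{\omega}|_S:A_{S,0}\diffto (e^{\omega}\L_0)|_S$ is a Lie algebroid isomorphism which is \emph{compatible with the IM $2$-forms}: $\mu_{S,0}(a)=(i_a\omega|_{TS}+\xi|_{TS}$ where $a=v+\xi$) indeed equals $\mu_S(e^{\omega}a)$ because the extra term $i_v\omega$ dies on $TS$. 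Composing this with $\varphi_!|_S$ gives the desired isomorphism of pairs $(A_{S,0},\mu_{S,0})\simeq(A_{S,1},\mu_{S,1})$, and since the bundle of ideals $\ka_k=\ker\rho_{A_{S,k}}\cap\ker\mu_{S,k}$ is canonically read off from this pair, the isomorphism restricts to an isomorphism of the bundles of ideals $\ka_0\simeq\ka_1$. The last sentence of the lemma is then immediate: partial splitness is a property of the pair $(A_S,\mu_S)$ (equivalently of $\ka$ together with the $A_S$-action), invariant under isomorphism.

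I do not expect a serious obstacle here; the only point requiring a little care is the bookkeeping of where $\omega$ is evaluated — one must use both $i_0^*\omega=0$ \emph{and} the invariance of $S$ (so that $\pr_{TM}(a)\in TS$) to conclude $i_a\omega|_{TS}=0$ for $a\in A_{S,0}$, which is exactly what makes $e^{\omega}|_S$ intertwine $\mu_{S,0}$ and $\mu_S$. One should also note that $e^{\omega}$ need not preserve non-degeneracy globally, but that is irrelevant: we only use it along $S$ and on the level of the abstract pair $(A_S,\mu_S)$, where robustness of $\mu_S$ is automatic from \eqref{eq:short:exact:Dirac2} and is preserved by isomorphism. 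If one wishes to be slightly more careful about $\varphi_!$, one can instead argue with the pullback $\varphi^!\L_1=e^{\omega}\L_0$ directly, which is the definition of a diffeomorphism of Dirac structures, and then the induced map on restricted Lie algebroids $\d\varphi:A_{S,0}\diffto A_{S,1}$ visibly intertwines the $i^!$ maps of \eqref{eq:short:exact:Dirac2} twisted by $e^{\omega}$, giving the same conclusion.
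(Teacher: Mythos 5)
Your proposal is correct and follows essentially the same route as the paper: reduce via $\varphi$ to comparing $\L_0$ with $e^{\omega}\L_0$, then observe that $e^{\omega}$ restricts along $S$ to a Lie algebroid isomorphism whose effect on the IM $2$-form is the extra term $i_0^*(i_{\rho(a)}\omega)$, which vanishes because $\rho(a)\in TS$ and $i_0^*\omega=0$. The concluding remark that partial splitness is an invariant of the isomorphism class of the pair $(A_S,\mu_S)$ matches the paper's implicit use of the same fact.
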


\begin{proof}
An equivalence $(\varphi,\omega)$ between $\L_0$ and $\L_1$ gives an isomorphism between the first order jets of $e^{\omega}\L_0$ and $\L_1$ at $S$. Therefore, it suffices to show that $\L_0$ and $e^{\omega}\L_0$ have isomorphic first order jets at $S$. Denote these by 
\[(A_S:=\L_0|_S, \mu_S:=i_0^*\pr_{T^*M_0})\quad  \textrm{and} \quad (A'_S:=(e^{\omega}\L_0)|_S, \mu_S':=i_0^*\pr_{T^*M_0}).\]
The map $e^\omega$ restricts to a Lie algebroid isomorphism between $A_S$ and $A'_S$, and the pullback of $\mu'_S$ under this isomorphism is given by
\[\tilde{\mu}_S(a)=\mu_S(a)+i_0^*\big(i_{\rho(a)}\omega\big).\]
Since $\rho(a)\in TS$ and $i_0^*\omega=0$, we have $\tilde{\mu}_S=\mu_S$. This completes the proof. 
\end{proof}

The following result, which is proven in the Appendix using an IM version of the Moser argument (see Proposition \ref{prop:Moser:algbrd}), shows that the local model is well-defined up to the equivalence relation from Definition \ref{def:iso:Dirac}.

\begin{proposition}\label{prop:moser:Dirac}
Let $(A_S,\mu_S)$ be a first order jet of a Dirac structure with kernel $\ka:= \ker \rho_{A_S}\cap\ker\mu_S$. Let $\mu_0,\mu_1\in\Omega^2_\imult(A_S\ltimes M)$ be 
closed IM 2-forms, defined on an neighborhood $M\subset \ka^*$ of $S$, that are Dirac non-degenerate and extend $\mu_S$
\[ i^*\mu_k=\mu_S, \quad (k=0,1).\]
Then the Dirac structures $\L_k:=\mathrm{Im}(\rho_{\ltimes},\mu_k)$ are equivalent around $S$. 
\end{proposition}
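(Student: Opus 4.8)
The plan is to reduce this to the infinitesimally multiplicative Moser method in its Dirac-non-degenerate form, namely Proposition \ref{prop:Moser:algbrd}(b), applied to the linear action algebroid $A_S\ltimes \Rep$ with $\Rep = \ka^*$. The statement of Proposition \ref{prop:moser:Dirac} is almost literally the conclusion of Proposition \ref{prop:Moser:algbrd}(b): the latter gives an isomorphism of Lie algebroids $\Phi:A_S\ltimes M_0'\diffto A_S\ltimes M_1'$ whose base map $\varphi$ fixes $S$, together with an exact $2$-form $\d\lambda$ with $i_S^*(\d\lambda)=0$, such that $\Phi^*(\mu_1)=\mu_0+\rho_\ltimes^*(\d\lambda)$, and it records explicitly that this yields an equivalence of the Dirac structures $\L_k=\im(\rho_\ltimes,\mu_k)$ around $S$ in the sense of Definition \ref{def:iso:Dirac}, via $\varphi:(M_0',e^{\d\lambda}\L_0)\diffto (M_1',\L_1)$. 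So the substance of the proof is just to check that the hypotheses of Proposition \ref{prop:Moser:algbrd}(b) are met.

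First I would verify the hypotheses. We are given $\mu_0,\mu_1\in\Omega^2_\imult(A_S\ltimes M)$ both closed, both Dirac non-degenerate, and both satisfying $i^*\mu_k=\mu_S$ where $i:A_S\hookrightarrow A_S\ltimes \ka^*$ is the zero section. These are exactly the standing assumptions in Proposition \ref{prop:Moser:algbrd}(b) with the representation $\Rep=\ka^*$ and the open neighborhoods $M_k=M$. The only point requiring a remark is the identification of the relevant bundle of ideals: in the Moser proposition one sets $\ka' := \ker\rho_S\cap\ker\mu_S\subset A_S$ via Lemma \ref{lemma:how:to:fix:along:zero:section}, and one must note that this coincides with the $\ka=\ker\rho_{A_S}\cap\ker\mu_S$ appearing in the present statement — which is immediate since $\rho_S = \rho_{A_S}$ by definition of the restricted algebroid. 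Lemma \ref{lemma:how:to:fix:along:zero:section} then applies to each $\mu_k$ and produces $A_S$-equivariant isomorphisms $\psi_k:\ka\diffto(\ka^*)^* = \ka$, which the Moser argument uses internally to normalize $\mu_k|_\ka$; since here $\Rep$ is already $\ka^*$ we are in the situation handled directly in the body of that proof.

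Next I would simply invoke Proposition \ref{prop:Moser:algbrd}(b) and read off its conclusion. It gives open neighborhoods $S\subset M_k'\subset M$, a Lie algebroid isomorphism $\Phi:A_S\ltimes M_0'\diffto A_S\ltimes M_1'$ with base map fixing $S$, and a closed $2$-form $\d\lambda$ on $M_0'$ with $i_S^*(\d\lambda)=0$, such that $\Phi^*(\mu_1)=\mu_0+\rho_\ltimes^*(\d\lambda)$; and it records that $\varphi:(M_0',e^{\d\lambda}\L_0)\diffto(M_1',\L_1)$ is an equivalence of Dirac structures. Matching this against Definition \ref{def:iso:Dirac} with $\omega := \d\lambda$ (so $\omega$ is closed and $i_S^*\omega=0$, as required), and with $\varphi\circ i_0 = i_1$ following from the fact that $\varphi$ fixes $S$ and both embeddings are the given ones, gives precisely the assertion that $\L_0$ and $\L_1$ are equivalent around $S$. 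This completes the proof.

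I do not expect a genuine obstacle here: the work has all been front-loaded into Proposition \ref{prop:Moser:algbrd}, whose proof in the appendix already treats the Dirac-non-degenerate case with gauge transformations. The only mild care needed is bookkeeping — making sure the bundle of ideals $\ka$, the representation $\ka^*$, and the open sets line up between the two statements, and confirming that ``$\varphi$ fixes $S$ pointwise'' together with the identifications $i_k:S\hookrightarrow M_k$ gives $\varphi\circ i_0=i_1$ on the nose. If one wanted to be fully self-contained one could alternatively repeat the short Moser argument: interpolate $\mu_t=(1-t)\mu_0+t\mu_1$, note $i^*\mu_t=\mu_S$ and that Dirac non-degeneracy is open so $\mu_t$ is Dirac non-degenerate near $S$ for all $t$, apply Lemma \ref{Poincare:for:action:algd} to write $\mu_0-\mu_1=\d_\imult$ of an IM $1$-form vanishing on $A_S$, solve for a time-dependent derivation $D_t$ (and auxiliary $1$-form $\theta_t$) using Lemma \ref{lemma:derivations:Dirac}, integrate the flow (defined near $S$ since $\sigma_{D_t}$ vanishes on $S$), and set $\omega=\d\lambda_1$ with $\lambda_t=\int_0^t(\phi^s)^*\theta_s\,\d s$ — but since this is verbatim the proof of Proposition \ref{prop:Moser:algbrd}(b), the cleanest route is to cite it.
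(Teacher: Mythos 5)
Your proposal is correct and follows exactly the paper's route: the paper proves Proposition \ref{prop:moser:Dirac} precisely by invoking the IM Moser method of Proposition \ref{prop:Moser:algbrd}(b) from the appendix, applied with $\Rep=\ka^*$, whose conclusion already records the Dirac equivalence in the sense of Definition \ref{def:iso:Dirac}. Your hypothesis bookkeeping (identification of $\ka$, the normalization via Lemma \ref{lemma:how:to:fix:along:zero:section}) matches what the appendix proof does internally, so nothing is missing.
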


\begin{definition}
A Dirac structure $(M,\L)$ is called \textbf{linearizable} around an invariant submanifold $S\subset M$, if its first order jet $(A_S:=\L|_S,\mu_S:=\pr_{T^*S})$ at $S$ is partially split, and $\L$ is equivalent around $S$ with the local model $\L_0$ corresponding to some (hence, any) IM connection 1-form $(L,l)\in \Omega^1(A_S;\ka)$.
\end{definition}

Proposition \ref{prop:moser:Dirac} implies the analog of Theorem \ref{thm:normal:form:algebroid} for the Dirac setting.

\begin{theorem}\label{thm:normal:form:Dirac:algebroid}
A Dirac manifold $(M,\L)$ is linearizable around an invariant submanifold $S$ if and only if the  Lie algebroid $\L$ is linearizable around $S$.
\end{theorem}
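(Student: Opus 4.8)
The statement is a Dirac-geometric analogue of Theorem \ref{thm:normal:form:algebroid}, and the plan is to mimic that proof, replacing ``Poisson isomorphism around $S$'' by ``equivalence of Dirac structures around $S$'' in the sense of Definition \ref{def:iso:Dirac}, and replacing the non-degenerate IM 2-forms by Dirac non-degenerate ones. The two implications are proved separately.

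\textbf{($\Rightarrow$) Linearizable Dirac structure implies linearizable algebroid.} Suppose $(M,\L)$ is linearizable around $S$. By definition, the first order jet $(A_S,\mu_S)$ is partially split, so by Proposition \ref{prop:local:model:algbrd} we have a local model $(A_S\ltimes M_0,\mu_0)$ with $\mu_0$ Dirac non-degenerate, giving a Lie algebroid isomorphism $(\rho_\ltimes,\mu_0):A_S\ltimes M_0\diffto \L_0$ onto the local model Dirac structure. Moreover $\L$ is equivalent to $\L_0$ around $S$, i.e.\ there is a diffeomorphism $\varphi:(U_0,e^\omega\L_0)\diffto(U_1,\L)$ fixing $S$, with $\omega$ closed and $i_0^*\omega=0$. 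The gauge transformation $e^\omega$ is a Lie algebroid isomorphism $\L_0\diffto e^\omega\L_0$ (over the identity of $U_0$), and $\varphi$ induces a Lie algebroid isomorphism $e^\omega\L_0\diffto \L$. Composing, we get a Lie algebroid isomorphism $A_S\ltimes M_0 \diffto \L|_{U_1}$, which is the linear approximation of $\L$ around $S$ (here one uses, as in Theorem \ref{thm:normal:form:algebroid}, that the canonical representation of $A_S=\L|_S$ on $\nu(S)\simeq\ka^*$ coincides with the one used to build $A_S\ltimes\ka^*$; this is the Bott/Lie algebroid representation argument already carried out for $T^*M$ and works verbatim for $\L$ since $\ka=(TS)^\circ$ and $\rho_{A_S}$ plays the role of $\pi^\sharp$). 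One must still arrange the isomorphism to restrict to the identity on $A_S$; this is done exactly as in the lemma inside the proof of Theorem \ref{thm:normal:form:algebroid}, replacing $\phi_{A_S}$ arguments by the fact that both $\varphi$ and $e^\omega$ fix $S$, so they preserve $\mu_S$ and $\rho_{A_S}$, hence restrict to an $A_S$-equivariant automorphism of $\ka=\ker\rho_{A_S}\cap\ker\mu_S$, which we precompose away. Thus $\L$ is linearizable around $S$ as a Lie algebroid.

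\textbf{($\Leftarrow$) Linearizable algebroid implies linearizable Dirac structure.} Suppose $\L$ is linearizable around $S$ as a Lie algebroid: there is a Lie algebroid isomorphism $\phi:A_S\ltimes V\diffto \L|_U$ which is the identity on $A_S$, with $V\subset\nu(S)=\ka^*$ open. The canonical IM 2-form on $\L$ (the projection $\mathrm{pr}_{T^*U}:\L\to T^*U$) is closed and Dirac non-degenerate; pulling it back by $\phi$ yields a closed, Dirac non-degenerate IM 2-form $\mu_0:=\phi^*(\mathrm{pr}_{T^*U})\in\Omega^2_\imult(A_S\ltimes V)$, and since $\phi|_{A_S}=\id_{A_S}$ we have $i^*\mu_0=\mu_S$. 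By the Dirac-geometric version of Proposition \ref{prop:partially:split:algbrd:optimal} (stated in this section, with the same proof), the existence of such a $\mu_0$ shows $(A_S,\mu_S)$ is partially split. Now let $\mu_0'$ be the IM 2-form of the local model of Proposition \ref{prop:local:model:algbrd} associated to some IM connection 1-form; it is Dirac non-degenerate on some $M_0'\subset\ka^*$ and extends $\mu_S$. Applying Proposition \ref{prop:moser:Dirac} to the pair $\mu_0$ (transported to a common neighborhood) and $\mu_0'$, we conclude that the Dirac structures $\mathrm{Im}(\rho_\ltimes,\mu_0)$ and $\L_0=\mathrm{Im}(\rho_\ltimes,\mu_0')$ are equivalent around $S$. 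But $\mathrm{Im}(\rho_\ltimes,\mu_0)=\phi_*(\L|_U)$, so $\L$ is equivalent around $S$ to $\L_0$; hence $\L$ is linearizable around $S$.

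\textbf{Main obstacle.} The routine content (the Bott connection identification, the ``fix the identity on $A_S$'' lemma) transfers from the Poisson case with only cosmetic changes, so the real work is making sure the \emph{equivalence relation} of Definition \ref{def:iso:Dirac} behaves transitively and composes correctly with gauge transformations and with the Lie algebroid isomorphisms $e^\omega$ and $\phi_*$ --- in particular that composing an equivalence $(\varphi,\omega)$ with a pushforward along a Dirac diffeomorphism again produces a pair of the required form with a closed 2-form pulling back to zero on $S$. This is where Lemma \ref{lemma:equivalent:give:equivalent:jet} and the bookkeeping around $e^\omega$ being a Lie algebroid isomorphism do the heavy lifting; once those are in hand the proof is a direct translation of Theorem \ref{thm:normal:form:algebroid}.
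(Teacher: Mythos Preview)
Your proposal is correct and follows essentially the same approach as the paper, which simply states that Proposition \ref{prop:moser:Dirac} yields the Dirac analogue of Theorem \ref{thm:normal:form:algebroid}; you have correctly unpacked what that analogy entails, including the use of Dirac non-degenerate IM 2-forms, the gauge transformation $e^\omega$ as a Lie algebroid isomorphism, and Lemma \ref{lemma:equivalent:give:equivalent:jet} to ensure the first order jet is preserved so that the $A_S$-fixing lemma goes through. One minor remark: what you flag as the ``main obstacle'' --- transitivity and composition of equivalences --- is actually routine, since composing $(\varphi_1,\omega_1)$ and $(\varphi_2,\omega_2)$ gives $(\varphi_2\circ\varphi_1,\ \omega_1+\varphi_1^*\omega_2)$ with the required vanishing on $S$; the genuine content lies where you already placed it, in Proposition \ref{prop:moser:Dirac}.
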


This result and the linearization theorem for proper Lie groupoids from \cite{dHFe18} imply Theorem \ref{thm:normal:form:Dirac}, stated at the beginning of this section.

\medskip 

Finally, we briefly discuss the notions at the groupoid level which correspond to Dirac structures and their first order jets. We omit the discussion about partially split groupoids, because it can be found in a more general setting in \cite{FM22} and we omit the discussion about local models and equivalences, because these admit similar generalizations from the Poisson setting. 

At a global level, Dirac structures correspond to \textbf{presymplectic groupoids} \cite{BCWZ04}, i.e., a Lie groupoid $\G\tto M$ with a closed, multiplicative 2-form $\omega$ satisfying
\[\dim(\G)=2\dim(M),\quad \ker(\omega)\cap \ker(\d\s)\cap \ker(\d\t)=0.\]

The notion corresponding to first order jets of Dirac structures, i.e., closed, robust IM 2-forms, was also introduced in \cite{BCWZ04}.

\begin{definition}
A closed multiplicative 2-form $\omega_S$ on a Lie groupoid $\G_S\tto S$ is said to be \textbf{robust} if $\ker(\omega_S)\cap \ker(\d\s)\cap \ker(\d\t)$ has constant rank, equal to 
$\mathrm{dim}(\G_S)-2\dim(S)$. A Lie groupoid $\G_S\tto S$ endowed with a robust 2-form $\omega_S$ is called an \textbf{over-presymplectic} groupoid.
\end{definition}

Equivalent characterizations of these objects are given in \cite[Lemma 4.7]{BCWZ04}, which implies also the following result.

\begin{proposition}
Let $\G_S\tto S$ be a Lie groupoid with Lie algebroid $A_S$. Then
\begin{enumerate}[(i)]
    \item A closed multiplicative 2-form $\omega_S$ on $\G_S$ is robust if and only if the corresponding IM 2-form $\mu_S:A_S\to T^*S$ is robust. 
    \item If $\G_S$ has 1-connected target-fibers, then differentiation $\omega_S\mapsto \mu_S$ (see \eqref{eq:M:IM:forms}) gives a 1-to-1 correspondence between closed, robust, multiplicative 2-forms on $\G_S$ and closed, robust, IM 2-forms on $A_S$.
\end{enumerate}
\end{proposition}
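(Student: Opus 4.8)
The statement to prove is the proposition characterizing over-presymplectic groupoids: (i) a closed multiplicative $2$-form $\omega_S$ on $\G_S\tto S$ is robust if and only if the associated IM $2$-form $\mu_S:A_S\to T^*S$ is robust; and (ii) for target $1$-connected $\G_S$, differentiation $\omega_S\mapsto\mu_S$ is a bijection between closed robust multiplicative $2$-forms and closed robust IM $2$-forms. The plan is to reduce everything to the already-established dictionary between multiplicative and IM forms together with a pointwise computation of the relevant kernels along the unit manifold. The essential observation is that both the ``robust'' condition for $\omega_S$ and the ``robust'' condition for $\mu_S$ are conditions that can be read off from the restriction of the data to $S$, exploiting multiplicativity.

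First I would recall, from the Appendix and from \cite{BCWZ04}, that for a closed multiplicative $2$-form the corresponding IM data is the single bundle map $\mu_S:A_S\to T^*S$, $\mu_S(a)=i_a\omega_S|_{TS}$ (the component $\zeta$ vanishes since $\omega_S$ is closed). For part (i) the key step is to compute $\ker\omega_S$ at a unit $1_x$. Using the canonical splitting $T_{1_x}\G_S=T_xS\oplus (A_S)_x$ and multiplicativity of $\omega_S$, one gets an explicit block form for $\omega_S|_{1_x}$ in terms of $\mu_S$ and of $\rho_{A_S}$; the standard computation (already implicitly used in the proof of Proposition~\ref{prop:over:symplectic} and in \cite[Section 4]{BCWZ04}) shows that $v+a\in\ker(\omega_S|_{1_x})\cap\ker\d\s|_{1_x}\cap\ker\d\t|_{1_x}$ forces $v=0$ and $a\in\ker\rho_{A_S}|_x\cap\ker\mu_S|_x$, and conversely. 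Hence $\ker(\omega_S)\cap\ker\d\s\cap\ker\d\t$, restricted to $S$, is canonically identified with $\ker\rho_{A_S}\cap\ker\mu_S=\ka$, and by right-translation this identification spreads over all of $\G_S$ since $\omega_S$, $\d\s$, $\d\t$ are all compatible with right translations. Therefore $\ker(\omega_S)\cap\ker\d\s\cap\ker\d\t$ has constant rank equal to $\dim\G_S-2\dim S$ exactly when $\ka$ has constant rank, i.e.\ when $(\rho_{A_S},\mu_S):A_S\to TS\oplus T^*S$ has constant rank $\dim S$, which is precisely robustness of $\mu_S$ (Example~\ref{example:robust}). This proves (i).

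For part (ii), I would invoke directly the isomorphism of complexes $\Omega^\bullet_\mult(\G_S)\simeq\Omega^\bullet_\imult(A_S)$ valid for target $1$-connected $\G_S$, recalled in the Appendix: differentiation $\omega\mapsto(\mu,\zeta)$ is already known to be a bijection between closed multiplicative $2$-forms and closed IM $2$-forms. It then only remains to check that this bijection matches the two ``robust'' subsets, and that is exactly the content of part (i): $\omega_S$ is robust iff $\mu_S$ is robust. Composing, one obtains the claimed $1$-to-$1$ correspondence between closed robust multiplicative $2$-forms on $\G_S$ and closed robust IM $2$-forms on $A_S$.

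I do not expect a serious obstacle here; the only point requiring care is the pointwise kernel computation in part (i) — getting the block decomposition of $\omega_S|_{1_x}$ right and verifying that the intersection $\ker\omega_S\cap\ker\d\s\cap\ker\d\t$ sits inside $T(\G_S)$ in a way controlled purely by $\mu_S$ and $\rho_{A_S}$ — together with the remark that this identification is translation-invariant so that constancy of rank along $S$ propagates to all of $\G_S$ (using target-connectedness, which is part of our standing conventions). Everything else is a direct application of results stated earlier in the paper and in \cite{BCWZ04}.
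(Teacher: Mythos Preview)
Your proposal is correct and follows the natural line of argument. The paper itself does not give a detailed proof of this proposition: it simply states that the result follows from \cite[Lemma 4.7]{BCWZ04}, so your argument is essentially a reconstruction of what lies behind that citation. The key computation you outline --- that along the units $\ker\omega_S\cap\ker\d\s\cap\ker\d\t$ coincides with $\ker\rho_{A_S}\cap\ker\mu_S$, using that $\ker\d\s\subset(\ker\d\t)^{\perp_{\omega_S}}$ for any multiplicative $2$-form and that $\mu_S(a)$ records the pairing of $a$ with $TS$ --- together with translation invariance and the rank count $\dim\G_S-2\dim S=\operatorname{rank}A_S-\dim S$, is exactly the right one.
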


\section{Linearization around coregular submanifolds}
\label{sec:coregular}

In this section, we consider local models and linearization around more general submanifolds, called coregular submanifolds. The splitting theorem around transversals of \cite{BLM19}, reduces the study of this class to the case of invariant submanifolds, discussed before. We will discuss the general setting of Lie algebroids, and then the setting for Dirac structures. For example, we will prove the following. 

\begin{theorem}\label{Theorem:coregular:normal:form}\mbox{}
\begin{enumerate}[(i)]
\item The Lie algebroid of a proper Lie groupoid is linearizable around any coregular submanifold.
\item A Dirac manifold $(M,\L)$, where $\L$ is the Lie algebroid of a proper groupoid, is linearizable around any coregular submanifold.
\end{enumerate}
\end{theorem}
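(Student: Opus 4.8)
\textbf{Proof plan for Theorem \ref{Theorem:coregular:normal:form}.}

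The strategy is to reduce the coregular case to the invariant-submanifold case already handled by Theorem \ref{thm:normal:form:algebroid} (for Lie algebroids) and Theorem \ref{thm:normal:form:Dirac:algebroid} (for Dirac structures), using the splitting theorem around transversals from \cite{BLM19}. Recall that a coregular submanifold $S\subset M$ is one for which there is a ``saturation'' $\Sat(S)$ — a smallest invariant submanifold containing $S$ — with $S$ sitting inside $\Sat(S)$ as a transversal, and such that $\Sat(S)$ is itself embedded. The plan is: first, invoke the transversal splitting theorem of \cite{BLM19} to obtain, on a neighborhood of $S$ in $M$, a decomposition of the Lie algebroid (or of the Dirac structure) as a product of the restriction to $\Sat(S)$ near $S$ with a trivial/fixed transverse piece; second, apply Theorem \ref{Theorem:coregular:normal:form}(resp.\ the Dirac analogue) to linearize the restriction to $\Sat(S)$ around $S$ as an invariant submanifold; third, check that the product of a linearization with the (constant) transverse data is a linearization of the ambient structure around $S$, and that all the cohomological/properness hypotheses pass from the ambient proper groupoid to the restricted groupoid over $\Sat(S)$.

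For part (i): Let $\G\tto M$ be a proper Lie groupoid with Lie algebroid $A=\L$. Put $N:=\Sat(S)$, which by hypothesis is a closed embedded invariant submanifold of $M$. The restriction $\G|_N\tto N$ is again a proper Lie groupoid (restriction of a proper groupoid to a closed saturated submanifold is proper), with Lie algebroid $A|_N$. By Theorem \ref{Theorem:coregular:normal:form}(or rather the already-established linearization of $\L$ around invariant submanifolds — here one applies the coregular-case input inductively, or directly the proper linearization result for invariant submanifolds via \cite{dHFe18} together with Theorem \ref{thm:normal:form:algebroid}), $A|_N$ is linearizable around $S$ viewed as an invariant submanifold of $(N, \L|_N)$. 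Now the transversal splitting theorem of \cite{BLM19} gives a neighborhood $U$ of $S$ in $M$, a tubular-type identification $U\simeq V\times W$ with $V\subset N$ a neighborhood of $S$ and $W$ a neighborhood of the origin in the normal space to $N$ along the transversal, under which $\L|_U$ becomes the product Lie algebroid $\L|_V\times T W$ (a trivial/transverse factor, since $S$ is a transversal the transverse factor carries the full ``symplectic'' directions and is, in the appropriate sense, rigid). One then checks that linearizing the $\L|_V$ factor around $S$ and leaving the $TW$ factor unchanged produces a linearization of $\L$ around $S$: the linear approximation of a product of algebroids along a submanifold of one factor is the product of the linear approximation with the other factor, and the partially-split IM connection $1$-form extends by the trivial one on the $TW$-direction. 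This yields the desired isomorphism of Lie algebroids identifying $\L|_U$ with the local model around $S$.

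For part (ii): The argument is identical in structure, replacing $A|_N$ by the restricted Dirac structure $\L|_N$ (which exists as a Dirac structure on $N$, making the inclusion $N\hookrightarrow M$ simultaneously forward and backward Dirac, exactly as in Section \ref{sec:Dirac}), and using Theorem \ref{thm:normal:form:Dirac} to linearize $\L|_N$ around $S$ as an invariant submanifold, since $\L|_N$ is integrated by the proper groupoid $\G|_N$. The Dirac splitting theorem around transversals from \cite{BLM19} again splits $(M,\L)$ near $S$ as a product of $(N,\L|_N)$ near $S$ with a transverse (pre)symplectic factor; linearizing the first factor and keeping the second unchanged gives the result, where now ``linearizable'' is understood up to the equivalence of Definition \ref{def:iso:Dirac}, and Lemma \ref{lemma:equivalent:give:equivalent:jet} ensures this equivalence is compatible with the product decomposition.

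\textbf{Main obstacle.} The essential technical point — and the step I expect to require the most care — is verifying that the transversal splitting of \cite{BLM19} is compatible with our local models: namely that under the product identification $\L|_U\simeq \L|_V\times TW$ (a) the first order jet of $\L$ along $S$ is the product of the first order jet of $\L|_N$ along $S$ with the (trivial) transverse jet, so that partial splitness is equivalent on the two sides and the bundle of ideals $\ka=\ker\mu_S$ is unchanged; and (b) the Poisson/Dirac local model of the product jet is the product of the local models, so that a linearizing isomorphism for the $N$-factor, crossed with the identity on $W$, is genuinely a linearizing isomorphism for the ambient structure. This requires tracking the normal bundles, the action algebroids $A_S\ltimes \ka^*$, and the IM connection $1$-forms through the splitting. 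Once this bookkeeping is done, everything else follows formally from the already-proven invariant-submanifold results and from the stability of properness and of the de Rham cohomology hypothesis under restriction to closed saturated submanifolds.
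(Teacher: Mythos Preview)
Your overall strategy---reduce the coregular case to an invariant-submanifold case via the splitting theorem of \cite{BLM19}---is exactly the paper's approach. However, you have interchanged the roles of the two auxiliary submanifolds, and as written the argument does not go through.

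Concretely: with $N=\Sat(S)$, the submanifold $S$ is \emph{not} invariant in $(N,A|_N)$; it is a \emph{transversal} there (this is the content of Lemma~C.9 in the paper). Conversely, $N$ is \emph{not} a transversal in $M$; it is invariant. So your two main steps both misfire: (a) ``linearize $A|_N$ around $S$ viewed as an invariant submanifold of $N$'' invokes the invariant-submanifold theory where the transversal theory applies (and linearization around transversals is automatic by \cite{BLM19}, so there is nothing to prove there); (b) ``apply the transversal splitting theorem of \cite{BLM19} to split $M$ along $N$'' invokes \cite{BLM19} for an invariant submanifold, where it does not apply---the result $\L|_U\simeq \L|_V\times TW$ you write down is not what one gets near an invariant $N$ (the linear approximation there is the \emph{action} algebroid $A|_N\ltimes\nu(N)$, not a product). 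A secondary issue: $\Sat(S)$ need not be globally embedded; one must work with a \emph{local} saturation $\Sat(S,U)$ as in Theorem~\ref{theorem:coregular:intersection}.

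The paper runs the argument in the correct direction in two (equivalent) ways. Via the saturation: Proposition~\ref{local:model:saturation:Lie:alg} shows that $A$ is linearizable around $S$ if and only if $A$ is linearizable around the invariant submanifold $N=\Sat(S,U)$; then properness of $\G$ and \cite{dHFe18} linearize the groupoid (hence the algebroid) around $N$. Via a minimal transversal: one picks $X$ transversal in $M$ with $S$ \emph{invariant} in $X$, uses \cite{BLM19} to split $A$ along $X$ (Proposition~\ref{prop:linearizable:iff:transverse} and its Dirac analogue), and then linearizes $A_X$ around the invariant submanifold $S$ using properness. Either way, the place where the invariant-submanifold linearization theorem is applied is at the level of the \emph{ambient} algebroid around $N$, or of the \emph{restricted} algebroid $A_X$ around $S$---never $A|_N$ around $S$.
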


In both settings, we take advantage of the existence of the pullback operation. The reader should bare in mind that now we work modulo more general equivalences, that include gauge transformations, as in Definition \ref{def:iso:Dirac} (see also Remark \ref{remark:gauge:Poisson}, where the passage of this equivalence relation to the setting of Poisson structures is described).
 

\subsection{Transversals}

An embedded submanifold $i:S\hookrightarrow  M$ is called a \textbf{transversal} for a Lie algebroid $A\Ato M$, if it is transverse to the anchor $\rho$ of $A$, i.e., if
\[T_xS+\rho_x(A_x)=T_xM,\quad \forall \, x\in S.\]
Transversals lie at the other end of the spectrum compared to invariant submanifolds. They admit a very simple local model. First of all, the pullback of $A$ to $S$ yields a Lie algebroid over $S$
\[A_S:=i^!(A)=\rho^{-1}(TS)\Ato S.\]
The \textbf{local model} of $A$ around $S$ is the pullback of the Lie algebroid $A_S$ to the normal bundle via the projection $p:\nu(S,M)\to S$, i.e.,
\[p^!A_S=T\nu(S,M)\times_{TS}A_S\Ato \nu(S,M),\]
where, in this subsection, we include the ambient manifold in the notation for the normal bundle. The normal form theorem, obtained in \cite{BLM19} (see also \cite{Fr19} for a different proof) holds without restrictions.
\begin{theorem}[\cite{BLM19}]\label{theorem:BLM}
Let $A\Ato M$ be a Lie algebroid, and $i:S\hookrightarrow M$ a transversal. Then $A$ is isomorphic around $S$ to the local model $p^!A_S\Ato \nu(S,M)$.
\end{theorem}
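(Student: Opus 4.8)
\textbf{Proof strategy for Theorem \ref{theorem:BLM}.} The plan is to reduce the statement to a normal form result for the anchor map, essentially the relative Ehresmann/tubular neighborhood theorem adapted to Lie algebroids. Since $S$ is a transversal, the composite $TM|_S \to TM|_S/\rho(A)|_S$ restricted to $TS$ is surjective, so $\nu(S,M)$ is a quotient of $\rho(A_S)$; equivalently, $TM|_S = TS + \rho(A)|_S$, and a choice of complement realizes $\nu(S,M)$ as a subbundle of $\rho(A)|_S$. First I would choose a splitting: a vector bundle map $j : \nu(S,M) \to A_S$ over $S$ such that $\rho \circ j : \nu(S,M) \to TM|_S$ is a right inverse (mod $TS$) to the projection, i.e.\ $\rho\circ j$ followed by $TM|_S \to \nu(S,M)$ is the identity. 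Using $\rho\circ j$ one builds a vector field on a neighborhood of $S$ in $M$, or rather a spray-type construction, whose flow provides a tubular neighborhood $\phi : \nu(S,M) \supset V \xrightarrow{\sim} U \subset M$ whose differential along $S$ is adapted to $j$.

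The key step is then to upgrade this tubular neighborhood $\phi$ to a Lie algebroid isomorphism $\Phi : p^!A_S|_V \xrightarrow{\sim} A|_U$ covering $\phi$ and restricting to the identity $A_S \to A_S$ along the zero section. Here the idea, following \cite{BLM19}, is to use the flow of a suitable section. Concretely, the section $j$ (viewed as a section of $p^!A_S$ along $0_S$, or transported by $p$) generates an algebroid homotopy / a time-dependent flow of algebroid morphisms, and one shows that at time one this flow carries the fiber over $0_S$ to a neighborhood of $S$ and intertwines the two bracket-anchor structures. The point is that both $p^!A_S$ and $A$ restrict to the \emph{same} Lie algebroid $A_S$ over $S$, and the pullback construction $p^!$ is characterized (up to isomorphism fixing the restriction) by a universal property with respect to the submersion $p$; since $\phi$ is a diffeomorphism onto $U$, $\phi^! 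A \cong A$ canonically, and one is reduced to showing $\phi^!A \cong p^! A_S$ as algebroids over $\nu(S,M)$ extending $\mathrm{id}_{A_S}$. This last isomorphism is produced by the flow of an appropriate \emph{interpolating} section, exactly as a Moser-type argument: one connects the two algebroid structures by a path $A_t$ (both pullbacks of the same data along maps that agree to first order along $S$) and integrates a time-dependent derivation that vanishes along $S$.

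The main obstacle I expect is precisely constructing and integrating this interpolating path of Lie algebroid structures so that the generating (time-dependent) derivation vanishes to the right order along $S$ — this is what guarantees the flow is defined up to time one on a neighborhood of $S$ and fixes $S$ pointwise. This is the Lie-algebroid analogue of the Moser trick, and the technical heart is a relative Poincaré-type lemma ensuring the difference of the two structures is ``exact'' in a controlled way, with primitive vanishing along $S$. For transversals the situation is actually more favorable than for invariant submanifolds, because the anchor is transverse to $S$, so the normal directions are ``algebroid directions'' and no cohomological obstruction survives — this is why Theorem \ref{theorem:BLM} holds unconditionally, unlike Theorems \ref{thm:normal:form:algebroid} and \ref{thm:normal:form:Dirac}.

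Since this theorem is due to \cite{BLM19} and quoted here only for use in the coregular case, I would in practice not reprove it but simply cite \cite{BLM19,Fr19}; if a self-contained argument is wanted, the above Moser-type scheme along a spray-adapted tubular neighborhood is the route to fill in, with the relative Poincaré lemma for algebroid-valued tensors as the one genuinely nontrivial input.
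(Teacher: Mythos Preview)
The paper does not prove this theorem: it is stated as a quoted result from \cite{BLM19} (with \cite{Fr19} mentioned for an alternative proof) and used as a black box in the coregular discussion. Your instinct at the end of the proposal --- to simply cite \cite{BLM19,Fr19} rather than reprove --- is exactly what the paper does, so on that point you are aligned with the source.

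That said, your sketched argument is not quite the route taken in \cite{BLM19}. The actual proof there hinges on the notion of an \emph{Euler-like section}: a section $\epsilon\in\Gamma(A)$ vanishing along $S$ whose linearization at $S$ is the Euler vector field of $\nu(S,M)$. One shows such sections exist whenever $S$ is a transversal, and then the (complete) flow of $\epsilon$ directly produces the tubular neighborhood together with the Lie algebroid isomorphism in one stroke --- no interpolating path of algebroid structures, no Moser argument, and no relative Poincar\'e lemma for algebroid tensors is needed. Your scheme of choosing a splitting $j:\nu(S,M)\to A|_S$, building a tubular neighborhood from $\rho\circ j$, and then running a Moser-type deformation between $\phi^!A$ and $p^!A_S$ is more in the spirit of the invariant-submanifold arguments elsewhere in the paper; it could likely be made to work, but it introduces an extra layer (the path $A_t$ and the vanishing-along-$S$ derivation) that the Euler-like approach bypasses entirely. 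If you want a self-contained argument, the Euler-like section is both shorter and conceptually cleaner.
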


The analog of this result in the Dirac setting also holds. In order to explain this, recall that given a map $f:N\to M$ and a Dirac structure $\L$ on $M$, one has
\begin{itemize}
    \item the pullback of the \emph{Lie algebroid} $\L$ to $N$
    \[ (f^!\L)_{\textrm{alg}}=TN\times_{TM}\L;\]
    \item the pullback of the \emph{Dirac structure} $\L$ to $N$
    \[(f^!\L)_{\textrm{Dir}}:=\{v+(\d f)^*(\xi)\, :\, (\d f)(v)+\xi\in \L\}.\]
\end{itemize}
In general, these are not smooth objects. However, if $f$ is transverse to $\L$, i.e., if
\[\im \d_{x}f+\pr_{TM}\L_{f(x)}=T_{f(x)}M, \quad \forall x\in N,\]
then these objects are smooth and the Lie algebroids are canonically isomorphic. In this case, we will not distinguish between them and we will denote them by $f^!\L$.



Let $(M,\L)$ be a Dirac manifold. The local model of $\L$ around a transversal $i:S\hookrightarrow M$ is the Dirac pullback $p^!i^!\L$ to the normal bundle $p:\nu(S,M)\to S$. Again, the normal form theorem always holds (see \cite{BLM19,FrMa18B}, and \cite{FrMa17} for Poisson transversals).
\begin{theorem}[\cite{BLM19}]\label{theorem:BLM:Dirac}
If $i:S\hookrightarrow (M,\L)$ is a transversal in a Dirac manifold, then $\L$ and $p^!i^!\L$ are equivalent Dirac structures around $S$ (see Definition \ref{def:iso:Dirac}). 
\end{theorem}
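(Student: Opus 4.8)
The plan is to deduce Theorem \ref{theorem:BLM:Dirac} from its Lie algebroid counterpart, Theorem \ref{theorem:BLM}, combined with an infinitesimally multiplicative Moser argument, in the spirit of the proof of Theorem \ref{thm:normal:form:Dirac:algebroid}. Write $A_S:=i^!\L$ and $\ka:=\ker\rho_{A_S}\cap\ker\mu_S$, and let $p:\nu(S,M)\to S$ be the normal bundle projection. The two Dirac structures to be compared, $\L$ near $S$ and the local model $p^!i^!\L$ near the zero section, have underlying Lie algebroids that are both identified with $p^!A_S$, so the task is to match them through a gauge transformation and a diffeomorphism, i.e.\ to exhibit an equivalence in the sense of Definition \ref{def:iso:Dirac}.

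First I would apply Theorem \ref{theorem:BLM} to get a Lie algebroid isomorphism $\Phi:p^!A_S|_{\tilde U}\xrightarrow{\ \sim\ }\L|_U$ over neighborhoods $\tilde U\subset\nu(S,M)$ and $U\subset M$ of $S$. One checks that $\Phi$ can be normalized so that along the zero section it is the canonical identification $(p^!A_S)|_S\cong\L|_S$ attached to a chosen tubular neighborhood; any two normalizations differ by an automorphism of $p^!A_S$ that is the identity along $S$, which can be absorbed into $\Phi$. Transporting the canonical closed, Dirac non-degenerate IM $2$-form $\mu_\L:=\pr_{T^*M}|_\L$ (in the sense of Example \ref{example:robust}) yields a closed, Dirac non-degenerate IM $2$-form $\mu_0:=\Phi^*\mu_\L\in\Omega^2_\imult(p^!A_S|_{\tilde U})$, while $p^!i^!\L$ is by construction the image of $(\rho_{\ltimes},\mu_1)$ for its own canonical IM $2$-form $\mu_1:=\pr_{T^*\nu(S,M)}|_{p^!i^!\L}\in\Omega^2_\imult(p^!A_S)$. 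The next step is to verify that $\mu_0$ and $\mu_1$ carry the same first order data along $S$: pulled back along the zero section inclusion of Lie algebroids $\jmath:A_S\hookrightarrow(p^!A_S)|_S$, $a\mapsto(\rho_{A_S}(a),a)$, both give $\mu_S$. For $\mu_1$ this is immediate from the explicit form of the IM $2$-form of a pullback Dirac structure together with the fact that $p$ restricts to the identity on the zero section; for $\mu_0$ it follows from the normalization of $\Phi$ along $S$ and from transversality of $S$ to $\rho_\L$, which pins down $\mu_\L|_{\L|_S}$ in terms of $\mu_S$ and the normal bundle identification. With this established, one applies an IM Moser argument for closed, Dirac non-degenerate IM $2$-forms — the analogue of Proposition \ref{prop:Moser:algbrd}(b) and Proposition \ref{prop:moser:Dirac}, with the action algebroid $A_S\ltimes\Rep$ replaced by the pullback algebroid $p^!A_S$ over the vector bundle $\nu(S,M)$ — to conclude that $\L_0:=\mathrm{Im}(\rho_{\ltimes},\mu_0)$ and $\L_1=p^!i^!\L$ are equivalent around $S$. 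Composing with $\Phi$ gives the asserted equivalence between $\L$ and $p^!i^!\L$. The reason the Moser machinery survives the change of ambient algebroid is that fiberwise scaling $m_\lambda:\nu(S,M)\to\nu(S,M)$ covers $\mathrm{id}_S$, hence lifts to Lie algebroid morphisms $(\d m_\lambda,\mathrm{id})$ of $p^!A_S$; after fixing a linear connection on $\nu(S,M)$ to produce the projection $p^!A_S\to A_S$, the homotopy operator of Lemma \ref{Poincare:for:action:algd} and the Moser isotopy of Proposition \ref{prop:moser:Dirac} go through verbatim, with the gauge $2$-form appearing exactly as in the Dirac non-degenerate case.

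I expect the main obstacle to be the bookkeeping in the jet-matching step, since one must simultaneously track the tubular neighborhood used to normalize $\Phi$ and the way transversality constrains $\mu_\L$ on the first infinitesimal neighborhood of $S$; the clean way to organize this is to note that for a transversal the pair $(A_S,\mu_S)$ together with a tubular neighborhood already determines the restriction of $\L$ to that first infinitesimal neighborhood, which is precisely what Theorem \ref{theorem:BLM} extracts at the algebroid level. A more self-contained alternative would be to run directly the Euler-like vector field proof of \cite{BLM19}: using transversality, build a section $\sigma\in\Gamma(\L)$ near $S$ whose anchor $X=\rho_\L(\sigma)$ is Euler-like for $S$, and observe that the rescaling flow of $X$ acts on $\L$ through the flow of the inner derivation $[\sigma,\cdot]$, which by the Lie derivative formula $\Lie_{[\sigma,\cdot]}(\mu_\L,0)=\rho_\L^*(\d\mu_\L(\sigma))$ moves $\mu_\L$ only by an IM-exact term, i.e.\ only by a gauge transformation; hence the tubular neighborhood determined by $X$ linearizes $\L$ up to gauge, which is exactly equivalence in the sense of Definition \ref{def:iso:Dirac}. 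In this route the delicate points are instead the construction of the Euler-like section inside $\rho_\L(\Gamma(\L))$ and the control of the rescaling flow on the Dirac structure as $t\to 0$.
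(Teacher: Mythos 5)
The paper does not prove this statement at all: it is quoted from \cite{BLM19} (see also \cite{FrMa18B}), so there is no internal proof to compare against. Your second, ``self-contained'' route is essentially the original argument of \cite{BLM19}: choose $\sigma\in\Gamma(\L)$ with $\rho_\L(\sigma)$ Euler-like (transversality guarantees its existence after averaging near $S$), and use that the induced rescaling acts on $\L$ through the inner derivation $[\sigma,\cdot]$, which by $\Lie_{D_\sigma}(\mu_\L,0)=\rho_\L^*(\d\mu_\L(\sigma))$ moves the canonical IM $2$-form only by IM-exact terms, i.e.\ by gauge transformations vanishing on $S$; this is exactly equivalence in the sense of Definition \ref{def:iso:Dirac}.

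Your first route is genuinely different and is workable, but two points need tightening. First, Theorem \ref{theorem:BLM} as stated only says ``isomorphic around $S$''; to conclude $\jmath^*\Phi^*\mu_\L=\mu_S$ you need $\Phi\circ\jmath:A_S\to\L|_S$ to be the canonical inclusion, which is a property of the construction in \cite{BLM19} (via the Euler-like tubular neighborhood), not of the bare statement; your claim that ``any two normalizations differ by an automorphism that is the identity along $S$'' is not justified as written, so this step really leans on the stronger form of the cited theorem. Second, Lemma \ref{Poincare:for:action:algd}, Lemma \ref{lemma:how:to:fix:along:zero:section} and Propositions \ref{prop:Moser:algbrd}, \ref{prop:moser:Dirac} are stated for action algebroids $A_S\ltimes\Rep$; on $p^!A_S$ the scaling morphisms $(\d m_\lambda,\id)$ do exist (no connection is needed for the projection $p^!A_S\to A_S$, which is canonical), but the transfer is not quite ``verbatim'': the restriction $(p^!A_S)|_S$ is strictly larger than $\jmath(A_S)$ and is not fixed pointwise by the scaling, so one must check that $m_0$ factors through $\jmath\circ\pr_{A_S}$, which is what makes $m_0^*(i_D\alpha)=0$ and hence the homotopy operator and the condition $\jmath^*\mu_0=\jmath^*\mu_1$ the correct hypotheses here. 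On the other hand, the transversal case brings a simplification you do not use: $\ker\rho_{A_S}\cap\ker\mu_S=0$, so the convex combination $\mu_t$ is automatically Dirac non-degenerate along $S$ and no analogue of Lemma \ref{lemma:how:to:fix:along:zero:section} is needed. With these points addressed, your first route gives a proof through the paper's Appendix \ref{appendix} machinery, at the price of invoking the refined form of Theorem \ref{theorem:BLM}; the Euler-like route avoids that dependence and is shorter.
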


\subsection{Coregular submanifolds of Lie algebroids}

For a Lie algebroid, we consider the following class of submanifolds which includes transversals and embedded invariant submanifolds as extremes. Moreover, any submanifold has an open dense subset whose components are of this type. 
 
\begin{definition}
Let $A\Ato M$ be a Lie algebroid. We call an embedded submanifold $i_S:S\hookrightarrow M$  \textbf{coregular} if the vector spaces
\[T_xS+\rho_x(A_x),\quad x\in S,\]
have the same dimension.
\end{definition}

Equivalently, $i_S:S \hookrightarrow M$ is a coregular submanifold if and only if the pullback of $A$ to $S$ is a smooth Lie subalgebroid of $A$
\[A_S:=i_S^!A=\rho^{-1}(TS).\]
In this case, the anchor of $A$ yields a short exact sequence of vector bundles
\begin{equation}\label{eqseq}
0\rmap A_S\rmap A|_S\rmap\tau_S\rmap 0,
\end{equation}
where we have denoted by $\tau_S$ the vector subbundle
\[\tau_S:=\im(\rho|_S) \ \mathrm{mod}\ TS \ \subset\ \nu(S,M).\]
Note that, when $\tau_S=\nu(S,M)$, we recover transversals, and when $\tau_S=0_S$, we recover invariant submanifolds.

%

The following will play an important role in the study of such submanifolds.
\begin{definition}
Let $A\Ato M$ be a Lie algebroid and let $i_S:S\hookrightarrow M$ be a coregular submanifold. A \textbf{minimal transversal around $S$} is a transversal $i_X:X\hookrightarrow M$ containing $S$ that yields a direct sum decomposition
\begin{equation}\label{eq:def:min:trans}
\nu(S,M)=\tau_S\oplus \nu(S,X).
\end{equation}
\end{definition}

Notice that minimal transversals always exist. To see this, choose a Riemannian metric and take $X:=\exp(U)$, where $U$ is a small enough neighborhood of $S$ in a complement $C$ of $\tau_S$ in $\nu(S,M)$. We obtain a submanifold satisfying \eqref{eq:def:min:trans}, which can then be shrunk to a minimal transversal. The following is also straightforward.

\begin{lemma}\label{lemma:coreg:is:inv:in:transv}
If $X$ is a minimal transversal around $S$, then $S$ is an invariant submanifold for the Lie algebroid $A_X:=i_X^!A\Ato X$.
\end{lemma}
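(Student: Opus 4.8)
The statement to prove is Lemma~\ref{lemma:coreg:is:inv:in:transv}: if $X$ is a minimal transversal around the coregular submanifold $S$, then $S$ is an invariant submanifold for the pullback Lie algebroid $A_X := i_X^!A\Rightarrow X$. The plan is to unravel all the definitions and reduce everything to the decomposition \eqref{eq:def:min:trans} $\nu(S,M)=\tau_S\oplus\nu(S,X)$ that defines ``minimal''. Since $X$ is a transversal, $A_X=i_X^!A=\rho^{-1}(TX)$ is a genuine smooth Lie subalgebroid of $A$, with anchor $\rho_X$ the restriction of $\rho$ landing in $TX$. To say $S$ is invariant for $A_X$ means precisely that $\rho_X(a)\in T_xS$ for every $x\in S$ and every $a\in (A_X)_x$; equivalently, $\rho_X(a)$ is tangent to $S$. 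So the content to verify is: for $x\in S$, if $a\in A_x$ satisfies $\rho_x(a)\in T_xX$, then in fact $\rho_x(a)\in T_xS$.

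First I would fix $x\in S$ and take $a\in A_x$ with $v:=\rho_x(a)\in T_xX$. Since $S\subset X$ we have $T_xS\subset T_xX\subset T_xM$, so it makes sense to project $v$ into $\nu_x(S,M)=T_xM/T_xS$ and to ask whether this class vanishes. By definition of $\tau_S$, the image of $v=\rho_x(a)$ in $\nu_x(S,M)$ lies in the subspace $(\tau_S)_x$. On the other hand, because $v\in T_xX$, its image in $\nu_x(S,M)$ lies in the subspace $\nu_x(S,X)=T_xX/T_xS$ (under the natural inclusion $\nu(S,X)\hookrightarrow \nu(S,M)$ determined by $S\subset X\subset M$, which is exactly the inclusion appearing in \eqref{eq:def:min:trans}). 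Therefore the class $[v]\in\nu_x(S,M)$ lies in $(\tau_S)_x\cap \nu_x(S,X)$, which is $\{0\}$ by the defining direct-sum decomposition of a minimal transversal. Hence $v\in T_xS$, which is exactly the invariance condition. Since $x\in S$ and $a$ were arbitrary, $S$ is invariant for $A_X$.

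I would then add a brief sentence observing that the identification of $A_X$ with a Lie subalgebroid of $A$, and of its anchor with the restriction of $\rho$, is part of the content of Theorem~\ref{theorem:BLM} (or already of the construction $A_S=i_S^!A=\rho^{-1}(TS)$ recalled just before the lemma), so no new work is needed there; smoothness of $A_X$ follows because $X$ is a transversal. The only genuine point is the linear-algebra identity $(\tau_S)_x\cap\nu_x(S,X)=0$ inside $\nu_x(S,M)$, which is literally the hypothesis \eqref{eq:def:min:trans}.

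\textbf{Main obstacle.} There is essentially no hard step here: the lemma is ``straightforward'' precisely because the definition of minimal transversal was engineered to make it true. The only thing requiring care is bookkeeping with the three nested tangent/normal spaces $T_xS\subset T_xX\subset T_xM$ and making sure the inclusion $\nu(S,X)\hookrightarrow\nu(S,M)$ used in \eqref{eq:def:min:trans} is the same one used when one says ``$v\in T_xX$ has normal class in $\nu_x(S,X)$'' — i.e.\ that the diagram of quotients $T_xM/T_xS \supset T_xX/T_xS = \nu_x(S,X)$ is compatible with the splitting. Once that is spelled out, the argument is the two-line intersection computation above. So I expect the write-up to be only a short paragraph, with the ``obstacle'' being purely expository clarity rather than mathematical difficulty.
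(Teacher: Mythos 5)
Your proof is correct: unwinding the definitions, the anchor of $A_X$ at a point $x\in S$ takes values in $T_xX\cap\rho_x(A_x)$, whose class in $\nu_x(S,M)$ lies in $(\tau_S)_x\cap\nu_x(S,X)=0$ by \eqref{eq:def:min:trans}, so it is tangent to $S$. This is exactly the definition-unpacking argument the paper has in mind — it states the lemma without proof, calling it straightforward — so there is nothing to add.
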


In fact, coregular submanifolds can be characterized as follows (for similar results in Poisson geometry, see \cite[Theorem 8.44]{CFM21} and \cite[Lemma 4.1]{CZ09}).

\begin{proposition}
Given a Lie algebroid $A\Ato M$, a submanifold $S\subset M$ is coregular if and only if it is an invariant submanifold inside a transversal $X\subset M$.
\end{proposition}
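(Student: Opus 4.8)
The plan is to prove the equivalence in two directions, both of which are essentially unwinding definitions.

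First I would prove the ``only if'' direction: suppose $S\subset M$ is a coregular submanifold of the Lie algebroid $A\Ato M$. As observed just before the statement, minimal transversals around $S$ always exist, so fix one, $i_X:X\hookrightarrow M$, satisfying the splitting $\nu(S,M)=\tau_S\oplus \nu(S,X)$. By Lemma~\ref{lemma:coreg:is:inv:in:transv}, $S$ is an invariant submanifold for the pullback Lie algebroid $A_X:=i_X^!A\Ato X$. Moreover, $X$ is by construction a transversal for $A$, so this exhibits $S$ as an invariant submanifold inside a transversal, which is exactly what is required.

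Second I would prove the ``if'' direction: suppose $X\subset M$ is a transversal for $A$ and $S\subset X$ is an invariant submanifold for the pullback Lie algebroid $A_X=i_X^!A=\rho^{-1}(TX)$. I must show that $S$ is coregular as a submanifold of $M$, i.e.\ that $T_xS+\rho_x(A_x)$ has locally constant dimension along $S$. The key computation is a dimension count using the two short exact sequences in play: the transversality sequence for $X$ (namely $0\to A_X\to A|_X\to \nu(X,M)\to 0$, coming from $\rho$ being transverse to $X$) and the analogous sequence for $S$ inside $A_X$, which has constant ranks precisely because $S$ is invariant for $A_X$, so $A_S=(A_X)|_S\cap\ker(\text{projection to }\nu(S,X))$ has constant rank. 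Since $X$ is a transversal, $\rho_x(A_x)$ surjects onto $\nu_x(X,M)$ for $x\in X$, hence for $x\in S$ we get $T_xX+\rho_x(A_x)=T_xM$ with constant-rank intersection; intersecting further, the invariance of $S$ in $A_X$ forces $T_xS+\rho_x(A_x)$ to have locally constant dimension. Assembling these rank identities gives coregularity of $S$ in $M$.

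The main obstacle will be making the second direction's dimension bookkeeping clean: one has to carefully track how $\rho_x(A_x)\cap T_xX$ relates to $\rho_x((A_X)_x)$ and how the invariance condition for $S$ inside $X$ (a statement about $\rho|_{A_X}$ modulo $TS$) translates into constancy of $T_xS+\rho_x(A_x)$ modulo $TM$. I expect that choosing, near each point of $S$, a local splitting $\nu(S,M)\simeq\tau_S\oplus\nu(S,X)$ compatible with a Riemannian metric (as in the construction of minimal transversals) will let one reduce everything to the two exact sequences above and conclude by additivity of ranks; once set up this way the argument is short. The cited references \cite[Theorem 8.44]{CFM21} and \cite[Lemma 4.1]{CZ09} treat the Poisson case and can be followed as a template.
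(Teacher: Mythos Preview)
The paper states this proposition without a proof block, treating it as a direct consequence of the surrounding discussion; your ``only if'' direction is exactly the paper's implicit argument (existence of minimal transversals plus Lemma~\ref{lemma:coreg:is:inv:in:transv}).

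For the ``if'' direction your outline is correct, but you over-anticipate the difficulty. No exact sequences, local splittings, or Riemannian metrics are needed. Here is the clean two-line argument. From transversality of $X$ one has $(T_xS+\rho_x(A_x))+T_xX\supset \rho_x(A_x)+T_xX=T_xM$. From invariance of $S$ in $A_X$ one has $\rho_x(A_x)\cap T_xX=\rho_x((A_X)_x)\subset T_xS$, and a short computation then gives $(T_xS+\rho_x(A_x))\cap T_xX=T_xS$. Combining these,
\[
\dim\big(T_xS+\rho_x(A_x)\big)=\dim M-\dim X+\dim S,
\]
which is constant along $S$. Your phrase ``constant-rank intersection'' for $T_xX\cap\rho_x(A_x)$ is not correct in general (the rank of $\rho_x(A_x)$ may jump), but fortunately that intersection plays no role once the argument is set up as above. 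Also, your description $A_S=(A_X)|_S\cap\ker(\text{projection to }\nu(S,X))$ is off: since $S$ is \emph{invariant} for $A_X$, one simply has $i_S^!A_X=(A_X)|_S$, with no kernel condition.
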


Next, we observe that minimal transversals are essentially unique. We sketch a proof which is similar to that of the case of Poisson manifolds from \cite[Lemma 2.2]{Weinstein83} and \cite[Theorem 4.3]{CZ09}. 

\begin{lemma}\label{lemma:equiv:min:trans}
Let $A\Ato M$ be a Lie algebroid and $i_S:S\hookrightarrow M$ be a coregular submanifold. If $X_0$ and $X_1$ are minimal transversals around $S$, then there exists an inner Lie algebroid isomorphism 
$\Phi:A|_{U_0}\diffto A|_{U_1}$, defined between open sets $U_0$ and $U_1$ containing $S$, whose base map $\varphi:U_0\diffto U_1$ fixes $S$ pointwise and sends $X_0\cap U_0$ to $X_1\cap U_1$.
\end{lemma}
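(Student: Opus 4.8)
The plan is to prove Lemma \ref{lemma:equiv:min:trans} by a flow argument, interpolating between the two minimal transversals along a path and integrating a suitable time-dependent section of $A$. First I would set up the interpolation: since $X_0$ and $X_1$ are minimal transversals around $S$, both satisfy $\nu(S,M)=\tau_S\oplus \nu(S,X_k)$, $k=0,1$. Choosing a tubular neighborhood of $S$ adapted to the splitting $\nu(S,M)=\tau_S\oplus C$ (where $C$ is a fixed complement), I may assume after shrinking that, near $S$, $X_0$ and $X_1$ are graphs over a common model transversal $X:=\exp(\text{nbhd of }S\text{ in }C)$ of maps $X\to \tau_S$-directions that vanish along $S$ together with their differential restricted appropriately. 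Concretely, I would produce a smooth family of submanifolds $X_t$, $t\in[0,1]$, all containing $S$, all minimal transversals, with $X_0,X_1$ the given ones — e.g.\ by linearly interpolating the two graph functions. The derivative $\tfrac{d}{dt}$ of this family is a vector field $Y_t$ defined along $X_t$, tangent to $M$, vanishing along $S$, and taking values in the $\tau_S$-directions modulo $TX_t$; the key point is that because $\tau_S=\im(\rho|_S)\bmod TS$, one can lift $Y_t$ (after shrinking to a neighborhood of $S$) to a time-dependent section $\alpha_t\in\Gamma(A)$ with $\rho(\alpha_t)$ extending $Y_t$ and $\alpha_t|_S=0$.

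The second step is the integration. The time-dependent section $\alpha_t$, vanishing along $S$, generates a flow of inner Lie algebroid automorphisms $\Phi^t:A|_{U_0}\to A|_{U_t}$ (in the sense of the flow of the inner derivation $[\alpha_t,\cdot\,]$, cf.\ the discussion of inner derivations and their flows in Appendix \ref{appendix}), defined on a neighborhood $U_0$ of $S$ for $t\in[0,1]$ since $\rho(\alpha_t)$ vanishes on $S$; its base map $\varphi^t$ is the flow of $\rho(\alpha_t)$, which fixes $S$ pointwise and, by construction of $\alpha_t$, carries $X_0$ to $X_t$. Taking $t=1$ gives the desired inner isomorphism $\Phi:=\Phi^1:A|_{U_0}\diffto A|_{U_1}$ with $\varphi:=\varphi^1$ fixing $S$ and sending $X_0\cap U_0$ to $X_1\cap U_1$. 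This is the structure of the Weinstein-type argument cited from \cite[Lemma 2.2]{Weinstein83} and \cite[Theorem 4.3]{CZ09}, transported to the Lie algebroid setting; since the lemma only asks for a sketch, I would present exactly this outline, referring to those sources for the verification that the interpolating family can be chosen smoothly and that the lift $\alpha_t$ exists.

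The main obstacle I anticipate is constructing the lift $\alpha_t$ of the infinitesimal deformation vector field with the correct boundary behavior along $S$ — i.e.\ ensuring simultaneously that $\rho(\alpha_t)$ restricts to $Y_t$ on $X_t$ (so that the flow really moves $X_0$ to $X_1$), that $\alpha_t$ vanishes on $S$ (so that the flow is defined up to time $1$ near $S$ and fixes $S$), and that everything depends smoothly on $t$. The point is that $Y_t$ along $S$ lies in $\im(\rho|_S)\bmod TS$, so locally one can solve $\rho(\alpha_t)\equiv Y_t$ first along $S$ with $\alpha_t|_S=0$ using a local frame of $A$ and the fact that $\rho|_S$ surjects onto $\tau_S$, then extend off $S$ using a partition of unity; the coregularity hypothesis is precisely what makes $\tau_S$ a smooth subbundle so this works uniformly. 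Once this lift is in hand, the flow argument is routine, so I would not grind through the ODE estimates, simply noting that vanishing of the symbol $\rho(\alpha_t)$ along the compact-in-fibers locus $S$ (or along $S$ itself after passing to relatively compact neighborhoods) guarantees the flow exists on a neighborhood of $S$ for all $t\in[0,1]$.
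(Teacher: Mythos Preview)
Your proposal is correct and follows essentially the same approach as the paper's proof: interpolate $X_0$ and $X_1$ by a smooth family $X_t$ of minimal transversals fixing $S$, use transversality of each $X_t$ to lift the infinitesimal variation $\tfrac{d}{dt}i_t$ (modulo $TX_t$) to a time-dependent section $\alpha_t\in\Gamma(A)$ vanishing along $S$, and take $\Phi$ to be the time-$1$ flow of the inner derivation $[\alpha_t,\cdot]$. The paper is terser about the interpolation step (simply asserting one can join $X_0$ to $X_1$ by transversals $i_t(X_0)$ with $i_t|_S=\id_S$) while you spell out the graph-over-a-model-transversal picture, but the content is the same.
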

\begin{proof}
Possibly after shrinking $X_0$ and $X_1$, one can join them by a smooth family of transversals around $S$,  $X_t:=i_t(X_0)$, $t\in [0,1]$, where $i:[0,1]\times X_0\to M$ is a smooth map such that $i_0=\id_{X_0}$ and $i_t|_S=\id_S$. 
Transversality implies the existence of a  time-dependent section $\al_t\in \Gamma(A|_U)$, $t\in [0,1]$, defined on some neighborhood $U$ of $S$ in $M$, such that 
\[\frac{\d i_t}{\d t}(x)=\rho(\al_t(i_t(x)))\ \mathrm{mod}\ T_{i_t(x)}X_{t},\]
for all $(t,x)\in [0,1]\times X_0$. Moreover, we may assume that $\al_t|_S=0$. Then the flow of the time-dependent section $\al_t$ gives the isomorphism $\Phi$ from the statement. 
\end{proof}

We are ready to discuss local models and linearization of a Lie algebroid $A\Ato M$ around a coregular submanifold $i_S:S \hookrightarrow M$. 
Fix a minimal transversal $X$ around $S$. Note that $A_S=i^!_SA$ can be identified also with the restriction of $A_X$ to the invariant submanifold $S$, $A_S=i^!_SA_X$. Then for $A_X$ we have the local model around the invariant submanifold $S$ (see Definition \ref{defi:linearizable:LieAlg})
\[A_S\ltimes \nu(S,X)\Ato \nu(S,X).\]
On the other hand, Theorem \ref{theorem:BLM} shows that $A$ is isomorphic around $X$ to the pullback Lie algebroid $p^!A_X$, where $p:\nu(X,M)\to X$ it the projection. This motivates the following definition.
\begin{definition}\label{defi:local:model:linear:coreg}
The \textbf{local model} of $A$ around the coregular submanifold $S$ is the Lie algebroid
\[p^!(A_S\ltimes \nu(S,X))\Ato \nu(S,M),\]
where $p:\nu(S,M)\to \nu(S,X)$ is the projection with kernel $\tau_S$. 

We say that $A$ is \textbf{linearizable} around $S$ if there is a Lie algebroid isomorphism $\Phi:p^!(A_S\ltimes \nu(S,X))|_{U_0}\diffto A|_{U_1}$, where $U_0\subset \nu(S,M)$ and $U_1\subset M$ are open neighborhoods of $S$, with $\Phi|_{A_S}=\id_{A_S}$ and such that the differential of its base map $\varphi:U_0\diffto U_1$ along $S$ induces the identity on $\nu(S,M)$. 
\end{definition}

The conditions on $\Phi$ along $S$ imply that, in the short exact sequence \eqref{eqseq}, $\Phi$ preserves not only the inclusion of $A_S$ but also the projection to $\tau_S$. Lemma \ref{lemma:equiv:min:trans} shows that local models associated with different choices of transversals are isomorphic, and so being linearizable is independent of the chosen minimal transversal. 

\begin{proposition}\label{prop:linearizable:iff:transverse}
Let $i_S:S\hookrightarrow M$ be a coregular submanifold, and $X$ a minimal transversal around $S$. Then $A\Ato M$ is linearizable around $S$ if and only if $A_X\Ato X$ is linearizable around $S$.
\end{proposition}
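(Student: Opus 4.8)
The plan is to prove the equivalence by transporting the linearization problem between $A$ and $A_X$ through the two pullback constructions that are already at our disposal: Theorem \ref{theorem:BLM} (normal form around transversals) and the definition of the local model for coregular submanifolds (Definition \ref{defi:local:model:linear:coreg}). The key observation is that the local model of $A$ around $S$ is \emph{by construction} the pullback $p^!(A_S\ltimes \nu(S,X))$ along the fibration $p:\nu(S,M)\to \nu(S,X)$ with kernel $\tau_S$, while $A_S\ltimes \nu(S,X)$ is the local model of $A_X$ around the invariant submanifold $S$ inside $X$ (Lemma \ref{lemma:coreg:is:inv:in:transv}). So the equivalence should follow by showing that "$p^!$ commutes with linearization" in an appropriate sense.

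Here is the sequence of steps I would carry out. First, suppose $A_X$ is linearizable around $S$, so there is a Lie algebroid isomorphism $\Psi:(A_S\ltimes \nu(S,X))|_{V_0}\diffto A_X|_{V_1}$ with $\Psi|_{A_S}=\id$ and whose base map's differential along $S$ is the identity of $\nu(S,X)$. Applying the pullback functor $p^!$ to $\Psi$ gives an isomorphism $p^!\Psi: p^!(A_S\ltimes \nu(S,X))|_{p^{-1}(V_0)}\diffto p^!A_X|_{p^{-1}(V_1)}$ over the induced map on $\nu(S,M)$. On the other hand, Theorem \ref{theorem:BLM}, applied to the transversal $X\subset M$, gives an isomorphism between $A$ around $X$ and $p^!A_X$ around $X$ (here one must be slightly careful: Theorem \ref{theorem:BLM} uses the normal bundle $\nu(X,M)$, whereas we want to land in $\nu(S,M)$; but the minimal transversal condition \eqref{eq:def:min:trans} gives $\nu(S,M)=\tau_S\oplus\nu(S,X)$, and $\tau_S$ is precisely $\nu(X,M)|_S$, so the two normal bundles are compatible and the identification of $\nu(S,M)$ with a neighborhood of $S$ in $\nu(S,M)$ agrees with the tubular neighborhood of $X$ restricted over $S$). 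Composing $p^!\Psi$ with the Theorem \ref{theorem:BLM} isomorphism, and checking the behavior along $S$, yields the desired linearization of $A$ around $S$; since $\Psi|_{A_S}=\id$ and the transversal normal form can be chosen to be the identity on $A_X$, the composite is the identity on $A_S$, and the differential of the base map along $S$ is the identity on $\nu(S,M)=\tau_S\oplus\nu(S,X)$ by combining the two corresponding statements on each summand.

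For the converse, suppose $A$ is linearizable around $S$, i.e.\ there is $\Phi: p^!(A_S\ltimes\nu(S,X))|_{U_0}\diffto A|_{U_1}$ as in Definition \ref{defi:local:model:linear:coreg}. The conditions on $\Phi$ along $S$ guarantee, as noted in the excerpt, that $\Phi$ respects both the inclusion of $A_S$ and the projection onto $\tau_S$ in the short exact sequence \eqref{eqseq}. I would then restrict everything to a suitable minimal transversal: the base map $\varphi$ of $\Phi$ carries a minimal transversal for the local model (namely the submanifold $\nu(S,X)\subset \nu(S,M)$, sitting as the zero section of $\tau_S$) to a transversal $X'\subset M$ around $S$, and by Lemma \ref{lemma:equiv:min:trans} this $X'$ is, up to an inner automorphism fixing $S$, the given minimal transversal $X$. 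Pulling $\Phi$ back along the inclusion $i_{X}:X\hookrightarrow M$ (equivalently $i_{\nu(S,X)}:\nu(S,X)\hookrightarrow\nu(S,M)$) and using $i_{X}^!A=A_X$ together with $i_{\nu(S,X)}^!p^!(A_S\ltimes\nu(S,X))=A_S\ltimes\nu(S,X)$, one obtains precisely a linearization of $A_X$ around $S$.

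The main obstacle I anticipate is bookkeeping rather than conceptual: making the identifications of normal bundles, tubular neighborhoods and pullback algebroids fully coherent so that the two halves of the argument really are inverse to each other and the "$\Phi|_{A_S}=\id$" and "differential $=\id$ along $S$" normalizations survive all the compositions. In particular one must verify that the transversal normal form of Theorem \ref{theorem:BLM} can be chosen compatibly with the tubular neighborhood of $S$ implicit in $\nu(S,M)=\tau_S\oplus\nu(S,X)$, and that restricting a linearization of $A$ to the minimal transversal $X$ does not lose the normalization data — this is where Lemma \ref{lemma:equiv:min:trans} (uniqueness of minimal transversals up to inner isomorphism fixing $S$) does the real work. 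Once these compatibilities are in place, both implications are short, since the functoriality of $i^!$ and $p^!$ and the already-established normal form theorems do everything else.
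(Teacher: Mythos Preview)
Your proposal is correct and follows essentially the same approach as the paper: for the forward direction you pull back the linearization of $A_X$ via $p^!$ and compose with the transversal normal form of Theorem \ref{theorem:BLM}; for the converse you restrict $\Phi$ to the minimal transversal $\nu(S,X)$ in the domain, land in a new minimal transversal $X'=\varphi(\nu(S,X))$ in $M$, and invoke Lemma \ref{lemma:equiv:min:trans} to pass from $A_{X'}$ to $A_X$. The paper's proof is somewhat terser (it fixes a tubular neighborhood identifying $M\simeq\nu(S,M)$ and $X\simeq\nu(S,X)$ at the outset to streamline the bookkeeping), but the logical structure is identical.
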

\begin{proof}
Fix a tubular neighborhood, and identify $M\simeq \nu(S,M)$ and $X\simeq \nu(S,X)$.

Assume that $A_X$ is linearizable, i.e.\ $A_X\simeq A_S\ltimes \nu(S,X)$ around $S$. Then also their pullbacks via the projection $p:\nu(S,M)\to \nu(S,X)$ are isomorphic. From 
Theorem \ref{theorem:BLM}, we have that $A\simeq p^! A_X$ around $X$. So $A$ is linearizable. 

Conversely, assume that we have an isomorphism $\Phi$ between $p^!(A_S\ltimes \nu(S,X))$ and $A$, defined around $S$, as in Definition \ref{defi:local:model:linear:coreg}. Then $\Phi$ induces an isomorphism between the restriction of $p^!(A_S\ltimes \nu(S,X))$ to $\nu(S,X)\subset \nu(S,M)$ and the restriction of $A$ to the transversal $X'=\varphi(\nu(S,X))\subset M$, where $\varphi$ is the base map of $\Phi$. This map is a linearization for $A_{X'}$. Lemma \ref{lemma:equiv:min:trans}, implies that $A_X\simeq A_{X'}$ around $S$, thus also $A_X$ is linearizable.
\end{proof}

One can also approach the linearization problem around a coregular submanifold $S$ by looking at another invariant submanifold, namely the one arising from the saturation of $S$.


\begin{definition}
Let $A\Ato M$ be a Lie algebroid. The \textbf{saturation} of $S\subset M$, denoted by $\Sat(S,M)$, is the union of all orbits of $A$ that hit $S$. The \textbf{local saturation} of $S$ inside the open neighborhood $U\subset M$, denoted by $\Sat(S,U)$, is the saturation of $S$ with respect to the Lie algebroid $A|_U\Ato U$. 
\end{definition}

The following shows that the local saturation of a coregular submanifold is smooth (see \cite{Geudens20} for this result in the Poisson and Dirac setting).
\begin{theorem}\label{theorem:coregular:intersection}
Let $A\Ato M$ be a Lie algebroid, and let $i: S\hookrightarrow M$ be a coregular submanifold. 
Then $S=N\cap X$, where the intersection is transverse, and
\begin{enumerate}[(i)]
    \item $N=\Sat(S,U)$ is a local saturation of $S$ and an embedded submanifold;
    \item $X$ is a minimal transversal around $S$.
\end{enumerate}
\end{theorem}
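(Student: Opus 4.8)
The statement to be established is Theorem \ref{theorem:coregular:intersection}: given a coregular submanifold $i\colon S\hookrightarrow M$ of a Lie algebroid $A\Rightarrow M$, there is a decomposition $S=N\cap X$ (transverse intersection) where $N$ is a local saturation of $S$ and an embedded submanifold, and $X$ is a minimal transversal around $S$. The strategy is to first fix a minimal transversal $X$ around $S$ (these exist by the discussion following the definition of minimal transversal), and then produce $N$ as the $A$-saturation of $S$ computed inside a suitable open set. The key structural input is Lemma \ref{lemma:coreg:is:inv:in:transv}: $S$ is an \emph{invariant} submanifold of the pullback algebroid $A_X:=i_X^!A\Rightarrow X$, together with the splitting theorem around transversals (Theorem \ref{theorem:BLM}), which identifies $A$ near $X$ with the pullback $p^!A_X$ along the normal projection $p\colon\nu(X,M)\to X$.

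\textbf{Step 1: set up the saturation and reduce to a flow-box picture.} First I would fix a minimal transversal $i_X\colon X\hookrightarrow M$ around $S$, so that $\nu(S,M)=\tau_S\oplus\nu(S,X)$, and then work in a tubular neighborhood, identifying $M$ near $X$ with $\nu(X,M)$. By Theorem \ref{theorem:BLM}, $A$ is isomorphic near $X$ to $p^!A_X$, where $p\colon\nu(X,M)\to X$ is the bundle projection. Since the orbits of the pullback algebroid $p^!A_X$ are exactly the $p$-preimages of the orbits of $A_X$ (the anchor of $p^!A_X$ projects onto the anchor of $A_X$, and the fibers of $p$ are contained in orbits), the $A$-saturation of $S$ inside this tubular neighborhood $U$ is $N:=p^{-1}\big(\mathrm{Sat}(S,X)\big)$, where $\mathrm{Sat}(S,X)$ is the $A_X$-saturation of $S$ inside $X$.

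\textbf{Step 2: $\mathrm{Sat}(S,X)$ is embedded.} This is where the bulk of the work lies, and it is the main obstacle. Here $S$ is an \emph{invariant} submanifold of $A_X\Rightarrow X$ (Lemma \ref{lemma:coreg:is:inv:in:transv}), but $S$ need not be open in an orbit of $A_X$, so its saturation is genuinely larger. The plan is to use the local structure of $A_X$ around the invariant submanifold $S$: after shrinking, one can apply the linear approximation/normal bundle picture (the action algebroid $A_S\ltimes\nu(S,X)\Rightarrow\nu(S,X)$ controls $A_X$ to first order along $S$) to see that the orbit through a point near $S$ meets $S$. More concretely, transversality of the anchor of $A_X$ to the orbit foliation combined with the fact that orbits through points of $S$ stay ``close'' to $S$ lets one realize $\mathrm{Sat}(S,X)$, locally, as the image of a submersion — e.g.\ as $\{\mathrm{Fl}^{\alpha_k}_{t_k}\circ\cdots\circ\mathrm{Fl}^{\alpha_1}_{t_1}(s): s\in S,\ t_i\ \text{small}\}$ for a suitable finite collection of sections $\alpha_i\in\Gamma(A_X)$ — whence it is an embedded submanifold after shrinking $X$. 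One should also record that $TN|_S=TS+\rho(A_X|_S)$ along $S$, which follows because the saturating flows are generated by anchors of sections.

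\textbf{Step 3: transversality and the intersection identity.} Finally I would verify $S=N\cap X$ and that the intersection is transverse. Set-theoretically, $N\cap X=p^{-1}(\mathrm{Sat}(S,X))\cap X=\mathrm{Sat}(S,X)\cap X$; but $\mathrm{Sat}(S,X)\subset X$ by construction, so $N\cap X=\mathrm{Sat}(S,X)$ — wait, this gives $\mathrm{Sat}(S,X)$, not $S$. The resolution is that the claim $S=N\cap X$ refers to $X$ being a minimal transversal: along $S$, $T_xN = T_xS\oplus\tau_{S,x}$ (the saturation thickens $S$ precisely in the anchor directions $\tau_S$), while $T_xX=T_xS\oplus\nu(S,X)_x$, and since $\nu(S,M)=\tau_S\oplus\nu(S,X)$ we get $T_xN+T_xX=T_xM$ (transversality) and $T_xN\cap T_xX=T_xS$; hence, after possibly shrinking, the transverse intersection $N\cap X$ is a submanifold of dimension $\dim S$ containing $S$, so it equals $S$ near each point. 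I would phrase Step 3 as: show $TN|_S=TS\oplus\tau_S$ using Step 2's computation of $TN|_S$ together with $\rho(A_X|_S)\subset TS$ (invariance) and $\mathrm{im}(\rho|_S)\ \mathrm{mod}\ TS=\tau_S$; deduce transversality of $N$ and $X$ from the minimal transversal splitting; and conclude $N\cap X=S$ locally. The embeddedness of $N$ then follows from Step 2 since $p$ is a submersion and $p^{-1}$ of an embedded submanifold is embedded. The main obstacle throughout is Step 2 — controlling the saturation of a non-open-in-orbit invariant submanifold and showing it is a manifold — for which one leans on the linear normal-form data around invariant submanifolds developed earlier in the paper and cites \cite{Geudens20} for the analogous Poisson/Dirac statement.
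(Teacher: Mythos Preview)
Your overall strategy—fix a minimal transversal $X$, apply Theorem \ref{theorem:BLM} to identify $A|_U\simeq p^!A_X$ for a submersion $p\colon U\to X$, and build $N$ as a $p$-preimage—is exactly the paper's approach. But you have a genuine conceptual gap in Step~2 that propagates into Step~3.

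You write that ``$S$ need not be open in an orbit of $A_X$, so its saturation is genuinely larger.'' This is false: by Lemma \ref{lemma:coreg:is:inv:in:transv}, $S$ is an \emph{invariant} submanifold of $A_X$, meaning the anchor of $A_X$ is tangent to $S$ along $S$. Hence any $A_X$-orbit meeting $S$ is locally contained in $S$, and (taking $S$ closed in $X$, which you may arrange by shrinking $X$) each such orbit lies entirely in $S$. Therefore $\Sat_{A_X}(S,X)=S$, and your candidate saturation is simply $N=p^{-1}(S)$. All of your proposed Step~2 work—flows, submersion images, citing \cite{Geudens20}—is unnecessary, and your formula $TN|_S=TS+\rho(A_X|_S)$ is vacuous since $\rho(A_X|_S)\subset TS$ by invariance.

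Once $N=p^{-1}(S)$, Step~3 is immediate: $N\cap X=p^{-1}(S)\cap X=S$ because $p|_X=\id_X$, and transversality of $N$ and $X$ follows since $p$ is a submersion, so $T_xN=T_xS\oplus\ker\d_xp$ while $T_xX$ is complementary to $\ker\d_xp$. That $N=\Sat(S,U)$ follows because the orbits of $p^!A_X$ are the $p$-preimages of $A_X$-orbits (here use that the fibers of $p$ are connected), and $S$ is $A_X$-saturated. This is the paper's proof in three lines; your confusion in Step~3 (``wait, this gives $\mathrm{Sat}(S,X)$, not $S$'') was correctly detecting the issue but you resolved it in the wrong direction.
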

\begin{proof}
Let $X$ be a minimal transversal around $S$. By applying Theorem \ref{theorem:BLM}, we find a neighborhood $U$ of $X$ in $M$, a submersion with connected fibers $p:U\to X$, which is the identity on $X$ (corresponding to the retraction of the normal bundle), and an isomorphism $(p^!A_X)|_U\simeq A|_U$. Then $N:=p^{-1}(S)$ is smooth submanifold, $S=X\cap N$, where the intersection is transverse and, moreover, $N=\Sat(S,U)$.
\end{proof}

We have the following analog of Lemma \ref{lemma:coreg:is:inv:in:transv}.
\begin{lemma}
In the setting of Theorem \ref{theorem:coregular:intersection}, $S$ is a transversal for the Lie algebroid $A_N\Ato N$.
\end{lemma}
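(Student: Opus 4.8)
The plan is to unwind the definitions and reduce the statement to the transversality of $S$ inside $N$ for the Lie algebroid $A_N=i_N^!A$. Recall from Theorem \ref{theorem:coregular:intersection} that $S=N\cap X$ with $N=\Sat(S,U)$ and $X$ a minimal transversal around $S$, and that this intersection is transverse inside $M$. Being a transversal for $A_N\Ato N$ means, by definition, that for every $x\in S$ we have $T_xS+\rho_{A_N,x}((A_N)_x)=T_xN$. Since $A_N=\rho^{-1}(TN)$, the anchor of $A_N$ is just the restriction of $\rho$ and $\rho_{A_N,x}((A_N)_x)=\rho_x(A_x)\cap T_xN$. So what must be proved is
\[
T_xS+\big(\rho_x(A_x)\cap T_xN\big)=T_xN,\qquad \forall\, x\in S.
\]

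First I would set up notation at a fixed point $x\in S$ and use the explicit description of $N$ coming from the proof of Theorem \ref{theorem:coregular:intersection}: under the isomorphism $(p^!A_X)|_U\simeq A|_U$ afforded by Theorem \ref{theorem:BLM}, $N=p^{-1}(S)$ for the submersion $p:U\to X$, so $T_xN=(\d_xp)^{-1}(T_xS)$ and $T_xS=T_xN\cap T_xX$. The fibers of $p$ are the normal directions $\tau$ directions only transversally, but more useful is the observation that $N$ is a local saturation of $S$, hence invariant: $\rho_x(A_x)\subset T_xN$ already, for all $x\in N$, because $N$ is a union of orbits. That single fact collapses the problem: if $\rho_x(A_x)\subset T_xN$, then $\rho_x(A_x)\cap T_xN=\rho_x(A_x)$, and the required identity becomes $T_xS+\rho_x(A_x)=T_xN$. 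Now I would feed in the coregularity of $S$ in $M$, which gives $T_xS+\rho_x(A_x)=T_xS+\im(\rho|_S)_x$, a fixed-dimensional subspace of $T_xM$; writing $\tau_{S,x}:=\im(\rho|_S)_x \ \mathrm{mod}\ T_xS\subset \nu(S,M)_x$, the minimal transversal condition \eqref{eq:def:min:trans} gives $\nu(S,M)_x=\tau_{S,x}\oplus \nu(S,X)_x$, and since $T_xN$ maps onto $\nu(S,M)_x/(\text{image of }\nu(S,X)_x)\simeq \tau_{S,x}$ under the quotient $T_xM\to \nu(S,M)_x$ — because $T_xN\cap T_xX=T_xS$ and the intersection is transverse — one gets $T_xN=T_xS+(\text{a complement realizing }\tau_{S,x})=T_xS+\rho_x(A_x)$. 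I would write this last linear-algebra step carefully using the splitting $\nu(S,M)=\tau_S\oplus\nu(S,X)$ and the identification $T_xN/T_xS\simeq \tau_{S,x}$.

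The cleanest route, and the one I would actually present, is therefore: (1) quote that $N$ is a (local) saturation, hence $\rho(A_x)\subset T_xN$ for $x\in S$; (2) conclude $\rho_{A_N}((A_N)_x)=\rho(A_x)$; (3) use coregularity of $S$ in $M$ together with the minimal-transversal splitting and the transversality $S=N\cap X$ to identify $T_xN=T_xS\oplus \rho(A_x)\ \mathrm{mod}\ T_xS$, i.e.\ $T_xS+\rho(A_x)=T_xN$; (4) combine (2) and (3) to get $T_xS+\rho_{A_N}((A_N)_x)=T_xN$, which is exactly the transversality of $S$ for $A_N$. I expect the only mild subtlety — the "main obstacle", though it is quite routine — to be step (3): checking that under the quotient $T_xM\twoheadrightarrow \nu(S,M)_x$ the subspace $T_xN$ maps precisely onto $\tau_{S,x}$. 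This follows by a dimension count: $\dim T_xN - \dim T_xS = \operatorname{rank}\tau_S$ (from $N=p^{-1}(S)$ with $p$ a submersion onto $X$ whose fiber dimension is $\operatorname{rank}\tau_S$), and the image of $T_xN$ in $\nu(S,M)_x$ is contained in $\tau_{S,x}=\im(\rho|_S)_x\ \mathrm{mod}\ T_xS$ because $\rho(A_x)\subset T_xN$; equal dimensions and containment give equality. With that in hand the lemma is immediate.
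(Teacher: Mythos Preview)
The paper states this lemma without proof, treating it as a direct analog of Lemma \ref{lemma:coreg:is:inv:in:transv}; the sentence immediately following the lemma records precisely the key fact $\nu(S,N)=\tau_S$, which is the content of your step (3). Your argument is correct and is exactly what the paper has in mind: since $N$ is invariant, $\rho_x(A_x)\subset T_xN$, so $\rho_{A_N,x}((A_N)_x)=\rho_x(A_x)$, and then $T_xS+\rho_x(A_x)=T_xN$ by the dimension count.

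One small slip to fix: in your justification of step (3) you write that the image of $T_xN$ in $\nu(S,M)_x$ is \emph{contained in} $\tau_{S,x}$ ``because $\rho(A_x)\subset T_xN$''. That implication goes the other way: $\rho(A_x)\subset T_xN$ gives $\tau_{S,x}\subset \mathrm{image}(T_xN)$. The dimension count ($\dim T_xN-\dim T_xS=\operatorname{rank}\tau_S$) then forces equality, so the conclusion is unaffected.
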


For a pair $(N,X)$ as in Theorem \ref{theorem:coregular:intersection}, we have that $\nu(S,N)=\tau_S$ and
\[ \nu(S,M)=\nu(S,N)\oplus \nu(S,X).\]
In particular, $\dim N=\dim \tau_S$. In fact, the following holds. 

\begin{proposition}
The germ around $S$ of the embedded submanifold $N$ from Theorem \ref{theorem:coregular:intersection} is independent of choices.  
\end{proposition}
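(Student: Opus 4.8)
The statement asserts that the germ around $S$ of the submanifold $N=\Sat(S,U)$ from Theorem \ref{theorem:coregular:intersection} is well-defined, i.e.\ independent of the auxiliary choices (the minimal transversal $X$, the tubular neighborhood, the retraction $p$, and the open set $U$). The plan is to reduce this to facts already available in the excerpt: the essential uniqueness of minimal transversals (Lemma \ref{lemma:equiv:min:trans}) and the intrinsic characterization of $N$ as a local saturation. First I would unwind what $N$ is: from the proof of Theorem \ref{theorem:coregular:intersection}, $N=\Sat(S,U)$ for a suitable small neighborhood $U$ of $X$ in $M$; abstractly, near $S$ the set $\Sat(S,U)$ consists of the points reachable from $S$ by flows of (compactly supported, small) sections of $A|_U$, so it has an intrinsic description once $U$ is fixed. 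The content to prove is therefore twofold: (a) changing $U$ (shrinking) only changes $N$ by a germ-preserving shrinking — this is immediate since $\Sat(S,U')\cap U' = \Sat(S,U)\cap U'$ for $U'\subset U$ a sufficiently small neighborhood of $S$, because any orbit segment staying close to $S$ stays inside $U'$; and (b) changing the minimal transversal $X$ (and the associated tubular data) produces the same germ.

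For part (b), the key input is Lemma \ref{lemma:equiv:min:trans}: given two minimal transversals $X_0, X_1$ around $S$, there is an inner Lie algebroid isomorphism $\Phi:A|_{U_0}\diffto A|_{U_1}$ whose base map $\varphi:U_0\diffto U_1$ fixes $S$ pointwise and sends $X_0\cap U_0$ to $X_1\cap U_1$. I would then argue that $\varphi$ also maps the local saturation $\Sat(S,U_0)$ onto $\Sat(S,U_1)$: indeed, an inner Lie algebroid isomorphism is (by construction, as a flow of a time-dependent section) a composition of flows of sections of $A$, hence it carries orbits of $A$ to orbits of $A$ and maps $S$ to $S$; consequently it preserves ``being reachable from $S$ by an orbit segment,'' so it sends the local saturation of $S$ to the local saturation of $S$. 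Since $\varphi$ fixes $S$ and is defined on a neighborhood of $S$, this shows that $N_0=\Sat(S,U_0)$ and $N_1=\Sat(S,U_1)$ have the same germ along $S$ after the diffeomorphism $\varphi$; but as $\varphi|_S=\id_S$ and its differential along $S$ can be arranged to respect the splitting $\nu(S,M)=\tau_S\oplus\nu(S,X_i)$ (again using the construction in Lemma \ref{lemma:equiv:min:trans}, where the generating section vanishes on $S$, so $\d\varphi|_S=\id$), the germ of $N$ is literally unchanged, not merely changed up to diffeomorphism. Alternatively — and perhaps more cleanly — one can bypass the transversal entirely: the germ of $\Sat(S,U)$ along $S$ depends only on the Lie algebroid $A$ near $S$ and on $S$, since the defining notion (union of orbits meeting $S$, intersected with a small neighborhood of $S$) makes no reference to $X$; Theorem \ref{theorem:coregular:intersection} only uses $X$ to \emph{prove} this germ is that of an embedded submanifold, but the object itself is canonical.

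The main obstacle I anticipate is making precise the claim that $\Sat(S,U)$ has a well-defined germ along $S$ independent of $U$ — i.e.\ part (a) above — because ``local saturation'' as literally defined (orbits of $A|_U$ through $S$) could a priori shrink in a subtle way as $U$ shrinks, if an orbit leaves $U$ and comes back. The resolution is to use the local structure provided by Theorem \ref{theorem:coregular:intersection} itself: once we know $N=p^{-1}(S)$ for a submersion $p:U\to X$ with connected fibers equal to orbit-plaques, the germ of $N$ along $S$ is that of the union of plaques through $S$, and any two such descriptions (for different $U$, $X$, $p$) must agree on the overlap because both equal the unique embedded submanifold through $S$ whose tangent space at $x\in S$ is $T_xS+\rho_x(A_x)$ — this tangent space is intrinsic, and an embedded submanifold is determined near $S$ by being integral to the (possibly singular, but here regular near $S$) distribution generated by $TS$ and $\im\rho$. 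So the final step is: show $TN|_S$ is forced, show $N$ is tangent along its whole extent to the singular distribution $\mathcal D$ spanned by $\rho(\Gamma(A))$ and $\Gamma(TS)$-extensions (equivalently, $N$ is $A$-invariant as a submanifold once it contains $S$ and has the right dimension $\dim\tau_S+\dim S$), and invoke uniqueness of the integral submanifold of $\mathcal D$ through $S$. I would present the proof along these lines, keeping the first route (via Lemma \ref{lemma:equiv:min:trans} and inner isomorphisms transporting saturations) as the primary argument and remarking that it also follows from the intrinsic tangent-distribution characterization.
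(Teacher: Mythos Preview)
Your primary argument has a genuine gap. You correctly observe that the inner automorphism $\varphi$ from Lemma \ref{lemma:equiv:min:trans} fixes $S$ and (since the generating section vanishes on $S$) satisfies $\d\varphi|_S=\id$, and that $\varphi$ carries $A$-orbits to $A$-orbits. From this you deduce $\varphi(N_0)=N_1$ near $S$. But the step ``$\varphi|_S=\id$ and $\d\varphi|_S=\id$, hence the germ of $N$ is literally unchanged'' is false: a diffeomorphism agreeing with the identity to first order along $S$ can still move a submanifold through $S$ to a genuinely different germ (e.g.\ $\varphi(x,y)=(x,y+x^2)$ on $\R^2$ sends the $x$-axis to the parabola $y=x^2$, yet $\varphi(0)=0$ and $\d\varphi|_0=\id$). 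So your route via Lemma \ref{lemma:equiv:min:trans} shows only that $N_0$ and $N_1$ are diffeomorphic near $S$, not that they coincide as germs.

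The paper's proof bypasses minimal transversals and inner automorphisms entirely, and is much closer to your discarded ``alternative''. Given two realizations $N=\Sat(S,U)$ and $\tilde N=\Sat(S,\tilde U)$, one chooses an open $S\subset O\subset U\cap\tilde U$ small enough that the retraction $p:U\to X$ from the proof of Theorem \ref{theorem:coregular:intersection} still has connected fibers on $O$; then $N':=N\cap O=\Sat(S,O)$ is open in $N$. The key point is the trivial monotonicity $\Sat(S,O)\subset\Sat(S,\tilde U)=\tilde N$ (any $A|_O$-orbit is contained in an $A|_{\tilde U}$-orbit), together with $\dim N'=\dim\tilde N$, which forces $N'$ to be open in $\tilde N$ as well. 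Hence $N$, $N'$ and $\tilde N$ all share the same germ at $S$. Your instinct that the saturation is intrinsic was the right one; the missing ingredient is just this monotonicity-plus-dimension argument, not an appeal to uniqueness of integral submanifolds of a singular distribution.
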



\begin{proof}
Let $N=\Sat(S,U)$ and $\tilde{N}=\Sat(S,\tilde{U})$ be as in the theorem. Then $\tilde{N}$ and $N$ have the same dimension, and we write $N=p^{-1}(S)$, where $p:U\to X$ is as in the proof. Consider an open $S\subset O\subset U\cap \tilde{U}$, such that $p|_O$ also has connected fibers. Then $N':=O\cap N$ satisfies $N'=\Sat(S,O)$, and it is open in $N$. So $N$ and $N'$ have the same germ at $S$. On the other hand, $N'\subset \tilde{N}$ is an embedded submanifold and $\dim N'=\tilde{N}$, so $N'$ is open in $\tilde{N}$. Hence, $N'$ and $\tilde{N}$ have the same germ at $S$. 
\end{proof}

\begin{proposition}\label{local:model:saturation:Lie:alg}
Let $i_S:S\hookrightarrow M$ be a coregular submanifold for $A\Ato M$. The local model of $A$ around $S$ is isomorphic to the local model of $A$ around the invariant submanifold $N=\Sat(S,U)$, for some small enough open neighborhood $U$ of $S$. Moreover, $A$ is linearizable around $S$ if and only if $A$ is linearizable around $N=\Sat(S,U)$, for some small enough open neighborhood $U$ of $S$. 
\end{proposition}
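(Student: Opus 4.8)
The plan is to reduce the statement to one lemma about how linear approximations and linearizability behave under pullback along submersions, and then to feed that lemma into the two results already available: Theorem~\ref{theorem:BLM} (normal form around transversals) and Proposition~\ref{prop:linearizable:iff:transverse}. First I would fix the geometric setup coming from the proof of Theorem~\ref{theorem:coregular:intersection}: choose a minimal transversal $X$ around $S$ and, by Theorem~\ref{theorem:BLM}, after shrinking obtain an open set $S\subset U\subset M$, a surjective submersion with connected fibres $p:U\to X$ which is the identity on $X$, and an isomorphism $A|_U\cong p^!A_X$, with $N=\Sat(S,U)=p^{-1}(S)$. By Lemma~\ref{lemma:coreg:is:inv:in:transv}, $S$ is invariant for $A_X\Ato X$, with $A_S=i_S^!A_X=A_X|_S$, and $N$ is invariant for $A|_U$. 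Since $p$ is a submersion and $N=p^{-1}(S)$, there is a canonical identification $\nu(N,M)\cong(p|_N)^*\nu(S,X)$; after fixing a tubular neighbourhood $N\simeq\tau_S$ the map $p|_N$ becomes the bundle projection $\tau_S\to S$, so $\nu(N,M)\simeq\tau_S\oplus\nu(S,X)=\nu(S,M)$ and the induced bundle map $\nu(N,M)\to\nu(S,X)$ becomes the projection with kernel $\tau_S$ appearing in Definition~\ref{defi:local:model:linear:coreg}.

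The key step is the following lemma: for a surjective submersion with connected fibres $q:Z\to Y$ and a closed invariant submanifold $T\subset Y$ of a Lie algebroid $B\Ato Y$, the preimage $q^{-1}(T)$ is invariant for $q^!B$, its linear approximation is the pullback of the linear approximation $B|_T\ltimes\nu_Y(T)$ of $B$ around $T$ along the natural bundle map $\nu_Z(q^{-1}(T))\cong q^*\nu_Y(T)\to\nu_Y(T)$, and $q^!B$ is linearizable around $q^{-1}(T)$ if and only if $B$ is linearizable around $T$. To prove it I would use that the linear approximation and the notion of linearizability depend only on the germ of the algebroid along the submanifold and are compatible with the pullback functor $q^!$ (which also commutes with the action-algebroid construction, $(q^!B)\ltimes q^*E\cong \widetilde{q}^{\,!}(B\ltimes E)$ for a $B$-representation $E$ and $\widetilde{q}:q^*E\to E$). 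Applying $q^!$ to a linearization of $B$ around $T$, respectively to the identity of $B|_T\ltimes\nu_Y(T)$, yields the description of the linear approximation and the ``if'' direction; the converse follows by pulling a linearization of $q^!B$ around $q^{-1}(T)$ back along a local section $s$ of $q$ through $T$, which is transverse to $q^{-1}(T)$ along $s(T)$ and satisfies $s^!q^!=(q\circ s)^!=\id$. The one verification needing care is that the normal (Bott-type) representation of $q^!B$ on $\nu_Z(q^{-1}(T))$ is carried to the pullback of that of $B|_T$ on $\nu_Y(T)$; this follows from the explicit formula for the normal connection together with the fact that $q$-related vector fields have $q$-related Lie brackets.

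Finally I would apply the lemma with $q=p$, $B=A_X$, $T=S$. Combined with the identifications of the first paragraph, the linear approximation of $A|_U$ around $N$ is the pullback of $A_S\ltimes\nu(S,X)$ along the projection $\nu(S,M)\to\nu(S,X)$ with kernel $\tau_S$, which by Definition~\ref{defi:local:model:linear:coreg} is exactly the local model of $A$ around the coregular submanifold $S$; this proves the first assertion. The lemma also gives that $A|_U\cong p^!A_X$ is linearizable around $N$ if and only if $A_X$ is linearizable around $S$, and Proposition~\ref{prop:linearizable:iff:transverse} gives that $A_X$ is linearizable around $S$ if and only if $A$ is linearizable around the coregular submanifold $S$; chaining these proves the second assertion. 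Two points remain to be checked: the normalization in Definition~\ref{defi:local:model:linear:coreg} (that the linearizing isomorphism be the identity on $A_S$ and that the differential of its base map along $S$ induce the identity on $\nu(S,M)$) can always be achieved by post-composing with the automorphism of the linear model induced by the normal derivative of the base map — which, being a Lie algebroid automorphism covering $\id_N$, is an automorphism of the normal representation — exactly the adjustment used in Lemma~\ref{lemma:equiv:min:trans}; and $N$ together with its germ around $S$ is independent of the choice of small enough $U$, so the statement holds as phrased. The main obstacle will be the bookkeeping inside the lemma — tracking the bases of the various action algebroids and checking that the normal representations pull back correctly — and the normalization step; neither is conceptually deep, but both require some care.
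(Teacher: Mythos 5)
Your proposal follows essentially the same route as the paper's proof: after the reduction via Theorem \ref{theorem:BLM} to $A|_U\simeq p^!A_X$ with $N=p^{-1}(S)$, the paper likewise identifies the linear approximation around $N$ with the pullback of $A_S\ltimes\nu(S,X)$ along the projection with kernel $\tau_S$ (the content being that the normal representations correspond under the pullback), and settles the linearizability equivalence by an argument of the type of Proposition \ref{prop:linearizable:iff:transverse}; your packaging of this as a general lemma on submersion pullbacks is only a cosmetic difference. The one step your sketch treats too quickly is the converse direction of that lemma: a linearization of $q^!B$ around $q^{-1}(T)$ cannot literally be pulled back along the section $s$, because its base map need not preserve the image of $s$; instead one restricts the isomorphism to the transversal sitting inside the linear model, obtains a linearization over a possibly different transversal in $Z$, and then compares the two transversals by an inner automorphism as in Lemma \ref{lemma:equiv:min:trans} — precisely the extra step in the paper's proof of the converse of Proposition \ref{prop:linearizable:iff:transverse}, which you should make explicit.
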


\begin{proof}
By choosing a tubular neighborhood, we may replace $M$ by $\nu(S,M)$. Fix a decomposition $\nu(S,M)=\tau_S\oplus E$. Then $E$ is a transversal and, after applying Theorem \ref{theorem:BLM} and changing the tubular neighborhood, we may assume that $A=p_{E}^!A_{E}$, where $p_{E}: \tau_S\oplus E\to E$ is the projection. Since $S$ is an invariant submanifold of $E$, we have that $\Sat(S,\nu(S,M))=\tau_S$, and the restriction of $A$ to $\tau_S$ is the pullback of $A_S$ along the projection $p:\tau_S\to S$
\[A_{\tau_S}=i_{\tau_S}^!p_{E}^!A_{E}=p^!A_S.\]
Moreover, the obvious isomorphism $p^{!}E\simeq \tau_S\oplus E$ is in fact an isomorphism between the pullback of the normal representation of $A_S$ in $A_{E}$ and the normal representation of $A_{\tau_S}$ in $A$. We obtain an isomorphism between the pullback of the action Lie algebroid and the action Lie algebroid
\[p_{E}^!(A_S\ltimes E)\simeq A_{\tau_S}\ltimes (\tau_S\oplus E).\]
In other words, the two local models are isomorphic. An argument similar to the proof of Proposition \ref{prop:linearizable:iff:transverse} shows that the linearization problem of $A$ at $S$ and the linearization problem of $A$ at $\tau_S$, in a neighborhood of $S$, are equivalent.
\end{proof}

Theorem \ref{Theorem:coregular:normal:form} (i) is a direct consequence of Proposition \ref{local:model:saturation:Lie:alg} and the linearization theorem for proper Lie groupoids from \cite{dHFe18}.


\subsection{Coregular submanifolds in Dirac manifolds}

Let $(M,\L)$ be a Dirac manifold and let $i_S:S\hookrightarrow M$ be a coregular submanifold for $\L$, in the Lie algebroid sense. Any minimal transversal $i_X:X\hookrightarrow M$ for $S$ comes with a Dirac structure $\L_X:=i_X^!\L$. By Lemma \ref{lemma:equiv:min:trans}, any other minimal transversal $X'$ is related to $X$ by an inner automorphism of $\L$, more precisely, by the flow of a time-dependent section of $\L$ with vanishes along $S$. Recall that such an automorphism of $\L$ is given by a diffeomorphism fixing $S$ composed with a gauge transformation by a closed 2-form whose pullback to $S$ vanishes (see, e.g., \cite{Gual11}). This implies the following result.

\begin{proposition}\label{proposition:transversal:well-defined}
For any two minimal transversals $X_0$ and $X_1$ around $S$ the Dirac manifolds $(X_i,\L_{X_i})$ are equivalent around $S$ (see Definition \ref{def:iso:Dirac}).
\end{proposition}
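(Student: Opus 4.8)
The statement to prove is Proposition \ref{proposition:transversal:well-defined}: for any two minimal transversals $X_0, X_1$ around a coregular submanifold $S$ in a Dirac manifold $(M,\L)$, the induced Dirac manifolds $(X_0,\L_{X_0})$ and $(X_1,\L_{X_1})$ are equivalent around $S$ in the sense of Definition \ref{def:iso:Dirac}. The strategy mirrors the Lie algebroid case (Lemma \ref{lemma:equiv:min:trans}), upgrading it to track the Dirac structure as well. First I would invoke the proof scheme of Lemma \ref{lemma:equiv:min:trans}: after shrinking, $X_0$ and $X_1$ can be joined by a smooth family of (minimal) transversals $X_t = i_t(X_0)$, $t\in[0,1]$, with $i_0 = \id_{X_0}$, $i_t|_S = \id_S$, and the infinitesimal variation $\tfrac{\d i_t}{\d t}$ is realized modulo $TX_t$ by the anchor $\rho_\L(\al_t)$ of a time-dependent section $\al_t \in \Gamma(\L|_U)$ defined near $S$, with $\al_t|_S = 0$ (this last normalization is possible because the variation is supported away from $S$). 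The flow $\Phi^t$ of $\al_t$ — an \emph{inner} automorphism of the Lie algebroid $\L$ — is then defined on a neighborhood of $S$, fixes $S$ pointwise, sends $X_0$ to $X_1$, and covers a diffeomorphism $\varphi$ with $\varphi|_S = \id_S$.

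The new point, beyond the algebroid statement, is to understand how $\Phi^t$ acts on the \emph{Dirac structure} $\L$, not merely on the algebroid. Here I would use the standard fact (see, e.g., \cite{Gual11}, or it follows from Cartan calculus for multiplicative/IM forms as recalled in Appendix \ref{appendix}): the flow of a time-dependent section $\al_t$ of a Dirac structure $\L$ acts on $\L$ by a diffeomorphism composed with a gauge transformation $e^{B_t}$, where $B_t = \int_0^t \d\langle \al_s, \cdot\rangle_{\mathrm{eff}}$ is a closed $2$-form built from the ``$T^*M$-components'' of $\al_s$. Since each $\al_s$ vanishes along $S$ and the flow $\varphi^s$ fixes $S$ pointwise, the pullback $i_S^* B_t = 0$. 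Thus the total transformation from $(M,\L)$ to itself carrying $X_0$ to $X_1$ has the form $\varphi_* \circ e^{B}$ with $\varphi|_S=\id_S$ and $i_S^* B = 0$.

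Finally I would restrict this transformation to the transversals. Because $\varphi(X_0 \cap U_0) = X_1 \cap U_1$ and both $X_0, X_1$ are transverse to $\L$, the Dirac pullback operation is well behaved (as recalled just before Theorem \ref{theorem:BLM:Dirac}): $\varphi$ restricts to a diffeomorphism $\bar\varphi: X_0 \cap U_0 \to X_1 \cap U_1$ with $\bar\varphi \circ i_S = i_S$, and gauge transformations pull back to gauge transformations, $i_{X_0}^*(e^B \L) = e^{i_{X_0}^* B}(i_{X_0}^*\L) = e^{\bar B}\L_{X_0}$ with $\bar B := i_{X_0}^* B$. Since $S \subset X_0$ and $i_S^* B = 0$, we also get $i_S^* \bar B = 0$ (precomposing with the inclusion $S \hookrightarrow X_0$). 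Chasing the pullbacks through the identity $i_{X_1}^* \L = i_{X_0}^*(\Phi\text{-transform of }\L)$ then yields precisely a diffeomorphism $\bar\varphi: (X_0\cap U_0, e^{\bar B}\L_{X_0}) \diffto (X_1 \cap U_1, \L_{X_1})$ with $\bar\varphi \circ i_S = i_S$ and $i_S^*\bar B = 0$ — which is exactly the equivalence around $S$ required by Definition \ref{def:iso:Dirac}.

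\textbf{Main obstacle.} The delicate point is the precise bookkeeping of the gauge $2$-form: one must verify that the flow of a section of a Dirac structure genuinely acts by a diffeomorphism-plus-gauge transformation (and not by some more general orthogonal automorphism of $TM \oplus T^*M$), and that the gauge $2$-form accumulated along the isotopy has vanishing pullback to $S$. The first is a known structural fact about Courant/Dirac symmetries, but stating it cleanly in our conventions requires a little care; the second follows from $\al_t|_S = 0$ together with $\varphi^t|_S = \id_S$, but making this rigorous means differentiating the identity for $\Phi^t$ along $S$ and checking the $T^*M$-valued part vanishes there. Everything else is a routine transfer of Lemma \ref{lemma:equiv:min:trans} together with the naturality of Dirac pullback under transverse maps.
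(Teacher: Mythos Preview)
Your proposal is correct and follows essentially the same approach as the paper: invoke Lemma \ref{lemma:equiv:min:trans} to get the inner automorphism of $\L$ generated by a time-dependent section vanishing along $S$, then use the fact (citing \cite{Gual11}) that such an inner automorphism is a diffeomorphism composed with a gauge transformation by a closed 2-form with vanishing pullback to $S$, and finally restrict to the transversals. The paper's proof is terser---it leaves the restriction step implicit---but your more careful bookkeeping of the Dirac pullback and the gauge form is exactly what is needed to fill in those details.
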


Proposition \ref{proposition:transversal:well-defined}, Lemma \ref{lemma:equivalent:give:equivalent:jet} and Proposition \ref{prop:moser:Dirac} imply that the following notions are independent of the chosen minimal transversal.

\begin{definition}
Let $S$ be a coregular submanifold of a Dirac manifold $(M,\L)$, and $X$ a minimal transversal through $S$. 
\begin{enumerate}[(i)]
    \item We say that $\L$ is \textbf{partially split} at $S$ if the first order jet at $S$ of the Dirac structure $\L_X$ is partially split.
    \item Let $\L_X$ be partially split at $S$, and let $(X_0,\L_{X_0})$ be the local model of $\L_X$ around $S$ for some IM connection 1-form. The \textbf{local model} for $\L$ around $S$ is a Dirac manifold \[(M_0,\L_0):=(p^{-1}(X_0),p^!\L_{X_0}),\] where $p:\nu(S,M)\to \nu(S,X)$ is the projection with kernel $\tau_S$.
    \item We say that $\L$ is \textbf{linearizable} around $S$ if it is partially split at $S$, and $(M,\L)$ is equivalent to the local model $(M_0,\L_0)$ around $S$. 
\end{enumerate}
\end{definition}

Theorem \ref{theorem:BLM:Dirac}, i.e., the local normal form theorem around transversals, implies the Dirac version of Proposition \ref{prop:linearizable:iff:transverse}.

\begin{proposition}
A Dirac manifold $(M,\L)$ is linearizable around the coregular submanifold $S$ if and only if for some (and any) minimal transveral $X$ through $S$, the induced Dirac structure $\L_X$ is linearizable around the invariant submanifold $S$.
\end{proposition}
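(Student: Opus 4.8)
The plan is to deduce this equivalence of ``linearizability'' statements from the local normal form theorem around transversals (Theorem~\ref{theorem:BLM:Dirac}) and the well-definedness of the local model around coregular submanifolds, which was already established via Proposition~\ref{proposition:transversal:well-defined}, Lemma~\ref{lemma:equivalent:give:equivalent:jet} and Proposition~\ref{prop:moser:Dirac}. Fix a minimal transversal $i_X:X\hookrightarrow M$ through $S$, and identify a tubular neighborhood of $S$ in $M$ with $\nu(S,M)$ and a tubular neighborhood of $S$ in $X$ with $\nu(S,X)$, so that $\nu(S,M)=\tau_S\oplus \nu(S,X)$ and $p:\nu(S,M)\to \nu(S,X)$ is the bundle projection with kernel $\tau_S$. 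The Dirac version of the local model is then $(M_0,\L_0)=(p^{-1}(X_0),p^!\L_{X_0})$, where $(X_0,\L_{X_0})$ is the local model of $\L_X$ around the invariant submanifold $S$.

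First I would prove the ``only if'' direction. Assume $\L$ is linearizable around $S$, i.e.\ $(M,\L)$ is equivalent around $S$ to $(M_0,\L_0)=(p^{-1}(X_0),p^!\L_{X_0})$. Restricting the equivalence (a diffeomorphism fixing $S$ composed with a gauge transformation by a closed $2$-form vanishing on $S$) to a minimal transversal, one gets an equivalence around $S$ between $\L$ restricted to a transversal and $p^!\L_{X_0}$ restricted to the transversal $\nu(S,X)\subset \nu(S,M)$. Since $p$ restricts to the identity on $\nu(S,X)$, the Dirac pullback $p^!\L_{X_0}$ restricted to $\nu(S,X)$ is exactly $\L_{X_0}$, the local model of $\L_X$ around $S$. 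By Lemma~\ref{lemma:equiv:min:trans} (Dirac version, i.e.\ Proposition~\ref{proposition:transversal:well-defined}), any two minimal transversals give equivalent induced Dirac structures, so $\L_X$ is equivalent around $S$ to $\L_{X_0}$; that is, $\L_X$ is linearizable around the invariant submanifold $S$, and by Lemma~\ref{lemma:equivalent:give:equivalent:jet} the partial split condition at $S$ is also inherited.

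Conversely, suppose $\L_X$ is linearizable around the invariant submanifold $S$, so $(X,\L_X)$ is equivalent around $S$ to $(X_0,\L_{X_0})$. Applying the pullback functor $p^!$ along the projection $p:\nu(S,M)\to \nu(S,X)$ — which is a surjective submersion, hence transverse to every Dirac structure, so $p^!$ sends equivalences to equivalences — we obtain that $(p^{-1}(X),p^!\L_X)$ is equivalent around $S$ to $(M_0,\L_0)=(p^{-1}(X_0),p^!\L_{X_0})$. On the other hand, Theorem~\ref{theorem:BLM:Dirac} applied to the transversal $X\subset M$ gives that $\L$ and $p^!\L_X=p^!i_X^!\L$ are equivalent Dirac structures around $X$, hence in particular around the smaller submanifold $S$. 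Composing the two equivalences (equivalences around $S$ compose, after shrinking neighborhoods and adding the two closed $2$-forms) shows that $(M,\L)$ is equivalent around $S$ to $(M_0,\L_0)$, i.e.\ $\L$ is linearizable around $S$. The partial split hypothesis is exactly the one transferred from $\L_X$ via Lemma~\ref{lemma:equivalent:give:equivalent:jet}, so nothing is lost.

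The main obstacle is bookkeeping rather than depth: one must check carefully that the pullback operation $p^!$ genuinely carries the equivalence relation of Definition~\ref{def:iso:Dirac} to itself (the gauge $2$-form $\omega$ pulls back to $p^*\omega$, which is closed and still vanishes on $S$ since $S\subset p^{-1}(S)$ and $p$ fixes $S$), and that ``equivalence around $S$'' is transitive up to shrinking neighborhoods and composing gauge transformations (here one uses that a diffeomorphism $\varphi$ fixing $S$ pulls a closed $2$-form vanishing on $S$ back to another closed $2$-form vanishing on $S$, and that $e^{\omega_1}e^{\varphi^*\omega_2}=e^{\omega_1+\varphi^*\omega_2}$). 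I also need the identification, along the minimal transversal $\nu(S,X)$, of the restriction of $p^!\L_{X_0}$ with $\L_{X_0}$ itself, which is immediate from $p|_{\nu(S,X)}=\mathrm{id}$ and the definition of the Dirac pullback; and the observation that restricting an equivalence around $S$ to a minimal transversal again produces an equivalence around $S$, which follows because minimal transversals through $S$ are unique up to inner automorphisms by Proposition~\ref{proposition:transversal:well-defined}.
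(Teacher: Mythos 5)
Your proposal is correct and is essentially the argument the paper intends: the paper only remarks that Theorem~\ref{theorem:BLM:Dirac} implies the Dirac version of Proposition~\ref{prop:linearizable:iff:transverse}, whose proof (pull back the linearization along $p$ and invoke the transversal normal form in one direction; restrict the isomorphism to a minimal transversal and use uniqueness of minimal transversals in the other) is exactly what you carry out, with Proposition~\ref{proposition:transversal:well-defined} playing the role of Lemma~\ref{lemma:equiv:min:trans}. The extra bookkeeping you supply (composing equivalences, pulling back and restricting the gauge $2$-forms vanishing on $S$) is precisely the adaptation needed in the Dirac setting and matches the paper's level of detail.
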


Theorem \ref{Theorem:coregular:normal:form} (ii) follows by applying this proposition, Theorem \ref{thm:normal:form:Dirac} and the linearization theorem for proper Lie groupoids from \cite{dHFe18}.

As for Lie algebroids, the linearization problem can be expressed also in terms of the saturation. Namely, for a coregular submanifold $S$ of $(M,\L)$, Theorem \ref{theorem:coregular:intersection} implies the existence of an open neighborhood $U$ of $S$ for which $N:=\Sat(S,U)$ is an embedded submanifold with $T_SN=\tau_S$. The analog of Proposition \ref{local:model:saturation:Lie:alg} holds.

\begin{proposition}\label{local:model:saturation:Dirac}
The Dirac structure $\L$ is partially split at $S$ if and only if $\L$ is partially split at the invariant submanifold $N=\Sat(S,U)$, for some small enough open neighborhood $U$ of $S$. In this case, the two local models of $\L$ are equivalent around $S$, and $\L$ is linearizable around $S$ if and only if $\L$ is linearizable around $N=\Sat(S,U)$, for some small enough open neighborhood $U$ of $S$.
\end{proposition}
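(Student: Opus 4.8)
The plan is to reduce Proposition \ref{local:model:saturation:Dirac} to the Lie-algebroid statement already established in Proposition \ref{local:model:saturation:Lie:alg}, together with the transversal normal form of Theorem \ref{theorem:BLM:Dirac}, by mimicking the Poisson-to-Dirac upgrade used elsewhere in this section. First I would fix a minimal transversal $X$ through $S$ and set $\L_X=i_X^!\L$, and recall from Theorem \ref{theorem:coregular:intersection} that, after shrinking to some $U$, $N=\Sat(S,U)$ is embedded with $T_SN=\tau_S$, that $S=N\cap X$ transversely, and that $N$ is an invariant submanifold of $\L$. The key point is that $S$ is then a transversal for the Dirac structure $\L_N:=i_N^!\L$ (this is the Dirac analogue of the lemma preceding Proposition \ref{local:model:saturation:Lie:alg}), and that the first order jets of $\L_X$ at $S$ and of $\L$ at $N$ are related through the common restricted Lie algebroid $A_S$. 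Indeed, $A_S=\L|_S=i_S^!\L_X=i_S^!\L_N$ with the same closed IM 2-form $\mu_S=i_S^*\pr_{T^*S}$, so the bundle of ideals $\ka=\ker\rho_{A_S}\cap\ker\mu_S$ is the same in both pictures. By the definition of ``partially split at $S$'' for coregular submanifolds, and the definition of ``partially split at the invariant submanifold $N$'' from Section \ref{sec:Dirac}, both statements are literally the assertion that this same $(A_S,\mu_S)$ is a partially split first order jet; hence the equivalence of the two partial-split conditions is immediate once the identification of first order jets is in place. Here I would lean on Proposition \ref{proposition:transversal:well-defined} and Lemma \ref{lemma:equivalent:give:equivalent:jet} to make sure this identification is canonical modulo the relevant equivalence.

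Next, assuming $\L$ is partially split at $S$, I would compare the two local models. On the one hand, the coregular local model is $(M_0,\L_0)=(p^{-1}(X_0),p^!\L_{X_0})$ with $p:\nu(S,M)\to\nu(S,X)$ the projection with kernel $\tau_S$ and $(X_0,\L_{X_0})$ the local model of $\L_X$ around the invariant submanifold $S$. On the other hand, the local model of $\L$ around $N$ is built from $A_N=\L|_N$ and the action of $A_N$ on $\ka^*$. The argument of Proposition \ref{local:model:saturation:Lie:alg} — after fixing a tubular neighborhood, a decomposition $\nu(S,M)=\tau_S\oplus E$, and using Theorem \ref{theorem:BLM:Dirac} to replace $\L$ by $p_E^!\L_E$ around $E$ — identifies $\L_{\tau_S}=i_{\tau_S}^!\L$ with the pullback $p^!\L_S$, and identifies the pullback of the normal (Bott) representation of $\L_S$ inside $\L_E$ with the normal representation of $\L_N$ in $\L$. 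Pulling back the IM connection 1-form on $A_S$ along $p$ and taking the associated closed IM 2-form on the action algebroid then shows the two local Dirac structures $p_E^!(\L_S\ltimes E)$ and $\L_N\ltimes(\tau_S\oplus E)$ agree on a neighborhood of $S$ — i.e.\ the two local models coincide up to the equivalence of Definition \ref{def:iso:Dirac}. I would record this as a separate sentence: the two constructions of $\L_0$ are equivalent around $S$.

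For the linearization statement, I would argue by chaining equivalences. If $\L$ is linearizable around $N$, then by definition $\L$ is equivalent around $N$ (hence around $S$) to the local model at $N$, which by the previous paragraph is equivalent around $S$ to the coregular local model $(M_0,\L_0)$; so $\L$ is linearizable around $S$. Conversely, if $\L$ is linearizable around $S$, then $\L$ is equivalent around $S$ to $(M_0,\L_0)=p^!\L_{X_0}$; restricting along $\tau_S\hookrightarrow\nu(S,M)$, and invoking Theorem \ref{theorem:BLM:Dirac} to undo the pullback along $p_E$, one obtains that $\L$ is equivalent around $N$ to the local model at $N$. Here the only subtlety is that equivalences in the Dirac setting involve gauge transformations; but since every intervening closed 2-form pulls back to zero along $S$ and restricts compatibly to the submanifolds $X$, $N$, $\tau_S$ (this is exactly the content of Proposition \ref{proposition:transversal:well-defined} and Lemma \ref{lemma:equivalent:give:equivalent:jet}), the equivalences compose. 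I expect the main obstacle to be precisely this bookkeeping of gauge transformations under restriction and pullback — making sure that an equivalence defined near $X$ (respectively near $\tau_S$) restricts to a genuine equivalence near $S$ in the sense of Definition \ref{def:iso:Dirac}, with the auxiliary 2-form still pulling back to zero on $S$. Everything else is a transcription of the Lie-algebroid proof of Proposition \ref{local:model:saturation:Lie:alg} with $\L$ in place of $A$, using $i^!$ for Dirac pullbacks where transversality holds.
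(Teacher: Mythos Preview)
Your argument for the equivalence of the two partial-split conditions has a genuine gap. You write that ``both statements are literally the assertion that this same $(A_S,\mu_S)$ is a partially split first order jet,'' but this is not so: \emph{partially split at the invariant submanifold $N$} refers to the first order jet $(A_N=\L|_N,\mu_N:A_N\to T^*N)$ over $N$, with bundle of ideals $\ka_N=\ker\rho_{A_N}\cap\ker\mu_N$ --- a bundle over $N$, not over $S$. So the two conditions concern different Lie algebroids over different bases, and there is no tautological identification of first order jets to appeal to. What needs to be shown is that partial splitness of $\ka_S\subset A_S$ is equivalent to partial splitness of $\ka_N\subset A_N$, and this requires an actual argument.

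The paper supplies this via a lemma you do not invoke: if $f$ is transverse to the anchor of a Lie algebroid $A$ and $I\subset A$ is a bundle of ideals, then $f^!I\subset f^!A$ is again a bundle of ideals, and pulling back an IM connection $1$-form shows that if $I$ is partially split then so is $f^!I$. After arranging $M=\tau_S\oplus E$ and $\L\simeq p_E^!\L_E$ as you suggest, one has $A_{\tau_S}\simeq p^!A_S$ and $\ka_{\tau_S}\simeq p^!\ka_S$; the lemma then gives one implication via $p:\tau_S\to S$, and the other via $i_S:S\hookrightarrow\tau_S$ (since $A_S=i_S^!(p^!A_S)$ and $\ka_S=i_S^!(p^!\ka_S)$). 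You do hint at this pullback mechanism in your second paragraph (``pulling back the IM connection 1-form on $A_S$ along $p$''), so the fix is to move that idea up and use it to establish the partial-split equivalence, rather than claiming the two jets coincide. The rest of your outline --- comparing local models via the pulled-back IM form and then chaining equivalences for the linearization statement --- is in line with the paper's proof.
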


For the proof, we will need the following fact.

\begin{lemma}
Let $A\Ato M$ be a Lie algebroid, $I\subset A$ a bundle of ideals, and $f:N\to M$ a map transverse to the anchor of $A$. Then $f^!A\Ato N$ is a Lie algebroid which contains $f^!I=N\times_MI$ as a bundle of ideals. Moreover, if $I$ is partially split, then so is $f^!I$. 
\end{lemma}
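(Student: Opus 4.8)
The plan is to verify each claim in turn, starting from the known behaviour of pullback Lie algebroids and then upgrading to bundles of ideals and finally to the partial splitting. First I would recall that since $f\colon N\to M$ is transverse to the anchor $\rho_A$, the fibered product $f^!A=TN\times_{TM}A=\{(v,a)\in TN\times A : \d f(v)=\rho_A(a)\}$ is a smooth vector bundle over $N$ and carries a canonical Lie algebroid structure with anchor $\pr_{TN}$; this is standard and is exactly the construction used in Theorem \ref{theorem:BLM}. The bracket on $f^!A$ is characterized on sections of the form $(X,\phi)$ with $X\in\X(N)$ and $\phi$ a section of $f^*A$ that is projectable (i.e.\ $f$-related to a genuine section of $A$) by $[(X_1,\phi_1),(X_2,\phi_2)]=([X_1,X_2],[\phi_1,\phi_2])$, and extended by the Leibniz rule. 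The subbundle $f^!I=N\times_M I\subset f^!A$ makes sense because $I\subset\ker\rho_A$, so every element of $I$ pairs with $0\in TN$; concretely $f^!I=\{(0,a):a\in I_{f(x)},\ x\in N\}$, which is visibly a smooth subbundle since $I$ is.

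Next I would check that $f^!I$ is a bundle of ideals in $f^!A$. Two things are required: that $f^!I\subset\ker\pr_{TN}$, which is immediate from the description above, and that $[\Gamma(f^!A),\Gamma(f^!I)]\subset\Gamma(f^!I)$, i.e.\ invariance under the adjoint action. For the latter it suffices to test on generators. Given a local section $(X,\phi)$ of $f^!A$ and a section $(0,\psi)$ of $f^!I$, one may locally write $\psi=\sum g_i\,\xi_i$ with $g_i\in C^\infty(N)$ and $\xi_i$ pullbacks of sections of $I$, and similarly reduce $\phi$ to a combination of pullbacks of sections of $A$ plus $C^\infty(N)$-coefficients; then the bracket computation reduces, via the Leibniz rule, to the identity $[\al,\xi]\in\Gamma(I)$ holding in $A$ for $\al\in\Gamma(A)$, $\xi\in\Gamma(I)$, together with the fact that $f^*$ of a section of $I$ is again (the $A$-component of) a section of $f^!I$. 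This is a short but slightly fiddly bookkeeping argument; it is the kind of routine calculation I would not spell out in full, but I would note that the transversality hypothesis is what guarantees enough projectable sections to make the reduction legitimate.

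For the final assertion I would argue as follows. Suppose $I\subset A$ is partially split, and fix an IM connection $1$-form $(L,l)\in\Omega^1_\imult(A;I)$ with $l(\xi)=\xi$ for $\xi\in\Gamma(I)$; equivalently, by the equivalences recalled in Subsection \ref{sec:local:model:description}, fix an IM Ehresmann connection, i.e.\ a VB-subalgebroid $E\Ato TM$ of $TA\Ato TM$ with $TA=I\oplus E$. The natural thing is to pull back the connection: since $f$ is transverse to $\rho_A$, the map $\d f$ induces a Lie algebroid morphism $T(f^!A)\to TA$ covering $TN\to TM$, and one sets $f^!E$ to be the preimage of $E$ under this morphism intersected with $T(f^!A)$, or more invariantly one pulls back the partial splitting $\Theta\colon A\ltimes I^*\to T^*A$ along $f$. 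Concretely, I would pull back the IM connection $1$-form: define $(f^!L, f^!l)\in\Omega^1_\imult(f^!A; f^!I)$ by letting $f^!l$ be the restriction to $f^!I$ of the obvious bundle map $f^!A\to f^!I$ induced by $l\colon A\to I$ (note $l$ restricted to $I$ is the identity, so the pullback map restricted to $f^!I$ is again the identity, giving the required symbol condition), and defining $f^!L$ on a projectable section $\phi$ of $f^!A$ by $f^!L(\phi):=f^*(L(\bar\phi))$, where $\bar\phi$ is a section of $A$ to which $\phi$ projects, extended to all sections via the symbol relation \eqref{eq:symbol:IM:form}. One then verifies that the pair $(f^!L,f^!l)$ satisfies the IM compatibility equations \eqref{eq:compatibility:IM:E:form}: this is because $f^*$ intertwines the relevant Lie derivatives and contractions (again using transversality so that $f$-relatedness of vector fields and sections behaves well), so the equations for $(f^!L,f^!l)$ follow termwise from those for $(L,l)$. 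Hence $(f^!L,f^!l)$ is an IM connection $1$-form for the bundle of ideals $f^!I\subset f^!A$, so $f^!I$ is partially split. I expect the main obstacle to be making the pullback of the Ehresmann connection / IM $1$-form precise enough that the IM equations can be checked cleanly — in particular checking well-definedness of $f^!L$ independently of the choice of projectable representative $\bar\phi$, and compatibility with the symbol relation — since this is exactly where transversality of $f$ must be invoked; everything else is formal manipulation with the pullback functor $f^!$.
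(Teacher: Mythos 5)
Your proposal is correct and follows essentially the same route as the paper: the paper's proof consists precisely of the observation that the pullback of the IM connection $1$-form $(L,l)$ along $f$ is again an IM $1$-form in $\Omega^1_{\imult}(f^!A;f^!I)$ whose symbol restricts to the identity on $f^!I$, with the pullback-algebroid and bundle-of-ideals statements treated as standard. Your additional verifications (the bracket computation on projectable sections and the well-definedness of $f^!L$ via the symbol relation) are exactly the routine details the paper leaves implicit.
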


\begin{proof}
The pullback of an IM form $(L,l)\in \Omega^1_{\imult}(A,I)$, with $l|_{I}=\id$, is also an IM 1-form $f^!(L,l)\in \Omega^1_{\imult}(f^!A,f^!I)$, with $(f^!l)|_{f^!I}=\id$. 
\end{proof}

\begin{proof}[Proof of Proposition \ref{local:model:saturation:Dirac}]
As in the proof of Proposition \ref{local:model:saturation:Lie:alg}, assume that
\[ M=\tau_S\oplus E, \] 
with $E$ a transversal and $\tau_S=\Sat(S,M)$. Applying Theorem \ref{theorem:BLM:Dirac}, we have an equivalence around $E$
\[ \L\simeq p^!_{E}\L_{E}. \] 

To prove the statement about partially split, observe that the first jet of the invariant submanifold $S\subset (E,\L_{E})$ is given by the Lie algebroid $A_S:=\L_{E}|_{S}$ and has bundle of ideals $\ka:=\nu^*(S,E)\simeq E^*$. For the invariant submanifold $\tau_S\subset (\tau_S\oplus E, p_{E}^!\L_{E})$, its corresponding Lie algebroid $A_{\tau_S}=\L|_{\tau_S}$ is isomorphic to the pullback $p^!A_S$, where $p:\tau_S\to S$ is the projection, and its bundle of ideals is $p^!\ka$. By the lemma above, if $\ka$ is a partially split for $A_S$ then  $p^!\ka$ is partially split for $p^!A_S$. Conversely, we can also write $A_S=i_S^!(p^!A_S)$, where $i_S:S\hookrightarrow \tau_S$ is the zero-section, which is a transversal for $p^!A_S$. For the bundle of ideals, we have $\ka=i_S^!(p^!\ka)$ so, by the lemma, if $p^!\ka$ is partially split for $p^!A_S$ then $\ka$ is a partially split for $A_S$. 

Assuming now that the partially split condition holds, let $(L,l)\in\Omega^1_\imult(A_S,\ka)$ be an IM form, and let $\L_0$ be the corresponding local model on $E$. Then the local model of $\L$ around $S$ is $p^!\L_{0}$. This coincides with the local model of $\L$ around $\tau_S$ constructed using the pullback IM form $p^!(L,l)$ for $p^!\ka\subset p^!A_S$.

The equivalence between the two linearization problems can be proven similarly to Proposition \ref{prop:linearizable:iff:transverse}. 
\end{proof}


\end{document}